\theoremstyle{definition}
\newtheorem{nul}{}[section]
\newtheorem{dfn}[nul]{Definition}
\newtheorem{rmk}[nul]{Remark}
\newtheorem{cnstr}[nul]{Construction}
\newtheorem{cnv}[nul]{Convention}
\newtheorem{ntn}[nul]{Notation}
\newtheorem{exm}[nul]{Example}
\newtheorem{rec}[nul]{Recollection}
\newtheorem{wrn}[nul]{Warning}
\newtheorem{qst}[nul]{Question}
\newtheorem*{dfn*}{Definition}
\newtheorem*{axm*}{Axiom}
\newtheorem*{ntn*}{Notation}
\newtheorem*{exm*}{Example}
\newtheorem*{exr*}{Exercise}
\newtheorem*{int*}{Intuition}
\newtheorem*{qst*}{Question}
\newtheorem*{rmk*}{Remark}
\theoremstyle{plain}
\newtheorem{thm}[nul]{Theorem}
\newtheorem{prop}[nul]{Proposition}
\newtheorem{lem}[nul]{Lemma}
\newtheorem{cnj}[nul]{Conjecture}
\newtheorem{cor}[nul]{Corollary}
\newtheorem{fact}[nul]{Fact}
\newtheorem*{thm*}{Theorem}
\newtheorem*{prop*}{Proposition}
\newtheorem*{cor*}{Corollary}
\newtheorem*{lem*}{Lemma}
\newtheorem*{cnj*}{Conjecture}
\let\oldwidetilde\widetilde
\protected\def\widetilde{\oldwidetilde}
\DeclareMathOperator{\im}{\mathrm{im}}
\DeclareMathOperator{\fib}{\mathrm{fib}}
\DeclareMathOperator{\sBar}{\mathrm{Bar}}
\DeclareMathOperator{\Hom}{\mathrm{Hom}} 
\DeclareMathOperator{\End}{\mathrm{End}}
\DeclareMathOperator{\coker}{\mathrm{coker}}
\DeclareMathOperator{\Ss}{\mathbb{S}}
\DeclareMathOperator{\FF}{\mathbb{F}}
\DeclareMathOperator{\F}{\mathbb{F}}
\DeclareMathOperator{\NN}{\mathbb{N}}
\DeclareMathOperator{\A}{\mathcal{A}}
\DeclareMathOperator{\E}{\mathbb{E}}
\DeclareMathOperator{\MO}{\mathrm{MO}}
\DeclareMathOperator{\bS}{\mathbb{S}}
\DeclareMathOperator{\Ext}{\mathrm{Ext}}
\DeclareMathOperator{\Alg}{\mathrm{Alg}}
\DeclareMathOperator{\Sp}{\mathrm{Sp}}
\DeclareMathOperator{\J}{\mathcal{J}}
\DeclareMathOperator{\GL}{\mathrm{GL}}
\DeclareMathOperator{\cof}{\mathrm{cof}}
\DeclareMathOperator{\DD}{\mathbb{cofib}}
\DeclareMathOperator{\Sq}{\mathrm{Sq}}
\newcommand{\spOf}{\Sigma^{\infty}_+ \mathrm{O \langle 4n-1\rangle}}
\newcommand{\sOf}{\Sigma^{\infty} \mathrm{O \langle 4n-1\rangle}}
\newcommand{\Of}{\mathrm{O \langle 4n-1\rangle}}
\newcommand{\wt}{\widetilde}
\newcommand{\BOm}{\mathrm{BO \langle m \rangle}}
\newcommand{\BO}{\mathrm{BO}}
\newcommand{\Om}{\mathrm{O \langle m -1 \rangle}}
\newcommand{\Oo}{\mathrm{O}}
\newcommand{\U}{\mathrm{U}}
\newcommand{\BP}{\mathrm{BP}}
\newcommand{\Syn}{\mathrm{Syn}}
\newcommand{\HZ}{\mathrm{H}\mathbb{Z}}
\newcommand{\HFp}{\mathrm{H}\mathbb{F}_p}
\newcommand{\HFt}{\mathrm{H}\mathbb{F}_2}
\newcommand{\qoppa}{\kappa}
\newcommand{\sampi}{\lambda}
\newcommand{\ZZ}{\mathbb{Z}}
\newcommand{\Z}{\mathbb{Z}}
\DeclarePairedDelimiter\abs{\lvert}{\rvert}%
\let\oldabs\abs
\def\abs{\@ifstar{\oldabs}{\oldabs*}}
\let\oldtocsection=\tocsection
\let\oldtocsubsection=\tocsubsection
\let\oldtocsubsubsection=\tocsubsubsection
\renewcommand{\tocsection}[2]{\hspace{0em}\oldtocsection{#1}{#2}}
\renewcommand{\tocsubsection}[2]{\hspace{1em}\oldtocsubsection{#1}{#2}}
\renewcommand{\tocsubsubsection}[2]{\hspace{2em}\oldtocsubsubsection{#1}{#2}}
\newcommand{\NB}[1]{\todo[color=gray!40]{#1}}
\newcommand{\TODO}[1]{\todo[color=red]{#1}}
\newcommand{\NB}[1]{}
\newcommand{\TODO}[1]{}
\renewcommand{\todo}[1]{}
\renewcommand{\todo}[1]{}
\title{On the boundaries of highly connected, almost closed manifolds}
\author{Robert Burklund}
\address{Department of Mathematics, Massachusetts Institute of Technology, Cambridge, MA, USA}
\email{burklund@mit.edu}
\author{Jeremy Hahn} 
\address{Department of Mathematics, Massachusetts Institute of Technology, Cambridge, MA, USA}
\email{jhahn01@mit.edu}
\author{Andrew Senger}
\address{Department of Mathematics, Harvard University, Cambridge, MA, USA}
\email{senger@math.harvard.edu}
\begin{document}
\begin{abstract}
Building on work of Stolz, we prove for integers $0 \le d \le 3$ and $k>232$ that the boundaries of $(k-1)$-connected, almost closed $(2k+d)$-manifolds also bound parallelizable manifolds.
Away from finitely many dimensions, this settles longstanding questions of C.T.C. Wall, determines all Stein fillable homotopy spheres, and proves a conjecture of Galatius and Randal-Williams.
Implications are drawn for both the classification of highly connected manifolds and, via work of Kreck and Krannich, the calculation of their mapping class groups.

Our technique is to recast the Galatius and Randal-Williams conjecture in terms of the vanishing of a certain Toda bracket, and then to analyze this Toda bracket by bounding its $\mathrm{H}\mathbb{F}_p$-Adams filtrations for all primes $p$.
We additionally prove new vanishing lines in the $\mathrm{H}\mathbb{F}_p$-Adams spectral sequences of spheres and Moore spectra, which are likely to be of independent interest. Several of these vanishing lines rely on an Appendix by Robert Burklund, which answers a question of Mathew about vanishing curves in $\mathrm{BP} \langle n \rangle$-based Adams spectral sequences.
\end{abstract}
\maketitle

\tableofcontents
\vbadness 5000


\addtocontents{toc}{\protect\setcounter{tocdepth}{1}}
\section{Introduction}
For each integer $m \ge 5$, the Kervaire--Milnor \cite{KervaireMilnor} group of homotopy spheres $\Theta_{m}$ is the group under connected sum of $h$-cobordism classes of closed, smooth, oriented manifolds $\Sigma$ that are homotopy equivalent to the $m$-sphere $S^{m}$.  The Kervaire--Milnor exact sequence
$$0 \to \mathrm{bP}_{m+1} \to \Theta_{m} \to \mathrm{coker}(J)_{m},$$
expresses $\Theta_{m}$ in terms of the finite cyclic group $\mathrm{bP}_{m+1}$ and the mysterious, but amenable to methods of homotopy theory, finite group $\mathrm{coker}(J)_{m}$.  The subgroup $\mathrm{bP}_{m+1} \subset \Theta_{m}$ consists of all homotopy spheres that are the boundaries of parallelizable $(m+1)$-manifolds.  When $m$ is even, $\mathrm{bP}_{m+1}$ is trivial \cite[Theorem 5.1]{KervaireMilnor}.

A not-necessarily parallelizable, compact, oriented, smooth manifold $M$ is said to be \emph{almost closed} if its boundary $\partial M$ is a homotopy sphere.  The main theorem of our work is as follows:

\begin{thm} \label{thm:mainboundary}
Let $k>232$ and $0 \le d \le 3$ be integers.  Suppose that $M$ is a $(k-1)$-connected, almost closed $(2k+d)$-manifold.  Then the boundary 
$\partial M \in \Theta_{2k+d-1}$ has trivial image $$0=[\partial M] \in \mathrm{coker}(J)_{2k+d-1}.$$  In particular, $\partial M$ bounds a parallelizable manifold.
\end{thm}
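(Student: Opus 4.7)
The plan is to build on the framework of Stolz, who initiated the study of boundaries of highly connected, almost closed manifolds through stable homotopy theory. The first step is to recast the statement as a problem about Toda brackets: for a $(k-1)$-connected almost closed $(2k+d)$-manifold $M$, the class $[\partial M] \in \coker(J)_{2k+d-1}$ should be expressible as a Toda bracket $\langle \alpha, \beta, \gamma \rangle$ whose entries are stable homotopy classes arising from the characteristic data of $M$ together with fixed generators that depend on $d \in \{0,1,2,3\}$. Making this identification precise, in the spirit of the Galatius--Randal-Williams conjecture, converts the geometric vanishing question into a purely homotopy-theoretic one. One should also arrange, by exploiting the naturality of the construction, that $\alpha$, $\beta$, $\gamma$ factor through sufficiently connective spectra (e.g.\ Thom spectra of $\BOn$-type bundles), so that their individual Adams filtrations are large.

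The second step is to prove the resulting Toda bracket vanishes. Since $\coker(J)$ decomposes as a product over its $p$-primary parts, it suffices to work one prime at a time using the $\HFp$-based Adams spectral sequence. Upper bounds on the Adams filtrations of $\alpha$, $\beta$, $\gamma$ force the Toda bracket into a high Adams filtration; if that filtration exceeds the filtration at which any nonzero class can appear in the relevant stem, the bracket must vanish modulo its indeterminacy. The indeterminacy must then be separately controlled, either by showing it lies in the image of $J$ (and hence dies in $\coker(J)$) or by an analogous filtration comparison.

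The main obstacle, and the heart of the argument, will be establishing sharp enough Adams vanishing lines: the classical ones (Adams' own vanishing line, or refinements from $\mathit{ko}$ and $\mathit{tmf}$) are not strong enough. For $p = 2$ and small odd primes, new vanishing curves for the sphere spectrum and for mod-$p$ Moore spectra are needed; these can in turn be obtained by comparison with $\BP\langle n \rangle$-based Adams spectral sequences, whose vanishing behavior is established in the appendix. The numerical hypothesis $k > 232$ will emerge as the smallest range in which all of these filtration estimates become simultaneously effective across the four values of $d$. Once the vanishing lines are in hand, one checks case-by-case in $d \in \{0,1,2,3\}$ that the forced Adams filtration of the Toda bracket exceeds that of any nonzero class in the target stem, yielding $[\partial M] = 0$ in $\coker(J)_{2k+d-1}$. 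The final statement, that $\partial M$ bounds a parallelizable manifold, is immediate from the Kervaire--Milnor exact sequence, whose kernel $\ker(\Theta_{2k+d-1} \to \coker(J)_{2k+d-1})$ equals $\mathrm{bP}_{2k+d}$ by definition.
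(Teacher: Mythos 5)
Your proposal accurately captures the spirit of the argument for the cases covered by the Galatius--Randal-Williams conjecture, namely $d=0$ with $k\equiv 0 \pmod 4$, and $d=2$ with $k\equiv 3\pmod 4$ (where $\MO\langle k\rangle\simeq\MO\langle k+1\rangle$ returns one to an $\MO\langle 4n\rangle$). There the problem really does reduce to the vanishing, modulo $\J_{8n-1}$, of a universal Toda bracket $w$, and that bracket is attacked exactly as you describe: bound its $\HFp$-Adams filtration via the synthetic lift, then compare against upper bounds (due to Davis--Mahowald, Gonz\'alez, and Burklund) on the filtration of elements not in the image of $J$.

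But you are missing a genuine bifurcation, and it concerns the majority of cases. For the remaining residue classes of $(k,d)$, the paper does not express $[\partial M]$ as a Toda bracket at all. The key input there is Stolz's Theorem A, which asserts that every element of $\pi_{2k+d}A[k]$ (Stolz's auxiliary spectrum) is $8$-torsion once $(k,d)$ avoids the two exceptional families. Given $\alpha\in\pi_{2k+d}A[k]$, the argument lifts $\alpha$ to $\pi_{2k+d}\bigl(\Sigma^{-1}C(8)\otimes A[k]\bigr)$, shows via Stolz's Satz 12.7 that the resulting class in $\pi_{2k+d}(C(8))$ has high $\HFt$-Adams filtration, and then invokes the mod $8$ Moore spectrum theorem (Theorem \ref{thm:intro-mod-8}) to conclude that its image under the Bockstein lands in the subgroup of $\pi_{2k+d-1}(\bS)$ generated by the image of $J$ and the $\mu$-family; the $\mu$-family is then ruled out via the Atiyah--Bott--Shapiro orientation $\mathrm{MSpin}\to\mathrm{KO}$. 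This is a direct Adams-filtration argument for a lift through a Moore spectrum, not a secondary composition, and your framing---which casts the Moore spectrum results as auxiliary vanishing lines for controlling the indeterminacy of a Toda bracket---inverts the actual roles. There is no obvious way to recast this branch of the argument in Toda-bracket form, and the very fact that the $8$-torsion hypothesis fails in the exceptional families (where instead the GRW bracket is used) suggests the two methods are genuinely complementary, not special cases of one.

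Two further inaccuracies worth flagging: first, the Toda bracket $w$ is universal (constructed from the bar resolution of $\MO\langle 4n\rangle$ and the $\mathbb{E}_\infty$-structure on the sphere, depending only on $n$), not assembled from the characteristic data of a particular $M$---the reduction is that $w$ generates the kernel of the unit map $\pi_{8n-1}\bS\to\pi_{8n-1}\MO\langle 4n\rangle$ modulo $\J_{8n-1}$. Second, the bound $k>232$ is forced by the Moore spectrum branch, where an inequality between Stolz's filtration estimate and the slope-$\tfrac15$ vanishing line of the modified mod $8$ Adams spectral sequence must hold; the Toda bracket branch already works for $k>124$.
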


\begin{rmk}
The bounds $k>232$ and $d \le 3$ can likely be improved (cf. \Cref{sec:open-problems} and \Cref{rmk:open-questions}).  However, there are examples (due to Frank \cite[Example 1]{Frank} and Stolz \cite[Satz 12.1]{StolzBook}, respectively) of:
\begin{itemize}
 \item A $3$-connected, almost closed $9$-manifold with boundary non-trivial in $\mathrm{coker}(J)_8$.
 \item A $7$-connected, almost closed $17$-manifold with boundary non-trivial in $\mathrm{coker}(J)_{16}$.
\end{itemize}
Theorem \ref{thm:mainboundary} demonstrates that these examples exhibit fundamentally low-dimensional phenomena.
\end{rmk}

\begin{rmk}
Many special cases of Theorem \ref{thm:mainboundary} were known antecedent to this work. 
Theorem B of \cite{StolzBook} summarizes the prior state of the art, and our work can be viewed as the completion of a program by Stolz to answer questions raised by Wall in \cite{Wall62,Wall67}.  Our theorem is new when $d=0$ and $k \equiv 0$ mod $4$, when $d=1$ and $k \equiv 1$ mod $8$, when $d=2$ and $k \equiv 3$ mod $4$, and when $d=3$ and $k \equiv 0$ mod $4$.
\end{rmk}

Theorem \ref{thm:mainboundary} is most interesting in the case $d=0$, where it was previously unknown for $k \equiv 0$ modulo $4$.
Work of Stolz \cite[Lemma 12.5]{StolzBook} reduces this case of our main theorem to the following result:

\begin{thm}[Conjecture of Galatius and Randal-Williams] \label{thm:intro-main}
Let $\MO \langle 4n \rangle$ denote the Thom spectrum of the canonical map
$$\tau_{\ge 4n} \mathrm{BO} \to \mathrm{BO},$$
where $\tau_{\ge 4n} \mathrm{BO}$ denotes the $(4n-1)$-connected cover of $\mathrm{BO}$.
For all $n>31$, the unit map
$$\pi_{8n-1} \bS \to \pi_{8n-1} \MO\langle 4n \rangle$$
is surjective, with kernel exactly the image of the $J$-homomorphism
$$\pi_{8n-1} \mathrm{O} \to \pi_{8n-1} \bS.$$
\end{thm}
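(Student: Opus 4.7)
The plan is to recast \Cref{thm:intro-main} as the vanishing, modulo the image of the $J$-homomorphism, of a certain Toda bracket in $\pi_{8n-1}\bS$, and then to prove this vanishing by bounding $\HFp$-Adams filtrations at every prime $p$.

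\textbf{Reduction to a Toda bracket.} I first analyze the cofiber sequence $\bS \to \MO\langle 4n \rangle \to C$ of the unit map. Because $\tau_{\ge 4n}\BO$ has its first cells in degrees $4n, 4n+1, 4n+2, 4n+4$ modulo $8$, with initial attaching maps given by low-dimensional Hopf elements, the spectrum $C$ admits an explicit cellular presentation in the range $[4n, 8n]$. The image of the $J$-homomorphism lies in the kernel of the unit map automatically via the forgetful map $\MO\langle 4n \rangle \to \MO$, so the long exact sequence on homotopy reduces the theorem to two statements: that the connecting map $\pi_{8n}C \to \pi_{8n-1}\bS$ has image contained in the image of $J$, and that the connecting map $\pi_{8n-1}C \to \pi_{8n-2}\bS$ is injective. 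Unwinding the cellular filtration of $C$, each connecting map is identified with an explicit (possibly matric) Toda bracket in $\pi_*\bS$ built from the attaching maps of $C$ and generators of $\pi_{<4n}\bS$, so the theorem reduces to vanishing-modulo-$J$-image statements for one or two Toda brackets $\tau_n, \tau_n'$.

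\textbf{Adams filtration analysis.} Next I would bound the $\HFp$-Adams filtration of these brackets from below at each prime $p$. By Moss's convergence theorem, a Toda bracket represented at the $E_2$-page of the Adams spectral sequence detects its stable incarnation in an appropriate filtration. Combined with the sharp new vanishing lines in $\Ext_A^{s, 8n-1+s}(\F_2, \F_2)$ developed in the main body of the paper, this forces $\tau_n$ into filtrations where every surviving class either vanishes or detects the image of $J$. At $p = 2$ the main body of the paper provides these sharp lines; at odd primes they follow from the $\BP\langle n \rangle$-Adams vanishing curves established in the Appendix, answering Mathew's question.

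\textbf{Main obstacle.} The hard part is establishing the vanishing lines themselves precisely enough to cover the uniform range $n > 31$: the classical Adams and Anderson--Davis estimates are too weak. Overcoming this requires genuine chromatic-height input---detailed analysis of $v_2$-self maps on Moore spectra at $p = 2$ and the resolution of Mathew's vanishing-curves question at odd primes. Once those inputs are in hand, the Toda-bracket juggling and filtration bookkeeping needed to deduce the theorem is tractable.
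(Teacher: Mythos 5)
Your outline captures the high-level shape of the paper's strategy---reduce to a Toda bracket, bound its $\HFp$-Adams filtration from below, and compare with vanishing curves that bound the filtration of non-$J$ classes from above---but several specific steps you describe are either incorrect or too vague to carry the argument.

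First, your claim that the image of $J$ lies in the kernel of the unit map ``automatically via the forgetful map $\MO\langle 4n\rangle \to \MO$'' is backwards. A class in $\pi_{8n-1}\bS$ dying in $\pi_{8n-1}\MO$ says nothing about whether it dies in $\pi_{8n-1}\MO\langle 4n\rangle$, since the unit to $\MO$ factors through $\MO\langle 4n\rangle$. What the paper actually does is lift the $J$-homomorphism through $\pi_{8n-1}\sOf \to \pi_{8n-1}\bS$ and use the bar construction $\MO\langle 4n\rangle \simeq \bS \otimes_{\Sigma^\infty_+ O\langle 4n-1\rangle}\bS$ to show such classes are killed; that the lift surjects onto $\mathcal{J}_{8n-1}$ in the relevant range is a non-trivial statement (Theorem 4.8), relying on the Goodwillie calculus computation of $\pi_* \sOf$ and on $J(x)^2 = 0$.

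Second, your Toda-bracket reduction via ``unwinding the cellular filtration of $C$'' is too coarse to yield the needed filtration bound. The decisive structural point in the paper is that the Toda bracket $w$ is formed using the \emph{canonical} nullhomotopy of $2J(x)^2$ coming from the $\mathbb{E}_\infty$-ring structure on $\bS$ (Lemma 5.6), together with the identification of $J$ as an $\mathbb{E}_\infty$-map. This choice, not just any nullhomotopy supplied by cellular skeleta, is precisely what allows the Adams filtration lower bound to be $2N_p - 1$ rather than the $\approx N_p$ obtained by Stolz. A bracket built from generic attaching maps of $C$ would not see this doubling. Relatedly, Moss's convergence theorem bounds the filtration of a bracket detected on an $E_2$-page, but the argument here needs to track the filtrations of the nullhomotopies themselves across the bracket, which is why the paper works in synthetic spectra (Section 9) rather than applying Moss.

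Third, the attribution of the vanishing-curve input is off. At $p=2$ the paper uses the pre-existing Davis--Mahowald bound \cite[Corollary 1.3]{DM3}, not a new one; the genuinely new quantitative vanishing curve (needed to handle $p=3$) is Burklund's Theorem in Appendix B, combined with the Adams--Novikov vanishing line of Section 9.5 (which uses $\beta_1$-nilpotence, not $v_2$-self maps). The $v_1$-banded vanishing line machinery on Moore spectra (Sections 10--12) is used only for Theorem 1.1, not for the present theorem, so invoking ``$v_2$-self maps on Moore spectra at $p=2$'' conflates the two halves of the paper.
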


We label Theorem \ref{thm:intro-main} a conjecture of Galatius and Randal-Williams since it is, when $n>31$, equivalent to Conjectures A and B of their work \cite{GRAbelianQuotients}.
Theorem \ref{thm:intro-main} allows us to improve the bound $k>232$ in the $d=0$ case of Theorem \ref{thm:mainboundary}.  For details, see \Cref{thm:strongmainboundary}.

\begin{rmk} \label{rmk:Qdef}
Much of Theorem \ref{thm:intro-main} is classical: the surjectivity statement follows from surgery as in \cite[Theorem 6.6]{KervaireMilnor}, while the Pontryagin-Thom correspondence guarantees that the image of the $J$-homomorphism is contained in the kernel of the unit map $\pi_{8n-1} \bS \to \pi_{8n-1} \MO\langle 4n\rangle$.  The difficult point is to prove that the kernel of this unit map contains \emph{only} the image of $J$.

A priori, there could be additional elements in this kernel, and the concern has a geometric interpretation.  Let $\Sigma_Q \in \Theta_{8n-1}$ denote the boundary of the manifold obtained by plumbing together two copies of the $4n$-dimensional linear disk bundle over $S^{4n}$ that generates the image of $\pi_{4n} \mathrm{BSO}(4n-1)$ in $\pi_{4n}\mathrm{BSO}(4n)$. Theorem \ref{thm:intro-main} is equivalent to the claim that, for $n>31$, the class $[\Sigma_Q] \in \mathrm{coker}(J)_{8n-1}$ is trivial \cite[Lemma 10.3]{StolzBook}.
\end{rmk}

Our proof of Theorem \ref{thm:intro-main} follows a general strategy due to Stolz \cite{StolzBook}, which he applied to prove some cases of Theorem \ref{thm:mainboundary}.  For each prime number $p$ we compute a lower bound on the $\mathrm{H}\mathbb{F}_p$-Adams filtrations of classes in the kernel of the unit map
$$\pi_{8n-1} \bS \to \pi_{8n-1} \MO \langle 4n \rangle.$$
Our lower bound is given in \Cref{thm:AdamsBound}, and it is one of the main technical achievements of this paper.  It is approximately double the bound obtained by Stolz in \cite[Satz 12.7]{StolzBook}, and we devote Sections \ref{sec:Goodwillie}-\ref{sec:Toda} and Sections \ref{sec:SyntheticReview}-\ref{sec:SyntheticToda} to its proof.

\begin{rmk}
A key portion of the argument for \Cref{thm:AdamsBound} takes place in Pstr\k{a}gowski's category of \emph{synthetic spectra} \cite{Pstragowski} (c.f. \cite{GIKR} for an alternative construction of $\mathrm{BP}$-synthetic spectra).  Other users of this category may be interested in our omnibus \Cref{thm:synthetic-Adams}, which relates Adams spectral sequences to synthetic homotopy groups.

Let us comment briefly on how synthetic technology allows us to prove stronger results than we could otherwise.  Many of our results, such as \Cref{thm:AdamsBound} and \Cref{prop:band-in-cofiber-seqs}, can be stated without reference to synthetic language.  While it should in principle be possible to prove such statements without synthetic technology, in practice we suspect such proofs would be technically demanding, difficult to verify, and vastly expand the length of the paper.


The most delicate point in the paper is \Cref{cnstr:synth-toda-diagram}, which bounds the Adams filtration of the class $w$ constructed in \Cref{lem:toda-main}. To make this construction requires a \emph{simultaneous} solution to two difficulties. The first is that we must choose a certain nullhomotopy to be of sufficiently high Adams filtration. In the synthetic category it is both simple and natural to maintain control over the Adams filtration of a homotopy. The second is that we must reason about how $\mathbb{E}_\infty$ ring structures interact with Adams filtration, which we cleanly accomplish using the symmetric monoidal structure on Pstr\k{a}gowski's category. The authors found it challenging to rigorously address both of these points, and their interaction, without the synthetic category. 

\end{rmk}

To make effective use of \Cref{thm:AdamsBound}, and also to prove the remaining cases of Theorem \ref{thm:mainboundary}, we need to explicitly understand all elements of $\pi_*(\mathbb{S}^{\wedge}_{p})$ of large $\mathrm{H}\mathbb{F}_p$-Adams filtration.  This is a problem of significant independent interest in pure homotopy theory, so we summarize our new results as \Cref{thm:Burklund} and \Cref{thm:intro-mod-8} below. For the definition of the $\mu$-family, see \cite{AdamsJIV}, and note that we write $\mathbb{S}_p^{\wedge}$ to denote the $p$-completion of the sphere spectrum.

\begin{thm}[Burklund, proved as Theorem \ref{thm:app-main}] \label{thm:Burklund}
  For each prime number $p>2$ and each integer $k>0$, let $\Gamma_p(k)$ denote the largest Adams filtration attained by a class in $\pi_{k} \mathbb{S}^{\wedge}_p$ that is not in the image of $J$.
  Similarly, let $\Gamma_2(k)$ denote the largest Adams filtration attained by a class in $\pi_k \mathbb{S}^{\wedge}_2$ that is not in the subgroup generated by the image of $J$ and the $\mu$-family.
  \begin{enumerate}
  \item For any prime $p$, 
    $$ \Gamma_p(k) \leq \frac{(2p-1)k}{(2p-2)(2p^2-2)} + o(k),$$
    where $o(k)$ denotes a sublinear error term. 
  \item If $k>0$ is any integer, then
    $$ \Gamma_3(k) \leq \frac{25}{184}k + 20 + \ell(k),$$
    where $\ell(k)$ is $0$ unless $k+2 \equiv 0$ modulo $4$, in which case $\ell(k)$ is the $3$-adic valuation of $k+2$.
  \end{enumerate}
\end{thm}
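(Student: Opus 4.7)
The plan is to bound the $\HFp$-Adams filtration of classes not in the image of $J$ (nor in the $\mu$-family at $p = 2$) by passing through $\BP\langle n \rangle$-based Adams spectral sequences, which enjoy sharper vanishing curves than the classical $\HFp$-Adams spectral sequence. The key input is the Appendix by Burklund, which answers Mathew's question by producing a vanishing curve of slope approximately $1/|v_{n+1}|$ above which the $\BP\langle n \rangle$-Adams $E_\infty$-page of $\bS^{\wedge}_p$ vanishes modulo the low-chromatic-height classes.

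Concretely, I would first perform a chromatic separation: the image of $J$ (together with the $\mu$-family at $p = 2$) is precisely the chromatic height $\leq 1$ piece of $\pi_* \bS^{\wedge}_p$, so any element $x$ outside these subgroups has nonzero image in $\BP\langle n \rangle_*$ for some $n \geq 2$ and therefore survives the $\BP\langle n \rangle$-Adams spectral sequence. Applying the Appendix then forces $x$ below the corresponding vanishing curve in $\BP\langle n \rangle$-Adams filtration. Finally, I would translate this bound into an $\HFp$-Adams filtration bound by comparing the two spectral sequences along the map $\BP\langle n \rangle \to \HFp$, whose cofiber is built from $\HFp$-modules indexed by the exterior algebra on $v_1, \ldots, v_n$. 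Optimizing over $n$ and absorbing higher-order contributions into the sublinear error yields the asymptotic slope $\tfrac{2p-1}{(2p-2)(2p^2-2)} = \tfrac{2p-1}{|v_1| \cdot |v_2|}$ in part (1). For the explicit $p = 3$ bound in part (2), the same strategy with $n = 2$, where $|v_1| = 4$ and $|v_2| = 16$, produces the slope $25/184$, with the denominator $184 = 8 \cdot 23$ encoding the precise filtration conversion between $\BP\langle 2 \rangle$- and $\HFp$-Adams pages; the correction $\ell(k)$ when $4 \mid k+2$ arises from the Toda $\alpha_{i/j}$-family in stems $\equiv 3 \pmod 4$, whose $3$-local divisibility by $3^{v_3(i)}$ pushes some generators to large $\HFp$-Adams filtration along the $v_1$-periodic line.

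The main obstacle will be the $\BP\langle n \rangle$-to-$\HFp$ filtration conversion: obtaining sharp quantitative bounds requires tracking how many $\HFp$-Adams pages correspond to each $\BP\langle n \rangle$-Adams page, and how this interacts with $v_i$-tower multiplication. The $\ell(k)$ correction in part (2) further suggests that subtle divisibility phenomena at $p = 3$ must be tracked explicitly beyond what a purely asymptotic argument can yield, and it is in the bookkeeping of these low-filtration exotic classes along the $v_1$-periodic line that I expect the proof to be most technically demanding.
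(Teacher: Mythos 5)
Your broad outline — go through a $\BP\langle n\rangle$-based Adams spectral sequence, exploit a vanishing curve there, and convert back to $\HFp$-Adams filtration — has the right flavor, but several of the specific claims are wrong or point away from how the argument actually works.

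First, the "chromatic separation" step as stated does not hold: an element of $\mathrm{coker}(J)$ need not have nonzero Hurewicz image in $\pi_* \BP\langle n\rangle$ (for $n$ large the torsion in $\pi_* \Ss$ is killed), and since the $\BP\langle n\rangle$-based Adams spectral sequence converges to $\pi_* \Ss^{\wedge}_p$, there is no meaningful sense in which some classes do or do not "survive" it. The actual mechanism is different: the $K(1)$-local Hurewicz map, not a $\BP\langle n\rangle$-Hurewicz map, is what separates off the image of $J$ (and the $\mu$-family at $p=2$), and this enters via the $bo$/$\BP\langle 1\rangle$-resolution theorems of Davis--Mahowald and Gonz\'alez, not chromatic localization directly. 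Second, the $\BP\langle n\rangle \to \HFp$ "translation" step you sketch is precisely the technically hard part, and it is not a simple bookkeeping of $\HFp$-pages per $\BP\langle n\rangle$-page: as the paper's introduction to the appendix notes, this is the "second (and more technically difficult) step," and the paper does not reprove it; it cites Davis--Mahowald (Theorem \ref{thm:DM-bo}) and Gonz\'alez (Theorem \ref{thm:gon-bp1}), which describe how $\HFp$-Adams filtration drops as one climbs the $bo$/$\BP\langle 1\rangle$-resolution, stem by stem. Your $\ell(k)$ correction at $p=3$ does arise from $v_1$-periodic divisibility, but concretely it enters through the $s=2$ term $\epsilon(n,2) = 1 + \ell(n)$ in Gonz\'alez's theorem, not through an ab initio accounting of the $\alpha$-family.

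Third, and most substantively, you do not identify the paper's central new idea, which is the Comparison Theorem (\ref{thm:AB-vl}): given a ring map $A \to B$ that becomes an equivalence after $\tau_{<m}$, one obtains $f_B(k) \leq f_A(k) + \lfloor (k+f_A(k)-1)/m\rfloor$, by explicitly rebuilding a finite resolution by $A$-modules into a longer but controlled resolution by $B$-modules, slicing each $A$-module into windows of width $m$. Applied to $\BP \to \BP\langle 1\rangle$ with $m = |v_2|$, this gives the $\BP\langle 1\rangle$-vanishing curve from the $\BP$-one; the $\BP$-vanishing curve is in turn supplied by the Nilpotence Theorem (for the $o(k)$ statement in part (1)) or by the explicit Adams--Novikov vanishing line of \Cref{thm:AN-vl} with slope $1/(p^3-p-1)$ (for the explicit constants at $p=3$). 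In particular, at $p=3$ the relevant ring is $\BP\langle 1\rangle$, not $\BP\langle 2\rangle$; $|v_2| = 16$ enters because $\BP \to \BP\langle 1\rangle$ is an equivalence through degree $|v_2|-1$, and the factor $23 = p^3-p-1$ in the denominator comes from Belmont's $\Ext_{P_*}$-vanishing line for $C(\beta_1)$ feeding into \Cref{thm:AN-vl}, not from a $\BP\langle 2\rangle$-to-$\HFp$ conversion. Your proposal also suggests "optimizing over $n$," but the paper cannot do this because the Davis--Mahowald/Gonz\'alez conversion step is only available for $bo$ and $\BP\langle 1\rangle$.
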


This theorem is due solely to the first author, and is proved in Appendix \ref{sec:Appendix} at the end of the work.  Part (2) of Burklund's theorem, at the prime $p=3$, is essential to our proof of Theorem \ref{thm:intro-main}.  Sections \ref{sec:AppendixVL} and \ref{sec:ANvl} of the main paper develop the tools necessary to deduce part (2) of the theorem from a more precise version of part (1).
Experts in Adams spectral sequences will want to examine the introduction to Appendix \ref{sec:Appendix} for additional and more precise results, including a solution to a question of Mathew about vanishing curves in $\mathrm{BP} \langle n \rangle$-based Adams spectral sequences.

\begin{rmk}
Previous upper bounds for $\Gamma_p(k)$ were proved by Davis and Mahowald when $p=2$ \cite{DM3}, and by Gonz\'alez \cite{Gonzalez} for $p>3$.
We make much use of their bounds in this paper, which complement our own.
In particular, while Burklund proves better asymptotic behavior of $\Gamma_p(k)$ than implied by any previous work, the explicit constants of Davis, Mahowald and Gonz\'alez are more useful for our geometric applications.
At $p=3$, the best prior known bound for $\Gamma_3(k)$ is due to Andrews \cite{Andrews}, who in his thesis computed the entire $3$-primary Adams spectral sequence above a line of slope $1/5$.
Part $(2)$ of Burklund's theorem contains stronger information about the $3$-primary $\mathrm{E}_{\infty}$-page, at the cost of having nothing to say about earlier pages.
\end{rmk}

Our other major result, Theorem \ref{thm:intro-mod-8} below, applies only to $8$-torsion classes in $\pi_*(\bS)$.  When it applies, it is stronger than \Cref{thm:Burklund}.
\begin{thm}[Proved as Theorem \ref{thm:mod8-main-thm} in the main text] \label{thm:intro-mod-8}
    Let $C(8)$ denote the mod $8$ Moore spectrum, and let $F^s \pi_k (C(8)) \subseteq \pi_k (C(8))$ denote the subgroup of elements of $\HFt$-Adams filtration at least $s$.
Then, for $k \geq 126$, the image of the Bockstein map
    \[F^{\frac{1}{5} k + 15} \pi_k (C(8)) \to \pi_{k-1} (\Ss)\]
is contained in the subgroup of $\pi_{k-1} (\Ss)$ generated by the image of $J$ and the $\mu$-family.
\end{thm}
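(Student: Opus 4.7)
The plan is to reduce this Moore spectrum statement to a vanishing line on the $\HFt$-Adams $\sE_\infty$-page of the sphere via naturality of the Adams filtration.

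From the defining cofiber sequence
\[ \Ss \xrightarrow{8} \Ss \to C(8) \xrightarrow{\partial} \Sigma \Ss, \]
the Bockstein $\partial$ is a map of spectra and therefore preserves $\HFt$-Adams filtration:
\[ \partial\bigl( F^s \pi_k (C(8)) \bigr) \subseteq F^s \pi_{k-1} (\Ss^{\wedge}_2) \]
for every integer $s$, either via the characterization of $F^s$ by factorizations through $s$ cohomologically trivial maps, or by the universal property of the Adams tower. Specializing to $s = \lceil \tfrac{1}{5} k + 15 \rceil$, the Bockstein sends $F^{\frac{1}{5}k + 15} \pi_k(C(8))$ into $F^{\frac{1}{5}k + 15} \pi_{k-1}(\Ss^{\wedge}_2)$.

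Next, I would invoke the Davis--Mahowald vanishing line at $p=2$ on the sphere: there exist explicit constants $c_0$ and $n_0$ such that, for every $n \geq n_0$, any class in $\pi_n(\Ss^{\wedge}_2)$ of $\HFt$-Adams filtration greater than $n/5 + c_0$ lies in the subgroup generated by $\mathrm{im}(J)$ and the $\mu$-family. Provided $c_0 \leq 15$ and $n_0 \leq 125$, applying this with $n = k - 1 \geq 125$ completes the argument: any Bockstein image of $x \in F^{\frac{1}{5}k + 15} \pi_k(C(8))$ lies strictly above the vanishing line, and therefore in $\langle \mathrm{im}(J), \mu\text{-family}\rangle$, as required. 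Since $\pi_*(C(8))$ is $2$-local, the conclusion transports to $\pi_{k-1}(\Ss)$ without further work.

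The principal obstacle is certifying that the Davis--Mahowald vanishing line is available with the explicit constants $c_0 \leq 15$ and $n_0 \leq 125$. The classical $\mathrm{bo}$-resolution method delivers the asymptotic slope $1/5$ with a small intercept, but rigorously pinning down the numerical parameters demanded here likely requires combining those methods with the sharpened vanishing-line machinery developed in \Cref{sec:AppendixVL} and \Cref{sec:ANvl} of this paper. One must additionally verify by direct inspection that every sporadic high-Adams-filtration class outside $\langle \mathrm{im}(J), \mu\text{-family}\rangle$ (such as exceptional Kervaire-type classes and the few low-dimensional anomalies) occurs in stems below $126$, thereby justifying the cutoff $k \geq 126$.
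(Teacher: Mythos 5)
There is a genuine gap, and it is fatal to the approach as written. Your reduction hinges on the claim that, for the $2$\nobreakdash-local sphere, there is a Davis--Mahowald style vanishing line of slope $1/5$ with intercept $c_0 \le 15$, above which everything in the $\HFt$-Adams $\mathrm{E}_\infty$-page lies in $\langle \mathrm{im}(J), \mu\text{-family}\rangle$. No such result exists, and the paper does not prove one. The Davis--Mahowald bound quoted in the paper (\Cref{thm:DM-filt-bound}) has slope $3/10$: it says a class $\alpha \in \pi_n \Ss_{(2)}$ outside $\langle \mathrm{im}(J), \mu \rangle$ must have Adams filtration less than $\tfrac{3n}{10} + 4 + v_2(n+2) + v_2(n+1)$. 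For $n = k-1 \geq 125$, the threshold $\tfrac{1}{5}k + 15$ sits \emph{below} that $3/10$\nobreakdash-slope line, so pushing filtration through the Bockstein (which is fine, filtration is monotone under maps of spectra) lands you in a region of the sphere's Adams chart where plenty of classes outside $\langle \mathrm{im}(J), \mu\rangle$ still live. Even the sharpened line of Burklund in \Cref{sec:Appendix} has slope about $1/4$, not $1/5$. Slope $1/5$ is a feature of the type~$1$ complex $C(8)$, coming from its $v_1$\nobreakdash-self map, and does not descend naively to $\Ss$.

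The idea you are missing is the \emph{$v_1$\nobreakdash-banded vanishing line}, which is precisely what rescues the argument. The paper proves (\Cref{prop:banded-numbers}) that the $\mathrm{E}_{17}$-page of the modified $\HFt$-Adams spectral sequence for $C(8)$ is \emph{empty} in the entire wedge between the line $s = \tfrac{1}{5}k + 15$ and a line of slope $\tfrac{1}{2}$, and that the surviving classes near slope $1/2$ detect exactly the $K(1)$-local homotopy. Consequently, any class in $F^{\frac{1}{5}k + 15}\pi_k(C(8))$ actually has (modified) filtration roughly $\tfrac{1}{2}k - 12.5$, far above where you placed it. Projection to the top cell then costs $n-1 = 2$ filtration degrees (because $\nu C(8) \simeq C(\tau^2 \wt{2}^3)$), and only \emph{after} this boost is the resulting class in $\pi_{k-1}(\Ss)$ of high enough filtration --- roughly $\tfrac{1}{2}k - 14.5$ --- to be caught by the slope-$3/10$ Davis--Mahowald line. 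This two-step structure (band forces a filtration jump on $C(8)$; then apply a weaker vanishing line on $\Ss$) is the content of \Cref{prop:I-want-to-be-done}, and without it the naturality argument proves nothing. Establishing the band requires the machinery of \Cref{sec:BandVan}--\Cref{sec:mod8}, in particular the Miller-square analysis for $Y = C(2)\otimes C(\eta)$ in \Cref{sec:Y} as the base case of the genericity induction.
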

We devote Sections \ref{sec:BandVan}-\ref{sec:mod8} to the proof of Theorem \ref{thm:intro-mod-8}.

\begin{rmk}
At key points in the arguments for \cite[Theorems B \& D]{StolzBook}, Stolz applies an analog, for the mod $2$ Moore spectrum, of our Theorem \ref{thm:intro-mod-8}.
This analog is due to Mahowald.
While Mahowald announced the result in \cite{MahBull}, and it is also claimed in \cite{MahTokyo} and \cite[p.41]{DM3}, to the best of our knowledge no proof has appeared in print.  In \Cref{sec:mod8} we prove a version of Mahowald's result in order to close this gap in the literature.  We then study in turn the mod $4$ and mod $8$ Moore spectra in order to prove Theorem \ref{thm:intro-mod-8}, the full strength of which is necessary to conclude Theorem \ref{thm:mainboundary}.

These Moore spectra results are closely related to Mahowald and Miller's proofs \cite{Miller, MahJEHP} of the height $1$ telescope conjecture, and we record a quick proof of the height $1$ telescope conjecture at $p=2$ as Corollary \ref{cor:telescope}.
\end{rmk}

Before launching into our arguments, we use Sections \ref{sec:classification}-\ref{sec:Applications} to give four applications of the above theorems.  In brief, these applications consist of:
\begin{enumerate}
\item For $n>31$, a classification of smooth, $(4n-1)$-connected, closed $(8n)$-manifolds up to diffeomorphism.  This completes, away from finitely many exceptional dimensions, the classification of $(n-1)$-connected $(2n)$-manifolds sought after in Wall's 1962 paper \cite{Wall62}.

\item In dimensions larger than $247$, a classification of all Stein fillable homotopy spheres.  Away from finitely many exceptional dimensions, this answers a question raised by Eliashberg \cite[3.8]{ContactWorkshop} and proves a conjecture of Bowden, Crowley, and Stipsicz \cite[Conjecture 5.9]{SteinFillable}.

\item For $\ell>31$ and $g \ge 1$, a computation of the mapping class group of the manifold
$$\sharp^{g} \left(S^{4\ell-1} \times S^{4\ell-1} \right).$$
The computation follows from inputting our result into theorems of Kreck and Krannich \cite{Kreck,KrannichMappingClass}.  With additional input, due to Galatius--Randal-Williams and Krannich--Reinhold \cite{GRAbelianQuotients,KrannichCharacteristic}, we make further comments about the classifying space
$$\mathrm{BDiff}^+\left(\sharp^{g} \left(S^{4\ell-1} \times S^{4\ell-1} \right) \right).$$

\item The best known upper bounds for the exponents of the stable stems $\pi_{*}(\Ss_{(p)})$.
\end{enumerate}

\subsection{An outline of the paper}
\label{subsec:sketch}\ 

The proofs of our main theorems begin in Section \ref{sec:Goodwillie}.  We outline our strategy below:

\vspace{1ex}
\noindent \textbf{Sections \ref{sec:Goodwillie}-\ref{sec:Toda}:}
For $n \ge 3$ an integer, we begin our analysis of $\pi_{8n-1}(\MO \langle 4n \rangle)$.  Our main tool in these sections is the relative bar construction
$$\MO \langle 4n \rangle \simeq \bS \otimes_{\spOf} \bS.$$
The bar construction allows us to reduce our study of the Thom spectrum $\MO \langle 4n \rangle$ to a study of the suspension spectrum $\spOf$.  In Section \ref{sec:Goodwillie}, we study $\spOf$ by means of the Goodwillie tower of the identity in augmented $\mathbb{E}_\infty$-algebras.  The idea of applying the Goodwillie calculus is due to Tyler Lawson, and it neatly resolves the `Problem' that Stolz identifies in \cite[p. XIII]{StolzBook}.  In Section \ref{sec:ThomPushout} we describe a variant of the bar construction that is equivalent in the metastable range.  Finally, in Section \ref{sec:Toda}, we reduce the calculation of the unit map $\pi_{8n-1} \bS \to \pi_{8n-1}(\MO \langle 4n \rangle)$ to the calculation of a certain Toda bracket $w$.  We postpone further analysis of this Toda bracket to Section \ref{sec:SyntheticToda}.

\vspace{1ex}
\noindent \textbf{Sections \ref{sec:Finale}-\ref{sec:FinishingStolz}:} We prove Theorems \ref{thm:mainboundary} and \ref{thm:intro-main} in these two sections, using three results from later in the paper as black boxes.  In Section \ref{sec:Finale}, we prove Theorem \ref{thm:intro-main} using Theorems \ref{thm:Burklund} and \ref{thm:AdamsBound}.  Theorem \ref{thm:AdamsBound} is the main result of Section \ref{sec:SyntheticToda}, and it consists of a lower bound on the $\mathrm{H}\mathbb{F}_p$-Adams filtrations of the Toda bracket $w$.  In Section \ref{sec:FinishingStolz}, we give the proof of Theorem \ref{thm:mainboundary} assuming Theorem \ref{thm:intro-mod-8} as well as some results from Stolz's book \cite{StolzBook}.  The arguments in both Sections \ref{sec:Finale} and \ref{sec:FinishingStolz} are straightforward analogs of arguments from Stolz's book, and we are able to go farther than Stolz only because our three black box theorems are stronger than the results he references.

\vspace{1ex}
\noindent \textbf{Sections \ref{sec:SyntheticReview}-\ref{sec:SyntheticToda}:} In these sections we undertake an analysis of the Toda bracket $w$.
The definition of $w$ critically hinges on the following fact: given an element $x \in \pi_{4n-1} \bS$, there is a canonical nullhomotopy of $2x^2$ (since $\bS$ is $\mathbb{E}_\infty$ there is a canonical witness to the Koszul sign rule, or a homotopy between $x^2$ and $-x^2$, or a nullhomotopy of $2x^2$).
The interaction of this nullhomotopy with Adams spectral sequences has some history, going back to work of Kahn, Milgram, M\"{a}kinen, and Bruner \cite[VI]{BMMS} on $\cup_1$ operations in the Adams spectral sequence.
We do not know how to apply Bruner's work directly to our (somewhat more complicated) situation, but it is morally related.
Instead of relying on results of Bruner, we analyze the situation from scratch: here enters for the first time a major tool in our work, the recently developed category of \emph{synthetic spectra}.

Synthetic spectra were developed by Piotr Pstr\k{a}gowski in \cite{Pstragowski}.
They constitute a homotopy theory, or symmetric monoidal stable $\infty$-category, of formal Adams spectral sequences.
Lax symmetric monoidal functors $\nu$ and $\tau^{-1}$ to and from the $\infty$-category $\Sp$ of spectra allow for a particularly clear analysis of the interaction between Adams spectral sequences and $\mathbb{E}_\infty$-ring structures.

In Section \ref{sec:SyntheticReview} we recall Pstr\k{a}gowski's work and develop a few additional properties of synthetic spectra that we require.  In Section \ref{sec:SyntheticToda} we apply all of the theory thus far to bound the $\mathrm{H}\mathbb{F}_p$-Adams filtrations of $w$ for all primes $p$. 

\vspace{1ex}
\noindent \textbf{Sections \ref{sec:AppendixVL}-\ref{sec:ANvl}:} We begin the latter half of the paper, which aims to prove Theorems \ref{thm:Burklund} and \ref{thm:intro-mod-8}.
In Section \ref{sec:AppendixVL}, we give a general discussion of vanishing lines in $E$-based Adams spectral sequences.  We study the behavior of vanishing lines under extensions, and recover results of Hopkins--Palmieri--Smith \cite{HPS} in the language of synthetic spectra.
In Section \ref{sec:ANvl}, we combine the general theory of Section \ref{sec:AppendixVL} with concrete computations of Belmont \cite{Eva} and  Ravenel \cite{GreenBook} to deduce vanishing lines in Adams--Novikov spectral sequences.

\vspace{1ex}
\noindent \textbf{Section \ref{sec:BandVan}-\ref{sec:mod8}:} In Section \ref{sec:BandVan}, we introduce the notion of a \emph{$v_1$-banded vanishing line}.  While Adams spectral sequences are not zero above $v_1$-banded vanishing lines, elements above such lines are essentially $K(1)$-local and hence related to the image of $J$.  We show variants of the results of Section \ref{sec:AppendixVL}, in particular demonstrating that $v_1$-banded vanishing lines are preserved under extensions and cofibers of synthetic spectra.  In Section \ref{sec:Y}, we apply machinery of Haynes Miller \cite{Miller} to prove a $v_1$-banded vanishing line in the $\mathrm{H}\mathbb{F}_2$-based Adams spectral sequence for the spectrum $Y = C(2) \otimes C(\eta)$.  In more classical language this result is known to experts, and follows from combining Miller's tools with computational results of Davis and Mahowald \cite{v1ExtDavisMahowald}.  In Section \ref{sec:mod8}, we establish a $v_1$-banded vanishing line in the modified $\mathrm{H}\mathbb{F}_2$-Adams spectral sequence for the Moore spectrum $C(8)$ and conclude, in particular, Theorem \ref{thm:intro-mod-8}. 
%

\vspace{1ex}
\noindent \textbf{Appendix \ref{sec:synthetic-appendix}:} The first part of this appendix is devoted to a technical proof of \Cref{thm:synthetic-Adams}. The theorem provides the means to translate statements about $E$-based Adams spectral sequences into statements about $E$-based synthetic spectra, and vice-versa. The proofs in this section are mostly a matter of careful bookkeeping. 

The second part of the appendix contains a computation of the $\mathrm{H}\mathbb{F}_2$-synthetic homotopy groups of the $2$-complete sphere through the Toda range.  We find that this computation illustrates many of the subtleties of \Cref{thm:synthetic-Adams} and effectively demonstrates the process of moving between Adams spectral sequence information and synthetic information.

\vspace{1ex}
\noindent \textbf{Appendix \ref{sec:Appendix}:} This appendix, due solely to the first author, proves Theorem \ref{thm:Burklund} and settles Question 3.33 of \cite{Akhil}.
Classically, results similar to Theorem \ref{thm:Burklund} are proved in two independent steps via the study of $bo$-resolutions \cite{DM3, Gonzalez}. The first step establishes vanishing curves in $bo$-based Adams spectral sequences. The second (and more technically difficult) step relates the canonical $bo$- and $\HFp$-resolutions of the sphere. This appendix provides an improvement on the vanishing curve of the first step.

The main idea is a new, and surprisingly elementary, method of analyzing vanishing curves in $\BP \langle 1 \rangle$-based Adams spectral sequences.  More generally, using only the fact that $\tau_{<|v_{n+1}|} \BP \langle n \rangle \simeq \tau_{<|v_{n+1}|} \BP$, Burklund relates $\BP \langle n \rangle$-based Adams spectral sequences to $\BP$-based Adams spectral sequences.
Vanishing curves in $\BP$-based Adams spectral sequences are understood through a strong form of the Nilpotence Theorem of Devinatz, Hopkins, and Smith \cite{DHS}, which provides the key input necessary to prove \Cref{thm:Burklund}(1). At the prime $3$, the main result of Section \ref{sec:ANvl} provides the precise numerical control needed to deduce Theorem \ref{thm:Burklund}(2).

\subsection{Conventions}\

Beginning in Section \ref{sec:ThomPushout}, we fix an integer $n \ge 3$.  We use $\mathbb{S}$ to denote the sphere spectrum, $\mathbb{S}^n$ to denote the stable $n$-sphere, and $S^{n}$ to denote the unstable $n$-sphere.  For integers $k>0$, we use $\mathcal{J}_{k}$ to denote the image of $J$ subgroup of $\pi_{k} \mathbb{S}$.  All manifolds are smooth and oriented, and all diffeomorphisms are orientation-preserving.  Throughout the work, we freely use the language of $\infty$-categories as set out in \cite{HA,HTT}. In particular, all limits and colimits are taken in the homotopy invariant sense of \cite{HTT}.

\subsection{Acknowledgments}\

We thank Manuel Krannich for giving an inspiring lecture in the MIT Topology Seminar in March $2019$, as well as for comments on a draft of the paper.  It is because of Manuel's encouragement that we became aware of, and began to work on, the problems we solve here.
We thank Mike Hopkins for conversations regarding the Toda bracket manipulations of Section \ref{sec:Toda}. 
Mike is the PhD advisor of the first author, was the advisor of the middle author, and is always an invaluable resource.
We thank Haynes Miller for his help and encouragement as the PhD advisor of the third author and postdoctoral mentor of the middle author.
We thank Mark Behrens for discussions regarding Section \ref{sec:Y}, Dexter Chua for identifying the sign in Theorem \ref{thm:synthetic-Adams}(1c), and Stephan Stolz for a useful conversation about bordism of almost closed manifolds.
We thank Araminta Amabel, Shaul Barkan, Diarmuid Crowley, Sander Kupers, Oscar Randal--Williams, John Rognes, Zhouli Xu, and Adela YiYu Zhang for helpful comments.

We thank the anonymous referees for their detailed comments.

The authors offer their greatest thanks to Tyler Lawson, who suggested the use of Goodwillie calculus in Section \ref{sec:Goodwillie} and offered detailed comments on a draft of the work.
While Tyler declined to be a coauthor, his influence on this paper is substantial. Tyler has been a personal mentor to the latter two authors for many years now---we would not be algebraic topologists without him.
We hope that his guidance, kindness, and keen mathematical vision raise up generations of homotopy theorists to come.

During the course of the work, the second author was supported by NSF Grant DMS-1803273, and the third author was supported by an NSF GRFP fellowship under Grant No. 1745302.


\addtocontents{toc}{\protect\setcounter{tocdepth}{2}}

\section{The classification of \texorpdfstring{$(n-1)$}{(n-1)}-connected \texorpdfstring{$(2n)$}{(2n)}-manifolds} \label{sec:classification}
Recall our convention that all manifolds are smooth and oriented, and all diffeomorphisms are orientation-preserving.
Interest in the boundaries of highly connected manifolds may be traced back to the late 1950s and early 1960s, due to relations with the following question:

\begin{qst} \label{qst:Classification}
Let $n \ge 3$ be an integer.
Is it possible to classify, or enumerate, all $(n-1)$-connected, closed $(2n)$-manifolds up to diffeomorphism?
\end{qst}

In \cite{MilnorTalk}, Milnor explains how his study of Question \ref{qst:Classification} led to the discovery of exotic spheres.  Major strides toward the classification were provided by C.T.C. Wall in \cite{Wall62}, who used Smale's $h$-cobordism theorem to classify $(n-1)$-connected, almost closed $(2n)$-manifolds.  We recall some of that work below.

\begin{rec} \label{rec:manifoldinvariants}
Suppose that $M$ is an $(n-1)$-connected, closed $(2n)$-manifold.
By Poincar\'e duality, the middle homology group 
$$H=H_n(M;\mathbb{Z})$$
must be free abelian of finite rank.
Associated to this middle homology group is a canonical bilinear, unimodular form, the intersection pairing
$$H \otimes H \to \mathbb{Z}.$$
The pairing is symmetric if $n$ is even and skew-symmetric if $n$ is odd---in general, one says that the pairing is \emph{$n$-symmetric}.

A slightly more delicate invariant, which depends on the smooth structure of $M$, is the \emph{normal bundle data}
$$\alpha:H \to \pi_{n-1}SO(n).$$
Following Wall \cite{Wall62}, we define this function $\alpha$ via a theorem of Haefliger \cite{Haefliger}.
If $n=3$, then $\pi_2 SO(3)$ is trivial, so there is nothing to define. In general, the Hurewicz theorem gives a canonical isomorphism $H \cong \pi_n(M)$.
For $n \ge 4$,  Haefliger's theorem implies that an element $x \in \pi_n(M)$ may be represented, uniquely up to isotopy, by an embedded sphere $x:S^n \to M$.
The normal bundle of this embedding is then $n$-dimensional, and so classified by an element $\alpha(x):S^n \to BSO(n)$.
\end{rec}

\begin{rec} \label{rec:Wallrelations}
Wall proved universal relationships between the intersection pairing $H \otimes H \to \mathbb{Z}$ and the function $\alpha$.
To describe them, let 
$$HJ:\pi_{n-1} SO(n) \to \mathbb{Z}$$ 
denote the composite of the unstable $J$-homomorphism $\pi_{n-1} SO(n) \to \pi_{2n-1} S^{n}$ and the Hopf invariant $\pi_{2n-1} S^{n} \to \mathbb{Z}$ (this composite may alternatively be described as $\pi_{n-1}$ applied to the projection $SO(n) \to S^{n-1}$).
Furthermore, let $\tau_{S^n} \in \pi_{n-1} SO(n) \cong \pi_n BSO(n)$ denote the map classifying the tangent bundle to the $n$-sphere.
Finally, for $x,y \in H$, let $xy$ denote the intersection pairing of $x$ with $y$, and let $x^2$ denote the intersection pairing of $x$ with itself.  

For all $x,y \in H$, Wall proved \cite[Lemma 2]{Wall62} the following relations:
\begin{equation} \label{eqn:Wallalpha2}
x^2 = HJ(\alpha(x)), \text{ and}
\end{equation}
\begin{equation} \label{eqn:Wallalpha}
\alpha(x+y) = \alpha(x)+\alpha(y) + (xy)(\tau_{S^n}).
\end{equation}
\end{rec}

\begin{dfn} \label{dfn:nspace}
Following \cite[p. 169]{Wall62}, we call a triple
$$I=(H,H \otimes H \to \mathbb{Z}, \alpha)$$
an \emph{$n$-space} whenever $H$ is a free, finite rank abelian group, $H \otimes H \to \mathbb{Z}$ is a unimodular, $n$-symmetric bilinear form, $\alpha$ is a map of pointed sets, and the triple $I$ satisfies the relations $(\ref{eqn:Wallalpha2})$ and $(\ref{eqn:Wallalpha})$ of \Cref{rec:Wallrelations}.
\end{dfn}

\begin{dfn}
Recollections \ref{rec:manifoldinvariants} and \ref{rec:Wallrelations} allow us to define a map of sets
$$
\begin{tikzcd}
\left\{ \begin{matrix} (n-1)\text{-connected,} \\ \text{closed }2n\text{-manifolds} \end{matrix} \right\}\hspace{-1.2ex}\raisebox{-1.5ex}{\bigg/}
\hspace{-.8ex}\raisebox{-2ex}{\text{diffeomorphism}}
\arrow{r}{\Psi} &  
\left\{\begin{matrix} n\text{-spaces} \\ \end{matrix} \right\}\hspace{-1.2ex}\raisebox{-1.5ex}{\bigg/}
\hspace{-.8ex}\raisebox{-2ex}{\text{isomorphism.}}
\end{tikzcd}
$$
%
An isomorphism of $n$-spaces is just an isomorphism of the underlying abelian group $H$ that respects both the bilinear form $H \otimes H \to \mathbb{Z}$ and the function $\alpha$.
\end{dfn}

The theorem below was first proved in \cite[p.170]{Wall62}.

\begin{thm}[Wall]
Suppose that $M$ and $N$ are two $(n-1)$-connected, closed $(2n)$-manifolds such that $\Psi(M)=\Psi(N)$.  Then there exists a homotopy sphere $\Sigma \in \Theta_{2n}$ such that $M \sharp \Sigma$ is diffeomorphic to $N$.
\end{thm}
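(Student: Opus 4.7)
The plan is to show that, after removing an open disk from both $M$ and $N$, the resulting almost closed manifolds are diffeomorphic; the discrepancy when re-gluing the caps will then account for a homotopy sphere $\Sigma$.

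First, I would puncture both manifolds: set $M_0 = M \setminus \mathrm{int}(D^{2n})$ and $N_0 = N \setminus \mathrm{int}(D^{2n})$. Both are $(n{-}1)$-connected almost closed $2n$-manifolds, their boundaries are homotopy spheres, and the Wall invariants $H$, the intersection form, and $\alpha$ from Recollections \ref{rec:manifoldinvariants}--\ref{rec:Wallrelations} are unchanged by the puncturing.

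Second, I would establish the central classification: two $(n{-}1)$-connected almost closed $2n$-manifolds with isomorphic $n$-spaces are diffeomorphic as manifolds with boundary. Given a basis $\{e_i\}$ of $H \cong \pi_n(M_0)$, represent each class by an embedded $n$-sphere using Haefliger's theorem (the case $n = 3$ requires a separate standard argument since Haefliger does not apply and $\alpha$ is vacuous). Use small isotopies to arrange these spheres to meet transversely in the correct signed count of points dictated by the intersection form, and thicken each sphere to a disk-bundle neighborhood with framing prescribed by $\alpha(e_i) \in \pi_{n-1}SO(n)$; the Wall relations \eqref{eqn:Wallalpha2} and \eqref{eqn:Wallalpha} guarantee the thickenings are globally consistent. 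The resulting handlebody $W_M \subset M_0$ depends up to diffeomorphism only on $\Psi(M)$. Performing the same construction in $N_0$, compatibly with the fixed isomorphism $\Psi(M) \cong \Psi(N)$, yields a matching handlebody $W_N$ and a diffeomorphism $W_M \cong W_N$.

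Third, I would apply the $h$-cobordism theorem to the complement $M_0 \setminus \mathrm{int}(W_M)$: since the inclusion $W_M \hookrightarrow M_0$ is a homology equivalence and both spaces are simply connected, the complement is an $h$-cobordism between $\partial W_M$ and $\partial M_0$, hence a product. This extends the handlebody diffeomorphism to a diffeomorphism $\phi: M_0 \cong N_0$. Closing up, $M$ is obtained from $M_0$ by gluing in a disk along the identity, while $N$ is obtained by gluing in a disk along $\phi|_{\partial M_0}^{-1}$ composed with the standard gluing in $N$; the discrepancy is the image of a self-diffeomorphism of $S^{2n-1}$ under the boundary map, which produces a homotopy sphere $\Sigma \in \Theta_{2n}$, and one verifies directly that $N \cong M \sharp \Sigma$.

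The main obstacle will be the second step: reconstructing $W_M$ from the $n$-space data and showing the result is independent of choices. One must verify that homology classes have essentially unique embedded sphere representatives (Haefliger), that algebraic intersection data can be realized geometrically up to isotopy, and that the framings predicted by $\alpha$, together with the Wall relations, pin down the normal bundle of each representative. Additional care is needed in low dimensions (notably $n = 3$) where Haefliger's theorem is unavailable, though this case can be handled by hand since $\pi_2 SO(3) = 0$ and the intersection form alone suffices.
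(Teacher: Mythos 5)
The paper does not prove this theorem; it cites it to \cite{Wall62}, and your task was to reconstruct Wall's argument. Your outline does so essentially correctly: puncture both manifolds, build a handlebody inside $M_0$ from the $n$-space data (embedded spheres via Haefliger, thickened by the framing $\alpha$), show the complement is a collar by the $h$-cobordism theorem, and measure the re-gluing discrepancy as a twisted sphere in $\Theta_{2n}$. This is the same chain of ideas Wall uses.

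One imprecision worth flagging: you write that ``small isotopies'' arrange the spheres to meet ``in the correct signed count of points.'' Transversality alone gives a finite set of signed intersection points whose algebraic sum is the intersection number, but it does \emph{not} reduce the geometric intersection count to its minimum $|e_i \cdot e_j|$; for that you need the Whitney trick, which is the tool that realizes the algebraic intersection data geometrically (valid here since the ambient manifold is simply connected and $n \ge 3$). You do gesture at this in your final paragraph (``algebraic intersection data can be realized geometrically up to isotopy''), but the step deserves to be named as the Whitney trick rather than attributed to small isotopies, since it is the place where the dimensional hypothesis $n \ge 3$ enters alongside the $h$-cobordism theorem. With that clarification, the argument is sound and follows Wall's original route.
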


\begin{rmk} \label{rmk:inertia}
Suppose that $M$ is an $(n-1)$-connected $2n$-manifold and that $\Sigma \in \Theta_{2n}$ is a homotopy sphere not diffeomorphic to $S^{2n}$.  One may ask whether the diffeomorphism types of $M$ and $M \sharp \Sigma$ differ.  Wall proved this to be the case whenever $n \not \equiv 0,1,4$ mod $8$ \cite[p.289]{Wall67} (in other words, when $n \not \equiv 0,1,4$ mod $8$, Wall proved that the inertia group of $M$ is trivial).
Building on work of Kosi\'{n}ski \cite{Kosinski}, Stolz expanded this to show  that, if $n \ge 106$ and $n \equiv 0$ mod $4$, then the diffeomorphism types of $M$ and $M \sharp \Sigma$ differ \cite[Theorem D]{StolzBook}.

Our theorems settle, at least in large dimensions, the remaining case $n \equiv 1$ mod $8$.
Indeed, work of Wall \cite[Theorem 10]{Wall67} proves that, if $M$ and $M \sharp \Sigma$ have the same diffeomorphism type, then $\Sigma$ is the boundary of an $(n-1)$-connected $(2n+1)$-manifold.
Using Theorem \ref{thm:mainboundary}, we may conclude in dimensions $2n+1 > 465$ that $\Sigma$ bounds a parallelizable manifold.
However, any $\Sigma \in \Theta_{2n}$ that bounds a parallelizable manifold must be standard, by \cite[Theorem 5.1]{KervaireMilnor}. In sufficiently large dimensions, it follows that the preimage under $\Psi$ of any triple is either empty, or consists of exactly $|\Theta_{2n}|$ different diffeomorphism types.
\end{rmk}

In light of the above theorem and remark, the complete enumeration of $(n-1)$-connected $(2n)$-manifolds is reduced, in sufficiently large dimensions, to the following question:

\begin{qst} \label{qst:image-of-psi}
What is the image of the map $\Psi$?  In other words, what combinations of middle homology group, intersection pairing, and normal bundle data arise from $(n-1)$-connected, closed $(2n)$-manifolds?
\end{qst}

\begin{rmk}
Wall solved Question \ref{qst:image-of-psi} for all $n \equiv 6$ mod $8$ \cite[Case 4]{Wall62}. More specifically, the solution may be read off from \cite[Proposition 5]{Wall62} together with the fact that $\pi_{n-1} SO =0$ for $n \equiv 6$ mod $8$ (so $\chi=0$, always).  Schultz \cite[Corollary 3.2]{Schultz} solved Question \ref{qst:image-of-psi} in the case $n \equiv 2$ mod $8$ (c.f. the discussion immediately subsequent to  \cite[Corollary 3.2]{Schultz}, about the work of Frank). Stolz \cite[Theorems B \& C]{StolzBook} settled the case $n \equiv 1$ mod $8$ when $n \ge 113$.
\end{rmk}

\begin{exm}
Suppose that $n>7$ is $\equiv 3,5,$ or $7$ modulo $8$.  In these cases, the function
$$\alpha:H \to \pi_{n-1} SO(n) \cong \mathbb{Z}/2\mathbb{Z}$$
satisfies the formula
$$\alpha(x+y) = \alpha(x) + \alpha(y) + (xy \text{ mod }2).$$
In other words, $\alpha$ is a quadratic refinement of the intersection pairing, and so one can associate an Arf--Kervaire invariant
$$\Phi(\alpha) \in \mathbb{Z}/2\mathbb{Z}.$$
Thus, if one is able to settle Question \ref{qst:image-of-psi}, then one is in particular able to answer the following question:
\end{exm}

\begin{qst} \label{qst:kervaire}
Suppose $n \equiv 3,5,$ or $7$ mod $8$.  Does there exist an $(n-1)$-connected, closed $(2n)$-manifold of Kervaire invariant $1$?
\end{qst}

\begin{rmk}
Barratt, Jones, and Mahowald constructed a $62$-dimensional manifold of Kervaire Invariant $1$ \cite{BJMKervaire} (cf. \cite{XuKervaire}).  Such manifolds are also known to exist in dimensions $2,6,14,$ and $30$.  On the other hand, deep work of Hill-- Hopkins--Ravenel \cite{HHR} proves that there is no manifold of Kervaire Invariant $1$ of dimension larger than $126$.
\end{rmk}

\begin{rmk}
When $n \equiv 3,5,$ or $7$ mod $8$, Wall completely reduced \cite[Lemma 5]{Wall62} Question \ref{qst:image-of-psi} to Question \ref{qst:kervaire}.  Question \ref{qst:kervaire} was settled by Brown and Peterson for $n \equiv 5$ mod $8$ \cite{BPKervaire}, by Browder for $n \equiv 3$ mod $8$ \cite{BrowderKervaire}, and by Hill--Hopkins--Ravenel for $n \equiv 7$ mod $8$ and $n>63$ \cite{HHR}.
\end{rmk}

For $n \ge 113$, the above work leaves Question \ref{qst:image-of-psi} open only when $n \equiv 0$ modulo $4$, and so we focus on this case now.

\begin{rec} \label{rec:chi}
Suppose $n \ge 12$, $n \equiv 0$ mod $4$, and $M$ is an $(n-1)$-connected, closed $(2n)$-manifold with middle homology group $H$ (the case $n=8$ has slightly different features, because of the existence of Hopf invariant $1$ elements and the octonionic projective plane).  Since the intersection pairing
$$H \otimes H \to \mathbb{Z}$$
is unimodular, it provides a canonical isomorphism between $H$ and its dual $\mathrm{Hom}(H,\mathbb{Z})$.
Wall notes \cite[Case 1]{Wall62} that the composite
$$H \stackrel{\alpha}{\longrightarrow} \pi_{n-1} SO(n) \longrightarrow \pi_{n-1} SO \cong \mathbb{Z}$$
is a homomorphism of abelian groups (by relation (2) in \Cref{rec:Wallrelations} together with the fact that spheres are stably parallelizable), and so via the intersection pairing determines a class $\chi(\alpha) \in H$.
In fact, the function $\alpha$ is entirely determined by the relations $(\ref{eqn:Wallalpha2})$ and $(\ref{eqn:Wallalpha})$ and the class $\chi(\alpha)$ \cite[p. 174]{Wall62}.
\end{rec}

\begin{cnstr}(\cite{Wall62}) \label{cnstr:Wall}
Let $n \ge 3$ denote any integer, and let 
$$I = (H,H \otimes H \to \mathbb{Z},\alpha)$$
denote an $n$-space.  From this data Wall constructs an $(n-1)$-connected, almost closed $(2n)$-manifold $N_I$ \cite[p.170]{Wall62}.
Wall further notes that there is an $(n-1)$-connected, closed $(2n)$-manifold $M$, with $\Psi(M)=I$, if and only if $\partial N_I$ is diffeomorphic to $S^{2n-1}$ \cite[p.177]{Wall62}.
\end{cnstr}

Suppose now that $n \ge 12$ is a multiple of $4$.
Given an $n$-space $I$, it remains to understand the boundary $\partial N_I \in \Theta_{2n-1}$.

By work of Brumfiel \cite{Brumfiel}, when $n \equiv 0$ mod $4$ the Kervaire--Milnor exact sequence splits to give a direct sum decomposition
$$\Theta_{2n-1} \cong \mathrm{bP}_{2n} \oplus \mathrm{coker}(J)_{2n-1}.$$
It thus suffices to analyze separately the images of $\partial N_I$ within $\mathrm{bP}_{2n}$ and $\mathrm{coker}(J)_{2n-1}$.
By applying a formula of Stolz \cite{StolzbP} and elaborating on work of Lampe \cite{Lampe}, Krannich and Reinhold \cite[Lemma 2.7]{KrannichCharacteristic} determined when the image of $\partial N_I$ vanishes in $\mathrm{bP}_{2n}$:

\begin{dfn} \label{dfn:manynumbers}
Let $m>2$ denote a positive integer.  Following \cite{KrannichCharacteristic}, we let 
\begin{itemize}
\item $B_{2m}$ denote the $(2m)^{\mathrm{th}}$ Bernoulli number.
\item $j_{m}$ denote $$j_{m}=\mathrm{denom}\left(\frac{\abs{B_{2m}}}{4m}\right),$$ the denominator of the absolute value of $\frac{B_{2m}}{4m}$ when written in lowest terms.
\item $a_m$ denote $1$ if $m$ is even and $2$ if $m$ is odd.
\item $\sigma_m$ denote the integer $$\sigma_m=a_m 2^{2m+1}(2^{2m-1}-1)\mathrm{num}\left(\frac{\abs{B_{2m}}}{4m}\right).$$
\item $c_m$ and $d_m$ denote integers such that
$$c_m \mathrm{num}\left(\frac{\abs{B_{2m}}}{4m}\right) + d_m \mathrm{denom}\left(\frac{\abs{B_{2m}}}{4m}\right) = 1.$$
\end{itemize}
If $m=2k>4$ is an even integer, we additionally follow \cite[Lemma 2.7]{KrannichCharacteristic} and let $s(Q)_{2k}$ denote the integer
$$s(Q)_{2k} = \frac{-1}{8 j^2_{k}}
\left( \sigma_k^2 + a_k^2 \sigma_{2k}\mathrm{num}\left(\frac{\abs{B_{2k}}}{4k}\right)\right)
\left( c_{2k} \mathrm{num}\left(\frac{\abs{B_{2k}}}{4k}\right) + 2(-1)^{k} d_{2k} j_k \right).
$$
\end{dfn}

\begin{thm}[Lampe, Krannich--Reinhold] Suppose $n \ge 12$ is a multiple of $4$, and let $I$ denote an $n$-space.  Then the boundary $\partial N_I$ has trivial image in $\mathrm{bP}_{2n}$ if and only if
$$\frac{\mathrm{sig}}{8}+\frac{\chi(\alpha)^2}{2}s(Q)_{n/2} \equiv 0 \text{ modulo } \frac{\sigma_{n/2}}{8}.$$
Here, $\mathrm{sig}$ denotes the \emph{signature} of the intersection form, and $\chi(\alpha)^2$ refers to the product of $\chi(\alpha)$ with itself via the intersection form.
\end{thm}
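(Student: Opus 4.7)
The plan is to combine Stolz's general formula \cite{StolzbP} for the image of an almost closed manifold's boundary in $\mathrm{bP}_{2n}$ with a computation, due to Lampe, of the relevant characteristic numbers of Wall's manifold $N_I$. At this point in the paper, the Kervaire--Milnor identification of $\mathrm{bP}_{2n}$ (for $n \equiv 0$ mod $4$) as cyclic of order $\sigma_{n/2}/8$ is available, and Stolz has given an explicit formula expressing the class $[\partial W] \in \mathrm{bP}_{2n}$, for any $(n-1)$-connected almost closed $2n$-manifold $W$, as a linear combination (modulo $\sigma_{n/2}/8$) of $\mathrm{sig}(W)/8$ and a Pontryagin-type characteristic number of any $\mathrm{BO}\langle n \rangle$-structure on $W$ rel boundary.

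The first step would be to recall Wall's explicit handle model: $N_I$ is obtained from $D^{2n}$ by attaching an $n$-handle for each element of a preferred basis of $H$, with framings dictated by $\alpha$, in a pattern realizing the given intersection form. In particular, the tangent bundle of $N_I$ is read off directly from $\alpha$: outside the attaching region the handles contribute via their normal bundles, which by definition are classified by $\alpha$ on basis elements and extended to all of $H$ using the relation $(\ref{eqn:Wallalpha})$. Consequently, the stabilization $H \to \pi_{n-1}SO(n) \to \pi_{n-1}SO \cong \mathbb{Z}$ is a homomorphism of abelian groups (\Cref{rec:chi}), and via the intersection form this homomorphism is represented by the single class $\chi(\alpha) \in H$.

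The second step, which is the technical heart of the argument and the part attributed to Lampe, is to compute the relevant Pontryagin number of $N_I$ in terms of $\chi(\alpha)^2$. Since $N_I$ has cells only in dimensions $0$, $n$, and $2n$, and its tangent bundle restricted to the middle skeleton is determined by the stable class $\chi(\alpha) \in H$, the self-intersection $\chi(\alpha)^2 \in \mathbb{Z}$ is exactly the pairing that detects the top Pontryagin number appearing in Stolz's formula; the arithmetic factor distinguishing this contribution from the naive signature contribution is precisely $s(Q)_{n/2}$, whose definition in \Cref{dfn:manynumbers} is engineered to package the Bernoulli-number normalizations coming from the $J$-homomorphism and Stolz's denominator conventions.

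The final step is to assemble these two inputs: substituting Lampe's characteristic-number computation into Stolz's formula gives an expression for $[\partial N_I] \in \mathrm{bP}_{2n}$ as $\mathrm{sig}/8 + (\chi(\alpha)^2/2)\, s(Q)_{n/2}$ modulo $\sigma_{n/2}/8$, which vanishes exactly under the stated congruence. This is the content of \cite[Lemma 4.7]{KrannichCharacteristic}, and I would simply cite it here. The main obstacle, as with all results of this flavor, is bookkeeping: checking that every factor of $2$ and every numerator/denominator of $B_{2m}/(4m)$ is correctly accounted for (a halved self-intersection $\chi(\alpha)^2/2$ requires that this pairing be even, which must be verified from $(\ref{eqn:Wallalpha2})$ together with $HJ$ being even-valued for $n$ a multiple of $4$), and that Stolz's and Brumfiel's conventions are compatible with the $\mathrm{bP} \oplus \mathrm{coker}(J)$ splitting used here.
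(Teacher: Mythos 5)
Your proposal is correct and takes essentially the same route as the paper: the paper's proof is simply a citation to \cite[Section 2]{KrannichCharacteristic} and \cite[Section 3.2.2]{KrannichMappingClass}, and your sketch of the underlying argument (Stolz's formula for the image in $\mathrm{bP}_{2n}$, Lampe's characteristic-number computation for $N_I$, and the Krannich--Reinhold bookkeeping) is an accurate unpacking of what those references do, as the paragraph preceding the theorem already indicates.
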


\begin{proof}
See \cite[Section 2]{KrannichCharacteristic} and \cite[Section 3.2.2]{KrannichMappingClass}.
\end{proof}

We thus obtain, as a consequence of our work in this paper, the following result:

\begin{thm} \label{thm:classification}
Suppose $n>124$ is divisible by $4$.  Then there exists an $(n-1)$-connected, closed $(2n)$-manifold with middle homology group $H$, intersection pairing $H \otimes H \to \mathbb{Z}$, and normal bundle data $\alpha:H \to \pi_{n-1}SO(n)$ if and only if the following conditions both hold:
\begin{enumerate}
\item The collection $(H, H \otimes H \to \mathbb{Z},\alpha)$ forms an $n$-space in the sense of \Cref{dfn:nspace}.
\item The relation 
$$\frac{\mathrm{sig}}{8}+\frac{\chi(\alpha)^2}{2}s(Q)_{n/2} \equiv 0 \text{ modulo } \frac{\sigma_{n/2}}{8}$$
 is satisfied, where $\mathrm{sig}$ denotes the signature of the intersection pairing and $\chi(\alpha)$ is defined as in \Cref{rec:chi}.
\end{enumerate}
If the conditions hold, so that a manifold exists, then the number of choices of such up to diffeomorphism is exactly $|\Theta_{2n}| = |\mathrm{coker}(J)_{2n}|,$ and they form a free orbit under the $\Theta_{2n}$ action by connected sum.
\end{thm}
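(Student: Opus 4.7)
The plan is to combine Wall's construction $N_I$ from \Cref{cnstr:Wall}, the Brumfiel splitting $\Theta_{2n-1} \cong \mathrm{bP}_{2n} \oplus \mathrm{coker}(J)_{2n-1}$ recalled just before \Cref{dfn:manynumbers}, the Lampe--Krannich--Reinhold criterion stated immediately above, the main boundary theorem of this paper (in the strengthened form \Cref{thm:strongmainboundary}, valid in the $d = 0$ case once $n > 124$), and Stolz's inertia vanishing from \Cref{rmk:inertia}. The biconditional in \Cref{cnstr:Wall} says that a closed $M$ with $\Psi(M) = I$ exists if and only if $\partial N_I \cong S^{2n-1}$, which via the Brumfiel splitting is equivalent to the simultaneous vanishing of $[\partial N_I]$ in $\mathrm{bP}_{2n}$ and in $\mathrm{coker}(J)_{2n-1}$.

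For \emph{necessity}: the invariants $(H, H \otimes H \to \mathbb{Z}, \alpha)$ of any $(n-1)$-connected closed $(2n)$-manifold form an $n$-space by \Cref{rec:manifoldinvariants} and \Cref{rec:Wallrelations}, giving (1). If such $M$ with $\Psi(M) = I$ exists, then \Cref{cnstr:Wall} forces $\partial N_I \cong S^{2n-1}$, hence $[\partial N_I] = 0 \in \mathrm{bP}_{2n}$, which by the Lampe--Krannich--Reinhold criterion is precisely the congruence in (2).

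For \emph{sufficiency}: given an $n$-space $I$ satisfying (2), build $N_I$ via \Cref{cnstr:Wall}. The $\mathrm{bP}_{2n}$-component of $[\partial N_I]$ vanishes by (2) and the Lampe--Krannich--Reinhold criterion. Writing $n = 4m$, the hypothesis $n > 124$ gives $m > 31$, and the $d = 0$ case of the strengthened main boundary theorem \Cref{thm:strongmainboundary} (which ultimately rests on \Cref{thm:intro-main}) forces the $\mathrm{coker}(J)_{8m-1}$-component of $[\partial N_I]$ to vanish. Thus $\partial N_I \cong S^{2n-1}$, and \Cref{cnstr:Wall} supplies the desired closed $M$ with $\Psi(M) = I$.

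For the \emph{counting statement}: Wall's theorem on $\Psi$ recalled just after \Cref{dfn:nspace} shows that any two closed manifolds with the same $\Psi$-invariant are related by connected sum with some $\Sigma \in \Theta_{2n}$, so the fiber of $\Psi$ over $I$ is a single $\Theta_{2n}$-orbit. Stolz's inertia theorem (\Cref{rmk:inertia}) applies since $n > 124 \geq 106$ and since $n \equiv 0 \pmod{4}$ guarantees $n \not\equiv 1 \pmod{8}$; the action is therefore free and the fiber has cardinality $|\Theta_{2n}|$. The equality $|\Theta_{2n}| = |\mathrm{coker}(J)_{2n}|$ follows from the Kervaire--Milnor exact sequence in these dimensions (where $\mathrm{bP}_{2n+1} = 0$). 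The \emph{main obstacle} is not located in the present argument, which is purely an assembly of classical tools; it lies in the $\mathrm{coker}(J)$-vanishing step of sufficiency, which is precisely what the paper's main technical achievement \Cref{thm:intro-main} / \Cref{thm:mainboundary} provides.
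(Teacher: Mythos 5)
Your proof is correct and follows essentially the same route the paper takes: the paper's own argument is the two-sentence remark that the counting follows from Stolz's inertia theorem \cite[Theorem D]{StolzBook} and the rest follows from the preceding discussion (Wall's construction, the Brumfiel splitting, and the Lampe--Krannich--Reinhold criterion) combined with \Cref{thm:strongmainboundary}. You have simply written out the unwinding in full, with the correct dimension bookkeeping.
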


\begin{proof}
The last sentence of the theorem follows, as in Remark \ref{rmk:inertia}, from Stolz's theorem \cite[Theorem D]{StolzBook}.  The remainder of the result follows by combining the above discussion with Theorem \ref{thm:strongmainboundary}.
\end{proof}

\begin{rmk}
  Our results also have implications for the classification of $(n-1)$-connected, closed $(2n+1)$-manifolds.  For $n \ge 8$, Wall classified all $(n-1)$-connected, almost closed $(2n+1)$-manifolds \cite{Wall67}.  Since $bP_{2n+1}$ is trivial, our Theorem \ref{thm:mainboundary} proves that the boundaries of Wall's almost closed manifolds are diffeomorphic to $S^{2n}$ whenever $n>232$.  This was previously unknown for $n \equiv 1$ modulo $8$ \cite[Theorem B]{StolzBook}.  There follows a classification of $(n-1)$-connected, closed $(2n+1)$-manifolds up to connected sum with a homotopy sphere.

  The problem of determining the inertia groups is somewhat subtle,
  but tractable \cite[Theorem D]{StolzBook}.
  In later work which builds on the methods developed in this paper the authors have analyzed boundary spheres and inertia groups substantially deeper into the metastable range \cite{burklund2020inertia}.
\end{rmk}


\section{Additional applications} \label{sec:Applications}
\subsection{The classification of Stein fillable homotopy spheres}
\

Recall that a \emph{Stein domain} is a compact, complex manifold with boundary, such that the boundary is a regular level set of a strictly plurisubharmonic function (see, e.g., \cite[p.1-3]{SteinFillable} or \cite{SteinBook}).  The boundaries of Stein domains are naturally equipped with contact structures.  A contact $(2q+1)$-manifold $M$ is \emph{Stein fillable} if it may be realized as the boundary of a Stein domain.

Eliashberg has raised the question \cite[3.8]{ContactWorkshop} of which homotopy spheres $\Sigma \in \Theta_{2q+1}$ admit Stein fillable contact structures.  Eliashberg explicitly noted in \cite[3.8]{ContactWorkshop} that such $\Sigma$ necessarily bound $q$-connected, almost closed $(2q+2)$-manifolds, and that this might already be restrictive. For the proof that such $\Sigma$ must bound $q$-connected, almost closed $(2q+2)$-manifolds, see \cite[Proof of Theorem 5.4]{SteinFillable} and \cite[1.2.2]{EliTopChar}.

Bowden, Crowley, and Stipsicz took up Eliashberg's question, and applied Wall and Schultz's work \cite{Wall62, Wall67, Schultz} to settle it when $q \ne 9$ and $q+1 \not \equiv 0$ modulo $4$ \cite[Theorem 5.4]{SteinFillable}.  We offer the following additional theorem, which answers all but finitely many cases of Conjecture 5.9 from \cite{SteinFillable}:

\begin{thm} \label{app:SteinFillable}
Suppose that $q>123$.  A homotopy sphere $\Sigma \in \Theta_{2q+1}$ admits a Stein fillable contact structure if and only if $\Sigma \in \mathrm{bP}_{2q+2}$.  
\end{thm}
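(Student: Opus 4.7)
The proof treats the two directions of the biconditional separately. The \emph{if} direction is classical, while the \emph{only if} direction is the substantive content and reduces to Theorem \ref{thm:mainboundary}.

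For the \emph{if} direction, the generators of $\mathrm{bP}_{2q+2}$ can be realized as boundaries of explicit parallelizable plumbings (equivalently, as links of Brieskorn singularities), each of which carries a natural Stein filling. Since boundary connected sum of Stein domains produces a Stein domain, every element of $\mathrm{bP}_{2q+2}$ is Stein fillable. This direction is handled in \cite{SteinFillable} with no restriction on $q$, so I would simply cite it.

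For the \emph{only if} direction, I would suppose $\Sigma \in \Theta_{2q+1}$ admits a Stein fillable contact structure and write $\Sigma = \partial W$ for a Stein domain $W$ of real dimension $2q+2$. By Eliashberg's handle-theoretic description of Stein domains, $W$ admits a handle decomposition using only handles of index at most $q+1$, and standard handle trading allows one to assume that $W$ is $q$-connected. Hence $W$ is a $q$-connected, almost closed $(2q+2)$-manifold. I would then invoke Theorem \ref{thm:mainboundary} with $k = q+1$ and $d = 0$: in the residue $k \equiv 0 \pmod{4}$ one applies the strengthening Theorem \ref{thm:strongmainboundary}, whose bound $n > 31$ in Theorem \ref{thm:intro-main} translates precisely to $q > 123$; in the remaining residues of $k$ modulo $4$ the statement for $d = 0$ was already established in \cite{StolzBook} at much smaller bounds. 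In every case the conclusion is $[\Sigma] = 0$ in $\mathrm{coker}(J)_{2q+1}$, so the Kervaire–Milnor exact sequence forces $\Sigma \in \mathrm{bP}_{2q+2}$.

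The main obstacle lies not in this proof but in the underlying Theorem \ref{thm:mainboundary}: the present proposition is essentially a contact-geometric transcription of that theorem. The only nonformal step internal to the argument above is Eliashberg's reduction from an arbitrary Stein filling to a $q$-connected one, which is precisely the observation that led him to pose the original question in \cite{ContactWorkshop}; matching the numerical hypothesis $q > 123$ to the $n > 31$ threshold in Theorem \ref{thm:intro-main} is the reason this statement does not go further in low dimensions.
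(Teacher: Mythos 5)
Your proof is correct and follows essentially the same route as the paper's: both reduce the ``only if'' direction to Theorem \ref{thm:strongmainboundary} in the new residue $k = q+1 \equiv 0 \pmod 4$ and to pre-existing results in the remaining residues, and both cite Bowden--Crowley--Stipsicz for the ``if'' direction. The one cosmetic difference is that the paper outsources the non-$(k \equiv 0)$ residues to \cite[Theorem 5.4]{SteinFillable} rather than re-running the reduction, and one small imprecision in your writeup: the passage from an arbitrary Stein filling $W$ to a $q$-connected almost closed filling is not merely ``standard handle trading'' applied to Eliashberg's bound on handle indices --- a Stein domain need not itself be $q$-connected, and the reduction (one replaces $W$ by a $q$-connected filling of the same $\Sigma$ via surgery below the middle dimension) is exactly the nontrivial observation of Eliashberg that you correctly attribute; it is safer to cite \cite[Theorem 5.4]{SteinFillable} or \cite[3.8]{ContactWorkshop} for it directly.
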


\begin{proof}
By the theorem of Bowden, Crowley, and Stipsicz, this is true whenever $q+1 \not \equiv 0$ modulo $4$ \cite[Theorem 5.4]{SteinFillable}.  We therefore suppose that $q+1 \equiv 0$ modulo $4$.  As we have discussed above, any Stein fillable $\Sigma \in \Theta_{2q+1}$ must bound a $q$-connected, almost closed $(2q+2)$-manifold.  It follows from Theorem \ref{thm:strongmainboundary} that, if $\Sigma \in \Theta_{2q+1}$ is Stein fillable, then $\Sigma \in \mathrm{bP}_{2q+2}$.  The converse is another result of Bowden, Crowley, and Stipsicz \cite[Proposition 5.3]{SteinFillable}.
\end{proof}


\subsection{Calculations of mapping class groups}
\

In Section \ref{sec:classification}, our theorems were used to classify $(n-1)$-connected $(2n)$-manifolds up to diffeomorphism.
We explain here how work of Galatius, Krannich, Kreck, Randal-Williams, and Reinhold connects our results to the study of diffeomorphisms of the manifold
$$W_g = \sharp^{g} \left( S^n \times S^n \right),$$
with $g \ge 1$.
This $(n-1)$-connected $(2n)$-manifold is a higher dimensional analog of a genus $g$ surface. 
As $g$ varies, the $W_g$ play a fundamental role in some approaches to the moduli spaces of manifolds, as outlined in the survey article \cite{GRHandbook}.
A discussion of how diffeomorphisms of $W_g$ relate to diffeomorphisms of other $(n-1)$-connected $(2n)$-maifolds appears below Theorem G in \cite{KrannichMappingClass}.

We consider in particular the classifying space
$$\mathcal{M}_{g} = \mathrm{BDiff}^{+}(W_g)$$
of orientation-preserving diffeomorphisms of $W_{g}$.
The first homotopy group $\pi_1(\mathcal{M}_{g})$ is an example of a higher dimensional mapping class group; it is the group of isotopy classes of orientation-preserving diffeomorphisms of $W_g$ (so, for example, $\pi_1 \mathcal{M}_0 \cong \Theta_{2n+1}$).  The first homology group $H_1(\mathcal{M}_{g};\mathbb{Z})$ is the abelianization of this mapping class group.  Higher cohomology groups, such as $H^2(\mathcal{M}_g;\mathbb{Z})$, include Miller--Morita--Mumford characteristic classes of bundles with fiber $W_{g}$.  At least for some values of $n$ and $g$, our theorems have something to say about each of these groups.

\begin{rec}
Suppose $n \ge 3$, and consider the mapping class group $\pi_1(\mathcal{M}_g)$.  This group was determined up to two extension problems by Kreck \cite{Kreck}.  Following Krannich \cite{KrannichMappingClass}, we write these extensions as
\begin{equation} \label{eq:extension1}
0 \to \Theta_{2n+1} \to \pi_1(\mathcal{M}_g) \to \pi_1(\mathcal{M}_g)/\Theta_{2n+1} \to 0
\end{equation} 
and
\begin{equation} \label{eq:extension2}
0 \to H_n(W_g) \otimes S\pi_n SO(n) \to \pi_1(\mathcal{M}_g)/\Theta_{2n+1} \to G_g \to 0.
\end{equation}
Here, $\Theta_{2n+1}$ is the Kervaire--Milnor group of homotopy $(2n+1)$-spheres, and $S\pi_n(SO(n))$ is the image of the stabilization map $S:\pi_n SO(n) \to \pi_n SO(n+1)$.  The group $G_g \subset \mathrm{GL}_{2g}(\mathbb{Z})$ is the subgroup of automorphisms of $H_n(W_g) \cong \mathbb{Z}^{2g}$ that are realized by diffeomorphisms.  It is explicitly described in \cite[Section 1.2]{KrannichMappingClass}.

The extension problems (\ref{eq:extension1}) and (\ref{eq:extension2}) have proven difficult to resolve, with special cases studied in \cite{Sato, Fried, Kry02, Kry03, Crowley, GRAbelianQuotients,KrannichMappingClass}.
\end{rec}

Recent work of Krannich resolves these extensions geometrically, in the case of $n$ odd, with the answers phrased in terms of certain elements $\Sigma_P, \Sigma_Q \in \Theta_{2n+1}$.  To be precise, Krannich proves for $n>7$ odd that the extension (\ref{eq:extension2}) splits \cite[Theorem A]{KrannichMappingClass}, and the extension $(\ref{eq:extension1})$ is classified \cite[Theorem B]{KrannichMappingClass} by a certain element
$$\frac{\mathrm{sgn}}{8} \Sigma_P + \frac{\chi^2}{2} \Sigma_Q \in \mathrm{H}^2\left(\pi_1(\mathcal{M}_g)/\Theta_{2n+1}; \Theta_{2n+1}\right).$$
The element $\Sigma_P \in \Theta_{2n+1}$ is a generator of $\mathrm{bP}_{2n+2}$.  The element $\Sigma_Q$ is $0$ whenever $n \equiv 1$ modulo $4$, and when $n \equiv 3$ modulo $4$ it is the boundary of the plumbing discussed in Remark \ref{rmk:Qdef}.  A consequence of our work here is a more explicit description of $\Sigma_Q$:

\begin{thm}
Suppose that $n>123$ is congruent to $3$ modulo $4$, and let $s(Q)_{(n+1)/2}$ denote the integer defined in Definition \ref{dfn:manynumbers}.  Then the element $\Sigma_Q \in \Theta_{2n+1}$ of \cite[Theorem B]{KrannichMappingClass} is equal to $s(Q)_{(n+1)/2} \Sigma_P$.  In particular, $\Sigma_Q$ is an element of the subgroup $\mathrm{bP}_{2n+2}$.
\end{thm}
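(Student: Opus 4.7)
The plan is to combine our main theorem with the formula of Lampe and Krannich--Reinhold. Write $n+1 = 4m$; then, adapting \Cref{rmk:Qdef} to the present dimensions, $\Sigma_Q \in \Theta_{2n+1}$ is the boundary of an explicit $n$-connected, almost closed $(2n+2)$-manifold $N_Q$ obtained as the plumbing of two copies of a linear $(n+1)$-disk bundle over $S^{n+1}$ whose classifying map lifts through $\mathrm{BSO}(n) \to \mathrm{BSO}(n+1)$. Applying the improved $d = 0$ case of \Cref{thm:mainboundary} (that is, \Cref{thm:strongmainboundary}) with $k = n+1 > 124$ gives $[\Sigma_Q] = 0$ in $\mathrm{coker}(J)_{2n+1}$; combined with Brumfiel's splitting $\Theta_{2n+1} \cong \mathrm{bP}_{2n+2} \oplus \mathrm{coker}(J)_{2n+1}$, which is valid because $n+1$ is divisible by $4$, this already forces $\Sigma_Q \in \mathrm{bP}_{2n+2}$.

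To pin down $\Sigma_Q$ as the exact multiple $s(Q)_{(n+1)/2}\,\Sigma_P$ in the cyclic group $\mathrm{bP}_{2n+2}$, I would next compute the $n$-space $I_Q = (H, H \otimes H \to \mathbb{Z}, \alpha)$ underlying the plumbing $N_Q$. The middle homology group is $H = \mathbb{Z}\langle e_1, e_2 \rangle$, and the intersection form is hyperbolic (the Euler class of each disk bundle vanishes by construction, while the two zero sections meet transversely in a single point), so the signature is $0$. The function $\alpha$ sends each $e_i$ to the classifying element of the disk bundle in $\pi_n\mathrm{SO}(n+1)$, chosen by the definition of the plumbing to represent a generator of the image of $\pi_n\mathrm{SO}(n) \to \pi_n\mathrm{SO}$. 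Unwinding \Cref{rec:chi} then yields $\chi(\alpha)^2 = 2$ in the hyperbolic form, and substituting $(\mathrm{sig}, \chi(\alpha)^2) = (0,2)$ into the formula of Lampe and Krannich--Reinhold (with $n$ there replaced by $n+1$) identifies the image of $\partial N_Q$ in $\mathrm{bP}_{2n+2}$ with
\[
\left(\tfrac{0}{8} + \tfrac{2}{2}\, s(Q)_{(n+1)/2}\right)\Sigma_P \;=\; s(Q)_{(n+1)/2}\,\Sigma_P,
\]
as required.

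The conceptual step---that $\Sigma_Q$ lies in $\mathrm{bP}_{2n+2}$ at all---is the genuinely new content; it is where \Cref{thm:mainboundary} enters, and without it one has no reason to expect $\Sigma_Q$ to be controlled by any explicit characteristic class formula. Everything after that is a bookkeeping exercise in the invariants of the plumbing and the definitions of the integers collected in \Cref{dfn:manynumbers}. Consequently, the main obstacle is not a novel computation but rather tracing through Krannich--Reinhold's conventions carefully enough to match the sign and normalization of $s(Q)_{(n+1)/2}$; since $s(Q)$ was introduced by those authors precisely to encode the integer produced by the Lampe formula applied to $N_Q$, this verification should ultimately reduce to quoting \cite[Lemma 2.7]{KrannichCharacteristic}.
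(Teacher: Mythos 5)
Your proposal follows the same two-step route as the paper: invoke \Cref{thm:strongmainboundary} (with $k = n+1$) to place $\Sigma_Q$ in $\mathrm{bP}_{2n+2}$, then use Krannich--Reinhold to identify the exact multiple of $\Sigma_P$ --- and as you anticipate at the end, the paper simply cites \cite[Lemma 2.7]{KrannichCharacteristic} for the latter. Two small caveats on the extra detail you supply: the detour through $\mathrm{coker}(J)$ and Brumfiel's splitting is unnecessary, since \Cref{thm:strongmainboundary} is already phrased as the boundary bounding a parallelizable manifold; and your claim $\chi(\alpha)^2 = 2$ is not fully justified, since $\chi(\alpha)^2 = 2c^2$ where $c$ is the image in $\pi_n\mathrm{SO} \cong \mathbb{Z}$ of the generator of $\mathrm{im}(\pi_n\mathrm{SO}(n) \to \pi_n\mathrm{SO}(n+1))$, and the index of $\mathrm{im}(\pi_n\mathrm{SO}(n) \to \pi_n\mathrm{SO})$ varies with $n$ mod $8$ --- the normalization is baked into Krannich--Reinhold's definition of $s(Q)$, which is precisely why one ends up citing their Lemma 2.7 rather than redoing the computation.
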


\begin{proof}
  The last sentence of the theorem follows immediately from the definition of $\Sigma_Q$ (see e.g. the paragraph above \cite[Theorem B]{KrannichMappingClass}) and our Theorem \ref{thm:strongmainboundary}.  The exact formula $\Sigma_Q = s(Q)_{(n+1)/2} \Sigma_P$ is a consequence of \cite[Lemma 2.7]{KrannichCharacteristic}.
\end{proof}

The original motivation of Galatius and Randal-Williams in conjecturing \Cref{thm:intro-main} was to study the homology group $H_1(\mathcal{M}_g;\mathbb{Z})$.  It was understood in \cite[Theorem 1.3]{GRAbelianQuotients} and \cite[Corollary E]{KrannichMappingClass} that Theorem \ref{thm:intro-main} would lead to an explicit calculation of $H_1(\mathcal{M}_g;\mathbb{Z})$. By combining these results with our work, we conclude the following corollary:

\begin{cor}
Suppose that $n>123$ is congruent to $3$ modulo $4$ and $g \ge 3$.  Then
$$
H_1(\mathcal{M}_g;\mathbb{Z}) \cong  \left(\mathbb{Z}/4\mathbb{Z} \right) \oplus \mathrm{coker}(J)_{2n+1}.
$$
If $n>123$ is congruent to $3$ modulo $4$ and $g=2$, then
$$
H_1(\mathcal{M}_g;\mathbb{Z}) \cong  \left(\mathbb{Z}/4\mathbb{Z} \oplus \mathbb{Z}/2\mathbb{Z} \right) \oplus \mathrm{coker}(J)_{2n+1}.
$$
\end{cor}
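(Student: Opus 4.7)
The plan is to begin with the Hurewicz theorem, which identifies $H_1(\mathcal{M}_g;\mathbb{Z})$ with the abelianization of $\pi_1(\mathcal{M}_g)$, and then to analyze this abelianization by combining the two Kreck extensions $(\ref{eq:extension1})$ and $(\ref{eq:extension2})$ recalled above with inputs supplied by our Theorem \ref{thm:intro-main}. Most of the work has been carried out already in \cite{GRAbelianQuotients} and \cite{KrannichMappingClass}; my role is essentially to verify that the hypotheses of those results are all met in the range $n > 123$, $n \equiv 3 \pmod{4}$.

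Concretely, I would first invoke Krannich's \cite[Theorem A]{KrannichMappingClass}, which for odd $n > 7$ provides a splitting of the extension $(\ref{eq:extension2})$. Next, by \cite[Theorem B]{KrannichMappingClass}, the extension $(\ref{eq:extension1})$ is classified by the cohomology class $\tfrac{\mathrm{sgn}}{8}\Sigma_P + \tfrac{\chi^2}{2}\Sigma_Q$. The preceding theorem of this subsection, a direct consequence of Theorem \ref{thm:intro-main}, identifies $\Sigma_Q$ as an element of $\mathrm{bP}_{2n+2} \subseteq \Theta_{2n+1}$ for $n \equiv 3 \pmod{4}$ with $n > 123$. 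Combined with Brumfiel's splitting $\Theta_{2n+1} \cong \mathrm{bP}_{2n+2} \oplus \mathrm{coker}(J)_{2n+1}$, this shows that the classifying class takes values in the $\mathrm{bP}_{2n+2}$ summand, so that $\mathrm{coker}(J)_{2n+1}$ splits off as a direct summand after abelianization.

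What remains is to identify the abelianization of the resulting extension of $\pi_1(\mathcal{M}_g)/\Theta_{2n+1}$ by $\mathrm{bP}_{2n+2}$ with the explicit group $\mathbb{Z}/4\mathbb{Z}$ (respectively $\mathbb{Z}/4\mathbb{Z} \oplus \mathbb{Z}/2\mathbb{Z}$ when $g = 2$). This is precisely the computation performed in \cite[Theorem 1.3]{GRAbelianQuotients} and \cite[Corollary E]{KrannichMappingClass}, both stated conditional on the validity of Theorem \ref{thm:intro-main}; the $g = 2$ case acquires the extra $\mathbb{Z}/2\mathbb{Z}$ factor reflecting the unstable abelianization of the automorphism group $G_g$ at low genus, which stabilizes for $g \ge 3$. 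The principal remaining task is bookkeeping: checking that $n > 123$ is simultaneously strong enough for Krannich's splitting, the preceding $\Sigma_Q$ calculation, Brumfiel's splitting, and the cited abelianization computations. Beyond that verification, no further geometric or homotopy-theoretic input is needed.
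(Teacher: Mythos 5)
Your proposal follows the same route as the paper: the paper's ``proof'' is essentially a one-line citation to \cite[Theorem 1.3]{GRAbelianQuotients} and \cite[Corollary E]{KrannichMappingClass}, combined with Theorem \ref{thm:intro-main} (equivalently, the preceding theorem about $\Sigma_Q \in \mathrm{bP}_{2n+2}$). You have unpacked the citation chain in more detail than the paper does, including the Brumfiel splitting and the role of the two Kreck extensions, but the underlying logic is identical. One small caveat: the attribution of the extra $\mathbb{Z}/2\mathbb{Z}$ at $g=2$ to the ``unstable abelianization of $G_g$'' is plausible but is your own gloss rather than something the paper states or that I can verify from the paper alone; if you include that sentence, you should double-check it against \cite[Corollary E]{KrannichMappingClass} where the $g=2$ case is actually proved.
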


\begin{rmk}
For the implications of our result when $g=1$, see \cite[Corollary E]{KrannichMappingClass}.
\end{rmk}

\begin{rmk} \label{rmk:MillerMoritaMumford}
As pointed out in \cite[Remark 6.2]{GRHandbook}, the universal coefficients formula expresses the finite group $H_1(\mathcal{M}_g;\mathbb{Z})$ as the torsion subgroup of $H^2(\mathcal{M}_g;\mathbb{Z})$. 
In \cite{KrannichCharacteristic}, Krannich and Reinhold calculated the torsion-free quotient of $H^2(\mathcal{M}_g;\mathbb{Z})$, for $g \ge 7$, in terms of $\Sigma_Q$.
\end{rmk}

It remains an interesting open question to determine the higher homology and cohomology groups of $\mathcal{M}_g$.  We expect that the methods of this paper have more to say about these groups, especially when $g \gg 0$ so that the work of Galatius and Randal-Williams \cite{GRAbelianQuotients, GRHandbook} applies (when $g \gg 0$, Galatius and Randal-Williams prove these groups isomorphic to the (co)homology groups of $\Omega^{\infty}$ of a Thom spectrum, placing the problem firmly in the realm of stable homotopy theory).

\subsection{Bounds on the exponent of \texorpdfstring{$\mathrm{coker}(J)$}{coker(J)}}
\label{ssec:exp}\

We end by giving an application, to stable homotopy theory, of Burklund's Theorem \ref{thm:Burklund}. Fix a prime number $p$.

\begin{dfn}
For each integer $n \ge 1$, Serre proved \cite{SerreFinite} that the $p$-local $n$th stable stem 
$\pi_n(\mathbb{S}_{(p)})$
is a finite $p$-group.  The \emph{exponent} of the $n$th stable stem, denoted here by
$$\mathrm{exp}(\pi_n(\mathbb{S}_{(p)})),$$
is the smallest integer $a$ such that all elements of $\pi_n(\mathbb{S}_{(p)})$ are $p^a$-torsion. 
\end{dfn}

Upper bounds for the exponent have been considered in several papers \cite{Ad:SHT, liul, Arlettaz, Gonz00, MathewExponent}.  For $p>3$, the best prior bounds are due to Gonz\'alez \cite[Corollary 4.1.4]{Gonz00}. As in Gonz\'alez's work, our bounds on the exponent are deduced from an upper bound on $\Gamma_p$.

\begin{thm}[Burklund] \label{thm:torsion-bound}
There is an inequality
$$\mathrm{exp}(\pi_n(\mathbb{S}_{(p)})) \le \frac{(2p-1)n}{(2p-2)(2p^2-2)} + o(n),$$
where $o(n)$ is the sublinear error term appearing in the statement of Theorem \ref{thm:Burklund}(1).
\end{thm}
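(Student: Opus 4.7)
The plan is to deduce \Cref{thm:torsion-bound} from \Cref{thm:Burklund} by the same basic strategy Gonz\'alez used in \cite{Gonz00}: bound the number of times we must multiply by $p$ in order to exhaust Adams filtration, and handle the image of $J$ plus $\mu$-family separately. Fix $n \ge 1$ and an element $x \in \pi_n^{\mathrm{st}}$. Working in the $\mathrm{H}\mathbb{F}_p$-Adams spectral sequence for the $p$-complete sphere (into which $\pi_n^{\mathrm{st}}$ injects since it is finite), I would use that the element $p \in \pi_0 \mathbb{S}^{\wedge}_p$ is detected by the class $h_0$ in Adams filtration $1$. Consequently, if $y \in \pi_n^{\mathrm{st}}$ has Adams filtration exactly $s$ and $py$ is nonzero, then $py$ has Adams filtration at least $s+1$. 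Iterating, either $p^k x = 0$ for some $k \le \Gamma_p(n)+1$, or $p^{\Gamma_p(n)+1} x$ is a nonzero element of Adams filtration strictly larger than $\Gamma_p(n)$.

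By the very definition of $\Gamma_p(n)$ in \Cref{thm:Burklund}, any element of $\pi_n^{\mathrm{st}}$ whose Adams filtration exceeds $\Gamma_p(n)$ lies in the subgroup generated by the image of $J$ (and the $\mu$-family when $p=2$). So the previous paragraph reduces the problem to showing that this subgroup has exponent $o(n)$ in degree $n$. Here I would appeal to Adams's classical computation of the image of $J$: at an odd prime $p$, the $p$-primary image of $J$ in $\pi_{2k(p-1)-1}^{\mathrm{st}}$ is cyclic of order $p^{v_p(k)+1}$, and is trivial in all other positive degrees; at $p=2$, an analogous formula holds. In particular, the exponent of the image of $J$ in degree $n$ is bounded by $\log_p(n) + O(1)$. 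The $\mu$-family at $p=2$ consists of $2$-torsion classes, so it contributes at most an additive constant to the exponent. In total, the subgroup generated by the image of $J$ and the $\mu$-family has exponent $O(\log n)$ in degree $n$.

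Combining the two estimates, every $x \in \pi_n^{\mathrm{st}}$ satisfies $p^N x = 0$ for some
\[ N \le \Gamma_p(n) + 1 + O(\log n).\]
Applying the bound on $\Gamma_p(n)$ from \Cref{thm:Burklund}(1) and absorbing the logarithmic correction into the sublinear error term yields the desired inequality
\[ \mathrm{exp}(\pi_n^{\mathrm{st}}) \le \frac{(2p-1)n}{(2p-2)(2p^2-2)} + o(n).\]
I do not anticipate a serious obstacle: the mechanism of trading Adams filtration for powers of $p$ is standard (and used in all of \cite{Ad:SHT, liul, Arlettaz, Gonz00, MathewExponent}), and the entire content of the improvement over \cite{Gonz00} is the improvement in the asymptotic bound on $\Gamma_p$ coming from \Cref{thm:Burklund}. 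The only minor subtlety is ensuring that the logarithmic exponent bound on the image of $J$ plus $\mu$-family is indeed $o(n)$, which is immediate.
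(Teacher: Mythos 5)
Your proof is correct and follows the same basic strategy as the paper: multiplication by $p$ raises $\mathrm{H}\mathbb{F}_p$-Adams filtration, \Cref{thm:Burklund}(1) pushes $p^{\Gamma_p(n)+1}x$ into the subgroup generated by the image of $J$ and the $\mu$-family, and the classical Adams computation of the order of that subgroup supplies the logarithmic tail. The one cosmetic difference is that the paper also invokes the Adams--Quillen theorem that this subgroup is a \emph{direct summand}, so for $x$ in the complementary summand it concludes $p^{\Gamma_p(n)}x = 0$ outright and the final bound is a maximum rather than your additive estimate $\Gamma_p(n) + 1 + O(\log n)$; both are equivalent at the level of the stated asymptotic since the logarithmic correction is absorbed into $o(n)$.
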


\begin{proof}
At odd primes, Adams showed that the image of $J$ is a direct summand of $\pi_n(\mathbb{S}_{(p)})$ \cite{Ad:SHT, AdamsJIV}.  At the prime $2$, Adams and Quillen proved that the subgroup generated by the image of $J$ and the $\mu$-family is a direct summand \cite{QuillenAdamsConj}.  These papers also calculate the order of the image of $J$, from which it follows that the exponents of these summands grow logarithmically in $n$.

Suppose now that $x$ is an element of the complementary summand of $\pi_n(\mathbb{S}_{(p)})$, and let $\Gamma_p(n)$ denote the function from the statement of Theorem \ref{thm:Burklund}.  Since multiplication by $p$ raises $\mathrm{H}\mathbb{F}_p$-Adams filtration by at least $1$, Theorem \ref{thm:Burklund} implies that $p^{\Gamma_p(n)}x$ is either in the image of $J$, or, if $p=2$, in the subgroup generated by the image of $J$ and the $\mu$-family.  Since we assumed that $x$ is in the complementary summand, it follows that $p^{\Gamma_p(n)}x=0$.
\end{proof}

\begin{rmk}
  Like previous bounds on the torsion exponent of the stable stems,
  \Cref{thm:torsion-bound} is a linear bound.
  By contrast it is expected that $\mathrm{exp}(\pi_n(\mathbb{S}_{(p)}))$ grows sublinearly in $n$,
  though it remains unclear what the specific asymptotics of this function should be.
  The best known lower bound is logarithmic and comes from the image of $J$.
\end{rmk}




\addtocontents{toc}{\protect\setcounter{tocdepth}{1}}

\section{Calculations with the Goodwillie TAQ tower} \label{sec:Goodwillie}
{\bf An overview of Sections \ref{sec:Goodwillie}-\ref{sec:Toda}}
Fix an integer $n \ge 3$.  Recall that $\MO \langle 4n \rangle$ is, by definition, the Thom spectrum \cite{BMMS,ABGHR} of the composite spectrum map
\begin{equation} \label{eq:ThomMap}
\tau_{\ge 4n} ko \to \tau_{\ge 1}ko \to \Sigma \mathrm{gl}_1(\mathbb{S}).
\end{equation}
Here, the spectrum map $\tau_{\ge 1}ko \to \Sigma \mathrm{gl}_1(\mathbb{S})$ is the infinite delooping of the real $J$ homomorphism $\mathrm{BO} \to \mathrm{BGL}_1(\mathbb{S})$, which exists because the one-point compactification functor $\mathrm{Vect}_{\mathbb{R}} \to \mathrm{Top}_*$ is symmetric monoidal.

Our first aim in this paper is to prove \Cref{thm:intro-main}, or equivalently to understand the unit map
$$\pi_{8n-1} \bS \to \pi_{8n-1} \MO \langle 4n \rangle.$$
To begin to do so, we fix some notation and recall more precisely how a Thom spectrum such as $\MO \langle 4n \rangle$ is defined.

\begin{dfn}
Taking $\Omega^{\infty+1}$ of the sequence (\ref{eq:ThomMap}) gives maps of infinite loop spaces
$$\Omega \Omega^{\infty} \tau_{\ge 4n} ko \to \mathrm{O} \to \GL_1(\mathbb{S}).$$
We use the notation $\Of$ to denote the infinite loop space $\Omega \Omega^{\infty} \tau_{\ge 4n} ko$.
The infinite loop map
$$\Of \to \GL_1(\mathbb{S})$$
gives rise, by the universal property of $\GL_1(\mathbb{S})$ \cite[Theorem 5.1]{ABGHR}, to a map of $\E_{\infty}$-rings
$$J_+:\spOf \to \bS.$$
Here, $\spOf$ is a spherical group ring, with underlying spectrum the suspension spectrum of the pointed space $\Of_+$.
Contracting $\Of$ to a point gives rise to a second $\mathbb{E}_\infty$-ring map
$$\epsilon:\spOf \to \bS,$$
the \emph{augmentation map.}  We use $J:\sOf \to \bS$ to refer to the composite 
$$\sOf = \fib(\epsilon) \longrightarrow \spOf \stackrel{J_+}{\longrightarrow} \bS,$$
which is a map of non-unital $\mathbb{E}_\infty$-rings.
\end{dfn}

\begin{cnstr} \label{cnstr:barconstruction}
By Definition 4.1 of \cite{ABGHR}, the spectrum $\MO \langle 4n \rangle$ can be presented as the geometric realization of the two-sided bar construction
$$\MO \langle 4n \rangle \simeq \left|\sBar(\bS,\spOf, \bS)_{\bullet} \right|,$$
which computes the relative tensor product $\bS \otimes_{\spOf} \bS$.
Here, the action of $\spOf$ on the left copy of $\bS$ is via $\epsilon$, and the action on the right copy of $\bS$ is via $J_+$.
\end{cnstr}

One may view our work in the first half of the paper as a computation of the map $\pi_{8n-1} \Ss \to \pi_{8n-1} \MO \langle 4n \rangle$ via the associated bar spectral sequence. We will see that the only possible differentials affecting $\pi_{8n-1} \MO \langle 4n \rangle$ are $d_1$-differentials and a single $d_2$-differential, so what we need to do may be summarized as follows:
\begin{enumerate}
    \item Compute the $\mathrm{E}_1$-page in the relevant range.
    \item Compute the relevant $d_1$-differentials.
    \item Compute the single relevant $d_2$-differential.
\end{enumerate}
Later in this section, we study the homotopy of $\sOf$ using the Goodwillie tower in augmented $\mathbb{E}_\infty$-ring spectra.  This is enough to resolve (1) and most of (2) above.

The key to proving \Cref{thm:intro-main} is to show that the single relevant $d_2$-differential vanishes.  Rather than using the language of spectral sequences, we will cast the computation of this $d_2$ as a computation of a certain Toda bracket $w$.  One of the main theorems of this paper, \Cref{thm:AdamsBound}, is a lower bound on the $\HFp$-Adams filtration of $w$ for each prime $p$.  Our goals in Sections \ref{sec:ThomPushout} and \ref{sec:Toda} will be to define $w$, to reduce Theorem \ref{thm:intro-main} to the computation of $w$, and to express $w$ in as convenient a form as possible.  In particular, \Cref{lem:toda-main} will express $w$ in a form amenable to Adams filtration arguments, though we postpone any serious discussion of Adams filtration to Sections \ref{sec:Finale} and later.

\subsection{The homotopy of $\sOf$}\

The main body of this section is a computation of the homotopy of the reduced suspension spectrum $\sOf$, for $n\ge 3$.  To explain our results, it is helpful to assign names to a few elements in $\pi_*(\sOf)$ and $\pi_*(\mathbb{S})$:

\begin{dfn} \label{dfn:x}
Let $$x \in \pi_{4n-1}(\sOf)$$
denote a generator of the bottom non-zero homotopy group of $\sOf$.  Since $\sOf$ is a non-unital $\E_\infty$-ring, we may speak of the class
$$x^2 \in \pi_{8n-2}(\sOf).$$
Finally, there is a class
$$J(x) \in \pi_{4n-1}(\bS),$$
defined as the composite
$S^{4n-1} \stackrel{x}{\longrightarrow} \sOf \stackrel{J}{\longrightarrow} \bS$
\end{dfn}
The remainder of this section will consist of proofs of the following facts:
\begin{enumerate}
\item The group $\pi_{8n-2}\sOf$ is isomorphic to $\mathbb{Z}/2\mathbb{Z}$, generated by the element $x^2$ of Definition \ref{dfn:x}.
    Furthermore, the group $\pi_{8n-1} \sOf$ is isomorphic to $\pi_{8n} ko \cong \mathbb{Z}$.
\item The element $xJ(x) \in \pi_{8n-2} \sOf$, defined using the right $\pi_*(\bS)$-module structure on $\pi_*(\sOf)$, is zero.
\item
Suppose $4n-1 \le \ell \le 8n-1$.  Then the image of the map
$$\pi_{\ell}(\sOf) \stackrel{J}{\longrightarrow} \pi_{\ell}(\mathbb{S})$$
is exactly $\mathcal{J}_{\ell}$.
\end{enumerate}

The first of these facts will be proved as \Cref{cor:gen-by-x2}, the second as \Cref{lem:xJ(x)null}, and the last as \Cref{thm:imJwelldefined}.
Our key tool will be the Goodwillie tower of the identity in augmented $\mathbb{E}_\infty$-ring spectra, the basic structure of which was worked out by Nick Kuhn \cite{KuhnTAQ}.
We thank Tyler Lawson for suggesting the relevance of this tower.

\begin{dfn}
Let $X$ be a spectrum and $m \ge 1$ a natural number.  We denote by $D_m(X)$ the extended power spectrum
$$D_m(X) \coloneqq (X^{\otimes m})_{h\Sigma_m}.$$
\end{dfn}

\begin{lem} \label{lem:Kuhn}
Suppose that $R$ is an $\E_\infty$-ring spectrum, equipped with an augmentation
$$\epsilon:R \to \bS.$$
Suppose further that the fiber of $\epsilon$ is $0$-connected.  Then there is a convergent tower of $\E_{\infty}$-ring spectra:
\begin{center}
  \begin{tikzcd} [column sep=tiny]
    & & D_m(\mathrm{TAQ}(R; \bS)) \ar[d] &
    & D_2(\mathrm{TAQ}(R; \bS)) \ar[d] &
    \mathrm{TAQ}(R; \bS) \ar[d] &
    \bS \ar[d, "\simeq"]  \\
    R \ar[r] &
    \cdots \ar[r] &
    P_m(R) \ar[r] &
    \cdots \ar[r] &
    P_2(R) \ar[r] &
    P_1(R) \ar[r] &
    P_0(R)
  \end{tikzcd}
\end{center}
such that the composite map $R \to P_0(R)$ is the augmentation map $\epsilon$.
\end{lem}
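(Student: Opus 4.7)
The plan is to identify this tower with the Goodwillie--Taylor tower of the identity functor on the $\infty$-category $\mathrm{Alg}^{\mathrm{aug}}_{\mathbb{E}_\infty/\mathbb{S}}$ of augmented $\mathbb{E}_\infty$-ring spectra over $\mathbb{S}$, as constructed and analyzed by Kuhn in \cite{KuhnTAQ} (building on work of Basterra--Mandell). The key inputs are: (i) the $1$-excisive approximation to the identity at the basepoint $\mathbb{S}$ is $P_1(R) \simeq \mathbb{S} \oplus \mathrm{TAQ}(R;\mathbb{S})$, arising from the topological André--Quillen cotangent adjunction with the square-zero extension functor; (ii) the higher Goodwillie derivatives of the identity on augmented $\mathbb{E}_\infty$-algebras are equivalent (as a symmetric sequence) to the constant sequence valued in $\mathbb{S}$, so that the $n$-th layer of the tower is an extended power of the linearization; and (iii) the construction $P_n$ naturally preserves augmented $\mathbb{E}_\infty$-algebra structure, so that the tower is a tower of $\mathbb{E}_\infty$-rings.

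First I would recall the cotangent adjunction producing $P_1$ and identify $R \to P_1(R)$ with projection onto the indecomposables. Next I would invoke Kuhn's identification of the Goodwillie derivatives, from which the standard formula for the layers of a Taylor tower gives $\mathrm{fib}\bigl(P_n(R) \to P_{n-1}(R)\bigr) \simeq D_n(\mathrm{TAQ}(R;\mathbb{S}))$. That $P_0(R) \simeq \mathbb{S}$ and that the map $R \to P_0(R)$ is the augmentation is automatic, since $P_0$ of any reduced functor is the constant functor at the basepoint.

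For convergence, the assumption that $\mathrm{fib}(\epsilon)$ is $0$-connected passes, via the unit of the cotangent adjunction, to the statement that $\mathrm{TAQ}(R;\mathbb{S})$ is $0$-connected; hence $D_n(\mathrm{TAQ}(R;\mathbb{S}))$ has connectivity growing linearly with $n$, and the tower converges in each fixed degree. The main subtlety beyond citation of \cite{KuhnTAQ} is verifying that the tower lifts from spectra to $\mathbb{E}_\infty$-rings: for this one uses that $P_n$ commutes with finite limits and with sifted colimits, and hence descends to the category of augmented $\mathbb{E}_\infty$-algebras, so the universal property of Goodwillie approximation produces the claimed tower of augmented $\mathbb{E}_\infty$-algebras rather than merely of spectra.
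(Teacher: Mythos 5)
Your proposal is correct and takes essentially the same approach as the paper, which proves this lemma simply by citing \cite[Theorem 3.10]{KuhnTAQ}; your sketch is an accurate unpacking of the content of that theorem (identification of $P_1$ via the TAQ cotangent adjunction, layers $D_n(\mathrm{TAQ})$, lifting the tower to $\mathbb{E}_\infty$-rings, and convergence from the connectivity of $\mathrm{fib}(\epsilon)$).
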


\begin{proof}
See \cite[Theorem 3.10]{KuhnTAQ}.
\end{proof}

\begin{cor} \label{cor:goodwillie}
There is a convergent tower of non-unital $\mathbb{E}_\infty$ ring spectra
\begin{center}
  \begin{tikzcd}[column sep = small]
    & & D_3(\Sigma^{-1} \tau_{\ge 4n} ko) \ar[d] &
    D_2(\Sigma^{-1} \tau_{\ge 4n} ko) \ar[d] &
    \Sigma^{-1} \tau_{\ge 4n} ko \ar[d, "\simeq"] \\
    \sOf \ar[r] &
    \cdots \ar[r] &
    Q_3 \ar[r] &
    Q_2 \ar[r] &
    Q_1 
  \end{tikzcd}
\end{center}
\end{cor}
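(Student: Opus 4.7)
The plan is to apply the preceding lemma to the augmented $\mathbb{E}_\infty$-ring $R = \spOf$ with augmentation $\epsilon$, identify its relative cotangent complex, and then pass to augmentation fibers throughout the resulting Goodwillie tower.

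First I would verify the connectivity hypothesis of the lemma. Using the identity $\Omega \Omega^\infty E \simeq \Omega^\infty \Sigma^{-1} E$, we rewrite
$$\mathrm{O}\langle 4n-1 \rangle \simeq \Omega^\infty(\Sigma^{-1} \tau_{\ge 4n} ko).$$
Since $n \ge 3$, the spectrum $\Sigma^{-1} \tau_{\ge 4n} ko$ is $(4n-2)$-connective, so $\mathrm{O}\langle 4n-1\rangle$ is at least $10$-connected as a pointed space, and therefore $\sOf = \mathrm{fib}(\epsilon)$ is at least $10$-connected (in particular, $0$-connected) as a spectrum. So the lemma applies.

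Next I would identify the relative TAQ. By a theorem of Basterra and Mandell (whose proof is also revisited by Kuhn in the paper cited for the lemma), for any connective spectrum $E$ the canonical comparison map
$$E \longrightarrow \mathrm{TAQ}(\Sigma^\infty_+ \Omega^\infty E ; \bS)$$
is an equivalence. Applied to $E = \Sigma^{-1} \tau_{\ge 4n} ko$ this gives a natural equivalence $\Sigma^{-1} \tau_{\ge 4n} ko \simeq \mathrm{TAQ}(\spOf; \bS)$, and hence identifies the Goodwillie layers as $D_m(\mathrm{TAQ}(\spOf; \bS)) \simeq D_m(\Sigma^{-1} \tau_{\ge 4n} ko)$.

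Finally I would pass to augmentation fibers. Each $P_m(\spOf)$ comes equipped with an augmentation to $\bS$, and I would set $Q_m := \mathrm{fib}(P_m(\spOf) \to \bS)$. Since taking the fiber of the augmentation preserves vertical fibers, the $Q_m$ inherit the tower structure with layers $D_m(\Sigma^{-1} \tau_{\ge 4n} ko) \to Q_m$, and the comparison $\spOf \to P_m(\spOf)$ restricts to the required map $\sOf \to Q_m$; convergence of the new tower is inherited from convergence of the old one. At the bottom, $P_1(\spOf)$ is the trivial square-zero extension $\bS \oplus \mathrm{TAQ}(\spOf;\bS)$, so $Q_1 \simeq \mathrm{TAQ}(\spOf;\bS) \simeq \Sigma^{-1} \tau_{\ge 4n} ko$, yielding the claimed equivalence at the right end of the diagram. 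The only content beyond the preceding lemma is the Basterra--Mandell identification of TAQ of a spherical group ring; once that is cited, the corollary is a formal consequence.
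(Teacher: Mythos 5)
Your proof is correct and follows essentially the same route as the paper: apply Kuhn's lemma to $R = \spOf$, identify $\mathrm{TAQ}(\spOf;\bS)$ with $\Sigma^{-1}\tau_{\ge 4n} ko$ via the Basterra--Mandell/Kuhn identification for spherical group rings (the paper cites \cite[Example 3.9]{KuhnTAQ} for this), and set $Q_i$ to be the fiber over the augmentation. The only quibble is a harmless off-by-one: $\Sigma^{-1}\tau_{\ge 4n}ko$ is $(4n-1)$-connective, not $(4n-2)$-connective, but either way $\sOf$ is $0$-connected for $n\ge 3$ and the hypothesis is satisfied.
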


\begin{proof}
We apply the previous lemma to $R=\spOf$ with its augmentation map $\epsilon$.  Note that, since
$$R \simeq \Sigma^{\infty}_+ \Omega^{\infty} \Sigma^{-1} \tau_{\ge 4n} ko,$$
we learn from \cite[Example 3.9]{KuhnTAQ} that $\mathrm{TAQ}(R;\bS) \simeq \Sigma^{-1} \tau_{\ge 4n} ko$.  The corollary follows by setting
$Q_i = \mathrm{fib}(P_i \to P_0).$
\end{proof}

\begin{lem}\label{lemm:d2-ko-comp}
    For $n \ge 3$, the bottom two homotopy groups of $D_2 (\Sigma^{-1} \tau_{\ge 4n} ko)$ are \[\pi_{8n-2} D_2 (\Sigma^{-1} \tau_{\ge 4n} ko) \cong \Z/2\Z\] and \[\pi_{8n-1} D_2 (\Sigma^{-1} \tau_{\ge 4n} ko) \cong 0.\]

    Moreover, the generator of $\Z/2\Z \cong \pi_{8n-2} D_2 (\Sigma^{-1} \tau_{\ge 4n} ko)$ survives in the spectral sequence associated to the tower of \Cref{cor:goodwillie} to detect $x^2 \in \pi_{8n-2} \sOf$.
\end{lem}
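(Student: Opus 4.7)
The plan is to reduce the computation to a homotopy-orbit calculation for $D_2(S^{4n-1})$ by comparing $Y := \Sigma^{-1}\tau_{\ge 4n}ko$ with its bottom cell. Choose $A := S^{4n-1} \to Y$ representing a generator of $\pi_{4n-1}Y \cong \Z$, and let $B$ denote its cofibre. A long-exact-sequence chase, using that $\pi_{4n}Y \cong \pi_{4n+1}ko$ either vanishes (when $n$ is odd) or is generated by $\eta$ times the fundamental class (when $n$ is even, in which case that class is already in the image from $\pi_{4n}A$), shows that $B$ is $(4n)$-connected.

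A cofibre sequence $A \to X \to B$ of spectra induces a $\Sigma_2$-equivariant three-step filtration of $X^{\otimes 2}$ with associated graded $A^{\otimes 2}$, $(A \otimes B) \vee (B \otimes A)$ (with the free swap action), and $B^{\otimes 2}$. Passing to homotopy $\Sigma_2$-orbits yields a filtration of $D_2(X)$ with layers $D_2(A)$, $A \otimes B$, and $D_2(B)$. In our situation the upper two layers are $(8n-1)$- and $(8n+1)$-connected respectively, so $D_2(A) \to D_2(Y)$ is an isomorphism on $\pi_k$ for $k \le 8n-1$. This reduces matters to $\pi_* D_2(S^{4n-1})$, for which the homotopy-orbit spectral sequence collapses because $A^{\otimes 2} \simeq S^{8n-2}$. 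The $\Sigma_2$-action on $\pi_{8n-2}(S^{8n-2}) \cong \Z$ is by the sign of the twist $(-1)^{4n-1} = -1$, and the standard calculation $H_*(\Sigma_2; \Z_{\mathrm{sgn}}) = \Z/2,\, 0,\, \Z/2,\, 0,\ldots$ yields $\Z/2$ in degree $8n-2$ and $0$ in degree $8n-1$.

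For the detection claim, $x$ is represented by a non-unital $E_\infty$-algebra map from the free n.u.\ $E_\infty$-algebra on $A$ into $\sOf$. By naturality of the Goodwillie tower of augmented $E_\infty$-algebras, this induces a comparison of the associated spectral sequences. In the free algebra the multiplication map $A \otimes A \to D_2(A)$ tautologically sends the fundamental class of $A^{\otimes 2}$ to the generator of $\pi_{8n-2} D_2(A)$, which thereby detects the square of the canonical generator. Transporting through the isomorphism of the previous paragraph identifies this as the detecting class for $x^2 \in \pi_{8n-2}\sOf$ in the associated graded $D_2(\mathrm{TAQ}(\sOf)) \simeq D_2(\Sigma^{-1}\tau_{\ge 4n}ko)$.

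The subtlest step is the last one, where one must track the multiplicative structure through the Goodwillie tower carefully enough to recognize $x^2$ as the image of the squaring map $A \otimes A \to D_2(A)$ under the naturality comparison; the connectivity estimate for $B$ and the sign-representation calculation for $D_2(S^{4n-1})$ are both routine once the framework is in place.
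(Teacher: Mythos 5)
Your reduction to $D_2(S^{4n-1})$ is essentially the same as the paper's, though you spell out the connectivity via the two-step filtration of $D_2$ by $D_2(A)$, $A \otimes B$, $D_2(B)$, where the paper simply invokes the standard fact that a $4n$-connected map induces an $(8n-1)$-connected map on $D_2$. (One small imprecision: you get an isomorphism on $\pi_k$ for $k \le 8n-2$ and only a \emph{surjection} on $\pi_{8n-1}$, since $\pi_{8n}$ of the cofibre may be nonzero; but a surjection suffices for the intended conclusion.) The detection step via the map out of the free non-unital $\mathbb{E}_\infty$-algebra on $S^{4n-1}$ is a reformulation of the paper's Snaith-splitting comparison and is fine.

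The genuine gap is in the computation of $\pi_{8n-1}D_2(S^{4n-1})$. You assert that ``the homotopy-orbit spectral sequence collapses because $A^{\otimes 2} \simeq S^{8n-2}$'' and read off $\pi_{8n-1} = 0$ from $H_1(\Sigma_2;\Z_{\mathrm{sgn}}) = 0$. This only accounts for the row $t = 8n-2$ of the $E_2$-page $E_2^{s,t} = H_s(\Sigma_2;\pi_t S^{8n-2})$. You have neglected the entry
\[
E_2^{0,\,8n-1} \;=\; H_0\bigl(\Sigma_2;\,\pi_{8n-1}S^{8n-2}\bigr) \;=\; H_0(\Sigma_2;\Z/2\cdot\eta) \;\cong\; \Z/2,
\]
which also contributes to total degree $8n-1$. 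In fact the spectral sequence does \emph{not} collapse: there is a nontrivial $d_2 \colon E_2^{2,8n-2} \to E_2^{0,8n-1}$ (both copies of $\Z/2$) that must kill this class, and this differential is precisely the reflection, in the homotopy-orbit spectral sequence, of the $\eta$-attaching map in the cell structure $S^{4n-1}\cup_2 e^{4n}\cup_\eta e^{4n+1}$ of stunted projective space that the paper cites from \cite[Proposition V.3.1]{BMMS}. So your argument would need to be supplemented by identifying this $d_2$; as written, it establishes $\pi_{8n-2} \cong \Z/2$ but not $\pi_{8n-1} = 0$. The paper sidesteps this entirely by passing directly to the explicit cell structure, which is the cleaner route.
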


\begin{proof}
   There is a $4n$-connected map $\Ss^{4n-1} \to \Sigma^{-1} \tau_{\ge 4n} ko$ which induces an $(8n-1)$-connected map
	\[D_2 (\Ss^{4n-1}) \to D_2 (\Sigma^{-1} \tau_{\ge 4n} ko).\]
	Thus, there is an isomorphism 
	\[\pi_{8n-2} D_2 (\Ss^{4n-1}) \cong \pi_{8n-2} D_2 (\Sigma^{-1} \tau_{\ge 4n} ko)\]
	and a surjective map 
	\[\pi_{8n-1} D_2 (\Ss^{4n-1}) \twoheadrightarrow \pi_{8n-1} D_2 (\Sigma^{-1} \tau_{\ge 4n} ko).\]
	It therefore suffices to make the desired homotopy group computations for $D_2 (\Ss^{4n-1})$.

    There is an equivalence $D_2 (\Ss^{4n-1}) \simeq \Sigma^{4n-1}\mathbb{RP}^\infty _{4n-1}$, arising from the fact that 
		\[D_2(\Ss^{4n-1}) \simeq \Ss^{(4n-1)\rho}_{hC_2}\]
		is the Thom spectrum of the bundle $(4n-1)\mathbf{1} + (4n-1)\gamma$ over $BC_2 \simeq \mathbb{RP}^{\infty}$, and \cite[Proposition V.3.1]{BMMS} computes $\mathbb{RP}_{4n-1} ^{4n+1} \simeq \Ss^{4n-1} \cup_{2} \mathrm{e}^{4n} \cup_{\eta} \mathrm{e}^{4n+1}$. We therefore determine $\pi_{8n-2} D_2 (\Ss^{4n-1}) \cong \Z/2\Z$ and $\pi_{8n-1} D_2 (\Ss^{4n-1}) \cong 0$, as desired.

    Comparison with the Goodwillie tower of $\Sigma^{\infty} _+ \Omega^{\infty} \Ss^{4n-1}$, which recovers the Snaith splitting \cite{KuhnTAQ}, shows that the generator of $\pi_{8n-2} D_2 (\Sigma^{-1} \tau_{\ge 4n} ko)$ detects
    \[ x^2 \in \pi_{8n-2} \sOf. \qedhere\]
\end{proof}

\begin{cor} \label{cor:gen-by-x2}
For $n \ge 3$, the group $\pi_{8n-2}\sOf$ is a copy of $\mathbb{Z}/2\mathbb{Z}$, generated by the element $x^2$ of Definition \ref{dfn:x}.
    Furthermore, the group $\pi_{8n-1} \sOf$ is isomorphic to $\pi_{8n} ko$.
\end{cor}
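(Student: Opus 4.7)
The plan is to read off both homotopy groups directly from the Goodwillie tower of \Cref{cor:goodwillie} by comparing $\sOf$ with its second-stage approximation $Q_2$.

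First I would bound the connectivity of the higher layers. Since $\Sigma^{-1}\tau_{\ge 4n} ko$ is $(4n-2)$-connected, the extended power $D_i(\Sigma^{-1}\tau_{\ge 4n} ko)$ is $(i(4n-1)-1)$-connected, and for $n \ge 3$ and $i \ge 3$ this exceeds $8n-1$. Working up the tower one fiber sequence at a time therefore shows that $\sOf \to Q_2$ is an isomorphism on $\pi_k$ for $k \le 8n-1$, reducing the problem to computing $\pi_{8n-2} Q_2$ and $\pi_{8n-1} Q_2$.

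Next I would apply the long exact sequence of the fiber sequence
\[D_2(\Sigma^{-1}\tau_{\ge 4n} ko) \longrightarrow Q_2 \longrightarrow Q_1 \simeq \Sigma^{-1}\tau_{\ge 4n} ko.\]
\Cref{lemm:d2-ko-comp} supplies $\pi_{8n-2} D_2 \cong \Z/2\Z$ and $\pi_{8n-1} D_2 = 0$, while $8$-fold Bott periodicity gives $\pi_{8n-2} Q_1 = \pi_{8n-1} ko = 0$ and $\pi_{8n-1} Q_1 = \pi_{8n} ko \cong \Z$. The relevant segment of the long exact sequence thus reads
\[0 \longrightarrow \pi_{8n-1}(Q_2) \longrightarrow \Z \stackrel{\partial}{\longrightarrow} \Z/2\Z \longrightarrow \pi_{8n-2}(Q_2) \longrightarrow 0.\]

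It remains to show that the connecting homomorphism $\partial$ vanishes. Here I would invoke the detection statement in \Cref{lemm:d2-ko-comp}: the generator of $\pi_{8n-2} D_2$ maps to the (hence nonzero) class $x^2 \in \pi_{8n-2}\sOf \cong \pi_{8n-2}(Q_2)$, so $\pi_{8n-2} D_2 \to \pi_{8n-2} Q_2$ is injective and $\partial = 0$ by exactness. Both claims follow at once, with the $\Z/2\Z$ generated by $x^2$ and the $\Z$ identified with $\pi_{8n} ko$. The only substantive input is the detection of $x^2$ by the bottom cell of $D_2$, which is already provided via comparison with the Snaith splitting in \Cref{lemm:d2-ko-comp}; everything else is a routine diagram chase up the tower.
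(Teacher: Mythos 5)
Your overall plan matches the paper's: work up the Goodwillie tower to reduce to $Q_2$ and then run the long exact sequence of the fiber sequence
$D_2(\Sigma^{-1}\tau_{\ge 4n}ko) \to Q_2 \to Q_1$,
using Lemma \ref{lemm:d2-ko-comp} and Bott periodicity to identify the terms. The real content of both proofs is the vanishing of the connecting map $\partial:\pi_{8n}ko\cong\Z\to\Z/2\Z$, and this is where you and the paper part ways.

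The paper kills $\partial$ with a genuinely independent input: the comparison map $\sOf\to\Sigma^{-1}\tau_{\ge 4n}ko$ is adjoint to the identity of $\mathrm{O}\langle 4n-1\rangle$, so it acquires a section upon applying $\Omega^\infty$ (the unit of the $\Sigma^\infty\dashv\Omega^\infty$ adjunction). Surjectivity of $\pi_{8n-1}\sOf\to\pi_{8n-1}(\Sigma^{-1}\tau_{\ge 4n}ko)$ then forces $\partial=0$ outright.

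You instead derive $\partial=0$ from the clause in Lemma \ref{lemm:d2-ko-comp} that the generator of $\pi_{8n-2}D_2$ ``survives to detect $x^2$,'' taking this to mean $x^2\neq 0$. Read as a black box, the lemma does say that, so your deduction from it is fine. But the lemma's stated proof — comparison with the Snaith tower along the $4n$-connected map $\Ss^{4n-1}\to\Sigma^{-1}\tau_{\ge 4n}ko$ — only shows that the generator maps \emph{to} $x^2$; it cannot by itself rule out a nonzero $d_1$ on the $ko$ side. The relevant comparison on $\mathrm{E}_1$-terms is $\pi_{8n-1}\Ss^{4n-1}=\pi_{4n}\Ss\to\pi_{8n-1}(\Sigma^{-1}\tau_{\ge 4n}ko)\cong\pi_{8n}ko\cong\Z$, which is the zero map since $\pi_{4n}\Ss$ is torsion; so vanishing of the $d_1$ in the Snaith tower says nothing about the $d_1$ hitting $\pi_{8n-2}D_2(\Sigma^{-1}\tau_{\ge 4n}ko)$. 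In other words, the nonvanishing of $x^2$ is exactly what this corollary is there to establish, and the paper deliberately supplies the $\Omega^\infty$-section argument as the independent reason $\partial$ vanishes, rather than reading it off from the lemma. Your proof would be airtight if you replaced the parenthetical ``hence nonzero'' with that section argument (or some other independent reason the generator is not a $d_1$-boundary); as written it risks circularity.
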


\begin{proof}
    Note that $\tau_{\le 8n-1} D_k(\Sigma^{-1} \tau_{\ge 4n} ko)$ is trivial for $k >2$.  Thus, Corollary \ref{cor:goodwillie} and \Cref{lemm:d2-ko-comp} imply the existence of a long exact sequence
$$
\begin{tikzcd}
0 \arrow{r} & \pi_{8n-1}(\sOf) \arrow{r} \arrow[draw=none]{d}[name=Z, shape=coordinate]{} & \pi_{8n-1}(\Sigma^{-1} \tau_{\ge 4n} ko) \arrow[rounded corners,to path={ -- ([xshift=2ex]\tikztostart.east)|- (Z) [near end]\tikztonodes-| ([xshift=-2ex]\tikztotarget.west)-- (\tikztotarget)}]{dll} \\
\Z/2\Z \arrow{r}{x^2} & \pi_{8n-2}(\sOf) \arrow{r} & \pi_{8n-2}(\Sigma^{-1} \tau_{\ge 4n} ko).
\end{tikzcd}
$$

We now claim that the maps
$\pi_{k}(\sOf) \to \pi_{k}(\Sigma^{-1} \tau_{\ge 4n} ko)$ are surjective.  To see this, we note that these maps are $\pi_{k}$ of the map of spectra
$$\sOf \longrightarrow \Sigma^{-1} \tau_{\ge 4n} ko$$
that is adjoint to the identity homomorphism
$$\Of \stackrel{\simeq}{\longrightarrow} \Omega^{\infty} \Sigma^{-1} \tau_{\ge 4n} ko.$$
Indeed, it follows from the combination of \cite[Example 3.9]{KuhnTAQ} and \cite[Theorem 3.10(2)]{KuhnTAQ} that the map
$$\mathrm{hofib}(\epsilon:R \to \mathbb{S}) \to \mathrm{TAQ}(R;\mathbb{S})$$
induced by the tower of \Cref{lem:Kuhn} agrees for $R=\Sigma^{\infty}_+ \Omega^{\infty} X$ with the counit $\Sigma^{\infty}_+ \Omega^{\infty}X \to X$.
In particular, the map $\sOf \to \Sigma^{-1} \tau_{\ge 4n} ko$ admits a section after applying $\Omega^{\infty}.$ 

Identifying $\pi_{8n-2} (\Sigma^{-1} \tau_{\ge 4n} ko)$ with zero, we obtain isomorphisms
    \[\pi_{8n-1} (\sOf) \cong \pi_{8n-1} (\Sigma^{-1} \tau_{\ge 4n} ko) \cong \pi_{8n} ko\]
    and
    \[\Z/2\Z \cong \pi_{8n-2} (\sOf),\]
    with the latter sending the generator to $x^2$, as desired.
\end{proof}

\begin{cnstr}
Recall that the element $J(x) \in \pi_{4n-1} \bS$ was defined, in Definition \ref{dfn:x}, as the composite
$$S^{4n-1} \stackrel{x}{\longrightarrow} \sOf \stackrel{J}{\longrightarrow} \bS.$$
The right $\pi_*(\bS)$-module structure on $\pi_*(\sOf)$ allows us to define an element
$$xJ(x) \in \pi_{8n-2} \sOf.$$
\end{cnstr}

\begin{lem} \label{lem:xJ(x)null}
For $n \ge 3$, the element $xJ(x) \in \pi_{8n-2} \sOf$, defined using the right $\pi_*(\bS)$-module structure on $\pi_*(\sOf)$, is zero.
\end{lem}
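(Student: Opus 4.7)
The strategy is to exploit the Snaith-type splitting of $\sOf$ coming from the fact that $\sOf = \Sigma^{\infty} \Omega^{\infty} Y$ where $Y = \Sigma^{-1}\tau_{\geq 4n} ko$ is connective. This splitting is a manifestation of the same splitting of the Goodwillie tower of \Cref{cor:goodwillie} that was used to identify its layers (cf.~\cite{KuhnTAQ}), and yields a stable equivalence
\[ \sOf \simeq \bigvee_{k \geq 1} D_k(Y), \]
equipped with a spectrum-level section $\iota: Y \hookrightarrow \sOf$ of the linearization map $\ell: \sOf \to Y$.

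Since $D_k(Y)$ is at least $(k(4n-1)-1)$-connective, the group $\pi_{4n-1}(D_k(Y))$ vanishes for all $k \geq 2$. Hence the class $x \in \pi_{4n-1}(\sOf)$ lies entirely in the $D_1(Y) = Y$ summand, meaning $x = \iota \circ \tilde{x}$ for some generator $\tilde{x} \in \pi_{4n-1}(Y) \cong \pi_{4n}(ko) \cong \mathbb{Z}$.

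By the construction of $xJ(x)$ via the right $\pi_*(\bS)$-module structure, this class is realized as the composite
\[ \mathbb{S}^{8n-2} \xrightarrow{\Sigma^{4n-1} J(x)} \mathbb{S}^{4n-1} \xrightarrow{x} \sOf. \]
Substituting $x = \iota \circ \tilde{x}$ lets us rewrite this as
\[ xJ(x) = \iota \circ \bigl( \tilde{x} \circ \Sigma^{4n-1} J(x) \bigr) : \mathbb{S}^{8n-2} \to \mathbb{S}^{4n-1} \to Y \to \sOf. \]
The inner composite $\tilde{x} \circ \Sigma^{4n-1} J(x)$ represents a class in $\pi_{8n-2}(Y) = \pi_{8n-1}(\tau_{\geq 4n} ko) \cong \pi_{8n-1}(ko)$, which vanishes by Bott periodicity since $8n-1 \equiv 7 \pmod{8}$. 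Therefore $xJ(x) = 0$.

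The only real point of justification is the existence of the splitting and the resulting section $\iota$ of $\ell$; this is classical for suspension spectra of infinite loop spaces of connective spectra. Once it is in hand, the remainder of the argument is a quick combination of connectivity estimates (to place $x$ in the bottom stratum of the splitting) and Bott periodicity (to kill the intermediate class in $\pi_{8n-2}(Y)$).
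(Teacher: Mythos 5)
The gap is the claimed stable splitting $\sOf \simeq \bigvee_{k \ge 1} D_k(Y)$, equivalently the spectrum-level section $\iota \colon Y \to \sOf$ of $\ell \colon \sOf \to Y$. This is not classical and is in fact false for general connective $Y$. The Snaith (Barratt--Priddy--Quillen--Segal) splitting applies to $\Sigma^{\infty}_{+}QX$ for a pointed \emph{space} $X$, i.e.\ to the case $Y = \Sigma^{\infty}X$ a suspension spectrum; and \Cref{cor:goodwillie} asserts a tower, not a splitting---the layers are identified by polynomial approximation, without any splitting entering. As a concrete obstruction: for $Y = \Sigma^2 \HZ$ one has $\Omega^{\infty}Y = \CP^{\infty}$, so $\mathrm{H}_*(\Sigma^{\infty}\Omega^{\infty}Y;\F_2)$ is concentrated in even degrees, while $\mathrm{H}_5(\Sigma^2\HZ;\F_2) \cong \mathrm{H}_3(\HZ;\F_2) \ne 0$; a stable section would be split injective on $\F_2$-homology, which is impossible. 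What \emph{is} true, and what the proof of \Cref{cor:gen-by-x2} uses, is that $\ell_*$ is split surjective on homotopy \emph{groups}, via stabilization from the unstable $\pi_k(\Of) \cong \pi_k(Y)$. That group-level splitting does not produce a map of spectra $Y \to \sOf$, and without one your computation only establishes $\ell_*(xJ(x)) = 0$ in $\pi_{8n-2}(Y)$. This is vacuous: $\pi_{8n-2}(Y)=0$ by Bott periodicity, so $\ell_*\colon\pi_{8n-2}(\sOf)\to\pi_{8n-2}(Y)$ is the zero map and in particular also kills $x^2 \ne 0$. You learn nothing distinguishing $xJ(x) = 0$ from $xJ(x) = x^2$.

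The paper's actual argument sidesteps this via Adams filtrations. By \Cref{cor:gen-by-x2}, $\pi_{8n-2}(\sOf) \cong \Z/2$ is generated by $x^2$, so it suffices to rule out $xJ(x)=x^2$. By \Cref{rmk:x2-filt} the class $x^2$ has $\HFt$-Adams filtration exactly $1$, whereas $xJ(x)$ has $\HFt$-Adams filtration at least that of $J(x) \in \J_{4n-1}$, which for $n\ge 3$ is strictly greater than $1$. Hence $xJ(x) \ne x^2$, forcing $xJ(x)=0$.
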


\begin{proof}
By Corollary \ref{cor:gen-by-x2}, we know that $\pi_{8n-2} \sOf$ is isomorphic to $\Z/2\Z$, generated by the element $x^2$.
It follows that, if $xJ(x) \ne 0$, then $xJ(x) =x^2$.
In \Cref{rmk:x2-filt} we determine that the element $x^2$ has $\mathrm{H}\mathbb{F}_2$-Adams filtration $1$.
However, $xJ(x)$ has $\mathrm{H}\mathbb{F}_2$-Adams filtration at least that of $J(x)$.
Note now that $$x \in \pi_{4n-1} \sOf$$ is the suspension of an unstable class, and thus $J(x) \in \pi_{4n-1} \mathbb{S}$ is in $\J_{4n-1}$.
In particular, since $n\ge 3$, $J(x)$ has $\mathrm{H}\mathbb{F}_2$-Adams filtration larger than $1$.
\end{proof}

\subsection{The image of $J$ in $\pi_*(\mathbb{S})$}\ 

Classically, the phrase ``image of $J$'' in $\pi_{\ell}(\mathbb{S})$ refers to the image of the map
$$\pi_\ell(\mathrm{O}) \to \pi_{\ell}\GL_1(\mathbb{S}) \cong \pi_{\ell}(\mathbb{S}) \text{ for } \ell>0.$$
Recall that we use $\mathcal{J}_{\ell} \subseteq \pi_{\ell}(\mathbb{S})$ to denote this subset.

Unfortunately, we have introduced a second possible meaning of the phrase ``image of $J$,'' namely the image of the map
$$\pi_{\ell}(\sOf) \stackrel{J}{\longrightarrow} \pi_{\ell}(\mathbb{S}).$$

For general $\ell$, these two images may be different.  We prove here, however, that they are the same in our range of interest, and so no ambiguity has been introduced.
\begin{thm} \label{thm:imJwelldefined}
Suppose $n \ge 3$ and $4n-1 \le \ell \le 8n-1$.  Then the image of the map
$$\pi_{\ell}(\sOf) \stackrel{J}{\longrightarrow} \pi_{\ell}(\mathbb{S})$$
is exactly $\mathcal{J}_{\ell}$.
\end{thm}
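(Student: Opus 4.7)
I would prove the two containments $\mathcal{J}_\ell \subseteq \mathrm{im}(J) \subseteq \mathcal{J}_\ell$ separately. The inclusion $\mathcal{J}_\ell \subseteq \mathrm{im}(J)$ is formal: the unit map $\mathrm{O}\langle 4n-1\rangle \to \Omega^\infty \sOf$ followed by $\Omega^\infty J$ agrees with the composite $\mathrm{O}\langle 4n-1\rangle \to \mathrm{O} \to \Omega^\infty \mathbb{S}$, and since $\mathrm{O}\langle 4n-1\rangle \to \mathrm{O}$ is an isomorphism on $\pi_\ell$ for $\ell \ge 4n-1$, its image in $\pi_\ell \mathbb{S}$ is exactly $\mathcal{J}_\ell$. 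This containment therefore requires no further work.

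For the reverse inclusion the plan is to use the Goodwillie tower of \Cref{cor:goodwillie}. Since $D_k(\Sigma^{-1}\tau_{\ge 4n}ko)$ is $(k(4n-1)-1)$-connected, only the layers $Q_1 = \Sigma^{-1}\tau_{\ge 4n}ko$ and $D_2(\Sigma^{-1}\tau_{\ge 4n}ko)$ can contribute to $\pi_\ell \sOf$ in the range $\ell \le 8n-1$, and by \Cref{lemm:d2-ko-comp} the $D_2$ layer contributes only in degree $8n-2$. Hence for each $\ell$ in our range with $\ell \ne 8n-2$, the natural map $\pi_\ell \sOf \to \pi_\ell \Sigma^{-1}\tau_{\ge 4n} ko$ is an isomorphism. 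Combined with the $\Omega^\infty$-section $\mathrm{O}\langle 4n-1\rangle \to \Omega^\infty \sOf$ invoked in the proof of \Cref{cor:gen-by-x2}, this shows the map $\pi_\ell \mathrm{O}\langle 4n-1\rangle \to \pi_\ell \sOf$ is already surjective, so every class in the image of $J$ is in the image of the classical unstable J-homomorphism and therefore lies in $\mathcal{J}_\ell$.

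The remaining and main obstacle is the case $\ell = 8n-2$: here $\pi_{8n-1}ko = 0$, so $\pi_{8n-2}\mathrm{O}\langle 4n-1\rangle$ vanishes and the class $x^2 \in \pi_{8n-2}\sOf$ admits no unstable lift, so the previous argument fails. Instead, I would deduce the result directly from \Cref{lem:xJ(x)null}. Using that $J\colon \sOf \to \mathbb{S}$ is a map of $\mathbb{S}$-modules, the product $x\cdot J(x)$ formed via the right $\pi_*\mathbb{S}$-module structure satisfies $J(x \cdot J(x)) = J(x) \cdot J(x) = J(x)^2$. Since \Cref{lem:xJ(x)null} gives $x\cdot J(x)=0$, we conclude $J(x)^2 = 0$, so that the image of $J$ in degree $8n-2$ is zero. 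To finish it suffices to note that $\mathcal{J}_{8n-2}=0$ as well, since $\pi_{8n-2}\mathrm{O} \cong \pi_{8n-1}ko=0$ by Bott periodicity. Thus the two containments hold in every degree of the asserted range.
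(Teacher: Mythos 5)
Your proof is correct, and for the critical case $\ell = 8n-2$ you take a genuinely different and substantially simpler route than the paper.

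For $\ell \ne 8n-2$ your argument matches the paper's in substance: the Goodwillie tower of \Cref{cor:goodwillie} makes the fiber of $\sOf \to \Sigma^{-1}\tau_{\ge 4n} ko$ highly connected, and the $\Omega^\infty$-section (the unit $\Oo\langle 4n-1\rangle \to \Omega^\infty\sOf$) then shows every class in $\pi_\ell\sOf$ stabilizes an unstable class, so $J$ lands in $\mathcal{J}_\ell$.

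The interesting divergence is at $\ell = 8n-2$. The paper there writes $J(x^2) = J(x)^2$ using the ring-map structure of $J$ and then cites \Cref{prop:trivial-square} for $J(x)^2 = 0$. That proposition is a forward reference whose proof requires the entire banded-vanishing-line apparatus of Sections~\ref{sec:BandVan}--\ref{sec:mod8}. You instead observe that $J_*\colon \pi_*\sOf \to \pi_*\mathbb{S}$ is a map of right $\pi_*\mathbb{S}$-modules (tautologically, since $J$ is a map of spectra), so $J(x)^2 = J_*(x)\cdot J(x) = J_*(x\cdot J(x))$; since $x\cdot J(x)=0$ by \Cref{lem:xJ(x)null}, this is zero immediately. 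I've checked that \Cref{lem:xJ(x)null} is itself established without reference to either \Cref{thm:imJwelldefined} or \Cref{prop:trivial-square} --- its proof needs only \Cref{cor:gen-by-x2}, the Ext computation in \Cref{rmk:x2-filt}, and the Davis--Mahowald Adams-filtration bound on image-of-$J$ generators --- so there is no circularity. Your route eliminates the forward dependence of Section~\ref{sec:Goodwillie} on Section~\ref{sec:mod8}, and in fact yields a one-line proof of \Cref{prop:trivial-square} itself: since $J(x)$ is a generator of the cyclic group $\mathcal{J}_{4n-1}$, any other generator $j$ satisfies $j = mJ(x)$ for some integer $m$, whence $j^2 = m^2 J(x)^2 = 0$. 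This is a genuine simplification of the paper's logical structure, at the modest cost of losing a cleanly packaged standalone statement (the paper may have preferred \Cref{prop:trivial-square} as a result of independent interest, and it does serve to illustrate the banded-vanishing-line technique).
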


\begin{proof}
This will automatically be true so long as every class in 
$\pi_{\ell} \sOf$ is the suspension of an unstable class in $\pi_{\ell} \Of.$
According to \Cref{cor:goodwillie}, there can be no difficulty unless $\ell=8n-1$ or $\ell=8n-2$.
The case $\ell=8n-1$ follows from \Cref{cor:gen-by-x2}.

To handle the case $\ell=8n-2$, we must check that $J(x^2)$ is an element of $\mathcal{J}_{8n-2}=0$.
Since $J$ is a non-unital ring map, $J(x^2)=J(x)^2$.  Now, it is known (by, e.g. \cite[Lemma 3]{Novikov}) that $\mathcal{J}_i \cdot \mathcal{J}_j \subseteq \mathcal{J}_{i+j}$ if $i,j>7$.  Using the hypothesis that $n \ge 3$, we have in short that $J(x)^2$ is an element of $\mathcal{J}_{8n-2} = 0$.
\end{proof}


\section{\texorpdfstring{$\MO \langle 4n \rangle$}{MO<4n>} as a homotopy cofiber} \label{sec:ThomPushout}
In \Cref{cnstr:barconstruction}, we recalled that $\MO \langle 4n \rangle$ can be computed via a two-sided bar construction.  In this section we give a description of the bar construction, valid through a range of homotopy groups, which is particularly well-suited to the explicit identification of the $d_2$ in the bar spectral sequence as a Toda bracket.  Our main result is \Cref{thm:cofiber}.

\begin{cnstr} \label{cnstr:square}
Since $J$ is a map of non-unital rings in the homotopy category of spectra, the following diagram commutes up to homotopy:
$$
\begin{tikzcd}
\sOf^{\otimes 2} \arrow{r}{1 \otimes J} \arrow{d}{m} & \sOf \arrow{d}{J} \\
\sOf \arrow{r}{J} & \mathbb{S},
\end{tikzcd}
$$
where $m$ is the product map.
In the $\infty$-category of spectra, the fact that $J$ is a ring map is not a property, but actually additional structure.
In particular, there is a \emph{canonical homotopy} $a$ filling the above square:
$$
\begin{tikzcd}[column sep = huge]
\sOf^{\otimes 2} \arrow{dd}{m} \arrow{r}{1 \otimes J} & \sOf \arrow{dd}{J} \\ \\
\sOf \arrow{r}{J} \arrow[Leftrightarrow]{ruu}{a} & \bS.
\end{tikzcd}
$$
This homotopy $a$ may alternatively be viewed as a specific \emph{nullhomotopy} of $J \circ (1 \otimes J-m)$.
Let $P$ denote the homotopy cofiber of the map
$$
\begin{tikzcd}[column sep=huge]
\sOf^{\otimes 2} \arrow{r}{1 \otimes J-m} & \sOf
\end{tikzcd}
$$
Then the homotopy $a$ provides a canonical factorization
$$
\begin{tikzcd}[column sep=huge]
\sOf^{\otimes 2} \arrow{r}{1 \otimes J-m} & \sOf \arrow{d}{J} \arrow{r} & P \arrow[dashed]{dl} \\
& \mathbb{S}.
\end{tikzcd}
$$
\end{cnstr}

The main theorem of this section is the identification of the Thom spectrum $\MO \langle 4n \rangle$ with the cofiber of the above map $P \to \Ss$ in a range:

\begin{thm} \label{thm:cofiber}
Let $C$ denote the  cofiber of the map $P \to \mathbb{S}$ constructed above.
Then there is an equivalence of spectra $\tau_{\le 12n-2} C \simeq \tau_{\le 12n-2} \MO \langle 4n \rangle$.  Furthermore, the unit map
$$\tau_{\le 12n-2} \mathbb{S} \to \tau_{\le 12n-2} \MO \langle 4n \rangle$$ agrees with the natural map $\tau_{\le 12n-2} \mathbb{S} \to \tau_{\le 12n-2} C$.
\end{thm}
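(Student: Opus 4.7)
My plan is to realize $\MO\langle 4n\rangle$ as the geometric realization of the bar simplicial spectrum from \Cref{cnstr:barconstruction}, and to identify its $2$-skeleton with $C$ through the relevant range, then control the higher simplicial pieces by connectivity.

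First, I would analyze the skeletal filtration of the simplicial bar construction $\sBar(\bS, \spOf, \bS)_\bullet$. The augmentation $\epsilon: \spOf \to \bS$ is split by the unit, giving a decomposition $\spOf \simeq \bS \oplus \sOf$ which exhibits the associated graded of the skeletal filtration as $\mathrm{sk}_p / \mathrm{sk}_{p-1} \simeq \Sigma^p \sOf^{\otimes p}$. The $0$-skeleton is $\bS$, mapping to $\MO\langle 4n\rangle$ as the unit. For the $1$-skeleton, the attaching map $\sOf \to \bS$ is the difference of the two face maps restricted to the reduced summand. Since $\epsilon$ vanishes on $\sOf$, this difference equals $J$ up to sign, and $\mathrm{sk}_1 \simeq \mathrm{cof}(J: \sOf \to \bS)$.

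Next, I would identify $\mathrm{sk}_2$ with $C$. On the reduced summand $\sOf^{\otimes 2}$ of the $2$-simplices, the three face maps restrict as $d_0 = \epsilon \otimes 1 = 0$, $d_1 = m$, and $d_2 = 1 \otimes J_+ = 1 \otimes J$. The alternating sum governing the attaching map modulo $\mathrm{sk}_0$ is thus $\pm(1 \otimes J - m)$, matching the map of \Cref{cnstr:square}. The $2$-simplex coherence of the bar simplicial object supplies a canonical nullhomotopy of $J \circ (1 \otimes J - m)$, and this coincides with the homotopy $a$ of \Cref{cnstr:square}, since both are extracted from the $\mathbb{E}_\infty$-ring structure on $\spOf$ in the same way. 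Consequently, $\mathrm{sk}_2$ is equivalent to the total cofiber of the square in \Cref{cnstr:square}, which is precisely $C$. The inclusion $\mathrm{sk}_0 \hookrightarrow \mathrm{sk}_2$ is then identified with the canonical map $\bS \to C$, so the unit of $\MO\langle 4n\rangle$ matches the unit of $C$ under this equivalence.

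Finally, I would bound the connectivity of $\MO\langle 4n\rangle / C \simeq \MO\langle 4n\rangle / \mathrm{sk}_2$. This quotient admits a filtration whose associated graded pieces are $\Sigma^p \sOf^{\otimes p}$ for $p \ge 3$. Since $\sOf$ is $(4n-2)$-connected, $\sOf^{\otimes p}$ is $(p(4n-1)-1)$-connected, hence $\Sigma^p \sOf^{\otimes p}$ is $(4np - 1)$-connected. For $p \ge 3$ this gives $(12n-1)$-connectivity, so $\MO\langle 4n\rangle/C$ is $(12n-1)$-connected. The cofiber sequence $C \to \MO\langle 4n\rangle \to \MO\langle 4n\rangle/C$ then forces the desired equivalence on $\tau_{\le 12n-2}$.

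The main obstacle is the middle step: verifying in the $\infty$-category of spectra that the $2$-skeleton of the bar construction is equivalent to the total cofiber of the square of \Cref{cnstr:square}, with the canonical homotopy $a$ matching the bar coherence data. The outer steps are essentially a standard skeletal-filtration analysis coupled with a connectivity count.
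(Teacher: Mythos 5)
Your proposal is essentially the paper's argument: reduce to the $2$-truncated bar construction by a connectivity estimate on the pieces $\Sigma^p \sOf^{\otimes p}$ for $p \geq 3$, and identify that $2$-truncated colimit with $C$. The connectivity step and the reduction to a $2$-skeleton match what is done in \Cref{lem:partialbar}.

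Where you part ways with the paper is in the middle step that you yourself flag as the main obstacle. You propose to identify $\mathrm{sk}_2$ with $C$ by reading off the attaching map of the $2$-cells (the alternating sum $1 \otimes J - m$ on the nondegenerate summand) and arguing that the simplicial coherence supplies the nullhomotopy $a$. This is morally correct, but matching the homotopy carried by the bar simplicial object against the homotopy $a$ extracted from the $\mathbb{E}_\infty$-ring map $J_+$ is exactly the kind of coherence bookkeeping that is easy to state and tricky to nail down. The paper sidesteps this by invoking the cofinality statement of \cite[Lemma 1.2.4.17]{HA} to rewrite the colimit over $\Delta^{\mathrm{op}}_{\leq 2}$ as the colimit of a cocartesian cube, then taking vertical fibers to recognize $X$ as the total cofiber of a square in which the homotopy $a$ appears for free as the appropriate face. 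Both routes get you there, but the cube reformulation replaces the coherence-matching you would otherwise have to carry out by hand with a universal-property argument, and it also hands you the identification of the unit map $\bS \to C$ with the bar inclusion $\mathrm{sk}_0 \to \mathrm{sk}_2$ without extra work. If you pursue your version, you should spell out precisely how the latching/attaching data of the simplicial object produce the nullhomotopy and why it agrees with $a$; as written this is asserted rather than proved.
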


Before proving \Cref{thm:cofiber}, let us recall \Cref{cnstr:barconstruction}.  \Cref{cnstr:barconstruction} says that the spectrum $\MO \langle 4n \rangle$ can be calculated as the geometric realization of a two-sided bar construction
$$\MO \langle 4n \rangle = \left|\sBar(\bS,\spOf, \bS)_{\bullet} \right|.$$
Here, the action of $\spOf$ on the leftmost $\bS$ is via $\epsilon$, and the action on the rightmost $\bS$ is via $J_+$.

We may display this bar construction as a simplicial object,
$$
\begin{tikzcd}
\cdots \arrow[r,yshift=-3ex] \arrow[r,yshift=-1ex] \arrow[r,yshift=1ex] \arrow[r,yshift=3ex]
& \spOf^{\otimes 2} \arrow[r,yshift=-2ex,"\epsilon \otimes 1"] \arrow[r,"m"] \arrow[r,yshift=2ex,"1 \otimes J_+"]
& \spOf \arrow[r,yshift=-1ex,"\epsilon"] \arrow[r,yshift=1ex, "J_+"] 
& \bS,
\end{tikzcd}
$$
where the leftward degeneracy maps are omitted.  The key point is that, if we only wish to study $\tau_{\le 12n-2} \MO \langle 4n \rangle$, we need only study the partial simplicial diagram
$$
\begin{tikzcd}
\spOf^{\otimes 2} \arrow[r,yshift=-2ex,"\epsilon \otimes 1"] \arrow[r,"m"] \arrow[r,yshift=2ex,"1 \otimes J_+"]
& \spOf \arrow[r,yshift=-1ex,"\epsilon"] \arrow[r,yshift=1ex, "J_+"] 
& \bS.
\end{tikzcd}
$$
In the language of \cite[Lemma 1.2.4.17]{HA}, this is a diagram $\Delta^{\text{op}}_{\le 2} \to \Sp$.

\begin{lem} \label{lem:partialbar}
Let $X$ denote the  colimit of the partial simplicial diagram $\Delta^{\mathrm{op}}_{\le 2} \to \Sp$ given by $\mathrm{Bar}(\bS,\spOf,\bS)_{\le 2}$.  Then there is an equivalence of spectra $$\tau_{\le 12n-2} \MO \langle 4n \rangle \simeq \tau_{\le 12n-2} X.$$
\end{lem}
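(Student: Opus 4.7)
The plan is to exploit the skeletal filtration on the geometric realization $\MO\langle 4n\rangle \simeq |\sBar(\bS, \spOf, \bS)_\bullet|$ provided by \Cref{cnstr:barconstruction}. By construction, $X$ is the $2$-skeleton of this realization, so there is a natural map $X \to \MO\langle 4n\rangle$, and the task is to show it is a $\tau_{\le 12n-2}$-equivalence.

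The first step is to identify the associated graded of the skeletal filtration. For a simplicial object in spectra, one has $\mathrm{sk}_p/\mathrm{sk}_{p-1} \simeq \Sigma^p N_p$, where $N_p$ is the normalized (non-degenerate) part in simplicial degree $p$, namely the cofiber of the latching map. The degeneracies of the two-sided bar construction are induced by the unit $\bS \to \spOf$; crucially, this unit is a common section of both augmentations $\epsilon$ and $J_+$. Consequently, regardless of the face structure at the two ends, the normalized term is the iterated cofiber $\sOf^{\otimes p}$, where $\sOf \simeq \cofib(\bS \to \spOf)$ is the augmentation ideal. This identification is the main piece of bookkeeping, and where some care is warranted since the two augmentations $\epsilon$ and $J_+$ are genuinely distinct; the saving observation is that they coincide on the unit.

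The second step is a connectivity estimate. Since the infinite loop space $\Of = \Omega\Omega^\infty \tau_{\ge 4n} ko$ is $(4n-2)$-connected, its reduced suspension spectrum $\sOf$ is $(4n-2)$-connected, so $\sOf^{\otimes p}$ is $(p(4n-1)-1)$-connected. The filtration quotient $\Sigma^p \sOf^{\otimes p}$ therefore has first possibly nonzero homotopy group in degree $4np$, which is at least $12n$ for $p \ge 3$.

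Finally, chaining together the cofiber sequences $\mathrm{sk}_{p-1} \to \mathrm{sk}_p \to \Sigma^p \sOf^{\otimes p}$ for $p \ge 3$ with this connectivity bound shows that the inclusion $\mathrm{sk}_2 \hookrightarrow |\sBar(\bS, \spOf, \bS)_\bullet|$ induces isomorphisms on $\pi_k$ for all $k \le 12n-1$, and in particular yields the desired equivalence $\tau_{\le 12n-2} X \simeq \tau_{\le 12n-2} \MO\langle 4n\rangle$ once $\mathrm{sk}_2$ is identified with the partial colimit $X$.
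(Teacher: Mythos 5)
Your proof is correct and takes essentially the same route as the paper: filter the geometric realization by skeleta, identify the filtration quotient $\mathrm{sk}_p/\mathrm{sk}_{p-1}$ with $\Sigma^p \sOf^{\otimes p}$, and observe that this piece is $(4np-1)$-connected, hence invisible to $\tau_{\le 12n-2}$ once $p \ge 3$. The only difference is cosmetic: you spell out why the normalized term is $\sOf^{\otimes p}$ (the degeneracies only see the unit $\bS \to \spOf$, which is common to both augmentations), whereas the paper states the cofiber identification without comment.
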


\begin{proof}
    Set $\mathrm{Bar}_{\leq k} = \abs{\sBar(\Ss, \spOf, \Ss)_{\leq k}}$. Then $\Ss = \sBar_{\leq 0}$ and $\MO \langle 4n \rangle$ is the  colimit of the $\sBar_{\leq k}$. We have a diagram
    \begin{center}
        \begin{tikzcd}[column sep=tiny]
            \Ss \arrow[r] & \sBar_{\leq 1} \arrow[r] \arrow[d] & \sBar_{\leq 2} \arrow[r] \arrow[d] & \sBar_{\leq 3} \arrow[r] \arrow[d] & \cdots \arrow[r] & \MO \langle 4n \rangle, \\
            & \Sigma \sOf & \Sigma^{2} \sOf^{\otimes 2} & \Sigma^{3} \sOf^{\otimes 3} & & 
        \end{tikzcd}
    \end{center}
    where the vertical maps are the  cofibers of the horizontal maps. The result now follows from the fact that, when $k \geq 3$, $\Sigma^{k} \sOf ^{\otimes k}$ is $(12n)$-connective.

\end{proof}

\begin{proof}[Proof of \Cref{thm:cofiber}]
In \Cref{lem:partialbar}, we established that $\tau_{\le 12n-2} \MO \langle 4n \rangle$ may be calculated as $\tau_{\le 12n-2}$ of the  colimit $X$ of a simplicial diagram
$$
\begin{tikzcd}
\spOf^{\otimes 2} \arrow[r,yshift=-2ex,"\epsilon \otimes 1"] \arrow[r,"m"] \arrow[r,yshift=2ex,"1 \otimes J_+"]
& \spOf \arrow[r,yshift=-1ex,"\epsilon"] \arrow[r,yshift=1ex, "J_+"] 
& \bS,
\end{tikzcd}
$$
where we have omitted the degeneracies from the notation.
According to the cofinality statement of \cite[Lemma 1.2.4.17]{HA}, $X$ may be characterized as the lower right corner of the following cocartesian cube:
$$
\begin{tikzcd}[row sep={35,between origins}, column sep={45,between origins}]
      & \spOf^{\otimes 2} \ar[rr,"1 \otimes J_+"] \ar[dd,"\epsilon \otimes 1" near end] \ar[dl,"m"] & & \spOf \ar[dd,"\epsilon"] \ar[dl,"J_+"] \\
    \spOf \ar[rr,crossing over, "J_+" near end] \ar[dd,"\epsilon"] & & \mathbb{S}  \\
      & \spOf  \ar[rr, "J_+" near end] \ar[dl,"\epsilon"] & &  \mathbb{S}  \arrow{dl}  \\
    \mathbb{S} \arrow[rr,""] & & X. \ar[from=uu,crossing over]
\end{tikzcd}
$$
We finish the proof by showing that the $X$ appearing in the cube is equivalent to the spectrum $C$ from the theorem statement.  Indeed, taking fibers in the vertical direction, we learn that $X$ is the total cofiber of the square
$$
\begin{tikzcd}[column sep= huge]
\mathrm{fiber}(\spOf^{\otimes 2} \stackrel{\epsilon \otimes 1}{\longrightarrow} \spOf) \arrow{r} \arrow{d} &  \mathrm{fiber}(\spOf \stackrel{\epsilon}{\longrightarrow} \mathbb{S}) \arrow{d} \\
\mathrm{fiber}(\spOf \stackrel{\epsilon}{\longrightarrow} \mathbb{S}) \arrow{r} & \mathbb{S},
\end{tikzcd}
$$
which simplifies to the square
$$
\begin{tikzcd}[column sep= huge]
\sOf \oplus \sOf^{\otimes 2} \arrow{r}{(1,1 \otimes J)} \arrow{d}{(1,m)} &  \sOf \arrow{d}{J} \\
\sOf \arrow{r}{J} \arrow[Leftrightarrow]{ru}{1\oplus a} & \mathbb{S}.
\end{tikzcd}
$$
The pushout of the arrows $(1,1 \otimes J)$ and $(1,m)$ is calculated as the cofiber of the map
$$
\begin{tikzcd} [column sep = huge, ampersand replacement=\&]
\sOf \oplus \sOf^{\otimes 2} \arrow{r}{\begin{pmatrix} 1 & 1 \otimes J \\ -1 & -m \end{pmatrix}} \& \sOf \oplus \sOf,
\end{tikzcd}
$$
or equivalently the cofiber of the map
$$
\begin{tikzcd} [column sep = huge]
\sOf^{\otimes 2} \arrow{r}{1 \otimes J-m} & \sOf.
\end{tikzcd}  
$$
This cofiber is the spectrum $P$ of the theorem statement.  To obtain the final sentence of the theorem, note that the unit map from $\bS$ to $\MO \langle 4n \rangle$ is the map from $\mathrm{Bar}(\bS,\spOf,\bS)_0$ into the geometric realization of the full bar construction.  This factors through the partial bar construction $|\mathrm{Bar}(\bS,\spOf,\bS)_{\le 2}| \simeq X$, via the map $\bS \to X$ that appears three times in the above cocartesian cube.
\qedhere
\end{proof}


\section{The remaining problem as a Toda bracket} \label{sec:Toda}
In this section we will use the theory built up in Sections \ref{sec:Goodwillie} and \ref{sec:ThomPushout} to reduce Theorem \ref{thm:intro-main} to a concrete Toda bracket computation.
The final result of this section, \Cref{lem:toda-main}, is the only statement from Sections \ref{sec:Goodwillie}-\ref{sec:Toda} that is used later in the paper. The lemma expresses the Toda bracket in an explicit enough form that we will be able to bound its $\HFp$-Adams filtrations in \Cref{sec:SyntheticToda}.

Recall once more the fundamental square
$$
\begin{tikzcd}[column sep = huge]
\sOf^{\otimes 2} \arrow{dd}{m} \arrow{r}{1 \otimes J} & \sOf \arrow{dd}{J} \\ \\
\sOf \arrow{r}{J} \arrow[Leftrightarrow]{ruu}{a} & \bS.
\end{tikzcd}
$$
Let $P$ denote the  cofiber
$$
\begin{tikzcd} [column sep = huge]
P=\mathrm{cofiber}(\sOf^{\otimes 2} \arrow{r}{1 \otimes J - m} & \sOf).
\end{tikzcd}
$$
Then, as explained in Remark \ref{cnstr:square}, the homotopy $a$ gives rise to a canonical map $P \to \mathbb{S}$.  According to Theorem \ref{thm:cofiber}, there is a long exact sequence
\begin{equation} \label{eq:les}
  \pi_{8n-1} (P) \to \pi_{8n-1} (\mathbb{S}) \to \pi_{8n-1} \left(\MO \langle 4n \rangle\right) \to \pi_{8n-2} (P) \to \pi_{8n-2} (\mathbb{S}),
\end{equation}
which we will use to compute $\pi_{8n-1} \left( \MO \langle 4n \rangle \right)$.

\begin{lem} \label{lem:unit-surjectivity}
    The group $\pi_{8n-2} (P)$ is trivial.
\end{lem}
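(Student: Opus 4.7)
The plan is to exploit the cofiber sequence
\[\sOf^{\otimes 2} \xrightarrow{1 \otimes J - m} \sOf \to P\]
and read off $\pi_{8n-2}(P)$ from the resulting long exact sequence in homotopy. Concretely, I would first observe that since $\sOf$ is $(4n-2)$-connected, the tensor product $\sOf^{\otimes 2}$ is $(8n-3)$-connected, which kills the group $\pi_{8n-3}(\sOf^{\otimes 2})$ and leaves only the map
\[\pi_{8n-2}(\sOf^{\otimes 2}) \longrightarrow \pi_{8n-2}(\sOf) \longrightarrow \pi_{8n-2}(P) \longrightarrow 0\]
to analyze.

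By the K\"unneth theorem applied at the bottom cell, the group $\pi_{8n-2}(\sOf^{\otimes 2})$ is infinite cyclic, generated by $x \otimes x$, where $x \in \pi_{4n-1}\sOf$ is the bottom class from \Cref{dfn:x}. The image of $x \otimes x$ under $1 \otimes J - m$ is, by the very definitions, $x \cdot J(x) - x^2$, where $x J(x)$ uses the right $\pi_*\bS$-module structure on $\pi_*\sOf$.

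Here the main input will be \Cref{lem:xJ(x)null}, which tells us that $x J(x) = 0$ in $\pi_{8n-2}\sOf$. Hence the image of $x \otimes x$ is $-x^2 = x^2$ (using that $\pi_{8n-2}\sOf \cong \mathbb{Z}/2\mathbb{Z}$ by \Cref{cor:gen-by-x2}). Since $x^2$ generates $\pi_{8n-2}\sOf$, the map $\pi_{8n-2}(\sOf^{\otimes 2}) \to \pi_{8n-2}(\sOf)$ is surjective, and the exact sequence forces $\pi_{8n-2}(P) = 0$. I do not expect any serious obstacle; the proof is essentially a bookkeeping exercise once \Cref{lem:xJ(x)null} and \Cref{cor:gen-by-x2} are in hand.
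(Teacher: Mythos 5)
Your proposal is correct and takes essentially the same approach as the paper: the long exact sequence of the cofiber sequence defining $P$, combined with \Cref{cor:gen-by-x2} and \Cref{lem:xJ(x)null}, to show $x \otimes x$ hits a generator of $\pi_{8n-2}(\sOf) \cong \mathbb{Z}/2\mathbb{Z}$. The only cosmetic difference is that you invoke K\"unneth to identify $\pi_{8n-2}(\sOf^{\otimes 2}) \cong \mathbb{Z}\{x \otimes x\}$, which is true but more than you need here (surjectivity only requires that $x \otimes x$ be \emph{some} class mapping to $x^2$, and the paper phrases it that way).
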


\begin{proof}
Consider the long exact sequence
\begin{center}
\begin{tikzcd}[column sep=tiny]
		\pi_{8n-2} \left( \sOf^{\otimes 2} \right) \arrow[rr] & \arrow[d, phantom, ""{coordinate, name=Z}] & \pi_{8n-2} \left( \sOf \right) \arrow[dll, rounded corners, to path={ -- ([xshift=2ex]\tikztostart.east) |- (Z) [near end]\tikztonodes -| ([xshift=-2ex]\tikztotarget.west) --(\tikztotarget)}] \\
		\pi_{8n-2} (P) \arrow[rr] & \vphantom{a} & \pi_{8n-3} \left( \sOf^{\otimes 2} \right) \cong 0.
	\end{tikzcd}
\end{center}
According to Corollary \ref{cor:gen-by-x2}, $\pi_{8n-2} \left( \sOf \right) \cong \mathbb{Z}/2\mathbb{Z}$, generated by $x^2$.
We will thus be done upon showing that $x^2$ is in the image of the map
$$\pi_{8n-2} \left( \sOf^{\otimes 2} \right) \to \pi_{8n-2} \left( \sOf \right).$$
The class $x \otimes x$ in the domain is sent to $xJ(x)-x^2$.  By Lemma \ref{lem:xJ(x)null}, $xJ(x)=0$.
\end{proof}

To complete the proof of Theorem \ref{thm:intro-main}, it remains to compute the image of the map
$$\pi_{8n-1} (P) \to \pi_{8n-1} (\mathbb{S}).$$
Note that the definition of $P$ as the cofiber of a map $\sOf^{\otimes 2} \to \sOf$ means that there is a canonical map
$\pi_{8n-1} (P) \to \pi_{8n-2} \left( \sOf^{\otimes 2} \right).$

\begin{lem} \label{lem:Reduction1}
Suppose $\ell$ is any class in $\pi_{8n-1} P$ which maps to 
$$2(x \otimes x) \in \pi_{8n-2} \left( \sOf^{\otimes 2} \right) \cong \Z\{x \otimes x\}.$$
Then $\J_{8n-1}$ and the image of $\ell$ in $\pi_{8n-1} (\mathbb{S})$ generate the kernel of the map
$$\pi_{8n-1} (\mathbb{S}) \to \pi_{8n-1} (\MO \langle 4n \rangle).$$
\end{lem}

\begin{proof}
  By equation (\ref{eq:les}), it suffices to show that the image of $\pi_{8n-1} (P) \to \pi_{8n-1} (\mathbb{S})$ is generated by $\J_{8n-1}$ and $\ell$.
  We will argue using the cofiber sequence
  $$ \sOf \to P \to \Sigma \sOf^{\otimes 2}. $$
  The composite map $\sOf \to P \to \mathbb{S}$ is by definition $J$. Therefore, by Theorem \ref{thm:imJwelldefined}, it has image exactly $\mathcal{J}_{8n-1}$ in degree $8n-1$. What remains is to show that $2 (x \otimes x)$ generates the subgroup of elements of $\pi_{8n-2} \left(\sOf^{\otimes 2} \right)$ that lift to $\pi_{8n-1} (P)$. Consider the map
  $$ \Z\{x \otimes x\} \cong \pi_{8n-2} \left(\sOf^{\otimes 2}\right) \to \pi_{8n-2} \left(\sOf\right) \cong (\Z/2)\{x^2\} $$
  The class $x \otimes x$ maps to 
    $$x^2 -x J(x) = x^2 \in \pi_{8n-2} \left( \sOf \right),$$
    since $x J(x) = 0$ by \Cref{lem:xJ(x)null}. Therefore, $2(x \otimes x)$ is a generator of the subgroup of elements which lift.
\end{proof}

Unwinding the definition of $P$, it is helpful to restate Lemma \ref{lem:Reduction1} in the following equivalent form:

\begin{cnstr} \label{cnstr:toda-basic}
Recall from \Cref{cor:gen-by-x2} and \Cref{lem:xJ(x)null} that $2xJ(x)=2x^2=0$ in $\pi_{8n-2} \left(\spOf\right)$.  We may therefore choose (completely arbitrary) nullhomotopies $f$ and $b$ completing the following diagram: 
$$
\begin{tikzcd}[column sep = huge]
\bS^{8n-2} \arrow{r}{2(x \otimes x)}
\arrow[rdd, bend right=20,""{name=D}] \arrow[rdd, bend right=20,swap,"0"] \arrow[rr,bend left = 30,""{name=U},"0"]
& \sOf^{\otimes 2} \arrow[Leftrightarrow, from=D, "b"] \arrow[Leftrightarrow, from=U, "f"] \arrow{dd}{m} \arrow{r}{1 \otimes J}
& \sOf \arrow{dd}{J}  \\ \\
& \sOf \arrow[Leftrightarrow]{ruu}{a} \arrow{r}{J}  & \bS. 
\end{tikzcd}
$$
Composing all three of these homotopies yields a homotopy between the map
$$0:\bS^{8n-2} \to \bS$$
  and itself, or equivalently a loop in the pointed mapping space $\mathrm{Hom}_{*}(\bS^{8n-2},\bS)$, or an element $z \in \pi_{8n-1} \bS$.  This Toda bracket $z$ is well-defined up to changing the nullhomotopies $f$ and $b$. The sets of homotopy classes of nullhomotopies $f$ and $b$ are torsors for $\pi_{8n-1} \sOf$, and changing either $f$ or $b$ by an element $y \in \pi_{8n-1} \sOf$ has the effect of changing the element $z$ by $J(y)$. The class $z$ therefore has indeterminancy equal to $\pi_{8n-1}J$, which is equal to $\J_{8n-1}$ by \Cref{thm:imJwelldefined}.
\end{cnstr}

\begin{lem} \label{lem:toda-basic}
The kernel of the map
$$\pi_{8n-1} (\mathbb{S}) \to \pi_{8n-1} (\MO \langle 4n \rangle)$$
is generated by $\J_{8n-1}$ and the Toda bracket $z$ of \Cref{cnstr:toda-basic}.
\end{lem}

\begin{proof}
The nullhomotopies $f$ and $b$ combine to give a nullhomotopy of the composite
$$
\begin{tikzcd}[column sep = huge]
\bS^{8n-2} \arrow{r}{2(x \otimes x)} & \sOf^{\otimes 2} \arrow{r}{1 \otimes J -m} & \sOf,
\end{tikzcd}
$$
which is exactly the data of a lift of $2(x \otimes x)$ to a class in
\[
\begin{tikzcd} [column sep = huge]
    \pi_{8n-2}\mathrm{fib}(\sOf^{\otimes 2} \arrow{r}{1 \otimes J - m} & \sOf) = \pi_{8n-2} \left( \Sigma^{-1} P \right) \cong \pi_{8n-1} (P).
\end{tikzcd}
\]
The conclusion therefore follows from \Cref{lem:Reduction1}.
\end{proof}

Our strategy will be to choose the nullhomotopies $f$ and $b$, or equivalently the lift $\ell$ in Lemma \ref{lem:Reduction1}, as judiciously as possible. It will be because of these choices that we will be able to establish our $\HFp$-Adams filtration bounds in \Cref{sec:SyntheticToda}. Let us begin by making a careful choice of the nullhomotopy $b$:

\begin{rec}
Suppose that $R$ is a homotopy commutative ring spectrum, and $r$ an element of $\pi_{2*+1} R$.  Then the graded commutativity of $\pi_*(R)$ ensures that $2r^2 =0$ in $\pi_{4*+2}(R)$.
  A small part of the data of an $\E_\infty$-structure on $R$ is a \emph{canonical} nullhomotopy of $2r^2$.
  Indeed, given $r : \bS^{2n+1} \to R$, there is a canonical factorization of $r^2$ as
  \[\bS^{4n+2} \to D_2 (\bS^{2n+1}) \to D_2 (R) \to R.\]
  The canonical nullhomotopy of $2r^2$ arises from fixing an identification of the $(4n+3)$-skeleton of $D_2 (\bS^{2n+1})$ with $\Sigma^{4n+2} C(2)$.
\end{rec}

\begin{cnstr} \label{cnstr:toda-main-preskeleton}
Let $h$ denote the canonical nullhomotopy of $2J(x)^2$ that arises from the fact that $J(x) \in \pi_{4n-1} \bS$ is an element in the odd degree homotopy of the $\E_{\infty}$-ring $\bS$.  Let $g$ denote the canonical homotopy $J(xJ(x))\simeq J(x)^2$ that arises from $J$ being a map of right $\bS$-modules, and let $f$ denote a completely arbitrary nullhomotopy of $2xJ(x)$.  Then we may form the following diagram,
$$
\begin{tikzcd}[column sep = huge]
\mathbb{S}^{8n-2} \arrow{r}{2}
\arrow[rrdd, bend right=20,""{name=D}] \arrow[rrdd, bend right=20,swap,"0"] \arrow[rr,bend left = 30,""{name=U},"0"]
\arrow[rrdd, bend right=20,""{name=D}] \arrow[rrdd, bend right=20,swap,"0"] \arrow[rr,bend left = 30,""{name=U},"0"]
& \mathbb{S}^{8n-2} \arrow[Leftrightarrow, from=D, "h"] \arrow[Leftrightarrow, from=U, "f"] \arrow{r}{xJ(x)} \arrow[rdd,""{name=MU},"J(x)^2" description] \arrow[rdd,swap, ""{name=MD},"J(x)^2" description]
& \sOf \arrow{dd}{J}  \arrow[bend left=20,Leftrightarrow, from=MU, swap, "g"] \\ \\
& & \mathbb{S}, 
\end{tikzcd}
$$
which composes to give a Toda bracket $w \in \pi_{8n-1} \bS$.
\end{cnstr}

\begin{lem} \label{lem:toda-main-preskeleton}
Let $w$ denote the Toda bracket of Construction \ref{cnstr:toda-main-preskeleton}.
Then the kernel of the map
$$\pi_{8n-1} (\mathbb{S}) \to \pi_{8n-1} (\MO \langle 4n \rangle)$$
is generated by $\J_{8n-1}$ and $w$. 
\end{lem}

\begin{proof}
Composing (whiskering) the homotopy $a$
$$
\begin{tikzcd}[column sep = huge]
\sOf^{\otimes 2} \arrow{dd}{m} \arrow{r}{1 \otimes J} & \sOf \arrow{dd}{J} \\ \\
\sOf \arrow{r}{J} \arrow[Leftrightarrow]{ruu}{a} & \bS.
\end{tikzcd}
$$
along the map $\mathbb{S}^{8n-2} \stackrel{x \otimes x}{\longrightarrow} \sOf^{\otimes 2}$ yields the diagram
$$
\begin{tikzcd}[column sep = huge]
\bS^{8n-2} \arrow{dd}{x^2} \arrow{r}{xJ(x)} \arrow[rdd,""{name=MU},"J(x)^2" description] \arrow[rdd,swap, ""{name=MD},"J(x)^2" description] &
\sOf \arrow{dd}{J} \arrow[bend left=10,Leftrightarrow, from=MU, swap, "g"] \\ \\
\sOf \arrow{r}{J} \arrow[bend left=10,Leftrightarrow, from=MD, swap, "c"]  & \bS. 
\end{tikzcd}
$$
Here, $g$ is the homotopy from Construction \ref{cnstr:toda-main-preskeleton}, and $c$ is the natural homotopy arising from the structure of $J$ as a ring homomorphism.  Consider now the slightly extended diagram 
$$
\begin{tikzcd}[column sep = huge]
\bS^{8n-2} \arrow{r}{2} &\bS^{8n-2} \arrow{dd}{x^2} \arrow{r}{xJ(x)} \arrow[rdd,""{name=MU},"J(x)^2" description] \arrow[rdd,swap, ""{name=MD},"J(x)^2" description] &
\sOf \arrow{dd}{J} \arrow[bend left=10,Leftrightarrow, from=MU, swap, "g"] \\ \\
&\sOf \arrow{r}{J} \arrow[bend left=10,Leftrightarrow, from=MD, swap, "c"]  & \bS. 
\end{tikzcd}
$$
To put ourselves in the situation of Lemma \ref{lem:toda-basic}, we must choose a nullhomotopy $f$ of $2xJ(x)$ as well as a nullhomotopy $b$ of $2x^2$.  The result follows from choosing $b$ to the canonical nullhomotopy of $2x^2$ arising from the fact that $\sOf$ is a (non-unital) $\mathbb{E}_\infty$-ring spectrum.  Since $J$ is naturally a map of $\mathbb{E}_\infty$-rings, and not just of $\mathbb{A}_\infty$-rings, this canonical nullhomotopy of $2x^2$ will compose with $c$ to be the canonical nullhomotopy $h$ of $2J(x)^2$.
\end{proof}

We record one final technical reduction to end this section.

\begin{dfn} \label{dfn:skeleton}
Let
	$$M \longrightarrow \Sigma^{\infty} \Oo \mathrm{\langle 4n-1 \rangle}$$
	denote the inclusion of an $(8n-1)$-skeleton of $\Sigma^{\infty} \Oo \mathrm{\langle 4n-1 \rangle}$. By the inclusion of an $(8n-1)$-skeleton, we mean in particular that the induced map
	$$\mathrm{H}_*(M;\F_p) \longrightarrow \mathrm{H}_*(\Sigma^{\infty} \Oo \mathrm{\langle 4n-1 \rangle};\F_p)$$
is an isomorphism for $* < 8n-1$ and a surjection for $* = 8n-1$, and that $\mathrm{H}_*(M;\F_p) \cong 0$ for $* > 8n-1$.
The generator $x \in \pi_{4n-1} \left(\sOf \right)$ is the image of some class in $\pi_{4n-1} M$, which by abuse of notation we also denote by $x$.  We additionally abuse notation by using $J$ to denote the composite map $$M \longrightarrow \Sigma^{\infty} \Oo \mathrm{\langle 4n-1 \rangle} \stackrel{J}{\longrightarrow} \mathbb{S}.$$
\end{dfn}

\begin{lem} \label{lem:toda-main}
Let $h$ denote the canonical nullhomotopy of $2J(x)^2$ that arises from the fact that $J(x) \in \pi_{4n-1} \bS$ is an element in the odd degree homotopy of the $\E_{\infty}$-ring $\bS$.  Let $g$ denote the canonical homotopy $J(xJ(x))\simeq J(x)^2$ that arises from $J$ being a map of right $\bS$-modules, and let $f$ denote a completely arbitrary nullhomotopy of $2xJ(x)$.  Then we may form the following diagram,
$$
\begin{tikzcd}[column sep = huge]
\mathbb{S}^{8n-2} \arrow{r}{2}
\arrow[rrdd, bend right=20,""{name=D}] \arrow[rrdd, bend right=20,swap,"0"] \arrow[rr,bend left = 30,""{name=U},"0"]
\arrow[rrdd, bend right=20,""{name=D}] \arrow[rrdd, bend right=20,swap,"0"] \arrow[rr,bend left = 30,""{name=U},"0"]
& \mathbb{S}^{8n-2} \arrow[Leftrightarrow, from=D, "h"] \arrow[Leftrightarrow, from=U, "f"] \arrow{r}{xJ(x)} \arrow[rdd,""{name=MU},"J(x)^2" description] \arrow[rdd,swap, ""{name=MD},"J(x)^2" description]
& M \arrow{dd}{J}  \arrow[bend left=20,Leftrightarrow, from=MU, swap, "g"] \\ \\
& & \mathbb{S}, 
\end{tikzcd}
$$
which composes to give a Toda bracket $w \in \pi_{8n-1} (\bS)$.
The kernel of the map
$$\pi_{8n-1} (\mathbb{S}) \to \pi_{8n-1} (\MO \langle 4n \rangle)$$
is generated by $\J_{8n-1}$ and $w$. 
\end{lem}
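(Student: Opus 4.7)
The plan is to reduce \Cref{lem:toda-main} directly to \Cref{lem:toda-main-preskeleton} by transporting the Toda bracket along the skeletal inclusion $i: M \to \Sigma^{\infty} \Oo \langle 4n-1 \rangle$. The key observation is that the cofiber of $i$ is concentrated in degrees $\geq 8n$, so $i$ induces an isomorphism on $\pi_{k}$ for $k \leq 8n-2$ and a surjection on $\pi_{8n-1}$. In particular, the class $xJ(x) \in \pi_{8n-2}(\sOf)$ of \Cref{cnstr:toda-main-preskeleton}, defined using the right $\bS$-module structure on $\sOf$, corresponds under this isomorphism to a class $xJ(x) \in \pi_{8n-2}(M)$ defined analogously from the right $\bS$-module structure on $M$. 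Thus the map $xJ(x): \mathbb{S}^{8n-2} \to M$ makes sense, and its composition with $i$ recovers the original map to $\sOf$.

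Next, I would check that each of the three homotopies in the Toda bracket is compatible with $i$. A nullhomotopy $f$ of $2xJ(x)$ in $M$ pushes forward along $i$ to a nullhomotopy $i \circ f$ of $2xJ(x)$ in $\sOf$. The homotopy $g: J \circ (xJ(x)) \simeq J(x)^2$ arises naturally in either $M$ or $\sOf$ from $J$ being a map of right $\bS$-modules, and the two versions agree after composing with $i$ (since the $M$-version of $J$ is by definition the composite $J \circ i$). The canonical nullhomotopy $h$ of $2 J(x)^{2}$ lives purely in $\bS$ and does not depend on the choice of intermediate spectrum.

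Assembling the diagram with the nullhomotopies $(f, g, h)$ in the $M$-version and with $(i \circ f, g, h)$ in the $\sOf$-version produces the same element of $\pi_{8n-1}(\bS)$: both Toda brackets are computed as a loop in the mapping space $\mathrm{Hom}_{*}(\mathbb{S}^{8n-2}, \bS)$, and the two loops are equal on the nose since the only difference between the two diagrams is the insertion of $i$ into the middle, which is absorbed by the factorization $J = J_{\sOf} \circ i$. Consequently, if the Toda bracket $w$ defined using $M$ is congruent to $0$ modulo $\J_{8n-1}$, the same is true of the corresponding Toda bracket $w'$ of \Cref{cnstr:toda-main-preskeleton} for the particular choice $i \circ f$. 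An appeal to \Cref{lem:toda-main-preskeleton} then finishes the proof.

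This argument is essentially bookkeeping: no new homotopical input is required beyond the connectivity estimate on the cofiber of $i$. The only item deserving care is to verify that the right $\bS$-module structure on $M$ and the accompanying homotopy $g$ can be extracted from the $\sOf$-picture in a manner compatible with $i$; this follows because $M$ is a module over $\bS$ (as is every spectrum) and $J \colon M \to \bS$ is tautologically a module map, so the identification $J(xJ(x)) \simeq J(x)^{2}$ in $M$ is obtained by restricting the corresponding identification in $\sOf$ along $i$.
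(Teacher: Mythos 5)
Your proposal is correct and follows the same strategy as the paper: use the skeletal inclusion $i\colon M \to \sOf$ to transport the $M$-diagram to the $\sOf$-diagram of \Cref{cnstr:toda-main-preskeleton}, exploiting that $i$ is an isomorphism on $\pi_{8n-2}$ (so $2xJ(x)$ vanishes in $\pi_{8n-2}(M)$ and $f$ pushes forward), that $i$ is a map of right $\bS$-modules (so $g$ is compatible), and that $h$ lives entirely in $\bS$. The paper's proof is terser but hits exactly these three points, so the only difference is the level of detail.
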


\begin{proof}
    Since $M$ is an $(8n-1)$-skeleton, $2xJ(x)$ is trivial not just in $\pi_{8n-2} \left(\sOf \right)$, but also in $\pi_{8n-2} (M)$.  A nullhomotopy $f$ of $2x J(x)$ inside of $\pi_{8n-2} (M)$ in particular induces such a nullhomotopy in $\pi_{8n-2} \left( \sOf \right)$.  Also, the map $M \to \sOf$ is a map of right $\mathbb{S}$-modules (since it is a map of spectra), and so the homotopy $g$ from this lemma composes with the inclusion of $M$ to give the homotopy $g$ of Construction \ref{cnstr:toda-main-preskeleton}.
\end{proof}





\section{The Galatius \& Randal-Williams conjecture} \label{sec:Finale}
In this section, we will prove Theorem \ref{thm:intro-main} assuming two results from later in the paper.
Recall our standing assumption that $n \ge 3$ is a positive integer.  In Sections \ref{sec:ThomPushout}-\ref{sec:Toda}, we studied the unit map
$$\pi_{8n-1} \bS \to \pi_{8n-1} \MO \langle 4n \rangle.$$
In Lemma \ref{lem:unit-surjectivity}, we showed that this map is surjective.  In Lemma \ref{lem:Reduction1}, we showed that the subgroup $\mathcal{J}_{8n-1}$ is in the kernel of this map.\footnote{We remark that this and the preceeding statement are not original to us and may be proven using classical tools of geometric topology---see \Cref{rmk:Qdef}} Furthermore, modulo $\mathcal{J}_{8n-1}$, every element in the kernel is an integer multiple of a single class, which by Lemma \ref{lem:toda-main} is given by the Toda bracket $w$.

Our task here is to show that, for $n \geq 32$, this element $w$ is trivial modulo $\mathcal{J}_{8n-1}$. Our strategy will be to prove, separately for each prime number $p$, that $w$ is trivial after $p$-localization. 

\begin{thm} \label{thm:w=0}
  Fix a prime number $p$.  The element
  $$w \in (\pi_{8n-1} \bS) / \J_{8n-1}$$
  is $p$-locally trivial if any of the following conditions are met:
  \begin{itemize}
  \item $p > 3$.
  \item $n \geq 32$ and $p=3$.
  \item $n \geq 17$ and $p=2$.
  \end{itemize}
\end{thm}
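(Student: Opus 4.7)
The plan is to treat each prime $p$ separately. For each $p$, I would extract a lower bound on the $\HFp$-Adams filtration of $w$ from Theorem \ref{thm:AdamsBound} (proved later in \Cref{sec:SyntheticToda}) and compare it against an upper bound on the $\HFp$-Adams filtrations attained by classes in $\pi_{8n-1}(\Ss^{\wedge}_p)$ that lie outside $\J_{8n-1}$ (respectively, outside the subgroup generated by $\J_{8n-1}$ and the $\mu$-family when $p = 2$). When the lower bound strictly exceeds the upper bound, $w$ must lie in the corresponding subgroup, and by \Cref{lem:toda-main} this suffices to conclude Theorem \ref{thm:intro-main} after $p$-localization.

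For primes $p > 3$, I would use Burklund's Theorem \ref{thm:Burklund}(1) (or equivalently the classical bound of Gonz\'alez) as the upper bound. Since the lower bound from Theorem \ref{thm:AdamsBound} is linear with slope approximately twice that of Stolz's earlier estimate, it dominates these upper bounds for all $n \geq 3$, yielding the first bullet of the theorem. For $p = 3$, Burklund's sharper Theorem \ref{thm:Burklund}(2) gives $\Gamma_3(8n-1) \leq \tfrac{25}{184}(8n-1) + O(1)$; comparing this with the lower bound from Theorem \ref{thm:AdamsBound} and tracking the additive constants carefully, the threshold $n \geq 32$ drops out of the numerics. Here the Appendix's improvement on Andrews's slope-$1/5$ bound is essential, since Andrews's bound alone would force $n$ to be much larger.

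For $p = 2$, the same filtration comparison using the Davis--Mahowald bound on $\Gamma_2$ only shows that $w$ lies in the subgroup of $\pi_{8n-1}(\Ss)$ generated by $\J_{8n-1}$ and the $\mu$-family. However, at $p=2$ the $\mu$-family is supported in degrees congruent to $1 \pmod 8$, whereas $8n - 1 \equiv 7 \pmod 8$. Consequently the $\mu$-family contributes nothing in degree $8n-1$, and $w \in \J_{8n-1}$ automatically. The threshold $n \geq 17$ emerges from the numerics of the filtration inequality.

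The main obstacle is calibrating the bounds at $p = 3$: the lower bound on the $\mathrm{H}\mathbb{F}_3$-Adams filtration of $w$ must beat $\tfrac{25}{184}(8n-1) + O(1)$ for all $n \geq 32$, and achieving the improved slope $\tfrac{25}{184}$ requires the new $\mathrm{BP}\langle n \rangle$-based Adams spectral sequence techniques developed in the Appendix, together with careful bookkeeping of constants throughout the synthetic-spectra analysis of \Cref{sec:SyntheticToda}. The $p = 2$ and $p > 3$ cases are comparatively clean: the former benefits from a degree coincidence that kills the $\mu$-family contribution, while the latter's upper bounds grow slowly enough that the lower bound dominates without a sharp numerical threshold.
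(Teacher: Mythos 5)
Your overall strategy — comparing a lower bound on the $\HFp$-Adams filtration of $w$ against an upper bound $\Gamma_p(8n-1)$ prime by prime, using the degree observation to kill the $\mu$-family contribution at $p=2$ — matches the paper's. The $p=2$ and $p=3$ cases are handled essentially as in the paper, including the reliance on the Appendix to get slope $25/184$ at $p=3$ and the numerical threshold $n\geq 32$.

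However, there is a genuine gap in your treatment of primes $p > 3$. You claim that the lower bound from \Cref{thm:AdamsBound} ``dominates these upper bounds for all $n \geq 3$.'' This is false. The lower bound is $2N_p - 1$ where for odd $p$ one has $N_p = \lfloor 4n/(2p-2) \rfloor - \lfloor \log_p(4n) \rfloor$, and for $n$ small relative to $p$ this is tiny (for example, $N_5 = 0$ when $n=3$, so the lower bound is $-1$, which is vacuous). The paper's \Cref{lem:logs} only establishes $A_p > B_p$ for $p = 5$ and $n \geq 16$, for $p=7$ and $n \geq 21$, and for $p \geq 11$ and $n \geq 2(2p-2)$. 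For the remaining cases — small $n$ at $p \geq 5$ — the filtration comparison says nothing, and one must instead show directly that $\pi_{8n-1}\Ss_{(p)}$ is generated by the image of $J$: for $p = 5, 7, 11, 13$ this is done by inspecting the known low-dimensional $p$-local stable stems (the handful of cokernel-of-$J$ generators $\beta_1, \alpha_1\beta_1, \dots$ all lie in degrees not congruent to $-1$ mod $8$), and for $p \geq 17$ one uses that the first element of $\coker(J)$ is $\beta_1$ in stem $2p^2-2p-2$, which exceeds $8n-1$ in the relevant range. Without this supplementary case analysis, the first bullet of the theorem is not established.
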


\begin{cnv}
For the rest of this section we will work $p$-locally for a fixed prime number $p$.  For example, we use $\pi_*\Ss$ to denote $\pi_*\Ss_{(p)}$.
\end{cnv}

The proof that $w \in \mathcal{J}_{8n-1}$ will proceed by using two results from later in the paper.   These results respectively
\begin{enumerate}
    \item establish a lower bound on the $\HFp$-Adams filtration of $w$, and
    \item exhibit an upper bound on the $\HFp$-Adams filtrations of elements of $\coker (J)$.
\end{enumerate}
To explain further, we recall the following definition, which appeared in the statement of \Cref{thm:Burklund}:

\begin{dfn} \label{dfn:h}
  For each prime number $p>2$ and each integer $k>0$, let $\Gamma_p(k)$ denote the minimal $m$ such that every $\alpha \in \pi_{k}\Ss_{(p)}$ with $\HFp$-Adams filtration strictly greater than $m$ is in the image of $J$.
  Similarly, let $\Gamma_2(k)$ denote the minimal $m$ such that every $\alpha \in \pi_{k}\Ss_{(2)}$ with $\HFt$-Adams filtration strictly greater than $m$ is in the subgroup generated by the image of $J$ and the $\mu$-family.
\end{dfn}

\begin{rmk}
At $p=2$, the elements of $\pi_*\Ss$ in the $\mu$ family are of degrees $1$ and $2$ modulo $8$, and in particular do not occur in degree $8n-1$.  Thus, an element in $\pi_{8n-1}\Ss_{(p)}$ with $\HFt$-Adams filtration greater than $\Gamma_p(8n-1)$ is automatically in the image of $J$.
\end{rmk}

If we let $AF(w)$ denote the $\HFp$-Adams filtration of (some choice of) $w$, then it will suffice to show that
\begin{align}\label{eqn:BIG} \Gamma_p (8n-1) < AF(w). \end{align}
\Cref{sec:SyntheticToda} will be devoted to establishing a lower bound on $AF(w)$. To state this bound, we establish additional notation.

\begin{dfn}
We define the integer $N_2$ by the formula
\[N_2 = h(4n-1) - \lfloor \log_2 (8n) \rfloor + 1,\]
where $h(k)$ is the number of integers $0 < s \leq k$ which are congruent to $0,1,2$ or $4$ mod $8$.
For an odd prime $p$, we define 
\[N_p = \left\lfloor \frac{4n}{2p-2} \right\rfloor - \left\lfloor \log_p (4n) \right\rfloor.\]
\end{dfn}

\begin{thm}[Proven as \Cref{thm:AdamsBound}]\label{thm:filt-bound-finale}
There exists a choice of $f$ in the statement of Lemma \ref{lem:toda-main} such that the $\mathrm{H}\F_p$-Adams filtration of the Toda bracket $w$ is at least $2N_p-1$.
\end{thm}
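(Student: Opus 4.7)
The plan is to use the theory of synthetic spectra from \Cref{sec:SyntheticReview} to lift the Toda bracket construction of \Cref{lem:toda-main} to the category $\mathrm{Syn}_{\mathrm{H}\mathbb{F}_p}$, where Adams filtration is tracked by the synthetic bigrading. The heuristic underlying the bound $2N_p - 1$ is that $w$ is morally a Toda bracket of the form $\langle J, xJ(x), 2 \rangle$ mediated by the canonical $\mathbb{E}_\infty$-nullhomotopy of $2J(x)^2$. Since $J(x)$ lies in the image of $J$, its $\mathrm{H}\mathbb{F}_p$-Adams filtration is at least $N_p$, which forces $J(x)^2$ to sit in filtration at least $2N_p$, with Toda bracket formation contributing the final $-1$.

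First, one verifies that $J(x) \in \pi_{4n-1} \mathbb{S}$ has $\mathrm{H}\mathbb{F}_p$-Adams filtration at least $N_p$. Since $x \in \pi_{4n-1}\sOf$ is the suspension of an unstable class, $J(x)$ lies in $\mathcal{J}_{4n-1}$, and the numbers $N_p$ were chosen precisely so as to lower-bound the $\mathrm{H}\mathbb{F}_p$-Adams filtration of the standard generator of the image of $J$ in this degree. At odd primes this is the standard count from the $\alpha$-family analysis, while at $p=2$ the function $h(4n-1) - \lfloor \log_2(8n) \rfloor + 1$ encodes the known pattern of $\mathrm{H}\mathbb{F}_2$-Adams filtrations of image-of-$J$ classes modulo $8$.

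Next, one lifts the Toda bracket diagram to $\mathrm{Syn}_{\mathrm{H}\mathbb{F}_p}$. Using the lax symmetric monoidal functor $\nu$, one produces synthetic lifts $\nu(M) \to \nu(\mathbb{S})$ of $J$, synthetic lifts of $x$, $xJ(x)$, and $J(x)$ of the claimed bigradings, and a synthetic homotopy lifting $g$ (which comes formally from the right $\mathbb{S}$-module structure of $J$). The crucial structural input is that the canonical nullhomotopy $h$ of $2J(x)^2$, which arises from the $\mathbb{E}_\infty$-structure on $\mathbb{S}$, lifts to a synthetic nullhomotopy of bidegree appropriate to filtration $2N_p$; this is because $\nu(\mathbb{S})$ inherits a compatible $\mathbb{E}_\infty$-structure, so the structural identity $2r^2 \simeq 0$ for $r$ of odd degree refines to a synthetic identity respecting the filtration of $r$. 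Choosing $f$ to be a synthetic nullhomotopy of $2\, xJ(x)$ (which exists because $xJ(x)$ already has synthetic filtration at least $N_p$, hence its double has a synthetic trivialization) and then applying $\tau^{-1}$ recovers a valid choice of $f$ in the classical statement of \Cref{lem:toda-main}.

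Finally, one assembles the synthetic lifts of $f$, $h$, and $g$ into a synthetic loop whose image under $\tau^{-1}$ detects $w$, and a standard Toda bracket calculation inside $\mathrm{Syn}_{\mathrm{H}\mathbb{F}_p}$ shows that this loop lives in synthetic bidegree corresponding to Adams filtration at least $2N_p - 1$. The principal obstacle in carrying this out is the bookkeeping needed to verify that the canonical $\mathbb{E}_\infty$-nullhomotopy $h$ genuinely has the claimed synthetic filtration: one must understand the $\mathbb{E}_\infty$-structure on $\nu(\mathbb{S})$ well enough to see that the canonical witness of $2r^2 \simeq 0$ for odd-degree $r$ behaves, filtration-wise, like a two-fold product rather than a one-fold one. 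Making this precise, and ensuring that all three nullhomotopies splice together in $\mathrm{Syn}_{\mathrm{H}\mathbb{F}_p}$ without filtration loss beyond the unavoidable $-1$, is the main technical content of \Cref{sec:SyntheticToda}.
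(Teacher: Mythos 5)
Your proposal matches the paper's overall architecture: lift the Toda-bracket diagram to $\Syn_{\HFp}$ via a synthetic refinement of $J$ of weight $N_p$, take $\tilde{g}$ from the module structure of $J$ and $\tilde{h}$ from the $\mathbb{E}_\infty$-structure on $\Ss^{0,0}$, form the synthetic bracket in $\pi_{8n-1,8n+2N_p-2}(\Ss^{0,0})$, and read off the bound $2N_p-1$ from \Cref{cor:synth-filt} after applying $\tau^{-1}$. The filtration bound on $J|_M$ is indeed the content of \Cref{lemm:filtration}, deduced from Stong's and Singer's cohomology computations.

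However, there is a genuine gap in your justification for the existence of the synthetic nullhomotopy $\tilde{f}$. You write that $\tilde{f}$ ``exists because $xJ(x)$ already has synthetic filtration at least $N_p$, hence its double has a synthetic trivialization.'' This does not follow. First, $xJ(x)=0$ in $\pi_{8n-2}(\sOf)$ by \Cref{lem:xJ(x)null}, so its Adams filtration is not the relevant quantity; the relevant lift $\nu(x)y$ is $\tau$-power torsion, and a priori could be a \emph{nonzero} $\tau$-torsion class in the required bidegree. Second, high filtration of an element, or even knowing it vanishes after $\tau$-inversion, does not produce a nullhomotopy of the correct weight: one needs the actual vanishing of a specific bigraded homotopy group. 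What the paper actually proves is \Cref{thm:vanishinggroup}, the statement that $\pi_{8n-2,8n+2N_p-2}(\nu M)=0$. This is established through \Cref{lemm:ctaugoodenough} together with an explicit computation of the $\HFp$-Adams $\mathrm{E}_2$-page of $M$ (\Cref{prop:E2compute} at $p=2$, and a comparison to $\Sigma^{-1}\tau_{\geq 4n}ko$ at odd primes), which in turn relies on the Goodwillie-tower analysis of $\sOf$ from \Cref{sec:Goodwillie} and the cohomological surjectivity result \Cref{lemm:cohsurj2}. This computational vanishing is a load-bearing input to the argument, not a formal consequence of the filtration of $J$, and your proposal needs to supply it.

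Relatedly, you identify the ``main technical content'' as understanding the $\mathbb{E}_\infty$-structure on $\nu\Ss$ well enough to see that the canonical witness of $2r^2\simeq 0$ is filtration-doubling. In fact this is routine once $\nu$ is known to be (lax) symmetric monoidal: if $y\in\pi_{4n-1,4n+N_p-1}(\Ss^{0,0})$ has odd topological degree, then $y^2\in\pi_{8n-2,8n+2N_p-2}(\Ss^{0,0})$ and the $\mathbb{E}_\infty$-structure gives a canonical nullhomotopy of $2y^2$ at that weight. The genuine technical work lies instead in \Cref{lemm:filtration} and in \Cref{thm:vanishinggroup}.
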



\begin{rmk}
  Our use of Adams filtrations in the proof of \Cref{thm:w=0} is inspired by arguments of Stolz in \cite{StolzBook}. 
  In partiular, it follows from a theorem of Stolz that there is a lower bound of size $N_2$ on the $\HFt$-Adams filtration of $w$ \cite[Satz 12.7]{StolzBook}.
  The doubling of Stolz's lower bound is one of the main contributions of this paper.
\end{rmk}

Additionally, upper bounds on $\Gamma_p (8n-1)$ have been previously studied by Davis--Mahowald \cite{DM3} (at the prime $2$) and Gonz\'alez \cite{Gonzalez} (at odd primes). At the prime $3$, we will require a novel bound established by the first named author in \Cref{sec:Appendix}.

\begin{thm} \label{thm:gamma-upper}
  We have the following upper bounds on the function $\Gamma_{p} (8n-1)$:
  \begin{enumerate}
    \item (Davis--Mahowald, \cite[Corollary 1.3]{DM3})
      \[\Gamma_2 (8n-1) \leq \frac{3(8n-1)}{10}+7+v_2 (n). \]
    \item (Gonz\'alez, \cite[Theorem 5.1]{Gonzalez}) Assume $p \neq 2$. Then:
      \[\Gamma_p (8n-1) \leq 3+\frac{(2p-1)(8n-1)}{(2p-2)(p^2-p-1)}. \]
    \item (Burklund, \Cref{thm:app-main} (4))\footnote{The statement of \Cref{thm:app-main}(4) in \Cref{sec:Appendix} contains a term depending on a function $\ell(k)$, but this function vanishes by definition when $k=8n-1$.}
      \[\Gamma_3 (8n-1) \leq \frac{25(8n-1)}{184}+19+\frac{1133}{1472}.\]
  \end{enumerate}
\end{thm}

\begin{rmk}
\Cref{thm:AdamsBound} and \Cref{thm:app-main} are the only results from the latter half of this paper that are required to settle the Galatius \& Randal--Williams conjecture.  The paper is structured so that the reader willing to assume \Cref{thm:app-main} need not read past \Cref{sec:SyntheticToda} to understand the proof of \Cref{thm:intro-main}.
\end{rmk}

%
At this point, the main work ahead of us in this section is to understand when the bound of \Cref{thm:filt-bound-finale} exceeds the bounds of \Cref{thm:gamma-upper}. To this end, we introduce some compact notation.


\begin{ntn}
  Let
  $$ A_p := 2N_p - 1\ \ \text{ and }\ \ B_p := \begin{cases} \frac{3(8n-1)}{10} + 7 + v_2(n), & p=2 \\ \frac{25(8n-1)}{184} + 19 + \frac{1133}{1472}, & p=3 \\ 3 + \frac{(2p-1)(8n-1)}{(2p-2)(p^2 - p - 1)}, & p \geq 5 \end{cases}. $$
\end{ntn}

\begin{lem} \label{lem:logs}
  The element $w \in (\pi_{8n-1} \mathbb{S}) / \J_{8n-1}$ is $p$-locally trivial if $A_p > B_p$,
  which occurs for 
  \begin{itemize}
  \item $p=2$ and $n \geq 17$,
  \item $p=3$ and $n \geq 32$,
  \item $p=5$ and $n \geq 16$,
  \item $p=7$ and $n \geq 21$,
  \item $p \ge 11$ and $n \geq 2(2p-2)$.
  \end{itemize}
\end{lem}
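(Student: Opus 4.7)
The proof will proceed by combining the lower bound on $AF(w)$ from \Cref{thm:filt-bound-finale} with the upper bounds on the $\HFp$-Adams filtration of elements outside $\J_{8n-1}$ (or $\J_{8n-1} + \mu$ when $p=2$) provided by \Cref{thm:DM-filt-bound}, \Cref{thm:G-filt-bound}, and \Cref{thm:B-filt-bound}. As noted right after these statements, the splitting of $\pi_{8n-1}\bS_{(p)}$ as a direct sum of $(\J_{8n-1})_{(p)}$ and the kernel of the $K(1)$-local Hurewicz homomorphism \cite[VIII.4.2]{EinfBook} means the kernel descriptions of those upper-bound theorems agree, in our range, with $\Gamma_p(8n-1)$ as in \Cref{dfn:h}. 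Since $AF(w) \geq A_p$ by \Cref{thm:filt-bound-finale} and any class of filtration strictly greater than $B_p$ lies in $\J_{8n-1}$ (resp.\ $\J_{8n-1} + \mu$), it suffices to check $A_p > B_p$ for each of the listed $(p,n)$. The $\mu$-family contributes only in degrees $1, 2 \pmod 8$, so once $w$ lands in the image of $J$ plus $\mu$, it automatically lies in $(\J_{8n-1})_{(2)}$ since $8n - 1 \not\equiv 1,2 \pmod 8$.

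The remaining content is therefore a numerical verification. For $p=2$ and $n$ odd, $h(4n-1) = 2n$, while for $n$ even $h(4n-1) = 2n - 1$, so $N_2 \geq 2n - 1 - \lfloor \log_2(8n) \rfloor + 1 = 2n - \lfloor \log_2(8n) \rfloor$ in all cases, giving $A_2 \geq 4n - 2\lfloor \log_2(8n) \rfloor - 1$. Meanwhile $B_2 = \tfrac{3(8n-1)}{10} + 7 + v_2(n)$ grows at rate $\tfrac{12n}{5}$. For $p$ odd, $N_p \geq \tfrac{4n}{2p-2} - 1 - \lfloor \log_p(4n) \rfloor$, so $A_p \geq \tfrac{4n}{p-1} - 2\lfloor \log_p(4n) \rfloor - 3$, while $B_p$ grows like $\tfrac{(2p-1) \cdot 8n}{(2p-2)(p^2-p-1)} = \tfrac{(2p-1) \cdot 4n}{(p-1)(p^2-p-1)}$. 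The ratio of leading terms of $A_p$ to $B_p$ is
\[
\frac{1/(p-1)}{(2p-1)/((p-1)(p^2-p-1))} = \frac{p^2-p-1}{2p-1} \xrightarrow{p \to \infty} \infty,
\]
so the inequality $A_p > B_p$ is easily satisfied for large $p$ and even easier for large $n$. The plan is to just substitute and verify.

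Concretely, for each of the five cases I would write out the two quantities at the threshold value of $n$ and confirm $A_p > B_p$ there; monotonicity in $n$ of $A_p - B_p$ (which holds because each $N_p$ is asymptotically linear in $n$ with slope strictly greater than the slope of $B_p$, and the logarithm/valuation correction terms are negligible past each threshold) finishes the cases $n \geq $ threshold. For example, at $p=3$, $n=32$ one computes $N_3 = 32 - 4 = 28$ so $A_3 = 55$, whereas $B_3 \approx 54.42$, so the inequality just barely holds. At $p=2$, $n=17$, $A_2 = 2(34 - 7 + 1) - 1 = 55$ and $B_2 = 40.5 + 7 + 0 = 47.5$. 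For $p=5,7$ and the general $p \geq 11$ case, the inequalities are checked similarly using $n \geq 4p - 4$.

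The main obstacle is not conceptual but numerical: the bounds at $p=2$ and especially $p=3$ are very tight, and one has to use the integer floor functions in the definitions of $N_p$ carefully, rather than the simpler real approximations, to obtain $A_p > B_p$ exactly at the stated thresholds. One must also take a bit of care with the $v_2(n)$ term in $B_2$, since it can jump; monotonicity of $A_2 - B_2$ must be checked on the relevant residue classes of $n$ modulo small powers of $2$. For $p \geq 11$, the bound $n \geq 2(2p-2)$ is comfortably loose, so the verification there is a one-line estimate using the asymptotic comparison above.
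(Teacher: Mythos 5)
Your overall plan is the same as the paper's: combine Theorem~\ref{thm:filt-bound-finale} with Theorems~\ref{thm:DM-filt-bound}, \ref{thm:G-filt-bound}, \ref{thm:B-filt-bound}, and check $A_p > B_p$ at the stated thresholds together with a monotonicity argument for larger $n$. The spot-checks you did at $(p,n)=(3,32)$ and $(2,17)$ are correct, as is the observation about the $\mu$-family in degree $8n-1$.

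However, the monotonicity step as you've stated it has a genuine gap. You assert that $A_p - B_p$ is monotone in $n$ because ``each $N_p$ is asymptotically linear in $n$ with slope strictly greater than the slope of $B_p$,'' but asymptotic growth rates do not give you exact monotonicity of the integer-valued quantity $A_p - B_p$, and in fact the paper's own table shows $A_2 - B_2$ decreasing at several steps (e.g.\ from $n=23$ to $n=24$ the difference drops from $17.1$ to $13.7$ because $v_2(n)$ jumps and $\lfloor \log_2(8n)\rfloor$ increases). Your proposed repair — ``check residue classes of $n$ modulo small powers of $2$'' — does not work either: the $\lfloor \log_2(8n)\rfloor$ and $\lfloor \log_p(4n)\rfloor$ terms are not periodic in any modulus, so there is no finite collection of residue classes on which $A_p - B_p$ becomes monotone. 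What the paper does instead is sandwich $A_p$ and $B_p$ between \emph{continuous} approximations $\overline{A}_p$, $\overline{B}_p$ satisfying $|A_p - \overline{A}_p| \leq 2$, $B_p \leq \overline{B}_p$, and $\partial(\overline{A}_p - \overline{B}_p)/\partial n > 0$; then it checks the strengthened inequality $A_p > 4 + \overline{B}_p$ at a single value, which propagates to all larger $n$, and handles the remaining small $n$ by an explicit table. You would need to introduce such a smoothing (or some equivalent device) to close this gap; a residue-class reduction alone will not suffice. For $p \ge 11$ the claim is essentially fine in spirit, but you should note that the paper still carries out an explicit two-variable derivative argument rather than a pure asymptotic estimate, since the threshold $n \geq 2(2p-2)$ is a concrete finite bound that must be verified, not a ``for $n$ sufficiently large'' statement.
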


\begin{proof}
  The first claim follows from the preceding discussion.
  Verifying the remaining claims is just a matter of checking inequalities between elementary functions.
  For each of these we follow the same basic strategy:
  \begin{enumerate}
  \item Find a smooth function that acts as a lower bound for $A_p - B_p$.
  \item Argue that the derivative of this function is positive.
  \item Find a point where this lower bound is positive.
  \item Go back and fill in small values of $n$ by directly computing $A_p - B_p$.
  \end{enumerate}
  
  We begin with the $p=2$ case.
  \begin{align*}
    A_2 - B_2
    &= 2 \bigg(h(4n-1) - \lfloor \log_2(8n) \rfloor + 1 \bigg) - 1 - \bigg( \frac{3}{10}(8n-1) + 7 + v_2(n) \bigg) \\
    &\geq (4n - 2) - 2\log_2(8n)  + 1 - \frac{3}{10}(8n-1) - 7 - \log_2(n) \\
    &= \frac{8}{5}n - 3\log_2(n)  - 13.7
  \end{align*}
  Since the quantity on the final line is positive for $n=17$ (it is approximately $1.24$) and its derivative with respect to $n$ is positive for $n \geq 3$ we may conclude that $A_2>B_2$ for $n \geq 17$.

  Next we handle the $p=3$ case, which proceeds in the same fashion as $p=2$.
  \begin{align*}
    A_3 - B_3
    &= 2 \left( \left\lfloor \frac{4n}{4} \right\rfloor - \lfloor \log_3(4n) \rfloor \right) - 1 - \left( \frac{25}{184}(8n-1) + 19 + \frac{1133}{1472} \right) \\
    &\geq 2n - 2\log_3(4n) - 1 - \frac{200}{184}n - 20 \\
    &= \frac{21}{23}n - 2\log_3(4n) - 21      
  \end{align*}
  The quantity on the final line is positive for $n=33$ (it is approximately $0.24$) and its derivative with respect to $n$ is positive for $n \geq 2$, thus we may conclude that $A_3>B_3$ for $n \geq 33$.
  The inequality for the remaining value of $n$ can now be read off from \Cref{tbl:big}.
  \begin{table}
    \begin{center}
      \renewcommand{\arraystretch}{1.1}
      \begin{tabular}{|c||c|c|}\hline
        $n$ & $A_3$ & $B_3$ \\\hline\hline
        31 & 53 & 53.33 \\\hline
        32 & 55 & 54.42 \\\hline
        33 & 57 & 55.50 \\\hline
      \end{tabular}
      \hspace{0.5cm}
      \begin{tabular}{|c||c|c|}\hline
        $n$ & $A_5$ & $B_5$  \\\hline\hline
          16 &   11 & 10.52  \\\hline
          17 &   11 & 10.99  \\\hline
          18 &   13 & 11.47  \\\hline
          19 &   13 & 11.94  \\\hline
          20 &   15 & 12.41  \\\hline
          21 &   15 & 12.89  \\\hline
          22 &   17 & 13.36  \\\hline
          23 &   17 & 13.84  \\\hline
          24 &   19 & 14.31  \\\hline
      \end{tabular}
      \hspace{0.5cm}
      \begin{tabular}{|c||c|c|}\hline
        $n$ & $A_7$ & $B_7$ \\\hline\hline
        20 &  7 & 7.20 \\\hline
        21 &  9 & 7.41 \\\hline
        22 &  9 & 7.62 \\\hline
        23 &  9 & 7.84 \\\hline
        24 &  11 & 8.05 \\\hline
      \end{tabular}
      \hspace{0.5cm}
      \renewcommand{\arraystretch}{1.0}
        \vspace{0.5cm}
        \caption{}
        \label{tbl:big}
    \end{center}
  \end{table}

  We handle the remaining values of $p$ uniformly.
  Let $(p-1)k = n$, then
  \begin{align*}
    A_p - B_p &= 2 \left( \left\lfloor \frac{4n}{2p-2} \right\rfloor - \lfloor \log_{p}(4n) \rfloor \right) - 1 - \left( 3 + \frac{(2p-1)(8n-1)}{(2p-2)(p^2-p-1)} \right) \\
    &\geq 2 \frac{4n}{2p-2} - 2\log_{p}(4n) - 6 - \frac{(2p-1)8n}{(2p-2)(p^2-p-1)} \\
    &\geq 4k - 2\log_{p}(4(p-1)k) - 6 - \frac{(2p-1)4k}{p^2 - p - 1} \\
    &\geq 4k - \frac{(2p-1)4k}{p^2 - p - 1} - 8 - 2\log_{p}(4k). 
  \end{align*}
  Let $C_p$ denote the quantity on the final line.
  For $p \geq 5$ and $k \geq 2$ we have that
  \begin{align*}
    \frac{\partial}{\partial k} C_p
    &= 4 - \frac{(2p-1)4}{p^2 - p - 1} - \frac{2}{\log(p) k} \geq 4 - \frac{8p}{p^2 - 2p} - \frac{2}{\log(p) k} \\
    &\geq 4 - \frac{8}{3} - \frac{2}{2\log(5)} > 0
  \end{align*}  
  and
  \[ \frac{\partial}{\partial p} C_p = 4k\frac{2p^2 - 2p + 3}{(p^2 - p - 1)^2} + \frac{2\log(4k)}{(\log(p))^2 p} = \frac{2k((2p - 1)^2 + 5)}{(p^2 - p - 1)^2} + \frac{2\log(4k)}{(\log(p))^2 p}> 0. \]
  When $p=5$ and $k=6$ the value of $C_p$ is approximately $0.68$ and
  when $p=7$ and $k=4$ the value of $C_p$ is approximately $0.08$.
  Translating back into terms of $p$ and $n$ instead of $k$ this completes the proof of the lemma for $p \geq 11$ and all but finitely many cases for $p=5,7$. 
  A second consultation with \Cref{tbl:big} now completes the proof.
\end{proof}

\begin{proof}[Proof of Theorem \ref{thm:w=0}]
  
  In order to finish the proof of \Cref{thm:w=0} at $p\geq 5$ it will suffice to show that $\pi_{8n-1}\Ss^0_{(p)}$ is generated by the image of $J$ for each $n$ below the bound from \Cref{lem:logs}.
  Through this range the $\mathrm{E}_2$-page of the Adams--Novikov spectral sequence is calculated in \cite[Theorem 4.4.20]{GreenBook} and the spectral sequence degenerates at the $\mathrm{E}_2$-page for degree reasons.

  In \Cref{tbl:small-stems} we list the generators of the cokernel of $J$ in degrees below the bound from \Cref{lem:logs} at primes $5,7,11$ and $13$.
  None of these generators are in a degree congruent to $-1$ modulo 8.
  At primes $p \geq 17$ we argue as follows:
  again using \cite[Theorem 4.4.20]{GreenBook} we know that the first element of $\coker(J)$ at odd primes is $\beta_1$ in the $(2p^2 - 2p -2)$ stem.
  On the other hand, since
  \[8n-1 \leq 16(2p-2) - 1 < 2p^2 - 2p - 2 \]
  the bound from \Cref{lem:logs} is below this point.
\end{proof}
  

  \begin{table}
    \begin{center}
      \renewcommand{\arraystretch}{1.1}
      \begin{tabular}{|c|c|c|c|}\hline
        $p$ & Max $n$ & Max degree & Classes in $\coker(J)$ \\\hline\hline
        5 & 15 & 119 & $ \beta_1,\ \alpha_1\beta_1,\ \beta_1^2,\ \alpha_1\beta_1^2,\ \beta_2,\ \alpha_1\beta_2,\ \beta_1^3 $\hfill\vadjust{} \\\hline
        7 & 20 & 159 & $\beta_1,\ \alpha_1\beta_1$\hfill\vadjust{} \\\hline
        11 & 39 & 311 & $\beta_1,\ \alpha_1\beta_1$\hfill\vadjust{} \\\hline
        13 & 47 & 375 & $\beta_1,\ \alpha_1\beta_1$\hfill\vadjust{} \\\hline             
      \end{tabular}
      \renewcommand{\arraystretch}{1.0}
      \vspace{0.5cm}
      \caption{}
      \label{tbl:small-stems}
    \end{center}
  \end{table}
  





\subsection{A note on the remaining dimensions}
\label{sec:open-problems}\ 

Galatius and Randal--Williams conjecture \cite[Conjecture A]{GRAbelianQuotients} that the map
$$\pi_{8n-1} \bS \to \pi_{8n-1} \MO \langle 4n \rangle$$
has kernel equal to $\mathcal{J}_{8n-1}$ for all $n \ge 1$, and not just for $n >31$.

It is known that the conjecture is true when $n=1$ and when $n=2$ \cite[p. 13]{GRAbelianQuotients}.  Indeed, the case $n=1$ follows from the fact that there is nothing in $\pi_7 \bS$ not in the image of $J$.  When $n=2$, it follows from direct calculation of $\pi_{15} \MO \langle 8 \rangle=\pi_{15} \mathrm{MString}$ as in \cite{GiambalvoO8,GiambalvoO8corrected}.  The methods of this paper first apply when $n \ge 3$.

The first and third named authors returned to this question in \cite{ManSmall} and proved the following theorem, completely resolving the remaining cases of the Galatius and Randal--Williams conjecture:
\begin{thm}[{\cite[Theorem 1.4]{ManSmall}}]
  The kernel of the map
  \[\pi_{2k-1} \bS \to \pi_{2k-1} \MO \langle k \rangle\]
  is equal to $\mathcal{J}_{2k-1}$ if $k \neq 9,12$.
  When $k=9$, it is generated by $\mathcal{J}_{17}$ and $\eta \eta_4 \in \pi_{17} (\bS)$.
  When $k=12$, it is generated by $\mathcal{J}_{23}$ and $\eta^3 \overline{\kappa} \in \pi_{23} (\bS)$.

  In particular, the Galatius and Randal--Williams conjecture is true precisely when $n \neq 3$.
\end{thm}

\section{The proof of Theorem \ref{thm:mainboundary}} \label{sec:FinishingStolz}
In this section, we will prove Theorem \ref{thm:mainboundary}.  Our methods are due to Stephan Stolz \cite{StolzBook}, and we rely heavily on his work.  We are able to improve on Stolz's results by a combination of Theorem \ref{thm:intro-main} and the following:

\begin{thm} \label{thm:mod8}
Suppose that a class $\alpha$ in the $(2k+d)$th homotopy group of the mod $8$ Moore spectrum
    $$\alpha \in \pi_{2k+d}(C(8))$$
has $\mathrm{H}\mathbb{F}_2$-Adams filtration at least $\frac{2k+d}{5}+15$.  Then, if $2k+d \ge 126$, the image of $\alpha$ under the Bockstein map
$$\pi_{2k+d}(C(8)) \to \pi_{2k+d-1}(\mathbb{S})$$
is contained in the subgroup of $\pi_{2k+d-1}(\mathbb{S})$ generated by $\mathcal{J}_{2k+d-1}$ and Adams's $\mu$-family.
\end{thm}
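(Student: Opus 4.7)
The plan is to establish a $v_1$-banded vanishing line of slope $1/5$ in the (modified) $\HFt$-Adams spectral sequence for the mod $8$ Moore spectrum $C(8)$, with an intercept small enough that every class of filtration at least $\tfrac{2k+d}{5}+15$ lies above it once $2k+d\ge 126$. The defining feature of a $v_1$-banded vanishing line, as developed in Section~\ref{sec:BandVan}, is that the synthetic cone above the line is essentially $K(1)$-local: any class in $\pi_{2k+d}C(8)$ lying above the line has image in $\pi_{2k+d}L_{K(1)}C(8)$ controlling its behavior under maps out of $C(8)$. In particular the Bockstein $\partial\alpha \in \pi_{2k+d-1}\Ss$ is then detected in $\pi_{2k+d-1}L_{K(1)}\Ss$, and by the Adams--Mahowald calculation of the $2$-local $K(1)$-local sphere this group is generated, modulo the summand isomorphic to $\Z_2$, by the image of $J$ and the $\mu$-family.

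The bulk of the work is the construction of the vanishing line itself, which proceeds in three stages that mirror the layout of Sections~\ref{sec:Y}--\ref{sec:mod8}. First, for $Y = C(2)\otimes C(\eta)$ one uses Miller's technology from his proof of the height $1$ telescope conjecture together with the explicit computations of Davis--Mahowald to exhibit an honest $v_1$-self-map and a $v_1$-banded vanishing line of slope $1/5$; the slope comes directly from $|v_1^4|/4 = 8/4$ after accounting for the $\mathrm{H}\F_2$-Adams filtration of $v_1^4$, which normalizes to $1/5$ in the Adams grading. Second, one transfers this vanishing line from $Y$ up to $C(2)$ along the cofiber sequence $\Sigma Y \to C(2)\to C(2)$ induced by $\eta$, invoking the general principle proven in Section~\ref{sec:BandVan} that $v_1$-banded vanishing lines are preserved under cofibers of synthetic spectra (this is the step that rigorously recovers the Mahowald result quoted but never published). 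Third, one climbs the tower $C(2)\to C(4)\to C(8)$ via the cofibers $C(2^i)\to C(2^{i+1})\to C(2)$, again using preservation under cofibers at each stage and tracking how the intercept accumulates.

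Once the $v_1$-banded vanishing line for $C(8)$ is in place, the theorem is the direct payoff: the hypothesis on filtration puts $\alpha$ above the line, the banded-vanishing mechanism identifies $\partial\alpha$ with a $K(1)$-locally detected class in $\pi_{2k+d-1}\Ss$, and the structure of $\pi_* L_{K(1)}\Ss$ at $p=2$ provides the required subgroup. The bound $2k+d\ge 126$ absorbs low-dimensional anomalies in the $v_1$-periodic pattern (these are the familiar exceptions involving the Mahowald invariant near the Kervaire dimension) and also makes the explicit intercept $15$ sharp.

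The main obstacle will be the inductive climb from $C(2)$ to $C(4)$ to $C(8)$. The spectrum $C(2)$ admits only a $v_1^4$-self-map, and neither $C(4)$ nor $C(8)$ admits any $v_1$-self-map in the classical sense; the $v_1$-periodic structure survives only in the sense recorded by a $v_1$-banded vanishing line, and each extension in the tower threatens to degrade both the slope and the intercept. Synthetic spectra are the key technical device for making the extension arguments functorial and slope-preserving, while the intercept is controlled by carefully bounding the $\HFt$-Adams filtration of the relevant connecting maps so that the accumulated error remains bounded by $15$. Succeeding at this bookkeeping is precisely what allows the statement for $C(8)$ to match the statement Stolz needed only for $C(2)$.
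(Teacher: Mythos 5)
The overall architecture you describe matches the paper's: establish a $v_1$-banded vanishing line of slope $1/5$ for $Y=C(2)\otimes C(\eta)$ via Miller's square and Davis--Mahowald's computations, propagate it to $C(2)$ and then through the Bockstein tower to $C(8)$, and finally read off the theorem from the $v_1$-banded structure plus the known $K(1)$-local homotopy of the sphere. However, your second stage --- passing from $Y$ to $C(2)$ --- contains a genuine gap. You propose to ``transfer this vanishing line from $Y$ up to $C(2)$ along the cofiber sequence \dots induced by $\eta$, invoking the general principle that $v_1$-banded vanishing lines are preserved under cofibers.'' The relevant cofiber sequence is $\Sigma C(2)\xrightarrow{\eta}C(2)\to Y$; in any of its rotations $C(2)$ appears as \emph{both} the fiber and the total object, so \Cref{prop:band-in-cofiber-seqs} (which requires $v_1$-banded vanishing lines on the two outer terms to produce one on the middle) gives you nothing about $C(2)$ from knowledge of $Y$ alone. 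This is not a formality: a $v_1$-banded vanishing line on $Y$ does not, by any direct cofiber-sequence argument, yield one on $C(2)$.

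The paper's resolution of this point is the splitting of \Cref{lemm:eta-splitting}: since $\wt{\eta}^3 = \wt{4}\wt{\nu}$ in $\pi_{*,*}\Ss^{0,0}_2$ and $\wt{4}$ is null as a self-map of $C(\wt{2})$, one has
\[
C(\wt{2})\otimes C(\wt{\eta}^3)\ \simeq\ C(\wt{2})\ \oplus\ \Sigma^{4,6}C(\wt{2}).
\]
One therefore first climbs the $\eta$-Bockstein tower from $C(\wt{2})\otimes C(\wt{\eta})$ up to $C(\wt{2})\otimes C(\wt{\eta}^2)$ and then $C(\wt{2})\otimes C(\wt{\eta}^3)$ by \Cref{prop:band-in-cofiber-seqs}, and only then extracts $C(\wt{2})$ as a \emph{retract} via \Cref{lemm:band-retract-susp}(1). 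Without some version of this observation, your chain of reductions does not close. (A smaller issue: the cofiber sequence you wrote, ``$\Sigma Y\to C(2)\to C(2)$,'' is not a cofiber sequence; and your derivation of the slope $1/5$ from ``$|v_1^4|/4=8/4$'' is not the correct explanation --- the $1/5$ is the slope governed by the Davis--Mahowald computation of $v_1^{-1}\Ext_{\A_*}(\F_2,\mathrm{H}_*Y)$, equivalently by the mod-$2$ analogue of $1/(p^2-p-1)$, not by $1/|v_1^4|$.)
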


Theorem \ref{thm:mod8} will be proved as Theorem \ref{thm:mod8-main-thm} in the subsequent half of the paper.

\begin{rmk}
    Stolz relied on a similar result for the mod $2$ Moore spectrum, which he attributes to Mahowald \cite[Satz 12.9]{StolzBook}.  Though Mahowald announced such a result in \cite{MahBull}, and again in \cite{MahTokyo} (the reference that Stolz cites), to the best of our knowledge no proof has appeared in print.  Part of our motivation in proving Theorem \ref{thm:mod8} is to fill this gap in the literature. The other motivation is that we obtain stronger geometric consequences from the mod $8$ Moore spectrum.
\end{rmk}

Recall that $\mathrm{MO} \langle k \rangle$ denotes the Thom spectrum of the map
$$\tau_{\ge k} \mathrm{BO} \to \mathrm{BO}.$$
There is a unit map $\mathbb{S} \to \mathrm{MO} \langle k \rangle$, which may be extended to a cofiber sequence
$$\mathbb{S} \to \mathrm{MO} \langle k \rangle \to \mathrm{MO} \langle k \rangle / \mathbb{S} \stackrel{\partial}{\to} \mathbb{S}^1.$$
Stolz constructed \cite[Satz 3.1]{StolzBook} a spectrum $A[k]$ together with a map $$b:A[k] \to \mathrm{MO} \langle k \rangle / \mathbb{S}$$ such that the following is true:

\begin{thm}[{\cite[Lemma 12.5]{StolzBook}}]
Let $k>2$ and $d \ge 0$ be integers.  Suppose that, for every element $\alpha \in \pi_{2k+d}(A[k])$, the image of $\alpha$ under the composite
$$\pi_{2k+d}(A[k]) \stackrel{b_*}{\longrightarrow} \pi_{2k+d} \left( \mathrm{MO} \langle k \rangle / \mathbb{S} \right) \stackrel{\partial_*}{\longrightarrow} \pi_{2k+d}(\mathbb{S}^1) \cong \pi_{2k+d-1}(\mathbb{S})$$
is in $\mathcal{J}_{2k+d-1}$.  Then the boundary of any $(k-1)$-connected, almost closed $(2k+d)$-manifold also bounds a parallelizable manifold.
\end{thm}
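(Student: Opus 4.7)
The plan is to translate the geometric hypothesis into a bordism statement about $\mathrm{MO}\langle k\rangle/\mathbb{S}$, then use Stolz's factorization through $A[k]$ to extract the conclusion about $[\partial M]$.

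First, I would set up the Pontryagin--Thom translation. Given a $(k-1)$-connected, almost closed $(2k+d)$-manifold $M$, the stable normal bundle classifier $\nu_M\colon M \to \mathrm{BO}$ lifts through $\tau_{\ge k}\mathrm{BO} \to \mathrm{BO}$ by obstruction theory (the obstructions live in $H^j(M;\pi_{j-1}\mathrm{BO})$ for $j<k$, which vanish by connectivity). The boundary $\partial M$ is a homotopy sphere, so collapsing it gives a Pontryagin--Thom class
\[ [M] \in \pi_{2k+d}(\mathrm{MO}\langle k\rangle). \]
Under the map $\mathrm{MO}\langle k\rangle \to \mathrm{MO}\langle k\rangle/\mathbb{S}$ we kill the top cell contribution and obtain a class $\overline{[M]} \in \pi_{2k+d}(\mathrm{MO}\langle k\rangle/\mathbb{S})$ which is independent of the auxiliary choices above up to the image of $J$. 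Unwinding the Pontryagin--Thom construction shows that the connecting map $\partial_*\colon \pi_{2k+d}(\mathrm{MO}\langle k\rangle/\mathbb{S}) \to \pi_{2k+d-1}(\mathbb{S})$ sends $\overline{[M]}$ to the class of $\partial M$, modulo $\mathcal{J}_{2k+d-1}$ coming from reframings.

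Next, the main step is to lift $\overline{[M]}$ along $b\colon A[k] \to \mathrm{MO}\langle k\rangle/\mathbb{S}$. Stolz's construction of $A[k]$ is tailored so that exactly those classes arising from $(k-1)$-connected almost closed $(2k+d)$-manifolds admit such a lift. Geometrically, such an $M$ has a handle decomposition with handles only of indices between $k$ and $k+d$ (after standard surgery arguments in the stable range), and $A[k]$ is designed as a CW spectrum whose cells in degree $\le 2k+d$ match these handle types, with attaching maps compatible with $b$. The existence of the lift $\widetilde{[M]} \in \pi_{2k+d}(A[k])$ with $b_*\widetilde{[M]} = \overline{[M]}$ is therefore a handlebody-theoretic cofinality statement: the Thom collapse of any such handle decomposition factors through the $(2k+d)$-skeleton of $A[k]$.

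Given the lift, the proof closes immediately. The hypothesis forces
\[ \partial_* b_* \widetilde{[M]} \in \mathcal{J}_{2k+d-1}, \]
and combined with the identification $\partial_*\overline{[M]} \equiv [\partial M] \pmod{\mathcal{J}_{2k+d-1}}$ from the first step, we conclude $[\partial M] \in \mathcal{J}_{2k+d-1}$, i.e. $\partial M$ bounds a parallelizable manifold. The principal obstacle is the second step, the lifting through $b$: it hinges on Stolz's explicit construction of $A[k]$ and a careful matching of the handle structure of a $(k-1)$-connected almost closed manifold with the cells of $A[k]$. One expects that, modulo bookkeeping, the first and third steps are essentially formal, while the second depends on the geometric content of Stolz's book.
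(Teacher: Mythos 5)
The paper does not prove this statement; it cites it verbatim from Stolz's book as \cite[Lemma 12.5]{StolzBook}, and the surrounding text merely recalls Stolz's construction of $A[k]$ and the map $b$. So there is no argument in the paper to compare yours against; I will evaluate your reconstruction on its own terms.

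Your Step 1 contains a genuine error. You claim that collapsing $\partial M$ gives a Pontryagin--Thom class $[M] \in \pi_{2k+d}(\mathrm{MO}\langle k\rangle)$, and that you then pass to $\pi_{2k+d}(\mathrm{MO}\langle k\rangle/\mathbb{S})$. This is backwards, and in general impossible: $M/\partial M$ is not a closed smooth manifold when $\partial M$ is exotic, and whether the relative class lifts to $\pi_{2k+d}(\mathrm{MO}\langle k\rangle)$ is precisely the question of whether $[\partial M]$ vanishes. The correct move is that the pair $(M,\partial M)$, with its $\langle k\rangle$-structure on $M$ (which exists by the obstruction-theory argument you give) and a compatible framing on the homotopy sphere $\partial M$, defines a relative bordism class directly in $\pi_{2k+d}(\mathrm{MO}\langle k\rangle/\mathbb{S})$, and the long exact sequence identifies $\partial_*$ of that class with the framed class of $\partial M$, well defined modulo reframings, i.e. modulo $\mathcal{J}_{2k+d-1}$. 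Also, the cell you are killing in forming $\mathrm{MO}\langle k\rangle/\mathbb{S}$ is the \emph{bottom} cell (the unit $\mathbb{S} \to \mathrm{MO}\langle k\rangle$), not the top one.

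Your Step 2 is where all the content of Stolz's lemma lives, and your proposal leaves it as a gesture. The assertion that every $(k-1)$-connected, almost closed $(2k+d)$-manifold produces a class in $\pi_{2k+d}(A[k])$ lifting $\overline{[M]}$ along $b$ is exactly what \cite[Satz 3.1 and its consequences]{StolzBook} establish, via a surgery/handle analysis that matches normal bundle data of such manifolds to the cells of $A[k]$. Saying this depends on ``Stolz's explicit construction \dots and a careful matching of the handle structure'' names the right dependency but does not supply it; as written, this is a restatement of the lemma rather than a proof. The third step is, as you say, formal once the first two are in place.
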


Stolz then proved the following two theorems. The first is contained in the proof of \cite[Satz 12.7]{StolzBook}:

\begin{thm}\cite[Proof of Satz 12.7]{StolzBook}  \label{thm:Stolz-Adams}
    Suppose that $k>2$ and $d \ge 0$, and let $M_{2k+d+1} \hookrightarrow A[k]$ denote a $(2k+d+1)$-skeleton of $A[k]$. Then the composite \[M_{2k+d+1} \hookrightarrow A[k] \to \mathrm{MO} \langle k \rangle / \Ss \to \Ss^1\] has $\mathrm{H}\mathbb{F}_2$-Adams filtration at least $$h(k-1)-\lfloor \mathrm{log}_2(2k+d+1) \rfloor +1,$$ where $h(k-1)$ is the number of integers of $s$ with $0<s \le k-1$ and $s \equiv 0,1,2,$ or $4$ modulo $8$.
\end{thm}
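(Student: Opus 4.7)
The plan is to factor the composite map through a tower in which each successive stage increases the $\mathrm{H}\mathbb{F}_2$-Adams filtration by one, with the tower long enough to realize the bound. The starting point is Stolz's construction of $A[k]$ in Satz 3.1 of \cite{StolzBook}, which equips $A[k]$ with a cell structure whose cells in the range $[k, 2k+d+1]$ are controlled by the Whitehead tower
\[\cdots \to \tau_{\ge s+1}\mathrm{BO} \to \tau_{\ge s}\mathrm{BO} \to \cdots.\]
By Bott periodicity, $\pi_s \mathrm{BO} \ne 0$ for $s>0$ exactly when $s \equiv 0, 1, 2, 4 \pmod 8$, and after the reindexing built into $A[k]$ there are $h(k-1)$ such indices contributing to the $(2k+d+1)$-skeleton.

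First I would identify a Postnikov-style filtration of the map $A[k] \to \mathrm{MO}\langle k \rangle/\Ss$ matching this cell structure: each layer corresponds to a $k$-invariant in the relevant Whitehead tower, and each such $k$-invariant is detected by a nonzero Steenrod operation acting on $\mathrm{H}^*(\mathrm{MO}\langle k \rangle; \mathbb{F}_2)$. Consequently each layer contributes a map of $\mathrm{H}\mathbb{F}_2$-Adams filtration at least $1$. Next I would chain these bounds together: if $f \colon X \to Y$ factors as $X = X_0 \to X_1 \to \cdots \to X_N = Y$ with each stage of filtration at least $1$, then $f$ has filtration at least $N$. Applied to $M_{2k+d+1} \hookrightarrow A[k] \to \mathrm{MO}\langle k \rangle/\Ss \to \Ss^1$, the cell-by-cell factorization yields a tower of length $h(k-1)$.

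The subtracted correction $\lfloor \log_2(2k+d+1)\rfloor$ accounts for layers where the attaching map degenerates in Adams filtration. Concretely, cells carrying $\pi_s \mathrm{BO} \cong \mathbb{Z}$ (i.e.\ $s \equiv 0, 4 \pmod 8$) admit mod-$2^j$ reductions that can absorb a single unit of filtration, and the total number of such absorptions is bounded logarithmically in the top dimension of the skeleton by a standard argument using the known $v_1$-periodic pattern of $\mathrm{H}^*(\mathrm{MO}\langle k \rangle; \mathbb{F}_2)$ as a module over the subalgebra $\mathcal{A}(1) \subset \mathcal{A}$. I would verify this combinatorially by listing which cells in $A[k]$ carry nontrivial Bockstein information and checking that their contribution to the filtration loss is at most the number of powers of $2$ below $2k+d+1$.

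The hardest step is to check rigorously that the filtration bounds for the individual layers compose additively, rather than collapsing at some intermediate stage. This amounts to verifying that the Steenrod module structure on $\mathrm{H}^*(A[k]; \mathbb{F}_2)$ inherits the non-degeneracy of the Whitehead tower $k$-invariants of $\mathrm{BO}$ in the range of interest---equivalently, that no attaching map factors through an intermediate Eilenberg--MacLane spectrum in a way that would decrease its contribution. This is ultimately where the Bott-periodic count $h(k-1)$ enters, and where Stolz's precise choice of the spectrum $A[k]$ and of its cell filtration is essential. Given this input, assembling the tower and reading off the filtration bound is essentially bookkeeping.
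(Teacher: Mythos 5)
The paper does not prove this statement itself---it is cited directly from Stolz's book as \cite[Satz 12.7]{StolzBook}. However, the paper proves the closely analogous Lemma~\ref{prop:genHF2filt} by the same method, so that is what your proposal should be measured against.

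Your high-level outline is right: factor the composite through the Whitehead tower of $\mathrm{O}$, show that each relevant stage increases $\mathrm{H}\mathbb{F}_2$-Adams filtration by one, and count stages to get $h(k-1)$ minus a correction. But your explanation of the correction term $\lfloor\log_2(2k+d+1)\rfloor$ is wrong, and this is not a minor point---the correction is essential and its origin is the crux of the argument. You attribute the loss to ``mod-$2^j$ reductions'' of the $\mathbb{Z}$-cells with $s\equiv 0,4\pmod 8$, vaguely tied to the $\mathcal{A}(1)$-module structure of $\mathrm{H}^*(\mathrm{MO}\langle k\rangle;\mathbb{F}_2)$. That mechanism is not what happens. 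What actually happens, both in the paper's Lemma~\ref{prop:genHF2filt} (via Corollary~\ref{cor:zerocohmap}) and in Stolz's original argument, is that the map $\mathrm{O}\langle s\rangle\to\mathrm{O}\langle s-1\rangle$ induces zero on $\mathrm{H}^*(-;\mathbb{F}_2)$ only in degrees $0<*<2^{h(s)}-1$; this bound comes from Stong's computation of $\mathrm{H}^*(\mathrm{BO}\langle s\rangle;\mathbb{F}_2)$ as a polynomial algebra, pushed down to $\mathrm{O}\langle s-1\rangle$ via the Eilenberg--Moore spectral sequence. Once you pass to a $(2k+d+1)$-skeleton, a Whitehead-tower stage only contributes a unit of Adams filtration when $2^{h(s)}-1>2k+d+1$. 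The stages that fail this criterion are exactly those with $h(s)\le\log_2(2k+d+2)$, and counting them gives the $\lfloor\log_2\rfloor$ loss. Nothing about Bocksteins, $v_1$-periodicity, or mod-$2^j$ reductions enters.

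So there are two gaps to fill: first, you need the correct quantitative statement ``$\mathrm{O}\langle s\rangle\to\mathrm{O}\langle s-1\rangle$ is zero on mod-$2$ cohomology through degree $2^{h(s)}-2$'' with a citation to Stong (Theorem~\ref{thm:BOmcoh2} in the paper) or a proof via the Eilenberg--Moore spectral sequence (Corollary~\ref{cor:zerocohmap}); and second, you need to replace the Bockstein/$v_1$-heuristic for the logarithmic term with the honest count of stages whose zero-range is shorter than the skeleton. Your final paragraph worrying about whether the per-stage bounds ``compose additively'' is a red herring---the additivity is automatic from the definition of Adams filtration as soon as each stage is individually zero on $\mathrm{H}^*$ in the relevant range; there is no subtle non-degeneracy condition to verify beyond the cohomological zero statement itself.
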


The second follows from \cite[Theorem A]{StolzBook} along with the equivalences $A[k] \simeq A[k+1]$ for $k \equiv 3,5,6,7 \mod 8$ \cite[Satz 3.1(ii)]{StolzBook}:

\begin{thm}[{\cite[Theorem A]{StolzBook}}] \label{thm:Stolz-8torsion}
Suppose that $k \ge 9$ and that $0 \le d \le 3$.  Then, unless one of the following conditions are met, every element of $\pi_{2k+d} A[k]$ is $8$-torsion:
\begin{itemize}
\item $k \equiv 0$ modulo $4$ and $d=0$.
\item $k \equiv 3$ modulo $4$ and $d=2$.
\end{itemize}
\end{thm}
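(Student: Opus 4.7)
The plan is to analyze the cell structure of Stolz's spectrum $A[k]$ in degrees at most $2k+3$, and then to compute $\pi_{2k+d}(A[k])$ by tracking extensions in an appropriate spectral sequence. The first step is to unpack the construction of $A[k]$ from Satz 3.1 of \cite{StolzBook} in order to obtain an explicit cell model in the relevant range. Since $A[k]$ is designed as a small model equipped with a map to $\MO \langle k \rangle / \mathbb{S}$ that captures its low-dimensional structure, its cells in degrees $[2k, 2k+3]$ should reflect the Thom spectrum structure of $\tau_{\ge k} \BO$, which near the bottom is controlled by a $ko$-style Postnikov pattern and therefore by $\pi_* ko$ in its Bott periodic form.

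Next, I would compute $\pi_{2k+d}(A[k])$ via an Atiyah--Hirzebruch-type spectral sequence built from this cell filtration. In our narrow range the $E_1$-page consists essentially of low-dimensional cells together with products forced by the $\pi_* \mathbb{S}$-module action, and almost all contributing classes sit in dimensions where $\pi_* ko$ is $2$-primary torsion (namely $\equiv 1,2 \pmod{8}$). Non-torsion contributions can only arise when a relevant cell sits in a dimension divisible by $8$, so for $2k+d$ in $\{2k, 2k+1, 2k+2, 2k+3\}$ this happens exactly in the two excluded cases: $k \equiv 0 \pmod{4}$ with $d=0$, or $k \equiv 3 \pmod{4}$ with $d=2$. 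Outside these cases, every contributing cell sits in a dimension carrying at most $2$-primary torsion, and a careful $2$-local bookkeeping argument should confine $\pi_{2k+d}(A[k])$ to $8$-torsion.

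The main obstacle will be resolving extensions in this spectral sequence: even when every associated graded piece is $8$-torsion, hidden additive extensions could in principle combine them into a $\mathbb{Z}/16$ or larger summand, or even a free summand if they stack with a latent $ko$-class. Addressing this requires explicit knowledge of low-dimensional stable stems, of the Hurewicz map $\pi_* \mathbb{S} \to \pi_* ko$, and of the attaching maps defining $A[k]$. Stolz's approach is a case analysis on $k \pmod{8}$, verifying $2$-locally that all potentially problematic extensions split; I would follow the same plan, substituting modern Adams spectral sequence input for the classical computations where convenient. A sanity check for the strategy is that in the two genuine exceptional cases one really does find a $\mathbb{Z}$- (or high-order cyclic) summand in $\pi_{2k+d}(A[k])$ coming from a $\pi_{8m} ko = \mathbb{Z}$ landing in the right spot, consistent with the statement that these exceptions cannot be eliminated.
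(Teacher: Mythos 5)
The paper does not prove this statement; it is imported verbatim as \cite[Theorem A]{StolzBook}, so there is no internal proof to compare against. Your outline must therefore be assessed on its own terms, and there is a concrete gap in it.

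You assert that non-torsion contributions in the relevant range arise only when a cell sits in a dimension divisible by $8$. That is false as stated: both $\pi_* ko$ and $\pi_* \BO$ carry free abelian summands in \emph{all} degrees $\equiv 0 \pmod 4$, not merely those $\equiv 0 \pmod 8$. Consequently, for $k \equiv 2 \pmod 4$ with $d=0$ (so $2k+d \equiv 4 \pmod 8$), and for $k \equiv 1 \pmod 4$ with $d=2$ (again $2k+d \equiv 4 \pmod 8$), your bookkeeping would predict a potential $\mathbb{Z}$-summand, yet the theorem asserts these cases are \emph{not} exceptional. Ruling out the free contribution in exactly those residues is where the real content lies --- in Stolz's treatment it hinges on the $(-1)^k$-symmetry of the middle-dimensional intersection form, which kills the relevant signature-type invariant when the form is skew --- and your proposal neither notices the discrepancy nor addresses it. Beyond that, the precise torsion bound ($8$, rather than $2$ or $16$ or unbounded), the hypothesis $k \ge 9$, and the actual extraction of a cell model for $A[k]$ from Satz 3.1 of \cite{StolzBook} are all left as a plan rather than carried out, as you yourself acknowledge. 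As written, this is a strategy sketch, not a proof, and the strategy as phrased would not produce the correct list of exceptional cases.
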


We now proceed with our proof of Theorem \ref{thm:mainboundary}.

\begin{thm} \label{thm:strongmainboundary}
Let $k>124$ and $0 \le d \le 3$ be integers.  Suppose that $k \equiv 0$ modulo $4$ and $d=0$ or $k \equiv 3$ modulo $3$ and $d=2$.  Then the boundary of any $(k-1)$-connected, almost closed $(2k+d)$-manifold also bounds a parallelizable manifold.
\end{thm}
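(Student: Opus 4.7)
The plan is to apply \cite[Lemma 12.5]{StolzBook}, which reduces the theorem to showing that every element in the image of the composite
\[
\pi_{2k+d}(A[k]) \xrightarrow{b_*} \pi_{2k+d}(\MO \langle k \rangle / \Ss) \xrightarrow{\partial_*} \pi_{2k+d-1}(\Ss)
\]
lies in $\mathcal{J}_{2k+d-1}$. From the long exact sequence associated to the cofiber sequence $\Ss \to \MO \langle k \rangle \to \MO \langle k \rangle/\Ss \to \Ss^1$, the image of $\partial_*$ coincides with the kernel of the unit map $\pi_{2k+d-1}(\Ss) \to \pi_{2k+d-1}(\MO \langle k \rangle)$. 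It therefore suffices to show that this kernel is contained in $\mathcal{J}_{2k+d-1}$.

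In the first case ($k = 4n$ with $d = 0$), the hypothesis $k > 124$ gives $n > 31$, so this kernel is precisely $\mathcal{J}_{8n-1}$ by Theorem \ref{thm:intro-main}, and the argument is complete.

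In the second case ($k = 4m+3$ with $d = 2$; I read the statement's ``modulo $3$'' as a typo for ``modulo $4$''), the key observation is that $\pi_{4m+3}(\BO) = 0$ for every $m \geq 0$: by Bott periodicity, $\pi_n(\BO)$ vanishes precisely when $n \equiv 3, 5, 6, 7 \pmod 8$, and $4m+3$ is always $3$ or $7 \pmod 8$. Consequently, the canonical map $\tau_{\geq 4m+4}\BO \to \tau_{\geq 4m+3}\BO$ is an equivalence (its fiber is the Eilenberg--MacLane spectrum $K(\pi_{4m+3}\BO, 4m+3) \simeq *$), which induces an equivalence of Thom spectra $\MO \langle 4(m+1) \rangle \xrightarrow{\simeq} \MO \langle 4m+3 \rangle$. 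Since $k > 124$ and $k \equiv 3 \pmod 4$ force $k \geq 127$ and hence $m+1 \geq 32 > 31$, I can apply Theorem \ref{thm:intro-main} with $n$ replaced by $m+1$ to identify the kernel of $\pi_{8m+7}(\Ss) \to \pi_{8m+7}(\MO \langle 4m+3 \rangle)$ with $\mathcal{J}_{8m+7} = \mathcal{J}_{2k+1}$. There is no substantive obstacle remaining: the analytic heart of the argument lies already in Theorem \ref{thm:intro-main}, and the second case simply exploits the fortuitous fact that incrementing the connectivity from $4m+3$ to $4m+4$ does not alter the Thom spectrum, reducing it to the first.
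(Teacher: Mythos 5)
Your proof is correct and follows the paper's own argument: reduce via Stolz's Lemma 12.5 to showing the kernel of the unit map $\pi_{2k+d-1}\bS \to \pi_{2k+d-1}\MO\langle k \rangle$ lies in $\mathcal{J}_{2k+d-1}$, invoke Theorem \ref{thm:intro-main} directly when $d=0$ and $k \equiv 0 \pmod 4$, and use the equivalence $\MO\langle k \rangle \simeq \MO\langle k+1 \rangle$ when $d=2$ and $k \equiv 3 \pmod 4$ (the paper asserts this equivalence without comment; your Bott-periodicity justification is the right one). You are also correct that ``modulo $3$'' in the statement is a typo for ``modulo $4$.''
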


\begin{proof}
We need to show that the image of the composite
$$\pi_{2k+d} A[k] \to \pi_{2k+d}(\MO \langle k \rangle/\mathbb{S}) \to \pi_{2k+d}(\mathbb{S}^1)$$
contains only classes in $\mathcal{J}_{2k+d-1}$.  In fact, we will show that this is already true of the image of
$$\pi_{2k+d}(\MO \langle k \rangle/\mathbb{S}) \to \pi_{2k+d}(\mathbb{S}^1),$$
or equivalently that the unit map
$$\pi_{2k+d-1} \mathbb{S} \to \pi_{2k+d-1} \MO \langle k \rangle$$
has kernel consisting only of classes in $\mathcal{J}_{2k+d-1}$.  If $d=0$ and $k \equiv 0$ modulo $4$, this follows from Theorem \ref{thm:intro-main}.  If $d=2$ and $k \equiv 3$ modulo $4$, then $\MO \langle k \rangle \simeq \MO \langle k+1 \rangle$, and so $\pi_{2k+2} \MO \langle k \rangle \cong \pi_{2k+2} \MO \langle k+1 \rangle$ and this again follows from Theorem \ref{thm:intro-main}.
\end{proof}

\begin{thm} \label{thm:weakmainboundary}
Let $k>232$ and $0 \le d \le 3$ be integers.  Suppose that $k$ and $d$ satisfy neither of the exceptional conditions under which Theorem \ref{thm:Stolz-8torsion} fails and Theorem \ref{thm:strongmainboundary} applies.  Then the boundary of any $(k-1)$-connected, almost closed $(2k+d)$-manifold also bounds a parallelizable manifold. 
\end{thm}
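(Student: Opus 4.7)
The plan is to reduce \Cref{thm:weakmainboundary} to an application of \Cref{thm:mod8} via Stolz's spectrum $A[k]$. By the lemma of Stolz cited above \Cref{thm:Stolz-Adams} (\cite[Lemma 12.5]{StolzBook}), it suffices to show that for every class $\alpha \in \pi_{2k+d}(A[k])$ the image $\beta := (\partial \circ b)_*(\alpha) \in \pi_{2k+d-1}(\Ss)$ lies in $\J_{2k+d-1}$. Since we are outside the exceptional cases of \Cref{thm:Stolz-8torsion}, every such $\alpha$ is $8$-torsion.

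Let $j: M \hookrightarrow A[k]$ denote the inclusion of the $(2k+d+1)$-skeleton appearing in \Cref{thm:Stolz-Adams}; the cofiber $A[k]/M$ is $(2k+d+1)$-connective, so $j$ induces an isomorphism on $\pi_{2k+d}$ and $\alpha$ has a unique lift $\alpha_0 \in \pi_{2k+d}(M)$, which is also $8$-torsion. Smashing the cofiber sequence $\Ss \xrightarrow{8} \Ss \to C(8) \to \Sigma \Ss$ with $M$ then produces $\hat{\alpha}_0 \in \pi_{2k+d+1}(M \wedge C(8))$ with Bockstein $\partial \hat{\alpha}_0 = \alpha_0$. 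Writing $\phi_M := \partial \circ b \circ j : M \to \Ss^1$, I set
\[
\alpha' := (\phi_M \wedge \mathrm{id}_{C(8)})_*(\hat{\alpha}_0) \in \pi_{2k+d+1}(\Sigma C(8)) \cong \pi_{2k+d}(C(8)).
\]
By naturality of the Bockstein, $\partial \alpha' = (\phi_M)_*(\alpha_0) = \beta$. Moreover, smashing a map with a finite spectrum does not decrease $\HFt$-Adams filtration, so \Cref{thm:Stolz-Adams} gives
\[
\mathrm{AF}(\alpha') \ge h(k-1) - \lfloor \log_2(2k+d+1) \rfloor + 1.
\]

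A direct arithmetic check shows that for every $k > 232$ and $0 \le d \le 3$ this bound exceeds $\tfrac{2k+d}{5} + 15$; the tightest case is $k = 233$, $d = 3$, which gives $h(232) - 8 + 1 = 109 > 108.8$, and for larger $k$ the linear term $h(k-1) \sim k/2$ rapidly outpaces $(2k+d)/5$. Since $2k+d \ge 466 \ge 126$ is also satisfied, \Cref{thm:mod8} places $\beta$ in the subgroup of $\pi_{2k+d-1}(\Ss)$ generated by $\J_{2k+d-1}$ and the $\mu$-family.

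To finish, I rule out any $\mu$-family contribution to $\beta$. By construction $\beta$ lies in the kernel of the unit map $\pi_{2k+d-1}(\Ss) \to \pi_{2k+d-1}(\MO \langle k \rangle)$. Since $k > 232 > 4$, the connective cover tower provides a composite orientation $\MO \langle k \rangle \to \MO \langle 4 \rangle = \mathrm{MSpin} \to ko$, and the induced composite $\Ss \to \MO \langle k \rangle \to ko$ is the unit of $ko$. The $\mu$-family is characterized by being detected under this $ko$-Hurewicz map, so any nonzero $\mu$-family class maps nontrivially to $\pi_*(ko)$ and hence also to $\pi_*(\MO \langle k \rangle)$. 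The $\mu$-contribution to $\beta$ must therefore vanish, giving $\beta \in \J_{2k+d-1}$ as required. The main obstacle is the tight arithmetic inequality between the Adams filtration bound of \Cref{thm:Stolz-Adams} and the hypothesis of \Cref{thm:mod8}; this is precisely what forces the bound $k > 232$.
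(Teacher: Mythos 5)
Your proof follows essentially the same route as the paper's: reduce via Stolz's $A[k]$ and the $8$-torsion statement (\Cref{thm:Stolz-8torsion}) to a class in $\pi_{2k+d}(C(8))$ whose Adams filtration inherits the bound of \Cref{thm:Stolz-Adams}, apply \Cref{thm:mod8}, and eliminate the $\mu$-family using the factorization of the unit through $\MO\langle k\rangle$ and the $\mathrm{KO}$-orientation of $\mathrm{MSpin}$. The one loose point is in your final step: from $\beta = j + m$ lying in the kernel of $\pi_{2k+d-1}(\Ss) \to \pi_{2k+d-1}(\MO\langle k\rangle)$, concluding $m = 0$ requires also knowing that the image-of-$J$ component $j$ maps to zero in $\pi_{2k+d-1}(ko)$ (the paper states this explicitly), since otherwise the images of $j$ and $m$ could a priori cancel in $\pi_{*}(ko)$.
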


\begin{proof}
We construct an argument very similar to that found on \cite[p.107]{StolzBook}.  Namely, consider the diagram
    \[
\begin{tikzcd}
    \Sigma^{-1} C(8) \otimes M_{2k+d+1} \arrow{r}{1 \otimes \iota} \arrow{d} & \Sigma^{-1}C(8) \otimes A[k] \arrow{r}{1 \otimes (\partial \circ b)} \arrow{d} & \Sigma^{-1}C(8) \otimes \mathbb{S}^1 \arrow{d} \\   
    M_{2k+d+1} \arrow{r}{\iota} & A[k] \arrow{r}{\partial \circ b} &  \mathbb{S}^1
\end{tikzcd}
    \]
    where $M_{2k+d+1} \to A[k]$ is a $(2k+d+1)$-skeleton.

    Let $\alpha$ denote a map $\Ss^{2k+d} \to A[k]$. Then we may factor $\alpha$ through an $8$-torsion map $\Ss^{2k+d} \to M_{2k+d+1}$ by \Cref{thm:Stolz-8torsion}, and thus we may choose a lift $\bar{\alpha}:\Ss^{2k+d} \to \Sigma^{-1}C(8) \otimes M_{2k+d+1}$. Since \[M_{2k+d+1} \xrightarrow{\iota} A[k] \xrightarrow{\partial\circ b} \Ss^{1}\] is of $\HFt$-Adams filtration at least $h(k-1)-\lfloor \mathrm{log}_2(2k+d+1) \rfloor +1$ by \Cref{thm:Stolz-Adams}, so is \[\Sigma^{-1}C(8) \otimes M_{2k+d+1} \xrightarrow{1 \otimes \iota} \Sigma^{-1} C(8) \otimes A[k] \xrightarrow{1 \otimes (\partial \circ b)} \Sigma^{-1} C(8) \otimes \mathbb{S}^1.\] It follows that $(1 \otimes \partial) \circ (1 \otimes b) \circ (1 \otimes \iota) \circ \bar{\alpha} \in \pi_{2k+d} \left(C(8)\right)$ is too. Thus, by Theorem \ref{thm:mod8}, so long as
$$2k+d \ge 126, \text{ and}$$
$$h(k-1)-\lfloor \mathrm{log}_2(2k+d+1) \rfloor +1 \ge \frac{2k+d}{5}+15,$$
the image of $\alpha$ in $\pi_{2k+d-1} \mathbb{S}$ must be in the subgroup generated by $\mathcal{J}_{2k+d-1}$ and Adams's $\mu$-family.  In \Cref{lemm:inequality}, we show that both of these conditions are satisfied under our assumptions $k > 232$ and $0 \le d \le 3$.
Now, we claim that the image of $\alpha$ in $\pi_{2k+d-1} \mathbb{S}$ must actually be in the subgroup generated by $\mathcal{J}_{2k+d-1}$, without Adams's $\mu$-family.  This follows simply from the fact that this class is in the image of the map
$$\pi_{2k+d} \MO \langle k \rangle / \mathbb{S} \to \pi_{2k+d} \mathbb{S}^1,$$
and therefore in the kernel of the map
$$\pi_{2k+d-1} \mathbb{S} \to \MO \langle k \rangle.$$
Recall that the Atiyah--Bott--Shapiro orientation \cite{ABS} determines a unital map 
$$\mathrm{MO} \langle 3 \rangle = \mathrm{MSpin} \to \mathrm{KO}.$$
The composite map
$$\pi_{2k+d-1} \mathbb{S} \to \pi_{2k+d-1} \MO \langle k \rangle \to \pi_{2k+d-1} \MO \langle 3 \rangle \to \pi_{2k+d-1} \mathrm{KO}$$
has the effect of killing $\mathcal{J}_{2k+d-1}$ without killing any of Adams's $\mu$-family.  Thus, any class in the kernel of the map
$$\pi_{2k+d-1} \mathbb{S} \to \pi_{2k+d-1} \MO \langle k \rangle$$
cannot be a sum of a class in $\mathcal{J}_{2k+d-1}$ and a non-trivial element of the $\mu$-family.
\end{proof}

\begin{proof} [Proof of Theorem \ref{thm:mainboundary}]
This follows by combining Theorem \ref{thm:strongmainboundary} and Theorem \ref{thm:weakmainboundary}.
\end{proof}

\begin{lem}\label{lemm:inequality}
    The inequality \[h(k-1) - \lfloor \log_2(2k+d+1) \rfloor + 1 \geq \frac{2k+d}{5} + 15\] holds for $k > 232$ and $0 \leq d \leq 3$.
\end{lem}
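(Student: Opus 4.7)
The plan is a direct case analysis using two elementary ingredients. First I would establish the counting estimate $h(k-1) \geq \frac{k-2}{2}$, which is immediate from the observation that exactly four of every eight consecutive positive integers are congruent to $0$, $1$, $2$, or $4$ modulo $8$. A short case check shows this improves to $h(k-1) \geq \frac{k-1}{2}$ whenever $k \not\equiv 0 \pmod{8}$: equality in the weaker bound occurs only when $k-1 \equiv 7 \pmod{8}$.

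Writing $L := \lfloor \log_2(2k+d+1) \rfloor$ and substituting $h(k-1) \geq \frac{k-2}{2}$, the desired inequality rearranges to
\[k \geq 10L + 150 + 2d. \qquad (\ast)\]
From $2k+d+1 \geq 2^L$ and $d \leq 3$ we always have $k \geq 2^{L-1} - 2$, and the hypothesis $k > 232$ forces $L \geq 8$.

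For $L \geq 9$ I would verify $(\ast)$ by checking $2^{L-1} - 2 \geq 10L + 156$: the base case $L = 9$ gives $254 \geq 246$, and the exponential growth of $2^{L-1}$ dominates the linear $10L$ term by a routine induction. For $L = 8$ and $k \geq 240$, $(\ast)$ is immediate since $10 \cdot 8 + 150 + 2 \cdot 3 = 236 \leq 240$. The only range left is $k \in \{233, \ldots, 239\}$ with $L = 8$: none of these integers is divisible by $8$, so the improved estimate $h(k-1) \geq \frac{k-1}{2}$ applies, and repeating the reduction yields the weaker inequality $k \geq 10L + 145 + 2d$, which holds since $10 \cdot 8 + 145 + 2 \cdot 3 = 231 < 233$.

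The mildly delicate point is that the naive $\frac{k-2}{2}$ bound just barely fails in the boundary range near $k = 233$, so the $k \not\equiv 0 \pmod{8}$ refinement is genuinely needed to close the gap; everything else is routine arithmetic.
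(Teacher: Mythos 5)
Your proof is correct, and it takes a genuinely different route from the paper's. The paper's proof reduces to $d=3$, bounds $h(k-1)\geq \tfrac{k}{2}-1$, replaces $\lfloor\log_2(2k+4)\rfloor$ by $\log_2(2k+4)$ to get a differentiable comparison, observes the derivative inequality from $k\geq 6$ onward, checks $k=246$ by computer, and then falls back on an explicit numerical table for the thirteen stragglers $k\in\{233,\dots,245\}$ where the smooth relaxation is too lossy. Your argument keeps the floor (writing $L=\lfloor\log_2(2k+d+1)\rfloor$), reduces via $h(k-1)\geq\tfrac{k-2}{2}$ to $k\geq 10L+150+2d$, and disposes of this by cases on $L$. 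The interesting step is the boundary case $L=8$, $k\in\{233,\dots,239\}$, which you close with the mod-$8$ refinement $h(k-1)\geq\tfrac{k-1}{2}$ for $k\not\equiv 0\pmod 8$ — a sharpening that is available precisely because $233$ through $239$ all miss the multiples of $8$. I verified the numerics: $e(r)\geq\tfrac{r-1}{2}$ with equality exactly at $r=7$ gives both your bound and the refinement; $(\ast)$ becomes $k\geq 10L+145+2d$, which at $L=8$, $d=3$ requires $k\geq 231$; and the induction $2^{L-1}-2\geq 10L+156$ for $L\geq 9$ grounds at $254\geq 246$. The net effect is that your proof is purely by-hand and table-free, at the cost of the slightly more delicate number-theoretic observation about $h$; the paper's is shorter to state but leans on a computer check.
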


\begin{proof}
  Without loss of generality we may assume $d=3$.
  Then, using the inequality $h(k-1) \geq \frac{k}{2}-1$, it will suffice to show that
  \begin{align}
    \frac{k}{10} \geq \frac{3}{5} + 15 + \log_2 (2k+4). \label{eq:stolz-inequality}
  \end{align}
  Taking derivatives we see that the left hand side increases faster than the right hand side as soon as $k \geq 13$. Using a computer we find that \Cref{eq:stolz-inequality} holds for $k=246$, so the lemma holds for $k \geq 246$. For the remaining values of $k$, we compute each side of the desired inequality
  \[h(k-1) - \lfloor \log_2(2k+d+1) \rfloor + 1 \geq \frac{2k+d}{5} + 15\]
  for $d=3$, and display their difference, $\Delta$, in the following table:

  \begin{table}[ht!]
    \begin{center}
      \begin{tabular}{|c||c|c|c|c|c|c|c|c|c|c|c|c|c|}\hline
        $k$ & 233 & 234 & 235 & 236 & 237 & 238 & 239 & 240 & 241 & 242 & 243 & 244 & 245 \\\hline
        $\Delta$ & 0.2 & 0.8 & 1.4 & 1.0 & 1.6 & 1.2 & 0.8 & 0.4 & 1.0 & 1.6 & 2.2 & 1.8 & 2.4 \\\hline
      \end{tabular}
    \end{center}
  \end{table}
  

\end{proof}

\begin{rmk} \label{rmk:open-questions}
In Section \ref{sec:open-problems} we discussed possible improvements to Theorem \ref{thm:strongmainboundary}.
We have spent comparatively little effort optimizing Theorem \ref{thm:weakmainboundary}, and it would be interesting to see an improvement of the bounds $k>232$ and $d \le 3$.

For a fixed dimension $m$, it would be interesting to know the largest integer $\ell$ such that a smooth, $\ell$-connected, almost closed $m$-manifold bounds an element non-trivial in $\mathrm{coker}(J)_{m-1}$.
Conjecture \ref{cnj:vanishing-line} suggests that this integer $\ell$ should be closer to $\frac{m}{3}$ than $\frac{m}{2}$.
\end{rmk}


\section{Synthetic spectra} \label{sec:SyntheticReview}
At this point, we have reduced our main theorems to three technical results, which will appear as Theorems \ref{thm:AdamsBound}, \ref{thm:mod8-main-thm} and \ref{thm:app-main}.
Additionally, in \Cref{sec:Goodwillie} we referred to \Cref{rmk:x2-filt}.
Each of these results relies on an analysis of Adams filtration.

First, we focus on \Cref{thm:AdamsBound}, which bounds the Adams filtration of the Toda bracket $w \in \pi_{8n-1} \bS$ defined in \Cref{lem:toda-main}.
To understand why Toda brackets have controllable Adams filtration, it is helpful to consider the following facts:
\begin{enumerate}
\item Adams filtration is super-additive under function composition,

  \noindent i.e. $AF(fg) \geq AF(f) + AF(g)$.
\item Toda brackets are a kind of \emph{secondary} composition operation.
\end{enumerate}
These facts suggest that we should be able to compute lower bounds for the Adams filtration of a Toda bracket. In practice, this can be subtle, since such bounds require us to keep track not only of the Adams filtrations of maps but also of the Adams filtrations of homotopies.

We believe that questions involving the Adams filtrations of homotopies are greatly clarified by recent work of Piotr Pstr\k{a}gowski, and in particular his development of the category of \emph{synthetic spectra} \cite{Pstragowski}. For $E$ an Adams-type homology theory, $E$-based synthetic spectra form an $\infty$-category $\mathrm{Syn}_{E}$ of formal $E$-based Adams spectral sequences.  We devote this section to a review of the basic properties of $\mathrm{Syn}_E$, some of which have not appeared in the literature.

\begin{dfn}
  Suppose that $E$ is a homotopy associative ring spectrum such that $E_*$ and $E_*E$ are graded commutative rings. Following \cite[Definition 3.12]{Pstragowski}, we say that a finite spectrum $X$ is \emph{finite $E_*$-projective} (or simply \emph{finite projective} if $E$ is clear from context) if $E_*X$ is a projective $E_*$-module.
  We say that $E$ is of \emph{Adams type} if $E$ can be written as a filtered colimit of finite projective spectra $E_\alpha$ such that the natural maps $$E^*E_\alpha \to \Hom_{E_*}(E_*E_\alpha, E_*)$$ are isomorphisms.
\end{dfn}

\begin{exm}
  In this paper, we will make use only of the examples $E=\BP$ and $E=\mathrm{H}\mathbb{F}_p$ for some prime $p$, both of which are of Adams type.
\end{exm}

\begin{cnstr}[Pstr\k{a}gowski]
Let $E$ denote an Adams type homology theory.  Then there is a stable, presentably symmetric monoidal $\infty$-category $\mathrm{Syn}_E$ together with a functor
$$\nu_E:\Sp \to \mathrm{Syn}_E,$$ 
which is lax symmetric monoidal and preserves filtered colimits \cite[Lemma 4.4]{Pstragowski}.  However, $\nu_E$ does \emph{not} preserve cofiber sequences in general. When $E$ is clear from context, we will often denote $\nu_E$ by $\nu$.
\end{cnstr}

\begin{rmk}
The tensor product in synthetic spectra preserves colimits in each variable separately.
\end{rmk}

\begin{rmk} \label{rmk:strong-monoidal}
If $X$ and $Y$ are any two spectra, then the lax symmetric monoidal structure on $\nu$ provides us with a natural comparison map
$$ \nu(X) \otimes \nu(Y) \to \nu(X \otimes Y). $$
In some cases this comparison map is actually an equivalence.
For example, $\nu$ is symmetric monoidal when restricted to the full subcategory of finite projectives.
More generally, \cite[Lemma 4.24]{Pstragowski} proves that the comparison map is an equivalence whenever $X$ is a filtered colimit of finite projectives. Note that this condition is only on $X$, and $Y$ may be arbitrary.

If $E=\mathrm{H}\mathbb{F}_p$, then every finite spectrum is finite projective, and so every spectrum $X$ satisfies the condition above. Thus, $\nu_{\mathrm{H}\mathbb{F}_p}$ is symmetric monoidal, rather than merely lax symmetric monoidal.
\end{rmk}

\begin{rmk} \label{rmk:dualizable-generators}
  As proved in \cite[Lemma 3.18]{Pstragowski}, the full subcategory of spectra spanned by the finite projective spectra is rigid symmetric monoidal.
  Furthermore, \cite[Remark 4.14]{Pstragowski} proves that the set of $\Sigma^k \nu P$ with $k \in \Z$ and $P$ finite projective is a family of compact generators of $\mathrm{Syn}_E$. The fact that $\nu$ is symmetric monoidal when restricted to finite projective spectra implies that this is a family of dualizable compact generators.
\end{rmk}

If $X$ is a spectrum, then $\nu X$ records detailed information about the $E$-based Adams tower for $X$.  A first hint of this is found in the following proposition:

\begin{lem}[{\cite[Lemma 4.23]{Pstragowski}}]   \label{lemm:syn-cof}
  Suppose that
  $$ A \to B \to C $$
	is a cofiber sequence of spectra. Then
  $$ \nu A \to \nu B \to \nu C $$
	is a cofiber sequence of synthetic spectra if and only if
  $$ 0 \to E_*A \to E_*B \to E_*C \to 0 $$
  is a short exact sequence of $E_*E$-comodules.\footnote{The condition that a cofiber sequence become short exact on $E$-homology is exactly the condition under which there is a long exact sequence on the level of Adams $\mathrm{E}_2$-pages.}
\end{lem}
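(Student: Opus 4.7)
The plan is to test the candidate cofiber sequence against the dualizable compact generators $\{\Sigma^k \nu P\}$ with $P$ finite $E_*$-projective from \Cref{rmk:dualizable-generators}. Since $\nu$ is a functor, applying it to $A \to B \to C$ produces a composition-null diagram in $\Syn_E$, and therefore a canonical comparison map $\Phi \colon \cofib(\nu A \to \nu B) \to \nu C$; the content of the lemma is to characterize when $\Phi$ is an equivalence. By dualizability of $\nu P$ and the symmetric monoidality of $\nu$ on finite projectives (\Cref{rmk:strong-monoidal}), testing $\Phi$ against $\Sigma^k \nu P$ reduces, after tensoring $(A, B, C)$ with $DP$, to the case $P = \mathbb{S}$, i.e., to comparing bigraded homotopy of $\nu$-objects.

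For the forward implication, I would exploit the canonical map $\tau$ in $\Syn_E$ together with its associated functors $\tau^{-1}$ and ``mod $\tau$'' to $\Sp$. Tensoring the assumed cofiber sequence $\nu A \to \nu B \to \nu C$ with $\cofib(\tau)$ yields another cofiber sequence, and Pstr\k{a}gowski's identification of suitable bigraded homotopy groups of $\nu X \otimes \cofib(\tau)$ with $E_* X$ as an $E_*E$-comodule turns the resulting synthetic long exact sequence into a short exact sequence of comodules. The vanishing of bigraded homotopy of $\nu X \otimes \cofib(\tau)$ outside a prescribed region is the reason a short exact sequence, rather than only a long exact one, survives on comodules.

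For the converse, the short-exactness assumption is equivalent to the connecting map $\partial \colon C \to \Sigma A$ acting as zero on $E$-homology. This is precisely the obstruction required to splice compatible $E$-Adams resolutions of $A$ and $C$ into an Adams resolution of $B$, producing a level-wise cofiber sequence of $E$-Adams towers. Because $\nu$ preserves filtered colimits and is strong monoidal on finite projectives (\Cref{rmk:strong-monoidal}), and because the synthetic Adams tower of $\nu X$ is built from these resolutions, this level-wise cofiber sequence of towers assembles into the desired cofiber sequence $\nu A \to \nu B \to \nu C$ in $\Syn_E$.

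The principal obstacle is carefully bookkeeping the bigraded homotopy of $\nu$-objects and their interaction with Pstr\k{a}gowski's presheaf-theoretic construction of $\Syn_E$. The real content of the lemma is that his Grothendieck topology on finite projectives is calibrated so that the cofiber sequences preserved by $\nu$ are exactly those detected by short-exactness on $E_*$-homology; once this calibration is pinned down and the $\tau$ comparison and Adams-tower splicing are set up cleanly, the equivalence on generators forces both directions of the biconditional.
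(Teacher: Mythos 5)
This statement is cited from Pstr\k{a}gowski's Lemma 4.23 and the paper does not reprove it, so there is no in-text argument to compare against. Judged on its own terms, your proposal correctly isolates the two moves (apply $C\tau \otimes -$; control the connecting map via $E_*$), but each direction as sketched has a concrete gap.

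For the forward direction, the identification you invoke is the wrong one. The bigraded homotopy groups of $C\tau \otimes \nu X$ are the Adams $\mathrm{E}_2$ Ext groups $\Ext^{s,t}_{E_*E}(E_*, E_*X)$ (\Cref{lemm:ctauE2}), not $E_*X$; extracting $E_*X$ from those would require $\Ext^0$ to recover the comodule, which it does not. The identification that actually makes the argument go through is the one at the level of objects: $C\tau \otimes -$ is exact and, under the symmetric monoidal embedding $\mathrm{Mod}_{C\tau} \hookrightarrow \mathrm{Stable}_{E_*E}$ of \Cref{thm:tau-inv}(5), the composite with $\nu$ is $E_*(-)$. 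So a cofiber sequence $\nu A \to \nu B \to \nu C$ in $\Syn_E$ yields a cofiber sequence in $\mathrm{Stable}_{E_*E}$ whose three terms lie in the heart, and the resulting long exact sequence on $t$-structure homotopy collapses to short exactness. Rewriting the argument with the object-level identification rather than the homotopy-group one makes it correct.

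For the converse, the Adams-tower splicing strategy tacitly treats $\nu X$ as recoverable from the $E$-Adams tower of $X$, but the lemma makes no completeness hypothesis on $A, B, C$, and $\nu$ is not known to commute with the relevant limits (only filtered colimits). Without $E$-nilpotent completeness and strong convergence, your appeal to ``the synthetic Adams tower of $\nu X$ is built from these resolutions'' does not close. Pstr\k{a}gowski's proof instead works directly with the sheaf-theoretic definition of $\nu$ on the site of finite $E_*$-projectives: the short-exactness hypothesis is exactly what makes the presheaf-level cofiber map to $\nu C$ a local equivalence in the relevant topology, so the sequence becomes a (co)fiber sequence after sheafification, with no completeness needed. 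You diagnose this yourself at the end (``the real content \dots is that his Grothendieck topology \dots is calibrated \dots''), but the splicing route you propose does not actually reach that calibration and would at best give the result for $E$-nilpotent complete spectra.
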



To precisely relate $\nu X$ to the $E$-based Adams spectral sequence for $X$, we must introduce \emph{bigraded spheres} and the canonical bigraded homotopy element $\tau$. 

\begin{dfn}[{\cite[Definitions 4.6 and 4.9]{Pstragowski}}]
The \emph{bigraded sphere} $\bS^{n,n}$ is defined to be $\nu(\bS^{n})$.  Since $\mathrm{Syn}_E$ is stable, we more generally define
$\bS^{a,b}$
to be $\Sigma^{a-b} \bS^{b,b}$, which makes sense even if $a-b<0$. For any synthetic spectrum $X$, the \emph{bigraded homotopy groups} $\pi_{a,b}(X)$ are defined to be the abelian groups
$$ \pi_{a,b}(X) = \pi_0 \Hom(\bS^{a,b},X). $$
\end{dfn}


\begin{rmk}
  The fact that $\nu$ is symmetric monoidal when restricted to finite projectives (such as $\bS^b$) implies that each of the bigraded spheres $\bS^{a,b}$ is $\otimes$-invertible. Thus, bigraded homotopy groups are particular instances of Picard-graded homotopy groups.
\end{rmk}

\begin{rmk} \label{rmk:colimit-compare}
Recall that, if $F:\mathcal{C} \to \mathcal{D}$ is any functor of pointed, cocomplete $\infty$-categories, the definition of $\Sigma$ as a pushout gives natural comparison morphisms
$$\Sigma F(c) \to F(\Sigma c).$$
\end{rmk}

\begin{dfn}[{\cite[Definition 4.27]{Pstragowski}}]
The natural comparison map 
$$\bS^{0,-1}=\Sigma \nu(\bS^{-1}) \longrightarrow \nu(\Sigma \bS^{-1})=\bS^{0,0}$$
is denoted by $\tau$.  In short, $\tau$ is a canonical element of $\pi_{0,-1} \bS^{0,0}$.   The symbol $C\tau$ denotes the cofiber of $\tau$.  A synthetic spectrum $X$ is said to be \emph{$\tau$-invertible} if the map $$\tau:\Sigma^{0,-1} X \to X$$ is an equivalence.  
\end{dfn}

Using $\tau$ we can now give a global description of the category of synthetic spectra.
Although the description given in \Cref{thm:tau-inv} is concise and powerful,
we will ultimately trade it in for the more computationally precise \Cref{thm:synthetic-Adams}.

\begin{thm}[Pstr\k{a}gowski] \label{thm:tau-inv}\ 

  \begin{enumerate}
  \item The localization functor given by inverting $\tau$ is symmetric monoidal. \todo{should we say it's smashing?}
  \item The full subcategory of $\tau$-invertible synthetic spectra is equivalent to the category of spectra.
  \item The composite $\tau^{-1} \circ \nu$ is equivalent to the identity functor on $\Sp$.
  \item The object $C\tau$ admits the structure of an $\mathbb{E}_\infty$-ring in $\Syn_E$.
  \item Suppose that $E$ is homotopy commutative.
    Then there is a natural fully faithful, monoidal functor
    $$ \mathrm{Mod}_{C\tau} \to \mathrm{Stable}_{E_*E},$$
    where the target is Hovey's stable $\infty$-category of comodules and
    the composition of $\nu(-) \otimes C\tau$ with this functor is equivalent to $E_*(-)$.
  \end{enumerate}
  We can construct the following diagram, where every arrow except $\nu$ and $E_*(-)$ is a left adjoint.
  \begin{center}
    \begin{tikzcd}
      & \Sp \ar[dl, swap, "1"] \ar[d, "\nu"] \ar[drr, bend left, "E_*(-)"] & & \\
      \Sp & \mathrm{Syn}_E \ar[l, "\tau^{-1}"] \ar[r, "- \otimes C\tau"] &
      \mathrm{Mod}_{C\tau} \ar[r] & \mathrm{Stable}_{E_*E}
    \end{tikzcd}
  \end{center}
\end{thm}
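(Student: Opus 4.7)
The plan is to assemble these five assertions, each of which appears in Pstr\k{a}gowski's paper \cite{Pstragowski}. I would proceed in increasing order of difficulty.

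Claim (1) is essentially formal. Since $\tau: \bS^{0,-1} \to \bS^{0,0}$ has codomain the unit, the natural map $X \otimes \bS^{0,0}[\tau^{-1}] \to X[\tau^{-1}]$ is an equivalence, as can be verified on the dualizable compact generators of Remark \ref{rmk:dualizable-generators}. Any smashing localization of a presentably symmetric monoidal stable $\infty$-category inherits a symmetric monoidal structure from the ambient category.

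For claim (3), I would first reduce to the case of finite projective $P$: by construction, $\tau: \Sigma \nu \bS^{-1} \to \nu(\Sigma \bS^{-1})$ is the comparison map of Remark \ref{rmk:colimit-compare}, so $\tau^{-1} \nu \bS^k \simeq \bS^k$ essentially by definition. Strong monoidality of $\nu$ on finite projectives (Remark \ref{rmk:strong-monoidal}) extends this to any finite projective $P$. Since $\nu$ preserves filtered colimits and $\tau^{-1}$ preserves all colimits, the composite $\tau^{-1}\circ\nu$ is a symmetric monoidal, colimit-preserving endofunctor of $\Sp$ agreeing with the identity on a generating family, hence is the identity. Claim (2) then follows from (1) and (3) by a standard generation argument: the $\tau$-invertible synthetic spectra are compactly generated by the $\tau^{-1}(\Sigma^k \nu P)$ for $P$ finite projective, and by (3) these are the standard compact generators $\Sigma^k P$ of $\Sp$, so the symmetric monoidal cocontinuous functor $\tau^{-1}\circ\nu$ is an equivalence onto the $\tau$-invertible subcategory.

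The main obstacle is establishing (4) and (5). My approach would be to endow $\Syn_E$ with its natural $t$-structure, whose heart is the abelian category of $E_*E$-comodules and in which the unit $\bS^{0,0}$ has $\pi_0$ equal to $E_*$. This $t$-structure is compatible with the symmetric monoidal product, so the truncation $\tau_{\leq 0}$ is lax symmetric monoidal and promotes its value on the unit to an $\mathbb{E}_\infty$-algebra structure; identifying this truncation with $C\tau$ via the defining cofiber sequence $\bS^{0,-1} \to \bS^{0,0} \to C\tau$ yields (4). For (5), I would construct the functor $\mathrm{Mod}_{C\tau} \to \mathrm{Stable}_{E_*E}$ by sending a $C\tau$-module to its Postnikov associated graded, equipped with its natural comodule structure. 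Full faithfulness should reduce, on the generating set of $\nu P \otimes C\tau$-modules for $P$ finite projective, to the classical identification of mapping spaces in the $E$-Adams $\mathrm{E}_2$-page with $\Ext_{E_*E}$; compatibility with $E_*(-) \circ \nu$ is then essentially by construction, since both sides compute $E$-homology on finite projectives.
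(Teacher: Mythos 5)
The paper treats parts (1)--(4) and the bulk of (5) as direct citations to Pstr\k{a}gowski, and devotes its entire proof to the one claim not available there: that the embedding $\mathrm{Mod}_{C\tau} \to \mathrm{Stable}_{E_*E}$ is \emph{symmetric monoidal}. Its argument is a factorization-of-adjoints: $\epsilon_* : \mathrm{Syn}_E \to \mathrm{Stable}_{E_*E}$ is symmetric monoidal, its right adjoint factors through $\mathrm{Mod}_{C\tau}$, so the left adjoint $\mathrm{Mod}_{C\tau} \to \mathrm{Stable}_{E_*E}$ is canonically oplax monoidal, and the oplax comparison maps are checked to be equivalences on the compact generators $C\tau\otimes M$.

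Your proposal instead rederives (1)--(4) from first principles, which is a reasonable alternative to citation (though your sketch of (2) would need a little more care to show that $\tau^{-1}$ actually restricts to a well-defined and fully faithful functor out of the $\tau$-invertible subcategory, rather than merely agreeing with the identity on a generating family after composing with $\nu$). The real issues are in part (5). First, you never address the symmetric monoidality of the embedding --- which is the \emph{only} claim the paper actually proves, precisely because Pstr\k{a}gowski's results supply everything else. Second, the construction you propose --- ``send a $C\tau$-module to its Postnikov associated graded'' --- cannot be fully faithful. A $C\tau$-module has Postnikov $k$-invariants valued in $\Ext^{\geq 2}_{E_*E}$, which are generically nontrivial (any non-split Yoneda $2$-extension of comodules produces a $C\tau$-module not equivalent to the direct sum of its homotopy), and passing to the associated graded discards them, identifying distinct $C\tau$-modules with isomorphic homotopy comodules. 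The functor must land on the full derived object of $\mathrm{Stable}_{E_*E}$, not its associated graded; Pstr\k{a}gowski constructs it via the adjunction $\epsilon_* \dashv \epsilon^*$ rather than via Postnikov towers.
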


Before proving \Cref{thm:tau-inv} we record the following useful corollary.

\begin{cor}[{\cite[Lemma 4.56]{Pstragowski}}] \label{lemm:ctauE2}
  For any spectrum $X$, there is a natural isomorphism of bigraded abelian groups
  $$ \pi_{t-s,t}(C\tau \otimes \nu X) \cong \Ext_{E_*E}^{s,t}(E_*,E_*X).$$
  Note that the latter object is the $\mathrm{E}_2$-page of the $E$-based Adams spectrum sequence for $X$.
\end{cor}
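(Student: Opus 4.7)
The strategy is a direct application of the structural results collected in \Cref{thm:tau-inv}. Unwinding the definition,
$$\pi_{t-s,t}(C\tau \otimes \nu X) = [\bS^{t-s,t},\, C\tau \otimes \nu X]_{\Syn_E}.$$
Since $C\tau$ admits the structure of an $\mathbb{E}_\infty$-ring in $\Syn_E$ by \Cref{thm:tau-inv}(4), the free--forgetful adjunction rewrites this as a mapping group in $C\tau$-modules:
$$[\bS^{t-s,t},\, C\tau \otimes \nu X]_{\Syn_E} \;\cong\; [C\tau \otimes \bS^{t-s,t},\, C\tau \otimes \nu X]_{\mathrm{Mod}_{C\tau}}.$$
By the definition of bigraded spheres, $\bS^{t-s,t} = \Sigma^{-s}\, \nu \bS^t$, so the source is $\Sigma^{-s}(C\tau \otimes \nu \bS^t)$.

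Now invoke \Cref{thm:tau-inv}(5), which provides a fully faithful symmetric monoidal functor $F: \mathrm{Mod}_{C\tau} \to \mathrm{Stable}_{E_*E}$ whose composite with $\nu$ computes $E_*(-)$. Fully faithfulness transports the above mapping group into $\mathrm{Stable}_{E_*E}$, and since $F(C\tau \otimes \nu Y) \simeq E_* Y$ for every spectrum $Y$, we obtain
$$\pi_{t-s,t}(C\tau \otimes \nu X) \;\cong\; [\Sigma^{-s}\, E_*\bS^t,\, E_* X]_{\mathrm{Stable}_{E_*E}}.$$
Here $E_* \bS^t$ denotes the $E_*E$-comodule $E_*$ shifted internally by $t$.

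The last step identifies this Hom-group with $\Ext^{s,t}_{E_*E}(E_*, E_* X)$ via the standard dictionary for Hovey's stable comodule $\infty$-category: the internal shift by $t$ supplies the internal $t$-grading of $\Ext$, while $\Sigma^{-s}$ supplies the cohomological $s$-grading. I expect the main obstacle to be grading bookkeeping, namely verifying that the $(t-s, t)$ synthetic convention, the $\Sigma^{a-b}$ definition of $\bS^{a,b}$, the internal shift on $E_*$-comodules, and the conventional $(s,t)$ indexing of $\Ext$ (and hence of the Adams $\mathrm{E}_2$-page) all line up. Naturality in $X$ is automatic, as every construction in the chain above is manifestly functorial in $X$.
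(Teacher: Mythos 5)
The paper does not give its own proof of this corollary; it simply cites Pstr\k{a}gowski's Lemma 4.56. Your reconstruction, which derives the isomorphism from \Cref{thm:tau-inv}(4) and (5) via the free--forgetful adjunction between $\mathrm{Syn}_E$ and $\mathrm{Mod}_{C\tau}$ and then transports along the fully faithful exact embedding into $\mathrm{Stable}_{E_*E}$, is correct and is the natural way to recover the statement from the structural package that \Cref{thm:tau-inv} assembles. The chain
$$\pi_{t-s,t}(C\tau \otimes \nu X) \cong [\Sigma^{-s}(C\tau\otimes \nu\bS^t),\,C\tau\otimes\nu X]_{\mathrm{Mod}_{C\tau}} \cong [\Sigma^{-s}E_*\bS^t,\,E_*X]_{\mathrm{Stable}_{E_*E}} \cong \Ext^{s,t}_{E_*E}(E_*,E_*X)$$
is sound: $\bS^{t-s,t}=\Sigma^{-s}\nu\bS^t$ by the paper's definition of bigraded spheres, $-\otimes C\tau$ and the embedding into $\mathrm{Stable}_{E_*E}$ are exact, and $E_*\bS^t$ and $E_*X$ are discrete comodules so the stable Hom-group is the classical derived $\Ext$. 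You have correctly flagged the sole remaining delicacy, namely checking that the $\Sigma^{-s}$ shift and the internal $t$-shift conspire to produce the $(s,t)$ grading on $\Ext$, and this does work out under Hovey's conventions. The only caveat worth adding is that \Cref{thm:tau-inv}(5) is itself stated with a reference to Pstr\k{a}gowski, so in a strict logical sense your argument and Pstr\k{a}gowski's Lemma 4.56 draw on overlapping inputs rather than being fully independent; but as a self-contained derivation within this paper's framework, it is clean and complete.
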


\begin{proof}[Proof of \Cref{thm:tau-inv}]
  Except for the claim that the functor in (5) is symmetric monoidal, this theorem is just a combination of citations to \cite{Pstragowski}:
  (1) is \cite[Theorem 4.36 and Proposition 4.39]{Pstragowski}, (2) is \cite[Theorem 4.36]{Pstragowski}, (3) is \cite[Proposition 4.39]{Pstragowski}, (4) is \cite[Corollary 4.45]{Pstragowski} and most of (5) is \cite[Theorem 4.46 and Remark 4.55]{Pstragowski}.

  We finish by proving the remaining claim.
  By \cite[Lemma 4.43]{Pstragowski} the left adjoint
  $$ \epsilon_* : \mathrm{Syn}_E \to \mathrm{Stable}_{E_*E} $$
  is symmetric monoidal because $E$ is homotopy commutative.\footnote{While \cite[Lemma 4.43]{Pstragowski} as written does not state that $E$ needs to be homotopy commutative, this hypothesis is necessary: we refer the reader to Lemma 4.44 in the second arXiv version of \cite{Pstragowski}.} Then, by \cite[Corollary I.2.5.5.3]{SAG} and \cite[Lemma 4.44]{Pstragowski}, there is a factorization of lax symmetric monoidal right adjoints
  $$ \mathrm{Stable}_{E_*E} \to \mathrm{Mod}_{C\tau} \to \mathrm{Syn}_E.$$
  In particular, this means that the left adjoint $\mathrm{Mod}_{C\tau} \to \mathrm{Stable}_{E_* E}$ canonically acquires the structure of an oplax symmetric monoidal functor \cite[Proposition A]{OpLaxAdjoint}.
  It remains to check that the comparison maps provided by the oplax structure are equivalences.
  Because the tensor products on $\mathrm{Syn}_E$ and $\mathrm{Stable}_{E_*E}$ are cocontinuous in each variable, it suffices to check this on compact generators. This follows from \cite[Lemma 4.43]{Pstragowski} and the fact that $\mathrm{Mod}_{C\tau}$ is compactly generated by objects of the form $C\tau \otimes M$. 
\end{proof}

\begin{rmk}
Altogether, \Cref{thm:tau-inv} suggests the following geometric picture of synthetic spectra:
Synthetic spectra form a $\mathbb{G}_m$-equivariant family over $\mathbb{A}^1$, where $\tau$ is the coordinate on $\mathbb{A}^1$.
The special fiber of this family is a category of comodules while the generic fiber is the category of spectra.
We will not pursue this perspective further in the present paper.
\end{rmk}

\begin{lem} \label{lemm:adams-fil1}
  If a map $f:X \longrightarrow Y$ of spectra has $E$-Adams filtration $k$, then there exists a factorization:
  \begin{center}
    \begin{tikzcd}
      & \Sigma^{0,-k} \nu(Y) \arrow{d}{\tau^{k}} \\
        \nu(X) \arrow[dashed]{ur}{\wt{f}} \arrow{r}{\nu(f)} & \nu(Y)
    \end{tikzcd}
  \end{center}
\end{lem}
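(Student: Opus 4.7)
The plan is to reduce to the base case $k=1$ and then iterate using the naturality of $\tau$. Recall that $f: X \to Y$ has $E$-Adams filtration at least $k$ precisely when it admits a factorization $f = g_k \circ g_{k-1} \circ \cdots \circ g_1$ with $X = X_0 \to X_1 \to \cdots \to X_k = Y$ such that each $g_i$ induces zero on $E_*$-homology. So I would first prove the lemma in the special case $k=1$ and then stack the resulting lifts.

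For $k=1$, suppose $g : A \to B$ satisfies $E_*(g) = 0$. I would consider the canonical cofiber sequence
$$\Sigma^{0,-1} \nu B \xrightarrow{\tau} \nu B \longrightarrow C\tau \otimes \nu B$$
in $\Syn_E$ and observe that it suffices to show the composite $\nu A \xrightarrow{\nu(g)} \nu B \to C\tau \otimes \nu B$ is null, since $\nu(g)$ would then factor through the fiber $\tau$ of $\nu B \to C\tau \otimes \nu B$. For the nullity, I would invoke \Cref{thm:tau-inv}(5): the fully faithful symmetric monoidal functor $\mathrm{Mod}_{C\tau} \to \mathrm{Stable}_{E_*E}$ sends $C\tau \otimes \nu Z$ to $E_* Z$ and therefore sends $C\tau \otimes \nu(g)$ to $E_*(g) = 0$; faithfulness then forces $C\tau \otimes \nu(g)$ itself to be null.

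For general $k$, I apply the $k=1$ case to each $g_i$ to obtain lifts $\tilde g_i : \nu X_{i-1} \to \Sigma^{0,-1} \nu X_i$ with $\tau \circ \tilde g_i = \nu(g_i)$, and then define
$$\widetilde f \;:\; \nu X_0 \xrightarrow{\tilde g_1} \Sigma^{0,-1} \nu X_1 \xrightarrow{\Sigma^{0,-1} \tilde g_2} \Sigma^{0,-2} \nu X_2 \longrightarrow \cdots \longrightarrow \Sigma^{0,-k} \nu X_k = \Sigma^{0,-k} \nu Y.$$
Since $\tau$ is a natural transformation $\Sigma^{0,-1} \Rightarrow \mathrm{id}$, repeated use of the identity $h \circ \tau = \tau \circ \Sigma^{0,-1} h$ allows me to push all $k$ copies of $\tau$ to the left, which yields $\tau^k \circ \widetilde f = \nu(g_k) \circ \cdots \circ \nu(g_1) = \nu(f)$, as required.

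The main technical input is the identification of $C\tau \otimes \nu(-)$ with $E$-homology under the monoidal embedding into $\mathrm{Stable}_{E_*E}$, which is precisely what \Cref{thm:tau-inv}(5) provides; once this is in hand, both the base case and the induction are formal. The only mild subtlety to verify is that the composites and suspensions line up, which comes down to the centrality and naturality of $\tau$, both of which are built into its definition as an element of $\pi_{0,-1}$ of the synthetic sphere.
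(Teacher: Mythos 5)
Your proof is correct, and it takes a genuinely different route from the paper's. For the $k=1$ case the paper works directly with the cofiber sequence of spectra $\Sigma^{-1}Y \xrightarrow{g} Z \xrightarrow{h} X \xrightarrow{f} Y$, uses \Cref{lemm:syn-cof} to see that $\nu$ preserves the first of these cofiber sequences (since $E_*(f)=0$ forces the long exact sequence on $E$-homology to split), and then reads off the factorization from a careful comparison of cofibers and colimit-comparison maps. You instead work entirely inside $\mathrm{Syn}_E$: you use the $C\tau$-cofiber sequence $\Sigma^{0,-1}\nu Y \xrightarrow{\tau} \nu Y \to C\tau\otimes\nu Y$ and observe that the obstruction to lifting $\nu(f)$ against $\tau$ is precisely $C\tau\otimes\nu(f)$, which vanishes because the fully faithful monoidal comparison of \Cref{thm:tau-inv}(5) identifies it with $E_*(f)=0$. (The one small point worth making explicit is why "$E_*(f)=0$ as a comodule map" implies the corresponding morphism in $\mathrm{Stable}_{E_*E}$ is null: this holds because $C\tau\otimes\nu A$ and $C\tau\otimes\nu B$ land in the heart, so mapping spaces between them are discrete and agree with $\mathrm{Hom}$ in the abelian category of comodules.) Both arguments are valid and roughly the same length; yours is perhaps more conceptual, leaning on the $C\tau$-module formalism already developed, while the paper's is more self-contained and exhibits the lift concretely via the map of cofiber sequences. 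Your reduction to $k=1$ and the iterated naturality of $\tau$ in the inductive step match the paper's pasting argument exactly.
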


\begin{proof}
  Any map which is of Adams filtration $k$ can be factored into a composite of $k$ maps each of Adams filtration 1. Then, by pasting diagrams as shown below it will suffice to prove the lemma for $k=1$.
  \begin{center}
    \begin{tikzcd}
      & & \Sigma^{0,-a-b} \nu(C) \ar[d, "\tau^b"] \\
      & \Sigma^{0,-a} \nu(B) \ar[ur, dashed] \ar[r, "\nu(h)"] \ar[d, "\tau^a"] & \Sigma^{0,-a}\nu(C) \ar[d, "\tau^a"] \\
      \nu(A) \ar[r, "\nu(g)"] \ar[ur, dashed] & \nu(B) \ar[r, "\nu(h)"] & \nu(C)
    \end{tikzcd}
  \end{center}  

  Using the associated cofiber sequence
  $$ \Sigma^{-1}Y \xrightarrow{g} Z \xrightarrow{h} X \xrightarrow{f} Y,  $$
  we can build the diagram below.
  \begin{center}
    \begin{tikzcd}
      \nu (\Sigma^{-1} Y) \ar[r] \ar[d, "\nu(g)"] & 0 \ar[r] \ar[d] & \Sigma \nu (\Sigma^{-1}Y) \ar[r, "\tau"] \ar[d] & \nu(Y) \ar[d, equal] \\
      \nu (Z) \ar[r, "\nu(h)"] & \nu(X) \ar[r] \ar[rr, bend right, "\nu(f)"] & \text{cof}(\nu(h)) \ar[r] & \nu(\mathrm{cof}(h)).
    \end{tikzcd}
  \end{center}

  In this diagram, the first pair of maps in each row form cofiber sequences and the right-most map in each row is an assembly map.
  Now, since $f$ has positive Adams filtration it is zero on $E$-homology and therefore $g$ and $h$ satisfy the conditions of \Cref{lemm:syn-cof}.
  This implies that the left-most square is cocartesian and the third vertical map is an equivalence, which provides the desired factorization of $\nu(f)$.
\end{proof}

Before we can relate the bigraded homotopy groups of $\nu X$ to the $E$-based Adams spectral sequence of $X$, we must engage in a brief discussion of completion and convergence.

\begin{dfn} \label{rmk:E-complete} \label{dfn:E-complete}
  A spectrum $X$ is said to be \emph{$E$-nilpotent complete} if the $E$-based Adams resolution for $X$ converges to $X$. In this paper we will make use of two instances of this:
  \begin{itemize}
      \item Any bounded below, $p$-local spectrum $X$ is $\BP$-nilpotent complete \cite[Theorem 6.5]{BousfieldLocalization}.
      \item Any bounded below, $p$-complete spectrum $X$ is $\mathrm{H}\mathbb{F}_p$-nilpotent complete \cite[Theorem 6.6]{BousfieldLocalization}.
  \end{itemize}
\end{dfn}

\begin{dfn} \label{dfn:strong-conv}
    Following Boardman \cite[Defintion 5.2]{Boardman}, we will say that the $E$-based Adams spectral sequence for a spectrum $X$ is \emph{strongly convergent} if:
    \begin{itemize}
        \item The $E$-Adams filtration $F^{\bullet} \pi_{*} (X)$ of the homotopy groups of $X$ is complete and Hausdorff.
        \item There are isomorphisms $F^{s} \pi_{t-s} (X) / F^{s+1} \pi_{t-s} (X) \cong \mathrm{E}^{s,t}_\infty (X)$, where $\mathrm{E}_{\infty} ^{s,t} (X)$ is the $\mathrm{E}_\infty$-page of the $E$-Adams spectral sequence for $X$.
    \end{itemize}
\end{dfn}


\begin{rmk}
    \Cref{dfn:E-complete} and \Cref{dfn:strong-conv} have obvious analogs for synthetic spectra, and we will make use of these analogs without further mention.
\end{rmk}

Our strongest result concerning the relationship between synthetic spectra and Adams spectral sequences is \Cref{thm:synthetic-Adams} (stated below). This theorem provides a dictionary between the structure of the $E$-Adams spectral sequence for $X$ and the structure of the bigraded homotopy groups of $\nu X$. The proof of \Cref{thm:synthetic-Adams} is quite technical, and we defer it to \Cref{sec:proof-synth-Adams}. We have structured the paper so that the reader willing to assume \Cref{thm:synthetic-Adams} need not read \Cref{sec:proof-synth-Adams}.

In order to highlight many of the subtleties which can arise in applying \Cref{thm:synthetic-Adams}, we give example calculations of $\pi_{*,*}(\nu_{\HFt} \bS_2^\wedge)$ through the Toda range in \Cref{subsec:syn-toda-range}.  We strongly recommend that any reader seeking to understand Theorem \ref{thm:synthetic-Adams} examine \Cref{subsec:syn-toda-range}.

\begin{thm} \label{thm:synthetic-Adams}
  Let $X$ denote an $E$-nilpotent complete spectrum with strongly convergent $E$-based Adams spectral sequence.
  Then we have the following description of the bigraded homotopy groups of $\nu X$.

  Let $x$ denote a class in topological degree $k$ and filtration $s$ of the $\mathrm{E}_2$-page of the $E$-based Adams spectral sequence for $X$. The following are equivalent:
  \begin{itemize}
  \item[(1a)] Each of the differentials $d_2$,...,$d_r$ vanish on $x$.
  \item[(1b)] $x$, viewed as an element of $\pi_{k,k+s}(C\tau \otimes \nu X)$, lifts to $\pi_{k,k+s}(C\tau^r \otimes \nu X)$.
  \item[(1c)] $x$ admits a lift to $\pi_{k,k+s}(C\tau^r \otimes \nu X)$ whose image under the $\tau$-Bockstein
  $$ C\tau^r \otimes \nu X \to \Sigma^{1,-r} C\tau \otimes \nu X $$
  is equal to $-d_{r+1}(x)$.
  \end{itemize}

  If we moreover assume that $x$ is a permanent cycle, then there exists a (not necessarily unique) lift of $x$ along the map
  $\pi_{k,k+s}(\nu X) \to \pi_{k,k+s}(C\tau \otimes \nu X)$.  For any such lift, $\wt{x}$, the following statements are true:
  \begin{itemize}
  \item[(2a)] If $x$ survives to the $\mathrm{E}_{r+1}$-page, then $\tau^{r-1} \wt{x} \neq 0$.
  \item[(2b)] If $x$ survives to the $\mathrm{E}_\infty$-page, then the image of $\wt{x}$ in $\pi_{k} (X)$ is of $E$-Adams filtration $s$ and detected by $x$ in the $E$-based Adams spectral sequence.\footnote{The image of $\wt{x}$ in $\pi_{k} (X)$ refers to the image of $\wt{x}$ under the map $\pi_{k,k+s} (\nu X) \to \pi_{k} (\tau^{-1} \nu X) \cong \pi_k (X)$ induced by the functor $\tau^{-1}$ of \Cref{thm:tau-inv}.}
  \end{itemize}
  Furthermore, there always exists a choice of lift $\wt{x}$ satisfying additional properties:
  \begin{itemize}
  \item[(3a)] If $x$ is the target of a $d_{r+1}$-differential, then we may choose $\wt{x}$ so that $\tau^r \wt{x} = 0$.
  \item[(3b)] If $x$ survives to the $\mathrm{E}_\infty$-page, and $\alpha \in \pi_k X$ is detected by $x$, then we may choose $\wt{x}$ so that
    $\tau^{-1} \wt{x} = \alpha$. In this case we will often write $\wt{\alpha}$ for $\wt{x}$.
  \end{itemize}
  Finally, the following generation statement holds:
  \begin{itemize}
  \item[(4)] Fix any collection of $\wt{x}$ (not necessarily chosen according to (3)) such that the $x$ span the permanent cycles in topological degree $k$.  Then the $\tau$-adic completion of the $\Z[\tau]$-submodule of $\pi_{k,*}(\nu X)$ generated by those $\wt{x}$ is equal to $\pi_{k,*} (\nu X)$.\footnote{We consider $\pi_{k,*}(\nu X)$ as a graded abelian group with an operation $\tau$ which decreases the grading by 1.}
  \end{itemize}
\end{thm}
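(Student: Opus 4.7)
The plan is to analyze the $\tau$-adic tower $\{C\tau^r \otimes \nu X\}_{r \ge 1}$ on $\nu X$ and to identify its associated Bockstein spectral sequence with the $E$-based Adams spectral sequence of $X$. The base case of this identification is \Cref{lemm:ctauE2}, which realizes $\pi_{*,*}(C\tau \otimes \nu X)$ as the $\mathrm{E}_2$-page. Inductively, smashing $\nu X$ with the defining cofiber sequence
$$\Sigma^{0,-1} C\tau^{r-1} \xrightarrow{\tau} C\tau^r \to C\tau$$
and passing to bigraded homotopy groups gives long exact sequences whose connecting maps I would identify with the Adams differentials $d_r$ (up to sign). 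This identification is forced by the compatibility of two towers: the $\tau$-adic tower on $\nu X$ becomes the $E$-Adams tower after inverting $\tau$ (by \Cref{thm:tau-inv}), so Bockstein and Adams differentials are computed by the same lifting obstructions, modulo sign conventions inherited from cofiber-sequence orientations; this is the origin of the $-d_{r+1}$ in (1c).

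With this identification in hand, the equivalence (1a)$\Leftrightarrow$(1b)$\Leftrightarrow$(1c) is a direct translation of lifting problems through the tower: a class $x$ lifts to $\pi_{k,k+s}(C\tau^r \otimes \nu X)$ if and only if each Adams differential $d_2,\ldots,d_r$ vanishes on $x$, and the image of such a lift under the next $\tau$-Bockstein is $-d_{r+1}(x)$. For (2a), (2b), (3a), and (3b), I would take a permanent cycle $x$ and use the Milnor sequence $\pi_{k,k+s}(\nu X) = \lim_r \pi_{k,k+s}(C\tau^r \otimes \nu X)$, whose $\lim^1$ term vanishes by strong convergence. A lift $\wt{x}$ of $x$ then satisfies (2a) because $\tau^{r-1} \wt{x}$ records the image of $\wt{x}$ at the $r$-th stage of the tower, and (2b) because inverting $\tau$ recovers $\pi_k X$ with $\tau^{-1}\wt{x}$ of Adams filtration exactly $s$ by construction. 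The flexibility to arrange (3a) or (3b) comes from the observation that any two lifts of $x$ differ by an element of strictly higher $\tau$-adic filtration, so one may add a correction to kill $\tau^r \wt{x}$ (when $x$ is hit by $d_{r+1}$) or to match $\tau^{-1} \wt{x}$ with a preferred representative $\alpha$. Finally, (4) follows from strong convergence: the $\tau$-adic filtration on $\pi_{k,*}(\nu X)$ is complete Hausdorff with associated graded the $\mathrm{E}_\infty$-page, so the $\tau$-adic closure of the $\Z[\tau]$-span of any collection of lifts $\wt{x}$ of a generating set of permanent cycles exhausts $\pi_{k,*}(\nu X)$.

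The main obstacle is the careful identification of the $\tau$-Bockstein differentials with Adams differentials, including establishing the sign $-d_{r+1}$ that appears in (1c). This requires tracing through the compatibility of the $\tau$-adic filtration on $\nu X$ with the $E$-Adams filtration on $X$, unwinding the induced long exact sequences, and keeping the sign conventions from cofiber-sequence orientations consistent throughout. Once this identification is in place, the remainder reduces to standard (if intricate) convergence and lifting arguments; the bookkeeping is substantial enough to justify deferring the full proof to \Cref{sec:proof-synth-Adams}.
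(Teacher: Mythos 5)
Your high-level picture is right, but you have located without resolving the single hardest step, and the filtration you chose to work with makes that step harder than it needs to be. The paper's proof does not analyze the $\tau$-Bockstein tower $\{C\tau^r \otimes \nu X\}$ directly; instead it works with the canonical $\nu E$-Adams tower for $\nu X$, which is obtained by applying $\nu$ to the canonical $E$-Adams tower for $X$. Because $\nu$ is lax monoidal, a connective cover $f_s : X_{s+1} \to X_s$ induces a map $\wt{f}_s : \Sigma^{0,s+1}\nu X_{s+1} \to \Sigma^{0,s}\nu X_s$ that factors the usual $\tau$-multiplication, and this tower is by construction a $\nu E$-Adams tower. Its associated spectral sequence is then pinned down entirely by the paper's Theorem~\ref{thm:synth-ASS}: the $\mathrm{E}_2$-page is $\mathrm{E}_2^{s,k}\otimes \Z[\tau]$, and because every target of a $d_r$ lives in a $\tau$-torsion-free tridegree, the $d_r$ is forced to be $\tau^{r-1}$ times the topological $d_r$ after inverting $\tau$. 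This inductive ``torsion-freeness $+$ compare to $\tau^{-1}$'' argument is the mechanism you allude to when you say the identification is ``forced by compatibility of two towers,'' but it is the entire technical content, not a corollary of it; deferring it leaves nothing behind. Your circularity is sharper than you indicate: you invoke that the $\tau$-adic filtration becomes the Adams filtration after inverting $\tau$, but in the paper that statement is \Cref{cor:synth-filt}, which is \emph{deduced from} the theorem you are trying to prove.

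A second issue is that the Bockstein tower and the $\nu E$-Adams tower are not the same object; the paper has to prove (in \Cref{cor:strong-conv}) that their associated spectral sequences agree up to reindexing, and that comparison itself uses \Cref{thm:synth-ASS}. Working with the $\nu E$-Adams tower is the right move precisely because it compares against the topological $E$-Adams tower degree-by-degree, with the weight shift bookkept by the $\Sigma^{0,s}$; the Bockstein filtration on $\nu X$ is not transparently aligned with the Adams filtration on $X$ until after the theorem is proved. Finally, two specific local points: the sign in (1c) is not a matter of ``cofiber-sequence orientation conventions'' but is extracted by an explicit diagram chase through the cofiber sequence of Adams towers for $C\tau^{r+1}$ and $C\tau^r$ (in the style of \cite[Lemma 9.3.2]{AndrewsMiller}); and your (3a) argument --- ``add a correction to kill $\tau^r\wt{x}$'' --- does not work as stated, because you need to know that $\tau^r\wt{x}$ actually lies in the image of $\tau^{r+1}$ acting on the higher-filtration piece. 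In the paper that surjectivity comes from \Cref{cor:tau-surj} (itself a consequence of \Cref{cor:synth-ASS} and strong convergence), fed into a snake-lemma diagram; without it the correction need not exist.
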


The proof is somewhat involved, so we postpone it to \Cref{sec:proof-synth-Adams}.
We extract below some more digestible corollaries of the above omnibus theorem.

\begin{cor}\label{lemm:ctaugoodenough}
  Let $X$ denote an $E$-nilpotent complete spectrum with strongly convergent $E$-based Adams spectral sequence.
  Suppose for fixed integers $a$ and $b$ that 
  $$\pi_{a,b+s} (C\tau \otimes \nu X) = 0$$
	for all integers $s \geq 0$.   Then it is also true that $\pi_{a,b+s} (\nu X) = 0$ for all $s \geq 0$.
\end{cor}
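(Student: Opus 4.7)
The plan is to exploit the defining cofiber sequence
\[\Sigma^{0,-1} \nu X \xrightarrow{\tau} \nu X \to C\tau \otimes \nu X\]
together with $\tau$-adic completeness of $\pi_{a,*}(\nu X)$, which is guaranteed by the hypotheses via \Cref{thm:synthetic-Adams}. Applying $\pi_{a,*}$ and using that $\pi_{a,b}(\Sigma^{0,-1} \nu X) = \pi_{a,b+1}(\nu X)$, I obtain the long exact sequence
\[\pi_{a, b+s+1}(\nu X) \xrightarrow{\tau} \pi_{a, b+s}(\nu X) \to \pi_{a, b+s}(C\tau \otimes \nu X) \to \pi_{a-1, b+s+1}(\nu X).\]
By hypothesis the third term vanishes for every $s \ge 0$, so $\tau \colon \pi_{a, b+s+1}(\nu X) \to \pi_{a, b+s}(\nu X)$ is surjective whenever $s \ge 0$.

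Next, I iterate this surjectivity. Given any $\alpha \in \pi_{a, b+s}(\nu X)$ with $s \ge 0$, there exists $\alpha^{(1)} \in \pi_{a, b+s+1}(\nu X)$ with $\tau \alpha^{(1)} = \alpha$; since the same surjectivity applies with $s$ replaced by $s+1$, one finds $\alpha^{(2)} \in \pi_{a, b+s+2}(\nu X)$ with $\tau \alpha^{(2)} = \alpha^{(1)}$, and so on. In this way I produce elements $\alpha^{(n)} \in \pi_{a, b+s+n}(\nu X)$ with $\tau^n \alpha^{(n)} = \alpha$ for every $n \ge 0$, so that $\alpha$ lies in $\bigcap_{n \ge 0} \tau^n \pi_{a,*}(\nu X)$.

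Finally, to conclude $\alpha = 0$, I invoke part (4) of \Cref{thm:synthetic-Adams}, which identifies $\pi_{a,*}(\nu X)$ with the $\tau$-adic completion of a submodule and therefore asserts that it is $\tau$-adically complete, in particular $\tau$-adically Hausdorff. Every infinitely $\tau$-divisible element in a Hausdorff $\tau$-adic module is zero, so $\alpha = 0$ and $\pi_{a, b+s}(\nu X) = 0$ for all $s \ge 0$, as claimed. There is essentially no serious obstacle here: the only delicate point is ensuring that the hypotheses on $X$ (nilpotent completeness plus strong convergence) are strong enough to give $\tau$-adic Hausdorffness of $\pi_{a,*}(\nu X)$, but this is exactly what part (4) of \Cref{thm:synthetic-Adams} provides.
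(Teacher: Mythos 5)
Your proof is correct, and it takes a genuinely different route from the paper. The paper's argument is a one-line appeal to \Cref{thm:synthetic-Adams}(4): by \Cref{lemm:ctauE2} the vanishing hypothesis says the Adams $\mathrm{E}_2$-page has no classes in topological degree $a$ and filtration $\geq b-a$, so the lifts $\wt{x}$ all land in internal degree $< b$, hence the $\Z[\tau]$-submodule they generate is zero in degrees $\geq b$, and so is its graded $\tau$-adic completion. Your argument avoids both the identification of $\pi_{*,*}(C\tau \otimes \nu X)$ with the $\mathrm{E}_2$-page and the generation statement of part (4): you extract surjectivity of $\tau$ directly from the long exact sequence of the $\tau$-Bockstein cofiber sequence, observe that every class in the degrees in question is therefore infinitely $\tau$-divisible, and conclude by Hausdorffness. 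This is arguably more elementary and more transparent. The one small imprecision is your source for Hausdorffness: part (4) is a generation statement whose proof \emph{uses} Hausdorffness rather than asserting it; the cleaner citation is \Cref{cor:strong-conv} (under the hypotheses, the $\tau$-Bockstein spectral sequence for $\nu X$ converges strongly, which by \Cref{dfn:strong-conv} means the $\tau$-adic filtration on $\pi_{a,*}(\nu X)$ is complete and Hausdorff). With that reference substituted, the proof is airtight.
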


\begin{proof}
    This follows by combining the vanishing assumption and \Cref{thm:synthetic-Adams}(4).
\end{proof}

We next note that the filtration by ``divisibility by $\tau$'' coincides with the Adams filtration:

\begin{cor}\label{cor:synth-filt}
  Let $X$ denote an $E$-nilpotent complete spectrum with strongly convergent $E$-based Adams spectral sequence.
  Then the filtration of $\pi_k(X)$ given by
  $$ F^s \pi_k(X) := \mathrm{im}(\pi_{k,k+s}(\nu X) \to \pi_k(X)) $$
  coincides with the $E$-Adams filtration on $\pi_k(X)$.
\end{cor}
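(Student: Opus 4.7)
The plan is to prove the two filtrations agree by establishing each inclusion separately, with \Cref{lemm:adams-fil1} and \Cref{thm:synthetic-Adams} as the key inputs.

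First I will show that $F^s_{\mathrm{Adams}} \pi_k(X) \subseteq F^s \pi_k(X)$. Suppose $\alpha \colon \bS^k \to X$ has $E$-Adams filtration at least $s$. \Cref{lemm:adams-fil1} supplies a factorization of $\nu(\alpha)$ of the form $\nu(\bS^k) = \bS^{k,k} \to \Sigma^{0,-s}\nu(X) \xrightarrow{\tau^s} \nu(X)$. Rewriting the first map via the equivalence $\Hom(\bS^{k,k}, \Sigma^{0,-s}\nu X) \cong \pi_{k,k+s}(\nu X)$ exhibits a lift $\wt\alpha \in \pi_{k,k+s}(\nu X)$, and applying the symmetric monoidal functor $\tau^{-1}$ of \Cref{thm:tau-inv} recovers $\alpha = \tau^{-1}\wt\alpha$.

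For the reverse inclusion, I will combine an iterative divisibility argument with $\tau$-adic completeness. Suppose $\wt\alpha \in \pi_{k,k+s}(\nu X)$ satisfies $\tau^{-1}\wt\alpha = \alpha$; the claim is vacuous if $\alpha = 0$, so we may assume $\alpha \neq 0$. Consider the image $\bar\alpha \in \pi_{k,k+s}(C\tau \otimes \nu X)$. Since $\wt\alpha$ lifts through $\nu X$ it lifts through every $C\tau^r \otimes \nu X$, so by \Cref{thm:synthetic-Adams}(1) the class $\bar\alpha$ is a permanent cycle. If $\bar\alpha \neq 0$, then \Cref{thm:synthetic-Adams}(2b) immediately implies that $\alpha$ is of $E$-Adams filtration $s$, and we are done. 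If $\bar\alpha = 0$, then the long exact sequence associated to the cofiber sequence $\Sigma^{0,-1}\nu X \xrightarrow{\tau} \nu X \to C\tau \otimes \nu X$ forces $\wt\alpha = \tau \wt\alpha_1$ for some $\wt\alpha_1 \in \pi_{k,k+s+1}(\nu X)$ with $\tau^{-1}\wt\alpha_1 = \alpha$, and I repeat the argument with $s$ replaced by $s+1$.

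The main obstacle will be to rule out the possibility that this iteration continues indefinitely, which would produce a class $\wt\alpha$ divisible by $\tau^r$ for every $r \ge 0$. The escape is $\tau$-adic completeness of $\nu X$, which is the synthetic translation of the hypothesis that $X$ is $E$-nilpotent complete with strongly convergent Adams spectral sequence; concretely, \Cref{thm:synthetic-Adams}(4) identifies $\pi_{k,*}(\nu X)$ with the $\tau$-adic completion of the $\Z[\tau]$-submodule generated by lifts of permanent cycles, so the $\tau$-adic filtration on $\pi_{k,*}(\nu X)$ is Hausdorff. Infinite $\tau$-divisibility of $\wt\alpha$ therefore forces $\wt\alpha = 0$, whence $\alpha = \tau^{-1}\wt\alpha = 0$, contradicting $\alpha \neq 0$.
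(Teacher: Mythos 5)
The first inclusion is fine and matches the paper's argument. The second inclusion has a genuine gap in the dichotomy: you split on whether $\bar\alpha \in \pi_{k,k+s}(C\tau \otimes \nu X)$ vanishes, but this misses the case where $\bar\alpha$ is nonzero on the $\mathrm{E}_2$-page yet is the \emph{target} of a differential. In that case $\bar\alpha$ is still a permanent cycle in the sense of \Cref{thm:synthetic-Adams} (all differentials \emph{out} of it vanish, which is what you verify), but it does not survive to the $\mathrm{E}_\infty$-page, so the hypothesis of \Cref{thm:synthetic-Adams}(2b) fails and you cannot conclude that $\alpha$ has $E$-Adams filtration $s$. This case genuinely occurs: if $\bar\alpha$ is a boundary, \Cref{thm:synthetic-Adams}(3a) provides a $\tau$-power torsion lift $\wt{y}$ of $\bar\alpha$, and then $\wt{\alpha} - \wt{y} = \tau\wt{z}$ for some $\wt{z}$ with $\tau^{-1}\wt{z} = \alpha$, so $\alpha$ actually lies in $F^{s+1}\pi_k(X)$ even though $\bar\alpha \neq 0$. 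Your argument ends in this branch without extracting the correct conclusion.

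This also undermines your termination argument. The $\tau$-adic Hausdorffness escape only works when the iteration produces genuine $\tau$-divisibility of a fixed $\wt\alpha$, which happens only in the $\bar\alpha = 0$ subcase; once the $\tau$-torsion corrections $\wt{y}$ enter, what you find divisible by $\tau$ is $\wt{\alpha} - \wt{y}$, not $\wt{\alpha}$, so the iteration does not produce infinite $\tau$-divisibility of $\wt\alpha$ and the Hausdorffness appeal does not apply. The paper sidesteps both problems at once by choosing $s$ maximal among filtrations admitting a lift of $\alpha$ at the outset: then the display $\alpha \in F^{s+1}$ is an immediate contradiction, ruling out $\bar\alpha$ being a boundary, so that permanence forces $\bar\alpha$ to survive to $\mathrm{E}_\infty$ and \Cref{thm:synthetic-Adams}(2b) applies. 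Adopting that maximality device is the cleanest repair of your argument.
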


\begin{proof}
  We show that each filtration contains the other.
  \Cref{lemm:adams-fil1} provides an inclusion in one direction: if $x \in \pi_k(X)$ has $E$-Adams filtration $\geq s$, then $x \in F^s\pi_k(X)$.

    Suppose now that $x \in F^s \pi_{k} (X)$, so that we may find some $\wt{x} \in \pi_{k,k+s} (\nu X)$ that maps to $x$. We may assume without loss of generality that $s$ was chosen maximally. Let $y$ be the image of $\wt{x}$ in $\pi_{k,k+s} (C\tau \otimes \nu X)$.

    Suppose that $y$ is a boundary in the $E$-Adams spectral sequence. Then by \Cref{thm:synthetic-Adams}(3a) there exists a $\tau$-power torsion element $\wt{y}$ lifting $y$. It follows that $\wt{x}-\wt{y} = \tau \wt{z}$ for some $\wt{z} \in \pi_{k,k+s+1} (\nu X)$. But then $\wt{z}$ maps to $x \in \pi_k (X)$ under $\tau^{-1}$, which implies that $x \in F^{s+1} \pi_{k} (X)$. This contradicts the maximality assumption on $s$.

    We conclude that $y$ cannot be a boundary. Then \Cref{thm:synthetic-Adams}(2b) finishes the proof.
  %
%
%
\end{proof}

Let $\pi_{k,k+s}(\nu X)^{\mathrm{tor}}$ denote the subgroup of $\tau$-power torsion elements.
We obtain the following $\tau$-power torsion order bound:

\begin{cor}\label{cor:tau-torsion-bound}
  Let $X$ denote an $E$-nilpotent complete spectrum with strongly convergent $E$-based Adams spectral sequence.
  Then the $\tau$-torsion order of $\pi_{k,k+s}(\nu X)^{\mathrm{tor}}$ is equal to the maximum of
  \begin{enumerate}
  \item one less than the $\tau$-torsion order of $\pi_{k,k+s+1}(\nu X)^{\mathrm{tor}}$,
  \item one less than the length of the longest Adams differential entering $\mathrm{E}_*^{s,k+s}$.
  \end{enumerate}
\end{cor}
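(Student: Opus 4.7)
The plan is to analyze each torsion element $y \in \pi_{k,k+s}(\nu X)$ via its image $\bar y$ in $\pi_{k,k+s}(C\tau \otimes \nu X) \cong \mathrm{E}_2^{s,k+s}$, using the long exact sequence of $\Sigma^{0,-1}\nu X \xrightarrow{\tau} \nu X \to C\tau \otimes \nu X$. If $\bar y = 0$, then $y = \tau z$ for some $z \in \pi_{k,k+s+1}(\nu X)$, and a direct computation shows that the $\tau$-torsion orders are related by $\mathrm{ord}(y) + 1 = \mathrm{ord}(z)$. This simultaneously establishes the lower bound $T(k,s) \geq T(k,s+1) - 1$ and shows $\mathrm{ord}(y) \leq T(k,s+1) - 1$ for every torsion $y$ with $\bar y = 0$.

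If $\bar y \neq 0$, then $\bar y$ must be a permanent cycle that is also a boundary. Lifting $\bar y$ along $\nu X \to C\tau \otimes \nu X$ forces all differentials $d_r(\bar y)$ to vanish, since lifting through the $r$-th stage of the $C\tau^r$-tower is obstructed by $d_r(\bar y)$; hence $\bar y$ is a permanent cycle. If $\bar y$ additionally survived to $\mathrm{E}_\infty$ non-trivially, detecting some $\alpha \in \pi_k X$ of exact filtration $s$, then by \Cref{thm:synthetic-Adams}(3b) we could write $y = \widetilde\alpha + \tau v$ with $\tau^{-1}\widetilde\alpha = \alpha$; torsion of $y$ would force $\alpha = -\tau^{-1}(\tau v) \in F^{s+1}\pi_k X$ (using \Cref{cor:synth-filt}), contradicting the exact filtration of $\alpha$. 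Hence $\bar y$ is killed by some $d_R$ with $R \leq L(k,s)+1$, where $L(k,s)$ denotes one less than the length of the longest differential entering $\mathrm{E}^{s,k+s}$. By \Cref{thm:synthetic-Adams}(3a) there is a lift $\widetilde y'$ of $\bar y$ with $\tau^{R-1}\widetilde y' = 0$; writing $y = \widetilde y' + \tau v$ and bounding torsion orders on each summand yields $\mathrm{ord}(y) \leq \max(R-1,\, T(k,s+1)-1) \leq \max(L(k,s),\, T(k,s+1)-1)$, completing the upper bound. For the matching lower bound in terms of $L(k,s)$, take the longest differential $d_R$ hitting $\mathrm{E}^{s,k+s}$, so $R = L(k,s)+1$ and $\bar y$ is a non-zero class on $\mathrm{E}_R$. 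Then \Cref{thm:synthetic-Adams}(2a) guarantees $\tau^{R-2}\widetilde y \neq 0$ for any lift $\widetilde y$, while \Cref{thm:synthetic-Adams}(3a) supplies a lift with $\tau^{R-1}\widetilde y = 0$; this produces a torsion element of order exactly $R-1 = L(k,s)$.

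The main subtlety is the reading of ``$x$ survives to the $\mathrm{E}_{r+1}$-page'' in \Cref{thm:synthetic-Adams}(2a): it must mean that $x$ is non-zero on $\mathrm{E}_{r+1}$, not merely that $d_2,\ldots,d_r$ vanish on $x$. Under the latter interpretation, (2a) and (3a) would be in direct conflict on a boundary class. The former interpretation, by contrast, yields exactly the sharp constraint $\tau^{R-2}\widetilde y \neq 0$ that is required to force the lower and upper bounds to match.
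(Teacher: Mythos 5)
Your proof is correct and takes essentially the same approach as the paper's: compare the torsion element to a lift (chosen via \Cref{thm:synthetic-Adams}(3)) of its image in $C\tau \otimes \nu X$, rule out the image being a non-boundary via \Cref{cor:synth-filt}, and combine the torsion order of that lift with that of the $\tau$-divisible remainder. You spell out several points the paper leaves implicit --- the $\bar y = 0$ case, the lower-bound direction of the claimed equality, and the reading that ``survives to the $\mathrm{E}_{r+1}$-page'' in (2a) must mean \emph{non-zero} on $\mathrm{E}_{r+1}$ --- and all of these are accurate readings of the intended argument.
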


\begin{proof}
  Suppose $x \in \pi_{k,k+s}(\nu X)^{\mathrm{tor}}$, and let $y$ denote the image of $x$ in $\pi_{k,k+s}(C\tau \otimes \nu X)$.
  Choose a lift $\wt{y}$ of $y$ as in \Cref{thm:synthetic-Adams}(3).
  
  Suppose that $y$ is not a boundary in the $E$-Adams spectral sequence.
  Then, $\wt{y} - x$ is divisible by $\tau$ while
  $ \tau^{-1}(\wt{y} - x) = \tau^{-1} \wt{y} $
  is detected by $y$, which contradicts \Cref{cor:synth-filt}.
  
  We conclude that $y$ must be a boundary in the $E$-Adams spectral sequence. Let $r$ denote the length of the differential that hits $y$.
    Then $\wt{y} - x$ is divisible by $\tau$ and $\wt{y}$ is $\tau^{r-1}$-torsion, so the desired bound on the $\tau$-torsion order of $x$ follows.
\end{proof}

%

%

\section{A synthetic Toda bracket} \label{sec:SyntheticToda}
In this section, we will prove \Cref{thm:filt-bound-finale}, which we used in \Cref{sec:Finale} to prove \Cref{thm:intro-main}. That is to say, we provide for each prime $p$ a bound on the $\HFp$-Adams filtration of the Toda bracket $w$ of \Cref{lem:toda-main}.

To accomplish this, we will lift the Toda bracket along the functor \[ \tau^{-1} : \mathrm{Syn}_{\HFp} \to \Sp\] in such a way that \Cref{cor:synth-filt} implies the existence of the desired bound on the $\HFp$-Adams filtration.

The first ingredient that we will need is a bound on the $\HFp$-Adams filtration of the map \[J : \Sigma^{\infty} \Oo\mathrm{\langle4n-1 \rangle} \to \Ss,\] at least when restricted to a skeleton of $\Sigma^{\infty} \Oo\mathrm{\langle 4n-1 \rangle}$. We must restrict to a skeleton because the map $J$ does not otherwise have high $\HFp$-Adams filtration.

\begin{cnv}
In the remainder of this section, we fix a prime $p$ and implicitly $p$-complete all spectra. Furthermore, all synthetic spectra will be taken with respect to $\HFp$.
\end{cnv}
\todo{Jeremy: Check numbers in this section.}

\begin{rmk}
 Recall from Definition \ref{dfn:skeleton} that
	$$M \longrightarrow \Sigma^{\infty} \Oo \mathrm{\langle 4n-1 \rangle}$$
	denotes the inclusion of an $(8n-1)$-skeleton of $\Sigma^{\infty} \Oo \mathrm{\langle 4n-1 \rangle}$. In particular, the induced map
	$$\mathrm{H}_*(M;\F_p) \longrightarrow \mathrm{H}_*(\Sigma^{\infty} \Oo \mathrm{\langle 4n-1 \rangle};\F_p)$$
is an isomorphism for $* < 8n-1$ and a surjection for $* = 8n-1$, and $\mathrm{H}_*(M;\F_p) \cong 0$ for $* > 8n-1$.
\end{rmk}

\begin{ntn}
    Let $h(k)$ denote the number of integers $0 < s \leq k$ which are congruent to one of $0,1,2$ or $4$ mod $8$. Then we set \[N_2 = h(4n-1) - \lfloor \log_2 (8n) \rfloor + 1\] and, for $p$ odd, \[N_p = \left\lfloor \frac{4n}{2p-2} \right\rfloor - \left\lfloor \log_p (4n) \right\rfloor.\] Note that this notation suppresses the dependence of $N_2$ and $N_p$ on $n$.
\end{ntn}

\begin{lem}\label{lemm:filtration}
The $\HFp$-Adams filtration of the composite map of spectra
	$$M \longrightarrow \Sigma^{\infty} \Oo \mathrm{\langle 4n-1 \rangle} \stackrel{J}{\longrightarrow} \mathbb{S}$$
	is at least $N_p$.
\end{lem}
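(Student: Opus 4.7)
The plan is to work in the synthetic spectra framework of Section 9: by \Cref{cor:synth-filt}, the $\HFp$-Adams filtration of $J|_M$ is at least $N_p$ if and only if $\nu(J|_M): \nu M \to \nu \bS$ factors through $\tau^{N_p}: \Sigma^{0,-N_p} \nu \bS \to \nu \bS$. So the goal is to construct such a factorization.

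The first step would be to exploit the skeletal structure of $M$. Since $\Oo\langle 4n-1\rangle$ is $(4n-2)$-connected, we may choose a CW decomposition of $M$ with cells in degrees $4n-1 \le k \le 8n-1$. On each such cell, the restriction of $J$ is an element of the image of the $J$-homomorphism $\pi_k \Oo \to \pi_k \bS$, whose $\HFp$-Adams filtration is controlled by classical results about the image of $J$. At $p=2$, such classes are detected by the $\HFt$-Adams spectral sequence for $ko$, and the filtration of the image-of-$J$ generator in degree $k$ is bounded below by a quantity of the form $h(k) - \lfloor \log_2(k+1)\rfloor + 1$, with the $h$-term counting the nontrivial mod $8$ residues contributing to $\pi_k ko$ and the $\log_2$-correction accounting for $2$-divisibility. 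At odd primes $p$, the analogous bound $\lfloor k/(2p-2)\rfloor - \lfloor \log_p(k+1)\rfloor$ follows from the corresponding spectral sequence for $\ell = \BP\langle 1\rangle_{(p)}$. Minimized over the cellular range $4n-1 \le k \le 8n-1$, these bounds are attained at the bottom cell and equal exactly $N_p$.

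Next, I would promote the cellwise filtration bound to a bound on the full map by an inductive argument in $\Syn_{\HFp}$. Given a lift $\widetilde{f}_{k-1}: \nu M^{(k-1)} \to \Sigma^{0,-N_p}\nu \bS$ of the synthetic map on the $(k-1)$-skeleton, the obstruction to extending $\widetilde{f}_{k-1}$ over a $k$-cell lies in a bigraded homotopy group $\pi_{k-1, k-1+N_p}(\nu \bS)$. By the cellwise filtration bound, the restriction of $J$ to each $k$-cell is itself of Adams filtration $\ge N_p$, so the obstruction can be absorbed into a suitable modification of the lift. Iterating this through the skeletal filtration of $\nu M$ produces the desired factorization through $\tau^{N_p}$.

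The main obstacle I expect is the careful bookkeeping of the $\lfloor \log_p\rfloor$ corrections, which encode the possible $p$-divisibility of image-of-$J$ generators across the cellular range and must be controlled uniformly. The appearance of $\lfloor \log_2(8n)\rfloor$ at $p=2$ versus $\lfloor \log_p(4n)\rfloor$ at odd $p$ reflects the structural asymmetry between the image of $J$ at the two types of primes, and matching the precise constants in the statement of the lemma will require a careful accounting of divisibility in each relevant degree.
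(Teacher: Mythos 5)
The proposal has a fundamental gap: the claim that ``on each such cell, the restriction of $J$ is an element of the image of the $J$-homomorphism'' is not correct, and the argument built on it does not work. A $k$-cell of $M$ is attached along a map $\bS^{k-1} \to M^{(k-1)}$ of a stable CW decomposition; this cell carries no natural correspondence to an element of $\pi_k\Oo$, and there is no well-defined ``restriction of $J$ to the cell'' giving an element of $\pi_k\bS$ (the cell is not a retract of $M$, and its image in the quotient $M^{(k)}/M^{(k-1)}$ does not lie under $J$). The only cell on which the composite with $J$ genuinely produces an image-of-$J$ class is the bottom cell. Consequently the claimed cellwise filtration bounds, the asserted attainment of the minimum at the bottom cell, and the obstruction-theoretic absorption step are all unsupported. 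The inductive step in particular confuses two different things: the filtration of a hypothetical cellwise class versus the filtration of the actual obstruction class, which is built from the attaching maps of $M$ and a choice of partial lift and is not controlled by image-of-$J$ numerology.

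The argument also fails to engage with the input that actually drives the result. The paper factors $M \to \Sigma^\infty\Oo\langle 4n-1\rangle \to \bS$ through the Whitehead tower $\Sigma^\infty\Oo\langle 4n-1\rangle \to \Sigma^\infty\Oo\langle 4n-2\rangle \to \cdots \to \Sigma^\infty\mathrm{SO}$, then uses Stong's and Singer's computations (\Cref{cor:zerocohmap}, \Cref{cor:zerocohmapp}) to show that, after restricting to a bounded skeleton, most of these transition maps induce zero on $\mathrm{H}^*(-;\F_p)$; each such map contributes at least $1$ to Adams filtration, and $J:\Sigma^\infty\mathrm{SO}\to\bS$ contributes one more. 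The $\lfloor\log_p\rfloor$ correction in $N_p$ counts precisely the Whitehead stages whose cohomology-killing range fails to exceed $8n-1$, not $p$-divisibility of image-of-$J$ generators. Your proposal makes no use of the cohomology of the intermediate spaces $\BO\langle m\rangle$ or $\U\langle 2m-1\rangle$, and without that input there is no route to the stated bound.
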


We will prove \Cref{lemm:filtration} in \Cref{subsec:filtration}.
Using \Cref{lemm:filtration}, we proceed to construct a lift of the diagram defining the Toda bracket $w$ to $\mathrm{Syn}_{\HFp}$.
We take the first step of this construction below:

\begin{cnstr}
By \Cref{lemm:adams-fil1}, \Cref{lemm:filtration} implies the existence of a factorization in $\mathrm{Syn}_{\HFp}$
\begin{center}
  \begin{tikzcd}
    & \arrow{d}{\tau^{N_p}} \Ss^{0,-N_p} \\
    \nu M \arrow[dashed]{ur} \arrow{r}{J} & \Ss^{0,0},
  \end{tikzcd}
\end{center}
which we will prefer to view as a morphism
$$\wt{J}:\Sigma^{0,N_p}\nu M \longrightarrow \Ss^{0,0}.$$
As in \Cref{dfn:skeleton}, we view $x$ as an element of $\pi_{4n-1} M$.
We may then obtain a class $y$ as the composition
$$ y:\Ss^{4n-1,4n+N_p-1} \xrightarrow{\nu(x)} \Sigma^{0,N_p} \nu M \xrightarrow{\wt{J}} \Ss^{0,0}. $$
This element $y$ is a member of the bigraded homotopy group $\pi_{4n-1,4n+N_p-1}\Ss^{0,0}$.
\end{cnstr}

Before constructing our lift of the Toda bracket $w$ we reproduce the relevant diagram, which appeared in Lemma \ref{lem:toda-main}, for the convenience of the reader.
\begin{center}
  \begin{tikzcd}[column sep = huge]
    \mathbb{S}^{8n-2} \arrow{r}{2}
    \arrow[rrdd, bend right=20,""{name=D}] \arrow[rrdd, bend right=20,swap,"0"] \arrow[rr,bend left = 30,""{name=U},"0"]
    \arrow[rrdd, bend right=20,""{name=D}] \arrow[rrdd, bend right=20,swap,"0"] \arrow[rr,bend left = 30,""{name=U},"0"]
    & \mathbb{S}^{8n-2} \arrow[Leftrightarrow, from=D, "h"] \arrow[Leftrightarrow, from=U, "f"] \arrow{r}{xJ(x)} \arrow[rdd,""{name=MU},"J(x)^2" description] \arrow[rdd,swap, ""{name=MD},"J(x)^2" description]
    & M \arrow{dd}{J}  \arrow[bend left=20,Leftrightarrow, from=MU, swap, "g"] \\ \\
    & & \mathbb{S}.
  \end{tikzcd}
\end{center}
The homotopies $f,g$ and $h$ are chosen as follows:
\begin{itemize}
\item $f$ is an arbitrary nullhomotopy.
\item $g$ is the canonical homotopy associated to the fact that $J$ is a map of $\Ss$-modules.
\item $h$ is the canonical nullhomotopy given by the $\mathbb{E}_\infty$-ring structure on $\Ss$.
\end{itemize}

\begin{cnstr}\label{cnstr:synth-toda-diagram}
  We may form the following diagram of morphisms and homotopies in $\Syn_{\HFp}$
\begin{center}
  \begin{tikzcd}[column sep = huge]
    \mathbb{S}^{8n-2,8n+2N_p-2} \arrow{r}{2}
    \arrow[rrdd, bend right=20,""{name=D}] \arrow[rrdd, bend right=20,swap,"0"] \arrow[rr,bend left = 30,""{name=U},"0"]
    \arrow[rrdd, bend right=20,""{name=D}] \arrow[rrdd, bend right=20,swap,"0"] \arrow[rr,bend left = 30,""{name=U},"0"]
    & \mathbb{S}^{8n-2,8n+2N_p-2} \arrow[Leftrightarrow, from=D, "\tilde{h}"] \arrow[Leftrightarrow, from=U, "\tilde{f}"] \arrow{r}{\nu(x)y} \arrow[rdd,""{name=MU},"y^2" description]
    & \Sigma^{0,N_p}\nu(M) \arrow{dd}{\wt{J}}  \arrow[swap,Leftrightarrow, from=MU,"\tilde{g}"] \\ \\
    & & \Ss^{0,0}, 
  \end{tikzcd}
\end{center}
where the homotopies $\tilde{f},\tilde{g}$ and $\tilde{h}$ are chosen as follows:
\begin{itemize}
\item $\tilde{f}$ is an arbitrary nullhomotopy, which exists as a consequence of \Cref{thm:vanishinggroup}.
\item $\tilde{g}$ is the canonical homotopy that expresses the fact that $\tilde{J}$ is a map of right $\Ss^{0,0}$-modules.
\item $\tilde{h}$ is the canonical nullhomotopy that comes from the fact that $\Ss^{0,0}$ is an $\mathbb{E}_\infty$-ring in the symmetric moniodal $\infty$-category $\mathrm{Syn}_{\HFp}$.
\end{itemize}

\end{cnstr}

\begin{prop}\label{thm:vanishinggroup}
    The bigraded homotopy group $\pi_{8n-2,8n+N_p-2}(\nu M )$ is trivial for $n \geq 3$.
\end{prop}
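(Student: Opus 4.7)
The plan is to invoke \Cref{lemm:ctaugoodenough}, which reduces the statement to a purely algebraic Ext vanishing assertion. Combined with \Cref{lemm:ctauE2}, what we need to show becomes
\[\Ext^{s,\, s+8n-2}_{\A_*}(\F_p, H_*(M; \F_p)) = 0 \qquad \text{for all } s \ge 2N_p,\]
i.e., that the $\HFp$-Adams $\mathrm{E}_2$-page of $M$ vanishes in topological degree $8n-2$ strictly above filtration $2N_p - 1$.

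To establish this vanishing, the natural approach is to exploit the Goodwillie-tower analysis of $\sOf$ from \Cref{sec:Goodwillie}. Through dimension $8n-1$, only the first two layers of \Cref{cor:goodwillie} contribute, so that up to higher-connectivity tails we obtain a cofiber sequence
\[K_2 \longrightarrow M \longrightarrow K_1,\]
where $K_1$ is an $(8n-1)$-skeleton of $\Sigma^{-1} \tau_{\ge 4n} ko$ and $K_2$ is a truncation of $D_2(\Sigma^{-1} \tau_{\ge 4n} ko)$ whose cells sit in dimensions $8n-2$ and possibly $8n-1$ (using \Cref{lemm:d2-ko-comp}). The associated long exact sequence in Ext reduces the problem to the corresponding vanishing assertions for $K_1$ and for $K_2$.

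For $K_1$, the classical identification $H^*(ko; \F_2) \cong \A \sslash \A(1)$, with its analogs at odd primes, together with a change-of-rings argument identifies the relevant Ext with $\Ext_{\A(1)_*}$ of a bounded module; this has a well-known vanishing line of slope essentially $1/2$, and the definition of $N_p$ is precisely engineered so that our line of interest lies strictly above it. For $K_2$, which has at most two cells, Ext can be read off by direct calculation. Here one uses the $\mathbb{E}_\infty$-structural relation $\Sq^{4n-1}(x) = x^2$ at $p=2$, which links the cell supporting $x^2$ to the bottom cell of $K_1$ and removes the $h_0$-tower that would otherwise obstruct a stem-zero vanishing from a cell in dimension $8n-2$.

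The main obstacle is the careful bookkeeping of constants, in particular the interaction between the exceptional initial region of the $\A(1)$-vanishing line and the precise definition of $N_p$; a secondary subtlety is that the argument must be run separately at $p=2$ and at odd primes, since the $\A_*$-comodule structure of $H_*(ko; \F_p)$ depends on the prime.
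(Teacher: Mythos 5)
Your opening reduction—invoking \Cref{lemm:ctaugoodenough} and \Cref{lemm:ctauE2} to pass to a statement about $\mathrm{E}_2^{s,\,s+8n-2}(M)$ for $s \ge 2N_p$, and then feeding in the Goodwillie decomposition of $\sOf$—matches the paper's strategy at $p=2$. But the way you analyze the two pieces contains a genuine gap, and along the way you miss the observation that makes the proof clean.

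The paper does not establish vanishing above a line related to $N_p$; it establishes vanishing in the \emph{entire column} $t-s = 8n-2$ (for $s \ge 2$ at $p=2$, for all $s \ge 0$ at odd $p$), and the proof of \Cref{thm:vanishinggroup} therefore never uses the value $2N_p$ at all. Concretely, by change of rings $\Ext_{\A_*}(\F_2, H_*(\Sigma^{-1}\tau_{\ge 4n}ko))$ in topological degree $8n-2$ is $\Ext_{\A(1)_*}(\F_2,\F_2)$ in intrinsic degree $4n-1$, and since $4n-1 \equiv 3$ or $7 \pmod 8$, this group is identically zero: $\A(1)$-Ext, like $\pi_*ko$, lives only in degrees $\equiv 0,1,2,4 \pmod 8$. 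At odd primes the same conclusion holds (and $D_2$ contributes nothing in the relevant range, being $2$-torsion, so the paper just uses connectivity of $M \to \Sigma^{-1}\tau_{\geq 4n}ko$ with no decomposition). In contrast, you propose to compare the line $s \geq 2N_p$ against an $\A(1)$-vanishing line of slope $1/2$ and assert that $N_p$ is ``precisely engineered'' to put the former above the latter. That comparison does not survive the hypothesis $n \ge 3$: for $n=3$ one has $N_2 = 3$, so the line of interest is $s \ge 6$, while the $\A(1)$-vanishing line in degree $8n-2 = 22$ (i.e.\ intrinsic degree $11$ after the shift) only guarantees vanishing for roughly $s > 6.5$. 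Your argument would leave the edge filtrations unresolved precisely in the small-$n$ regime the proposition needs to cover. The full-column vanishing is what closes this; it is also cleaner because $N_p$'s definition comes from \Cref{lemm:filtration}'s bound on the Adams filtration of the $J$-homomorphism, which has nothing to do with the $\A(1)$-vanishing line.

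Your treatment of $K_2$ is also muddled. The class $x^2$ does not support an $h_0$-tower in $D_2(\bS^{4n-1})$ in the first place: \Cref{lemm:d2-ko-comp} computes the bottom cells of $D_2(\bS^{4n-1})$ as $\bS^{8n-2} \cup_2 e^{8n-1} \cup_\eta e^{8n+1}$ (a stunted projective spectrum), so $\pi_{8n-2}$ is already $\Z/2$ and the relevant Ext column is concentrated in filtration $0$. The relation $\Sq^{4n-1}i = i^2$ enters the paper's argument through \Cref{lemm:cohsurj2} and \Cref{cor:coh-surj-range}, which give surjectivity of $H^*(\Sigma^{-1}\tau_{\geq 4n}ko) \to H^*(\Oo\langle 4n-1\rangle)$ through the range and hence a short exact sequence in cohomology feeding a long exact sequence of Ext groups; it is not a device for stripping off an $h_0$-tower that $K_2$ never had.
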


We will prove \Cref{thm:vanishinggroup} in \Cref{subsec:vanishinggroup}. By construction, the diagram of \Cref{cnstr:synth-toda-diagram} maps under the symmetric monoidal functor $\tau^{-1}$ to the diagram of \Cref{lem:toda-main}. We are therefore able to read off the following $\HFp$-Adams filtration bound on the resulting Toda bracket.

\begin{thm} \label{thm:AdamsBound}
There exists a choice of $f$ in the statement of Lemma \ref{lem:toda-main} such that the $\HFp$-Adams filtration of the Toda bracket $w$ is at least $2N_p-1$.
\end{thm}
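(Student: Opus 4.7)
The plan is to apply the symmetric monoidal functor $\tau^{-1}\colon \Syn_{\HFp} \to \Sp$ from Theorem \ref{thm:tau-inv} to the synthetic diagram of Construction \ref{cnstr:synth-toda-diagram} and then read off the Adams filtration bound from Corollary \ref{cor:synth-filt}. First, Construction \ref{cnstr:synth-toda-diagram} produces a well-defined synthetic Toda bracket $\wt w$: the nullhomotopy $\wt h$ exists canonically from the $\E_{\infty}$-ring structure on $\Ss^{0,0}$, the homotopy $\wt g$ exists canonically because any morphism in $\Syn_{\HFp}$ is automatically a right $\Ss^{0,0}$-module map, and an arbitrary nullhomotopy $\wt f$ of $2\cdot \nu(x) y$ is guaranteed by Proposition \ref{thm:vanishinggroup}. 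Composing these three homotopies gives a class $\wt w \in \pi_{8n-1,\,8n+2N_p-2}(\Ss^{0,0})$, since the loop in the mapping space from $\Ss^{8n-2,\,8n+2N_p-2}$ to $\Ss^{0,0}$ corresponds to a map out of $\Sigma \Ss^{8n-2,\,8n+2N_p-2} = \Ss^{8n-1,\,8n+2N_p-2}$.

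Next, I would verify that $\tau^{-1}(\wt w)$ realizes the Toda bracket $w$ of Lemma \ref{lem:toda-main} for a suitable choice of nullhomotopy $f$. Since $\tau^{-1}$ is symmetric monoidal with $\tau^{-1}\circ \nu \simeq \mathrm{id}_{\Sp}$ (Theorem \ref{thm:tau-inv}), it sends $\Sigma^{0,N_p}\nu M$ to $M$ and $\nu(x)$ to $x$; unwinding the factorization of Lemma \ref{lemm:adams-fil1} and using that $\tau^{-1}(\tau)$ is an equivalence shows that $\tau^{-1}(\wt J) = J$. Symmetric monoidality further ensures that the canonical homotopies $\wt g$ and $\wt h$ are carried to the canonical homotopies $g$ and $h$ of Lemma \ref{lem:toda-main}. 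Setting $f := \tau^{-1}(\wt f)$, the image of the entire diagram under $\tau^{-1}$ is precisely the diagram defining $w$ in Lemma \ref{lem:toda-main}, so $w = \tau^{-1}(\wt w)$. Corollary \ref{cor:synth-filt}, applied with $k = 8n-1$ and $s = 2N_p - 1$, then immediately yields that $w$ has $\HFp$-Adams filtration at least $2N_p - 1$, as claimed.

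The main obstacle is not this formal argument, which is a short bookkeeping exercise once the synthetic framework is in place; rather, it is establishing the two geometric inputs feeding Construction \ref{cnstr:synth-toda-diagram}. The filtration estimate for $J|_M$ (Lemma \ref{lemm:filtration}) is what lifts $J$ to a synthetic map $\wt J$ of the correct $\tau$-shift, and the vanishing of $\pi_{8n-2,\,8n+2N_p-2}(\nu M)$ (Proposition \ref{thm:vanishinggroup}) is what licenses the choice of critical nullhomotopy $\wt f$ in the desired synthetic bidegree. If either input were weaker, the synthetic Toda bracket would live in a lower-filtration position and the conclusion would degrade accordingly.
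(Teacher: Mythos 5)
Your proof is correct and takes essentially the same approach as the paper: apply $\tau^{-1}$ to the synthetic diagram of Construction \ref{cnstr:synth-toda-diagram} to recover the diagram of Lemma \ref{lem:toda-main} (thereby fixing the choice of $f$), and then use Corollary \ref{cor:synth-filt} together with the fact that the synthetic Toda bracket lives in $\pi_{8n-1,\,8n+2N_p-2}(\Ss^{0,0})$ to extract the filtration bound $2N_p-1$. You also correctly identify Lemma \ref{lemm:filtration} and Proposition \ref{thm:vanishinggroup} as the two nontrivial inputs feeding the construction.
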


\begin{proof}
	On the one hand, applying $\tau^{-1}$ to the diagram of \Cref{cnstr:synth-toda-diagram} yields the diagram of \Cref{lem:toda-main}. On the other hand, the Toda bracket presented by \Cref{cnstr:synth-toda-diagram} is given by an element of $\pi_{8n-1,8n+2N_p-2} \Ss^{0,0}.$ Therefore \Cref{cor:synth-filt} implies that it realizes to an element of Adams filtration at least \[\left(8n+2N_p-2\right)-(8n-1)=2N_p-1. \qedhere\]
\end{proof}

\begin{rmk}
    As mentioned in the introduction, this is an improvement on a bound of Stolz (cf. \cite[Satz 12.7]{StolzBook}), who works at $p=2$ and bounds the $\HFt$-Adams filtration of $w$ by approximately $N_2$.
\end{rmk}

In the rest of this section, we will prove \Cref{lemm:filtration} and \Cref{thm:vanishinggroup}.

\subsection{Proof of \Cref{lemm:filtration}}
\label{subsec:filtration}\ 

Our proof of \Cref{lemm:filtration} is similar to Stolz's proof of \cite[Satz 12.7]{StolzBook}.

At the prime $2$, our argument will be based on Stong's computation of the cohomology of $\BO\mathrm{\langle m \rangle}$ in \cite{Stong}. At an odd prime, we base our argument on Singer's computation of the cohomology of $\U \mathrm{\langle 2m-1 \rangle}$ in \cite{Singer}. We begin with some notation.

\begin{ntn}
    Given a prime $p$ and an integer $n$ with $p$-adic expansion $n = \Sigma_i a_i p^i$, we let $\sigma_p (n) = \Sigma_i a_i$.
\end{ntn}

\begin{ntn}
    Let $\theta_i \in \mathrm{H}^i (\BO; \F_2)$ for $i \geq 1$ denote the polynomial generators fixed by Stong in \cite{Stong}, so that $\mathrm{H}^i (\BO; \F_2) \cong \F_2 [\theta_i \vert i \geq 1]$.

    Moreover, let $G_m$ denote the image of the canonical map \[\mathrm{H}^*(K(\pi_m \BOm, m); \F_2) \to \mathrm{H}^* (\BOm; \F_2).\]
\end{ntn}

\begin{thm}[{\cite[Theorem A and Corollary on p. 542]{Stong}}] \label{thm:BOmcoh2}
	There is an isomorphism
    \[\mathrm{H}^* (\BOm; \F_2) \cong \FF_2 [\theta_i \vert \sigma_2 (i-1) \geq h(m)] \otimes G_m.\] Moreover, $G_m$ is a polynomial algebra.
\end{thm}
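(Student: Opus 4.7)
Since this is a theorem cited from Stong, the plan is to sketch the inductive strategy along the Postnikov tower of $\BO$ that Stong uses. Consider the principal fibration
\[\BOm \to \BO\langle m-1\rangle \to K(\pi_{m-1}\BO,\,m-1),\]
which is an equivalence when $\pi_{m-1}\BO = 0$; in that case $h(m) = h(m-1)$, so the inductive statement passes through unchanged. When $\pi_{m-1}\BO \ne 0$ the step is nontrivial, and one runs the cohomology Serre spectral sequence of this fibration. The base case is $\BO$ itself, whose mod $2$ cohomology is the polynomial algebra $\F_2[\theta_i \mid i \ge 1]$ in a basis chosen so that the Steenrod algebra action on the $\theta_i$ is controlled by binary expansions of the indices.

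In the inductive step, Serre's theorem identifies $\mathrm{H}^*(K(\pi_{m-1}\BO, m-1); \F_2)$ as a polynomial algebra on admissible iterated Steenrod squares of the fundamental class $\iota_{m-1}$ (plus a Bockstein partner when the coefficient group is $\Z$). After arranging that $\iota_{m-1}$ transgresses to a chosen polynomial generator $\theta \in \mathrm{H}^m(\BO\langle m-1\rangle; \F_2)$, the Kudo transgression theorem forces $\Sq^I \iota_{m-1}$ to transgress to $\Sq^I \theta$. By the combinatorics governing the Steenrod action on Stong's $\theta_i$, each application of an admissible $\Sq^I$ increments the binary weight of the relevant index by a controlled amount. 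Consequently, the transgressed images kill, modulo decomposables, precisely the $\theta_i$ with $\sigma_2(i-1) = h(m-1)$; quotienting then leaves the polynomial algebra on the $\theta_i$ with $\sigma_2(i-1) \ge h(m)$. What remains of the fiber cohomology after transgression assembles into the new tensor factor $G_m$, which is the image from the bottom Postnikov section $\BOm \to K(\pi_m \BO, m)$.

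The main obstacle is controlling the spectral sequence beyond the transgression page: one must rule out higher differentials and exotic multiplicative extensions, and verify that the $\theta_i$ surviving the quotient remain algebraically independent. Stong addresses this by carefully selecting the $\theta_i$ so that the ideal generated by the transgressed Steenrod images coincides precisely with the ideal one wishes to quotient by, reducing the remaining verification to a Poincar\'e-series count. The polynomiality of $G_m$ follows from the same transgression analysis applied to the $\Z$-summands of $\pi_m\BO$: their Eilenberg--MacLane contributions, together with their Bocksteins, land in even total degree and generate a free polynomial subalgebra of $\mathrm{H}^*(\BOm; \F_2)$.
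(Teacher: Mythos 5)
The paper does not prove this statement: it is cited directly to Stong and used as a black box, so there is no internal argument to compare your sketch against. What you describe is indeed the strategy of Stong's original proof (induction up the Whitehead tower of $\BO$, the Serre spectral sequence at each step, Kudo transgression to propagate Steenrod operations off the bottom class, and bookkeeping of binary weights on the $\theta_i$). However, the transgression bookkeeping is set up backwards. In the fibration you write, $\BOm$ is the \emph{fiber} and $K(\pi_{m-1}\BO, m-1)$ is the \emph{base}; transgression in a Serre spectral sequence runs from fiber cohomology to base cohomology, so the base class $\iota_{m-1}$ is not a transgressive element and certainly does not transgress to a class in $\mathrm{H}^m$ of the total space. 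The Kudo argument actually runs in the companion fibration $K(\pi_{m-1}\BO, m-2) \to \BOm \to \BO\langle m-1\rangle$, where the Eilenberg--MacLane space is the fiber: there the fundamental class is $\iota_{m-2}$, it transgresses to the $k$-invariant in $\mathrm{H}^{m-1}(\BO\langle m-1\rangle;\F_2)$ (degree $m-1$, not $m$), and Kudo gives that $\Sq^I\iota_{m-2}$ transgresses to $\Sq^I$ of the $k$-invariant. Both your indices and the fiber/base roles are off in the part of the sketch that is supposed to carry the computation.

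The closing paragraph also does not establish the polynomiality of $G_m$. By definition $G_m$ is the image of $\mathrm{H}^*(K(\pi_m\BO, m);\F_2) \to \mathrm{H}^*(\BOm;\F_2)$, and the image of a polynomial algebra under an algebra map is not polynomial in general. The parity claim is false as stated (for $m \equiv 0, 4 \pmod 8$ the generators $\Sq^I\iota_m$ and $\Sq^1\iota_m$ occur in both parities), and parity would not be sufficient anyway. What Stong must actually show is that the kernel of this map is generated by part of a system of polynomial generators, so that the image is polynomial on the complementary generators; this is controlled by the spectral sequence computation (including the Poincar\'e series count you mention), not by a degree-parity observation. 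So your sketch has the right overall shape, but the two mechanisms you actually state — the transgression of $\iota_{m-1}$ and the parity argument for $G_m$ — would both fail as written.
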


\begin{ntn}
    Fix an odd prime $p$. We let $\mu_{2i +1} \in \mathrm{H}^{2i+1} (\U; \F_p)$ for $i \geq 0$ denote the exterior generators fixed by Singer in \cite{Singer}, so that $\mathrm{H}^* (\U; \F_p) \cong \Lambda_{\F_p} (\mu_{2i+1} \vert i \geq 0)$.
\end{ntn}

\begin{thm}[{\cite[Equations (4.14n) and (4.15n)]{Singer}}]\label{thm:Umcohp}
    Let $p$ be an odd prime. Then there is an isomorphism \[\mathrm{H}^* (\U\mathrm{\langle 2m-1 \rangle}; \F_p) \cong \frac{\mathrm{H}^{*} (\U;\F_p)}{(\mu_{2i+1} \vert \sigma_p (i) < m-1)} \otimes H_m,\] where $H_m \subseteq \mathrm{H}^* (\U\mathrm{\langle 2m-1 \rangle}; \F_p)$ is a sub-Hopf algebra.

    Moreover, the image of the map \[\mathrm{H}^* (\U\mathrm{\langle 2m-2p+1\rangle}; \F_p) \to \mathrm{H}^* (\U\mathrm{\langle 2m-1\rangle}; \F_p)\] is \[\frac{\mathrm{H}^{*} (\U;\F_p)}{(\mu_{2i+1} \vert \sigma_p (i) < m-1)} \otimes 1.\]
\end{thm}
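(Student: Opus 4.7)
The plan is to recover Singer's computation \cite{Singer} by induction on $m$, using the Postnikov tower of $\U$. The base case $m=1$ is the classical Borel computation
\[\mathrm{H}^*(\U; \F_p) \cong \Lambda_{\F_p}(\mu_{2i+1} \mid i \geq 0),\]
for which the claim holds with $H_1 = \F_p$ and empty ideal in the quotient.

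For the inductive step, we consider the principal fibration
\[K(\Z, 2m-4) \to \U\langle 2m-1\rangle \to \U\langle 2m-3\rangle\]
obtained by looping the classifying map $\U\langle 2m-3\rangle \to K(\Z, 2m-3)$ for the bottom homotopy group of $\U\langle 2m-3\rangle$, and analyze its Serre spectral sequence with $\F_p$-coefficients. The heart of the argument is a careful tracking of transgressions via Kudo's theorem and the Cartan--Serre description of $\mathrm{H}^*(K(\Z, 2m-4);\F_p)$ in terms of admissible monomials of Milnor primitives applied to the fundamental class $\iota_{2m-4}$. By construction, $\iota_{2m-4}$ transgresses to the bottom exterior class of $\mathrm{H}^*(\U\langle 2m-3\rangle;\F_p)$, and an admissible monomial $Q_I\iota_{2m-4}$ transgresses to a multiple of some $\mu_{2i+1}$ with $i$ determined by $I$. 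The key combinatorial input is that the length of the minimal admissible monomial of Milnor operations hitting $\mu_{2i+1}$ is governed by $\sigma_p(i)$, so that $\mu_{2i+1}$ is killed in this spectral sequence precisely when $\sigma_p(i) = m-2$; together with the inductive hypothesis this establishes that the surviving exterior generators are exactly those with $\sigma_p(i) \geq m-1$. The polynomial classes from the base surviving to $\mathrm{E}_\infty$, together with the inductively produced $H_{m-1}$, assemble into the sub-Hopf algebra $H_m$, with the Hopf structure inherited from the $H$-space structure on $\U\langle 2m-1\rangle$.

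For the second assertion, we compare the Serre spectral sequences in the tower of $p-1$ successive principal fibrations
\[\U\langle 2m-1\rangle \to \U\langle 2m-3\rangle \to \cdots \to \U\langle 2m-2p+1\rangle.\]
An exterior generator $\mu_{2i+1}$ pulls back from $\mathrm{H}^*(\U\langle 2m-2p+1\rangle;\F_p)$ to $\mathrm{H}^*(\U\langle 2m-1\rangle;\F_p)$ if and only if it survives every intermediate spectral sequence, which by the inductive analysis happens exactly when $\sigma_p(i) \geq m-1$. Meanwhile, the new polynomial classes assembling $H_m$ all arise from the bases of the intermediate fibrations, and so do not lie in the image from the bottom of the tower. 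The main technical hurdle is the explicit Milnor-operation bookkeeping that underlies the transgression combinatorics; since we invoke this theorem as a black box, we simply cite equations (4.14n) and (4.15n) of \cite{Singer}.
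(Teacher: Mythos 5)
The paper does not prove this theorem; it records it as a direct citation to Singer's equations (4.14n) and (4.15n), exactly as you do in your final sentence. Your expository sketch of the Whitehead-tower / Serre spectral sequence mechanism and the role of $\sigma_p(i)$ is a fair summary of what Singer's computation accomplishes, but since both your proposal and the paper ultimately defer the substantive transgression combinatorics to Singer's equations, the two ``proofs'' coincide as citations; the sketch adds context rather than an independent argument.

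One presentational caution if you ever want to expand the sketch into a real argument: the assertion that ``$Q_I\iota_{2m-4}$ transgresses to a multiple of some $\mu_{2i+1}$'' is looser than what actually happens. Transgression commutes with Steenrod operations, so what transgresses are Steenrod-power images of the bottom exterior class; identifying which of these agree with exterior generators $\mu_{2i+1}$ (modulo decomposables), which instead survive to contribute to $H_m$, and keeping the sub-Hopf-algebra structure of $H_m$ under control is precisely the delicate content of Singer's equations, not a consequence of Kudo's theorem alone. As a citation your proposal is fine; as a proof it has the same gap the paper consciously leaves by citing Singer.
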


From the above results we can read off the behavior of mod $p$ cohomology under the maps in the Whitehead tower of $\Oo$.

\begin{cor}\label{cor:zerocohmap}
	Assume that $m \equiv 0,1,2,4 \text{ mod } 8$. Then the map $\Oo\mathrm{\langle m \rangle} \to \Om$ induces zero on $\mathrm{H}^* (-; \F_2)$ for $0 < * < 2^{h(m)} - 1$.
\end{cor}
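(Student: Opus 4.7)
The plan is first to establish the analogous vanishing at the level of classifying spaces, namely that $\BO\langle m+1\rangle \to \BOm$ induces zero on $\mathrm{H}^{*}(-;\F_{2})$ for $0 < * < 2^{h(m)}$, and then to transfer it to loop spaces using the fact that $\mathrm{H}^{*}(\BOm;\F_{2})$ is polynomial.

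For the classifying-space version, \Cref{thm:BOmcoh2} identifies
\[\mathrm{H}^{*}(\BOm;\F_{2}) \cong \F_{2}[\theta_{i} \mid \sigma_{2}(i-1) \geq h(m)] \otimes G_{m}.\]
The condition $\sigma_{2}(i-1) \geq h(m)$ forces $i \geq 2^{h(m)}$, so every polynomial generator $\theta_{i}$ lies in cohomological degree at least $2^{h(m)}$; in degrees $0 < * < 2^{h(m)}$ the cohomology is therefore spanned by monomials in positive-degree elements of $G_{m}$. By definition $G_{m}$ is the image of $\mathrm{H}^{*}(K(\pi_{m}\BOm, m);\F_{2}) \to \mathrm{H}^{*}(\BOm;\F_{2})$ induced by the first Postnikov section $\BOm \to K(\pi_{m}\BOm, m)$. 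Since $\BO\langle m+1\rangle$ is $m$-connected, the composite $\BO\langle m+1\rangle \to \BOm \to K(\pi_{m}\BOm, m)$ is null, so positive-degree elements of $G_{m}$ pull back to zero in $\mathrm{H}^{*}(\BO\langle m+1\rangle;\F_{2})$, and the classifying-space vanishing follows.

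To pass to loop spaces, I would apply the Eilenberg--Moore spectral sequence to the morphism of path-loop fibrations induced by $\BO\langle m+1\rangle \to \BOm$. Because $\mathrm{H}^{*}(\BOm;\F_{2})$ is polynomial (both tensor factors are polynomial by Stong), this EMSS collapses and the cohomology suspension $\sigma:\tilde{\mathrm{H}}^{k+1}(\BOm;\F_{2}) \to \tilde{\mathrm{H}}^{k}(\Om;\F_{2})$ surjects onto the indecomposables of $\mathrm{H}^{*}(\Om;\F_{2})$, so every positive-degree class is represented in the associated graded by a polynomial in classes $\sigma y$ with $y \in \tilde{\mathrm{H}}^{*}(\BOm;\F_{2})$ of positive degree, and in total degree $k$ only $\sigma y$ with $|y| \leq k+1$ contribute. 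By naturality of $\sigma$ the map $\mathrm{H}^{k}(\Om;\F_{2}) \to \mathrm{H}^{k}(\Oo\langle m\rangle;\F_{2})$ sends $\sigma y$ to $\sigma(y|_{\BO\langle m+1\rangle})$. For $0 < k < 2^{h(m)} - 1$ every relevant $y$ has $|y| \leq k+1 < 2^{h(m)}$, so $y|_{\BO\langle m+1\rangle} = 0$ by the first step; hence each $\sigma y$, and every polynomial in such classes, maps to zero on the $\mathrm{E}_{\infty}$-page. Since the EMSS filtration on $\mathrm{H}^{*}(\Om;\F_{2})$ is complete and Hausdorff, vanishing on the associated graded implies vanishing on $\mathrm{H}^{k}(\Om;\F_{2})$ itself, proving the corollary.

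The main technical point I expect to have to verify carefully is the collapse of the EMSS together with the surjectivity of $\sigma$ onto indecomposables at the prime $2$, where loop-space cohomology generally carries a divided-power rather than exterior structure; these features only affect multiplicative extensions and do not create new associated graded classes, so the argument on the $\mathrm{E}_{\infty}$-page is what genuinely controls the desired vanishing.
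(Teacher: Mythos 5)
Your proof is correct and proceeds by a genuinely different decomposition than the paper's.  The paper establishes \emph{surjectivity} of the Postnikov section $\Om \to K(\pi_m \BO, m-1)$ on $\mathrm{H}^*(-;\F_2)$ in the relevant range, by comparing the collapsing Eilenberg--Moore spectral sequences for $\BOm$ and for $K(\pi_m\BO,m)$, and then concludes since the composite $\Oo\langle m\rangle \to \Om \to K(\pi_m\BO,m-1)$ is null.  You instead first prove \emph{nullity on cohomology} of the classifying-space map $\BO\langle m+1\rangle \to \BOm$ directly from Stong's theorem (since the $\theta_i$ live too high and $G_m$ is pulled back along a null composite), and then transfer to loop spaces using naturality of the cohomology suspension together with the collapse of the single Eilenberg--Moore spectral sequence for $\Om$.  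Both routes rest on Stong and on Smith's collapse theorem, but yours does the hard work at the classifying-space level where $\mathrm{H}^*$ is polynomial, rather than arguing via the Eilenberg--Stein--Serre/EMSS comparison for Eilenberg--MacLane spaces; this is arguably a cleaner decomposition, though it does not yield the paper's useful byproduct (Corollary~\ref{cor:coh-surj-range}) for free.

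One remark on the final step: the closing sentence, ``vanishing on the associated graded implies vanishing on $\mathrm{H}^k(\Om;\F_2)$ itself,'' is not a valid implication in general for a filtered map (a map that shifts filtration by one need not vanish), and it is not what your argument actually uses.  What you do establish, via the collapse and the fact that $\mathrm{E}_\infty$ is generated in Eilenberg--Moore filtration one, is that every positive-degree class in $\mathrm{H}^*(\Om;\F_2)$ is \emph{literally} a polynomial in suspension classes $\sigma y$ with $|y|$ in the range where restriction to $\BO\langle m+1\rangle$ vanishes; since each $\sigma y$ maps to the honest zero class $\sigma(y|_{\BO\langle m+1\rangle})=0$ and the map is a ring map, the whole thing maps to zero with no filtration bookkeeping required.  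Rephrasing the last sentence as an appeal to this generation statement (or to the equivalent fact that $\sigma$ surjects onto indecomposables, which you already assert) closes the logic.
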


\begin{proof}
    It follows from \Cref{thm:BOmcoh2} that the mod $2$ cohomology of $\BOm$ is polynomial. It therefore follows from \cite[Part II, Corollary 3.2]{SmithEM} that the Eilenberg-Moore spectral sequence for $\mathrm{H}^*(\Oo\mathrm{\langle m-1 \rangle}; \F_2)$ collapses at the $\mathrm{E}_2$-page with $\mathrm{E}_2$-term an exterior algebra on the transgressions of polynomial generators for $\mathrm{H}^*(\BOm; \F_2)$.

    Since $\mathrm{H}^*(K(\pi_m \BO, m); \F_2)$ is also polynomial by \cite[Th\'{e}or\`{e}mes 2 and 3]{SerreCohEM}, the Eilenberg-Moore spectral sequence for $\mathrm{H}^* (K(\pi_m \BO, m-1), \F_2)$ similarly degenerates at the $\mathrm{E}_2$-page with $\mathrm{E}_2$-term an exterior algebra on the transgressions of polynomial generators for $\mathrm{H}^*(K(\pi_m \BO, m); \F_2)$.

    As \[\BOm \to K(\pi_m \BO, m) \] induces a surjective map on $\mathrm{H}^* (-; \F_2)$ for $* < 2^{h(m)}$, we find by the above that it induces a surjective map on $\mathrm{E}_2$ and therefore $E_{\infty}$ page of the Eilenberg-Moore spectral sequence through degree $2^{h(m)} - 2$. We conclude that the bottom Postnikov map \[\Oo\mathrm{\langle m-1 \rangle} \to K(\pi_m \BO, m-1)\] induces a surjection on cohomology through degree $2^{h(m)} - 2$, and therefore that \[\Oo \mathrm{\langle m \rangle} \to \Om\] induces zero on $\mathrm{H}^* (-; \F_2)$ in the desired range.
\end{proof}

As we will need it later, we state the following corollary to the proof of \Cref{cor:zerocohmap}.

\begin{cor}\label{cor:coh-surj-range}
    Assume that $m \equiv 0,1,2,4 \text{ mod } 8$. Then the map \[\Oo\mathrm{\langle m - 1\rangle} \to K(\pi_m \BO, m-1)\] induces a surjective map on $\mathrm{H}^*(-;\F_2)$ for $* \leq 2^{h(m)} - 2$.
\end{cor}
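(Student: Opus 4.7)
The plan is to extract this statement directly from the proof of \Cref{cor:zerocohmap} that was just completed; indeed, this conclusion appeared verbatim as an intermediate step there, and we are simply recording it for independent use.

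First, I would set up the two parallel Eilenberg--Moore spectral sequences used in the proof of \Cref{cor:zerocohmap}: one converging to $\mathrm{H}^*(\Oo\langle m-1\rangle;\F_2)$ from $\mathrm{H}^*(\BOm;\F_2)$, and the other converging to $\mathrm{H}^*(K(\pi_m \BO, m-1);\F_2)$ from $\mathrm{H}^*(K(\pi_m \BO, m);\F_2)$, via the respective path-loop fibrations. Since \Cref{thm:BOmcoh2} asserts that $\mathrm{H}^*(\BOm;\F_2)$ is polynomial and Serre's computation gives that $\mathrm{H}^*(K(\pi_m \BO, m);\F_2)$ is polynomial, \cite[Part II, Corollary 3.2]{SmithEM} applies to both spectral sequences, forcing $\mathrm{E}_2$-degeneration with $\mathrm{E}_2$-term an exterior algebra on the transgressions of polynomial generators of the base.

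Next, I would invoke \Cref{thm:BOmcoh2} to conclude that the Postnikov map $\BOm \to K(\pi_m \BO, m)$ is surjective on $\mathrm{H}^*(-;\F_2)$ in degrees $* < 2^{h(m)}$. This surjectivity passes to the polynomial generators, hence to their transgressions (which drop degree by one), hence produces a surjection on $\mathrm{E}_2$-pages of the two Eilenberg--Moore spectral sequences in total degree $\leq 2^{h(m)}-2$. By the $\mathrm{E}_2$-collapse noted above, the same surjectivity holds on $\mathrm{E}_\infty$-pages in that range, i.e.\ on the associated graded of the target cohomologies with respect to the Eilenberg--Moore filtration. A straightforward induction on Eilenberg--Moore filtration, using naturality of the filtration, upgrades this to the genuine surjectivity of $\mathrm{H}^*(K(\pi_m\BO,m-1);\F_2) \to \mathrm{H}^*(\Oo\langle m-1\rangle;\F_2)$ through degree $2^{h(m)}-2$.

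There is no real obstacle to overcome here: the argument is purely a bookkeeping extraction from the proof of \Cref{cor:zerocohmap}, and the only input beyond what was already used there is the standard fact that a map of filtered modules that is surjective on associated gradeds is surjective on the filtered modules themselves, in any bounded range.
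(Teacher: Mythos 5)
Your proposal is correct and is exactly the paper's intent: the paper explicitly labels this result as a ``corollary to the proof of \Cref{cor:zerocohmap}'' and gives no separate argument, and your write-up simply unwinds the relevant intermediate step of that proof (Eilenberg--Moore collapse via Smith's theorem for both fibrations, surjectivity on indecomposables from \Cref{thm:BOmcoh2} in degrees $< 2^{h(m)}$, surjectivity on $\mathrm{E}_2 = \mathrm{E}_\infty$ in total degree $\leq 2^{h(m)}-2$, then pass from associated graded to the filtered module). No gap; same route.
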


\begin{cor}\label{cor:zerocohmapp}
	Let $p$ be an odd prime. Then the map $\Oo\mathrm{\langle 4m+2p-3 \rangle} \to \Oo \mathrm{\langle 4m-1 \rangle}$ induces zero on $\mathrm{H}^* (-; \F_p)$ for \[0 < * < 2p^{\frac{2m}{p-1}} - 1.\]
\end{cor}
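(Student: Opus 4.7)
I will mimic the proof of \Cref{cor:zerocohmap} at odd primes, substituting Singer's \Cref{thm:Umcohp} for Stong's theorem. Since the second part of Singer's theorem already identifies the image of the pullback of interest, once one changes spaces from $\Oo$ to $\U$ the proof reduces to a digit-sum estimate, and no Eilenberg--Moore argument is needed.

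The first step is to pass from $\Oo$ to $\U$. At an odd prime $p$, complexification $c:\Oo\to\U$ induces a split surjection on mod $p$ cohomology (sending the degree-$(4k-1)$ generator of $\mathrm{H}^*(\U;\F_p)$ to the degree-$(4k-1)$ generator of $\mathrm{H}^*(\Oo;\F_p)$ and killing the degree-$(4k+1)$ generator), so $\Oo_{(p)}$ is a retract of $\U_{(p)}$ as infinite loop spaces. This retraction lifts through the Whitehead tower, giving compatible retracts $\Oo\mathrm{\langle j \rangle}_{(p)}\hookrightarrow\U\mathrm{\langle j \rangle}_{(p)}$ on cohomology. By naturality, it is therefore enough to show that the analogous map $\U\mathrm{\langle 4m+2p-3\rangle}\to\U\mathrm{\langle 4m-1\rangle}$ induces zero on $\mathrm{H}^*(-;\F_p)$ in the indicated range.

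The second step is to apply Singer's theorem. Substituting $2m+p-1$ for Singer's ``$m$'' in \Cref{thm:Umcohp}, his $2m-1$ becomes $4m+2p-3$ and his $2m-2p+1$ becomes $4m-1$. The second part of that theorem then asserts that the image of
\[\mathrm{H}^*(\U\mathrm{\langle 4m-1 \rangle};\F_p)\longrightarrow\mathrm{H}^*(\U\mathrm{\langle 4m+2p-3 \rangle};\F_p)\]
is the subalgebra generated by those exterior classes $\mu_{2i+1}$ with $\sigma_p(i)\geq 2m+p-2$.

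The final step is to estimate the smallest positive degree of a class in this subalgebra, namely $2i_\ast+1$ where $i_\ast$ is the smallest nonnegative integer with $\sigma_p(i_\ast)\geq 2m+p-2$. Writing $2m=s(p-1)+t$ with $0\leq t<p-1$, a greedy base-$p$ computation gives $i_\ast=(p-1)p^s-1$ when $t=0$ and $i_\ast=tp^{s+1}-1$ when $t\geq 1$. In the first case $2i_\ast+1=2(p-1)p^s-1\geq 2p^s=2p^{2m/(p-1)}$ (using $p\geq 3$); in the second $2i_\ast+1\geq 2p^{s+1}-1>2p^{s+t/(p-1)}-1=2p^{2m/(p-1)}-1$. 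Either way $2i_\ast+1$ exceeds $2p^{2m/(p-1)}-1$, so the pullback vanishes in the claimed range. The only mild subtlety is justifying the $\Oo$-to-$\U$ retract argument; one can bypass it entirely by running the Eilenberg--Moore spectral sequence argument from the proof of \Cref{cor:zerocohmap} using Giambalvo's odd-prime analog of Stong's computation of $\mathrm{H}^*(\BO\mathrm{\langle 4m\rangle};\F_p)$.
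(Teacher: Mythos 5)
Your proof is correct and follows essentially the same route as the paper: reduce from $\Oo$ to $\U$ using the odd-primary splitting, invoke the second part of Singer's \Cref{thm:Umcohp} (with the same parameter substitution) to identify the image of the pullback, and estimate the smallest index $j$ with $\sigma_p(j)\geq 2m+p-2$. Your digit-sum estimate is carried out with a case split on $2m\bmod(p-1)$ and yields the same bound $2j+1\geq 2p^{2m/(p-1)}-1$; the alternative Eilenberg--Moore/Giambalvo route you mention at the end is not needed and is not the one the paper uses.
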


\begin{proof}
  We begin by noting that, since $\Oo\mathrm{\langle n \rangle}$ is a summand of $\U \mathrm{\langle n \rangle}$ compatibly in $n$ (recall that we have implicitly completed at an odd prime), it suffices to prove that the map \[\U \mathrm{\langle 4m+2p-3 \rangle} \to \U \mathrm{\langle 4m-1 \rangle}\] induces zero on $\mathrm{H}^* (-; \F_p)$ for \[0 < * < 2p^{\frac{2m}{p-1}} - 1.\]

	By \Cref{thm:Umcohp}, the image of the map \[\mathrm{H}^* (\U \mathrm{\langle 4m-1 \rangle}; \F_p) \to \mathrm{H}^* (\U \mathrm{\langle 4m+2p-3\rangle}; \F_p)\] is of the form \[\frac{\mathrm{H}^{*} (\U; \F_p)}{(\mu_{2i+1} \vert \sigma_p (i) < 2m+p-2)}.\] It follows that the lowest positive degree element of the image is $\mu_{2j+1}$, where $j$ is the smallest integer such that $\sigma_p (j) = 2m+p-2$.

	This implies that
	\begin{align*}
		j &\geq \sum_{i=1} ^{\lfloor \frac{2m-1}{p-1} \rfloor + 1} (p-1) p^{i-1} 
		= p^{\lfloor \frac{2m-1}{p-1} + 1 \rfloor} - 1 
		\geq p^{\frac{2m}{p-1}} -1,
	\end{align*}
	so that \[2j+1 \geq 2p^{\frac{2m}{p-1}} - 1,\] from which the result follows.
%
%
\end{proof}

We are now able to prove the desired Adams filtration bounds.

\begin{lem}\label{prop:genHF2filt}
    Let $M_{k} \to \Sigma^{\infty} \Oo\mathrm{\langle m-1 \rangle}$ denote the inclusion of a $k$-skeleton for $k \geq 1$. Then the composite map $M_{k} \to \Sigma^{\infty} \Oo\mathrm{\langle m-1 \rangle} \xrightarrow{J} \Ss$ has $\HFt$-Adams filtration at least \[h(m-1) - \lfloor \log_2 (k+1) \rfloor + 1.\]
\end{lem}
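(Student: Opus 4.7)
The plan is to exhibit the composite $M_k \to \Sigma^{\infty}\Oo\mathrm{\langle m-1\rangle} \xrightarrow{J} \Ss$ as a composition of maps of spectra, each of which vanishes on $\HFt$-cohomology, and then count these maps. By the standard characterization of Adams filtration (a map $f : A \to B$ with $f^{*} = 0$ on $\HFt$-cohomology factors through the fiber of the unit $B \to \HFt \otimes B$, which is the first stage of the geometric Adams tower of $B$), a composite of $s$ such maps automatically has $\HFt$-Adams filtration at least $s$.

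First, I would factor $J$ restricted to $M_k$ through the Whitehead tower of $\Oo$ to obtain
\[
M_k \hookrightarrow \Sigma^{\infty}\Oo\mathrm{\langle m-1\rangle} \to \Sigma^{\infty}\Oo\mathrm{\langle m-2\rangle} \to \cdots \to \Sigma^{\infty}\Oo\mathrm{\langle 0\rangle} \xrightarrow{J} \Ss,
\]
and then use cellular approximation to refine this through $k$-skeleta
\[
M_k = M_k^{(m-1)} \to M_k^{(m-2)} \to \cdots \to M_k^{(0)} \to \Ss,
\]
where $M_k^{(m')}$ denotes a $k$-skeleton of $\Sigma^{\infty}\Oo\mathrm{\langle m'\rangle}$.

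Next, I would identify which of these $m$ maps are zero on $\HFt$-cohomology. For $m' \not\equiv 0,1,2,4 \pmod 8$, the map $\Oo\mathrm{\langle m'\rangle} \to \Oo\mathrm{\langle m'-1\rangle}$ is an equivalence, so such stages contribute nothing. For $m' \equiv 0,1,2,4 \pmod 8$, \Cref{cor:zerocohmap} shows that the Whitehead tower map is zero on $\HFt$-cohomology in degrees $< 2^{h(m')} - 1$; since $M_k^{(m'-1)}$ has trivial cohomology above degree $k$, the induced map between $k$-skeleta is zero on $\HFt$-cohomology in \emph{all} degrees provided $k \le 2^{h(m')} - 2$, equivalently $h(m') \ge \lfloor \log_2(k+1) \rfloor + 1$. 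As $h$ takes the values $1, 2, \dots, h(m-1)$ exactly once on the set $\{m' : 1 \le m' \le m-1,\ m' \equiv 0,1,2,4 \pmod 8\}$, the number of such $m'$ is exactly $h(m-1) - \lfloor \log_2(k+1) \rfloor$. Finally, the bottom stage $M_k^{(0)} \to \Ss$ is trivially zero on $\HFt$-cohomology: $M_k^{(0)}$ is a $k$-skeleton of a reduced suspension spectrum of a connected space, so $\HFt^{0}(M_k^{(0)}) = 0$, while $\HFt^{*}(\Ss)$ is concentrated in degree zero.

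Adding these contributions yields a factorization with at least $h(m-1) - \lfloor \log_2(k+1) \rfloor + 1$ stages zero on $\HFt$-cohomology, giving the desired filtration bound. The argument is essentially bookkeeping once the cohomological input of \Cref{cor:zerocohmap} is in hand; the only point requiring mild care is the passage between the strict inequality $k < 2^{h(m')} - 1$ and the inequality $h(m') \ge \lfloor \log_2(k+1) \rfloor + 1$, which must be checked including the boundary case when $k+1$ is a power of $2$.
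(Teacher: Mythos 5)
Your overall route coincides with the paper's: factor $J$ through the Whitehead tower of $\Oo$, restrict to $k$-skeleta, invoke \Cref{cor:zerocohmap} to mark which stages vanish on $\HFt$-cohomology, and append the terminal $J$ stage. Your combinatorial count $h(m-1) - \lfloor\log_2(k+1)\rfloor$ for the Whitehead rungs is also correct. There is, however, a genuine error at the bottom of the tower. You run the tower all the way down to $\Oo\langle 0\rangle$ and assert that $\Oo\langle 0\rangle$ is connected, hence $\HFt^0(M_k^{(0)}) = 0$. But in the paper's indexing, $\Oo\langle j\rangle$ denotes the $(j-1)$-connected cover of $\Oo$; in particular $\Oo\langle 1\rangle = SO$ (connected) and $\Oo\langle 0\rangle \simeq \Oo$, which has $\pi_0 \cong \Z/2$ and is therefore \emph{not} connected. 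Thus $\tilde{H}^0(\Sigma^\infty\Oo;\F_2) \cong \F_2 \neq 0$, the same is true of its $k$-skeleton, and your justification for the final vanishing does not go through as written.

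The paper sidesteps this by stopping the Whitehead tower one rung earlier, at $\Oo\langle 1\rangle = SO$, and using that $J : \Sigma^\infty SO \to \Ss$ is zero on $\HFt$-cohomology: $\HFt^*(\Ss)$ is concentrated in degree $0$, and $\tilde{H}^0(SO;\F_2) = 0$ because $SO$ is connected. This adjustment does not alter the count, since the rung $\Oo\langle 1\rangle \to \Oo\langle 0\rangle$ corresponds to $m' = 1$ with $h(1) = 1$, which is already excluded by your criterion $h(m') \geq \lfloor\log_2(k+1)\rfloor + 1$ whenever $k \geq 1$. One could alternatively rescue your version by noting that $J$ acts on $\pi_0(\Sigma^\infty\Oo) \cong \Z$ by multiplication by $\pm 2$ and therefore vanishes on $H_0(-;\F_2)$, but that is a different argument from the one you supplied, and the cleanest fix is simply to terminate the tower at $SO$ as the paper does.
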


\begin{proof}
    Factoring the map $\Sigma^{\infty} \Oo\mathrm{\langle m-1 \rangle} \to \Sigma^{\infty} \mathrm{O \langle 1 \rangle} = \Sigma^{\infty} \mathrm{SO}$ through the Whitehead tower and taking $k$-skeleta, we find that the resulting map has $\HFt$-Adams filtration at least \[\abs{\{s \in \NN \vert s \equiv 0,1,2,4 \text{ mod } 8 \text{ and } 2 \leq s \leq m-1 \text{ and } k < 2^{h(s)} -1\}}\] by \Cref{cor:zerocohmap}.
    Since $k \geq 1$, this is bounded below by
    \[h(m-1) - \abs{\{s \in \NN \vert s \equiv 0,1,2,4 \text{ mod } 8 \text{ and } \log_2 (k+1) \geq h(s)\}},\]
    which is equal to
    \[h(m-1) - \lfloor \log_2 (k+1) \rfloor.\]

    Since $J : \Sigma^{\infty} \mathrm{SO} \to \Ss$ is also zero on $\mathrm{H}^* (-; \F_2)$, we conclude that the $\HFt$-Adams filtration of
    \[M_k \to \Sigma^{\infty} \Oo\mathrm{\langle m-1 \rangle} \xrightarrow{J} \Ss\]
    is at least
    \[h(m-1) - \lfloor \log_2 (k+1) \rfloor + 1. \qedhere\]
\end{proof}

\begin{lem}\label{prop:genHFpfilt}
    Let $p$ be an odd prime. Then if $M_{k} \to \Sigma^{\infty} \Oo\mathrm{\langle 4m-1 \rangle}$ denotes the inclusion of a $k$-skeleton, the composite map $M_{k} \to \Sigma^{\infty} \Oo\mathrm{\langle 4m-1 \rangle} \xrightarrow{J} \Ss$ has $\HFp$-Adams filtration at least \[\left\lfloor \frac{4m}{2p-2} \right\rfloor - \left\lfloor \log_p \left( \frac{k+1}{2} \right) \right\rfloor.\]
\end{lem}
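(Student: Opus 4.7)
The plan is to mimic the proof of \Cref{prop:genHF2filt} at the prime $p$, replacing \Cref{cor:zerocohmap} with its odd-primary analog \Cref{cor:zerocohmapp}. Setting $m_i := m - i(p-1)/2 \in \Z$ for $i = 0, 1, \ldots, N$, I will factor the composite $M_k \to \Sigma^{\infty} \Oo\mathrm{\langle 4m-1 \rangle} \xrightarrow{J} \Ss$ as a chain
\[
M_k \to \Sigma^{\infty} \Oo\mathrm{\langle 4m_0-1 \rangle} \to \Sigma^{\infty} \Oo\mathrm{\langle 4m_1-1\rangle} \to \cdots \to \Sigma^{\infty} \Oo\mathrm{\langle 4m_N - 1\rangle} \to \Sigma^{\infty} \Oo \to \Ss
\]
running through $p$-local Whitehead-tower layers, and count how many of the successive maps are zero on $\mathrm{H}^*(-; \F_p)$ after restriction to the $k$-skeleton.

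The key input is \Cref{cor:zerocohmapp} applied with parameter $m_i$ to the $i$-th intermediate map $\Sigma^{\infty} \Oo\mathrm{\langle 4m_{i-1} - 1 \rangle} \to \Sigma^{\infty} \Oo\mathrm{\langle 4m_i - 1 \rangle}$; this is exactly one of the maps covered by the corollary, because $4m_{i-1} - 1 = 4m_i + 2p - 3$. The corollary will show that this map is zero on $\mathrm{H}^*(-; \F_p)$ in degrees $0 < * < 2p^{2m_i/(p-1)} - 1 = 2p^{2m/(p-1) - i} - 1$. Since $\mathrm{H}^*(M_k; \F_p)$ is concentrated in degrees $0 < * \le k$, the $i$-th step contributes one level of $\HFp$-Adams filtration to the restricted composite precisely when $i < \frac{2m}{p-1} - \log_p\frac{k+1}{2}$. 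Letting $N$ be the largest such $i$, this gives $N$ levels from the tower factorization. The terminal map $\Sigma^{\infty} \Oo \to \Ss$ is trivially zero on positive-degree reduced $\mathrm{H}^*(-; \F_p)$ since $\Ss$ has none, so it contributes one additional level.

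Totaling, the composite will have $\HFp$-Adams filtration at least $N + 1 = \lceil \frac{2m}{p-1} - \log_p\frac{k+1}{2}\rceil$, and the stated bound will follow from the elementary inequality $\lceil a - b\rceil \ge \lfloor a\rfloor - \lfloor b\rfloor$ applied to $a = \frac{2m}{p-1}$ and $b = \log_p\frac{k+1}{2}$. I do not anticipate any serious obstacle beyond careful bookkeeping; the only subtlety is verifying that $m_i \ge 1$ for all $i \le N$, so that \Cref{cor:zerocohmapp} genuinely applies at each step. This positivity is automatic whenever the $k$-skeleton bound (rather than positivity of $m_i$) is the binding constraint on $N$, as is the case in the Toda-bracket regime $m = n$, $k = 8n-1$ of geometric interest.
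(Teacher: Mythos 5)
Your argument is correct and is essentially the paper's own proof: both factor the restriction of $J$ to $M_k$ through the Whitehead tower in steps of size $2p-2$, apply \Cref{cor:zerocohmapp} to each step, and add one filtration level for the terminal $J$-map, with only minor differences in how the floor/ceiling bookkeeping is carried out. The positivity caveat you raise about $m_i\ge 1$ is never binding in a non-vacuous case: if $M_k\neq 0$ then $k\geq 4m-1$, so $\log_p\tfrac{k+1}{2}\geq\log_p(2m)>0$, and a short computation shows this always exceeds $\{\tfrac{2(m-1)}{p-1}\}+\tfrac{2}{p-1}-1$ (which is $\leq 0$ for all odd $p$), so the $k$-skeleton constraint cuts off $N$ before the positivity constraint does; and when $M_k=0$ the lemma is vacuous, so the bound holds in full generality, not only in the regime $m=n$, $k=8n-1$.
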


\begin{proof}
	Again, factoring the map $\Sigma^{\infty} \Oo\mathrm{\langle 4m -1 \rangle} \to \Sigma^\infty \mathrm{SO}$ through the Whitehead tower and taking $k$-skeleta, we find that \Cref{cor:zerocohmapp} implies that the resulting map has $\HFp$-Adams filtration at least
	\begin{align*}
		\abs{\left\{s \in \NN \vert s \equiv 0 \text{ mod } 2p-2 \text{ and } 2 \leq s \leq 4m-2p-2 \text{ and } k < 2p^{\frac{s}{2p-2}}-1\right\}}.
	\end{align*}
	This is at least as large as \[\left\lfloor \frac{4m-2p-2}{2p-2} \right\rfloor - \abs{\left\{ s \in \NN \vert s \equiv 0 \text{ mod } 2p-2 \text{ and } \log_p \left( \frac{k+1}{2} \right) \geq \frac{s}{2p-2}\right\}},\]
	which is equal to \[\left\lfloor \frac{4m}{2p-2} \right\rfloor - 1 - \left\lfloor \log_p \left( \frac{k+1}{2} \right) \right\rfloor.\]
    Since \[J : \Sigma^{\infty} \mathrm{SO} \to \Ss\] is also zero on $\mathrm{H}^* (-; \F_p)$, we conclude that the $\HFp$-Adams filtration of \[M_k \to \Sigma^{\infty} \Oo\mathrm{\langle 4m-1 \rangle} \xrightarrow{J} \Ss\] is at least \[\left\lfloor \frac{4m}{2p-2} \right\rfloor - \left\lfloor \log_p \left( \frac{k+1}{2} \right) \right\rfloor. \qedhere\]
\end{proof}

\begin{proof}[Proof of \Cref{lemm:filtration}]
	Set $m = 4n$ and $k = 8n-1$ in \Cref{prop:genHF2filt} and $m=n$ and $k=8n-1$ in \Cref{prop:genHFpfilt}.
\end{proof}

\subsection{Proof of \Cref{thm:vanishinggroup}}
\label{subsec:vanishinggroup}\ 

We will prove \Cref{thm:vanishinggroup} by using \Cref{lemm:ctauE2} and \Cref{lemm:ctaugoodenough} to reduce it to a statement about the vanishing of certain bidegrees in the $\mathrm{E}_2$-page of the Adams spectral sequence for $\Sigma^{\infty} \Oo\mathrm{\langle 4n-1 \rangle}$. Thus our task is to compute this $\mathrm{E}_2$-page in a range. We begin with the proof in the odd-primary case.

\begin{proof}[Proof of \Cref{thm:vanishinggroup} for odd $p$]
  We will show that
  \[ \pi_{8n-2, 8n-2 + k} (\nu M ) = 0 \]
    for all $k \geq 0$. Since $M$ is finite and implicitly $p$-completed, its $\HFp$-based Adams spectral sequence converges strongly \cite[Theorem 6.6]{BousfieldLocalization}, and we may apply \Cref{lemm:ctaugoodenough}. It therefore suffices to show that
  \[\pi_{8n-2, 8n-2+k} (C\tau \otimes \nu M) = 0\]
  for all $k \geq 0$.
  By \Cref{lemm:ctauE2}, 
  \begin{align*}
    \pi_{8n-2, 8n-2+k} (C\tau \otimes \nu M ) &\cong \mathrm{E}_2^{k,8n-2+k}(M) 
  \end{align*}

  We prove this last group is zero by comparison with the $\mathrm{E}_2$-page of the $\HFp$-Adams spectral sequence for $ko$. First, we note that it follows from the definition of $M$, \Cref{cor:goodwillie} and \Cref{lemm:d2-ko-comp} that $M \to \sOf \to \Sigma^{-1} \tau_{\geq 4n} ko$ is $(8n-1)$-connected at an odd prime. It therefore suffices to show that \[\mathrm{E}_2 ^{k, 8n-1+k} (\tau_{\geq 4n} ko) = 0.\] This follows from the structure of the $\HFp$-Adams spectral sequence for $\tau_{\geq 4n} ko$, which is equivalent to $\Sigma^{4n} ko$ since we are working at an odd prime. The structure of this spectral sequence may be deduced from \cite[Theorem 3.1.16]{GreenBook} and the fact that $ko$ is a summand of $ku$ at odd primes.
\end{proof}

We begin the proof for $p=2$ with the following lemma.

\begin{lem}\label{lemm:cohsurj2}
	The canonical map \[\Sigma^{\infty} \Oo\mathrm{\langle 4n-1 \rangle} \to \Sigma^{-1} \tau_{\geq 4n} ko\] is surjective on $\mathrm{H}^* (-; \F_2)$ for $* \leq 8n-1$ whenever $n \geq 3$.
\end{lem}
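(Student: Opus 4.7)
The plan is to reduce the claim to the cohomology-surjectivity on spaces established in \Cref{cor:coh-surj-range}, by comparing both sides to the Eilenberg--MacLane spectrum $\HZ[4n-1]$. Since $\pi_{4n-1} \Sigma^{-1}\tau_{\geq 4n} ko = \pi_{4n} ko = \Z$, the spectrum $\Sigma^{-1} \tau_{\geq 4n} ko$ admits a bottom Postnikov section to $\HZ[4n-1]$; the space-level bottom Postnikov section $\Oo\mathrm{\langle 4n-1\rangle} \to K(\Z,4n-1)$ is its $\Omega^{\infty}$. Naturality of the counit of the $\Sigma^{\infty} \dashv \Omega^{\infty}$ adjunction then gives a commutative square of spectra
\[
\begin{tikzcd}
\sOf \arrow[r] \arrow[d] & \Sigma^{\infty} K(\Z, 4n-1) \arrow[d] \\
\Sigma^{-1}\tau_{\geq 4n} ko \arrow[r] & \HZ[4n-1],
\end{tikzcd}
\]
whose top horizontal arrow is $\Sigma^{\infty}$ of the space-level Postnikov section and whose vertical arrows are the counits.

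Passing to mod $2$ cohomology and chasing the diagram, the desired surjectivity of the left vertical map reduces to two sub-claims for degrees $* \leq 8n-1$: (i) cohomology-surjectivity of the top horizontal map, and (ii) cohomology-surjectivity of the right vertical counit. Part (i) follows from \Cref{cor:coh-surj-range} applied with $m = 4n$: since four out of every eight integers are congruent to $0,1,2,4$ modulo $8$, one has $h(4n) \geq 2n$, and the inequality $2^{2n} - 2 \geq 8n-1$ holds for $n \geq 3$.

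For (ii), the counit sends a stable operation $\alpha \in \mathrm{H}^{*}(\HZ[4n-1]; \F_2)$ to $\alpha(\iota) \in \wt{\mathrm{H}}^{*}(K(\Z,4n-1); \F_2)$, where $\iota$ is the fundamental class, so its image is the cyclic $A$-submodule generated by $\iota$. By Serre's theorem, $\wt{\mathrm{H}}^{*}(K(\Z,4n-1); \F_2)$ is a polynomial algebra on admissible classes $\Sq^{I}\iota$ with $I$ not ending in $\Sq^{1}$ and of excess strictly less than $4n-1$. The key observation is that this polynomial algebra has no generator in degree $4n$: the only admissible $I$ of total degree one is $I = (1)$, which is excluded. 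It follows that in the range $4n-1 \leq * \leq 8n-1 = 2(4n-1)+1$, the only decomposable class is $\iota^{2}$ in degree $8n-2$, and the unstable squaring relation $\iota^{2} = \Sq^{4n-1}\iota$ shows that it too is an operation applied to $\iota$. All remaining classes in the range are single monomials $\Sq^{I}\iota$, and hence in the image. The main technical care is exactly this analysis of $\wt{\mathrm{H}}^{*}(K(\Z,4n-1); \F_2)$, to ensure that no generator appears in degree $4n$ and no additional decomposable arises in degree $8n-1$; the rest is a formal diagram chase.
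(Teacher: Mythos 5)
Your proof is correct and follows essentially the same route as the paper's. You use the same commutative square (drawn transposed, which is cosmetic), reduce to the surjectivity established in \Cref{cor:coh-surj-range} together with the surjectivity of the counit $\mathrm{H}^*(\Sigma^{4n-1}\HZ;\F_2) \to \wt{\mathrm{H}}^*(K(\Z,4n-1);\F_2)$ in the range $* \leq 8n-1$, and settle the latter with Serre's theorem plus the relations $\iota^2 = \Sq^{4n-1}\iota$ and $\Sq^1\iota = 0$; the paper does exactly this, phrasing the decomposable analysis as ``only $\iota^2$ and $(\Sq^1\iota)\iota$ could fail'' where you phrase it as ``no generator in degree $4n$, so only $\iota^2$ is decomposable in the range.''
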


\begin{proof}
  Consider the following diagram:
  \begin{center}
    \begin{tikzcd}
      \Sigma^{\infty} \Oo \mathrm{\langle 4n-1 \rangle} \arrow[r] \arrow[d] & \Sigma^{-1} \tau_{\geq 4n} ko \arrow[d] \\
      \Sigma^{\infty} K(\ZZ, 4n-1) \arrow[r] & \Sigma^{4n-1} \HZ,
    \end{tikzcd}
  \end{center}
  where the vertical maps come from taking the first nonzero Postnikov sections of $\Oo\mathrm{\langle 4n-1 \rangle}$ and $\Sigma^{-1} \tau_{\geq 4n} ko$.

  The left vertical map is surjective on $\mathrm{H}^* (-; \F_2)$ for $* \leq 2^{h(4n)} - 2$ by \Cref{cor:coh-surj-range}. Therefore under our assumption that $n \geq 3$, it suffices to show that the bottom horizontal map is surjective on $\mathrm{H}^* (-; \F_2)$ for $* \leq 8n-1$. The algebra $\mathrm{H}^*(K(\ZZ, 4n-1); \F_2)$ is generated as an algebra by the image of $\mathrm{H}^* (\Sigma^{4n-1} \HZ)$ by \cite[Th\'{e}or\`{e}me 3]{SerreCohEM}. Letting $i_{4n -1} \in \mathrm{H}^{4n-1} (K(\ZZ,4n-1);\F_2)$ denote the fundamental class, it follows that the only classes in the relevant range that might not be in the image are $i_{4n-1}^2$ and $(\Sq^1 i_{4n-1})(i_{4n-1})$. But $i_{4n-1}^2 = \Sq^{4n-1} i_{4n-1}$ and $\Sq^1 i_{4n-1} = 0$, so the result follows.
\end{proof}

\begin{prop}\label{prop:E2compute}
  Assume that $n \geq 3$.
  In the range $t-s \leq 8n-3$, there is an isomorphism of $\mathrm{E}_2$-pages of $\HFt$-Adams spectral sequences
  \[ \mathrm{E}_2 ^{s,t} (\Sigma^\infty \Oo\mathrm{\langle 4n-1 \rangle}) \cong \mathrm{E}_2 ^{s,t} (\Sigma^{-1} \tau_{\geq 4n} ko). \]
  Moreover, for $t-s = 8n-2$, we have an isomorphism:
  \begin{align*}
    \mathrm{E}_2 ^{s,t} (\Sigma^\infty \Oo\mathrm{\langle 4n-1 \rangle}) &\cong \mathrm{E}_2 ^{s-1,t} (\Sigma D_2 (\Sigma^{-1} \tau_{\geq 4n} ko)) \\
    &\cong \begin{cases} 0 & \text{if }(s,t) \neq (1,8n-1) \\ \ZZ/2\ZZ & \text{if } (s,t) = (1,8n-1). \end{cases}
  \end{align*}
\end{prop}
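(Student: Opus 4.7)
I would proceed in three steps.

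First, I use the Goodwillie tower of \Cref{cor:goodwillie} to reduce the computation to $\mathrm{E}_2^{s,t}(Q_2)$. The piece $D_k(\Sigma^{-1} \tau_{\ge 4n} ko)$ is $k(4n-1)$-connective, so for $k \ge 3$ it is at least $(12n-3)$-connective; hence the fiber of $\sOf \to Q_2$ is built from such highly connective pieces. Since the $\HFt$-Adams $\mathrm{E}_2$-page of a $d$-connective spectrum vanishes for $t-s < d$, this fiber contributes nothing in our range $t-s \le 8n-2$, giving $\mathrm{E}_2^{s,t}(\sOf) \cong \mathrm{E}_2^{s,t}(Q_2)$.

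Second, I analyze the cofiber sequence $D_2(\Sigma^{-1} \tau_{\ge 4n} ko) \to Q_2 \to \Sigma^{-1} \tau_{\ge 4n} ko$. \Cref{lemm:cohsurj2} (together with the identification from the first step) implies that $H^*(\Sigma^{-1} \tau_{\ge 4n} ko; \F_2) \to H^*(Q_2; \F_2)$ is surjective in degrees $\le 8n-1$. Combined with \Cref{lemm:d2-ko-comp} and the identification $\mathbb{RP}^{4n+1}_{4n-1} = S^{4n-1} \cup_2 e^{4n} \cup_\eta e^{4n+1}$ used in its proof, which pins down $H^*(D_2)$ in low degrees and records a nontrivial $\Sq^1$ from the $(8n-2)$-class to the $(8n-1)$-class, the cohomology long exact sequence collapses into a short exact sequence of $\A$-modules. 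By \Cref{lemm:syn-cof}, this lifts to a synthetic cofiber sequence.

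Third, smashing with $C\tau$ and taking bigraded homotopy yields a long exact sequence of Adams $\mathrm{E}_2$-pages. For $t-s \le 8n-3$, both of the $\mathrm{E}_2(D_2)$-terms appearing in the sequence vanish by $(8n-2)$-connectivity, leaving the first claimed isomorphism. For $t-s = 8n-2$, the nontrivial $\Sq^1$ on the bottom class of $D_2$ kills the $h_0$-tower, so $\mathrm{E}_2(D_2)$ on its connectivity line consists of a single $\F_2$ at $(0, 8n-2)$, which after the shift is precisely the $\F_2$ at $(s,t) = (1, 8n-1)$. Combined with the vanishing of the $\Sigma^{-1} \tau_{\ge 4n} ko$-contribution on this line (which one reads off from the structure of the $ko$-Adams spectral sequence, using that $\pi_{8n-1} ko = 0$), the long exact sequence produces the claimed identification with $\mathrm{E}_2^{s-1, t}(\Sigma D_2(\Sigma^{-1} \tau_{\ge 4n} ko))$. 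The main technical obstacle is verifying that the short exact sequence of $\A$-modules holds globally rather than only in the range directly given by \Cref{lemm:cohsurj2}; I would handle this by combining the Goodwillie filtration with the Stong-style calculation of $H^*(\sOf)$ used in \Cref{lemm:cohsurj2} to control corrections in higher cohomological degrees.
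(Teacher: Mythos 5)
Your proposal follows the same route as the paper: identify $\sOf$ with $Q_2$ in the relevant range via the Goodwillie tower, note that \Cref{lemm:cohsurj2} splits the cohomology long exact sequence of $D_2 \to Q_2 \to \Sigma^{-1}\tau_{\ge 4n}ko$ into a short exact sequence of $\A$-modules, and then run the resulting long exact sequence on $\mathrm{E}_2$-pages. The paper does this directly on $\Ext_{\A}$ rather than routing through a synthetic cofiber sequence and $C\tau$, but that is a cosmetic difference. Your use of the explicit $\Sq^1$ on the bottom of $D_2$ to rule out an $h_0$-tower is also morally what the paper's appeal to $\pi_{8n-2}D_2 \cong \Z/2$ and $\pi_{8n-1}D_2 = 0$ encodes (the Bockstein of a torsion class in $\pi_{8n-2}$).

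One remark on your closing worry about the short exact sequence holding ``globally rather than only in the range.'' This is a real technical point, but it does not require returning to the Stong/Goodwillie calculations. The cleanest fix is to truncate: since $\A$ raises degree, $M^{>8n-1}$ is a sub-$\A$-module of any $M$, so passing to the quotients $M^{\le 8n-1}$ produces an honest short exact sequence of $\A$-modules, and the comparison $\Ext_\A^{s,t}(M^{\le 8n-1},\F_2) \cong \Ext_\A^{s,t}(M,\F_2)$ holds for $t-s \le 8n-2$ because $M^{>8n-1}$ is $(8n)$-connective. This is the range needed, and it is what the paper implicitly relies on when it says the sequence is exact ``for $* \le 8n-1$.'' Your proposed workaround would also function, but it is heavier than necessary.
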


\begin{proof}
  By \Cref{lemm:cohsurj2} and the tower of \Cref{cor:goodwillie}, there is a short exact sequence
  \[ 0 \to \mathrm{H}^* (\Sigma D_2 (\Sigma^{-1} \tau_{\geq 4n} ko)) \to \mathrm{H}^* (\Sigma^{-1} \tau_{\geq 4n} ko) \to \mathrm{H}^* (\Oo\mathrm{\langle 4n-1 \rangle)} \to 0 \]
  for $* \leq 8n-1.$

  By \Cref{lemm:d2-ko-comp}, the bottom homotopy group of $\Sigma D_2 (\Sigma^{-1} \tau_{\geq 4n} ko)$ is \[\pi_{8n-1} \Sigma D_2 (\Sigma^{-1} \tau_{\geq 4n} ko) \cong \ZZ/2\ZZ.\] It follows that \[\mathrm{E}_2 ^{s,t} (\Sigma D_2 (\Sigma^{-1} \tau_{\geq 4n} ko)) \cong \begin{cases} 0 & \text{if } t-s \leq 8n-1, (s,t) \neq (0,8n-1) \\ \ZZ/2\ZZ & \text{if } (t,s) = (0,8n-1) \end{cases}.\]

  The desired result now follows from the long exact sequence on $\mathrm{E}_2$-terms induced by the short exact sequence on cohomology, since nontrivial connecting maps are ruled out for bidegree reasons.
\end{proof}

\begin{rmk}\label{rmk:x2-filt}
  Since we know by \Cref{cor:gen-by-x2} that $\pi_{8n-2} \Sigma^{\infty} \Oo\mathrm{\langle 4n-1 \rangle} \cong \ZZ/2\ZZ$ is generated by the class $x^2$, the nonzero class in $\mathrm{E}_2 ^{1,8n-1} (\Sigma^{\infty} \Oo\mathrm{\langle 4n-1 \rangle}) \cong \ZZ/2\ZZ$ must represent $x^2$ on the $E_\infty$ page.
\end{rmk}

To illustrate the result of \Cref{prop:E2compute}, we include a picture of the Adams spectral sequence for $\Sigma^{\infty} \Oo \mathrm{\langle 15 \rangle}$ in the range determined by \Cref{prop:E2compute}. Note that the spectral sequence must collapse in this range for sparsity reasons.

\begin{figure}
  \centering
   \includegraphics[trim={3.9cm 19cm 3.9cm 4.2cm}, clip, scale=1]{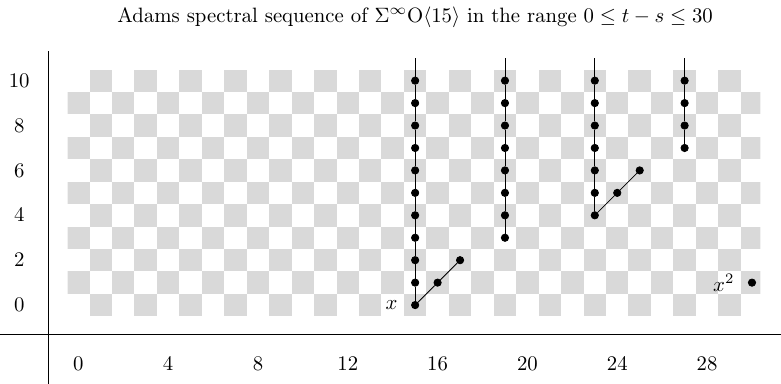}
\end{figure}

\begin{proof}[Proof of \Cref{thm:vanishinggroup} when $p=2$]
  We will show that
  \[\pi_{8n-2, 8n-2+k} (\nu M) = 0\]
    for all $k \geq 2$, which is sufficient since $N_2 \geq 3$ for $n \geq 3$. Since $M$ is finite and implicitly $2$-completed, its $\HFt$-based Adams spectral sequence converges strongly \cite[Theorem 6.6]{BousfieldLocalization}, and we may apply \Cref{lemm:ctaugoodenough}. It therefore suffices to show that
  \[\pi_{8n-2, 8n-2+k} (C\tau \otimes \nu M ) = 0\]
  for all $k \geq 2.$ By \Cref{lemm:ctauE2}, 
  \begin{align*}
    \pi_{8n-2, 8n-2+k} (\nu M \otimes C\tau) &\cong \mathrm{E}_2 ^{k,8n-2+k} (M) \\
    &\cong \mathrm{E}_2 ^{k, 8n-2+k} (\Sigma^{\infty} \Oo \mathrm{\langle 4n-1 \rangle}),
  \end{align*}
  which is zero for $k \geq 2$ by \Cref{prop:E2compute}.
\end{proof}


\section{Vanishing lines in synthetic spectra} \label{sec:AppendixVL}
%
This section begins our study of vanishing lines in Adams spectral sequences, which is subject of Sections \ref{sec:AppendixVL}-\ref{sec:mod8} and \Cref{sec:Appendix}. In this section, our main concern will be the genericity properties of various notions of vanishing lines in synthetic spectra. A key feature of our methods is that they make clear how the intercepts of such vanishing lines change in cofiber sequences. Our results are used in \Cref{sec:ANvl} to obtain an explicit vanishing line in the Adams-Novikov spectral sequence for the $p$-local sphere, for each $p \ge 3$. In \Cref{sec:Appendix} the results of Sections \ref{sec:AppendixVL} and \ref{sec:ANvl} are used to deduce \Cref{thm:Burklund}(2).

Our genericity results recover versions of the genericity results of Hopkins, Palmieri and Smith \cite{HPS} for finite-page vanishing lines in $E$-Adams spectral sequences.  One side effect of our use of synthetic spectra is that we only prove results for $E$ of Adams type.





\begin{dfn}
  A \emph{thick subcategory} $\mathcal{C}$ of $\Sp$ (resp. $\Syn_E$) is a full subcategory which satisfies the following properties:
  \begin{itemize}
  \item it is closed under suspensions $\Sigma^n$ (resp. $\Sigma^{p,q}$) for $n,p,q \in \Z$,
  \item it is closed under retracts,
  \item if $X \to Y \to Z$ is a cofiber sequence and any two of $X,Y,Z$ are in $\mathcal{C}$, then so is the third.
  \end{itemize}

  Following \cite[Definition 1.1]{HPS}, we say that a property of (synthetic) spectra is \emph{generic} if the full subcategory of (synthetic) spectra satisfying that property is thick.
\end{dfn}

We now define four notions of vanishing line.

\begin{dfn}
  Given a synthetic spectrum $X$, we will say that
  \begin{enumerate}
  \item $X$ has a vanishing line of slope $m$ and intercept $c$
    if $\pi_{k,k+s}(X) = 0$ whenever $s > mk+c$.
  \item $X$ has a strong vanishing line of slope $m$ and intercept $c$
    if $X \otimes \nu_E (Y)$ has a vanishing line of slope $m$ and intercept $c$ for every connective spectrum $Y \in \Sp_{\geq 0}$.
  \item $X$ has a finite-page vanishing line of slope $m$, intercept $c$ and torsion level $r$ 
    if every class in $\pi_{k,k+s}(X)$ is $\tau^r$-torsion when $s > mk+c$.
  \item $X$ has a strong finite-page vanishing line of slope $m$, intercept $c$ and torsion level $r$
    if $X \otimes \nu_E (Y)$ has a finite-page vanishing line of slope $m$, intercept $c$ and torsion level $r$ for every $Y \in \Sp_{\geq 0}$.
  \end{enumerate}
\end{dfn}

\begin{rmk}
    The compatibility of $\nu_E$ with filtered colimits implies that the presence of a strong (finite-page) vanishing line need only be checked on \emph{finite} $Y \in \Sp_{\geq 0}$.

\end{rmk}

\begin{rmk}
  \label{rmk:strict-as-pc}
  A (strong) vanishing line of slope $m$ and intercept $c$ is equivalent to a (strong) finite-page vanishing line of slope $m$, intercept $c$ and torsion level $0$.
\end{rmk}

\begin{rmk}
    Given an $E$-nilpotent complete spectrum $X$, we will say that the $E$-based Adams spectral sequence for $X$ admits a (strong) (finite-page) vanishing line if $\nu_E (X)$ does. This is justified by the following proposition.
\end{rmk}

\begin{prop}\label{prop:vanishing-line-Adams}
    Given an $E$-nilpotent complete spectrum $Y$, $\nu_E (Y)$ admits a finite-page vanishing line of slope $m$, intercept $c$ and torsion level $r$ if and only if $\mathrm{E}^{s,k+s} _{r+2} = 0$ for $s > mk+c$.
\end{prop}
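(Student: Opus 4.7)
The strategy is to use the dictionary of Theorem \ref{thm:synthetic-Adams}, together with Corollaries \ref{cor:synth-filt} and \ref{cor:tau-torsion-bound}, to translate both implications into parallel statements about $\tau$-torsion in bigraded synthetic homotopy groups. Corollary \ref{cor:synth-filt} identifies the Adams filtration with the filtration by divisibility by $\tau$, while Corollary \ref{cor:tau-torsion-bound} relates $\tau$-torsion orders to the lengths of Adams differentials. Both directions will be handled by combining these facts with the strong convergence of the $E$-Adams spectral sequence of $Y$.

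For the reverse direction, the plan is to assume $\mathrm{E}^{s,k+s}_{r+2}(Y) = 0$ for all $s > mk+c$ and deduce the finite-page vanishing line. Since $\mathrm{E}_\infty$ is a subquotient of $\mathrm{E}_{r+2}$, this forces $\mathrm{E}^{s,k+s}_\infty(Y) = 0$ in the same range, and strong convergence then gives $F^s \pi_k(Y) = 0$ for $s > mk+c$. Corollary \ref{cor:synth-filt} then shows every $y \in \pi_{k,k+s}(\nu Y)$ in this range maps to $0 \in \pi_k(Y)$, and is therefore $\tau$-power torsion. To bound the torsion order by $r$, the key observation will be that the vanishing of $\mathrm{E}^{s,k+s}_{r+2}$ forces every Adams differential entering $\mathrm{E}^{s,k+s}_*$ to have length at most $r+1$, since any longer differential would have to hit a nonzero class on some $\mathrm{E}^{s,k+s}_{r'}$ with $r' \geq r+2$, which is a subquotient of the vanishing group $\mathrm{E}^{s,k+s}_{r+2}$. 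Iterated application of Corollary \ref{cor:tau-torsion-bound} downward in filtration then yields the desired bound.

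For the forward direction, we instead assume every class in $\pi_{k,k+s}(\nu Y)$ is $\tau^r$-torsion whenever $s > mk+c$ and deduce the $\mathrm{E}_{r+2}$-page vanishing. Inverting $\tau$ annihilates every such class, so Corollary \ref{cor:synth-filt} again gives $F^s \pi_k(Y) = 0$ and hence $\mathrm{E}^{s,k+s}_\infty(Y) = 0$ in this range. Moreover, Corollary \ref{cor:tau-torsion-bound} applied at any $(k',s')$ in the vanishing region shows that the longest Adams differential entering $\mathrm{E}^{s',k'+s'}_*$ has length at most $r+1$. We then suppose for contradiction that some $x \in \mathrm{E}^{s,k+s}_{r+2}(Y)$ is nonzero with $s > mk+c$. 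Since $x$ cannot survive to $\mathrm{E}_\infty$, there must exist $r' \geq r+2$ such that $x$ either supports or is the target of a $d_{r'}$. In the target case we immediately contradict the length bound at stem $k$. In the source case, the target lies in stem $k-1$ and filtration $s+r'$, which still satisfies $s+r' > m(k-1)+c$ and so falls in the vanishing region; the length bound at stem $k-1$ again contradicts the existence of a differential of length $r' \geq r+2$.

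The main delicacy in this plan will be ensuring that the downward induction in Corollary \ref{cor:tau-torsion-bound} is well-founded, which we will handle by using strong convergence of the $E$-Adams spectral sequence to control $\pi_{k,k+s}(\nu Y)^{\mathrm{tor}}$ as $s$ grows. A secondary subtlety appearing in the forward direction is that classes on $\mathrm{E}_{r+2}$ may fail to survive because they \emph{support} a long differential rather than being hit by one; handling this case requires applying the vanishing line at stem $k-1$ in addition to stem $k$, which succeeds because the hypothesis on slope and intercept is uniform across all stems.
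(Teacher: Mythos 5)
Your forward direction is correct but takes a longer path than the paper's. You observe that Corollary \ref{cor:tau-torsion-bound} forces every differential entering a filtration above the line to have length at most $r+1$, then rule out nonzero survivors to $\mathrm{E}_{r+2}$ by a case analysis on whether $x$ is a target or a source of a long differential, the source case requiring a second appeal to the bound at stem $k-1$. The paper is more economical: it replaces $x$ by the (lift to $\mathrm{E}_2$ of the) target of the differential it supports, which is a permanent cycle, and applies Theorem \ref{thm:synthetic-Adams}(2a) to produce a lift in $\pi_{k,k+s}(\nu_E Y)$ that is not $\tau^r$-torsion, contradicting the vanishing line in one step. Both are valid, provided $m \geq 0$.

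Your reverse direction has a genuine gap. After establishing that everything above the line is $\tau$-power torsion and that incoming differentials have length at most $r+1$, you propose to bound torsion orders by ``iterated application of Corollary \ref{cor:tau-torsion-bound} downward in filtration.'' But that recursion, $T(s) = \max(T(s+1)-1, D(s)-1)$, passes information from higher filtration to lower, and there is no base case: nothing in your setup prevents $T(s+N)$ from growing like $N$, in which case iterating yields no bound on $T(s)$. You flag the delicacy and say you will ``control $\pi_{k,k+s}(\nu Y)^{\mathrm{tor}}$ as $s$ grows'' using strong convergence, but this is not the right mechanism; strong convergence does not directly bound torsion orders at high filtration, and indeed nothing forces those groups to be small. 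What strong convergence actually provides (via Corollary \ref{cor:strong-conv} applied to the $\tau$-Bockstein spectral sequence) is that the $\tau$-adic filtration on $\pi_{k,*}(\nu_E Y)$ is complete and Hausdorff. The argument then closes as follows: given $z \in \pi_{k,k+s}$ with $s > mk + c$, iterating the peeling-off step from the proof of Corollary \ref{cor:tau-torsion-bound} (using Theorem \ref{thm:synthetic-Adams}(3a) to choose $\tau^r$-torsion lifts of the boundaries encountered) yields $\tau^r z = \tau^{r+n} z_n$ for every $n \geq 0$, so $\tau^r z$ lies in $\bigcap_N \tau^N \pi_{k,*}(\nu_E Y) = 0$. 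The paper packages this separatedness into Theorem \ref{thm:synthetic-Adams}(4), whose proof explicitly invokes completeness and Hausdorffness of the $\tau$-adic filtration, and then observes that ``the completion was unnecessary.'' Without naming the separatedness ingredient, your downward iteration does not terminate.
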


\begin{figure}
  \centering
    \includegraphics[scale=0.6]{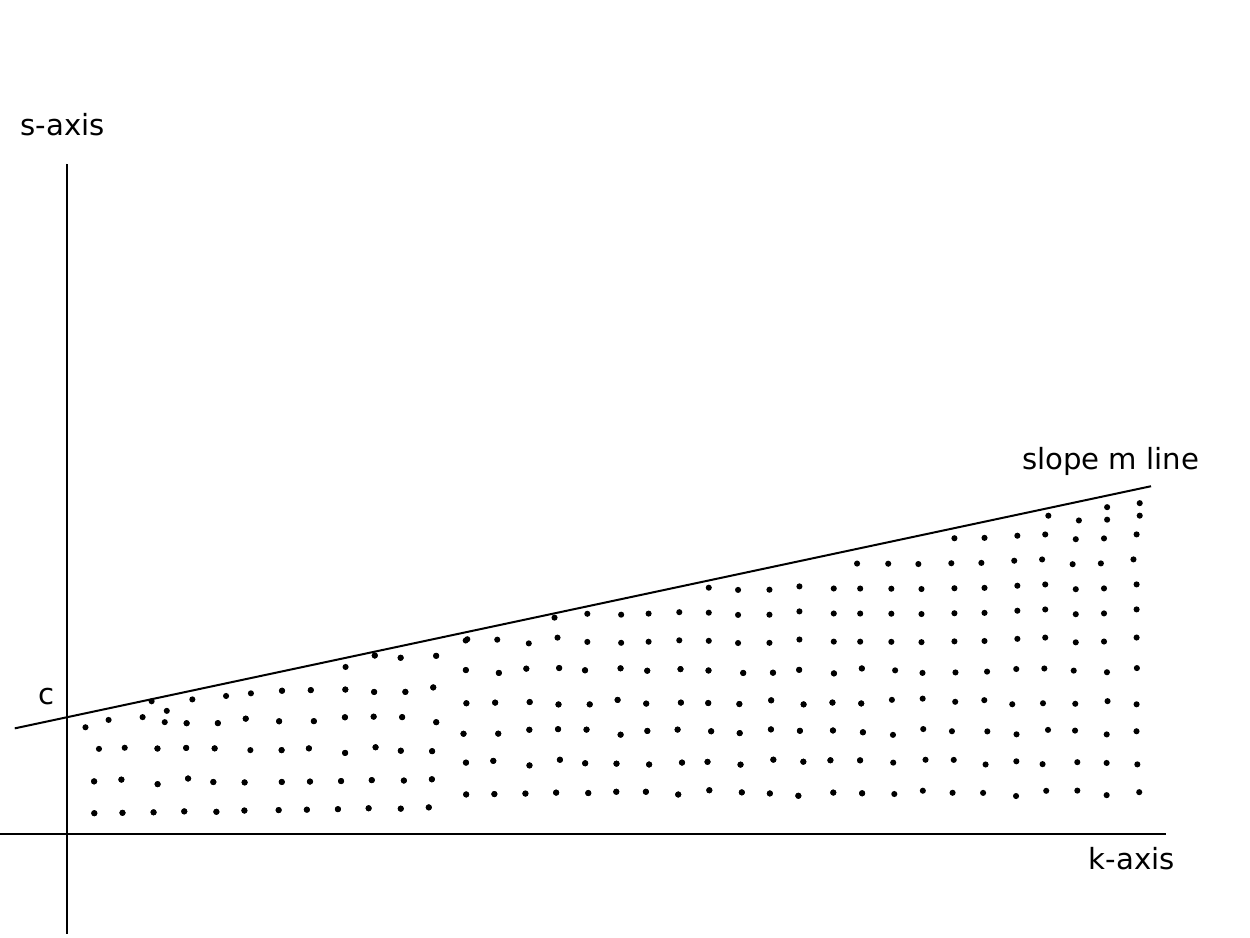}
    \caption{In this figure we display a picture of what the $\mathrm{E}_{r+2}$-page of the $E$-Adams spectral sequence of an $E$-nilpotent complete spectrum that admits a finite-page vanishing line of slope $m$, intercept $c$ and torsion level $r$ looks like. Namely, the region strictly above the vanishing line must consist of zero groups on the $\mathrm{E}_{r+2}$-page.}
    \label{fig:banded}
\end{figure}

We will need the following technical lemmas in the proof of \Cref{prop:vanishing-line-Adams}.

\begin{lem}\label{lemm:vanishing-line-converge}
    Given an $E$-nilpotent complete spectrum $Y$, the $E$-based Adams spectral sequence for $Y$ converges strongly if $\nu_E (Y)$ admits a finite-page vanishing line of positive slope.
\end{lem}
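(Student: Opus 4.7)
The plan is to verify strong convergence of the $E$-based Adams spectral sequence for $Y$ by checking the two conditions of Boardman's criterion from \cite{Boardman}: conditional convergence, and stabilization of every bidegree at a finite page (equivalently, vanishing of the derived $E_\infty$-term).

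Conditional convergence follows immediately from $E$-nilpotent completeness, since this hypothesis says exactly that the standard $E$-Adams tower for $Y$ has homotopy limit equal to $Y$. Next, assume that $\nu_E Y$ admits a finite-page vanishing line of slope $m > 0$, intercept $c$, and torsion level $r_0$. Using \Cref{lemm:ctauE2} and chasing the $\tau$-Bockstein through the cofiber sequences $\Sigma^{0,-j}\nu_E Y \xrightarrow{\tau^j} \nu_E Y \to C\tau^j \otimes \nu_E Y$, the $\tau^{r_0}$-torsion hypothesis translates into the vanishing $E_{r_0+2}^{s,k+s}(Y) = 0$ whenever $s > mk + c$; this is the easy direction of the subsequent \Cref{prop:vanishing-line-Adams}. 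Given such a vanishing line on a finite page, only finitely many Adams differentials can be nonzero at any fixed bidegree $(s,t)$ below the line: incoming $d_k$-differentials have source of filtration $s-k < 0$ and so vanish trivially once $k > s$, while outgoing $d_k$-differentials have target of filtration $s+k$, which lies above the vanishing line and hence is zero once $k > m(t-s-1) + c - s$. Therefore $E_r^{s,t}$ stabilizes at some finite $r$ for every bidegree, the derived $E_\infty$-term vanishes, and combined with conditional convergence we obtain strong convergence.

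The main obstacle is the synthetic-to-Adams translation in the second step: converting $\tau^{r_0}$-torsion of $\pi_{*,*}(\nu_E Y)$ above a line into a vanishing statement on the $E_{r_0+2}$-page requires care in tracking the interaction between lifts along powers of $\tau$, as supplied by \Cref{thm:synthetic-Adams}(1b), and successive Adams differentials. The positivity of the slope enters to guarantee the Hausdorff part of the filtration as well: by \Cref{lemm:adams-fil1}, every element of $F^s \pi_k(Y)$ lifts to a class in $\pi_{k,k+s}(\nu_E Y)$, and for $s > mk + c$ this lift is $\tau^{r_0}$-torsion, hence maps to zero under $\tau^{-1}$. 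This forces $F^s \pi_k(Y) = 0$ for all sufficiently large $s$ depending on $k$, which is automatically compatible with the strong convergence statement produced by Boardman's criterion.
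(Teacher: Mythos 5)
Your proposal has a genuine circularity problem in the central step. You want to translate the hypothesis ``every class in $\pi_{k,k+s}(\nu_E Y)$ with $s > mk+c$ is $\tau^{r_0}$-torsion'' into the Adams-side statement ``$\mathrm{E}_{r_0+2}^{s,k+s}(Y) = 0$ for $s > mk+c$,'' and you correctly identify that this is the content of (one direction of) \Cref{prop:vanishing-line-Adams}, invoked via \Cref{thm:synthetic-Adams}. But \Cref{thm:synthetic-Adams} carries strong convergence of the $E$-Adams spectral sequence for $Y$ as a standing hypothesis --- precisely the conclusion you are trying to reach --- and \Cref{prop:vanishing-line-Adams} itself opens its proof by citing the present lemma to establish that hypothesis. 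Without strong convergence already in hand, the passage from ``$\tau^{r_0}$-torsion in $\pi_{*,*}(\nu_E Y)$'' to ``vanishing on the $\mathrm{E}_{r_0+2}$-page'' is exactly the non-trivial part of \Cref{thm:synthetic-Adams}(2a), and you cannot assume it. (Note also that the $\mathrm{E}_\infty$-level vanishing you do get from \Cref{lemm:adams-fil1} --- which is sound, and does give Hausdorffness of the filtration --- is not enough to bound outgoing differentials: a class below the vanishing line could in principle support a $d_r$ landing in a bidegree that is only zero on $\mathrm{E}_\infty$, not on $\mathrm{E}_r$.)

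The paper sidesteps this entirely by never translating to the $E$-Adams side. It invokes \Cref{cor:strong-conv}, which states that strong convergence of the $E$-Adams spectral sequence for $Y$ is \emph{equivalent} to strong convergence of the $\tau$-Bockstein spectral sequence for $\nu_E(Y)$, assuming only $E$-nilpotent completeness (its proof goes through a $\varprojlim^1$ comparison and the page-by-page \Cref{cor:synth-ASS}, with no appeal to this lemma). The finite-page vanishing line hypothesis is a direct statement about $\tau$-power torsion in $\pi_{*,*}(\nu_E Y)$, which is exactly the currency of the $\tau$-Bockstein spectral sequence, so Boardman's $\varprojlim^1$ criterion can be verified there without any circularity. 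If you want to salvage your approach, you would need to prove the synthetic-to-Adams translation independently, without assuming convergence --- which would amount to redoing the harder parts of the appendix.
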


\begin{proof}
    By \Cref{cor:strong-conv}, it suffices to show that the $\tau$-Bockstein spectral sequence for $\nu_E (Y)$ converges strongly. By \Cref{thm:strong-conv}, to show strong convergence it will suffice to show that there are only finitely many differentials exiting each tridegree. But the finite-page vanishing line for $\nu_E Y$ implies that every $d_{s} ^{\tau-\mathrm{BSS}}$ with $s > r+1$ and target above the vanishing line must be zero. This implies that each group in the $\tau$-Bockstein spectral sequence may only be the source of only finitely many differentials, as required.
\end{proof}

\begin{lem}\label{lemm:vanishing-converge}
    Let $Y$ denote an $E$-nilpotent complete spectrum and suppose that there exist numbers $m > 0$, $c$ and $r$ for which the $E$-Adams spectral sequence of $Y$ satisfies $\mathrm{E}^{s,k+s} _{r} = 0$ for $s > mk+c$. Then the $E$-Adams spectral sequence for $Y$ converges strongly.
\end{lem}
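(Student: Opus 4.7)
The plan is to deduce this lemma immediately from the two preceding results, \Cref{prop:vanishing-line-Adams} and \Cref{lemm:vanishing-line-converge}, by observing that a vanishing line on any finite page of the Adams spectral sequence persists to all subsequent pages and can therefore be re-expressed as a finite-page vanishing line on $\nu_E(Y)$.

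First I would note that, since $\mathrm{E}^{s,t}_{r'}$ is a subquotient of $\mathrm{E}^{s,t}_r$ for every $r' \geq r$, the hypothesis $\mathrm{E}^{s,k+s}_r = 0$ for $s > mk+c$ automatically propagates to give $\mathrm{E}^{s,k+s}_{r'} = 0$ in the same region for all $r' \geq r$. Setting $r' := \max(r,2)$, we may thus assume without loss of generality that $r' \geq 2$, so that $r' - 2 \geq 0$ is a legitimate torsion level.

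Next, by \Cref{prop:vanishing-line-Adams}, the vanishing $\mathrm{E}^{s,k+s}_{r'} = 0$ for $s > mk+c$ is exactly equivalent to $\nu_E(Y)$ admitting a finite-page vanishing line of slope $m$, intercept $c$, and torsion level $r' - 2$. Since by hypothesis $m > 0$, we may now apply \Cref{lemm:vanishing-line-converge} to $\nu_E(Y)$ to conclude that the $E$-Adams spectral sequence of $Y$ converges strongly, as desired.

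There is no substantive obstacle in this argument; the real content lives in \Cref{prop:vanishing-line-Adams} (which translates between synthetic vanishing lines and Adams $\mathrm{E}_r$-page vanishing) and in \Cref{lemm:vanishing-line-converge} (which uses the $\tau$-Bockstein spectral sequence together with \Cref{thm:strong-conv} and \Cref{cor:strong-conv} to upgrade a synthetic finite-page vanishing line to strong convergence of the associated Adams spectral sequence). The only mild subtlety is book-keeping the off-by-two between the page on which the vanishing is assumed and the torsion level appearing in the synthetic formulation, which is handled by the $\max(r,2)$ adjustment above.
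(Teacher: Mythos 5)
Your proof is circular. You invoke \Cref{prop:vanishing-line-Adams} in the direction ``Adams vanishing line $\Rightarrow$ synthetic finite-page vanishing line,'' but in the paper that direction of \Cref{prop:vanishing-line-Adams} is proved \emph{using} \Cref{lemm:vanishing-converge}: the very first step of the proof of \Cref{prop:vanishing-line-Adams} reads ``By either \Cref{lemm:vanishing-line-converge} or \Cref{lemm:vanishing-converge}, the $E$-Adams spectral sequence for $Y$ converges strongly. We are therefore free to invoke \Cref{thm:synthetic-Adams} \ldots'', and when the hypothesis is the Adams-side vanishing (as in your step), it is \Cref{lemm:vanishing-converge} that is being used there. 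The paper also flags this dependency explicitly just above the two lemmas: ``We will need the following technical lemmas in the proof of \Cref{prop:vanishing-line-Adams}.'' So \Cref{lemm:vanishing-converge} must be proved directly, without passing through the Proposition.

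The intended argument is much more elementary and does not touch synthetic spectra at all. Fix a bidegree $(s,t)$ and write $t = k+s$. Since the later pages $\mathrm{E}^{s,t}_{r'}$ are subquotients of $\mathrm{E}^{s,t}_r$ (and subgroups once $r > s$), the hypothesis propagates to all $r' \geq r$. A differential $d_{r'}$ exiting $\mathrm{E}^{s,k+s}_{r'}$ lands in filtration $s + r'$ and topological degree $k-1$, so it can be nonzero only when $s + r' \leq m(k-1) + c$; since $m > 0$ this bounds $r'$ from above. Hence only finitely many differentials originate from each fixed bidegree, so the tower $\{\mathrm{E}^{s,t}_{r'}\}_{r'}$ eventually stabilizes and $\varprojlim^1_{r'} \mathrm{E}^{s,t}_{r'} = 0$. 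Boardman's criterion (\Cref{thm:strong-conv}) then gives strong convergence. Your route also works technically if you were to first prove the Adams-to-synthetic direction of \Cref{prop:vanishing-line-Adams} without convergence, but that would be considerably harder than the direct argument, and it is not what the paper does.
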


\begin{proof}
    It follows from the assumption that each group in the spectral sequence can only have finitely many differentials originating from it, so the result follows from \Cref{thm:strong-conv}.
\end{proof}

\begin{proof}[Proof of \Cref{prop:vanishing-line-Adams}]
    Let $Y$ denote an $E$-nilpotent complete spectrum satisfying one of the conditions in the statement of the proposition. By either \Cref{lemm:vanishing-line-converge} or \Cref{lemm:vanishing-converge}, the $E$-Adams spectral sequence for $Y$ converges strongly. We are therefore free to invoke \Cref{thm:synthetic-Adams} in the following.

    Assume that $\nu_E (Y)$ admits a finite-page vanishing line of slope $m$, intercept $c$ and torsion level $r$, and suppose that there exists $0 \neq x \in \mathrm{E}^{s,k+s} _{r+2}$ with $s > mk +c$. If $x$ is the source of a differential, we may replace it by its target and therefore assume without loss of generality that $x$ is a permanent cycle. Let $y \in \mathrm{E}^{s,k+s} _2$ be a representative of $x$. Invoking \Cref{thm:synthetic-Adams}, we conclude that there exists $\wt{y} \in \pi_{s,k+s} (\nu_E Y)$ which is not $\tau^r$-torsion, a contradiction.

    Now suppose that $\mathrm{E}^{s,k+s} _{r+2} = 0$ when $s > mk+c$.  Applying \Cref{thm:synthetic-Adams}, we see that every element of the form $\wt{x}$ above the vanishing line is $\tau^r$-torsion. \Cref{thm:synthetic-Adams} also implies that the $\tau$-adic completion of the $\Z[\tau]$-submodule of the bigraded homotopy generated by such $\wt{x}$ is exactly $\pi_{*,*} (\nu_E Y)$. From the uniform bound on the $\tau$-torsion order, we learn that the completion was unnecessary. It follows that every class in $\pi_{k,k+s} (\nu_E Y)$ is $\tau^r$-torsion when $s > mk + c$, i.e. that $\nu_E Y$ admits a finite-page vanishing line of slope $m$, intercept $c$ and torsion level $r$.
\end{proof}

We now state the main result of this section.

\begin{thm}
  \label{thm:vl-are-generic}
  Given a slope $m>0$, the following four conditions on a synthetic spectrum $X$ are generic:
  \begin{enumerate}
  \item $X$ has a vanishing line of slope $m$.
  \item $X$ has a strong vanishing line of slope $m$.
  \item $X$ has a finite-page vanishing line of slope $m$.
  \item $X$ has a strong finite-page vanishing line of slope $m$.
  \end{enumerate}
\end{thm}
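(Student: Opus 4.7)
The plan is to verify the three thick-subcategory axioms --- closure under bigraded suspension $\Sigma^{p,q}$, closure under retracts, and the two-out-of-three property along cofiber sequences --- separately for each of the four conditions. Closure under bigraded suspension follows from the natural identification $\pi_{k,k+s}(\Sigma^{p,q}X) \cong \pi_{k-p,\,(k-p)+(s+p-q)}(X)$, which translates a (finite-page) vanishing line for $X$ of slope $m$, intercept $c$, and torsion level $r$ into one for $\Sigma^{p,q}X$ with the same slope and torsion level and intercept $c+q-p(m+1)$. Closure under retracts is immediate because $\pi_{k,k+s}(-)$ sends a retract to a $\tau$-equivariant direct summand. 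For the strong conditions $(2)$ and $(4)$ these two closure properties pass through because $\Sigma^{p,q}$ and the formation of retracts commute with $-\otimes\nu_E(W)$.

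The main work concerns cofiber sequences $X \to Y \to Z$. I would analyze these via the $\tau$-equivariant bigraded long exact sequence
\[\cdots \to \pi_{k+1,k+s+1}(Z) \to \pi_{k,k+s}(X) \to \pi_{k,k+s}(Y) \to \pi_{k,k+s}(Z) \to \pi_{k-1,k+s-1}(X) \to \cdots,\]
where $\tau$ has bidegree $(0,-1)$. For condition $(1)$, vanishing lines of slope $m$ with intercepts $c_1, c_2$ on any two of $X, Y, Z$ force the third to admit a vanishing line of slope $m$ with intercept at most $\max(c_1,c_2) + C_m$, where $C_m$ depends only on $m$ (the shift coming from the $(1,1)$-degree of the boundary map). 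For condition $(2)$, the functor $-\otimes \nu_E(W)$ is a left adjoint, hence preserves cofiber sequences, so we may apply $(1)$ pointwise for each connective $W$.

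For conditions $(3)$ and $(4)$, the key additional observation is that torsion levels add along a cofiber sequence. For instance, suppose $X$ and $Z$ have finite-page vanishing lines of slope $m$ with torsion levels $r_X$ and $r_Z$, and let $\alpha \in \pi_{k,k+s}(Y)$ lie sufficiently far above both lines. Then $g_*(\alpha)\in \pi_{k,k+s}(Z)$ is $\tau^{r_Z}$-torsion, so $\tau^{r_Z}\alpha = f_*(\beta)$ for some $\beta \in \pi_{k,k+s-r_Z}(X)$; but $\beta$ is in turn $\tau^{r_X}$-torsion, yielding $\tau^{r_X+r_Z}\alpha = 0$. Symmetric diagram chases through the adjacent segments of the long exact sequence handle the cases in which the third spectrum is $X$ or $Z$ rather than $Y$, in each case producing a torsion level equal to the sum of the torsion levels of the two assumed spectra and an intercept obtained as a max involving a shift by a constant depending only on $m$. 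Condition $(4)$ then reduces to $(3)$ by tensoring with $\nu_E(W)$.

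The main obstacle, and the only real subtlety, is precisely the combined bookkeeping of intercept shifts and torsion-level growth across the long exact sequence in the finite-page case. Once the observations above are organized, each of the three configurations for the two-out-of-three property reduces to a short diagram chase, using only the bigraded long exact sequence, the $\tau$-equivariance of all structure in sight, and the fact that $-\otimes\nu_E(W)$ is colimit-preserving.
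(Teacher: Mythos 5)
Your proposal follows essentially the same route as the paper: closure under bigraded suspensions and retracts is handled by a direct translation of indices (your formula for $\pi_{k,k+s}(\Sigma^{p,q}X)$ and the resulting intercept shift match the paper's Lemma~\ref{lemm:suspension-line}), the two-out-of-three property for cofiber sequences is handled by a $\tau$-equivariant diagram chase in which torsion levels add (exactly the argument of Lemma~\ref{lemm:cof-line}, with $\tau^{r_X+r_Z}\alpha = 0$), and the strong variants reduce to the weak ones by tensoring with $\nu_E(W)$. The one small organizational difference is that you propose to run the diagram chase separately for each of the three rotated configurations, whereas the paper proves only the middle case ($A$ and $C$ have the line, hence so does $B$) and derives the other two by rotating the cofiber sequence and invoking the suspension lemma; both work, and the paper's choice is slightly more economical.

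One indexing slip worth fixing: the boundary map in the bigraded long exact sequence is $\pi_{k,k+s}(Z) \to \pi_{k-1,k+s}(X)$, not $\pi_{k,k+s}(Z) \to \pi_{k-1,k+s-1}(X)$. That is, the categorical suspension in $\mathrm{Syn}_E$ is $\Sigma^{1,0}$, so the boundary preserves the second index $b$ in $\pi_{a,b}$, which means the weight-filtration degree $s = b-a$ \emph{increases} by $1$ rather than being preserved. This only perturbs the exact constant by which the intercept shifts (and in fact the shift is in your favor); it does not affect the validity of the argument, but it is the source of your claim that the intercept shift involves a ``$(1,1)$-degree'' of the boundary, which is not quite right.
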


The proof of this theorem will be given over the course of two lemmas.

\begin{lem}
  \label{lemm:suspension-line}
  Suppose that a synthetic spectrum $X$ has a (strong) (finite-page) vanishing line of slope $m$, intercept $c$ and torsion level $r$. Then:
  \begin{enumerate}
  \item Any retract of $X$ has a (strong) (finite-page) vanishing line of slope $m$, intercept $c$ and torsion level $r$.
  \item $\Sigma^{k,k} X$ has a (strong) (finite-page) vanishing line of slope $m$, intercept $c-mk$ and torsion level $r$.
  \item $\Sigma^{0,s} X$ has a (strong) (finite-page) vanishing line of slope $m$, intercept $c+s$ and torsion level $r$.
  \end{enumerate}
\end{lem}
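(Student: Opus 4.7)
The proof is essentially a bookkeeping exercise using the defining relations
$\pi_{a,b}(\Sigma^{k,k} X) \cong \pi_{a-k,b-k}(X)$ and
$\pi_{a,b}(\Sigma^{0,s} X) \cong \pi_{a,b-s}(X)$,
together with the fact that $\tau \in \pi_{0,-1}\mathbb{S}^{0,0}$ acts naturally on these groups. I would separate the three claims, address them in turn, and only then note that the ``strong'' refinements require no additional work.

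For (1), any retract $X' \hookrightarrow X \twoheadrightarrow X'$ of synthetic spectra induces, for each bidegree $(a,b)$, a splitting of $\pi_{a,b}(X')$ as a direct summand of $\pi_{a,b}(X)$, compatibly with the action of $\tau$. Hence vanishing of $\pi_{a,b}(X)$ above a line forces vanishing of $\pi_{a,b}(X')$ above the same line, and if every class in $\pi_{a,b}(X)$ is annihilated by $\tau^r$ above the line, the same is true in the summand $\pi_{a,b}(X')$.

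For (2) and (3), I would compute the shift of the vanishing locus directly. Writing $s = b-a$ for the filtration coordinate used in the definition, the isomorphism $\pi_{a,a+s}(\Sigma^{k,k}X) \cong \pi_{a-k,(a-k)+s}(X)$ shows that the region $s > m(a-k)+c$ is exactly the region $s > ma + (c-mk)$, yielding intercept $c - mk$; similarly $\pi_{a,a+t}(\Sigma^{0,s}X) \cong \pi_{a,a+(t-s)}(X)$ shifts the vanishing region to $t > ma + (c+s)$, giving intercept $c+s$. In both cases the shift is an equivalence of synthetic spectra and hence commutes with multiplication by $\tau$, so the $\tau^r$-torsion condition is transported without change. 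This establishes the non-strong finite-page (and hence by \Cref{rmk:strict-as-pc} the non-strong plain) versions.

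For the strong versions, I would simply observe that tensoring commutes with both operations: $(\Sigma^{k,k} X) \otimes \nu_E(Y) \simeq \Sigma^{k,k}(X \otimes \nu_E(Y))$ and likewise for $\Sigma^{0,s}$, while for retracts the map $X' \to X \to X'$ of synthetic spectra yields, after tensoring with $\nu_E(Y)$, a retract of $X \otimes \nu_E(Y)$ onto $X' \otimes \nu_E(Y)$. Thus each of the three operations is compatible with tensoring against $\nu_E(Y)$ for arbitrary $Y \in \Sp_{\ge 0}$, and the three claims for strong (finite-page) vanishing lines reduce immediately to the non-strong cases applied uniformly in $Y$. No step is a genuine obstacle; the only thing to be careful about is the sign of the shift in the intercept in (2) versus (3), which is pinned down by the bidegree bookkeeping above.
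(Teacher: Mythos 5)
Your proof is correct and amounts to a careful spelling out of exactly the bookkeeping the paper elides (its entire proof is ``Clear.''); the bidegree shifts $\pi_{a,b}(\Sigma^{k,k}X)\cong\pi_{a-k,b-k}(X)$ and $\pi_{a,b}(\Sigma^{0,s}X)\cong\pi_{a,b-s}(X)$, the $\tau$-equivariance of these identifications, and the compatibility of retracts and shifts with $-\otimes\nu_E(Y)$ are precisely what the lemma needs. No gap.
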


\begin{proof}
  The first claim follows from the fact that the bigraded homotopy groups of a retract of $X$ are a retract of the bigraded homotopy groups of $X$.
  The second and third claim follow from keeping track of the change in indexing of the bigraded homotopy groups under bigraded suspensions.
\end{proof}

\begin{lem}
  \label{lemm:cof-line}
  Given a cofiber sequence of synthetic spectra
  $$ A \xrightarrow{f} B \xrightarrow{g} C $$
  such that
  \begin{enumerate}
  \item $A$ has a (strong) (finite-page) vanishing line of slope $m$, intercept $c_1$ and torsion level $r_1$,
  \item $C$ has a (strong) (finite-page) vanishing line of slope $m$, intercept $c_2$ and torsion level $r_2$.
  \end{enumerate}
  Then, $B$ has a (strong) (finite-page) vanishing line of slope $m$, intercept $\max(c_1+r_2,c_2)$ and torsion level $r_1+r_2$.
\end{lem}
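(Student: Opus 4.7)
The plan is to exploit the long exact sequence in bigraded homotopy associated to the cofiber sequence $A \to B \to C$, and perform a direct diagram chase, carefully tracking filtration degrees and $\tau$-torsion orders. Using that $\Sigma$ in synthetic spectra is $\Sigma^{1,1}$, the cofiber sequence extends to a long exact sequence
\[ \cdots \to \pi_{k,k+s}(A) \xrightarrow{f_*} \pi_{k,k+s}(B) \xrightarrow{g_*} \pi_{k,k+s}(C) \to \pi_{k-1,k+s-1}(A) \to \cdots \]
Multiplication by $\tau \in \pi_{0,-1}(\mathbb{S}^{0,0})$ commutes with $f_*$ and $g_*$, and on bigraded homotopy $\tau$ decreases the second index by $1$ while fixing the first.

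The main content of the finite-page case will be as follows. Given $x \in \pi_{k,k+s}(B)$ with $s > mk + \max(c_1+r_2, c_2)$, I would first invoke the vanishing line for $C$: since $s > mk + c_2$, the class $g_*(\tau^{r_2} x) = \tau^{r_2} g_*(x)$ vanishes, so by exactness there is a lift $y \in \pi_{k, k+s-r_2}(A)$ with $f_*(y) = \tau^{r_2} x$. The filtration of $y$ is $s-r_2$, and the hypothesis $s > mk + c_1 + r_2$ puts $y$ above the vanishing line for $A$, whence $\tau^{r_1} y = 0$. Applying $f_*$ then gives $\tau^{r_1+r_2} x = f_*(\tau^{r_1} y) = 0$, which is precisely the statement of a finite-page vanishing line for $B$ of slope $m$, intercept $\max(c_1+r_2, c_2)$, and torsion level $r_1 + r_2$. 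The non-finite-page case is recovered by setting $r_1 = r_2 = 0$, as noted in \Cref{rmk:strict-as-pc}, so that the conclusion reads $x = 0$ and the intercept simplifies to $\max(c_1, c_2)$.

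For the strong versions, the key point is that tensoring a cofiber sequence with $\nu_E(Y)$ for any connective $Y \in \mathrm{Sp}_{\ge 0}$ yields another cofiber sequence, since the tensor product on $\mathrm{Syn}_E$ preserves colimits separately in each variable. Applying the non-strong argument to the cofiber sequence
\[ A \otimes \nu_E(Y) \to B \otimes \nu_E(Y) \to C \otimes \nu_E(Y) \]
then delivers the required finite-page vanishing line on $B \otimes \nu_E(Y)$ with the same slope, intercept, and torsion level. This handles all four cases uniformly.

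I do not anticipate any serious obstacle: the proof is a routine long-exact-sequence chase, and essentially all of the work is bookkeeping of intercepts and torsion orders. The only care needed is in translating between filtration, bigraded degree, and the action of $\tau$, which is standard. Once the lemma is in hand, genericity (\Cref{thm:vl-are-generic}) will follow by combining it with \Cref{lemm:suspension-line}: closure under retracts and suspensions is immediate from the latter, while closure under two-out-of-three in a cofiber sequence $A \to B \to C$ follows by applying \Cref{lemm:cof-line} to the cyclic rotations $A \to B \to C$, $B \to C \to \Sigma^{1,1} A$, and $C \to \Sigma^{1,1} A \to \Sigma^{1,1} B$, together with the fact that possessing a vanishing line of slope $m$ is invariant under suspension.
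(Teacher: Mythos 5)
Your proof is correct and follows exactly the same long-exact-sequence chase as the paper's own argument: lift $\tau^{r_2}x$ along $f_*$ using the vanishing line for $C$, then kill the lift using the vanishing line for $A$, and handle the strong versions by tensoring with $\nu_E(Y)$. The only cosmetic difference is that the paper invokes Remark \ref{rmk:strict-as-pc} at the outset to reduce the non-finite-page case to the finite-page one, whereas you note the same reduction at the end by setting $r_1 = r_2 = 0$.
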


\begin{proof}
  Remark \ref{rmk:strict-as-pc} implies that it will suffice to prove the finite-page versions of this lemma. One can also easily see that the strong versions follow from the weak versions applied to all cofiber sequences of the form
  $$ A \otimes \nu_E (Y) \to B \otimes \nu_E (Y) \to C \otimes \nu_E (Y) $$
  where $Y \in \text{Sp}_{\geq 0}$.
  

  We finish the proof by proving the statement for finite-page vanishing lines. Suppose that $\alpha \in \pi_{k,k+s}(B)$ with $s > mk + \max(c_1+r_2,c_2)$. From the finite-page vanishing line for $C$ the class $g(\alpha)$ is $\tau^{r_2}$-torsion. Thus, there is a class $\alpha' \in \pi_{k,k+s-r_2}(A)$ such that $f(\alpha') = \tau^{r_2}\alpha$. By assumption, $s-r_2 > mk + c_1$, so the finite-page vanishing line for $A$ tells us that $\alpha'$ is $\tau^{r_1}$-torsion. In particular, $\tau^{r_1+r_2}\alpha = 0$, as desired.
\end{proof}

Combining \Cref{lemm:suspension-line} and \Cref{lemm:cof-line} gives the proof of \Cref{thm:vl-are-generic}.
We record the following corollary, which will find use in \Cref{sec:ANvl}.

\begin{cor} \label{cor:ctau-vl}
  The synthetic spectrum $C\tau^M$ has a strong finite-page vanishing line of slope $m$, intercept $c$ and torsion level $M$ for every slope $m$ and intercept $c$.
\end{cor}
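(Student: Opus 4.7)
The plan is very short: the statement will follow from the observation that $\tau^M$ acts as zero on $C\tau^M$, and hence on any $C\tau^M$-module.

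More precisely, I would begin by recalling the defining cofiber sequence
\[ \mathbb{S}^{0,-M} \xrightarrow{\tau^M} \mathbb{S}^{0,0} \longrightarrow C\tau^M. \]
Tensoring this cofiber sequence with any synthetic spectrum $X$ yields the cofiber sequence
\[ \Sigma^{0,-M} X \xrightarrow{\tau^M} X \longrightarrow C\tau^M \otimes X, \]
which, applied with $X = C\tau^M$, shows that the map $\tau^M \colon \Sigma^{0,-M} C\tau^M \to C\tau^M$ is null. Consequently, for any synthetic spectrum $X$, the map $\tau^M \colon \Sigma^{0,-M}(C\tau^M \otimes X) \to C\tau^M \otimes X$ is null, so every class in $\pi_{*,*}(C\tau^M \otimes X)$ is $\tau^M$-torsion.

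In particular, taking $X = \nu_E(Y)$ for any connective spectrum $Y \in \Sp_{\geq 0}$, every element of $\pi_{k,k+s}(C\tau^M \otimes \nu_E(Y))$ is annihilated by $\tau^M$. Since this holds for all bidegrees, the condition defining a finite-page vanishing line of slope $m$, intercept $c$, and torsion level $M$ is satisfied vacuously for arbitrary $m$ and $c$. This gives the strong finite-page vanishing line as claimed.

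There is essentially no main obstacle; the only thing to verify is the identification of $\tau^M$ as null on $C\tau^M$, which is immediate from the definition of the cofiber.
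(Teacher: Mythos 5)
The key step in your argument does not follow. Tensoring the defining cofiber sequence with $C\tau^M$ does yield a cofiber sequence
\[ \Sigma^{0,-M} C\tau^M \xrightarrow{\ \tau^M\ } C\tau^M \longrightarrow C\tau^M \otimes C\tau^M, \]
but the mere existence of such a cofiber sequence tells you nothing about the first map being null --- every map sits in a cofiber sequence. What this sequence does give you is that the \emph{composite} $\Sigma^{0,-M} C\tau^M \to C\tau^M \to C\tau^M \otimes C\tau^M$ is null. To promote that to nullity of $\tau^M$ itself you would need the second map $C\tau^M \to C\tau^M \otimes C\tau^M$ to admit a retraction, which amounts to a (homotopy-)multiplicative structure on $C\tau^M$. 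The paper establishes such a structure only for $M=1$: \cite[Corollary 4.45]{Pstragowski} shows $C\tau$ is an $\mathbb{E}_\infty$-ring in $\mathrm{Syn}_E$, and it is \emph{this} fact, not tensoring the cofiber sequence, that kills $\tau$ on $C\tau$-modules. For $M>1$ no ring structure on $C\tau^M$ is established here, so as written your argument has a genuine gap.

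The paper sidesteps the issue by inducting on $M$: it applies \Cref{lemm:cof-line} to the cofiber sequences $\Sigma^{0,-1}C\tau^{M-1} \to C\tau^M \to C\tau$, letting the torsion levels add along the cofiber sequence, with the base case $M=1$ supplied by the ring structure on $C\tau$. This never requires the self-map $\tau^M$ on $C\tau^M$ to be literally null; it only controls the action of $\tau^M$ on the bigraded homotopy groups of $C\tau^M \otimes \nu_E(Y)$, which is all the definition of a strong finite-page vanishing line demands. To repair your proof, either run this induction, or independently justify that $C\tau^M$ carries a multiplication compatible with the unit $\mathbb{S}^{0,0}\to C\tau^M$ (which this paper does not do).
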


\begin{proof}
  We proceed by induction on $M$.
  The base case follows from the fact that $C\tau$ is a ring \cite[Corollary 4.45]{Pstragowski} and therefore every $C\tau$-module has homotopy groups which are simple $\tau$-torsion. For $M > 1$, we apply \Cref{lemm:cof-line} to the cofiber sequences
  \[ \Sigma^{0,-1} C\tau^{M-1} \to C\tau^M \to C\tau. \qedhere \]
\end{proof}

Next we prove a version of \cite[Theorem 1.3]{HPS}.

\begin{thm}\label{thm:HPS}
  Given a slope $m > 0$, the following conditions on an $E$-local spectrum $X$ are generic:
  \begin{enumerate}
  \item $\nu_E (X)$ admits a finite-page vanishing line of slope $m$.
  \item $\nu_E (X)$ admits a strong finite-page vanishing line of slope $m$.
  \end{enumerate}
\end{thm}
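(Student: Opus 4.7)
The plan is to verify that each of the classes in (1) and (2) is closed under suspensions, retracts, and two-out-of-three for cofiber sequences. I will handle suspensions first: since $\bS^1$ is finite $E_*$-projective, Remark \ref{rmk:strong-monoidal} yields $\nu_E(\Sigma X) \simeq \Sigma^{1,1} \nu_E(X)$, and Lemma \ref{lemm:suspension-line}(2,3) transports a (strong) finite-page vanishing line of slope $m$ and intercept $c$ for $\nu_E(X)$ to one of slope $m$ and intercept $c-m$ for $\nu_E(\Sigma X)$. Retracts are handled similarly by Lemma \ref{lemm:suspension-line}(1).

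The main step is closure under cofiber sequences. Given a cofiber sequence $A \to B \to C$ of $E$-local spectra with $\nu_E(A)$ and $\nu_E(C)$ having (strong) finite-page vanishing lines of slope $m$, I will exhibit one for $\nu_E(B)$. I will construct in $\Syn_E$ the cofiber $Q := \mathrm{cof}(\nu_E(A) \to \nu_E(B))$, and use the canonical null-homotopy of the composite $\nu_E(A) \to \nu_E(B) \to \nu_E(C)$ arising from functoriality of $\nu_E$ to produce a comparison map $Q \to \nu_E(C)$. Setting $F := \mathrm{fib}(Q \to \nu_E(C))$, two successive applications of Lemma \ref{lemm:cof-line} (first to the cofiber sequence $F \to Q \to \nu_E(C)$, then to $\nu_E(A) \to \nu_E(B) \to Q$) reduce the goal to producing a (strong) finite-page vanishing line of slope $m$ for $F$.

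To control $F$ I will use the two functors out of $\Syn_E$ provided by Theorem \ref{thm:tau-inv}. On the one hand, $\tau^{-1}$ preserves cofiber sequences and satisfies $\tau^{-1} \circ \nu_E \simeq \mathrm{id}_{\Sp}$, so $\tau^{-1}(Q \to \nu_E(C))$ is identified with the identity on $C$; hence $\tau^{-1} F \simeq 0$ and every class in $\pi_{*,*}(F)$ is $\tau$-power torsion. On the other hand, applying $C\tau \otimes (-)$ and invoking Pstr\k{a}gowski's embedding $\mathrm{Mod}_{C\tau} \hookrightarrow \mathrm{Stable}_{E_*E}$ together with Lemma \ref{lemm:syn-cof} (which quantifies the defect of $0 \to E_*A \to E_*B \to E_*C \to 0$ from being short exact), I expect to identify $F$, up to bigraded shift, as a $C\tau$-module built from $\nu_E(A)$ by a bounded iteration of cofiber sequences. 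Since $C\tau$ admits a strong finite-page vanishing line of every slope with torsion level $1$ by Corollary \ref{cor:ctau-vl}, combining this description with the vanishing line hypothesis on $\nu_E(A)$ via Theorem \ref{thm:vl-are-generic} yields the desired vanishing line for $F$. The strong version (statement (2)) follows by running the same argument after tensoring every object with $\nu_E(Y)$ for arbitrary connective $Y \in \Sp_{\geq 0}$. The hardest part will be the explicit identification of $F$: reconciling the stable suspension $\Sigma = \Sigma^{1,0}$ of $\Syn_E$ with the internal grading conventions of $\mathrm{Stable}_{E_*E}$ under Pstr\k{a}gowski's embedding requires careful bookkeeping of the $\tau$-Bockstein that encodes the non-exactness of $\nu_E$.
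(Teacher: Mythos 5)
Your overall strategy is the same as the paper's. The paper reduces, exactly as you do, to the two-out-of-three step for cofiber sequences, interposes an intermediate object that compensates for the failure of $\nu_E$ to be exact, and applies \Cref{lemm:cof-line} twice; the intermediate object (your $F$, the paper's $Y$ in the proof of \Cref{cor:HPS}) is identified as a $C\tau$-module via \Cref{lemm:comparison-tau-tor}. Your arrangement of the two cofiber sequences (passing through $Q = \mathrm{cof}(\nu_E A \to \nu_E B)$ and its fiber over $\nu_E C$) is a rotation of the paper's (which forms $X = \mathrm{cof}(\nu_E \Sigma^{-1}C \to \nu_E A)$ and then the cofiber of $X \to \nu_E B$); they are interchangeable.

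The one place you go slightly astray is in the last step: $F$ is \emph{not} ``built from $\nu_E(A)$ by a bounded iteration of cofiber sequences,'' and no appeal to the vanishing line hypothesis on $\nu_E(A)$, nor to \Cref{thm:vl-are-generic}, is needed or available to establish a vanishing line for $F$. What is true, and what the proof of \Cref{lemm:comparison-tau-tor} establishes directly, is that $\nu E_{*,*}$ of this object is concentrated in a single weight --- it is precisely $\ker\bigl(E_*A \to E_*B\bigr)$ --- so that under the natural $t$-structure on $\Syn_E$ it lies in the heart and is hence a $C\tau$-module outright. Once you know $F$ is a $C\tau$-module, every element of $\pi_{*,*}(F \otimes \nu_E Y)$ is killed by a single power of $\tau$ for any connective $Y$, so $F$ automatically carries a strong finite-page vanishing line of every slope and intercept with torsion level $1$; the vanishing line of $\nu_E(A)$ plays no role in this. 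Your instinct about the bookkeeping being the hard part is right --- the nontrivial content is exactly the homology computation showing $F$ is in the heart, which is what \Cref{lemm:comparison-tau-tor} does --- but you should drop the ``built from $\nu_E(A)$'' idea, as it would send you chasing a structure that is not there.
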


\begin{rmk}
  Specializing to the case where $X$ is $E$-nilpotent complete, \Cref{prop:vanishing-line-Adams} and \Cref{thm:HPS}(1) together recovers a version of \cite[Theorem 1.3 (i)]{HPS}.

  Our assumptions on $E$ in \Cref{thm:HPS} differ from those given in \cite[Condition 1.2]{HPS}. We assume that $E$ is of Adams type, whereas \cite{HPS} assumes, among other things, that $E$ is connective.
\end{rmk}


To deduce \Cref{thm:HPS} from \Cref{thm:vl-are-generic}, we need to bound the extent to which $\nu_E$ fails to preserve cofiber sequences.

\begin{lem}\label{lemm:comparison-tau-tor}
  Let $X \to Y \to Z$ be a cofiber sequence of $E$-local spectra. Moreover, let $C$ denote the cofiber of $\nu_E (X) \to \nu_E (Y)$. Then the cofiber $D$ of the induced map $C \to \nu_E (Z)$ is a $C\tau$-module.
\end{lem}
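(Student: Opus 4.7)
The plan is to use the octahedral axiom to rewrite $D$ as the cofiber of a map that is, up to the action of $\tau$, an equivalence.

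First, I would apply the octahedral axiom to the composition $\nu_E X \to \nu_E Y \to \nu_E Z$. Setting $C' = \mathrm{cofib}(\nu_E X \to \nu_E Z)$ and $A = \mathrm{cofib}(\nu_E Y \to \nu_E Z)$, this yields a cofiber sequence
\[ C \to C' \to A.\]
Applying the octahedral axiom a second time to the composition $C \to \nu_E Z \to C'$ (whose composite is the map $C \to C'$ above), I obtain a cofiber sequence $D \to A \to \Sigma \nu_E X$, which rotates to the fiber sequence $\nu_E X \to D \to A$, or equivalently to the cofiber sequence
\[ \Sigma^{-1} A \xrightarrow{\phi} \nu_E X \to D. \]

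Next, I would analyze the map $\phi$. Since $\tau^{-1}$ is a symmetric monoidal localization (\Cref{thm:tau-inv}(1)--(3)) and hence exact, we have $\tau^{-1} \nu_E X = X$ and $\tau^{-1}\Sigma^{-1} A = \mathrm{fib}(Y \to Z) \simeq X$, and $\tau^{-1} \phi$ corresponds to the identity on $X$ under these identifications. Thus $\tau^{-1} D = 0$, so $D$ is $\tau$-power torsion; this is a first step, but the lemma requires the strictly stronger statement that $\tau$ acts nullhomotopically on $D$.

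The main obstacle is upgrading ``$\tau$-power torsion'' to ``$D$ is a $C\tau$-module.'' To do this, I would exploit the fact that $\Sigma^{-1} A = \mathrm{fib}(\nu_E Y \to \nu_E Z)$ admits a canonical comparison map $\beta \colon \nu_E X \to \Sigma^{-1} A$ coming from $\nu_E$ applied to the canonical nullhomotopy of $X \to Y \to Z$. Both $\phi$ and $\beta$ become the identity on $X$ after inverting $\tau$, so the composite $\phi \circ \beta \colon \nu_E X \to \nu_E X$ agrees with the identity after $\tau^{-1}$; one can verify using the octahedral axiom and the defining cofiber sequence $\Sigma^{0,-1} \xrightarrow{\tau} \mathbb{S}^{0,0} \to C\tau$ that in fact $\phi \circ \beta$ equals multiplication by $\tau$ (up to equivalence of source/target). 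This factorization of $\tau \cdot \mathrm{id}_{\nu_E X}$ through $\phi$ forces the composite
\[ \Sigma^{0,-1} \nu_E X \xrightarrow{\tau} \nu_E X \to D \]
to be nullhomotopic (since it factors through $\Sigma^{-1} A \xrightarrow{\phi} \nu_E X \to D$, which is a cofiber sequence composite), and hence $\tau \colon \Sigma^{0,-1} D \to D$ is nullhomotopic. Equivalently, $D$ admits a $C\tau$-module structure, as desired. The hardest step will be verifying the precise identification $\phi \circ \beta = \tau \cdot \mathrm{id}_{\nu_E X}$, which reduces to a diagram chase involving the natural comparison maps of the lax symmetric monoidal functor $\nu_E$ and the defining cofiber sequence for $\tau$.
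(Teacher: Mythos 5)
Your second application of the octahedral axiom contains an error which invalidates the whole argument. You apply the octahedral axiom to the composition $C \to \nu_E Z \to C'$ and identify $\mathrm{cofib}(C \to C')$ with $A$ by asserting that this composite equals the octahedral map $C \to C'$ from your first application. But these are two \emph{different} maps: the map $C \to \nu_E Z$ is built from $\nu_E$ applied to the canonical nullhomotopy of $X \to Y \to Z$ (extra data not present in the octahedron), whereas the octahedral map $C \to C'$ is the one induced functorially by $Y \to Z$. They agree only after precomposing with $\nu_E Y \to C$, so they differ by a term factoring through $\Sigma\nu_E X$, and this discrepancy is nonzero in general. Concretely, take $E = \mathrm{H}\mathbb{F}_p$, $X = (\mathbb{S}^{-1})^\wedge_p$, $Y = 0$, $Z = (\mathbb{S}^0)^\wedge_p$. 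Then $C = \Sigma\nu_E X \simeq \mathbb{S}^{0,-1}$, the map $C \to \nu_E Z = \mathbb{S}^{0,0}$ is $\tau$, and $D = C\tau$. Here $C' \simeq \mathbb{S}^{0,-1}\oplus\mathbb{S}^{0,0}$, the composite $C \to \nu_E Z \to C'$ is $(0,\tau)$ with cofiber $\mathbb{S}^{0,-1}\oplus C\tau$, while the octahedral map $C \to C'$ is the inclusion of the first summand with cofiber $A = \mathbb{S}^{0,0}$. The correct cofiber sequence, obtained by applying the octahedral axiom instead to $\nu_E Y \to C \to \nu_E Z$, is $\Sigma\nu_E X \to A \to D$, with the objects in the \emph{opposite} order from what you wrote; in the example this is precisely the defining sequence $\mathbb{S}^{0,-1}\xrightarrow{\tau}\mathbb{S}^{0,0}\to C\tau$, not the sequence $\mathbb{S}^{-1,0}\to\mathbb{S}^{-1,-1}\to C\tau$ your argument would require.

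Even setting this aside, the final two steps have gaps: the claimed identity $\phi\circ\beta = \tau\cdot\mathrm{id}$ is not grading-consistent (one side is an endomorphism of $\nu_E X$, the other is a map $\Sigma^{0,-1}\nu_E X \to \nu_E X$); and showing that the composite $\Sigma^{0,-1}\nu_E X \xrightarrow{\tau}\nu_E X \to D$ is null does not imply $\tau\colon\Sigma^{0,-1}D\to D$ is null --- by naturality of $\tau$ it only shows that $\tau_D$ vanishes after precomposing with $\Sigma^{0,-1}(\nu_E X\to D)$, i.e.\ that $\tau_D$ factors through $\Sigma^{0,-1}A$. (It is also worth noting that ``$\tau$ acts nullhomotopically'' is a priori weaker than ``admits a $C\tau$-module structure,'' although for the paper's downstream applications the former would suffice.) The paper sidesteps all of these issues by applying $\nu E_{*,*}(-)$ to the comparison diagram, using $\nu E_{k,k+*}(\nu_E W)\cong E_k W[\tau]$ to compute directly that the $\nu E$-homology of $D$ is concentrated in a single weight, and then invoking hypercompleteness and the $t$-structure to place $D$ in the heart, which embeds into $\mathrm{Mod}_{C\tau}$.
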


\begin{proof}
	We may build a commutative diagram
	\begin{center}
	\begin{tikzcd}
		\nu_E (X) \arrow[r] \arrow[d, "\mathrm{id}_{\nu_E (X)}"] & \nu_E (Y) \arrow[r] \arrow[d, "\mathrm{id}_{\nu_E (X)}"] & C \arrow[r] \arrow[d] & \Sigma^{1,0} \nu_E (X) \arrow[r] \arrow[d, "\tau"] & \Sigma^{1,0} \nu_E (Y) \arrow[d, "\tau"] \\
		\nu_E (X) \arrow[r] & \nu_E (Y) \arrow[r] & \nu_E (Z) \arrow[r] & \nu_E (\Sigma X) \arrow[r] & \nu_E (\Sigma Y) 
	\end{tikzcd}
	\end{center}
    out of the comparison maps between colimits before applying $\nu_E $ and after. By \cite[Remark 4.61]{Pstragowski}, there is a natural isomorphism \[(\nu E)_{k,k+*} (\nu_E(W)) \cong E_{k} (W)[\tau]\] for any spectrum $W$. This is an isomorphism of bigraded groups if $E_k (W)$ is considered to have bidegree $(k,k)$ and $\tau$ is given bidegree $(0,1)$. Applying $\nu E_{*,*} (-)$, we obtain a diagram
	\begin{center}
    \begin{tikzcd}[column sep = small]
		E_{k} (X)[\tau] \arrow[r] \arrow[d,"\text{id}"] & E_{k} (Y)[\tau] \arrow[r] \arrow[d,"\text{id}"] & \nu E_{k,k+*} (C) \arrow[r] \arrow[d] & \Sigma^{0,-1} E_{k-1} (X) [\tau] \arrow[r] \arrow[d, "\cdot \tau"] & \Sigma^{0,-1} E_{k-1} (Y) [\tau] \arrow[d, "\cdot \tau"] \\
		E_{k} (X)[\tau] \arrow[r] & E_{k} (Y)[\tau] \arrow[r] & E_{k} (Z)[\tau] \arrow[r] & E_{k-1} (X) [\tau] \arrow[r] & E_{k-1} (Y) [\tau],
	\end{tikzcd}
	\end{center}
	where both the top and bottom rows are exact: the top is exact because it arose from applying $\nu E_{*,*}$ to a cofiber sequence, and the bottom is exact because it is obtained by adjoining $\tau$ to an exact sequence.
    Letting $f : E_{k} (X) \to E_k (Y)$ denote the map induced by $X \to Y$, we find that \[ 0 \to \nu E_{k,k+*} (C) \to E_k (Z) [\tau] \to \ker(f)_{k-1} \to 0 \] is exact. Recalling that we defined $D$ to be the cofiber of $C \to \nu_E (Z)$, we conclude that \[\nu E_{k,k+\ell} (D) = \begin{cases} \ker(f)_{k-1},& \text{if } \ell = 0\\ 0,& \text{otherwise.} \end{cases}\]
        This is sufficient to conclude that $D$ is a $C\tau$-module, from our assumptions that $X$, $Y$ and $Z$ are $E$-local. Indeed, this is a combination of citations to \cite{Pstragowski}. In the language of that paper $D$ is hypercomplete \cite[Propositions 5.4 and 5.6]{Pstragowski}. Therefore \cite[Theorem 4.18]{Pstragowski} implies that $D$ lies in the heart of the natural $t$-structure on $\Syn_E$, which is discussed in \cite[Section 4.2]{Pstragowski}.
        By \cite[Lemmas 4.42 and 4.43]{Pstragowski}, there is an adjunction $\epsilon_* : \mathrm{Syn}_E \leftrightarrows \mathrm{Stable}_{E_* E} : \epsilon^*$ with $\epsilon^*$ lax symmetric monoidal and which induces an equivalence on the hearts. It follows that $D \simeq \epsilon^* (\epsilon_* (D))$. Since $C\tau \simeq \epsilon^* (E_*)$ as $\mathbb{E}_{\infty}$-rings \cite[Corollary 4.45]{Pstragowski}, $D \simeq \epsilon^* (\epsilon_* (D))$ is a $C\tau$-module by lax symmetric monoidality of $\epsilon^*$.
%
\end{proof}

\begin{cor}\label{cor:HPS}
	Let $A \to B \to C$ be a cofiber sequence of $E$-local spectra and suppose that
	\begin{enumerate}
		\item $\nu_E (A)$ has a (strong) finite-page vanishing line of slope $m$, intercept $c_1$ and torsion level $r_1$,
		\item $\nu_E (C)$ has a (strong) finite-page vanishing line of slope $m$, intercept $c_2$ and torsion level $r_2$.
	\end{enumerate}
	Then $\nu_E (B)$ has a (strong) finite-page vanishing line of slope $m$, intercept $\max(c_1+r_2, c_2)+1$ and torsion level $r_1 + r_2 + 1$.
\end{cor}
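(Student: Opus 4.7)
The plan is to compare the synthetic cofiber of $\nu_E(A) \to \nu_E(B)$ with $\nu_E(C)$ using Lemma \ref{lemm:comparison-tau-tor}. Concretely, let $C'$ denote the cofiber of $\nu_E(A) \to \nu_E(B)$ and let $D$ denote the cofiber of the induced comparison map $C' \to \nu_E(C)$. Lemma \ref{lemm:comparison-tau-tor} guarantees that $D$ is a $C\tau$-module, and so $\Sigma^{-1} D$ is as well; rotating the second cofiber sequence yields
$$\Sigma^{-1} D \to C' \to \nu_E(C).$$

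The next step is to observe that, as a $C\tau$-module, $\tau$ acts as zero on the bigraded homotopy of $\Sigma^{-1} D$, so $\Sigma^{-1} D$ admits a strong finite-page vanishing line of slope $m$, arbitrary intercept, and torsion level $1$ (compare Corollary \ref{cor:ctau-vl} in the case $M = 1$, whose base case is exactly this observation). Feeding this together with the hypothesis on $\nu_E(C)$ into Lemma \ref{lemm:cof-line} applied to the rotated cofiber sequence above, and choosing the intercept for $\Sigma^{-1} D$ small enough (e.g.\ any value $\le c_2 - r_2$) that the first term in the maximum does not contribute, one concludes that $C'$ admits a strong finite-page vanishing line of slope $m$, intercept $c_2$, and torsion level $r_2 + 1$.

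Finally, I would apply Lemma \ref{lemm:cof-line} a second time, now to the cofiber sequence $\nu_E(A) \to \nu_E(B) \to C'$, combining the hypothesis on $\nu_E(A)$ with the vanishing line just produced for $C'$. This immediately yields a strong finite-page vanishing line for $\nu_E(B)$ of slope $m$, intercept $\max(c_1 + r_2 + 1,\, c_2) \le \max(c_1 + r_2,\, c_2) + 1$, and torsion level $r_1 + r_2 + 1$, which is what was claimed. For the strong version of the statement, the entire argument is preserved after tensoring each of the two cofiber sequences above with $\nu_E(Y)$ for an arbitrary $Y \in \Sp_{\ge 0}$, since the tensor product preserves cofiber sequences and $D \otimes \nu_E(Y)$ remains a $C\tau$-module. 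The real content of the corollary is already carried by Lemma \ref{lemm:comparison-tau-tor}; the remainder is a direct exercise in the additivity of intercepts and torsion levels provided by Lemma \ref{lemm:cof-line}, so no serious obstacle is anticipated.
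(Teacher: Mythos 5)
Your proof is correct and uses the same two key ingredients as the paper's argument: Lemma~\ref{lemm:comparison-tau-tor} to absorb the failure of $\nu_E$ to preserve cofiber sequences into a $C\tau$-module, and two applications of Lemma~\ref{lemm:cof-line} to propagate the finite-page vanishing line. The only difference is the direction of rotation: you apply Lemma~\ref{lemm:comparison-tau-tor} to $A \to B \to C$ and handle the $C\tau$-module at the innermost step (obtaining the cofiber $C'$ of $\nu_E(A) \to \nu_E(B)$ with intercept $c_2$ and torsion level $r_2+1$), whereas the paper applies it to $\Sigma^{-1}C \to A \to B$ and absorbs the $C\tau$-module at the outermost step; your route actually yields the marginally tighter intercept $\max(c_1 + r_2 + 1,\, c_2)$, which is $\le \max(c_1+r_2, c_2)+1$ as claimed, and the strong version follows exactly as you say because tensoring with $\nu_E(Z)$ preserves both cofiber sequences and the $C\tau$-module structure on $D$.
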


\begin{proof}
	By \Cref{lemm:cof-line}, the cofiber $X$ of $\nu_E (\Sigma C) \to \nu_E (A)$ has a (strong) finite-page vanishing line of slope $m$, intercept $\max(c_1+r_2, c_2)$ and torsion level $r_1 + r_2$. By \Cref{lemm:comparison-tau-tor}, the cofiber $Y$ of $X \to \nu_E (B)$ is a $C\tau$-module. It follows that $Y$ has a strong finite-page vanishing line of slope $m$, arbitrary negative intercept and torsion level $1$.

	Applying \Cref{lemm:cof-line} to $X \to \nu_E (B) \to Y$, we obtain the desired result.
\end{proof}

\begin{proof}[Proof of \Cref{thm:HPS}]
	This follows from \Cref{cor:HPS}, \Cref{lemm:suspension-line} and the fact that $\nu_E$ sends retracts to retracts and suspensions to bigraded suspensions.
\end{proof}

Finally, we record a lemma which is useful in establishing (strong) vanishing lines. We say that a synthetic spectrum is $\tau$-complete if the natural map $X \to \varprojlim X \otimes C\tau^{n}$ is an equivalence.

\begin{lem} \label{lemm:vl-ctau}
	A $\tau$-complete synthetic spectrum $X$ has a vanishing line (resp. strong vanishing line) of slope $m \geq 0$ and intercept $c$ if and only if $X \otimes C\tau$ does.
\end{lem}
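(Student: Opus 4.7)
The forward direction is a quick consequence of applying $\pi_{k,k+s}$ to the cofiber sequence $\Sigma^{0,-1} X \xrightarrow{\tau} X \to X \otimes C\tau$: since $\Sigma^{0,-1}X$ has intercept $c - 1 \leq c$ (by \Cref{lemm:suspension-line}(3) and the hypothesis $m \geq 0$), for $s > mk+c$ the group $\pi_{k,k+s}(X \otimes C\tau)$ is sandwiched between two vanishing terms in the long exact sequence and hence vanishes. The identical reasoning applied after tensoring with $\nu_E(Y)$ handles the strong direction.

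For the backward direction, I would first show by induction on $n$ that every $X \otimes C\tau^n$ carries a vanishing line of slope $m$ and intercept $c$. Applying the octahedral axiom to the factorization $\tau^{n+1} = \tau \circ \tau^n$ produces a cofiber sequence
$$\Sigma^{0,-n}(X \otimes C\tau) \to X \otimes C\tau^{n+1} \to X \otimes C\tau^n,$$
whose fiber has intercept $c - n \leq c$ by \Cref{lemm:suspension-line}(3), so \Cref{lemm:cof-line} preserves the intercept $c$ at each stage.

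Using $\tau$-completeness $X \simeq \varprojlim_n X \otimes C\tau^n$, the Milnor short exact sequence
$$0 \to \varprojlim\nolimits^{1}_{n} \pi_{k+1,\,k+s}(X \otimes C\tau^n) \to \pi_{k,\,k+s}(X) \to \varprojlim_{n} \pi_{k,\,k+s}(X \otimes C\tau^n) \to 0$$
reduces the vanishing of $\pi_{k,k+s}(X)$ for $s > mk + c$ to the vanishing of both outer terms. The right outer term vanishes termwise by the previous step. The $\varprojlim^{1}$ term is the main obstacle: I claim the tower $\{\pi_{k+1,k+s}(X \otimes C\tau^n)\}_n$ is eventually constant. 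Indeed, the long exact sequence attached to the cofiber sequence above expresses each transition map as an isomorphism modulo contributions from $\pi_{*,*}(\Sigma^{0,-n}(X \otimes C\tau))$; since these groups inherit intercept $c - n$, they vanish in the fixed bidegrees $(k+1,k+s)$ and $(k,k+s)$ for all $n$ sufficiently large, forcing the transition maps to be isomorphisms. Eventually constant towers are Mittag--Leffler, so $\varprojlim^{1} = 0$.

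For the strong version of the backward direction, I would reduce to the case of $Y$ a finite projective spectrum using that bigraded spheres are compact in $\Syn_E$ (so $\pi_{*,*}$ commutes with filtered colimits) and that $\nu_E$ preserves filtered colimits. For such $Y$, $\nu_E(Y)$ is dualizable by \Cref{rmk:dualizable-generators}, so $-\otimes \nu_E(Y)$ commutes with limits; consequently $X \otimes \nu_E(Y)$ inherits $\tau$-completeness from $X$, and the already-proved weak backward direction applies to it, noting that $(X \otimes C\tau) \otimes \nu_E(Y) \simeq (X \otimes \nu_E(Y)) \otimes C\tau$ satisfies the hypothesis by the strong assumption on $X \otimes C\tau$. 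The Mittag--Leffler claim is the only step requiring genuine care.
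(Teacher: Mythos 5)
Your forward direction and the weak backward direction are correct and follow essentially the same route as the paper. You usefully spell out why the potential $\varprojlim^1$ vanishes (the tower is eventually Mittag--Leffler because the flanking groups $\pi_{*,*}(\Sigma^{0,-n}(X\otimes C\tau))$ in the long exact sequence have intercept $c-n$ and hence vanish in fixed bidegree for $n\gg 0$); the paper asserts this in a single sentence.

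There is, however, a genuine gap in your treatment of the strong backward direction. Compactness of the bigraded spheres together with $\nu_E$ preserving filtered colimits reduces the check to \emph{finite} connective $Y$, not to \emph{finite projective} $Y$: a general finite spectrum is not a filtered colimit of finite $E_*$-projective spectra (for $E=\BP$, the mod-$p$ Moore spectrum is finite but $\BP_*/p$ is not projective, so it is not finite projective, nor is it a filtered colimit of such). For finite non-projective $Y$, \Cref{rmk:dualizable-generators} does not tell you that $\nu_E(Y)$ is dualizable, so the step where you invoke dualizability to pull $-\otimes \nu_E(Y)$ through the limit defining $\tau$-completeness does not go through. This is precisely the point the paper addresses differently: it proves that $\{Y \in \Sp^{\mathrm{fin}} : X \otimes \nu_E(Y) \text{ is } \tau\text{-complete}\}$ is a thick subcategory containing $\Ss^0$, with closure under cofiber sequences established via \Cref{lemm:comparison-tau-tor}, which says the failure of $\nu_E$ to preserve a cofiber sequence is measured by a $C\tau$-module (and $C\tau$-modules are automatically $\tau$-complete). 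Your argument does work verbatim when $E = \HFp$, since then every finite spectrum is finite projective and $\nu_{\HFp}$ is strongly monoidal, but the lemma is applied in the paper (e.g.\ in \Cref{lemm:reduce-to-P}) with $E=\BP$, where the reduction to finite projective $Y$ fails.
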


\begin{proof}
  The ``only if'' direction is easy and follows from considering the exact sequence
  $$ \pi_{a,b}(X) \to \pi_{a,b}(X \otimes C\tau) \to \pi_{1,-1}(X). $$

  For the ``if'' direction we first note by induction that $X \otimes C\tau^n$ admits a (strong) vanishing line of slope $m$ and intercept $c$. For this it suffices to apply Lemmas \ref{lemm:suspension-line} and \ref{lemm:cof-line} to the cofiber sequences
  $$ \Sigma^{0,-n} C\tau \to C\tau^{n+1} \to C\tau^n. $$

  In the non-strong case, the result now follows from the $\tau$-completeness of $X$. The potential $\lim^1$ vanishes because of the assumed vanishing line.
  


    In the strong case we must also prove that $ X \otimes \nu_E(Y) $ is $\tau$-complete for finite $Y$. We will show that the collection of $Y$ for which $X \otimes \nu_E (Y)$ is $\tau$-complete is thick. Since it contains $\Ss^0$, it will then contain all finite spectra. It is clearly closed under suspensions and retracts. Suppose that $Z_1 \to Z_2 \to Z_3$ is a cofiber sequence with the property that $X \otimes \nu_E (Z_1)$ and $X \otimes \nu_E (Z_2)$ are $\tau$-complete. We will show that $X \otimes \nu_E (Z_3)$ is $\tau$-complete.

    Write $C$ for the cofiber of $\nu_E (Z_1) \to \nu_E (Z_2)$. Then $X \otimes C$ is $\tau$-complete, and there is a cofiber sequence $X \otimes C \to X \otimes \nu_E (Z_3) \to X \otimes D$ where $D$ is a $C\tau$-module by \Cref{lemm:comparison-tau-tor}. It follows that $X \otimes D$ is $\tau$-complete and hence that $X \otimes \nu_E (Z_3)$ is $\tau$-complete, as desired.
\end{proof}

\section{An Adams--Novikov vanishing line} \label{sec:ANvl}
Using ideas from \Cref{sec:AppendixVL}, we prove a strong finite-page vanishing line on $\nu_{\BP}(\Ss^0)$. This line is not visible on the $\mathrm{E}_2$-page of the spectral sequence. The vanishing line will be used in \Cref{sec:Appendix} to provide the explicit numerical control over the function $\Gamma (k)$ required in \Cref{sec:Finale}.

\begin{cnv}
    In this section, we will fix a prime $p$ and implicitly $p$-localize all spectra. Furthermore, all synthetic spectra will be taken with respect to $\BP$.
\end{cnv}

\begin{thm}
  \label{thm:AN-vl}
  For $p\geq 3$, the $\BP$-synthetic sphere $\nu_{\BP} (\Ss^{0})$ has a strong finite-page vanishing line of slope $m$, intercept $c$ and torsion level $r$ where
  $$ m = \frac{1}{p^3 - p -1} \text{, }\ \ c = 2p^2 - 4p + 9 - \frac{2p^2+2p-10}{p^3 - p -1}\ \ \text{   and   }\ \  r = 2p^2-4p+2. $$
\end{thm}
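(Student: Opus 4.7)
The plan is to invoke Proposition \ref{prop:vanishing-line-Adams} with $E = \BP$, which converts the task into establishing a vanishing line on the $\mathrm{E}_{r+2}$-page of the $\BP$-based Adams--Novikov spectral sequence for $\Ss^0$ (and, for the strong form, for $\Ss^0 \otimes Y$ uniformly in connective $Y$). The sphere is $\BP$-nilpotent complete $p$-locally by \Cref{rmk:E-complete}, so this is legitimate. The heart of the argument is then an $\mathrm{ANSS}$ computation, which we attack chromatically by leveraging the genericity results of \Cref{sec:AppendixVL}.

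First, I would set up a synthetic chromatic decomposition. Namely, by applying $\nu_{\BP}$ to the chromatic fiber sequences and using Corollary \ref{cor:HPS} (or its iterated form), a strong finite-page vanishing line for $\nu_{\BP}(\Ss^0)$ can be built out of vanishing lines on the synthetic lifts of each chromatic layer. For $n = 0$ (the rational layer) the contribution is concentrated in filtration $0$ so gives an arbitrary intercept with torsion level $0$. For $n = 1$, the layer computes $v_1$-periodic Ext, whose vanishing line of slope $\frac{1}{2p-2}$ and explicit intercept is classical (Miller--Ravenel--Wilson, reproduced in Ravenel's Green Book). This layer is stronger than what we need, so it contributes only bounded terms to $c$ and $r$.

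Second, and crucially, for the $n=2$ layer I would invoke the explicit vanishing line for $\Ext^{s,t}_{\BP_*\BP}(\BP_*, v_2^{-1}\BP_*/I_2)$ from Belmont \cite{Eva} and Ravenel \cite{GreenBook}. This is where the slope $\frac{1}{p^3-p-1}$ emerges: it is the slope of the line passing through the relevant primitive in $M_2^0$. The chromatic $\mathrm{d}_1$-differentials raise the filtration by one, and one must carefully control the contribution of these differentials. For $n \geq 3$, Morava's vanishing theorem provides even steeper vanishing lines, contributing only to the additive constants.

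Third, I would combine these layer-wise vanishing lines by repeated application of \Cref{lemm:cof-line} and \Cref{lemm:suspension-line}, carefully tracking how each cofiber sequence inflates the intercept (by at most the sum of the input intercepts plus one from the $C\tau$-contribution in \Cref{lemm:comparison-tau-tor}) and how the torsion levels add. The accumulated torsion level $r = 2(p-1)^2$ is pinned down by the total number of chromatic layers used plus the internal torsion levels carried by each, and the intercept $c = 2p^2 - 4p + 9 - \frac{2p^2 + 2p - 10}{p^3 - p - 1}$ is obtained by carefully stacking the explicit intercepts coming out of Belmont's and Ravenel's computations.

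The main obstacle is the bookkeeping for the $n=2$ layer: extracting both the slope $\frac{1}{p^3-p-1}$ and the specific intercept requires translating Belmont's tabulated $\mathrm{E}_2$-page into a uniform vanishing statement, and then tracking how the chromatic $\mathrm{d}_1$'s linking $M_1^0$ to $M_2^0$ move the vanishing line. Secondarily, establishing the \emph{strong} version requires verifying that each layerwise vanishing line persists after tensoring with $\nu_{\BP}(Y)$ for arbitrary connective $Y$; this follows because each chromatic piece is itself built from $\nu_{\BP}$ of a $\BP$-local spectrum so that \Cref{cor:HPS} applies with its strong hypotheses intact.
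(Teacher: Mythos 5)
Your chromatic-decomposition plan has a fundamental gap at the $n=2$ layer, and that gap is exactly where the hard part of the theorem lives. You assert that $\Ext^{s,t}_{\BP_*\BP}(\BP_*, v_2^{-1}\BP_*/I_2)$ has a vanishing line of slope $\frac{1}{p^3-p-1}$; it does not. That Ext group is $v_2$-periodic, so the region where it is nonzero is horizontally stable in the $(t-s,s)$-plane, and it contains the entire $\beta_1$-family: $\beta_1^i \in \Ext^{2i,\,i(2p^2-2p)}$ lies along a line of slope $\frac{1}{p^2-p-1}$, which is \emph{steeper} than the target slope $\frac{1}{p^3-p-1}$. These classes therefore sit strictly above the line you are trying to establish, and they are present on the $\mathrm{E}_2$-page; no assembly of layerwise $\mathrm{E}_2$-page vanishing lines can avoid them. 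A finite-page vanishing line of the claimed slope can only be obtained by exploiting the fact that $\beta_1$ is nilpotent in $\pi_*(\Ss)$, which is information living strictly beyond the $\mathrm{E}_2$-page. That is why the intended torsion level $r = 2p^2-4p+2$ is what it is: it records the length of the Adams--Novikov differential killing $\beta_1^{p^2-p+1}$, not a count of chromatic layers.

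The paper's argument is designed around precisely this obstruction. One first passes to the synthetic cofiber $C(\wt{\beta}_1)$: there the $\beta_1$-family is killed at the level of the $C\tau$-page, and reduction to $P_*$-comodules (Lemma~\ref{lemm:reduce-to-P}) followed by Palmieri's explicit vanishing-line theorem, with the needed hypotheses checked in Belmont \cite{Eva}, gives an honest $\mathrm{E}_2$-page vanishing line of slope $\frac{1}{p^3-p-1}$ on $C(\wt{\beta}_1)$, and in fact a strong one (Lemma~\ref{lemm:cbeta-vl}). One then uses Ravenel's Toda-style relation $\beta_1^{p^2-p+1}=0$ and the sparsity bound on the length of the witnessing Adams--Novikov differential (Lemma~\ref{lemm:beta1-values}) to extract, via \Cref{thm:synthetic-Adams}, the synthetic relation $\wt{\beta}_1^N\tau^M = 0$ with explicit $N, M$. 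Finally Lemma~\ref{lemm:syn-lines} --- which you do not invoke and which is the engine of the whole section --- converts a strong vanishing line on the cofiber of a $\tau$-power-nilpotent self-map into a strong finite-page vanishing line on the source. Your plan to iterate Corollary~\ref{cor:HPS} and Lemma~\ref{lemm:cof-line} across a chromatic filtration never supplies this last ingredient, because you have no cofiber sequence whose two outer terms both admit $\mathrm{E}_2$-page vanishing lines below the $\beta_1$-line.

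Two secondary issues. First, even granting a layerwise decomposition, you dismiss the $n\geq 3$ layers via ``Morava's vanishing theorem provides even steeper vanishing lines''; but Morava vanishing supplies \emph{horizontal} bounds on cohomological degree, not lines of small positive slope, and at the prime $3$ the relevant condition $p-1 \nmid n$ fails for $n=2$, which is precisely the layer you care about. Second, your proposed chromatic tower consists of non-connective, heavily localized $\BP$-modules; Corollary~\ref{cor:HPS} applies to cofiber sequences of $\BP$-local spectra, but the intercept/torsion bookkeeping you sketch would need to be carried out for infinitely many layers and then truncated, and you do not explain how the resulting constants could match the theorem's.
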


\begin{rmk}
  The key content of \Cref{thm:AN-vl} is not the slope of the vanishing line,
  but rather the explicit values for the intercepts and torsion levels.\footnote{Indeed, an argument of Hopkins and Smith shows that the Devinatz--Hopkins--Smith nilpotence theorem is equivalent to the following statement:
  given any positive slope $\epsilon > 0$,
  the Adams-Novikov spectral sequence for $\Ss^0$ admits a vanishing line of slope $\epsilon$
  at some finite page (see \cite[Theorem 3.30]{Akhil} for a published account of this argument).
  By \Cref{prop:vanishing-line-Adams} this is equivalent to saying that $\nu_{\BP} \mathbb{S}^0$ has a finite page vanishing line of slope $\epsilon$ for every positive $\epsilon$.}
\end{rmk}


\begin{ntn}
    We let $\wt{\beta}_1 \in \pi_{2p^2-2p-2,2p^2-2p} (\nu_{\BP} (\Ss^0))$ denote a synthetic lift of $\beta_1 \in \pi_{2p^2-2p-2} (\Ss^0)$ as in \Cref{thm:synthetic-Adams}(3).
\end{ntn}
The proof of \Cref{thm:AN-vl} consists of two main steps:
\begin{enumerate}
    \item We show that $C(\wt{\beta}_1)$ admits a strong vanishing line of slope $\frac{1}{p^3-p-1}$ and explicit intercept.
    \item Using the fact that $\beta_1$ is nilpotent topologically, we apply step (1) and the results of \Cref{sec:AppendixVL} to show that $\nu_{\BP} (\Ss^0)$ admits the desired strong finite-page vanishing line.
\end{enumerate}

Our proof of (1) will be based on the homological algebra of $P_*$-comodules, where $P_*$ is the polynomial part of the dual Steenrod algebra.

\begin{rec}
  Let $r$ denote the map of Hopf algebroids
  \[ (\BP_*, \BP_*\BP) \xrightarrow{r} (\F_p, P_*). \]
  Thinking in terms of the associated stacks we have a pullback/pushforward adjunction between the asscociated categories of sheaves
  \[ r^* : \mathrm{Stable}_{\BP_* \BP} \rightleftharpoons \mathrm{Stable}_{P_*} : r_*. \]
  Concretely, at the level of comodules $r^*$ sends a $\BP_*\BP$-comodule $M$ to the tensor product $\F_p \otimes_{\BP_*} M$ viewed as $P_*$-comodule.
\end{rec}

From \cite[Proposition 4.53]{Pstragowski} we know that the symmetric monoidal embedding \[\mathrm{Mod}_{C\tau} \to \mathrm{Stable}_{\BP_* \BP}\]
of \Cref{thm:tau-inv} is an equivalence.
As such, we will abuse notation by applying $r^*$ directly to $C\tau$-modules.

The reduction to $P_*$-comodules is carried out by the following lemma whose main input is an algebraic lemma of Krause \cite[Proposition 4.22]{AchimThesis}. For the statement of this lemma we use the notion of vanishing lines in Ext groups (which are a sort of bigraded homotopy groups) from Section 4 of Krause's work.

\begin{lem} \label{lemm:reduce-to-P}
  If $X$ is a compact and $\tau$-complete object of $\Syn_{\BP}$ such that
  $r^*(C\tau \otimes X)$ admits a vanishing line of slope $m$ and intercept $c$,
  then $X$ admits a strong vanishing line of the same slope and intercept.
\end{lem}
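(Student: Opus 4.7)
The proof proceeds via a three-step reduction that transports the assumed vanishing on $P_*$-comodules across to a vanishing on $\BP_*\BP$-comodules via the algebraic Novikov spectral sequence.

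First, the $\tau$-completeness of $X$ combined with \Cref{lemm:vl-ctau} implies that it suffices to exhibit a strong vanishing line of slope $m$ and intercept $c$ on $C\tau \otimes X$. Equivalently, for every connective spectrum $Y \in \Sp_{\geq 0}$, we must establish such a vanishing line on $(C\tau \otimes X) \otimes \nu_{\BP}(Y)$. Under the symmetric monoidal equivalence $\mathrm{Mod}_{C\tau} \simeq \mathrm{Stable}_{\BP_*\BP}$ of \Cref{thm:tau-inv}, set $M := (C\tau \otimes X) \otimes \nu_{\BP}(Y)$, viewed as a $\BP_*\BP$-comodule. By \Cref{lemm:ctauE2}, the desired vanishing becomes
\[\mathrm{Ext}^{s, k+s}_{\BP_*\BP}(\BP_*, M) = 0 \quad \text{whenever } s > mk + c.\]
The hypothesis, in these terms, provides a vanishing line of slope $m$ and intercept $c$ on $\mathrm{Ext}^{*,*}_{P_*}(\F_p, r^*(C\tau \otimes X))$. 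Compactness of $X$ and connectivity of $Y$ together ensure that $M$ is bounded below.

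Second, we apply the algebraic Novikov spectral sequence associated to the $I$-adic filtration on $M$, where $I = (p, v_1, v_2, \ldots) \subseteq \BP_*$ is the invariant ideal. Because the $v_n$ are comodule primitives this is a filtration of $\BP_*\BP$-comodules, whose associated graded is
\[\mathrm{gr}\, M \simeq r^*(M) \otimes_{\F_p} \F_p[v_0, v_1, v_2, \ldots],\]
with each $v_n$ placed in bidegree $(2p^n-2, 2p^n-2)$. The resulting spectral sequence
\[\mathrm{E}_1^{s, t, u} = \mathrm{Ext}^{s, t}_{P_*}(\F_p, \mathrm{gr}^u M) \Longrightarrow \mathrm{Ext}^{s+u, t}_{\BP_*\BP}(\BP_*, M)\]
converges strongly because only finitely many $v$-monomials fit inside any fixed bidegree once $M$ is bounded below. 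Since $r^*$ is symmetric monoidal with $r^*(C\tau) \simeq \F_p$, we may identify $r^*(M) \simeq r^*(C\tau \otimes X) \otimes_{\F_p} r^*(\nu_{\BP}(Y))$; connectivity of $Y$ forces $r^*(\nu_{\BP}(Y))$ to be bounded below, and tensoring with such an object can only tighten a vanishing line of positive slope. Hence the hypothesized vanishing line of slope $m$ and intercept $c$ on $r^*(C\tau \otimes X)$ propagates to $r^*(M)$.

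The main obstacle is then tracking how shifts by $v$-monomials interact with the passage from the $\mathrm{E}_1$-page to the abutment, which involves the coordinate shift sending $\mathrm{E}_1^{s, t, u}$ to abutment position $(s+u, t)$. Each shifted piece $\Sigma^{|v|, |v|} r^*(M)$ of $\mathrm{gr}\, M$ inherits a vanishing line of slope $m$ and intercept $c - m|v| \leq c$, which is no weaker than the target bound of slope $m$ and intercept $c$; however, the diagonal recoordinatisation from $\mathrm{E}_1$ to the abutment can in principle move $v_n$-tower contributions into the forbidden region above the abutment line. The key remaining verification is that any such potential contribution is killed by an algebraic Novikov differential before reaching $\mathrm{E}_\infty$: concretely, the $v_0$-Bockstein-type differentials, together with the boundedness-below finiteness, ensure that every class on $\mathrm{E}_\infty^{s,t,u}$ lying above the abutment vanishing line is forced to vanish. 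Assembling the resulting bounds on $\mathrm{E}_\infty^{*,*,u}$ across all $u$ then yields the desired vanishing of $\mathrm{Ext}^{*,*}_{\BP_*\BP}(\BP_*, M)$ above the line of slope $m$ and intercept $c$, completing the proof.
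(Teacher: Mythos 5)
Your proposal diverges substantially from the paper's proof, which reduces to the case $A = \Ss^0$ via a resolution of $\BP_*A$ by shifts of $\BP_*$ and then cites Krause's thesis for the comparison between vanishing lines over $\BP_*\BP$ and $P_*$ for compact comodules. You instead attempt a direct algebraic Novikov spectral sequence argument. That route is plausible in spirit (it is presumably how Krause's result is proved), but your execution contains a genuine conceptual error and a consequent gap.

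The error is in your indexing of the algebraic Novikov spectral sequence. You write
\[\mathrm{E}_1^{s, t, u} = \mathrm{Ext}^{s, t}_{P_*}(\F_p, \mathrm{gr}^u M) \Longrightarrow \mathrm{Ext}^{s+u, t}_{\BP_*\BP}(\BP_*, M),\]
asserting that the $I$-adic filtration degree $u$ adds to the cohomological degree in the abutment. This is the indexing of the \emph{Cartan--Eilenberg} spectral sequence (which converges to $\mathrm{Ext}_{\A_*}^{s+i,t+i}$), not of the algebraic Novikov spectral sequence. The algebraic Novikov spectral sequence arises from the $I$-adic filtration on the cobar complex $\Omega^*(\BP_*\BP, M)$, whose cohomological degree $s$ is a single fixed index; the filtration degree $u$ is auxiliary and is simply forgotten in the abutment, which sits at $\mathrm{Ext}^{s,t}_{\BP_*\BP}$ (as in the Miller square displayed in Section 11). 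With the correct indexing the ``main obstacle'' you identify does not exist: each $v$-monomial shifts the internal degree $t$ upward by a nonnegative amount $k$ while leaving $s$ fixed, so the shifted vanishing line has intercept $c - mk \leq c$ and no contributions appear above the target line on the $\mathrm{E}_1$-page at all. Your paragraph claiming that certain classes must be ``killed by an algebraic Novikov differential'' is therefore addressing a problem that isn't there, and is in any case unsubstantiated.

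There is a second, independent gap: your assertion that the spectral sequence ``converges strongly because only finitely many $v$-monomials fit inside any fixed bidegree once $M$ is bounded below'' is false. The element $v_0 = p$ has internal degree zero, so $v_0^a$ contributes to the same $(s,t)$ for every $a \geq 0$, and fixed bidegree does not bound the filtration degree $u$. This is precisely where the compactness hypothesis on $X$ is load-bearing, and your proof never uses it in a meaningful way — it appears only in a throwaway remark about boundedness-below. (The paper uses compactness both to restrict to finite $Y$ and, crucially, as the hypothesis of the cited result.) Finally, your claim that $\mathrm{gr}\,M \simeq r^*(M) \otimes_{\F_p} \F_p[v_0, v_1, \ldots]$ because ``the $v_n$ are comodule primitives'' needs care: the $v_n$ are not primitives in $\BP_*\BP$ (the ideal $I$ is invariant, so they are primitive only modulo higher filtration), and the stated splitting of the associated graded is not automatic for an arbitrary bounded-below $M$.
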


\begin{proof}
    By \Cref{lemm:vl-ctau} it suffices to show that $C\tau \otimes X \otimes \nu_{\BP} (A)$ admits a vanishing line of slope $m$ and intercept $c$ for all $A \in \Sp_{\geq 0}$. The vanishing statements we wish to prove are compatible with filtered colimits (as is $\nu_{\BP}$), therefore it suffices to restrict to the case where $A$ is finite.

  The symmetric monoidal equivalence of $\mathrm{Mod}_{C\tau}$ and $\mathrm{Stable}_{\BP_*\BP}$ provides us with a derived $\BP_*\BP$-comodule $\overline{X}$ associated to $C\tau \otimes X$ and equivalences 
    $$ \pi_{t-s,t}(C\tau \otimes X \otimes \nu_{\BP} (A)) \cong \Ext_{\BP_*\BP} ^{s,t} (\BP_*, \overline{X} \otimes_{\BP_*} \BP_*A). $$
    Let us now show that it suffices to address the case when $A = \Ss^0.$ The category of connective comodules over the Hopf algebroid $(\BP_*, \BP_*\BP)$ has enough projectives by the main result of \cite{SalchEnoughProj}, so we may fix a resolution $A_\bullet$ of $\BP_* A$ whose associated graded consists of positive shifts of $\BP_*$.
    We'll prove that the desired vanishing line already exists on the first page of the spectral sequence associated to the filtered object $\overline{X} \otimes_{\BP_*} A_\bullet$. By our choice of filtration this reduces to showing that $\Ext_{\BP_*\BP}^{s,t}(\BP_*, \overline{X})$ has the desired vanishing line, as we wanted.
  
    By our assumption that $X$ is compact, $\overline{X}$ is compact as an object of $\mathrm{Stable}_{\BP_* \BP}$. It therefore follows from the proof of \cite[Proposition 4.22]{AchimThesis} that $\overline{X}$ and $r^*\overline{X}$ admit the same vanishing lines, as desired.
\end{proof}

As a corollary to the above, we obtain the following well-known vanishing line. We will make use of it in our proof of \Cref{thm:AN-vl}.

\begin{prop}\label{prop:easy-ANSS-vl}
    Let $p$ be an odd prime. Then
  \label{prop:AN-vl}
  $\nu_{\BP}(\Ss^0)$ is $\tau$-complete and has a vanishing line of slope $m$ and intercept $c$ where
  $$ m = \frac{1}{p^2 - p -1}\ \ \text{   and   }\ \ c = 1 - \frac{2p-3}{p^2 - p -1}. $$
\end{prop}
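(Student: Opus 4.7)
The plan is to deduce this proposition from Lemma \ref{lemm:reduce-to-P} combined with a known vanishing line for the cohomology of the polynomial part of the dual Steenrod algebra, as computed in Ravenel's Green Book \cite{GreenBook} and Belmont \cite{Eva}.

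First, I would establish the two hypotheses of Lemma \ref{lemm:reduce-to-P} as applied to $X = \nu_{\BP}(\Ss^0) = \Ss^{0,0}$. Compactness is immediate, since $\Ss^{0,0}$ is the unit of $\Syn_{\BP}$ and in particular is among the compact generators listed in \Cref{rmk:dualizable-generators}. For $\tau$-completeness, I would invoke the fact that any bounded below, $p$-local spectrum is $\BP$-nilpotent complete (\Cref{dfn:E-complete}); the $\tau$-completeness of $\nu_{\BP}(\Ss^0)$ then follows from \cite[Lemma 4.25]{Pstragowski} (or the analog relating nilpotent completeness to $\tau$-completeness). This gives $\tau$-completeness, which is the first claim of the proposition and is also needed to apply Lemma \ref{lemm:reduce-to-P}.

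Next, I would identify $r^{*}(C\tau \otimes \nu_{\BP}(\Ss^{0}))$ explicitly. Under the symmetric monoidal equivalence $\mathrm{Mod}_{C\tau} \simeq \mathrm{Stable}_{\BP_*\BP}$ of \Cref{thm:tau-inv}(5), the unit $C\tau$ corresponds to the unit $\BP_*$; pulling back along the map of Hopf algebroids $r$ then yields the unit $\F_p \in \mathrm{Stable}_{P_*}$. By the $P_*$-analog of \Cref{lemm:ctauE2}, the bigraded homotopy groups of $\F_p \in \mathrm{Stable}_{P_*}$ recover the classical Ext groups,
\[ \pi_{t-s,t}(\F_p) \cong \mathrm{Ext}_{P_*}^{s,t}(\F_p,\F_p). \]
By Lemma \ref{lemm:reduce-to-P}, it therefore suffices to exhibit a vanishing line of the stated slope and intercept on these Ext groups.

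The key input will then be the classical vanishing line in the cohomology of $P_*$. For $p$ odd, one has $\mathrm{Ext}_{P_*}^{s,t}(\F_p,\F_p) = 0$ whenever $s > \frac{t-s}{p^{2}-p-1} + 1 - \frac{2p-3}{p^{2}-p-1}$; equivalently, the line of slope $\frac{1}{p^{2}-p-1}$ passing through the bidegree $(t-s,s) = (2p-3,1)$ of the class $h_{10}$ (the image of $\alpha_{1}$) separates the trivial region from the nontrivial region. This is a classical computation---see \cite[Chapter 3.4 and Theorem 4.4.2]{GreenBook}, and also the explicit $p=3$ computation of Belmont \cite{Eva}---and I would cite it directly. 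The main obstacle here is matching the intercepts and slopes to the bigrading conventions used in \Cref{thm:synthetic-Adams} and in Ravenel's book; this is essentially a bookkeeping check that the line passes through the bidegree of $\alpha_1$ with the right normalization, which is why the constant takes the awkward form $1 - \tfrac{2p-3}{p^2-p-1}$.
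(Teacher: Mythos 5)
Your proof is correct and follows essentially the same route as the paper: establish $\tau$-completeness from $\BP$-nilpotent completeness of $\Ss^0$ (the paper invokes its \Cref{lemm:Elocal-taucomplete} rather than a direct citation to Pstr\k{a}gowski, but these are the same content), reduce via \Cref{lemm:reduce-to-P} to the vanishing line in $\Ext_{P_*}^{s,t}(\F_p,\F_p)$, and then appeal to the classical vanishing line in $\Ext$ over the polynomial part of the dual Steenrod algebra. The only cosmetic difference is that the paper observes this last vanishing line directly on the $\mathrm{E}_1$-page of the May spectral sequence for $\Ext_{P_*}$ rather than citing Ravenel; your identification of $h_{1,0}$ in bidegree $(t-s,s)=(2p-3,1)$ as the class governing the intercept is exactly the right bookkeeping.
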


\begin{proof}
    Since $\Ss^0$ is $\BP$-nilpotent complete, \Cref{lemm:Elocal-taucomplete} implies that $\nu_{\BP}(\Ss^0)$ is $\tau$-complete. Hence, by \Cref{lemm:reduce-to-P}, it suffices to show that the desired vanishing line is present in
  $$ \Ext_{P_*}^{s,t}(\F_p, \F_p). $$
    This vanishing line is already visible in the $\mathrm{E}_1$-page of the May spectral sequence for Ext over $P_*$.
\end{proof}

We now establish the desired vanishing line for $C(\wt{\beta}_1)$.

\begin{lem} \todo{It would probably be good from an expositional point of view to pull out the statement of \cite[Theorem 2.3.1]{PalmieriBook} from the proof of Lemma. I decided not to because of Palmieri's unfortunate notion of $CL$-spectrum. The other possibility would be trying to write our own theorem with explicit intercept in Achim's framework. I don't have time for this at the moment, though.}
  \label{lemm:cbeta-vl}
    The synthetic spectrum $C(\wt{\beta}_1)$ has a strong vanishing line of slope $m$ and intercept $c$, where
  $$ m = \frac{1}{p^3 - p - 1} \text{  and  } c = 8 - \frac{4p^2 - 11}{p^3 - p -1}. $$
\end{lem}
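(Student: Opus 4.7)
The plan is to apply Lemma \ref{lemm:reduce-to-P} in order to reduce the claim to a vanishing statement for $\Ext_{P_*}$ after killing the image of $\beta_1$, and then to invoke a classical vanishing theorem of Palmieri on $P_*$-cohomology. First I would check the hypotheses of Lemma \ref{lemm:reduce-to-P}. The synthetic spectrum $C(\wt{\beta}_1)$ is compact: $\nu_{\BP}(\Ss^0)$ is compact by Remark \ref{rmk:dualizable-generators}, and $C(\wt{\beta}_1)$ is a cofiber between compact objects. Moreover $C(\wt{\beta}_1)$ is $\tau$-complete because it sits in a cofiber sequence
\[
\Sigma^{2p^2-2p-2,\,2p^2-2p}\nu_{\BP}(\Ss^0) \xrightarrow{\wt{\beta}_1} \nu_{\BP}(\Ss^0) \to C(\wt{\beta}_1),
\]
whose first two terms are $\tau$-complete by Proposition \ref{prop:easy-ANSS-vl}.

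By Lemma \ref{lemm:reduce-to-P} it suffices to produce the asserted vanishing line for $r^*(C\tau \otimes C(\wt{\beta}_1))$ inside $\mathrm{Stable}_{P_*}$. Under the symmetric monoidal equivalence $\mathrm{Mod}_{C\tau} \simeq \mathrm{Stable}_{\BP_*\BP}$ of Theorem \ref{thm:tau-inv}, the $C\tau$-module $C\tau \otimes \nu_{\BP}(\Ss^0)$ corresponds to $\BP_*$, and $\wt{\beta}_1$ restricts to the classical Adams--Novikov element $\beta_1$. Pulling back along $r$ identifies $r^*\beta_1$ with the May--spectral--sequence generator $b_{10}\in\Ext^{2,\,2p^2}_{P_*}(\F_p,\F_p)$, whose bidegree $(t-s,s)=(2p^2-2p-2,2)$ agrees with the one on the synthetic side. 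We are therefore reduced to proving a vanishing line of slope $\tfrac{1}{p^3-p-1}$ and explicit intercept on $\Ext_{P_*}\!\bigl(\F_p,\,C(b_{10})\bigr)$.

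This is exactly the sort of statement handled by Palmieri's vanishing theorem \cite[Theorem 2.3.1]{PalmieriBook}, which controls Ext over $P_*$ after quotienting by a finite collection of $h$- and $b$-generators. After quotienting by $b_{10}$, the next polynomial class of $\Ext_{P_*}$ that can force a vanishing line is $b_{20}$, of bidegree $(t-s,s)=(2p^3-2p-2,\,2)$, so that the line lies on the slope $\tfrac{s}{t-s}=\tfrac{1}{p^3-p-1}$. The slope falls out immediately from this identification.

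The main obstacle will be extracting the precise intercept $c = 8-\tfrac{4p^2-11}{p^3-p-1}$. This requires careful numerical bookkeeping through Palmieri's May--filtration bounds, tracking (i) the filtration and internal degree contributions of $b_{10}$ to the vanishing-region intercept, (ii) the unique $b$-power generators in lower May weight that survive after killing $b_{10}$, and (iii) the translation between Palmieri's $(s,t)$ conventions and the synthetic $(a,b)$ bigrading. Once these constants are assembled, the result follows directly.
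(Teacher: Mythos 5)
Your overall strategy matches the paper's: reduce via \Cref{lemm:reduce-to-P} to a vanishing statement for $\Ext_{P_*}$ against the cofiber of $\beta_1$ in $\mathrm{Stable}_{P_*}$, and then invoke Palmieri's vanishing theorem. The $\tau$-completeness and compactness checks are sound, and the slope heuristic via $b_{20}$ is the right intuition.

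However, the proposal has a genuine gap precisely at the point where the hard content lives. Palmieri's \cite[Theorem 2.3.1]{PalmieriBook} does not apply to an arbitrary cofiber; it applies to objects satisfying specific hypotheses (roughly, that the relevant quotient behaves like a CL-spectrum with Palmieri's parameter $d$ taking a particular value). Verifying that $C(\beta_1) \in \mathrm{Stable}_{P_*}$ satisfies those hypotheses with $d = p^3 - p$ is a nontrivial step, and it is exactly what the paper imports from Belmont's work \cite[Section 3]{Eva}. Your proposal never establishes this; it jumps from ``we are in the setting Palmieri treats'' to ``we are reduced to numerics,'' omitting the only structural verification in the whole argument. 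Relatedly, you must also note that Palmieri states the theorem for $\A_*$ rather than $P_*$, and observe that the proof transfers---this is another point the paper addresses that you gloss over.

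The second gap is that the intercept computation is not carried out. You correctly identify that the slope $\tfrac{1}{p^3-p-1}$ comes from $d = p^3-p$, but you stop at ``this requires careful numerical bookkeeping,'' listing what would need to be tracked. The paper actually performs this: it cites \cite[Remark 2.3.4]{PalmieriBook} for the shape of the bound
$$ \Ext_{P_*}^{s,t}(\F_p, C(\beta_1)) = 0 \quad\text{for } s > \tfrac{1}{d-1}(t-s+\alpha(d)) + 1, $$
explains that the trailing $+1$ comes from Palmieri's $i_1$ term, and evaluates $\alpha(p^3-p) = 7p^3 - 4p^2 - 7p + 4$ to recover the claimed intercept $c = 8 - \tfrac{4p^2-11}{p^3-p-1}$. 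Without this computation, the explicit intercept---which is the actual content the lemma needs to deliver for the downstream applications in Sections \ref{sec:ANvl} and \ref{sec:Appendix}---is not established. (As a minor slip, $b_{10}$ sits in $\Ext_{P_*}^{2,\,2p^2-2p}$, not $\Ext_{P_*}^{2,\,2p^2}$; your parenthetical $(t-s,s) = (2p^2-2p-2,2)$ is right, but the displayed bidegree is not.)
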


\begin{proof}
    We begin by noting that $C(\wt{\beta}_1)$ is $\tau$-complete by \Cref{prop:easy-ANSS-vl} and the fact that $\tau$-completeness is closed under finite colimits.
  Thus, by \Cref{lemm:reduce-to-P}, it suffices to show that 
    $ \Ext_{P_*}^{s,t}(\F_p, C(\beta_1)) $
    has the desired vanishing line, where $C(\beta_1)$ is the cofiber of the element $\beta_1 \in \Ext_{P_*} ^{2,2p^2-2p} (\F_p, \F_p)$ in $\mathrm{Stable}_{P_*}$.

    In \cite[Section 3]{Eva} Belmont shows that $C(\beta_1)$ satisfies the conditions of \cite[Theorem 2.3.1]{PalmieriBook} with Palmieri's paramter $d$ equal to $p^3 - p$. While this theorem is stated for $\A_*$ in \cite{PalmieriBook}, the proof carries over for $P_*$. Thus, we learn that
    $$ \Ext_{P_*}^{s,t}(\F_p, C(\beta_1)) = 0 \ \ \text{ for all } s > \frac{1}{d-1}(t-s + \alpha(d)) + 1, $$
  where $n$ is the minimal integer such that $2(p-1)p^n > d$ (in our case $n=2$) and
    $$ \alpha(d) := \left( \sum_{s+t \leq n \\ |\xi_t^{p^s}| \leq d} d + (p-1)|\xi_t^{p^s}| \right). $$
    The above calculation of the intercept is (the $P_*$ version of) \cite[Remark 2.3.4]{PalmieriBook}. We note that the $+1$ term at the end of the above inequality for $s$ comes from the $i_1$ term in \cite[Remark 2.3.4]{PalmieriBook}.
  We calculate that
  $\alpha(p^3 - p) = 7p^3 - 4p^2 - 7p + 4$ and thereby see that $C\wt{\beta}_1$ has the desired vanishing line.
\end{proof}


We now move on to step (2) of our proof of \Cref{thm:AN-vl}. Since $\beta_1$ is not nilpotent on the $\mathrm{E}_2$-page of the Adams-Novikov spectral sequence, the synthetic class $\wt{\beta}_1$ is not nilpotent. It follows that we cannot complete step (2) through a direct application of the genericity results of \Cref{sec:AppendixVL}. Instead, \Cref{thm:synthetic-Adams} will show that $\wt{\beta}_1 ^N \tau^M = 0$ for some large $N$ and $M$. In this situation, we have the following lemma:

\begin{lem}
  \label{lemm:syn-lines}
  Suppose that $X$ is a synthetic spectrum with a self map
  $b: \Sigma^{u,u+v}X \to X$ such that,
  \begin{itemize}
  \item $b^N\tau^M = 0$,
  \item $\Sigma^{-|b|} C(b)$ has a (strong) vanishing line of slope $m$ and intercept $c$, and
  \item $ \frac{v}{u} \geq m$.
  \end{itemize}
  Then $X$ has a (strong) finite-page vanishing line of slope $m$, intercept $c'$ and torsion level $M$, where
  $$ c' = c + \min(N(v-mu), M+m+1). $$
\end{lem}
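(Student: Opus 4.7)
The plan is to establish the two bounds separately and take the minimum. By \Cref{lemm:suspension-line}, the (strong) vanishing line on $\Sigma^{-|b|} C(b)$ transfers immediately to a (strong) vanishing line of slope $m$ and intercept $c + v - mu$ on $C(b)$ itself.

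For the bound $c + N(v - mu)$, the idea is a direct iterative lifting. Given $\alpha \in \pi_{k, k+s}(X)$ with $s > mk + c + N(v - mu)$, the cofiber sequence $\Sigma^{u, u+v} X \xrightarrow{b} X \to C(b)$ together with the vanishing line on $C(b)$ allows one to factor $\alpha = b \cdot \alpha_1$ with $\alpha_1 \in \pi_{k - u,\, (k-u) + (s - v)}(X)$. Because $v \geq mu$, each successive lift $\alpha_i$ still sits above the $C(b)$ vanishing line, so the argument iterates $N$ times to give $\alpha = b^N \alpha_N$. The relation $b^N \tau^M = 0$ then forces $\tau^M \alpha = 0$. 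In the strong case this argument applies verbatim after tensoring throughout with $\nu_E(Y)$ for arbitrary connective $Y$.

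For the bound $c + M + m + 1$, the plan is to exploit the nullhomotopy $b^N \tau^M \simeq *$ directly. First I would prove by induction on $i$, using the extensions $C(b^{i-1}) \to C(b^i) \to \Sigma^{(i-1)(u, u+v)} C(b)$ and \Cref{lemm:cof-line}, that $C(b^N)$ inherits a (strong) vanishing line of slope $m$ and intercept $c + N(v - mu)$. The chosen nullhomotopy of $b^N \tau^M$ then supplies a lift of $\tau^M \colon \Sigma^{Nu, N(u+v) - M} X \to \Sigma^{Nu, N(u+v)} X$ through the fiber of $b^N$, which after desuspending becomes a map $\tilde{\tau} \colon X \to \Sigma^{a,b}\, C(b^N)$ for an explicit bidegree $(a,b)$ whose boundary recovers $\tau^M \colon X \to \Sigma^{0,M} X$. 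The key computation is that the intercept shift produced by $\Sigma^{a,b}$ exactly cancels the $N(v-mu)$ summand in the intercept of $C(b^N)$, leaving an intercept of the form $c + M + m + 1$ that is independent of $N$. Any class $\alpha$ above this line then has $\tilde{\tau}(\alpha) = 0$ and hence $\tau^M \alpha = 0$ by the defining property of the lift.

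The main obstacle will be the careful bookkeeping of the bigraded suspension shifts in the second argument, and verifying that each step remains compatible with tensoring by $\nu_E(Y)$ in the strong case. One must also track torsion levels carefully in the iterated application of \Cref{lemm:cof-line}: because each $C(b^i)$ should inherit a genuine vanishing line (torsion level $0$), the intercepts combine via a simple maximum and the final output on $X$ has torsion level exactly $M$ rather than something larger.
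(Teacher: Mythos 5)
Your proof is correct but takes a genuinely different route from the paper.  The paper's argument proceeds by: (i) proving by induction (as you also do, after a shift) that $\Sigma^{-N|b|}C(b^N)$ has a (strong) vanishing line of slope $m$ and intercept $c$; (ii) applying \Cref{lemm:cof-line} and \Cref{cor:ctau-vl} to the cofiber sequence $C\tau^M \otimes X \to \Sigma^{-N|b|}C(b^N\tau^M) \to \Sigma^{-N|b|}C(b^N)$ to conclude that $\Sigma^{-N|b|}C(b^N\tau^M)$ has a (strong) finite-page vanishing line of slope $m$, intercept $c$, torsion level $M$; and (iii) observing that $b^N\tau^M = 0$ forces a splitting $C(b^N\tau^M) \simeq X \oplus \Sigma^{(1,-M)+N|b|}X$, so that $X$ is a retract of $\Sigma^{-N|b|}C(b^N\tau^M)$ in two distinct ways; retracting via $\Sigma^{N|b|}$ gives intercept $c + N(v-mu)$ and retracting via $\Sigma^{-1,M}$ gives intercept $c + M + m + 1$, hence the minimum.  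Your approach establishes the two bounds by completely separate direct arguments.  Your first argument — iterated lifting of a class above the $C(b)$ vanishing line, yielding $\alpha = b^N\alpha_N$ and hence $\tau^M\alpha = (b^N\tau^M)\alpha_N = 0$ — is an elementary long-exact-sequence chase that avoids \Cref{lemm:cof-line} entirely.  Your second argument — factoring $\tau^M$ through $\Sigma^{-1-Nu,\,M - N(u+v)}C(b^N)$ via the nullhomotopy of $b^N\tau^M$ and checking that the bigraded suspension shift cancels the $N(v-mu)$ term, leaving intercept $c + M + m + 1$ — replaces the paper's appeal to \Cref{cor:ctau-vl} and the splitting by an explicit factorization of $\tau^M$.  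I verified the intercept bookkeeping in both your arguments and it works out, including in the strong case (the factorization of $\tau^M$ is preserved under $- \otimes \nu_E(Y)$).  One small slip: your cofiber sequence $C(b^{i-1}) \to C(b^i) \to \Sigma^{(i-1)|b|}C(b)$ has the fiber and cofiber terms swapped — the octahedron for $b^i = b^{i-1}\circ b$ gives $\Sigma^{(i-1)|b|}C(b) \to C(b^i) \to C(b^{i-1})$.  Since \Cref{lemm:cof-line} is symmetric in $A$ and $C$ when both torsion levels are $0$, this does not affect your conclusion, but it should be fixed in a write-up.  Overall the paper's argument is slicker (the splitting does all the work at once), while yours is more hands-on and arguably more transparent about where each summand in $\min(N(v-mu),\, M+m+1)$ comes from.
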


\begin{proof}

  Consider the family of cofiber sequences
  $$ \Sigma^{-|b|} C(b) \to \Sigma^{-n|b|} C(b^n) \to \Sigma^{-n|b|} C(b^{n-1}) $$
  as $n$ varies. We will prove by induction that $\Sigma^{-n|b|} C(b^n)$ has a (strong) vanishing line of slope $m$ and intercept $c$. The base case is one of our hypotheses. In order to handle the induction step, we apply Lemma \ref{lemm:cof-line} to the cofiber sequence above. By assumption (and \Cref{lemm:suspension-line}), $\Sigma^{-n|b|} C(b^{n-1})$ has a (strong) vanishing line of slope $m$ and intercept $c + mu - v$. Thus, $\Sigma^{-n|b|} C(b^n)$ has a (strong) vanishing line of slope $m$ and intercept $\max(c, c + mu-v) = c$.
  
  Next, we apply Lemmas \ref{lemm:cof-line} and \ref{cor:ctau-vl} to the cofiber sequence 
  $$ C\tau^M \xrightarrow{f} \Sigma^{-N|b|} C(b^N\tau^M) \xrightarrow{g} \Sigma^{-N|b|} C(b^N) $$
  in order to conclude that $\Sigma^{-N|b|} C(b^N\tau^M)$ has a (strong) finite-page vanishing line of slope $m$, intercept $c$ and torsion level $M$. Finally, using the splitting
  $$ C(b^N\tau^M) \simeq X \oplus \Sigma^{(1,-M) + N|b|}X, $$
  we obtain the desired (strong) finite-page vanishing line. \qedhere
  

\end{proof}

To apply this lemma to prove \Cref{thm:AN-vl}, we need to determine the constants that we have called $N$ and $M$ for $X = \nu_{\BP} (\Ss^0)$ and $b = \wt{\beta}_1$. By \Cref{thm:synthetic-Adams}, this comes down to the following lemma. 

\begin{lem}[Ravenel]\label{lemm:beta1-values}
    There are differentials
    \begin{align*}
        d_9 (\alpha_1 \beta_4) &= \beta_1 ^6 \text{ at } p=3 \text{ and} \\
        d_{33} (\gamma_3) &= \beta_1 ^{18} \text{ at } p=5
    \end{align*}
    in the Adams-Novikov spectral sequence. Moreover, we have 
    $\beta_1 ^{p^2-p+1} = 0$
    at any odd prime $p$. The Adams-Novikov differential with target $\beta_1 ^{p^2-p+1}$ has length at most
    $$2p^2 - 4p + 3. $$
\end{lem}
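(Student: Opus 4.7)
The plan is to derive the lemma from Ravenel's classical Adams--Novikov spectral sequence (ANSS) computations at odd primes, as organized in \cite{GreenBook}. All three parts are really facts about the ANSS: two explicit differentials and a general nilpotence/filtration statement that can be extracted from Ravenel's analysis of the $\beta$-family.

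For the explicit differentials at $p=3$ and $p=5$, I would cite the relevant theorems in Ravenel's Green Book. The differential $d_9(\alpha_1 \beta_4) = \beta_1^6$ at $p=3$ goes back to Toda and appears in \cite{GreenBook} as part of Ravenel's explicit $3$-primary ANSS. The $d_{33}(\gamma_3) = \beta_1^{18}$ at $p=5$ is due to Ravenel and uses chromatic input from the $v_3$-periodic region.

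For the general nilpotence $\beta_1^{p^2-p+1} = 0$: $\beta_1$ is nilpotent in $\pi_* \mathbb{S}_{(p)}$ by Nishida (or by the Nilpotence Theorem of \cite{DHS}), so some power must be a boundary. The explicit power $p^2-p+1$ is read off from Ravenel's identification of the first power of $\beta_1$ killed in the ANSS; multiplying the $p=3$ and $p=5$ differentials above by the appropriate power of $\beta_1$ (respectively $\beta_1$ and $\beta_1^3$) realizes this concretely in those cases, and for general $p$ one combines Ravenel's $\gamma$-family differentials with multiplication by a power of $\beta_1$.

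For the length bound $2p^2-4p+3$: the target $\beta_1^{p^2-p+1}$ has ANSS filtration $2(p^2-p+1) = 2p^2-2p+2$, so a differential of length $r$ landing on it has source of filtration $2p^2-2p+2-r$. Setting $r = 2p^2-4p+3$ gives source filtration $2p-1$, which matches the two explicit cases: at $p=3$, the class $\alpha_1\beta_4\cdot\beta_1$ has filtration $3+2 = 5 = 2p-1$ and supports $d_9$ onto $\beta_1^7$, and at $p=5$ the class $\gamma_3 \cdot \beta_1^3$ has filtration $3+6 = 9 = 2p-1$ and supports $d_{33}$ onto $\beta_1^{21}$. The main obstacle is carrying out the analogous construction uniformly for $p\ge 7$, i.e.\ producing a source class of filtration $2p-1$ (necessarily of the form ``Greek letter element of filtration $3$'' $\times\ \beta_1^{p-2}$) that actually supports the differential hitting $\beta_1^{p^2-p+1}$; this can be extracted from Ravenel's general machinery for ANSS differentials on $\gamma$-classes, but requires careful bookkeeping of filtrations.
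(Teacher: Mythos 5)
Your handling of the two explicit differentials and the attribution of $\beta_1^{p^2-p+1}=0$ to Ravenel's Green Book is essentially what the paper does (the paper specifically points to Toda's classical argument for the nilpotence relation, recounted by Ravenel after Theorem 7.6.1), so those parts are fine modulo the imprecise attribution.

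The real gap is in the length bound, where you have misread what needs to be proved. The lemma only asserts an \emph{upper bound} on the length of the differential killing $\beta_1^{p^2-p+1}$; you are instead trying to identify the actual source class and show it supports a differential of that exact length, which is both much harder and unnecessary --- and you explicitly flag this as ``the main obstacle'' and admit you have not completed it. The correct argument is a short filtration count: since $\beta_1^{p^2-p+1}=0$ in $\pi_*\mathbb{S}_{(p)}$, it is the target of some $d_r$. The source has filtration $(2p^2-2p+2)-r$, and since there are no nontrivial differentials originating on the $1$-line of the odd-primary ANSS, the source filtration is at least $2$, giving $r\le 2p^2-2p$. By sparsity of the odd-primary ANSS, $d_r$ can be nonzero only when $r\equiv 1 \pmod{2p-2}$. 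The largest integer $\le 2p^2-2p$ congruent to $1$ mod $2p-2$ is $(2p-2)(p-1)+1 = 2p^2-4p+3$, which is exactly the claimed bound. No identification of the source class, and no ``general machinery for $\gamma$-classes,'' is needed.
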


\begin{proof}
    The $3$-primary differential is part of \cite[Theorem 7.5.3]{GreenBook} and the $5$-primary differential is \cite[Theorem 7.6.1]{GreenBook}. 
    The general bound on the order of nilpotence of $\beta_1$ is proven shortly after the statement of Theorem 7.6.1 in \cite{GreenBook}, where Ravenel recounts a classical argument of Toda for this relation. Finally, the bound on the length of the differential follows from sparsity and the fact that there are no differentials off the $1$-line of the Adams-Novikov spectral sequence at odd primes.
\end{proof}


\begin{proof}[Proof of Theorem \ref{thm:AN-vl}]
    In order to prove the theorem we apply \Cref{lemm:syn-lines} to $X = \nu_{\BP} (\Ss^{0})$ and $b = \wt{\beta}_1$. The remainder of the proof is just a matter of computing $m,c,u,v,N$ and $M$.

  The element $\wt{\beta}_1$ has bidegree $(2p^2-2p-2,2p^2-2p)$ so $u = 2p^2 -2p - 2$ and $v = 2$. By Lemmas \ref{lemm:cbeta-vl} and \ref{lemm:suspension-line} we know $\Sigma^{-|\wt{\beta}_1|}C\wt{\beta}_1$ has a strong vanishing line of slope $m = (p^3 - p - 1)^{-1}$ and intercept
  \begin{align*}
    c &= \left(8 - \frac{4p^2 - 11}{p^3 - p - 1} \right) - \left( 2 - \frac{2p^2 - 2p - 2}{p^3 - p - 1} \right) =  6 - \frac{2p^2+2p-9}{p^3 - p - 1}
  \end{align*}

    Suppose that there exists an $a$ in the $\mathrm{E}_{r+1}$-term of the Adams-Novikov spectral sequence such that $d_{r+1}(a) = \beta_1^N$. Then, by \Cref{thm:synthetic-Adams} there exists a $\wt{\beta_1^N}$ such that $\wt{\beta_1^N}\tau^{r} = 0$. A priori it may not be true that $\wt{\beta}_1^N = \wt{\beta_1^N}$, though we do know their difference maps to zero in $C\tau$ and is therefore divisible by $\tau$. In this case we can then use \Cref{prop:easy-ANSS-vl} to conclude that this ``difference divided by $\tau$'' is zero---seeing as it lives in a bigrading which is zero. To summarize, we learn that if $\beta_1^N$ is hit by a $d_{r+1}$-differential in the Adams-Novikov spectral sequence, then $\wt{\beta}_1^N \tau^r = 0$.
  

    We may therefore cite \Cref{lemm:beta1-values} to obtain the values of $N$ and $M$. We summarize the values we have computed in the following table:
  \renewcommand{\arraystretch}{1.6}
  \begin{center}
    \begin{tabular}{|c||c|c|c|c|c|}\hline
      prime  & $m$ & $u$ & $v$ & $N$ & $M$ \\\hline\hline
      $3$ & $\frac{1}{23}$ & $10$ & $2$ & $6$ & $8$ \\\hline
      $5$   & $\frac{1}{119}$ & $38$ & $2$ & $18$ & $32$ \\\hline
      $\geq 7$ & $\frac{1}{p^3 - p^2 - 1}$ & $2p^2-2p-2$ & $2$ & $p^2-p+1$ & $2p^2-4p+2$ \\\hline
    \end{tabular}
  \end{center}
  \renewcommand{\arraystretch}{1.0}
  At the prime $3$ the intercept is
  $$ 6 - \frac{15}{23} + \min \left(6\left( 2 - \frac{10}{23} \right), 9 + \frac{1}{23} \right) = 14 + \frac{9}{23}. $$
  At the prime $5$ the intercept is
  $$ 6 - \frac{51}{119} + \min \left(18 \left( 2 - \frac{38}{119} \right), 33 + \frac{1}{119} \right) < 38 + \frac{69}{119}. $$
  At primes $\geq 7$ the intercept is
  \begin{align*}
    6 - &\frac{2p^2+2p-9}{p^3 - p - 1} + \min \left((p^2 - p -1) \left( 2 - \frac{2p^2 - 2p - 2}{p^3 - p -1} \right), 2p^2 - 4p + 3 + \frac{1}{p^3 - p - 1} \right) \\
    &= 2p^2 - 4p + 9 - \frac{2p^2+2p-10}{p^3 - p^2 -1}.
  \end{align*}
  Note that the bound we write down for all primes is in fact equal to
  \[2p^2 - 4p + 9 + \frac{2p^2+2p-10}{p^3 - p^2 -1}. \qedhere\]
\end{proof}



\section{Banded vanishing lines} \label{sec:BandVan}
{\bf An overview of Sections \ref{sec:BandVan}-\ref{sec:mod8}}
This and the following two sections are devoted to the proof of \Cref{thm:mod8}, which will be proven as \Cref{thm:mod8-main-thm}. To prove \Cref{thm:mod8-main-thm}, we will show that there exists a line of slope $\frac{1}{5}$ on some finite page of the modified $\HFt$-Adams spectral sequence of the mod $8$ Moore spectrum $C(8)$ above which the only classes are those detecting the $K(1)$-local homotopy of $C(8)$.\footnote{For the notion of a modified Adams spectral sequence, see \cite[Section 3]{BHHM}.}

In this section, we will axiomatize this property into the defintion of a $v_1$-banded vanishing line on a synthetic spectrum. We will then show that the property of having a $v_1$-banded vanishing line is generic, i.e. is closed under retractions, bigraded suspensions and cofiber sequences of synthetic spectra. In \Cref{sec:Y}, we will show that $\nu_{\HFt} (Y) = \nu_{\HFt} (C(2) \otimes C(\eta))$ admits a $v_1$-banded vanishing line.
In \Cref{sec:mod8}, we will establish a $v_1$-banded vanishing line on $C(\wt{8})$ and use this line to prove \Cref{thm:mod8-main-thm}.\footnote{The synthetic spectrum $C(\wt{8})$ is defined in \Cref{sec:mod8}. It encodes the modified $\HFt$-Adams spectral sequence of $C(8)$.}  The proof of the $v_1$-banded vanishing line of $C(\wt{8})$ is a genericity argument, building from the case of $\nu_{\HFt} (Y)$. As in Sections \ref{sec:AppendixVL} and \ref{sec:ANvl}, we will sedulously keep track of intercepts and torsion levels throughout.
%
%

\begin{dfn}
    Given a $\Z[\tau]$-module $M$, we let $M_{\mathrm{tor}} \subset M$ denote the subgroup of $\tau$-power torsion elements and $M_{\mathrm{tf}}$ the torsion free quotient $M / M_{\mathrm{tor}}$. When there are other subscripts present, we will sometimes find it convenient to write $M^{\mathrm{tor}}$ and $M^{\mathrm{tf}}$ in place of $M_{\mathrm{tor}}$ and $M_{\mathrm{tf}}$, respectively.
\end{dfn}

\begin{dfn}\label{dfn:synth-filt}
    Given a synthetic spectrum $X$, we let $F^s \pi_{k} (\tau^{-1} X) \subset \pi_{k} (\tau^{-1} X)$ denote the image of $\pi_{k,k+s} X \to \pi_{k} (\tau^{-1} X)$. This defines a descending filtration on $\pi_k (\tau^{-1} X)$, which is natural in $X$.
\end{dfn}

\begin{rmk}
  The natural map $\pi_{k,k+s} (X)_{\mathrm{tf}} \to F^s \pi_k (\tau^{-1} X)$ is an isomorphism.
\end{rmk}

\begin{rmk}
    Let $Y$ be a $E$-nilpotent complete spectrum whose $E$-Adams spectral sequence converges strongly. By \Cref{cor:synth-filt}, the filtration $F^s \pi_k (\tau^{-1} \nu_E (Y))$ coincides with the $E$-Adams filtration on $\pi_k (Y) \cong \pi_k (\tau^{-1} \nu_E (Y))$.
\end{rmk}

\begin{cnv}
    In the remainder of this section, we will fix a prime $p$ and work exclusively with the category $\mathrm{Syn}_{\HFp}$ of synthetic spectra with respect to $\HFp$.
\end{cnv}

\begin{dfn}\label{dfn:banded-vanishing-line}
  We say that a synthetic spectrum $X$ has a $v_1$-banded vanishing line with
  \begin{itemize}
  \item band intercepts $b \leq d$
  \item range of validity $v$
  \item line of slope $m < \frac{1}{2p-2}$ and intercept $c$
  \item torsion bound $r$
  \end{itemize}
  if the following conditions hold:
  \begin{enumerate}
  \item every class in $\pi_{k,k+s} (X)_{\mathrm{tor}}$ is $\tau^{r}$-torsion for $s \geq mk+c$ and $k \geq v$,
  \item the natural map
    $$ F^{\frac{1}{2p-2} k + b} \pi_k (\tau^{-1} X) \to F^{mk + c} \pi_k (\tau^{-1} X) $$
    is an isomorphism for $k \geq v$,
  \item the composite
    $$ F^{\frac{1}{2p-2} k + b} \pi_{k} (\tau^{-1} X) \to \pi_k(\tau^{-1}X) \to \pi_k (L_{K(1)} \tau^{-1} X) $$
    is an equivalence for $k \geq v$,
  \item $\pi_{k,k+s} (X) = 0$ for $s > \frac{1}{2p-2} k + d$.
  \end{enumerate}
  
  More concisely, we will say that that $X$ has a $v_1$-banded vanishing line with parameters $(b \leq d, v, m, c, r)$.
\end{dfn}


\begin{rmk}
  Given an $\HFp$-nilpotent complete spectrum $X$, we will say that the $\HFp$-Adams spectral sequence of $X$ admits a $v_1$-banded vanishing line with parameters $(b \leq d, v, m, c, r)$ if $\nu_{\HFp} (X)$ admits one. This is justified by the following proposition:
\end{rmk}

\begin{prop} \label{prop:banded-adams}
    Given an $\HFp$-nilpotent complete spectrum $X$, $\nu_{\HFp} (X)$ admits a $v_1$-banded vanishing line with parameters $(b \leq d, v, m, c, r)$ if and only if the $\HFp$-based Adams spectral sequence for $X$ satisfies the following conditions:
    \begin{enumerate}
        \item[($1'$)] $\mathrm{E}_{r+2} ^{s,k+s} = \mathrm{E}_{\infty} ^{s,k+s}$ for $s \geq mk+c$ and $k \geq v$.
        \item[($2'$)] $\mathrm{E}_{r+2} ^{s,k+s} = 0$ for $mk+c \leq s < \frac{1}{2p-2}k + b$ and $k \geq v$.
        \item[($3'$)]$F^{\frac{1}{2p-2} k + b} \pi_k (X) \to \pi_k (L_{K(1)} X)$ is an isomorphism for $k \geq v$, where $F$ is the $\HFp$-Adams filtration.
        \item[($4'$)] $\mathrm{E}_2 ^{s,k+s} = 0$ for all $s > \frac{1}{2p-2}k+d$.
    \end{enumerate}
\end{prop}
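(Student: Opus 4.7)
The plan is to translate between synthetic and classical data using the dictionary of Theorem \ref{thm:synthetic-Adams}, together with Corollaries \ref{cor:synth-filt} and \ref{cor:tau-torsion-bound}. The first thing to verify is that in either the hypothesis $(1)$--$(4)$ or $(1')$--$(4')$, we have enough convergence to apply Theorem \ref{thm:synthetic-Adams}: condition $(4')$ gives a vanishing line on the $\mathrm{E}_2$-page, so by Proposition \ref{prop:vanishing-line-Adams} this corresponds to a finite-page vanishing line on $\nu_{\HFp}(X)$, which via Lemma \ref{lemm:vanishing-line-converge} ensures the $\HFp$-based Adams spectral sequence for $X$ converges strongly. (Alternatively, condition $(4)$ directly says $\nu_{\HFp}(X)$ has a vanishing line, again giving strong convergence.) Since $X$ is already assumed $\HFp$-nilpotent complete, the full power of Theorem \ref{thm:synthetic-Adams} is available.

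I would begin with the easiest equivalences. The equivalence $(4) \Leftrightarrow (4')$ is an immediate application of Proposition \ref{prop:vanishing-line-Adams} at torsion level $0$. For $(3) \Leftrightarrow (3')$, note that $\tau^{-1}\nu_{\HFp}(X) \simeq X$ by Theorem \ref{thm:tau-inv}(3), and Corollary \ref{cor:synth-filt} identifies the synthetic filtration $F^{\bullet}\pi_k(\tau^{-1}\nu_{\HFp}(X))$ of Definition \ref{dfn:synth-filt} with the $\HFp$-Adams filtration on $\pi_k X$; under this identification the maps in $(3)$ and $(3')$ are literally the same.

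Next I would handle $(1) \Leftrightarrow (1')$. For $(1') \Rightarrow (1)$, I would induct downward on $s$ using Corollary \ref{cor:tau-torsion-bound}: the vanishing line $(4')$ provides a base case where the torsion order is $0$ (above the slope-$\tfrac{1}{2p-2}$ line), and at each step $(1')$ bounds the length of any differential entering $\mathrm{E}_{\ast}^{s,k+s}$ by $r+1$, so the inductive bound gives torsion order $\leq r$. For $(1) \Rightarrow (1')$, suppose for contradiction that some $b \in \mathrm{E}_{r+1+j}^{s,k+s}$ with $j \geq 1$ is hit by a $d_{r+1+j}$ in the allowed range. By Theorem \ref{thm:synthetic-Adams}(3a) I can choose a lift $\tilde{b} \in \pi_{k,k+s}(\nu_{\HFp}X)$ with $\tau^{r+j}\tilde{b}=0$, and Theorem \ref{thm:synthetic-Adams}(2a) applied to its survival through $\mathrm{E}_{r+j+1}$ forces $\tau^{r+j-1}\tilde{b} \neq 0$; this exhibits a $\tau$-torsion class of order $r+j > r$, contradicting $(1)$.

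Finally, for $(2) \Leftrightarrow (2')$, I would use Corollary \ref{cor:synth-filt} to identify the synthetic filtration with the Adams filtration, so that the map in $(2)$ is an isomorphism precisely when the associated graded pieces $\mathrm{E}_{\infty}^{s,k+s}$ vanish in the range $mk+c \leq s < \tfrac{1}{2p-2}k+b$. By $(1) \Leftrightarrow (1')$ we already know that in this range $\mathrm{E}_{\infty}^{s,k+s} = \mathrm{E}_{r+2}^{s,k+s}$, so the isomorphism in $(2)$ is equivalent to the vanishing in $(2')$. I do not expect any step to pose a serious obstacle; the main care required is in the bookkeeping of ranges of $(k,s)$ and, in the $(1) \Leftrightarrow (1')$ step, in correctly invoking parts $(2a)$ and $(3a)$ of Theorem \ref{thm:synthetic-Adams} simultaneously to produce a torsion class of the exact order needed for the contradiction.
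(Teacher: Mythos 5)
Your proof follows the paper's argument closely: you invoke the same strong-convergence lemmas, the same \Cref{prop:vanishing-line-Adams} for $(4) \Leftrightarrow (4')$, the same downward induction via \Cref{cor:tau-torsion-bound} for the torsion bound, and the same \Cref{cor:synth-filt} identification of filtrations for $(2),(3) \Leftrightarrow (2'),(3')$. The paper itself states the $(1) \Leftrightarrow (1')$ step in a single sentence (``Using (4) to ground the induction started by \Cref{cor:tau-torsion-bound} we learn (1) and ($1'$) are equivalent''), whereas you unfold it into a clean two-direction argument, with the forward direction re-deriving the torsion-order estimate directly from parts (2a) and (3a) of \Cref{thm:synthetic-Adams}. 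That is, you are going back to the primary source that \Cref{cor:tau-torsion-bound} is itself derived from, which is a sensible choice.

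One point deserves more care in your $(1) \Rightarrow (1')$ direction. The condition $\mathrm{E}_{r+2}^{s,k+s} = \mathrm{E}_\infty^{s,k+s}$ fails if a long differential either \emph{enters} or \emph{exits} bidegree $(s,k+s)$. Your contradiction produces a $\tau$-torsion class of order $>r$ sitting in the bidegree of the \emph{target} of the offending differential, and then appeals to $(1)$ at that target. When the long differential enters $(s,k+s)$ from topological degree $k+1$, the target is $(s,k+s)$ itself, which is in range, and your argument works. When the long differential exits $(s,k+s)$, its target sits in topological degree $k-1$. For $k>v$ the target is still in range and the same argument applies, but for $k=v$ the target is at degree $v-1 < v$, where $(1)$ gives no constraint. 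Your write-up, as phrased, considers only the case ``$b$ ... is hit by a $d_{r+1+j}$ in the allowed range,'' which does not cover an outgoing differential from degree $v$ landing at degree $v-1$. To close this you would need an additional argument at the boundary degree $k=v$ (e.g.\ combining $(2)$ and $(4)$ to bound the possible lengths of differentials out of the band at degree $v$); as it stands this edge case is a genuine gap. Note, though, that the paper's own one-line justification does not visibly address this either, so you are not departing from its level of rigor.
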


\begin{proof}
    It follows from \Cref{lemm:vanishing-line-converge} and \Cref{lemm:vanishing-converge} that the $\HFp$-based Adams spectral sequence for $X$ converges strongly; therefore we may use \Cref{thm:synthetic-Adams} and its corollaries. By \Cref{prop:vanishing-line-Adams} we know that (4) and ($4'$) are equivalent. Using (4) to ground the induction started by \Cref{cor:tau-torsion-bound} we learn (1) and ($1'$) are equivalent. Using \Cref{cor:synth-filt} we may identify the filtration appearing in the definition of a banded vanishing line with the Adams filtration. This allows us to conclude that (2) and (3) are equivalent to ($2'$) and ($3'$), respectively.
\end{proof}

In \Cref{fig:banded} we use \Cref{prop:banded-adams} to illustrate the meaning of a banded vanishing line on $\nu X$. 
As we shall see, \Cref{dfn:banded-vanishing-line} captures the behavior of the modified Adams spectral sequence of a type $1$ spectrum. Moreover, it is formulated in such a way that it is a generic condition, i.e. the full subcategory of synthetic spectra satisfying \Cref{dfn:banded-vanishing-line} for a fixed $m$ and varying $(b \leq d, v, c, r)$ is closed under retracts, bigraded suspensions and cofiber sequences. We prove this genericity in \Cref{lemm:band-retract-susp} and \Cref{prop:band-in-cofiber-seqs}. A key feature of our approach is that we keep explicit track of how the constants $(b \leq d, v, c, r)$ change under retracts, bigraded suspensions and cofiber sequences.

\begin{figure}
  \centering
    \includegraphics[scale=0.6]{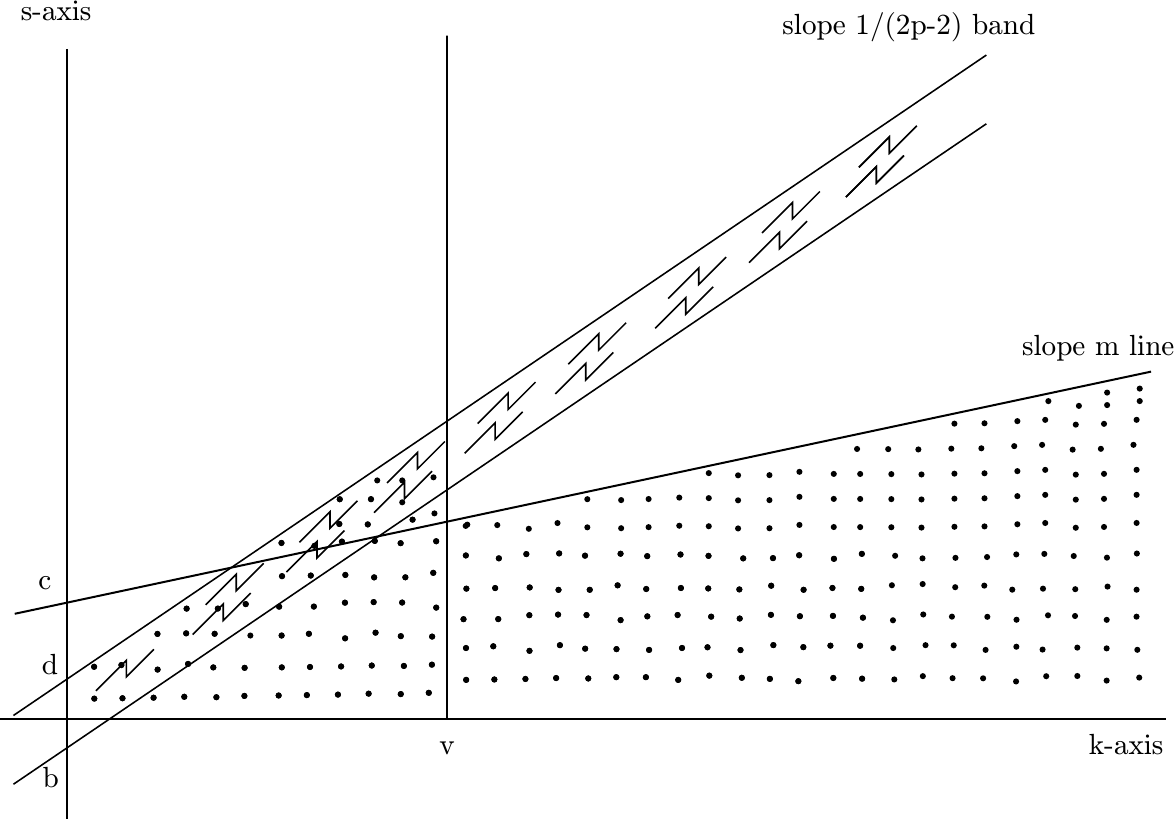}
    \caption{In this figure we display a picture of what the $\mathrm{E}_{r+2}$-page of the $\HFp$-Adams spectral sequence of an $\HFp$-nilpotent complete spectrum that admits an $v_1$-banded vanishing line with parameters $(b \leq d, v, m, c, r)$ might look like. We highlight the following features:\\
      (1) The top region is already empty at the $E_2$-page. \\
      (2) The region indicated with lightning flashes is the band (it is depicted this way since this is how it appears for $\Ss/2$) and contains all classes detected $K(1)$-locally. \\
      (3) The empty region below the band vanishes by the $E_{r+2}$-page. \\
      (4) In the dotted region no conditions are imposed.}
    \label{fig:banded}
\end{figure}

%

\begin{exm} \label{exm:miller-banded}
    The main result of \cite{Miller} implies that the $\HFp$-Adams spectral sequence for the mod $p$ Moore spectrum $C(p)$ admits a $v_1$-banded vanishing line of slope $\frac{1}{p^2-p-1}$ for $p$ odd. In \Cref{sec:Y}, we will show that the methods of \cite{Miller} may also be used to obtain a $v_1$-banded vanishing line of slope $\frac{1}{5}$ in the $\HFt$-Adams spectral sequence of $Y = C(2) \otimes C(\eta)$.
\end{exm}

We begin with the behavior of \Cref{dfn:banded-vanishing-line} under retracts and suspensions.

\begin{lem}[Banded Genericity (part 1)] \label{lemm:band-retract-susp}
  Suppose that a synthetic spectrum $X$ has a $v_1$-banded vanishing line with parameters $(b \leq d, v, m, c, r)$. Then
  \begin{enumerate}
  \item any retract of $X$ has a $v_1$-banded vanishing line with the same parameters as $X$,
  \item $\Sigma^{k,k} X$ has a $v_1$-banded vanishing line with parameters
    $$ \left( b - \frac{1}{2p-2} k \leq d - \frac{1}{2p-2} k, v + k, m, c - mk, r \right), $$
  \item $\Sigma^{0,s} X$ has a $v_1$-banded vanishing line with parameters
    $$ ( b + s \leq d + s, v, m, c + s, r). $$
  \end{enumerate}
\end{lem}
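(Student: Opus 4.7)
The plan is to verify the four clauses of Definition \ref{dfn:banded-vanishing-line} separately for each of the three operations; each verification reduces to substitution of a change-of-variables formula into the appropriate inequalities.

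For part (1), I would note that every ingredient appearing in Definition \ref{dfn:banded-vanishing-line} --- the bigraded homotopy groups $\pi_{a,b}(-)$, the subgroup of $\tau$-power torsion elements, the functors $\tau^{-1}(-)$ and $L_{K(1)}(-)$, and the filtration $F^\bullet \pi_*(\tau^{-1}(-))$ of Definition \ref{dfn:synth-filt} --- is additive in the input. Additive functors preserve retracts, so each of the four clauses of the definition (each of which is either a vanishing statement, a statement about $\tau$-torsion order, or a statement that a certain natural map is an isomorphism) transfers from $X$ to any retract with unchanged constants.

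For parts (2) and (3), the computation is driven by the identifications
$$\pi_{k', k'+s'}(\Sigma^{k,k} X) \cong \pi_{k'-k,\, (k'-k)+s'}(X), \qquad \pi_{k, k+s'}(\Sigma^{0,s_0} X) \cong \pi_{k,\, k+(s'-s_0)}(X),$$
together with the equivalences $\tau^{-1}(\Sigma^{k,k}X) \simeq \Sigma^k\tau^{-1}X$ and $\tau^{-1}(\Sigma^{0,s_0}X) \simeq \tau^{-1}X$; the latter two use that $\tau^{-1}$ is symmetric monoidal and that $\tau^{-1}\mathbb{S}^{a,b} \simeq \mathbb{S}^{a-b}$. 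Since $L_{K(1)}$ commutes with ordinary suspension, these translations pass through the $K(1)$-localization appearing in clause (3). I would then rewrite each of the four clauses with these substitutions and solve for the new parameters. For instance, clause (4) applied to $\Sigma^{k,k}X$ demands $\pi_{k',k'+s'}(\Sigma^{k,k}X)=0$ whenever $s' > \tfrac{k'}{2p-2} + d'$; using the identification above and clause (4) for $X$, this becomes $s' > \tfrac{k'-k}{2p-2} + d$, forcing $d' = d - \tfrac{k}{2p-2}$. The analogous computation for each of the other three clauses, in both (2) and (3), yields precisely the parameters in the statement; the shift from $v$ to $v+k$ in part (2) arises so that the hypothesis $k' \ge v+k$ implies the original $k'-k \ge v$.

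There is no genuine obstacle in the argument, which is pure bookkeeping. The only point worth a sentence of justification is that the structural requirements of the definition --- the inequality $m < \tfrac{1}{2p-2}$ and the ordering $b \le d$ --- are preserved in each case: $m$ is unchanged, and the pairs $(b,d)$ shift by a common amount under both operations.
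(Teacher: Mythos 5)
Your proposal is correct and takes the same approach as the paper, which disposes of the lemma with the single word ``Clear.'' --- your write-up just spells out the change-of-variables bookkeeping the authors left implicit. The identifications of bigraded homotopy groups, the behavior of $\tau^{-1}$ on bigraded suspensions, and the resulting parameter shifts all check out, including the observation that $m<\tfrac{1}{2p-2}$ and $b\le d$ are preserved.
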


\begin{proof}
  This lemma is a version of \Cref{lemm:suspension-line} for banded vanishing lines.
  As with the earlier lemma it follows from tracking how bigraded homotopy groups change under retracts and bigraded suspensions.
\end{proof}

\begin{prop}[Banded Genericity (part 2)] \label{prop:band-in-cofiber-seqs}
  Let $A \xrightarrow{f} B \xrightarrow{g} C \xrightarrow{\delta} \Sigma A$ be a cofiber sequence of synthetic spectra such that
  \begin{itemize}
  \item $A$ has a $v_1$-banded vanishing line with parameters $(b_A \leq d_A, v_A, m, c_A, r_A)$ and
  \item $C$ has a $v_1$-banded vanishing line with parameters $(b_C \leq d_C, v_C, m, c_C, r_C)$.
  \end{itemize}
  Then, $B$ has a banded vanishing line with parameters $(b_B \leq d_B, v_B, m, c_B, r_B)$, where
  \begin{itemize}
  \item $b_B = \min(b_A, b_C - r_A) \leq \max(d_A, d_C) = d_B$,
  \item $v_B = \max(v_A + 1,v_C, \frac{c_B - b_B}{(2p-2)^{-1}-m})$,
  \item $c_B = \max(c_A + r_C, c_C)$,
  \item $r_B = r_A + \max \left( r_C, \left\lfloor \max(d_A, \min(d_A + r_C, d_C)) - b_C - \frac{1}{2p-2} \right\rfloor \right)$.
  \end{itemize}
\end{prop}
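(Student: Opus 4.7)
The strategy is to verify the four conditions of Definition \ref{dfn:banded-vanishing-line} for $B$ one-by-one, using throughout the long exact sequence
$$\pi_{k+1,k+s}(C)\xrightarrow{\delta}\pi_{k,k+s}(A)\xrightarrow{f_*}\pi_{k,k+s}(B)\xrightarrow{g_*}\pi_{k,k+s}(C)\xrightarrow{\delta}\pi_{k-1,k+s}(A)$$
obtained from the cofiber sequence, together with the analogous long exact sequence obtained after applying $\tau^{-1}$ and then $L_{K(1)}$. Condition (4) is immediate: for $s > \frac{1}{2p-2}k + \max(d_A,d_C)$, both $\pi_{k,k+s}(A)$ and $\pi_{k,k+s}(C)$ vanish. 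Condition (3) follows by $K(1)$-localizing the cofiber sequence and applying the five lemma to compare the filtration long exact sequence with the $K(1)$-local one; the appearance of $v_A+1$ in $v_B$ is forced by $\delta$ landing in $\pi_{k-1,k+s}(A)$, while $v_C$ guarantees the hypothesis on $C$ at degree $k$. Once (1) and (4) are known for $B$, condition (2) follows formally: in the range $mk+c_B\le s<\tfrac{1}{2p-2}k+b_B$, any torsion-free element of $\pi_{k,k+s}(B)$ maps isomorphically under $g_*$ to a torsion-free element of $\pi_{k,k+s}(C)$, which lies in $F^{\frac{1}{2p-2}k+b_C}$ by assumption; this is precisely what the choice $b_B=\min(b_A,b_C-r_A)$ is designed to enforce (via a small diagram chase using the $r_A$-torsion bound on $A$), and the bound on $v_B$ of the form $(c_B-b_B)/((2p-2)^{-1}-m)$ ensures the two lines cross below $k=v_B$ so the range in question is nonempty.

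The main obstacle — and the step driving the peculiar form of $r_B$ — is verifying condition (1). Let $\beta\in\pi_{k,k+s}(B)_{\mathrm{tor}}$ with $s\ge mk+c_B$ and $k\ge v_B$. Since $g_*$ is $\tau$-equivariant, $g_*(\beta)$ is torsion in $\pi_{k,k+s}(C)$; condition (1) for $C$ gives $\tau^{r_C}g_*(\beta)=0$ (using $c_B\ge c_C$), so we may lift $\tau^{r_C}\beta=f_*(\alpha)$ for some $\alpha\in\pi_{k,k+s+r_C}(A)$. The easy case is when $\alpha$ is $\tau$-power torsion: then $c_B\ge c_A+r_C$ and condition (1) for $A$ give $\tau^{r_A}\alpha=0$, hence $\tau^{r_A+r_C}\beta=0$, accounting for the $r_A+r_C$ summand.

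The subtle case is when $\alpha$ is \emph{not} torsion. Then condition (4) for $A$ forces $s+r_C\le\tfrac{1}{2p-2}k+d_A$, so $\alpha$ represents a class in $F^{s+r_C}\pi_k(\tau^{-1}A)$ that is killed in $\pi_k(\tau^{-1}B)$, so $\alpha=\delta(\gamma)$ for $\gamma\in\pi_{k+1,k+s+r_C}(C)$. Condition (4) for $C$ together with $\tau$-divisibility forces $\gamma$ to live in a strip $F^{s+r_C}\pi_{k+1}(\tau^{-1}C)$ whose distance from the vanishing line of $C$ is at most $\max(d_A,\min(d_A+r_C,d_C))-b_C-\tfrac{1}{2p-2}$ filtrations from the $K(1)$-local stable regime of $C$; using condition (2) for $C$ one finds a lift of $\gamma$ of filtration $\tfrac{1}{2p-2}(k+1)+b_C$ after paying $\lfloor\cdot\rfloor$ powers of $\tau$, at which point $\delta$ applied to the lift is killed by (3) for $A$ acting on the $K(1)$-local image. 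Carefully chasing this gives $\tau^{T}\alpha=0$ for $T=\lfloor\max(d_A,\min(d_A+r_C,d_C))-b_C-\tfrac{1}{2p-2}\rfloor$, and hence $\tau^{r_A+T}\beta=0$ ... wait, in fact it yields the stated $r_B=r_A+\max(r_C,T)$ after taking the maximum over the two cases. The main obstacle will be verifying the bookkeeping in this second case, especially choosing $v_B$ large enough that all the shifted arguments on $A$ and $C$ remain within their respective regions of validity; this is what dictates the $v_A+1$ (for the $\pi_{k-1,k+s}(A)$ side of $\delta$) and the $v_C$ lower bounds, and the translation term $(c_B-b_B)/((2p-2)^{-1}-m)$ guarantees that the strip $[mk+c_B,\tfrac{1}{2p-2}k+b_B]$ in which the argument takes place is nonempty.
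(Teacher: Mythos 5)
Your outline captures the right overall shape (prove (4) easily, (3) and (2) via a five/four-lemma diagram, (1) as the hard step with a "lift to $A$" argument), and that matches the paper's strategy. However there are a few concrete gaps.

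\textbf{The missing exactness lemma.} For (3) you invoke the five lemma on the diagram comparing the filtered sequence $F^{\bullet}\pi_*(\tau^{-1}-)$ to the $K(1)$-local sequence, and for (2) you want to compare filtrations across $A\to B\to C$. Both of these require the \emph{filtration-level} sequence
$$F^{\qoppa-1}\pi_{k+1}(\tau^{-1}C)\to F^{\qoppa}\pi_k(\tau^{-1}A)\to F^{\qoppa}\pi_k(\tau^{-1}B)\to F^{\qoppa}\pi_k(\tau^{-1}C)\to F^{\qoppa+1}\pi_{k-1}(\tau^{-1}A)$$
to be exact in the relevant range of $\qoppa$. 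This is not automatic: applying $F^{\qoppa}$ to a long exact sequence need not give a long exact sequence. The paper isolates this as \Cref{lemm:qoppa-exactness}, whose proof itself rests on \Cref{lemm:lift-filt-bnd} and on the torsion bounds for $A$ and $C$; the specific window $mk+c_B\le\qoppa\le\sampi k+b_B$ (and the shifted window for exactness at the $A$-spot) is exactly what the choices of $b_B,c_B,v_B$ are designed to enable. Without this lemma your five-lemma step for (3) does not close.

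\textbf{Condition (2).} Your claim that ``any torsion-free element of $\pi_{k,k+s}(B)$ maps isomorphically under $g_*$ to a torsion-free element of $\pi_{k,k+s}(C)$'' is false: $g_*$ has kernel the image of $f_*$, and elements of $B$ that come from $A$ are invisible in $C$. The correct argument (as in the paper) is a four-lemma chase on the diagram of filtration-level exact sequences, which handles the contribution from $A$ simultaneously; it does not go through $g_*$ alone, and it does not need (1) for $B$ as an input.

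\textbf{Condition (1).} Two issues. First, an index error: since multiplication by $\tau^{r_C}$ \emph{lowers} the weight, the lift $\alpha$ with $f_*(\alpha)=\tau^{r_C}\beta$ lives in $\pi_{k,k+s-r_C}(A)$, not $\pi_{k,k+s+r_C}(A)$; this is what forces the hypothesis $c_B\ge c_A+r_C$ (so that $s-r_C\ge mk+c_A$). Second, in your ``subtle case'' you assert $\tau^T\alpha=0$ for a non-torsion $\alpha$, which is a contradiction. What one can actually show is that, after multiplying by enough $\tau$ to drop into the window of \Cref{lemm:qoppa-exactness}, the class $\alpha$ agrees \emph{modulo $\tau^{r_A}$-torsion} with the image of a class from $\pi_{k+1,*}(C)$ under $\delta$, and since $(\Sigma f)\circ\delta$ is null this forces $\tau^{r_A}f_*(\alpha)=0$. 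This is the content of the paper's Step 4, and it uses the exactness lemma in an essential way; your version lifts a $\tau^{-1}$-level class $\gamma\in\pi_{k+1}(\tau^{-1}C)$ back to a bigraded class without justification, and the bookkeeping through the $K(1)$-local image of $A$ is not actually how the torsion bound is obtained.

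In short: the plan has the right skeleton, but it is missing the filtration-level exactness lemma that makes the diagram chases legitimate, and the proofs of (2) and (1) as written contain errors that the lemma is precisely designed to fix.
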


In order to prevent expressions such as $F^{\frac{1}{2p-2}k + b_A}\pi_k(\tau^{-1}A)$ from cluttering the proof of \Cref{prop:band-in-cofiber-seqs}, we introduce the following compact notation (which will not appear outside this section):
\begin{center}
  \begin{tabular}{cc cc cc}
    $\sampi \coloneqq (2p-2)^{-1}$ & $L \coloneqq L_{K(1)}$ &  
    $\bar{A} \coloneqq \tau^{-1}A$ & $\bar{B} \coloneqq \tau^{-1}B$ & $\bar{C} \coloneqq \tau^{-1}C$ \\
  \end{tabular}
\end{center}

Before starting the proof of \Cref{prop:band-in-cofiber-seqs}, we prove two lemmas:

\begin{lem}\label{lemm:lift-filt-bnd}
  Suppose that $A \xrightarrow{f} B \xrightarrow{g} C$ is a cofiber sequence of synthetic spectra,
  where every $\tau$-power torsion element of $\pi_{k,k+s} (C)$ is $\tau^{r}$-torsion.
  Then, the indicated lift exists in the diagram below:
\begin{center}
  \begin{tikzcd}
    F^s\pi_k(A) \ar[r, two heads] \ar[d, hook] &
    \mathrm{Im}(F^sf) \ar[r, hook] \ar[d, hook] &
    \ker(F^sg) \ar[r, hook] \ar[d, hook] \ar[dl, dashed] &
    F^s\pi_k(B) \ar[r] \ar[d, hook] &
    F^s\pi_k(C) \ar[d,hook] \\
    F^{s-r}\pi_k(A) \ar[r, two heads] &
    \mathrm{Im}(F^{s-r}f) \ar[r, hook]  &
    \ker(F^{s-r}g) \ar[r, hook]  &
    F^{s-r}\pi_k(B) \ar[r]  &
    F^{s-r}\pi_k(C)  
  \end{tikzcd}
\end{center}
\end{lem}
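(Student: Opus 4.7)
The plan is to chase the dashed lift by exploiting the long exact sequence of bigraded homotopy groups associated to the cofiber sequence $A \to B \to C$ in $\mathrm{Syn}$, together with the hypothesis that $\tau$-power torsion in $\pi_{k,k+s}(C)$ has exponent at most $r$.

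First I would unpack the definition of $F^\bullet$ from \Cref{dfn:synth-filt}. Given $\beta \in \ker(F^s g)$, i.e.\ $\beta \in F^s \pi_k(\bar B)$ with $g(\beta) = 0 \in \pi_k(\bar C)$, there exists some lift $\tilde\beta \in \pi_{k,k+s}(B)$ mapping to $\beta$ under the canonical map $\pi_{k,k+s}(B) \to \pi_k(\bar B)$. I then consider $g(\tilde\beta) \in \pi_{k,k+s}(C)$: its image under $\tau^{-1}$ is $g(\beta) = 0$, so $g(\tilde\beta)$ is a $\tau$-power torsion element of $\pi_{k,k+s}(C)$.

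By the standing hypothesis, $\tau^{r} g(\tilde\beta) = 0$ in $\pi_{k,k+s-r}(C)$. Since $\tau$-multiplication is compatible with $g$, this means $g(\tau^{r} \tilde\beta) = 0$. The long exact sequence of bigraded homotopy groups associated to the cofiber sequence $A \to B \to C$ then produces an $\tilde\alpha \in \pi_{k,k+s-r}(A)$ with $f(\tilde\alpha) = \tau^{r} \tilde\beta$. Setting $\alpha \coloneqq \tau^{-1}(\tilde\alpha) \in \pi_k(\bar A)$, we have $\alpha \in F^{s-r}\pi_k(\bar A)$ by definition, and $f(\alpha) = \tau^{-1}(f(\tilde\alpha)) = \tau^{-1}(\tau^{r}\tilde\beta) = \beta$. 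Hence $\beta$ lies in $\mathrm{Im}(F^{s-r} f)$, which is precisely the content of the dashed arrow.

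There is no serious obstacle here; this is a bookkeeping exercise. The only point requiring care is the double role of $\tau$: on the one hand, the filtration $F^\bullet$ is defined by images along $\tau^{-1}$ and thus is preserved by the maps of the cofiber sequence; on the other hand, the exponent hypothesis is a statement about $\tau$-multiplication in the bigraded homotopy groups. Once these are aligned, exactness of the synthetic long exact sequence does all the work.
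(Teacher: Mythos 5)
Your proof is correct and follows essentially the same route as the paper's. The paper packages the argument as the vanishing of the map $H_0(D_s^{\mathrm{tf}})\to H_0(D_{s-r}^{\mathrm{tf}})$ via the connecting isomorphism $H_0(D^{\mathrm{tf}}_\bullet)\cong H_{-1}(D^{\mathrm{tor}}_\bullet)$ from the torsion--torsion-free short exact sequence of chain complexes, but unwinding that connecting map reproduces exactly your element chase: lift $\beta$ to $\tilde\beta\in\pi_{k,k+s}(B)$, observe $g(\tilde\beta)$ is $\tau^r$-torsion, lift $\tau^r\tilde\beta$ along $f$, and pass back through $\tau^{-1}$.
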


\begin{proof}
  Let $D_c$ denote the long exact sequence of bigraded homotopy groups for $A \to B \to C$ considered as an acyclic chain complex such that $\pi_{k,k+c}(B)$ is placed in degree zero. The complex $D_c$ fits into a level-wise short exact sequence $D_c^{\mathrm{tor}} \to D_c \to D_c^{\mathrm{tf}}$ where $D_c^{\mathrm{tor}}$ and $D_c^{\mathrm{tf}}$ are given by the same decorations applied level-wise. This lemma is equivalent to the statement that the map
  $$ H_0(D_s^{\mathrm{tf}}) \xrightarrow{\tau^r} H_0(D_{s-r}^{\mathrm{tf}}) $$
  is zero. Using the cofiber sequence of chain complexes above this map is isomorphic to the map
  $$ H_{-1}(D_s^{\mathrm{tor}}) \xrightarrow{\tau^r} H_{-1}(D_{s-r}^{\mathrm{tor}}). $$
  The latter map is zero because the map
  $$ \pi_{k,k+s}(C)_{\mathrm{tor}} \xrightarrow{\tau^r} \pi_{k,k+s-r}(C)_{\mathrm{tor}} $$
  is zero.
\end{proof}

\begin{lem} \label{lemm:qoppa-exactness}
  Under the hypotheses and notation of \Cref{prop:band-in-cofiber-seqs}, the sequence
  $$ F^{\qoppa -1} \pi_{k+1}(\bar{C}) \to F^\qoppa \pi_{k}(\bar{A}) \to F^\qoppa \pi_{k}(\bar{B}) \to F^\qoppa \pi_{k}(\bar{C}) \to F^{\qoppa + 1}\pi_{k-1}(\bar{A}) $$
  is exact for any $\qoppa$ such that $ mk + c_B \leq \qoppa \leq \sampi k + b_B $.
  Moreover, this sequence is exact at $F^\qoppa \pi_k(\bar{A})$ under the weaker condition that
  $$ mk + c_A \leq \qoppa \leq \sampi (k+1) + b_C + 1. $$
\end{lem}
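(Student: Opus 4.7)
The plan is to derive each of the three interior exactness statements from the bigraded long exact sequence
\[
\pi_{k+1, k+\qoppa}(C) \xrightarrow{\partial_*} \pi_{k,k+\qoppa}(A) \xrightarrow{f_*} \pi_{k,k+\qoppa}(B) \xrightarrow{g_*} \pi_{k,k+\qoppa}(C) \xrightarrow{\partial_*} \pi_{k-1,k+\qoppa}(A)
\]
by diagram chase, using condition (1) of the banded vanishing lines on $A$ and $C$ to bound $\tau$-torsion orders and condition (2) to promote the resulting lifts back to the desired filtration. Before doing so, I would first verify that condition (4) descends to $B$ with $d_B = \max(d_A, d_C)$: if $s > \sampi k + \max(d_A, d_C)$, then $\pi_{k, k+s}(A)$ and $\pi_{k, k+s}(C)$ vanish, forcing $\pi_{k, k+s}(B) = 0$ from the LES.

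For exactness at $F^\qoppa\pi_k(\bar{B})$, given $y$ with $g(y) = 0$ I would lift to $\tilde{y} \in \pi_{k, k+\qoppa}(B)$ and observe that $g_*(\tilde{y}) \in \pi_{k, k+\qoppa}(C)$ is $\tau$-power torsion. Condition (1) on $C$ (applicable since $\qoppa \geq mk + c_B \geq mk + c_C$ and $k \geq v_C$) bounds the torsion order by $r_C$, so $\tau^{r_C}\tilde{y}$ lifts to some $\tilde{z} \in \pi_{k, k+\qoppa - r_C}(A)$ via bigraded exactness. The image $z \in F^{\qoppa - r_C}\pi_k(\bar{A})$ satisfies $f(z) = y$, and condition (2) on $A$ promotes $z$ to $F^\qoppa\pi_k(\bar{A})$, since both $\qoppa - r_C$ and $\qoppa$ lie in $[mk + c_A, \sampi k + b_A]$ under our hypotheses. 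The proof of exactness at $F^\qoppa\pi_k(\bar{C})$ is the dual argument, using condition (1) on $A$ at spectrum degree $k-1$ for the torsion bound, followed by a constancy-type promotion for $B$ bootstrapped from constancies of $A$ and $C$ via the LES (since we have not yet established condition (2) for $B$ directly).

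For exactness at $F^\qoppa\pi_k(\bar{A})$ under the weaker hypothesis, given $x$ with $f(x) = 0$ I would lift to $\tilde{x} \in \pi_{k, k+\qoppa}(A)$ and consider the $\tau$-torsion order $N$ of $f_*(\tilde{x}) \in \pi_{k, k+\qoppa}(B)^{\tor}$. Bigraded exactness produces $\tilde{w} \in \pi_{k+1, k+\qoppa - N}(C)$ with $\partial_*(\tilde{w}) = \tau^N \tilde{x}$, yielding $w \in F^{\qoppa - N - 1}\pi_{k+1}(\bar{C})$ with $\partial(w) = x$ after inverting $\tau$. The weaker hypothesis $\qoppa \leq \sampi(k+1) + b_C + 1$ is precisely what places $\qoppa - 1$ at the top of the constancy range for $C$ at spectrum degree $k+1$, so condition (2) on $C$ promotes $w$ to $F^{\qoppa - 1}\pi_{k+1}(\bar{C})$ as long as $\qoppa - N - 1 \geq m(k+1) + c_C$. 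My plan for bounding $N$ is to first use condition (2) on $A$ to choose a high-filtration representative of $x$, then to use condition (4) on $C$ together with the bigraded LES to force $f_*$ of this representative to have $\tau$-torsion order controlled by $d_C$, modifying $\tilde{x}$ iteratively by $\tau$-torsion elements of $\pi_{k, k+\qoppa}(A)$ as needed.

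The hardest step will be this last bound on $N$: the obstruction to lifting a $\tau$-torsion element of the quotient $\pi_{k, k+\qoppa}(A)/\im(\partial_*)$ to a $\tau$-torsion element of $\pi_{k, k+\qoppa}(A)$ is a $\mathrm{Tor}$-type obstruction, and controlling it requires a careful interplay of conditions (2) and (4) for $C$ at the neighboring spectrum degree. The interval $[m(k+1) + c_C, \sampi(k+1) + b_C]$ is exactly wide enough under our hypothesis to absorb the modifications, but the bookkeeping is delicate and I expect it to dictate the precise combinatorial form of the constants $(c_B, v_B, r_B)$ in \Cref{prop:band-in-cofiber-seqs}.
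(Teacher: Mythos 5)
Your treatment of exactness at $F^\qoppa\pi_k(\bar{B})$ is essentially the paper's argument (the paper packages the diagram chase into Lemma~\ref{lemm:lift-filt-bnd}, but the idea — bound the torsion of $g_*(\tilde y)$ by $r_C$, lift down by $\tau^{r_C}$ to $A$, then use the constancy of $A$'s filtration in the band to promote back — is the same). Your treatment of exactness at $F^\qoppa\pi_k(\bar{C})$ is also close, though you misattribute the final promotion step: the paper promotes using constancy of $C$'s filtration (not a constancy statement for $B$, which indeed is not available yet), together with Lemma~\ref{lemm:lift-filt-bnd} applied with the torsion bound $r_A$ on $A$ in degree $k-1$.

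The genuine gap is at exactness at $F^\qoppa\pi_k(\bar{A})$, and I do not think your plan can be completed as stated. You propose to bound the $\tau$-torsion order $N$ of $f_*(\tilde x) \in \pi_{k,k+\qoppa}(B)_{\mathrm{tor}}$, but this is precisely the kind of quantity the outer Proposition~\ref{prop:band-in-cofiber-seqs} is trying to control: establishing a torsion bound $r_B$ for $B$ is one of the conclusions, not something available at this stage. Condition (4) on $C$ lets you bound torsion orders above the $d_C$-line, but for $\qoppa$ near the bottom of the band the element $f_*(\tilde x)$ can sit well below that line, and the ``$\mathrm{Tor}$-type obstruction'' you gesture at is not resolved by conditions (2) and (4) alone. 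The paper avoids the issue entirely: conditions (2) and (3) identify $F^\qoppa\pi_k(\bar A)$ (for $\qoppa \ge mk + c_A$) with a subgroup of $\pi_k(L_{K(1)}\bar A)$, and $\pi_{k+1}(L_{K(1)}\bar C)$ with $F^{\sampi(k+1)+b_C}\pi_{k+1}(\bar C)$; the long exact sequence of $K(1)$-local homotopy groups is honestly exact, with no filtration or torsion issues, so the required lift is produced there and then transported back. This is the missing idea, and it is also what explains the weaker hypothesis $mk + c_A \le \qoppa \le \sampi(k+1) + b_C + 1$: the lower bound is what makes $F^\qoppa\pi_k(\bar A) \to \pi_k(L_{K(1)}\bar A)$ injective, and the upper bound is what makes the $K(1)$-local lift land in $F^{\qoppa-1}\pi_{k+1}(\bar C)$.
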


\begin{proof}
  This sequence is a subsequence of the long exact sequence on homotopy groups for the cofiber sequence $\bar{A} \to \bar{B} \to \bar{C}$, and is therefore automatically a chain complex.
    
  {\bf Exactness at $F^{\qoppa} \pi_k (\bar{A})$.} Consider the diagram
  \begin{center}
    \begin{tikzcd}
      F^{\sampi (k+1) + b_C} \pi_{k+1}(\bar{C}) \ar[dr] \ar[dd, equal] & & & \\
      & F^{\qoppa -1} \pi_{k+1}(\bar{C}) \ar[r] \ar[dl] &
      F^{\qoppa} \pi_k(\bar{A}) \ar[r] \ar[d, hook] &
      F^{\qoppa} \pi_k(\bar{B}) \ar[d] \\
      \pi_{k+1}(L\bar{C}) \ar[rr] & &
      \pi_{k}(L\bar{A}) \ar[r] &
      \pi_{k}(L\bar{B}), 
    \end{tikzcd}
  \end{center}
  where the top left diagonal map exists because
  $$ \qoppa -1 \leq \sampi k + b_B -1 \leq \sampi (k+1) + b_C, $$
  and the middle vertical map is injective because
  $$ mk + c_A \leq mk + c_B \leq \qoppa. $$
  
  {\bf Exactness at $F^{\qoppa} \pi_k (\bar{B})$.} Consider the diagram
  \begin{center}
    \begin{tikzcd}
      F^{\qoppa} \pi_k (\bar{A}) \ar[d, equal] \ar[r, two heads] &
      \mathrm{Im}(F^\qoppa f) \ar[r, hook] \ar[d, hook] &
      \ker(F^\qoppa g) \ar[r, hook] \ar[d, hook] \ar[dl, dashed] &
      F^{\qoppa} \pi_k (\bar{B}) \ar[d, hook] \\
      F^{\qoppa - r_C} \pi_k (\bar{A}) \ar[r, two heads] &
      \mathrm{Im}(F^{\qoppa - r_C} f) \ar[r, hook] &
      \ker(F^{\qoppa - r_C} g) \ar[r, hook]  &
      F^{\qoppa - r_C} \pi_k (\bar{B}),
    \end{tikzcd}
  \end{center}
  where the dashed arrow exists by \Cref{lemm:lift-filt-bnd}, which applies because
  $$ mk + c_C \leq mk + c_B \leq \qoppa, $$
  and the leftmost vertical arrow is an isomorphism because
  $$ mk + c_A \leq mk + c_B - r_C \leq \qoppa - r_C \leq \qoppa \leq \sampi k + b_B \leq \sampi k + b_A. $$

  {\bf Exactness at $F^{\qoppa} \pi_k (\bar{C})$.} Consider the diagram
  \begin{center}
    \begin{tikzcd}
      F^{\qoppa + r_A} \pi_k (\bar{B}) \ar[d, hook] \ar[r, two heads] &
      \mathrm{Im}(F^{\qoppa + r_A}g) \ar[r, hook] \ar[d, hook] &
      \ker(F^{\qoppa + r_A} \delta) \ar[r, hook] \ar[d, equal] \ar[dl, dashed] &
      F^{\qoppa + r_A} \pi_k (\bar{C}) \ar[d, equal] \\
      F^{\qoppa - r_A} \pi_k (\bar{B}) \ar[r, two heads] &
      \mathrm{Im}(F^{\qoppa } g) \ar[r, hook] &
      \ker(F^{\qoppa} \delta) \ar[r, hook]  &
      F^{\qoppa} \pi_k (\bar{C}),
    \end{tikzcd}
  \end{center}
  where the dashed arrow exists by \Cref{lemm:lift-filt-bnd}, which applies because
  $$ m(k-1) + c_A \leq mk + c_B \leq \qoppa + r_A + 1,$$
  and the middle right vertical arrow is an isomorphism because
  \[ mk + c_C \leq mk + c_B \leq \qoppa \leq \qoppa + r_A \leq \sampi k + b_B + r_A \leq \sampi k + b_C. \qedhere \]
\end{proof}

\begin{proof}[Proof of \Cref{prop:band-in-cofiber-seqs}]
  We will prove properties (1)-(4) of \Cref{dfn:banded-vanishing-line} in reverse order.
  Property (4) is obvious from the long exact sequence on bigraded homotopy groups.
  
  Assuming that
  $$ mk + c_B \leq \sampi k + b_B, $$
  which is true whenever $k \geq v_B$,
  we can construct \Cref{fig:banded-chase}.
  The second and third rows of \Cref{fig:banded-chase} are exact by \Cref{lemm:qoppa-exactness}.
  The fifth and sixth rows of \Cref{fig:banded-chase} are also exact.
  The indicated equalities follow easily from the hypotheses. 
    
  \begin{figure}  
    \centering              
    \begin{turn}{90}
      \begin{tikzcd}[row sep = huge]
	F^{\sampi (k+1) + b_C} \pi_{k+1} (\bar{C}) \arrow[d, equal] &
	F^{\sampi k + b_A} \pi_k (\bar{A}) \arrow[d, equal] & &
	F^{\sampi k + b_C} \pi_k (\bar{C}) \arrow[d, equal] & \\
	F^{\sampi k + b_B - 1} \pi_{k+1} (\bar{C}) \arrow[r] \arrow[d, hook] &
	F^{\sampi k + b_B} \pi_k (\bar{A}) \arrow[r] \arrow[d, equal] &
	F^{\sampi k + b_B} \pi_k (\bar{B}) \arrow[r] \arrow[d, hook] &
	F^{\sampi k + b_B} \pi_k (\bar{C}) \arrow[r] \arrow[d, equal] &
	F^{\sampi k + b_B + 1} \pi_{k-1} (\bar{A}) \arrow[d, hook] \\
        F^{mk + c_B - 1} \pi_{k+1} (\bar{C}) \arrow[r] \arrow[dd, hook] &
	F^{mk + c_B} \pi_k (\bar{A}) \arrow[r] \arrow[d, equal] &
	F^{mk + c_B} \pi_k (\bar{B}) \arrow[r] \arrow[dd, hook] &
	F^{mk + c_B} \pi_k (\bar{C}) \arrow[r] \arrow[d, equal] &
	F^{mk + c_B + 1} \pi_{k-1} (\bar{A}) \arrow[d, equal] \\        
        & F^{mk + c_A} \pi_k (\bar{A}) \arrow[d, hook] & &
	F^{mk + c_C} \pi_k (\bar{C}) \arrow[d, hook] &
	F^{m(k-1) + c_A} \pi_{k-1} (\bar{A}) \arrow[d, hook] \\        
        \pi_{k+1} (\bar{C}) \arrow[r] \arrow[d] &
        \pi_k (\bar{A}) \arrow[r] \arrow[d] &
        \pi_k (\bar{B}) \arrow[r] \arrow[d] &
        \pi_k (\bar{C}) \arrow[r] \arrow[d] &
        \pi_{k-1} (\bar{A}) \arrow[d] \\
	\pi_{k+1} (L \bar{C}) \arrow[r] &
        \pi_k (L \bar{A}) \arrow[r] &
        \pi_k (L \bar{B}) \arrow[r] &
        \pi_k (L \bar{C}) \arrow[r] &
	\pi_{k-1} (L \bar{A})
      \end{tikzcd}
    \end{turn}
    \centering
    \caption{}\label{fig:banded-chase}
  \end{figure}

  {\bf Proof of (3).} 
  We wish to show that
  $$ F^{\sampi k + b_B} \pi_{k} (\bar{B}) \to \pi_k (L \bar{B}) $$
  is an isomorphism for $k \geq v_B$.
  The vertical maps from the top row of \Cref{fig:banded-chase} to the bottom row are isomorphisms by hypothesis. The vertical maps from the fourth row to the bottom row are also isomorphisms by hypothesis.
  Thus, we may apply the five lemma to the maps between the second and the bottom rows in order to conclude.

  {\bf Proof of (2).} 
  We wish to show that
  $$ F^{\sampi k + b_B} \pi_{k} (\bar{B}) \to F^{mk + c_B} \pi_k (\bar{B}) $$
  is an isomorphism for $k \geq v_B$.
  This map is automatically injective, so it suffices to apply the four lemma to the maps between the second and third rows of \Cref{fig:banded-chase}.

  {\bf Proof of (1).} 
  Let $w \in \pi_{k,k+s} (B)_{\mathrm{tor}}$ and assume that $s \geq m k + c_B$ and $k \geq v_B$.
  We would like to bound the $\tau$-torsion order of $w$. 

  {\bf Step 1.} We have $ w \in \pi_{k,k+s}(B)_{\mathrm{tor}}$ such that
  $$ mk + c_B \leq s \leq \sampi k + \max(d_A, d_C). $$
  If $s > \sampi k + d_A +r_C$, then $g(\tau^{r_C}w) = 0$ so $\tau^{r_C} w$ lifts to $\pi_{k,k+s-r_C}(A) = 0$ and therefore $\tau^{r_C}w = 0$, hence $\tau^{r_B} w =0$.
  On the other hand, if $s \leq \sampi k + d_A + r_C$, we move on to step 2.

  {\bf Step 2.} We have $ w \in \pi_{k,k+s}(B)_{\mathrm{tor}}$ such that
  $$ mk + c_B \leq s \leq \sampi k + \max(d_A, \min(d_A + r_C, d_C)). $$
  Find the smallest $N$ such that $g(\tau^Nw) = 0$ and an $x \in \pi_{k,k+s-N}(A)$ such that $f(x)= \tau^N w$. We have a bound $N \leq r_C$ coming from the fact that
  $ mk + c_C \leq mk + c_B \leq s. $
  From this we may conclude that $s-N \geq mk + c_B - r_C \geq mk + c_A$.
  
  {\bf Step 3.} We have a $x \in \pi_{k,k+s-N}(A)$ such that $f(x) = \tau^Nw$.
  If $$ \sampi (k+1) + b_C + 1 < s-N, $$ we replace $x$ by $\tau^L x$ where $L$ satisfies
  \[ mk + c_A \leq s-N-L \leq \sampi (k+1) + b_C + 1. \]
  This is possible because \[mk + c_A \leq \sampi k + b_B \leq \sampi (k+1) + b_C,\] which holds since $k \geq v_B$.
  
  {\bf Step 4.} We have a $y \in \pi_{k,k + \qoppa}(A)$ such that $f(y) = \tau^Mw$ where
  $$ mk + c_A \leq \qoppa \leq \sampi (k+1) + b_C + 1. $$
  Consider the diagram
  \begin{center}
    \begin{tikzcd}
      \pi_{k+1, k + \qoppa }(C) \ar[r] \ar[d, two heads] &
      \pi_{k, k + \qoppa }(A) \ar[r, "\tau^{r_A}"] \ar[d, two heads] &
      \pi_{k, k + \qoppa - r_A}(A) \\
      F^{\qoppa -1 } \pi_{k+1}(\bar{C}) \ar[r] &
      F^{\qoppa} \pi_k(\bar{A}) \ar[r] \ar[ur, dashed] &
      F^{\qoppa} \pi_k(\bar{B}),
    \end{tikzcd}
  \end{center}
  where the second row is exact by \Cref{lemm:qoppa-exactness},
  and the dashed arrow exists because any $\tau$-torsion element of $\pi_{k,k+\qoppa}(A)$ has torsion order bounded by $r_A$. The image of $y$ in $F^\qoppa \pi_k(\bar{B})$ is zero by hypothesis, so we can use exactness and surjectivity to produce a lift $z \in \pi_{k+1,k+\qoppa}(C)$ such that $\tau^{r_A}\delta(z) = \tau^{r_A}y$. From this we may conclude that $\tau^{r_A+M}w = 0$. We may now read off that
  \begin{align*}
    r_A + M
    &\leq r_A + \max \left( r_C, \lfloor \sampi k + \max(d_A, \min(d_A + r_C, d_C)) \rfloor - \lfloor \sampi (k+1) + b_C + 1 \rfloor \right) \\
    &\leq r_A + \max \left( r_C, \sampi k + \max(d_A, \min(d_A + r_C, d_C))  - \sampi (k+1) - b_C  \right) \\
    &\leq r_A + \max \left( r_C, \max(d_A, \min(d_A + r_C, d_C)) - b_C - \sampi \right) \\
    &= r_B. \qedhere
    \end{align*}
\end{proof}


\section{A banded vanishing line for \texorpdfstring{$Y$}{Y}} \label{sec:Y}

In \Cref{exm:miller-banded} we observed that Miller's computation of the $T(1)$-local homotopy of a Moore spectrum at odd primes \cite{Miller} can be summarized by saying that $C(p)$ admits a $v_1$-banded vanishing line of slope $\frac{1}{p^2 - p -1}$. The corresponding calculation at the prime $2$ is Mahowald's computation of the $T(1)$-local homotopy of the spectrum $Y \coloneqq C(2) \otimes C(\eta)$ \cite{MahJEHP}. Unfortunately, Mahowald's proof does not provide a $v_1$-banded vanishing line. In this section we adapt Miller's methods to the prime $2$ in order to obtain a $v_1$-banded vanishing line on $Y$.

\begin{thm}\label{thm:Y-v1-band}
  The $\HFt$-Adams spectral sequence for $Y$ has a $v_1$-banded vanishing line with parameters
  $\left(-\frac{3}{2} \leq 0, 15, \frac{1}{5}, \frac{13}{5}, 1 \right)$.
\end{thm}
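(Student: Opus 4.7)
The plan is to verify the four conditions of \Cref{dfn:banded-vanishing-line} for $\nu_{\HFt} Y$ with the parameters $(b \leq d, v, m, c, r) = (-3/2 \leq 0, 15, 1/5, 13/5, 1)$. Using \Cref{prop:banded-adams} and \Cref{cor:tau-torsion-bound}, these translate into the following statements about the classical $\HFt$-Adams spectral sequence of $Y$, whose $E_r$-page we denote $E_r^{s,t}(Y)$ and whose Adams filtration we denote $F^{\bullet}\pi_*(Y)$: $(4')$ $E_2^{s,k+s}(Y) = 0$ for $s > k/2$; $(3')$ the map $F^{k/2 - 3/2}\pi_k(Y) \to \pi_k(L_{K(1)} Y)$ is an isomorphism for $k \geq 15$; $(2')$ $E_3^{s,k+s}(Y) = 0$ in the strip $k/5 + 13/5 \leq s < k/2 - 3/2$ for $k \geq 15$; and $(1')$ no Adams differentials $d_r$ with $r \geq 3$ land above the line $s = k/5 + 13/5$ for $k \geq 15$.

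The proof will assemble three classical ingredients. First, Adams' vanishing line of slope $1/2$ on the $E_2$-page of the $\HFt$-Adams spectral sequence of a connective spectrum immediately handles $(4')$. Second, the spectrum $Y$ admits a $v_1$-self map $v: \Sigma^2 Y \to Y$ of Adams filtration $1$, which lifts to a synthetic self-map $\Sigma^{2,3}\nu_{\HFt} Y \to \nu_{\HFt} Y$. Third, Miller's height-$1$ telescope theorem \cite{Miller} for $Y$ gives an isomorphism $v^{-1}\pi_*(Y) \xrightarrow{\sim} \pi_*(L_{K(1)} Y)$. These inputs combine with Davis and Mahowald's explicit computation of $\Ext_{\mathcal{A}}(\HFt_* Y, \HFt)$ \cite{v1ExtDavisMahowald}, which is obtained by running the $v_1^1$-Bockstein spectral sequence over the much smaller algebra $\Ext_{\mathcal{A}(1)}(\HFt_* Y, \HFt)$. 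The Davis--Mahowald calculation exhibits $E_2^{*,*}(Y)$ as an extension of a $v_1$-periodic summand (concentrated along lines of slope exactly $1/2$ of intercept at most $-3/2$ for $k \geq 15$) by a $v_1$-nilpotent summand (vanishing above $s = k/5 + 13/5$ for $k \geq 15$). Condition $(3')$ then follows from Miller's theorem together with the fact that the $v_1$-periodic summand captures precisely the $K(1)$-local homotopy of $Y$. For $(1')$ and $(2')$, any $d_r$ with $r \geq 3$ landing above the slope-$1/5$ line would either have a $v_1$-periodic source (contradicting Miller's theorem) or have a source in the $v_1$-nilpotent region (contradicting Davis--Mahowald's vanishing bound); only $d_2$-differentials remain, and \Cref{cor:tau-torsion-bound} packages this into the $\tau^1$-torsion bound.

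The main obstacle is pinning down the precise numerical intercepts $b = -3/2$, $d = 0$, $c = 13/5$ and the threshold $v = 15$. The general shape of the argument is well known to experts, but the explicit constants require either a direct tabulation of $\Ext_{\mathcal{A}(1)}(\HFt_* Y, \HFt)$ together with its $v_1^1$-Bockstein differentials through a sufficient range of dimensions to confirm stabilization by $k = 15$, or a careful extraction of these bounds from the published Davis--Mahowald tables. Handling the low-dimensional sporadic behavior (below $k = 15$) and verifying uniformity of the vanishing pattern past $k = 15$ will constitute the bulk of the technical work, while the translation between classical spectral sequence statements and the synthetic formulation is already streamlined by the dictionary of \Cref{prop:banded-adams}.
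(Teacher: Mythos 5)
Your translation of the four banded-vanishing-line conditions into classical Adams spectral sequence statements via \Cref{prop:banded-adams} and \Cref{cor:tau-torsion-bound} is correct, and your identification of the slope-$1/2$ vanishing line, the $v_1$-self-map of Adams filtration $1$, and the Davis--Mahowald computation of $v_1^{-1}\Ext$ as key inputs is all on the right track. However, there is a genuine gap in your argument for conditions $(1')$ and $(2')$, which are precisely the ones that force the torsion level to be $r=1$ (collapse at the $\mathrm{E}_3$-page above the band).

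You argue that no $d_r$ with $r \geq 3$ can land above the slope-$1/5$ line because a $v_1$-periodic source would contradict ``Miller's theorem'' while a $v_1$-nilpotent source would contradict Davis--Mahowald's vanishing bound. This dichotomy does not work. The telescope theorem for $Y$ (which at $p=2$ is due to Mahowald via $bo$-resolutions, not Miller---Miller's proof is at odd primes for $C(p)$; indeed the paper \emph{deduces} the $p=2$ telescope conjecture as Corollary \ref{cor:telescope} rather than assuming it) only constrains the $\mathrm{E}_\infty$-page of the $v_1$-inverted Adams spectral sequence. The $v_1$-periodic $\mathrm{E}_2$-page is the large ring $\F_2[v_1^{\pm 1}][h_{j,1} \mid j \geq 2]$, which is vastly bigger than $\pi_*(L_{K(1)}Y) \cong \F_2[v_1^{\pm 1}][w_1]/w_1^2$, so there \emph{must} be a great many differentials supported on $v_1$-periodic classes. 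Your proposed argument that $v_1$-periodic classes cannot support long differentials is therefore false, and the core computational question---why are all these differentials $d_2$'s and not higher?---is left unaddressed. (Relatedly, your description of the $v_1$-periodic part of $\mathrm{E}_2$ as ``concentrated along lines of slope exactly $1/2$ of intercept at most $-3/2$'' is not accurate: the $h_{j,1}$ generators start at filtration $1$ in widely spaced stems, so the periodic region fills a large wedge rather than a thin band.)

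The paper's mechanism for pinning the differentials to length $2$ is the Miller square: the collapse of the Cartan--Eilenberg spectral sequence above the slope-$1/5$ line (which does require the Davis--Mahowald computation, as the CESS does \emph{not} collapse globally at $p=2$) allows \Cref{thm:Miller-thm} to identify Adams $d_2$-differentials with algebraic Novikov $d_2$-differentials, and the latter are known explicitly from Miller's odd-primary work applied to $C(2)$: $d_2(h_{n,1}) = q_1 h_{n-1,1}^2$ with $\mathrm{E}_3 = \mathrm{E}_\infty = \F_2[q_1^{\pm 1}][h_{2,1}]/h_{2,1}^2$. It is this algebraic identification---not the telescope theorem---that produces the torsion bound. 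The paper also computes $\pi_*(L_{K(1)}Y)$ independently via the $v_1$-localized Adams--Novikov spectral sequence and Ravenel's localization theorem, again without appealing to the telescope conjecture. To repair your argument you would need to either reproduce this Miller-square analysis or supply some other direct computation of the higher Adams differentials; citing the telescope theorem cannot substitute for it.
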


To prove \Cref{thm:Y-v1-band}, we apply the Miller square technique of \cite{Miller} \footnote{See \cite[Section 9]{AndrewsMiller} for a corrected and improved exposition of this technique.} to compute the $\HFt$-Adams spectral sequence of $Y$ above a line of slope $\frac{1}{5}$. The Miller square technique relates the differentials in the $\HFt$-Adams spectral sequence to those in the algebraic Novikov spectral sequence. We will use this relation to prove \Cref{thm:Y-v1-band} by producing many differentials in the $\HFt$-Adams spectral sequence for $Y$. Another major input to this section is a computation of Davis and Mahowald \cite{v1ExtDavisMahowald} that determines the $\mathrm{E}_2$-page of this spectral sequence above a line of slope $\frac{1}{5}$.

\begin{rmk}
  In classical language, \Cref{thm:Y-v1-band} is likely well-known to experts (though no proof appears in print) and the authors thank Mark Behrens for a helpful conversation on the subject.

  Although the statement of \Cref{thm:Y-v1-band} involves synthetic spectra,
  its proof is essentially classical.
  In fact, the bulk of this proof is simply a collation of statements from \cite{Miller} and \cite{v1ExtDavisMahowald}.
\end{rmk}

\begin{rmk}
  More recently, work of Gheorghe, Wang and Xu has identified the Miller square as arising via the motivic Adams spectral sequence \cite{GWX}.\footnote{One could equally well phrase things in terms of the $\nu \HFt$-based Adams spectral sequence in $\BP$-synthetic spectra.} In their formulation, the algebraic Novikov spectral sequence is identified with the motivic Adams spectral sequence for the motivic cofiber of $\tau$.
  The link between the two sides of the Miller square is then provided by the motivic Adams spectral sequence for the sphere which has maps out to both the classical Adams spectral sequence (by inverting $\tau$) and the motivic Adams spectral sequence for the cofiber of $\tau$ (by modding out by $\tau$).
  We encourage the reader interested in extending the computations from this section to read \cite{GWX}.
\end{rmk}

\begin{rmk}
  At the end of this section, we will show in \Cref{cor:telescope} that \Cref{thm:Y-v1-band} implies the height $1$ prime $2$ telescope conjecture, recovering the main result of \cite{MahJEHP}.
\end{rmk}

\begin{ntn}
  Throughout this section we will fix a prime $p$ and write $\mathrm{H}_* (X)$ for the mod $p$ homology of a spectrum $X$.
\end{ntn}

%

Let us begin by describing the Miller square technique, which applies to certain spectra $X$, as we recall below. The Miller square consists of the following diagram of spectral sequences:
%

\begin{center}
	\begin{tikzpicture}[baseline= (a).base]
	    \node[scale=.90] (a) at (0,0){
	\begin{tikzcd}
        \Ext_{E_0 \BP_* \BP} ^{s,i,t} (E_0 \BP_*, E_0 \BP_* (X)) \arrow[d, Rightarrow, "\text{Algebraic Novikov}", swap] \arrow[rr,"\cong"] &  & \Ext^{s,t} _{P_*} (\F_p, \Ext^{i,*} _{E_*} (\F_p, \mathrm{H}_*(X))) \arrow[d, Rightarrow, "\text{Cartan-Eilenberg}"] \\[25pt]
        \Ext_{\BP_* \BP} ^{s,t} (\BP_*, \BP_* (X)) \arrow[dr, Rightarrow, "\text{Adams-Novikov}", swap] & & \Ext_{\A_*} ^{s+i,t+i} (\F_p, \mathrm{H}_* (X)) \arrow[dl, Rightarrow, "\HFp-\text{Adams}"] \\[25pt]
		& \pi_{t-s} (X) &  
	\end{tikzcd}
		};
	\end{tikzpicture}
\end{center}

The reader will of course notice that, as we have drawn it, the diagram is not a square. This is because we want to emphasize the fact that the $\mathrm{E}_2$-pages of the algebraic Novikov and Cartan-Eilenberg spectral sequences do not agree in general, but only under the assumption that $X$ is \emph{$(\BP, \HFp)$-good} as defined below.

\begin{dfn}
    We say that a spectrum $X$ is \emph{$(\BP, \HFp)$-good} if the $\HFp$-Adams spectral sequence for $\BP \otimes X$ converges strongly and collapses on the $\mathrm{E}_2$-page\footnote{Note that under this collapse assumption, strong convergence is implied by conditional convergence.}, and the $\HFp$-Adams filtration on $\BP_* (X)$ agrees with the $(p,v_1,v_2, \dots)$-adic filtration.
\end{dfn}

\begin{exm}
    The mod $p$ Moore spectrum $C(p)$ is $(\BP, \HFp)$-good for any prime $p$. The spectrum $Y$ is $(\BP, \HFt)$-good.
\end{exm}

Let us now recall from \cite{Miller} the definitions of the spectral sequences labeled algebraic Novikov and Cartan-Eilenberg in the above diagram.

Before we describe the algebraic Novikov spectral sequence, we require some notation.

\begin{ntn}
    Let $I = (p,v_1,\dots) \subset \BP_* $. Given a $\BP_* \BP$-comodule $M$, we let $E_0 M$ denote the associated graded of $M$ with respect to the $I$-adic topology. We equip $E_0 M$ with the bigrading $(i,t)$, where $i$ is the $I$-adic grading and $t$ is the grading inherited from $M$.
\end{ntn}

\begin{exm}
    In the grading above, we have \[E_0 \BP_* \cong \F_p[q_0, q_1, \dots] \text{ with } \abs{q_i} = (1,2(p^{i}-1))\] and \[E_0 \BP_* \BP \cong E_0 \BP_* [t_0, t_1, \dots] \text{ with } \abs{t_i} = (0,2(p^{i}-1)).\]
\end{exm}

To obtain the algebraic Novikov spectral sequence for a $\BP_* \BP$-comodule $M$, equip the cobar complex $\Omega^{*} (\BP_* \BP, M)$ by the tensor product filtration determined by the $I$-adic filtrations on $\BP_* \BP$ and $M$. This makes $\Omega^{*} (\BP_* \BP, M)$ into a filtered complex, and the algebraic Novikov spectral sequence is the associated spectral sequence.

\begin{fact}[{\cite[Remark 8.4]{Miller}}]
    The algebraic Novikov spectral sequence converges strongly under the assumption that $M$ is of finite type as a $\BP_*$-module.
\end{fact}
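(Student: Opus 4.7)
The plan is to verify strong convergence by checking Boardman's criterion \cite{Boardman} for the spectral sequence of a filtered complex, namely that the filtration is complete, Hausdorff, and exhaustive on each group, and that the derived $\mathrm{E}_\infty$-term vanishes. The algebraic Novikov spectral sequence arises from the decreasing $I$-adic filtration $F^i \Omega^s(\BP_*\BP, M)$ on the cobar complex, where $F^i$ at cobar length $s$ is the image of $\sum_{a_0 + \cdots + a_s \geq i} I^{a_0}\BP_*\BP \otimes_{\BP_*} \cdots \otimes_{\BP_*} I^{a_{s-1}} \BP_*\BP \otimes_{\BP_*} I^{a_s} M$.

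First I would fix a cobar length $s$ and internal degree $t$ and show that the filtration on $(\BP_*\BP^{\otimes s} \otimes_{\BP_*} M)_t$ is complete and Hausdorff. Since $M$ is of finite type as a $\BP_*$-module and $\BP_*\BP$ is of finite type as a $\BP_*$-bimodule, the group $(\BP_*\BP^{\otimes s} \otimes_{\BP_*} M)_t$ is a finitely generated $\mathbb{Z}_{(p)}$-module. An Artin--Rees-type argument applied to this finitely generated module, using the fact that after modding out by $p$ each $I^{a}/I^{a+1}$ contributes only to sufficiently high internal degree (since all generators of $I$ other than $p$ itself have positive internal degree, and the $p$-adic filtration on the finitely generated $\mathbb{Z}_{(p)}$-piece is complete and Hausdorff), shows that the $I$-adic filtration on this finite type group is separated and complete.

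Next I would verify exhaustiveness, which is automatic since $F^0\Omega^* = \Omega^*$. The remaining step is to show that the derived $\mathrm{E}_\infty$-term vanishes, equivalently that the $\lim^1$ of the tower of $\mathrm{E}_r$-pages vanishes in each tridegree. The plan is to argue that, fixing a tridegree $(s,i,t)$, only finitely many differentials can enter or leave: any incoming differential $d_r$ originates from a tridegree with $I$-adic weight $i - r$, and using the finite type property together with the computation $E_0\BP_*\BP \cong E_0\BP_*[t_0,t_1,\dots]$, one sees that the $\mathrm{E}_1$-term in tridegree $(s-1, i-r, t)$ is eventually zero for $r$ large enough.

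The main obstacle will be the Mittag-Leffler condition at the $p$-adic part, since $p \in I$ has internal degree zero and so the $I$-adic filtration on $\BP_0 = \mathbb{Z}_{(p)}$ is not locally finite. The key subtlety is that although the filtration is infinite in the internal-degree-zero direction, it is $p$-adically complete there, and the relevant bookkeeping is to confirm that this $p$-adic completeness (combined with the finite type hypothesis in every other direction) is enough to rule out $\lim^1$ contributions. I would expect Miller's argument to handle this by a careful application of a graded Artin--Rees lemma to reduce to the $p$-adic case, where completeness is classical.
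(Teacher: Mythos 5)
The paper does not prove this statement; it is recorded as a Fact with a citation to Miller's Remark 8.4, so there is no in-paper proof to compare against. Your sketch, taken on its own terms, has a genuine gap: you assert twice that the $p$-adic filtration on the degree-zero piece $\mathbb{Z}_{(p)}$ is complete, but $\mathbb{Z}_{(p)}$ is \emph{not} $p$-adically complete. Its completion is $\mathbb{Z}_p$, and ${\varprojlim_n}^1\, p^n\mathbb{Z}_{(p)} \cong \mathbb{Z}_p/\mathbb{Z}_{(p)} \neq 0$. Taking $M = \BP_*$ and $(s,t) = (0,0)$, the $I$-adic filtration on $\Omega^0_0 \cong \mathbb{Z}_{(p)}$ is exactly the $p$-adic one, so the filtered cobar complex is not conditionally convergent in Boardman's sense. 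Your Mittag--Leffler argument in the $r$-direction of the $E_r$-pages is correct (each $E_r^{s,i,t}$ is a finite $\mathbb{F}_p$-vector space, since $E_0\BP_*$, $E_0\BP_*\BP$, and $E_0 M$ are finite over $\mathbb{F}_p$ in each fixed bidegree $(i,t)$), so $RE_\infty = 0$ does hold; but $RE_\infty = 0$ in the absence of conditional convergence does not give strong convergence. You correctly flagged the $p$-power direction in internal degree zero as the delicate point, but then resolved it with a false completeness claim rather than an actual argument.

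The cheapest repair, and the one that suffices for all of this paper's applications (which take $M$ to be $\BP_*(C(p))$, $\BP_*(C(2))$, or $\BP_*(Y)$), is to assume in addition that $M$ is $p$-power torsion; then the $I$-adic filtration on each $\Omega^s_t$ is finite, so Hausdorffness, completeness, and conditional convergence are automatic and only the ${\varprojlim_r}^1\, E_r = 0$ check remains. For general finite-type $M$ one would need to work with the $I$-adically completed cobar complex, changing what the spectral sequence converges to; the apparent generality of the Fact as stated is most likely a mild imprecision in translating Miller's 1981 remark into the paper's Boardman-style conventions. A minor terminological point: $\BP_*\BP$ is not ``of finite type as a $\BP_*$-bimodule'' in the sense of being finitely generated (it is a free $\BP_*$-module of countably infinite rank); the property you actually use, and which does hold, is that it is degreewise finitely generated over $\mathbb{Z}_{(p)}$.
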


On the other hand, the Cartan-Eilenberg spectral sequence in the above diagram is that associated to the extension of Hopf algebras \[P_* \to \A_* \to E_* ,\] where for an odd prime $P_* \cong \F_p [\xi_1, \xi_2, \dots]$ and $E_* \cong \Lambda_{\F_p} [\tau_0, \tau_1 , \dots]$. At the prime $2$, one has $P_* \cong \F_2 [\zeta_1 ^2, \zeta_2 ^2, \dots]$ and $E_* \cong \F_2 [\zeta_1, \zeta_2, \dots] / (\zeta_1 ^2, \zeta_2 ^2, \dots)$.

\begin{cnv}
	Here we follow Milnor \cite{MilnorSteenrod} in calling the polynomial generators of the mod $2$ Steenrod algebra $\zeta_i$ rather than the now more common notation $\xi_i$, which conflicts with the notation for an odd prime.
\end{cnv}

Let us now explain the top horizontal arrow in the above diagram.

\begin{lem}
    If $X$ is $(\BP, \HFp)$-good, then there exists a natural isomorphism \[\Ext^{s,i,t} _{E_0 \BP_* \BP} (E_0 \BP_*, E_0 \BP_* (X)) \cong \Ext^{s,t} _{P_*} (\F_p, \Ext^{i,*} (\F_p, \mathrm{H}_* (X))).\]
\end{lem}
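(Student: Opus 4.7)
The plan is to exhibit both sides as $\Ext$ groups arising from the natural map of Hopf algebroids
\[\phi\colon (E_0\BP_*, E_0\BP_*\BP) \to (\F_p, P_*),\qquad q_i\mapsto 0,\ t_i\mapsto \xi_i\]
(with $t_i\mapsto \zeta_i^2$ at $p=2$). The key structural observation is that $E_0\BP_*\BP$ is a split extension along $\phi$: the composite of the canonical inclusion of $P_* = \F_p \otimes_{E_0\BP_*} E_0\BP_*\BP$ with $\phi$ is the identity on $P_*$, and in fact $E_0\BP_*\BP \cong E_0\BP_* \otimes_{\F_p} P_*$ as $E_0\BP_*$-algebras compatibly with the Hopf algebroid structure. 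A formal change-of-rings argument (analogous to, and simpler than, the Cartan--Eilenberg spectral sequence for an extension of Hopf algebras, which here collapses due to the splitting) yields
\[\Ext^{s,t}_{E_0\BP_*\BP}(E_0\BP_*, M) \;\cong\; \Ext^{s,t}_{P_*}(\F_p,\, M \otimes^L_{E_0\BP_*} \F_p)\]
for any $E_0\BP_*\BP$-comodule $M$.

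I would then apply this to $M = E_0\BP_*(X)$, using the $(\BP,\HFp)$-goodness of $X$ to evaluate the right side. The K\"unneth isomorphism together with the standard identification $\mathrm{H}_*\BP \cong P_*$ as $\A_*$-comodules gives $\mathrm{H}_*(\BP \otimes X) \cong P_* \otimes \mathrm{H}_*(X)$ as $\A_*$-comodules, so a standard change of rings identifies the $\HFp$-Adams $\mathrm{E}_2$-page for $\BP \otimes X$ with $\Ext^{*,*}_{E_*}(\F_p, \mathrm{H}_*(X))$. Collapse at $\mathrm{E}_2$, combined with the hypothesis that the Adams filtration agrees with the $I$-adic filtration on $\BP_*(X)$, produces an isomorphism of bigraded $\F_p$-modules
\[E_0\BP_*(X) \;\cong\; \Ext^{*,*}_{E_*}(\F_p, \mathrm{H}_*(X)).\]
In particular, $E_0\BP_*(X)$ is $E_0\BP_*$-flat, so $M \otimes^L_{E_0\BP_*} \F_p$ reduces to $M/I \cong \Ext^{*,*}_{E_*}(\F_p, \mathrm{H}_*(X))$ concentrated in homological degree zero; substituting into the change-of-rings isomorphism above produces the desired identification of bigraded abelian groups.

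To upgrade this to the claimed isomorphism of trigraded $\Ext$ groups, it remains to verify that the $P_*$-comodule structure on $E_0\BP_*(X)/I$ induced by the $E_0\BP_*\BP$-comodule structure agrees, under the identifications above, with the $P_*$-comodule structure on $\Ext^{*,*}_{E_*}(\F_p, \mathrm{H}_*(X))$ arising from the extension of Hopf algebras $P_* \to \A_* \to E_*$. This compatibility is the main obstacle: the underlying $\Ext$ computations are classical, but matching comodule structures (especially at $p=2$, where $\A_*$ is an extension rather than a tensor product of $P_*$ and $E_*$) requires careful bookkeeping. I would handle this by a naturality argument, reducing to the case $X = \bS$ --- where the compatibility is the classical comparison between the algebraic Novikov spectral sequence and the Cartan--Eilenberg spectral sequence, treated in \cite{Miller} and \cite{AndrewsMiller} --- and then deducing the general case by smashing with $X$ and tracking the $\bS$-module structures carried by $E_0\BP_*(X)$ and $\mathrm{H}_*(X)$ through each step of the identification.
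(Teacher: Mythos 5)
Your overall strategy (exploit the split structure $E_0\BP_*\BP \cong E_0\BP_* \widetilde{\otimes} P_*$ and then invoke $(\BP,\HFp)$-goodness) is the same as the paper's, which cites Miller's Proposition~7.6. But your change-of-rings formula is wrong in a way that matters: for a split Hopf algebroid one has
\[\Ext^{s,i,t}_{E_0\BP_*\BP}(E_0\BP_*, M) \cong \Ext^{s,i,t}_{P_*}(\F_p, M),\]
with $M$ kept intact as a $P_*$-comodule (the $E_0\BP_*$-module structure on $M$ merely supplies the $i$-grading). Your proposed formula
\[\Ext^{s,t}_{E_0\BP_*\BP}(E_0\BP_*, M) \cong \Ext^{s,t}_{P_*}(\F_p, M \otimes^L_{E_0\BP_*} \F_p)\]
is false already for $M = E_0\BP_*$: the left side is the algebraic Novikov $\mathrm{E}_2$-page $\Ext_{P_*}(\F_p, E_0\BP_*)$, which carries the full polynomial ring on the $q_i$, whereas your right side collapses to $\Ext_{P_*}(\F_p, \F_p)$. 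The tensor $-\otimes^L_{E_0\BP_*}\F_p$ erases exactly the structure you need.

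The second error is in the final substitution. Under the $(\BP,\HFp)$-good hypothesis the identification is $E_0\BP_*(X) \cong \Ext^{*,*}_{E_*}(\F_p, \mathrm{H}_*(X))$, with the $I$-adic grading $i$ on the left matching the homological degree on the right. You instead write $M/I \cong \Ext^{*,*}_{E_*}(\F_p,\mathrm{H}_*(X))$, but $M/I$ is only the $i=0$ graded piece, i.e.\ $\BP_*(X)/I$, not all of $E_0\BP_*(X)$. The two mistakes happen to produce the correct final expression, but the argument as written does not establish it. If you replace the derived-tensor change-of-rings with the genuine split Hopf algebroid isomorphism and then substitute $M = E_0\BP_*(X) \cong \Ext^{*,*}_{E_*}(\F_p, \mathrm{H}_*(X))$ wholesale, the proof goes through; this is exactly what the paper does. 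Your closing concern about matching $P_*$-comodule structures is legitimate --- the paper's one-sentence appeal to the definition of $(\BP,\HFp)$-good does gloss over this --- and the naturality argument reducing to $X = \bS$ you sketch is a reasonable way to handle it.
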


\begin{proof}
    First, one notes that $E_0 \BP_* \BP$ is a split Hopf algebroid in the sense of \cite[Section 7]{Miller}. Indeed, $E_0 \BP_* \BP_*$ splits as $E_0 \BP_* \tilde{\otimes} P_*$ \cite[p. 305]{Miller}, which implies by \cite[Proposition 7.6]{Miller} that \[\Ext^{s,i,t} _{E_0 \BP_* \BP} (E_0 \BP_*, E_0 \BP_* (X)) \cong \Ext^{s,i,t} _{P_*} (\F_p, E_0 \BP_* (X)).\]

    Now, since $\Ext_{E_*} ^{*,*} (\F_p, \mathrm{H}_* (X))$ is the $\mathrm{E}_2$-page of the $\HFp$-Adams spectral sequence converging to $\BP_* (X)$, the desired isomorphism follows from the definition of $(\BP, \HFp)$-good.
\end{proof}
%

The main tool that we use from \cite{Miller} is the following theorem, which relates the $d_2$-differentials in the algebraic Novikov spectral sequence to those in the $\HFp$-Adams spectral sequence, under the assumption that the Cartan-Eilenberg spectral sequence collapses. We first state a piece of notation, and then the theorem.

\begin{ntn}
    We let $F^{\bullet} \Ext^{s,t} _{\A_*} (\F_p, \mathrm{H}_* (X))$ denote the filtration induced by the Cartan-Eilenberg spectral sequence on $\Ext^{s,t} _{\A_*} (\F_p, \mathrm{H}_* (X))$.
\end{ntn}

\begin{thm}[{\cite[Theorem 6.1]{Miller}}]\label{thm:Miller-thm} \todo{Somebody should go and check if I translated what Haynes wrote correctly.}
    Let $X$ denote a $(\BP, \HFp)$-good spectrum, and let $s,t$ be integers such that the Cartan-Eilenberg spectral sequence converging to $\Ext^{s,t}_{\A_*} (\F_p, \mathrm{H}_* (X))$ collapses at the $\mathrm{E}_2$-page in total degrees $(s,t)$ and $(s+2,t+1)$.

    Then the $d_2$-differential $d_2 ^{\HFt-ASS}$ induces a map
    \[d_2 ^{\HFt-\mathrm{ASS}} : F^\bullet \Ext^{s,t}_{\A_*} (\F_p, \mathrm{H}_* (X)) \to F^{\bullet+1} \Ext^{s+2,t+1}_{\A_*} (\F_p, \mathrm{H}_* (X)) \]
    and hence a map
    \[d_2 ^{\HFt-\mathrm{ASS}} : \frac{F^\bullet \Ext^{s,t}_{\A_*} (\F_p, \mathrm{H}_* (X))}{F^{\bullet+1} \Ext^{s,t}_{\A_*} (\F_p, \mathrm{H}_* (X))} \to \frac{F^{\bullet+1} \Ext^{s+2,t+1}_{\A_*} (\F_p, \mathrm{H}_* (X))}{F^{\bullet+2} \Ext^{s+2,t+1}_{\A_*} (\F_p, \mathrm{H}_* (X))}. \]
    Moreover, this associated-graded map may be identified with $-d_2 ^{\hspace{0.05cm}\mathrm{alg-Nov}}$, where $d_2 ^{\hspace{0.05cm}\mathrm{alg-Nov}}$ is the $d_2$-differential in the algebraic Novikov spectral sequence.
\end{thm}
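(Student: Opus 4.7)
The plan is to follow the strategy of Miller \cite{Miller}, where the identification of the two $d_2$-differentials emerges from a single ``grand'' filtered complex that refines both the $\mathrm{H}\mathbb{F}_p$-Adams and Adams--Novikov cobar complexes. The starting observation is that, under the $(\mathrm{BP},\mathrm{H}\mathbb{F}_p)$-good hypothesis, the $\mathrm{H}\mathbb{F}_p$-Adams resolution applied to the tower $X \leftarrow \mathrm{BP}\otimes X \leftarrow \mathrm{BP}^{\otimes 2}\otimes X \leftarrow \cdots$ assembles into a bi-indexed object whose two associated spectral sequences are (up to the isomorphism of the lemma preceding the theorem) the Cartan--Eilenberg spectral sequence and the algebraic Novikov spectral sequence. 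The Cartan--Eilenberg filtration $F^\bullet$ on $\mathrm{Ext}_{\mathcal{A}_*}^{*,*}(\mathbb{F}_p,\mathrm{H}_*(X))$ can thus be represented at the cochain level as the $I$-adic filtration transported across the isomorphism $\mathrm{Ext}^{s,i,t}_{E_0 \mathrm{BP}_*\mathrm{BP}}(E_0\mathrm{BP}_*,E_0\mathrm{BP}_*(X)) \cong \mathrm{Ext}^{s,t}_{P_*}(\mathbb{F}_p,\mathrm{Ext}^{i,*}_{E_*}(\mathbb{F}_p,\mathrm{H}_*(X)))$.

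The first step is to check that $d_2^{\mathrm{H}\mathbb{F}_p\text{-ASS}}$ is filtration-preserving, and in fact shifts filtration by one. Here the collapse hypothesis enters crucially: in the total degrees $(s,t)$ and $(s{+}2,t{+}1)$ the CESS collapses, so every class in $\mathrm{Ext}_{\mathcal{A}_*}$ in those bidegrees admits a cochain-level lift to the $E_1$-page of the CESS, i.e. to a cobar cocycle in $\Omega^*_{E_*}(\mathrm{H}_*(X))$ whose differential is already accounted for by the $P_*$-cobar differential. One chooses such a lift $\widetilde{x}$ of an arbitrary class $x \in F^p\mathrm{Ext}^{s,t}$ and analyzes the formula for the $\mathrm{H}\mathbb{F}_p$-Adams $d_2$ obtained from the standard one-stage desuspension of the Adams tower. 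The analysis shows that the leading term of $d_2^{\mathrm{H}\mathbb{F}_p\text{-ASS}}(\widetilde{x})$ lies in filtration $p+1$, establishing that $d_2^{\mathrm{H}\mathbb{F}_p\text{-ASS}}$ raises filtration by at least one.

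The second step is to identify the associated graded map with $-d_2^{\mathrm{alg\text{-}Nov}}$. The algebraic Novikov $d_2$ is by construction the ``first correction'' differential on the cobar complex $\Omega^*(\mathrm{BP}_*\mathrm{BP},\mathrm{BP}_*(X))$ arising when one tries to lift a cocycle from the $I$-adic associated graded back to the ungraded complex and takes the leading defect. Under the comparison of the previous paragraph, precisely the same lifting-and-taking-leading-defect procedure, carried out inside $\Omega^*(\mathcal{A}_*,\mathrm{H}_*(X))$ with respect to the CE filtration, recovers the formula for $d_2^{\mathrm{H}\mathbb{F}_p\text{-ASS}}$ modulo $F^{p+2}$. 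The sign $-1$ comes from the standard convention by which the Adams $d_2$ is built from the connecting homomorphism in the opposite direction to the cobar coboundary. A short diagram chase, combining the Adams resolution of $\mathrm{BP}\otimes X$ with the geometric resolution of $X$ by $\mathrm{BP}$-powers, produces the required equality of associated-graded maps.

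The main obstacle is bookkeeping: one must simultaneously track the $\mathrm{H}\mathbb{F}_p$-Adams filtration, the CE (equivalently $I$-adic) filtration, and the internal and homological gradings, while verifying that the collapse hypotheses force every would-be obstruction term to vanish in the required bidegrees. Once this is in place, the identification of $d_2^{\mathrm{H}\mathbb{F}_p\text{-ASS}}$ with $-d_2^{\mathrm{alg\text{-}Nov}}$ on the associated graded is essentially formal. The remainder of the argument is precisely Miller's \cite[Theorem 6.1]{Miller}, which we cite rather than reproduce.
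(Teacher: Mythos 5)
The paper gives no proof of this statement at all: it is stated as a direct citation to Miller's \cite[Theorem 6.1]{Miller}, and nothing more is offered. Your proposal correctly recognizes this and, after a reasonable informal summary of the ``Miller square'' strategy (bifiltered cobar complex, CE filtration $\leftrightarrow$ $I$-adic filtration under the $(\BP,\HFp)$-good hypothesis, the collapse hypothesis forcing obstruction terms to vanish so that the Adams $d_2$ is filtration-shifting, sign from opposite orientation of the connecting map), explicitly defers to Miller for the actual argument --- which is exactly what the paper does. So your approach is essentially the same as the paper's; the expository sketch you add is accurate as an outline of Miller's proof but is not, and does not claim to be, a self-contained argument.
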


Miller's main application is to the mod $p$ Moore spectrum $X = C(p)$ for an odd prime $p$. In this case, the Cartan-Eilenberg spectral sequence automatically collapses, so \Cref{thm:Miller-thm} applies. Miller is therefore able to compute the $\HFp$-Adams spectral sequence above a line of slope $\frac{1}{p^2-p-1}$ by studying the algebraic Novikov spectral sequence.

The main obstacle to carrying out Miller's program at the prime $2$ is that the Cartan-Eilenberg spectral sequence no longer collapses. What allows us to proceed is a computation of Davis and Mahowald \cite{v1ExtDavisMahowald} that implies that the Cartan-Eilenberg spectral sequence for $Y$ collapses above a line of slope $\frac{1}{5}$. 

The main steps in the proof of \Cref{thm:Y-v1-band} are as follows:

\begin{enumerate}
    \item Using Davis and Mahowald's computation \cite{v1ExtDavisMahowald} of $v_1 ^{-1} \Ext^{s,t} _{\A_*} (\F_2, \mathrm{H}_* (Y))$, deduce that the $v_1$-localized Cartan-Eilenberg spectral sequence collapses for $Y$.
    \item Recall from \cite{Miller} the structure of the $v_1$-localized algebraic Novikov spectral sequence for $Y$.
    \item Show that the $v_1$-local computations above agree with those before $v_1$-localizing above a line of slope $\frac{1}{5}$.
    \item Use \Cref{thm:Miller-thm} to compute the $\HFt$-Adams spectral sequence for $Y$ above a line of slope $\frac{1}{5}$. Conclude that \Cref{thm:Y-v1-band} holds.
\end{enumerate}

We begin by recalling some basic facts about $Y$.

\begin{prop}[{\cite[Theorem 1.2]{v1v2DavisMahowald}}]\label{prop:v1-self-map}
	There is a $v_1$-self map $v_1 : \Sigma^2 Y \to Y$ of $Y$, which is of $\HFt$-Adams filtration one.
\end{prop}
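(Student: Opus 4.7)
The plan is to follow the classical route of Davis--Mahowald and construct $v_1 \colon \Sigma^2 Y \to Y$ as a permanent cycle in the $\HFt$-Adams spectral sequence computing $[\Sigma^2 Y, Y]_*$. Concretely, I would first describe the $\A_*$-comodule $\mathrm{H}_*(Y)$: because $Y = C(2) \otimes C(\eta)$, it has a four-cell structure with $\mathrm{H}_*(Y) \cong \F_2\{1, a_1, a_2, a_3\}$ and the only nontrivial Steenrod operations are $\mathrm{Sq}^1$ (from $1$ to $a_1$ and from $a_2$ to $a_3$) and $\mathrm{Sq}^2$ (from $a_1$ to $a_3$). Since $Y$ is finite and self-dual up to a shift, one has $DY \simeq \Sigma^{-3} Y$, so the target spectral sequence is isomorphic to a shift of the Adams spectral sequence for $Y \otimes Y$.

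Next, I would identify a candidate class $v_1 \in \Ext^{1,3}_{\A_*}(\mathrm{H}_* Y, \mathrm{H}_* Y)$. The cleanest way is to observe that $\BP_*(Y)$ is concentrated in even degrees, so the usual polynomial generator $v_1 \in \BP_2$ acts on $\BP_*(Y)$; reducing along the collapse of the $\HFt$-Adams spectral sequence for $\BP \otimes Y$ (the fact that $Y$ is $(\BP, \HFt)$-good), one obtains a distinguished class in $\Ext^{1,3}_{\A_*}$ realising this action, which is the class that ought to give rise to $v_1$.

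The main step—and the main obstacle—is to verify that this class is a permanent cycle. Here I would appeal to a sparsity argument on the Adams $\mathrm{E}_2$-page of $[Y,Y]$ in a small range of stems, showing that every potential target group $\Ext^{r+1,r+3}_{\A_*}(\mathrm{H}_* Y, \mathrm{H}_* Y)$ for $r \geq 2$ is empty in the relevant bidegrees. The four-cell structure of $Y$ keeps $\mathrm{H}_*(Y \otimes DY)$ small enough that this check is directly computable, and is essentially the content of Davis--Mahowald \cite{v1v2DavisMahowald}; combined with the vanishing line results that will appear later in \Cref{sec:Y}, it suffices.

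Once the permanent cycle property is established, any lift to $\pi_{0,0}$ of the synthetic mapping spectrum, or equivalently any map $\Sigma^2 Y \to Y$ detected by this $\Ext$-class, is by construction of $\HFt$-Adams filtration exactly one, which yields the desired $v_1$-self map. Checking that it is indeed a $v_1$-self map (i.e.\ acts as multiplication by $v_1$ on $\BP_*(Y)$) then follows from the identification of the $\Ext$-class above with the image of $v_1$ under the edge homomorphism of the Adams spectral sequence for $\BP \otimes Y$.
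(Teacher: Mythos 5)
The paper does not prove this proposition; it is stated with a citation to Davis--Mahowald \cite[Theorem 1.2]{v1v2DavisMahowald} and no proof is given. So your sketch is not a rephrasing of the paper's argument but an attempt to reprove a result the paper imports wholesale.

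Your general strategy---find the algebraic $v_1$ in $\Ext^{1,3}_{\A_*}(\mathrm{H}_*Y,\mathrm{H}_*Y)$, show it is a permanent cycle, and match the lift against the action of $v_1$ on $\BP_*(Y)$---is the standard route, and the self-duality $DY\simeq \Sigma^{-3}Y$ and the identification of the candidate class via $(\BP,\HFt)$-goodness are both correct. However, the central step is not carried out. You say the permanent-cycle property ``follows from sparsity'' and cite ``the vanishing line results that will appear later in \Cref{sec:Y}.'' This is circular: the vanishing results of \Cref{sec:Y} (e.g.\ \Cref{lemm:vanishing-results}, \Cref{thm:Y-v1-band}) are proved \emph{using} the $v_1$-self-map of $Y$, for instance via $\cof(Y\xrightarrow{v_1}Y)$ and $v_1$-localised Ext computations. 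Moreover, even setting the circularity aside, those vanishing lines have slope $\tfrac15$ and apply above a line in the $(t-s,s)$-plane; the potential targets $d_r(v_1)$ live in $\Ext^{1+r,\,2+r}_{\A_*}(\mathrm{H}_*Y,\mathrm{H}_*Y)$ (stem~$1$, filtration $\geq 3$ of the self-maps spectral sequence, i.e.\ for the four-fold-larger complex $DY\otimes Y$), which is not in the range those results govern. A correct verification needs either an explicit low-dimensional Ext computation for $DY\otimes Y$ or Adams' periodicity machinery (as in the original references), neither of which appears in the sketch.

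There is also a minor error in the description of $\mathrm{H}_*(Y)$ as an $\A_*$-comodule: by the Cartan formula there are \emph{two} nontrivial $\mathrm{Sq}^2$'s (connecting cells $0\text{--}2$ and $1\text{--}3$), not just the one from $a_1$ to $a_3$ that you list. This does not affect the strategy but should be fixed if the sketch is to be made precise.
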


\begin{lem}\label{lemm:h21-lives}
	There is a nonzero element $w_1 \in \pi_5 (Y)$ which is represented in the Adams spectral sequence of $Y$ by the cocycle $h_{2,1} = [\zeta_2 ^2]$.
\end{lem}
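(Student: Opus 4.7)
The plan is to show that $h_{2,1} = [\zeta_2^2]$ defines a nonzero permanent cycle in the $\HFt$-Adams spectral sequence for $Y$, detecting a class $w_1 \in \pi_5(Y)$.

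First I will produce an explicit cocycle representative in the cobar complex for $\mathcal{A}_*$ with coefficients in $\mathrm{H}_*(Y) = \mathbb{F}_2\{y_0,y_1,y_2,y_3\}$. The naïve 1-cochain $[\zeta_2^2 \otimes y_0]$ is not a cocycle, since $\bar{\Delta}(\zeta_2^2) = \zeta_1^2 \otimes \zeta_1^4$. However, using the coaction $\bar{\psi}(y_2) = \zeta_1^2 \otimes y_0$ (dual to $\mathrm{Sq}^2 x_0 = x_2$) together with the primitivity of $\zeta_1^{2^j}$ for $j \geq 1$, a direct computation will show that
\[
[\zeta_2^2 \otimes y_0] + [\zeta_1^6 \otimes y_0] + [\zeta_1^4 \otimes y_2]
\]
is a genuine cocycle (the two summands $[\zeta_1^2 | \zeta_1^4 | y_0]$ from the $\zeta_2^2$ term and $[\zeta_1^4 | \zeta_1^2 | y_0]$ from the $\zeta_1^4 \otimes y_2$ term are exactly cancelled by $\bar{\Delta}(\zeta_1^6)$). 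Since $\mathrm{H}_*(Y)$ is concentrated in degrees $\le 3$, there are no 0-cochains in internal degree $6$, so this cocycle cannot be a coboundary; it therefore represents a nonzero class $h_{2,1} \in \mathrm{Ext}^{1,6}_{\mathcal{A}_*}(\mathbb{F}_2, \mathrm{H}_*(Y))$, whose leading term justifies the notation $[\zeta_2^2]$.

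Next I will argue that $h_{2,1}$ survives to the $\mathrm{E}_\infty$-page. Because it lies at Adams filtration $s = 1$, no incoming differential $d_r$ for $r \ge 2$ can hit it, since the source would live at filtration $1 - r < 0$. Outgoing differentials take $h_{2,1}$ to bidegree $(1+r, 5+r)$, i.e., to stem $4$ at filtration at least $3$. What remains is to verify the vanishing of $\mathrm{Ext}^{s,s+4}_{\mathcal{A}_*}(\mathbb{F}_2, \mathrm{H}_*(Y))$ for $s \ge 3$; this can be read off either by a direct cobar computation in this finite range or from the Davis--Mahowald computation \cite{v1ExtDavisMahowald} of $\mathrm{Ext}(Y)$ that is invoked elsewhere in this section.

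The main work is the vanishing check in stem $4$ at filtration $\ge 3$. However, this is a small, bounded calculation in a range where $\mathrm{Ext}(Y)$ is completely understood, so the obstacle is modest, and the lemma will follow once these targets are ruled out.
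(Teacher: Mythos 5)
Your approach is essentially the same as the paper's, which simply reads the claim off the low-degree Adams $\mathrm{E}_2$-chart for $Y$ (the paper cites the chart on p.\ 620 of Davis--Mahowald's $v_1$- and $v_2$-periodicity paper, rather than the $v_1$-inverted $\Ext$ reference you cite); your cobar calculation just spells out what the paper delegates to that chart. One useful simplification: your remaining check that $\Ext^{s,s+4}_{\mathcal{A}_*}(\mathbb{F}_2, \mathrm{H}_*(Y))=0$ for $s \geq 3$ does not require the chart at all, since it is an immediate consequence of the vanishing line $\Ext^{s,t}_{\mathcal{A}_*}(\mathbb{F}_2,\mathrm{H}_*(Y))=0$ for $s > \tfrac{1}{2}(t-s)$, which the proof of \Cref{thm:Y-v1-band} in this section deduces from $\Ext_{\mathcal{A}(1)_*}(\mathbb{F}_2,\mathrm{H}_*(Y)) \cong \mathbb{F}_2[v_1]$ together with Miller--Wilkerson.
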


\begin{proof}
    This follows immediately from calculating the first five stems of the $\mathrm{E}_2$-page of the Adams spectral sequence for $Y$. See for example the chart on \cite[p. 620]{v1v2DavisMahowald}.
\end{proof}

We now collect the computation of some $v_1$-inverted Ext groups.

\begin{thm}\label{thm:v1-inv-comp}
	There are algebra isomorphisms
    \begin{align} \tag{1} \label{eq:eq1}
        v_1 ^{-1} \Ext^{*,*} _{\mathcal{A}_{*}} (\F_2,\mathrm{H}_* (Y)) \cong \F_2 [v_1^{\pm1}][h_{j,1} \vert j \geq 2],
    \end{align}
    \begin{align} \tag{2} \label{eq:eq2}
        q_1 ^{-1} \Ext^{*,*,*} _{P_{*}} (\F_2, E_0 \BP_* (C(2))) \cong \F_2[q_1 ^{\pm 1}][h_{j,1} \vert j \geq 1],
    \end{align}
    and
    \begin{align} \tag{3} \label{eq:eq3}
        q_1 ^{-1} \Ext^{*,*,*} _{P_{*}} (\F_2, E_0 \BP_* (Y)) \cong \F_2[q_1 ^{\pm 1}][h_{j,1} \vert j \geq 2],
    \end{align}
    where \[\abs{h_{j,1}} = (1,2^{j+1}-2) \text{ in } (\ref{eq:eq1}),\] and \[\abs{h_{j,1}} = (1,0,2^{j+1}-2) \text{ in } (\ref{eq:eq2}) \text{ and } (\ref{eq:eq3}).\]
\end{thm}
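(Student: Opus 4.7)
The plan is to handle (1) by direct appeal to the computation of Davis and Mahowald, and to deduce (2) and (3) from explicit analysis of the cobar complex for $P_*$ after $q_1$-inversion. Each of the three statements is a $v_1$- or $q_1$-periodic Ext computation whose overall shape is dictated by the fact that the relevant Hopf algebra ($\A_*$ or $P_*$) is very close to polynomial.

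For (1), I would invoke \cite{v1ExtDavisMahowald} directly. Their argument proceeds by change-of-rings: the attaching maps $2$ and $\eta$ of $Y$ are precisely detected by the Milnor primitives $Q_0$ and $Q_1$, making $\mathrm{H}^*(Y;\F_2)$ free of rank one over the exterior subalgebra $E(Q_0,Q_1) \subset \A$. Dualizing and applying the Cartan--Eilenberg spectral sequence for the corresponding quotient of $\A_*$, one reduces $\Ext^{*,*}_{\A_*}(\F_2,\mathrm{H}_*(Y))$ to an Ext computation over a near-polynomial coalgebra. After $v_1$-inversion (using that the $v_1$-self map of \Cref{prop:v1-self-map} is represented by a class of Adams filtration one), this Ext becomes polynomial on $v_1^{\pm 1}$ and the generators $h_{j,1}$ with $j \geq 2$.

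For (2) and (3), the base case is that $P_* = \F_2[\zeta_1^2,\zeta_2^2,\dots]$ is a polynomial Hopf algebra, so its cohomology is the polynomial algebra $\Ext^{*,*}_{P_*}(\F_2,\F_2) = \F_2[h_{j,1} : j \geq 1]$ on the primitive cocycles $h_{j,1} = [\zeta_j^2]$ in bidegrees $(1,2^{j+1}-2)$. Both $E_0\BP_*(C(2))$ and $E_0\BP_*(Y)$ are $P_*$-comodules whose coaction is read off from the Quillen/Ravenel formula for the right unit $\eta_R : \BP_* \to \BP_*\BP$ modulo higher $I$-adic filtration. For (2), $E_0\BP_*(C(2)) = \F_2[q_1,q_2,\dots]$ as an algebra, and the explicit form of $\psi(q_j)$ in the cobar complex produces relations of the form $q_j \sim q_1 \cdot (\text{polynomial in the } h_{k,1})$ once $q_1$ is inverted, so that all $q_j$ with $j \geq 2$ are eliminated while every $h_{j,1}$ survives. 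For (3), the spectrum $Y$ contributes an extra top-cell class $\bar x$ in internal degree three whose $P_*$-coaction involves a $\zeta_1^2$-term (reflecting the fact that the top cell is attached by $\eta$), and this extra coaction term makes $h_{1,1}$ a coboundary in the $q_1$-inverted cobar complex, producing the stated answer with the $j=1$ generator removed.

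The main obstacle I anticipate is the precise bookkeeping of the $P_*$-coaction on $E_0\BP_*(Y)$, and in particular verifying that the coaction on $\bar x$ contributes a cobar differential that hits exactly $h_{1,1}$ after $q_1$-inversion, without spurious additional coboundaries. This reduces to explicit calculation with the leading terms of $\eta_R(v_n)$ and with the extension class of the short exact sequence
\[
0 \to E_0\BP_*(C(2)) \to E_0\BP_*(Y) \to \Sigma^3 E_0\BP_*(C(2)) \to 0
\]
of $P_*$-comodules. The computation is finite-dimensional in each internal degree and should mirror Miller's odd-primary analysis of the mod-$p$ Moore spectrum \cite{Miller} quite closely.
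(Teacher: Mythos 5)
Your outline for (1) matches the paper's: direct appeal to \cite[Theorem 1.3]{v1ExtDavisMahowald}. Where you genuinely diverge is in (2) and (3). The paper proves (2) by citing \cite[Corollary 3.5]{MillerLocalization}, whereas you propose to reprove it by a direct analysis of the $q_1$-inverted cobar complex for $P_*$, reading the coaction off the leading terms of $\eta_R(v_n)$. That route should work and is close in spirit to what Miller actually does, but it is substantially more labor than a citation, and the description ``relations of the form $q_j \sim q_1 \cdot (\text{polynomial in }h_{k,1})$'' is a loose gloss on what is really a computation of $\Ext$ over the (split) Hopf algebroid $E_0\BP_*\BP$ rather than an algebra-level elimination of generators; you would need to be careful that you are computing cohomology, not imposing relations. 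For (3), the paper's argument is sharper: it simply observes that the self-map $\eta$ of $C(2)$ induces multiplication by $h_{1,1}$ on $q_1$-localized $\Ext$, so the long exact sequence associated to the cofiber sequence $\Sigma C(2) \to C(2) \to Y$ immediately gives $\F_2[q_1^{\pm 1}][h_{j,1} : j\ge 2]$ as the cokernel of an injective $h_{1,1}$-multiplication. Your version—computing the extension class of a short exact sequence of $P_*$-comodules and verifying it kills $h_{1,1}$ in the cobar complex—is the same statement at the level of connecting homomorphisms, but carrying it out as written requires more bookkeeping, which you yourself flag as the anticipated obstacle; the LES viewpoint bypasses that bookkeeping entirely. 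Finally, a small error: the short exact sequence you write down should have a $\Sigma^2$, not a $\Sigma^3$, on the quotient term; $Y$ has top cell in degree $3$, but the cofiber $Y/C(2)$ is $\Sigma^2 C(2)$, so
\[
0 \to E_0\BP_*(C(2)) \to E_0\BP_*(Y) \to \Sigma^2 E_0\BP_*(C(2)) \to 0.
\]
This does not affect the shape of the conclusion, but it would trip up the degree bookkeeping you are worried about.
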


\begin{proof}
    We first need to justify that these localized Ext groups admit the structure of algebras. In the case of the second listed group, this follows from the fact that $\BP_* (C(2)) \cong \BP_* / 2$ is a comodule algebra over $\BP_* \BP$. The case of the first group is \cite[Theorem 3.1]{v1ExtDavisMahowald}, and that of the third follows from its proof.

    Now, the first isomorphism is \cite[Theorem 1.3]{v1ExtDavisMahowald}. The second isomorphism follows from \cite[Corollary 3.5]{MillerLocalization}. Finally, the third isomorphism is obtained from the second because the self-map $\eta$ of $C(2)$ induces multiplication by $h_{1,1}$ on localized Ext groups.
\end{proof}

We next recall from \cite{Miller} the computation of the the $v_1$-localized algebraic Novikov spectral sequence for $C(2)$, from which we deduce it for $Y$.

\begin{thm}[{\cite[Equation (9.20)]{Miller}}]\label{thm:localized-alg-Nov-Moore}
    The $d_2$-differentials in the $v_1$-localized algebraic Novikov spectral sequence for $C(2)$ \[q_1 ^{-1} \Ext^{s,i,t} _{P_*} (\F_2, E_0 \BP_* (C(2))) \Rightarrow v_1^{-1} \Ext^{s,t} _{\BP_* \BP} (\BP_*, \BP_*(C(2)))\] are derivations and are determined by \[d_2 (h_{n,1}) = q_1 h_{n-1,1}^2 \text{ for } n \geq 3.\] The spectral sequence collapses at the $\mathrm{E}_3$-term with $\mathrm{E}_3 = \mathrm{E}_\infty$-page \[\F_2 [q_1 ^{\pm 1}][h_{1,1}, h_{2,1}]/(h_{2,1}^2).\]
\end{thm}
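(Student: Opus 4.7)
The plan is to follow the strategy of Miller's original computation \cite{Miller}, adapted to $p=2$. The algebraic Novikov spectral sequence arises from the $I$-adic filtration on the cobar complex $\Omega^*(\BP_*\BP, \BP_*/2)$, where $I = (2, v_1, v_2, \ldots)$. Because $\BP_*/2$ is naturally a comodule algebra over $\BP_*\BP$ and because the $I$-adic filtration is multiplicative, the cobar complex inherits the structure of a filtered differential graded algebra. It follows that the algebraic Novikov spectral sequence is multiplicative, and in particular that $d_2$ is a derivation for the induced cup product on $\mathrm{E}_2 = q_1^{-1}\Ext_{P_*}^{*,*,*}(\F_2, E_0 \BP_*(C(2)))$. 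Combined with Theorem \ref{thm:v1-inv-comp}(2), this reduces the entire calculation to computing $d_2$ on the algebra generators $h_{n,1}$ and $q_1$, and then passing to cohomology.

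The key computation is $d_2(h_{n,1}) = q_1 h_{n-1,1}^2$ for $n \geq 3$. To establish this, I would lift $h_{n,1} = [\zeta_n^2] \in \Omega^1(P_*, \F_2)$ to a cobar cochain in $\Omega^1(\BP_*\BP, \BP_*/2)$, using the splitting $E_0\BP_*\BP \cong E_0\BP_* \tilde\otimes P_*$ and its Hazewinkel-style lift to $\BP_*\BP$. The cobar differential of such a lift acquires a new contribution coming from the fact that the $\BP_*\BP$-coaction on $\BP_*/2$ is not trivial modulo $I^2$: this contribution is controlled by $\eta_R(v_n) - v_n$. Quillen's formula yields $\eta_R(v_n) \equiv v_n + v_1 \, t_{n-1}^{2^n} \pmod{I^2 + (v_2, v_3, \ldots, v_{n-2})}$ in the relevant filtration, and $t_{n-1}^{2^n} \in P_*$ represents $h_{n-1,1}^2$ in the associated graded. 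Translating through the $I$-adic filtration gives exactly the claimed formula, and confirms that $d_2 h_{1,1} = 0$ and $d_2 h_{2,1} = 0$ (since there is no $v_0$ in the $q_1$-localized context and $d_2 h_{2,1}$ would land in a trivial bidegree after localizing).

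Granted these differentials, computing the $\mathrm{E}_3$-page reduces to a Koszul-type cohomology calculation on $\F_2[q_1^{\pm 1}][h_{j,1} \mid j \geq 1]$ with the derivation $d_2(h_{n,1}) = q_1 h_{n-1,1}^2$ for $n \geq 3$. The generators $h_{j,1}$ for $j \geq 3$ are each killed by the differential on themselves, while $h_{j-1,1}^2$ for $j \geq 3$ appears as the boundary $q_1^{-1} d_2(h_{j,1})$ and so becomes zero; this leaves precisely the algebra $\F_2[q_1^{\pm 1}][h_{1,1}, h_{2,1}]/(h_{2,1}^2)$. Collapse at $\mathrm{E}_3$ then follows on tridegree grounds: the surviving classes are supported on a sparse set of $(s,i,t)$ with no room for further $d_r$ with $r \geq 3$.

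The main obstacle will be the cocycle computation in Step~2: carefully choosing representatives in $\Omega^*(\BP_*\BP, \BP_*/2)$ so that reduction mod the appropriate power of $I$ recovers $h_{n,1}$, and then tracking the leading-order term of $d(h_{n,1})$ using Quillen's formula. This is essentially filtration bookkeeping, but it is the only step where anything nontrivial about $\BP$ enters; once the formula $d_2(h_{n,1}) = q_1 h_{n-1,1}^2$ is in hand, the derivation property and a direct cohomology calculation deliver the rest.
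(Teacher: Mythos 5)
The paper does not prove this statement: it is stated as a theorem with a citation to \cite[Equation (9.20)]{Miller}, and no proof appears in the paper. Your proposal therefore supplies an argument that the paper itself does not, and it is fair to measure it against Miller's original derivation, which it tracks in outline (multiplicativity $\Rightarrow$ $d_2$ a derivation, reduce to generators, identify the differential from the cobar complex, compute cohomology).

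The key step, however, is wrong as written. The formula you cite,
\[
\eta_R(v_n) \equiv v_n + v_1\, t_{n-1}^{2^n} \pmod{I^2 + (v_2,\dots,v_{n-2})},
\]
is dimensionally inconsistent: $|v_n| = 2(2^n-1)$, while $|v_1 t_{n-1}^{2^n}| = 2 + 2^n\cdot 2(2^{n-1}-1) = 2 + 2^{2n} - 2^{n+1}$, and these agree for no $n\ge 2$. The correct low-order corrections to $\eta_R(v_n)$ involve $v_{n-1}$ (schematically $v_{n-1}t_1^{2^{n-1}}$, $v_{n-1}^2 t_1$), not $v_1$, and sit in $I$-adic filtration one through $v_{n-1}$ rather than through $v_1$. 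There is also a conceptual imprecision: $h_{n,1}$ lives in cobar degree $1$, so its differential is governed by the reduced coproduct $\bar\Delta$ on $\BP_*\BP$ (and the coaction on the left factor), not directly by $\eta_R$ applied to $v_n$. The quantity $\eta_R(v_n) - v_n$ controls the differential off $\Omega^0$; it enters the computation on $\Omega^1$ only indirectly through the Hopf algebroid structure formulas relating $\Delta(t_n)$ to $\eta_R$. Your sketch blurs this distinction and, in the end, the single formula from which you derive $d_2(h_{n,1}) = q_1 h_{n-1,1}^2$ is the one that is wrong. Since this is the entire content of the differential calculation, the proof does not go through as written.

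The remainder is fine: the Koszul-type cohomology calculation producing $\F_2[q_1^{\pm 1}][h_{1,1}, h_{2,1}]/(h_{2,1}^2)$ from the claimed $d_2$ is correct, and the collapse at $\mathrm{E}_3$ (which you wave at via sparsity) is most cleanly argued by noting that the multiplicative generators $q_1^{\pm 1}$, $h_{1,1}$, $h_{2,1}$ are all permanent cycles, or by size-counting against the known $v_1^{-1}\Ext_{\BP_*\BP}(\BP_*,\BP_*/2)$. But to make the proof genuinely work, you would need to redo the cobar computation along the lines Miller actually carries out, with the correct right-unit and coproduct formulas in $\BP_*\BP$; as it stands the central formula is not salvageable by a sign or index tweak.
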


\begin{cor}\label{prop:localized-alg-Nov}
    The $d_2$-differentials in the $v_1$-localized algebraic Novikov spectral sequence for $Y$ \[q_1 ^{-1} \Ext^{s,i,t} _{P_*} (\F_2, E_0 \BP_* (Y)) \Rightarrow v_1^{-1} \Ext^{s,t} _{\BP_* \BP} (\BP_*, \BP_*(Y))\] are derivations and are determined by \[d_2 (h_{n,1}) = q_1 h_{n-1,1}^2 \text{ for } n \geq 3.\] The spectral sequence collapses at the $\mathrm{E}_3$-term with $\mathrm{E}_3 = \mathrm{E}_\infty$-page \[\F_2 [q_1 ^{\pm 1}][h_{2,1}]/(h_{2,1}^2).\]
\end{cor}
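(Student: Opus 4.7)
The plan is to deduce this corollary from \Cref{thm:localized-alg-Nov-Moore} by applying naturality of the algebraic Novikov spectral sequence to the map $\phi : C(2) \to Y$ coming from the cofiber sequence $\Sigma C(2) \xrightarrow{\eta} C(2) \to Y$. Since $\BP_*(\eta) = 0$, this produces a short exact sequence $0 \to \BP_*(C(2)) \to \BP_*(Y) \to \Sigma^2 \BP_*(C(2)) \to 0$ of $\BP_*\BP$-comodules, compatible with the $I$-adic filtration. Unpacking the resulting long exact sequence after $q_1$-inversion, and using that $\eta$ acts by multiplication by $h_{1,1}$ on $v_1$-local Ext (as in the proof of \Cref{thm:v1-inv-comp}(3)), shows that $\phi$ induces the algebra surjection
\[\F_2[q_1^{\pm 1}][h_{j,1}:j \geq 1] \twoheadrightarrow \F_2[q_1^{\pm 1}][h_{j,1}:j \geq 2]\]
on $q_1$-inverted $\mathrm{E}_2$-pages, with kernel the ideal $(h_{1,1})$.

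Given this identification, naturality of the spectral sequence together with the surjectivity of $\phi_*$ determines $d_2^Y$ entirely from $d_2^{C(2)}$. For $n \geq 3$, the formula $d_2(h_{n,1}) = q_1 h_{n-1,1}^2$ in $C(2)$ maps under $\phi_*$ to the same formula in $Y$, which is nonzero since $n-1 \geq 2$. For $n = 2$, $\phi_*$ sends $d_2^{C(2)}(h_{2,1}) = 0$ to $0$, yielding $d_2^Y(h_{2,1}) = 0$. The derivation property on $Y$ is inherited from the derivation property on $C(2)$ by passing through the algebra surjection $\phi_*$.

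The $\mathrm{E}_3$-page is then calculated from the short exact sequence of complexes $0 \to (h_{1,1}) \to \mathrm{E}_2^{C(2)} \to \mathrm{E}_2^{Y} \to 0$. Because $d_2(h_{1,1}) = 0$, multiplication by $h_{1,1}$ identifies $\mathrm{E}_2^{C(2)}$ with the subcomplex $(h_{1,1})$, and $h_{1,1}$ continues to act injectively on $\mathrm{E}_3^{C(2)} = \F_2[q_1^{\pm 1}][h_{1,1},h_{2,1}]/(h_{2,1}^2)$. The long exact sequence on cohomology therefore degenerates, giving $\mathrm{E}_3^Y \cong \mathrm{E}_3^{C(2)}/(h_{1,1}) = \F_2[q_1^{\pm 1}][h_{2,1}]/(h_{2,1}^2)$. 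Collapse at $\mathrm{E}_3$ is immediate from the concentration in cohomological degrees $s \in \{0,1\}$: any $d_r$ with $r \geq 3$ must land in $s \geq 2$, which is zero. The main subtle step is simply verifying that $\phi_*$ on $\mathrm{E}_2$-pages is the quotient by $h_{1,1}$, which requires unpacking the argument implicit in the proof of \Cref{thm:v1-inv-comp}(3); with this in hand, all remaining work reduces cleanly to Miller's computation for $C(2)$.
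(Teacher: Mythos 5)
Your approach is essentially the same as the paper's: the corollary is deduced from Miller's theorem for $C(2)$ (\Cref{thm:localized-alg-Nov-Moore}) by naturality along the bottom-cell map $C(2) \to Y$, which on $q_1$-inverted $\mathrm{E}_2$-pages is the algebra surjection killing $h_{1,1}$. Your identification of $\mathrm{E}_3^Y$ via the short exact sequence $0 \to (h_{1,1}) \to \mathrm{E}_2^{C(2)} \to \mathrm{E}_2^Y \to 0$ and the inheritance of the derivation property through the surjection are correct.

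However, your argument for collapse at $\mathrm{E}_3$ is wrong as stated. In the algebraic Novikov spectral sequence, $d_r$ raises the cobar degree $s$ by exactly $1$ for every $r$ --- it is the $I$-adic filtration degree $i$ that increases with $r$ --- so the concentration of $\mathrm{E}_3^Y$ in $s \in \{0,1\}$ does \emph{not} imply that $d_r$ for $r \geq 3$ lands in $s \geq 2$; differentials from $s = 0$ to $s = 1$ are not ruled out by that observation. You have already done all the work needed to fix this cheaply: your surjection $\mathrm{E}_3^{C(2)} \twoheadrightarrow \mathrm{E}_3^Y$ together with Miller's collapse $\mathrm{E}_3^{C(2)} = \mathrm{E}_\infty^{C(2)}$ shows every class of $\mathrm{E}_3^Y$ is a permanent cycle, which is the route the paper takes. (A direct degree argument is also available, but it must invoke the $i$ and $t$ gradings simultaneously, since $q_1^a$ and $q_1^b h_{2,1}$ never sit in a pair of tridegrees compatible with a $d_r$.)
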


This gives rise to a convenient computation of the $K(1)$-local homotopy of $Y$.

\begin{cor}\label{cor:K1-homotopy-Y}
    The $K(1)$-local homotopy of $Y$ is \[ \pi_* (L_{K(1)} Y) \cong \F_2 [v_1 ^{\pm 1}][w_1] / (w_1 ^2) \] as a $\ZZ[v_1]$-module.
\end{cor}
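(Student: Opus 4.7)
The plan is to read off $\pi_*(L_{K(1)} Y)$ from the $v_1$-localized Adams--Novikov spectral sequence for $Y$, whose $\mathrm{E}_2$-page is precisely the abutment of the algebraic Novikov spectral sequence of \Cref{prop:localized-alg-Nov}. The first step is to use the collapse of that $v_1$-localized algebraic Novikov spectral sequence at the $\mathrm{E}_3$-page to obtain an isomorphism
$$v_1^{-1} \Ext^{*,*}_{\BP_*\BP}(\BP_*, \BP_*(Y)) \;\cong\; \F_2[v_1^{\pm 1}][w_1]/(w_1^2),$$
where on the right we rename $q_1$ to $v_1$ (the class it detects in the Adams--Novikov filtration) and $h_{2,1}$ to $w_1$. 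Tracking bidegrees, $w_1$ has topological degree $5$ and Adams--Novikov filtration $1$, which is consistent with the class of \Cref{lemm:h21-lives}.

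Next, I would invoke standard chromatic machinery: since $Y$ is a finite type-$1$ spectrum admitting a $v_1$-self map by \Cref{prop:v1-self-map}, the $K(1)$-local Adams--Novikov spectral sequence
$$\mathrm{E}_2^{s,t} \;=\; v_1^{-1}\Ext^{s,t}_{\BP_*\BP}(\BP_*, \BP_*(Y)) \;\Longrightarrow\; \pi_{t-s}(L_{K(1)} Y)$$
is strongly convergent (cf.~\cite[Chapter 8]{GreenBook}). The first step's computation shows that the $\mathrm{E}_2$-page is concentrated in Adams--Novikov filtrations $s = 0$ and $s = 1$; since any $d_r$-differential with $r \ge 2$ would land in filtration $\ge 2$, the spectral sequence collapses at $\mathrm{E}_2$.

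Finally, the $\mathrm{E}_\infty$-page is an $\F_2$-vector space, so there are no additive extensions to worry about. The class $w_1 \in \pi_5 Y$ of \Cref{lemm:h21-lives} maps under the localization $Y \to L_{K(1)} Y$ to a class again detected by $h_{2,1}$, providing the generator named $w_1$ on the right-hand side of the claimed isomorphism. This yields the $\ZZ[v_1]$-module isomorphism $\pi_*(L_{K(1)} Y) \cong \F_2[v_1^{\pm 1}][w_1]/(w_1^2)$.

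The main obstacle (more bookkeeping than genuine difficulty) is the identification of the $\mathrm{E}_2$-page of the $K(1)$-local Adams--Novikov spectral sequence for $Y$ with the $v_1$-inverted $\Ext$. This relies on $Y$ being of type exactly $1$ together with the vanishing of higher chromatic contributions, which is where the finiteness and the existence of the $v_1$-self map enter essentially.
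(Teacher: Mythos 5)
Your proposal takes essentially the same route as the paper: read off the $\mathrm{E}_2$-page from the collapse of the $v_1$-localized algebraic Novikov spectral sequence in \Cref{prop:localized-alg-Nov}, observe it is concentrated in Adams--Novikov filtrations $0$ and $1$, and conclude by the collapse of the $v_1$-localized Adams--Novikov spectral sequence converging to $\pi_*(L_{K(1)}Y)$. The only cosmetic difference is your citation; the paper invokes Ravenel's Localization Theorem \cite[Theorem 7.5.2]{RavenelOrange} directly, which is the cleanest justification that the $v_1$-localized Adams--Novikov spectral sequence for the finite type-$1$ spectrum $Y$ has the asserted abutment.
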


\begin{proof}
    The $v_1$-localized Adams-Novikov spectral sequence for $Y$ converges to the homotopy of $L_{K(1)} Y$ by the Localization Theorem \cite[Theorem 7.5.2]{RavenelOrange}. By \Cref{prop:localized-alg-Nov}, the $\mathrm{E}_2$-page is concentrated in filtrations $0$ and $1$, so the spectral sequence collapses to the desired isomorphism.
\end{proof}

Our next goal is to show that the $v_1$-localized computations above are in fact valid above a line of slope $\frac{1}{5}$.

\begin{lem}\label{lemm:vanishing-results}
    We have \[\Ext^{s,i,t} _{P_*} (\F_2, E_0 \BP_* (Y) / q_1) = 0\] when $s+i > \frac{1}{5} (t-s) + \frac{4}{5}$ and \[\Ext^{s,t} _{\A_*} (\F_2, \mathrm{H}_* (\cof(Y \xrightarrow{v_1} Y))) = 0\] for $s > \frac{1}{5} (t-s) + \frac{4}{5}$. As a consequence, the maps \[\Ext^{s,i,t} _{P_*} (\F_2, E_0 \BP_* (Y)) \to q_1 ^{-1} \Ext^{s,i,t} _{P_*} (\F_2, E_0 \BP_* (Y))\] and \[\Ext^{s,t} _{\A_*} (\F_2, \mathrm{H}_* (Y)) \to v_1 ^{-1} \Ext^{s,t} _{\A_*} (\F_2, \mathrm{H}_* (Y))\] are isomorphisms for $s+i > \frac{1}{5} (t-s) + \frac{7}{5}$ and $s > \frac{1}{5} (t-s) + \frac{7}{5}$, respectively. Moreover, they are surjections for $s + i > \frac{1}{5}(t-s) + \frac{1}{5}$ and $s > \frac{1}{5}(t-s) + \frac{1}{5}$, respectively.
\end{lem}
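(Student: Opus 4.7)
The plan is to prove the two vanishing statements separately and then extract the four localization claims from long exact sequences. The two vanishings are closely linked, and I would deduce the $\Ext_{P_*}$ vanishing by transferring the $\Ext_{\A_*}$ vanishing through a Miller-square argument.

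I would first establish the $\A_*$-vanishing, $\Ext^{s,t}_{\A_*}(\F_2, \mathrm{H}_*\cof(v_1)) = 0$ for $s > \tfrac{1}{5}(t-s) + \tfrac{4}{5}$. Since the $v_1$-self-map of \Cref{prop:v1-self-map} has $\HFt$-Adams filtration one, it induces the zero map on mod-$2$ homology, so the cofiber sequence yields a short exact sequence of $\A_*$-comodules
\[0 \to \mathrm{H}_* Y \to \mathrm{H}_*\cof(v_1) \to \Sigma^{3}\mathrm{H}_* Y \to 0,\]
whose associated long exact sequence on $\Ext_{\A_*}(\F_2, -)$ has connecting homomorphism equal to multiplication by $v_1 \in \Ext^{1,3}_{\A_*}(\F_2, \F_2)$. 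Combining \Cref{thm:v1-inv-comp}(1) with a careful reading of the Davis--Mahowald argument in \cite{v1ExtDavisMahowald}, I would extract the range in which the localization map $\Ext_{\A_*}(\F_2, \mathrm{H}_* Y) \to v_1^{-1}\Ext_{\A_*}(\F_2, \mathrm{H}_* Y)$ is an isomorphism---a line of slope $1/5$ with intercept yielding precisely $\tfrac{4}{5}$ on the $\Ext(\mathrm{H}_*\cof(v_1))$-side. The vanishing is then immediate from the long exact sequence.

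I would then deduce the $P_*$-vanishing, $\Ext^{s,i,t}_{P_*}(\F_2, E_0\BP_*(Y)/q_1) = 0$ for $s + i > \tfrac{1}{5}(t-s) + \tfrac{4}{5}$, by transferring the previous result through the Miller square. Because $\eta$ acts nullhomotopically on $\BP$, the spectra $\BP \otimes Y$ and $\BP \otimes \cof(v_1)$ split as $\BP/2 \vee \Sigma^{2} \BP/2$ and $\BP/(2,v_1) \vee \Sigma^{2}\BP/(2,v_1)$, respectively. Combined with the fact that $q_1$ is the associated graded of $v_1$ under the $I$-adic filtration, this produces a natural identification $E_0\BP_*(Y)/q_1 \cong E_0\BP_*\cof(v_1)$ as $E_0\BP_*\BP$-comodules. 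Since $\BP/(2, v_1)$ is a regular quotient of $\BP$, the spectrum $\cof(v_1)$ is $(\BP, \HFt)$-good, so the top horizontal isomorphism of the Miller square identifies $\Ext^{s,i,t}_{P_*}(\F_2, E_0\BP_*\cof(v_1))$ with $\Ext^{s,t}_{P_*}(\F_2, \Ext^{i,*}_{E_*}(\F_2, \mathrm{H}_*\cof(v_1)))$, and the Cartan--Eilenberg spectral sequence collapses to realize this as the $(s,i)$-piece of the associated graded of $\Ext^{s+i, t+i}_{\A_*}(\F_2, \mathrm{H}_*\cof(v_1))$. Under this re-indexing, the condition $s+i > \tfrac{1}{5}(t-s) + \tfrac{4}{5}$ matches the vanishing line from the previous step exactly.

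The four localization claims then follow from the long exact sequences associated to multiplication by $q_1$ and $v_1$. For the $q_1$-isomorphism at tridegree $(s,i,t)$, injectivity of $\Ext(E_0\BP_*(Y)) \to q_1^{-1}\Ext(E_0\BP_*(Y))$ requires $\Ext^{s-1, i+1, t+2}(E_0\BP_*(Y)/q_1) = 0$, which upon substituting the vanishing line gives the shifted intercept $\tfrac{7}{5}$; surjectivity requires $\Ext^{s, i+1, t+2}(E_0\BP_*(Y)/q_1) = 0$, which gives the shifted intercept $\tfrac{1}{5}$. A parallel computation with $v_1$ in place of $q_1$---where injectivity at $(s,t)$ reduces to $\Ext^{s, t+3}(\mathrm{H}_*\cof(v_1)) = 0$ and surjectivity reduces to $\Ext^{s+1, t+3}(\mathrm{H}_*\cof(v_1)) = 0$---yields the matching intercepts on the $\A_*$-side. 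The main obstacle I expect is extracting the precise range of validity of the Davis--Mahowald localization isomorphism on the $\A_*$-side; verifying $(\BP, \HFt)$-goodness of $\cof(v_1)$ should be routine from standard facts about regular quotients of $\BP$, and the intercept bookkeeping is mechanical once the Miller-square identification is in place.
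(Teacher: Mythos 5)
Your proposal runs the argument in the reverse direction from the paper, and this reversal introduces two genuine gaps.

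First, your derivation of the $\A_*$-vanishing line at intercept $\tfrac{4}{5}$ is not an argument: you say you would ``extract the range in which the localization map is an isomorphism'' from a ``careful reading of the Davis--Mahowald argument,'' but that range of validity is precisely what the vanishing line encodes, so you are asserting the conclusion rather than deriving it. The paper's proof instead obtains the $P_*$-vanishing completely directly, with no input from \cite{v1ExtDavisMahowald}: the key observation is that $E_0\BP_*(Y)/q_1 \cong M \otimes_{\F_2}\F_2[q_2,q_3,\dots]$, where $M \subset P_*$ is the two-dimensional sub-comodule on $\{1,\zeta_1^2\}$, and that under the degree-doubling isomorphism $\A_* \cong P_*$ sending $\zeta_i \mapsto \zeta_i^2$, $M$ becomes $\mathrm{H}_*(C(2))$. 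Adams's classical vanishing line $s > \tfrac{1}{2}(t-s)+1$ for $\Ext_{\A_*}(\F_2,\mathrm{H}_*C(2))$ then halves in slope to give $s > \tfrac{1}{5}(t-s)+\tfrac{4}{5}$ for $\Ext_{P_*}(\F_2,M)$, and a Bockstein argument in the $q_i$ for $i\geq 2$ (all of which lie strictly below the line) transports this to $E_0\BP_*(Y)/q_1$. This is elementary, explicit, and does not touch Davis--Mahowald at all.

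Second, and more seriously, the step where you transfer the $\A_*$-vanishing to the $P_*$-vanishing goes the wrong way through the Cartan--Eilenberg spectral sequence. That spectral sequence has $\mathrm{E}_2 = \Ext_{P_*}(\F_2, \Ext_{E_*}(\F_2, \mathrm{H}_*\cof(v_1)))$ converging to $\Ext_{\A_*}(\F_2, \mathrm{H}_*\cof(v_1))$; vanishing of the $\mathrm{E}_2$-page above a line forces vanishing of the target, but vanishing of the target does \emph{not} force vanishing of the $\mathrm{E}_2$-page, since classes on $\mathrm{E}_2$ can cancel via differentials. You invoke collapse of this Cartan--Eilenberg spectral sequence to justify the transfer, but you never establish it, and in fact the analogous collapse statement for $Y$ (\Cref{prop:CESS-collapse}) is deduced in the paper \emph{from} the very lemma you are trying to prove. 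The paper therefore proves the $P_*$-vanishing first and then deduces the $\A_*$-vanishing as the immediate consequence in the ``correct'' direction of the spectral sequence --- the only direction that does not require a collapse hypothesis. Your bookkeeping for the four localization isomorphisms via long exact sequences is fine once the two vanishings are in hand, but the two vanishings themselves are not established by your argument.
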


\begin{proof}
    We begin with the vanishing statement for $\Ext^{s,i,t} _{P_*} (\F_2, E_0 \BP_* (Y) / q_1)$. Let $M$ denote the sub-comodule of $P_*$ spanned by $1$ and $\zeta_1 ^2$ and recall that there is a degree-doubling isomorphism $\A_* \cong P_*$ which sends $\zeta_i$ to $\zeta_i ^2$. Under this isomorphism, $M$ corresponds to the $\A_*$-comodule $\mathrm{H}_* (C(2))$. By \cite[Theorem 2.1]{AdamsPer}, $\Ext^{s,t} _{\A_*} (\F_2, \mathrm{H}_* (C(2)))$ vanishes for $s > \frac{1}{2} (t-s) + 1.$ It follows that $\Ext^{s,t} _{P_*} (\F_2, M)$ vanishes for $s > \frac{1}{2} (\frac{1}{2}t-s) + 1$, i.e. for $s > \frac{1}{5} (t-s) +\frac{4}{5}$.

	We now note that $E_0 \BP_* (Y) / q_1 \cong M \otimes_{\F_2} \F_2 [q_2, q_3, \dots].$ There are therefore a series of Bockstein spectral sequences starting from $\Ext^{s,i,t} _{P_*} (\F_2, M)$ and converging to $\Ext^{s,i,t} _{P_*} (\F_2, E_0 \BP_* (Y) / q_1)$. Since each of the $q_i$ for $i \geq 2$ lies below the plane of interest, this implies the result.

    The vanishing result for $\Ext^{s,t} _{\A_*} (\F_2, \mathrm{H}_* (\cof(Y \xrightarrow{v_1} Y)))$ follows from the above vanishing result and the Cartan-Eilenberg spectral sequence.

	The translation of these vanishing results into the desired isomorphisms and surjections follows from the long exact sequences
	\begin{center}
		    \begin{tikzcd}[column sep=tiny]
		\dots \arrow[r] & \Ext^{s-1,i+1,t+2} _{P_*} (\F_2, E_0 \BP_* (Y) / q_1) \arrow[rr] & \arrow[d, phantom, ""{coordinate, name=Z}] & \Ext^{s,i,t} _{P_*} (\F_2, E_0 \BP_* (Y)) \arrow[dll, "\cdot q_1"'{pos=1}, rounded corners, to path={ -- ([xshift=2ex]\tikztostart.east) |- (Z) [near end]\tikztonodes -| ([xshift=-2ex]\tikztotarget.west) --(\tikztotarget)}] & \\
		& \Ext^{s,i+1,t+2} _{P_*} (\F_2, E_0 \BP_* (Y)) \arrow[rr] & \vphantom{a} & \Ext^{s,i+1,t+2} _{P_*} (\F_2, E_0 \BP_* (Y)/q_1) \arrow[r] & \dots
	\end{tikzcd}
	\end{center}
	and
	\begin{center}
	\begin{tikzcd}[column sep = tiny]
        \dots \arrow[r] & \Ext^{s, t+3} _{\A_*} (\F_2, \mathrm{H}_* (\cof(Y \xrightarrow{v_1} Y))) \arrow[rr] & \arrow[d, phantom, ""{coordinate, name=Z}] & \Ext^{s, t} _{\A_*} (\F_2, \mathrm{H}_* (Y)) \arrow[dll, "\cdot v_1"'{pos=1}, rounded corners, to path={ -- ([xshift=2ex]\tikztostart.east) |- (Z) [near end]\tikztonodes -| ([xshift=-2ex]\tikztotarget.west) --(\tikztotarget)}] & \\
        & \Ext^{s+1,t+3} _{\A_*} (\F_2, \mathrm{H}_* (Y)) \arrow[rr] & \vphantom{a} & \Ext^{s+1,t+3} _{\A_*} (\F_2, \mathrm{H}_* (\cof(Y \xrightarrow{v_1} Y))) \arrow[r] & \dots.
	\end{tikzcd} \qedhere
	\end{center}
\end{proof}

\begin{cor}\label{prop:CESS-collapse}
    For $s + i > \frac{1}{5} (t-s) + \frac{7}{5}$, the Cartan-Eilenberg spectral sequence \[\Ext^{s,i,t} _{P_*} (\F_2, E_0 \BP_* (Y)) \Rightarrow \Ext^{s+i,t+i} _{\A_*} (\F_2, \mathrm{H}_* (Y))\] collapses at the $\mathrm{E}_2$-page.
\end{cor}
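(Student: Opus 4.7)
The strategy is to compare the Cartan--Eilenberg spectral sequence against its $v_1$-localization. The localized spectral sequence collapses by a Poincar\'e series count, and by \Cref{lemm:vanishing-results} the localization maps on the $\mathrm{E}_2$-page and on the abutment are isomorphisms above the given line, so the collapse will transfer.

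First I would analyze the $v_1$-localized CESS, which reads $q_1^{-1}\Ext^{s,i,t}_{P_*}(\F_2, E_0 \BP_*(Y)) \Rightarrow v_1^{-1}\Ext^{s + i, t + i}_{\A_*}(\F_2, \mathrm{H}_*(Y))$, where $q_1$ is the associated graded of $v_1 \in \BP_*$. Re-indexing by the abutment bigrading $(n, T) = (s + i, t + i)$, one checks that $q_1 \in \mathrm{E}_2^{0, 1, 2}$ and $v_1 \in \Ext^{1, 3}_{\A_*}$ both sit in position $(1, 3)$, and that each $h_{j, 1}$ contributes to position $(1, 2^{j+1} - 2)$ on both sides. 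By \Cref{thm:v1-inv-comp}, the localized $\mathrm{E}_2$-page and localized abutment are abstractly isomorphic as $(n, T)$-bigraded $\F_2$-vector spaces to $\F_2[v_1^{\pm 1}][h_{j, 1} \mid j \ge 2]$, and each $(n, T)$ is finite-dimensional (since $\mathrm{H}_*(Y)$ is). Since $\mathrm{E}_\infty^{s, n-s, T-n+s}$ is a subquotient of $\mathrm{E}_2^{s, n-s, T-n+s}$ and $\sum_s \dim \mathrm{E}_\infty^{s, n-s, T-n+s}$ equals the dimension of the abutment at $(n, T)$, matching Poincar\'e series force the $v_1$-localized CESS to collapse at $\mathrm{E}_2$.

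Next I would transfer this collapse. The condition $s + i > \frac{1}{5}(t - s) + \frac{7}{5}$ depends only on the abutment bigrading $(n, T)$, since it is equivalent to $n > \frac{1}{5}(T - n) + \frac{7}{5}$. Above this line, \Cref{lemm:vanishing-results} provides isomorphisms $\mathrm{E}_2^{s, i, t} \cong q_1^{-1}\mathrm{E}_2^{s, i, t}$ at every splitting $(s, i, t) = (s, n - s, T - n + s)$ of $(n, T)$ and $\Ext^{n, T}_{\A_*} \cong v_1^{-1}\Ext^{n, T}_{\A_*}$ on the abutment. Summing over $s$ and combining with the previous step yields
\[\sum_s \dim \mathrm{E}_\infty^{s, n - s, T - n + s} = \dim \Ext^{n, T}_{\A_*} = \sum_s \dim \mathrm{E}_2^{s, n - s, T - n + s}.\]
Since $\dim \mathrm{E}_\infty \le \dim \mathrm{E}_2$ at every position, equality is forced termwise, so the unlocalized CESS collapses at $\mathrm{E}_2$ at every $(s, i, t)$ above the line.

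The main subtlety is the grading bookkeeping required to see that $q_1$-localization on $\mathrm{E}_2$ and $v_1$-localization on the abutment are compatible via the CESS, so that Poincar\'e series can be compared in the correct bigradings; once this identification of $(n, T) = (s + i, t + i)$ is verified, the dimension count closes out the proof.
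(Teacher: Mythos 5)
Your proof is correct and takes essentially the same route as the paper's: the paper also argues by a dimension count in each bigraded piece of the abutment, citing \Cref{thm:v1-inv-comp} and \Cref{lemm:vanishing-results} to identify both the $\mathrm{E}_2$-page and the target with their $v_1$-localizations above the stated line, which have matching Poincar\'e series. The intermediate observation that the $v_1$-localized Cartan--Eilenberg spectral sequence collapses is not strictly needed (the termwise inequality $\dim \mathrm{E}_\infty \le \dim \mathrm{E}_2$ together with the matched totals already forces collapse directly), but it does not introduce a gap, and your explicit verification that the vanishing condition $s+i > \frac{1}{5}(t-s)+\frac{7}{5}$ depends only on the abutment bidegree $(n,T)=(s+i,t+i)$ is the right bookkeeping point to make.
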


\begin{proof}
    It suffices to show that the $\mathrm{E}_2$-page and the target are of the same finite dimension as bigraded $\F_2$-vector spaces in this range. This follows from \Cref{thm:v1-inv-comp} and \Cref{lemm:vanishing-results}.
\end{proof}

\begin{cor}\label{cor:alg-nov-range}
    For $s + i > \frac{1}{5} (t-s) + \frac{18}{5}$, the algebraic Novikov spectral sequence \[\Ext^{s,i,t} _{P_*} (\F_2, E_0 \BP_* (Y)) \Rightarrow \Ext^{s,t} _{\BP_* \BP} (\BP_*, \BP_*(Y))\] agrees with the $v_1$-localized algebraic Novikov spectral sequence.
\end{cor}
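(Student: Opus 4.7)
The natural map of filtered cobar complexes $\Omega^\ast(\BP_\ast\BP, \BP_\ast Y) \to \Omega^\ast(\BP_\ast\BP, v_1^{-1}\BP_\ast Y)$ respects the $I$-adic filtrations on both sides and so induces a comparison map $f$ from the algebraic Novikov spectral sequence for $Y$ to its $v_1$-localized counterpart. The plan is to show that $f$ is an isomorphism on every page above the line $s+i = \frac{1}{5}(t-s)+\frac{18}{5}$ and that the corresponding differentials match. By \Cref{lemm:vanishing-results}, multiplication by $q_1$ is already an isomorphism on $\Ext^{s,i,t}_{P_\ast}(\F_2, E_0\BP_\ast(Y))$ whenever $s+i > \frac{1}{5}(t-s) + \frac{7}{5}$; since the localization at $q_1$ is a filtered colimit of multiplication-by-$q_1$ maps, $f$ is an $E_2$-isomorphism in this larger range.

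Next I would check the $d_2$'s, which shift the tridegree $(s,i,t)$ to $(s+1,i+1,t)$. The target of any outgoing $d_2$ from our region automatically lies above the $\frac{7}{5}$-line (its distance above it grows by $2+\frac{1}{5}$). For an incoming $d_2$ landing at $(s,i,t)$ with source $(s-1,i-1,t)$, the required bound
\[ (s+i) - 2 > \tfrac{1}{5}\big((t-s)+1\big) + \tfrac{7}{5} \]
is equivalent to $s+i > \frac{1}{5}(t-s) + \frac{18}{5}$. This is precisely the origin of the $\frac{18}{5}$ in the statement. Consequently, $f$ descends to an isomorphism of $E_3$-pages above the $\frac{18}{5}$-line, and the $d_2$-differentials on both sides are matched in this range.

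For higher differentials ($r \geq 3$), \Cref{prop:localized-alg-Nov} says $d_r = 0$ in the localized spectral sequence. Noting that $d_r$ increases the displacement above the $\frac{18}{5}$-line by $r + \frac{1}{5}$ at the target, any outgoing $d_r$ from our region has target still further above the line, so $f$ remains an iso at that target on $E_r$ and thus the vanishing in the localized sequence forces $d_r = 0$ in the unlocalized sequence for such sources. Symmetrically, an incoming $d_r$ has target lying above the line where $f$ is iso, so the vanishing of the localized $d_r$ forces the unlocalized $d_r$ into our region to vanish. Inducting on $r$, the comparison map $f$ is an isomorphism on every page above the $\frac{18}{5}$-line, giving the asserted agreement. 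The only real substance here is the slope-intercept bookkeeping that pins down $\frac{18}{5}$ as the correct constant; everything else is a diagram chase exploiting naturality under $q_1$-localization.
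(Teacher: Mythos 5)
Your proof is correct and follows essentially the same approach as the paper: an $E_2$-isomorphism above the $\tfrac{7}{5}$-line from \Cref{lemm:vanishing-results}, a $d_2$-analysis (with the shift $(s,i,t)\mapsto(s+1,i+1,t)$) that pins down $\tfrac{18}{5}$ and yields the $E_3$-isomorphism, and then vanishing of all higher differentials using the collapse of the localized sequence. The only cosmetic difference is in ruling out outgoing $d_r$ for $r\ge 3$, where you argue from injectivity of the comparison map $f$ at the target together with $d_r^{\mathrm{loc}}=0$, whereas the paper invokes the explicit sparse form of the localized $E_3=E_\infty$-page from \Cref{prop:localized-alg-Nov}; both rest on the same collapse result.
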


\begin{proof}
    There is a map from the algebraic Novikov spectral sequence to its $v_1$-localized version, which by \Cref{lemm:vanishing-results} is an equivalence on $\mathrm{E}_2$-pages for $s+i > \frac{1}{5} (t-s) + \frac{7}{5}$. We may therefore lift all $d_2$-differentials that lie entirely in this range, which shows that the map from the $\mathrm{E}_3$-page of the algebraic Novikov spectral sequence to the $v_1$-localized algebraic Novikov spectral sequence is an equivalence for $s+i > \frac{1}{5}(t-s)+\frac{18}{5}$, since all entering $d_2$-differentials in this range originate in the range $s+i > \frac{1}{5} (t-s) + \frac{7}{5}$.

    The classes left on the $\mathrm{E}_3$-page in the region $s+i > \frac{1}{5}(t-s) + \frac{18}{5}$ cannot be the source of higher differentials by sparsity, and they cannot be the targets of higher differentials because they are detected in the $v_1$-localized Ext groups. It follows that $\mathrm{E}_3 = \mathrm{E}_\infty$ in the region $s+i > \frac{1}{5} (t-s) + \frac{18}{5}$, as desired.
\end{proof}

Finally, we are able to combine the above results with \Cref{thm:Miller-thm} to compute the $\HFt$-Adams spectral sequence of $Y$ above a line of slope $\frac{1}{5}$, at least up to an associated graded. From this we will deduce \Cref{thm:Y-v1-band}.

\begin{prop}\label{prop:Y-main-prop}
    For $s > \frac{1}{5}(t-s) + \frac{12}{5}$, the $\HFt$-Adams spectral sequence \[\Ext^{s,t} _{\A_*} (\F_2, \mathrm{H}_* (Y)) \Rightarrow \pi_{t-s} (Y)\] collapses at the $E_3$-page. Moreover, the map \[ F^{\frac{1}{5}k + \frac{13}{5}} \pi_k (Y) \to \pi_k (L_{K(1)} Y) \] is an isomorphism for $k \geq 15$.
\end{prop}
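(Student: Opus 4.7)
The plan is to resolve the $\HFt$-Adams $d_2$-differentials for $Y$ above the line of slope $1/5$ by invoking Miller's Theorem~\ref{thm:Miller-thm}, then use sparsity together with the explicit computation of $\pi_*(L_{K(1)} Y)$ from Corollary~\ref{cor:K1-homotopy-Y} to deduce both $\mathrm{E}_3$-collapse and the $K(1)$-local identification. The whole argument is essentially a matter of assembling the preceding Lemmas and Corollaries, all of which have been tuned to this slope.

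First, I would establish the $d_2$ formula. By Corollary~\ref{prop:CESS-collapse}, the Cartan--Eilenberg spectral sequence collapses in the region $s+i > \tfrac{1}{5}(t-s) + \tfrac{7}{5}$; combined with the fact that it also collapses in degrees $(s,t)$ and $(s+2,t+1)$ that are slightly below, Miller's Theorem~\ref{thm:Miller-thm} applies and identifies the Adams $d_2$ with $-d_2^{\mathrm{alg\text{-}Nov}}$ on the associated graded, where by Corollary~\ref{cor:alg-nov-range} the algebraic Novikov $d_2$ coincides with its $v_1$-localization in the range $s+i > \tfrac{1}{5}(t-s) + \tfrac{18}{5}$. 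The latter is recorded in Corollary~\ref{prop:localized-alg-Nov}: it is a derivation with $d_2(h_{n,1}) = q_1 h_{n-1,1}^2$ for $n \geq 3$. Using the isomorphism of Theorem~\ref{thm:v1-inv-comp}(1) to identify the $\mathrm{E}_2$-page of the Adams spectral sequence for $Y$ above the $1/5$-line with $\F_2[v_1][h_{j,1}\mid j\geq 2]$, we conclude that in the region $s > \tfrac{1}{5}(t-s) + \tfrac{12}{5}$ (which gives us enough margin on the source and target of every relevant differential) the Adams $d_2$ is the derivation determined by $d_2(h_{n,1}) = v_1 h_{n-1,1}^2$ for $n \geq 3$, so the $\mathrm{E}_3$-page in this region is the $\F_2[v_1]$-module $\F_2[v_1][h_{2,1}]/(h_{2,1}^2)$.

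Second, I would establish $\mathrm{E}_3$-collapse and the $K(1)$-local identification simultaneously. The surviving classes $v_1^i$ and $v_1^i h_{2,1}$ detect nonzero elements in $\pi_*(L_{K(1)} Y) \cong \F_2[v_1^{\pm 1}][w_1]/(w_1^2)$ of Corollary~\ref{cor:K1-homotopy-Y}: for $v_1^i$ this is immediate from the $v_1$-self-map (\Cref{prop:v1-self-map}), and for $v_1^i h_{2,1}$ it follows from Lemma~\ref{lemm:h21-lives}, which identifies $h_{2,1}$ with the detection class of $w_1 \in \pi_5(Y)$, together with $v_1$-periodicity. In particular these classes cannot be the source of any Adams differential; they cannot be the target of any differential hitting the region above the $1/5$-line, since by Theorem~\ref{thm:v1-inv-comp}(1) there are no other $\mathrm{E}_2$-classes above the line after $d_2$. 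Hence the Adams spectral sequence collapses at $\mathrm{E}_3$ in the region $s > \tfrac{1}{5}(t-s) + \tfrac{12}{5}$, and the $F^{\frac{1}{5}k + \frac{13}{5}}\pi_k(Y)$ filtration is generated by the permanent cycles $v_1^i$ (in stem $2i$, filtration $i$) and $v_1^i h_{2,1}$ (in stem $2i+5$, filtration $i+1$). A direct check that $s \geq \tfrac{k}{5} + \tfrac{13}{5}$ holds for these classes shows that the cutoff is $k \geq 15$ (the binding constraint coming from the $v_1^i h_{2,1}$ tower in odd stems), and the bijection with $\pi_k(L_{K(1)} Y)$ is then immediate from matching generators.

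The main obstacle is the careful bookkeeping of intercept shifts between the various spectral sequences (Cartan--Eilenberg, algebraic Novikov, $v_1$-localized algebraic Novikov, and Adams), since Miller's theorem is only applied after verifying Cartan--Eilenberg collapse in a suitably larger region than the region where we ultimately claim the conclusions. Another subtlety is verifying that no higher Adams differentials can originate in the region above $s = \tfrac{1}{5}(t-s)+\tfrac{12}{5}$ and land further above it; this is handled by the observation that the $\mathrm{E}_3$-classes above the line are detected in $\pi_*(L_{K(1)} Y)$, combined with the $v_1$-linearity forced by Proposition~\ref{prop:v1-self-map}.
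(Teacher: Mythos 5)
Your overall architecture is right — identify the Adams $d_2$ above the line via Miller's square and the $v_1$-localized algebraic Novikov computation, then show $\mathrm{E}_3$-collapse by detecting the surviving towers $v_1^i$ and $v_1^i h_{2,1}$ in $\pi_*(L_{K(1)}Y)$. But there is a genuine gap in your first paragraph. You claim that in the region $s > \tfrac{1}{5}(t-s) + \tfrac{12}{5}$ there is ``enough margin on the source and target of every relevant differential'' to apply Miller's theorem together with Corollary~\ref{cor:alg-nov-range}. This is false: a $d_2$ with target at $(s+2,t+1)$ satisfying $s+2 > \tfrac{1}{5}(t-s-1)+\tfrac{12}{5}$ has source at $(s,t)$ satisfying only $s > \tfrac{1}{5}(t-s)+\tfrac{1}{5}$, which lies well below the Cartan--Eilenberg collapse region $s+i > \tfrac{1}{5}(t-s)+\tfrac{7}{5}$ of Corollary~\ref{prop:CESS-collapse} and far below the algebraic Novikov comparison region $s+i > \tfrac{1}{5}(t-s)+\tfrac{18}{5}$ of Corollary~\ref{cor:alg-nov-range}. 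So the Miller/algebraic Novikov identification of $d_2$ cannot be applied directly to these sources.

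What is missing is the two-step structure of the paper's argument. First one establishes the collapse and the explicit description of the $\mathrm{E}_3$-page only in the higher region $s > \tfrac{1}{5}(t-s)+\tfrac{29}{5}$, chosen precisely so that the sources of all entering $d_2$-differentials lie in the region $s > \tfrac{1}{5}(t-s)+\tfrac{18}{5}$ where Corollary~\ref{cor:alg-nov-range} applies. Then one pushes this description down to $s > \tfrac{1}{5}(t-s)+\tfrac{12}{5}$ using the $v_1$-self-map: by Proposition~\ref{prop:v1-self-map}, multiplication by $v_1$ commutes with Adams differentials, and by Lemma~\ref{lemm:vanishing-results}, $v_1$ is surjective on the source region $s > \tfrac{1}{5}(t-s)+\tfrac{1}{5}$ and an isomorphism on the target region $s > \tfrac{1}{5}(t-s)+\tfrac{7}{5}$. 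Hence every $d_2$ landing above the $\tfrac{12}{5}$-line is a $v_1$-multiple of a $d_2$ originating above the $\tfrac{18}{5}$-line, so the same image is hit. You gesture at $v_1$-linearity at the very end of your proposal, but only to rule out higher differentials; the essential role of $v_1$-periodicity here is to propagate the $d_2$-computation into the region where the comparison theorems do not directly apply, and that step needs to be made explicit.
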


\begin{proof}
    We begin by noting that in the range $s > \frac{1}{5} (t-s) + \frac{29}{5}$, all entering $d_2$-differentials originate in the range $s > \frac{1}{5} (t-s) + \frac{18}{5}$. It therefore follows from \Cref{thm:Miller-thm}, \Cref{prop:localized-alg-Nov}, \Cref{prop:CESS-collapse} and \Cref{cor:alg-nov-range} that at most the elements $v_1 ^i$ and $v_1 ^i h_{2,0}$ survive to the $\mathrm{E}_3$-page of the spectral sequence in this range. These elements do in fact survive to represent nonzero elements of the $\mathrm{E}_\infty$-page by \Cref{prop:v1-self-map}, \Cref{lemm:h21-lives}, and \Cref{cor:K1-homotopy-Y}. It follows that, for $s > \frac{1}{5} (t-s) + \frac{29}{5}$, the spectral sequence collapses at the $\mathrm{E}_3$-page, and the $v_1 ^i$ and $v_1^i h_{2,0}$ are all of the nonzero classes on the $\mathrm{E}_3$-page.

    We may in fact extend the above description to the range $s > \frac{1}{5} (t-s) + \frac{12}{5}$ as follows. Since $v_1$ lifts to a self-map of $Y$ by \Cref{prop:v1-self-map}, multiplying by $v_1$ commutes with differentials in the $\HFt$-Adams spectral sequence. Now, it follows from \Cref{lemm:vanishing-results} that multiplication by $v_1$ induces an isomorphism on $\im(d_2)$ for any $d_2$ with target in the range $s > \frac{1}{5}(t-s) + \frac{12}{5}$, hence source in the range $s > \frac{1}{5} (t-s) + \frac{1}{5}$. This is because the source lies in the $v_1$-surjectivity region and the target lies in the $v_1$-periodic region. It follows that the description of the spectral sequence appearing in the previous paragraph applies in fact to the range $s > \frac{1}{5} (t-s) + \frac{12}{5}$.

    We conclude that the only classes in $\pi_k (Y)$ detected in Adams filtration at least $\frac{1}{5} k + \frac{13}{5}$ are $v_1 ^i$ and $v_1 ^i w_1$. By \Cref{cor:K1-homotopy-Y}, these classes map isomorphically to the homotopy of $L_{K(1)} Y$. Thus to check that \[F^{\frac{1}{5} k + \frac{13}{5}} \pi_k (Y) \to \pi_k (L_{K(1)} Y)\] is an isomorphism, it suffices to check that the classes $v_1 ^i$ and $v_1 ^i h_{2,1}$ are in the range $s \geq \frac{1}{5}(t-s) + \frac{13}{5}$. A short calculation shows that this happens when $i \geq 5$, hence when the total degree is at least $15$.
\end{proof}

\begin{proof}[Proof of \Cref{thm:Y-v1-band}]
    By \Cref{prop:banded-adams}, we see that there are two things left to check beyond \Cref{prop:Y-main-prop}. The first is that $\Ext_{\A_{*}} ^{s,t} (\F_2, \mathrm{H}_* (Y)) = 0$ for $s > \frac{1}{2} (t-s)$, which follows from the computation \[\Ext_{\A(1)_*} ^{s,t} (\F_2, \mathrm{H}_*(Y)) \cong \Ext_{\F_2 [\overline{\zeta}_2]/(\overline{\zeta}_2 ^2)} (\F_2, \F_2) \cong \F_2 [v_1]\] and \cite[Proposition 3.2]{Miller-Wilkerson}. The second is that the classes $v_1 ^i$ and $v_1^i h_{2,1}$ lie in the region $s \geq \frac{1}{2} (t-s) - \frac{3}{2}$, which is easily verified.
\end{proof}

Finally, we note down a proof of the telescope conjecture at chromatic height $1$ and the prime $2$, based on \Cref{thm:Y-v1-band}. It is similar to Miller's proof at an odd prime \cite{Miller} and different from the $2$-primary proof of Mahowald \cite{MahJEHP}, which uses $bo$-resolutions. We begin with the following proposition.

\begin{prop}\label{prop:v1-band-tele}
    Let $X$ be a type $1$ finite spectrum\footnote{A finite spectrum $X$ is said to be type $1$ if $\mathrm{H}_* (X;\mathbb{Q}) = 0$ and $K(1)_* (X) \neq 0$.} whose $\HFp$-Adams spectral sequence admits a $v_1$-banded vanishing line with parameters $(b \leq d, w, m, c, r)$, and suppose $v : \Sigma^{n(2p-2)} X \to X$ is a $v_1$-self map of $\HFp$-Adams filtration $n$. Then the map \[v^{-1} \pi_{*} X \to \pi_* (L_{K(1)} X) \] is an isomorphism.
\end{prop}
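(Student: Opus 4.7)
The plan is to show that both injectivity and surjectivity of the natural map $v^{-1} \pi_* X \to \pi_* L_{K(1)} X$ follow from a single key observation: for any class $\alpha \in \pi_k X$, repeated multiplication by $v$ eventually lands in the high-filtration ``band'' governed by properties (2) and (3) of \Cref{dfn:banded-vanishing-line}. The natural map itself arises because $v$ becomes an isomorphism after $K(1)$-localization, so $X \to L_{K(1)}X$ factors canonically through the telescope $v^{-1}X$.

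First I would establish the following lemma: given $\alpha \in \pi_k X$ of $\HFp$-Adams filtration $s$, for all sufficiently large $j$ the class $v^j\alpha \in \pi_{k+jn(2p-2)}X$ satisfies $k+jn(2p-2) \geq w$ and
\[
v^j\alpha \in F^{\frac{1}{2p-2}(k+jn(2p-2)) + b}\pi_{k+jn(2p-2)}X.
\]
The point is that by super-additivity of Adams filtration, $v^j\alpha$ has filtration at least $s+jn$. Writing $k' = k + jn(2p-2)$, the inequality $s+jn \geq mk'+c$ becomes $s + jn(1 - m(2p-2)) \geq mk + c$, which holds for $j$ large since $m < \frac{1}{2p-2}$ gives $1 - m(2p-2) > 0$. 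Property (2) of \Cref{dfn:banded-vanishing-line} then upgrades this to membership in $F^{\frac{1}{2p-2}k' + b}\pi_{k'}X$.

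Equipped with this lemma, injectivity is immediate: if $\alpha \in \pi_k X$ maps to zero in $\pi_k L_{K(1)}X$, then by functoriality of $L_{K(1)}$ so does $v^j\alpha$ in $\pi_{k'} L_{K(1)} X$ for every $j$. Choosing $j$ large enough that the lemma places $v^j \alpha$ in the band at degree $\geq w$, property (3) (the composite $F^{\frac{1}{2p-2}k'+b}\pi_{k'}X \to \pi_{k'}L_{K(1)}X$ is an isomorphism) forces $v^j\alpha = 0$ in $\pi_{k'}X$, so $\alpha$ vanishes in $v^{-1}\pi_*X$. For surjectivity, given $\beta \in \pi_kL_{K(1)}X$, pick $j$ with $k' := k + jn(2p-2) \geq w$; property (3) lifts $v^j\beta$ to a unique class $\gamma \in F^{\frac{1}{2p-2}k' + b}\pi_{k'}X$, and then $v^{-j}\gamma \in v^{-1}\pi_k X$ maps to $\beta$.

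The proof is mostly bookkeeping; the one delicate point, and the conceptual crux, is the filtration-slope comparison in the key lemma. The hypothesis that $v$ has $\HFp$-Adams filtration $n$ in degree $n(2p-2)$ means that the iterates $v^j$ track the slope $\frac{1}{2p-2}$ of the banded region, while the strict inequality $m < \frac{1}{2p-2}$ in \Cref{dfn:banded-vanishing-line} guarantees that iteration of $v$ outpaces the clearing line $s = mk+c$. Property (4) plays a supporting role by preventing anything from lying above the band, so the region into which $v$-iterates are eventually pushed is genuinely controlled by property (3). (The hypothesis that $X$ is type $1$ and admits a $v_1$-self map is used to know that such a $v$ exists and is a $K(1)$-equivalence; the argument itself uses only the banded vanishing line and the filtration of $v$.)
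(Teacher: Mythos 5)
Your proof is correct and takes essentially the same approach as the paper's, with the central observation in both being that because $v$ raises Adams filtration at slope exactly $\frac{1}{2p-2}$ while the clearing line has slope $m < \frac{1}{2p-2}$, iterating $v$ eventually pushes any class into the band where properties (2) and (3) of \Cref{dfn:banded-vanishing-line} identify the filtration quotient with $\pi_*(L_{K(1)}X)$. The paper phrases this as a chain of isomorphisms $v^{-1}F^{mk+c}\pi_k(X) \cong F^{mk+c}\pi_k(X) \cong \pi_k(L_{K(1)}X)$ for $k\geq w$ together with the nilpotency of $v$ on $\pi_k(X)/F^{mk+c}\pi_k(X)$, whereas you give the equivalent direct injectivity/surjectivity check on the colimit; your version arguably makes the slope computation more transparent. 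One minor difference worth noting: the paper opens by invoking the $v$-periodic Adams spectral sequence and a convergence theorem of Mahowald--Sadofsky (cited as \cite[Theorem 2.13]{TripleLoop}), which your argument never needs; since the rest of the paper's proof doesn't visibly use that spectral sequence either, your omission is not a gap but a small simplification. Also be slightly careful in the injectivity step to start from an arbitrary element of $v^{-1}\pi_k X$ (represented by some $\alpha \in \pi_{k+jN}X$) rather than literally from $\pi_k X$; your argument handles this without change once the wording is adjusted.
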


\begin{proof}
    Inverting $v$ in the $\HFp$-Adams spectral sequence gives rise to the $v$-periodic $\HFp$-Adams spectral sequence, which converges to $v^{-1} \pi_{*} (X)$ by \cite[Theorem 2.13]{TripleLoop}. This theorem applies by the assumption on the Adams filtration of $v$, as well as the fact that $\nu_{\HFp} (X)$ has a finite-page vanishing line of slope $\frac{1}{2p-2}$ by definition of $v_1$-banded vanishing line.




    By the assumption on the $\HFp$-Adams filtration of $v$, $\displaystyle\bigoplus_{k} F^{m k + c} \pi_k (X)$ is a $\Z[v]$-submodule of $\pi_k (X)$, so that we have a factorization \[F^{mk+c} \pi_k (X) \to v^{-1} F^{mk +c} \pi_k (X) \to v^{-1} \pi_k (X) \to \pi_k (L_{K(1)} X).\]

    Since both $v^{-1} \pi_k (X)$ and $\pi_k (L_{K(1)} X)$ are $v$-periodic, it suffices to show that \[v^{-1} \pi_k (X) \to \pi_k (L_{K(1)} X)\] is an equivalence for $k \gg 0$. By the assumption that the $\HFp$-Adams spectral sequence of $X$ admits a $v_1$-banded vanishing line, the map \[F^{mk +c} \pi_k (X) \to \pi_k (L_{K(1)} X)\] is an equivalence for $k \geq w$.
    This implies that \[F^{mk +c} \pi_k (X) \to v^{-1} F^{mk +c} \pi_k (X)\] is an equivalence for $k \geq w$. Therefore it suffices to show that \[v^{-1} F^{mk+c} \pi_k (X) \to v^{-1} \pi_k (X)\] is an equivalence, which follows from the fact that $v$ acts nilpotently on \[\pi_k(X) / (F^{mk+c} \pi_k (X)),\] since $m < \frac{1}{2p-2}$.
\end{proof}


\begin{cor}[Telescope Conjecture at height $1$ and the prime $2$] \label{cor:telescope}
	Suppose that the prime is $2$. Then the Bousfield classes of $K(1)$ and $v^{-1} X$ are equal for any type $1$ spectrum $X$ with $v_1$-self map $v : \Sigma^{n(2p-2)} X \to X$.
\end{cor}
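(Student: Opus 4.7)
The plan is to begin by applying Proposition \ref{prop:v1-band-tele} to the type $1$ spectrum $Y = C(2) \otimes C(\eta)$. The required $v_1$-self map of $\HFt$-Adams filtration one is provided by Proposition \ref{prop:v1-self-map}, and the hypothesis that the $\HFt$-Adams spectral sequence of $Y$ admits a $v_1$-banded vanishing line is exactly the content of Theorem \ref{thm:Y-v1-band}. The proposition thus yields an isomorphism $v^{-1} \pi_* Y \cong \pi_* L_{K(1)} Y$. Since $L_{K(1)} Y$ is automatically $v$-periodic (as $v$ acts as an isomorphism on $K(1)$-homology), the canonical map $Y \to L_{K(1)} Y$ factors through $v^{-1} Y$, and the resulting map $v^{-1} Y \to L_{K(1)} Y$ realizes the above $\pi_*$-isomorphism and is therefore a weak equivalence of spectra.

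Next, I would identify $\langle L_{K(1)} Y \rangle$ with $\langle K(1) \rangle$. One direction is immediate from $K(1)$-locality: $\langle L_{K(1)} Y \rangle \leq \langle K(1) \rangle$. For the reverse inclusion, suppose $Z \otimes L_{K(1)} Y \simeq 0$; smashing with $K(1)$ and using $K(1) \otimes L_{K(1)} Y \simeq K(1) \otimes Y$, which is a nonzero wedge of suspensions of $K(1)$ since $Y$ is of type $1$, one deduces $K(1) \otimes Z \simeq 0$. Combined with the previous paragraph, this yields $\langle v^{-1} Y \rangle = \langle K(1) \rangle$.

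Finally, I would promote this identification from $Y$ to an arbitrary type $1$ finite spectrum $X$ with $v_1$-self map. Here I invoke the standard consequence of the Hopkins--Smith uniqueness theorem for $v_n$-self maps: the Bousfield class of a telescope $v^{-1} X$ depends only on the chromatic type of $X$, not on the particular choice of $X$ or $v$. Concretely, on $X \otimes Y$ both $v \otimes 1$ and $1 \otimes v_Y$ are $v_1$-self maps, and by Hopkins--Smith some iterates $(v \otimes 1)^a$ and $(1 \otimes v_Y)^b$ agree, which yields an equivalence $v^{-1} X \otimes Y \simeq X \otimes v_Y^{-1} Y$. Combining this with $\langle X \rangle \wedge \langle K(1) \rangle = \langle K(1) \rangle$ (since $X$ is type $1$) and with the trivial inequality $\langle K(1) \rangle \leq \langle v^{-1} X \rangle$, one concludes $\langle v^{-1} X \rangle = \langle v_Y^{-1} Y \rangle = \langle K(1) \rangle$.

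The main obstacle in the proof is the establishment of the $v_1$-banded vanishing line for $Y$, which was carried out in Theorem \ref{thm:Y-v1-band}; the remaining steps above are routine chromatic manipulations, and the genericity of $v_1$-banded vanishing lines proved in Section \ref{sec:BandVan} suggests that one could alternatively bypass the Hopkins--Smith reduction by transporting the vanishing line across the thick subcategory generated by $Y$ and applying Proposition \ref{prop:v1-band-tele} directly to $X$.
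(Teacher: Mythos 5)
Your argument matches the paper's: apply Proposition \ref{prop:v1-band-tele} to $Y = C(2)\otimes C(\eta)$ using Theorem \ref{thm:Y-v1-band} and Proposition \ref{prop:v1-self-map} to conclude $v^{-1}Y \simeq L_{K(1)}Y$, then run the standard Bousfield-class manipulations that the paper simply cites from \cite{BousfieldLocalization} and \cite{RavenelConjs}. One small lacuna in your third paragraph: to pass from $\langle v^{-1}X\rangle \wedge \langle Y\rangle = \langle K(1)\rangle$ and $\langle K(1)\rangle \leq \langle v^{-1}X\rangle$ to $\langle v^{-1}X\rangle = \langle K(1)\rangle$ you also need $\langle v^{-1}X\rangle \leq \langle v^{-1}X\rangle \wedge \langle X\rangle$, which holds because (after replacing $v$ by a central iterate via Hopkins--Smith) $v^{-1}X$ is an $\End(X)$-module and hence a retract of $v^{-1}X \otimes DX \otimes X$; this is the detail hiding in the ``standard consequence'' you invoke.
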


\begin{proof}
    Since $v_1 : \Sigma^2 Y \to Y$ has $\HFt$-Adams filtration one, \Cref{thm:Y-v1-band} and \Cref{prop:v1-band-tele} imply that $v ^{-1} Y \to L_{K(1)} Y$ is an equivalence, so this follows as in \cite[Section 4]{BousfieldLocalization} and the proof of \cite[Theorem 10.12]{RavenelConjs}.
\end{proof}

\section{The mod \texorpdfstring{$8$}{8} Moore spectrum} \label{sec:mod8}
Our main goal in this section is to prove \Cref{thm:mod8-main-thm}, which was a key input to our proof of \Cref{thm:mainboundary} in \Cref{sec:FinishingStolz}.

\begin{thm}\label{thm:mod8-main-thm}
    Let $F^s \pi_k (C(8)) \subseteq \pi_k (C(8))$ denote the elements of $\HFt$-Adams filtration at least $s$. Then for $k \geq 126$, the image of the map \[F^{\frac{1}{5} k + 15} \pi_k (C(8)) \to \pi_{k-1} (\Ss)\] is contained in the subgroup of $\pi_{k-1} (\Ss)$ generated by the image of $J$ and the $\mu$-family.

\end{thm}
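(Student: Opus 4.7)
The strategy is to establish a $v_1$-banded vanishing line for a synthetic model $C(\wt{8})$ of the modified $\mathrm{H}\mathbb{F}_2$-Adams spectral sequence for $C(8)$, and then translate the resulting statement about high-filtration elements into the conclusion about Bocksteins via the $K(1)$-local sphere. More precisely, once a $v_1$-banded vanishing line with parameters $(b \leq d, v, \tfrac15, c, r)$ is in hand for $C(\wt{8})$ (with explicit numerical constants controlled so that $c \leq \tfrac15 k + 15$ and $v \leq 126$), condition (3) of Definition \ref{dfn:banded-vanishing-line} identifies $F^{\frac{1}{5}k+15}\pi_k(C(8))$ with (a subgroup of) $\pi_k(L_{K(1)} C(8))$, and a short computation of $\pi_\ast L_{K(1)}C(8)$ combined with the Bockstein identifies the resulting classes in $\pi_{k-1}\Ss$ with sums of image-of-$J$ elements and $\mu$-family elements.

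To produce the $v_1$-banded vanishing line on $C(\wt{8})$, I would bootstrap from Theorem \ref{thm:Y-v1-band}, which gives such a line for $\nu_{\HFt}(Y)$ with slope $\tfrac15$ and torsion bound $1$. Applying Proposition \ref{prop:band-in-cofiber-seqs} to the cofiber sequence
\[ \Sigma^{1,1} \nu_{\HFt}(Y) \xrightarrow{\eta} \nu_{\HFt}(Y) \to \nu_{\HFt}(C(2)) \]
(which is a cofiber of synthetic spectra because multiplication by $\eta$ is injective on $\HFt$-homology of $Y$, using Lemma \ref{lemm:syn-cof}) produces a $v_1$-banded vanishing line on $\nu_{\HFt}(C(2))$ with numerically explicit parameters, including the torsion bound picking up by one. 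This is already enough to deduce Proposition \ref{prop:trivial-square}, via Definition \ref{dfn:banded-vanishing-line}(3) applied to the class $J(x)^2$, together with the fact that the $K(1)$-local Hurewicz image of $J(x)$ lives in an odd stem where squares vanish. Iterating, the cofiber sequences
\[ \nu_{\HFt}(C(2)) \xrightarrow{2} \nu_{\HFt}(C(2)) \to \nu_{\HFt}(C(4)), \qquad \nu_{\HFt}(C(4)) \xrightarrow{2} \nu_{\HFt}(C(4)) \to \nu_{\HFt}(C(8)) \]
yield $v_1$-banded vanishing lines on $\nu_{\HFt}(C(4))$ and $\nu_{\HFt}(C(8))$ via two more applications of Proposition \ref{prop:band-in-cofiber-seqs}; the intercepts and validity range $v$ shift in a controlled way prescribed by that proposition, and I would track these constants carefully to verify the inequalities $c \leq \tfrac15 k + 15$ and $v \leq 126$ stated in the theorem.

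The subtle point is that the bound involves the \emph{modified} Adams filtration, so we cannot simply use $\nu_{\HFt}(C(8))$; instead I would construct a synthetic spectrum $C(\wt{8})$ refining $\nu_{\HFt}(C(8))$ in which the defining map $2^3 \cdot 1 \colon \Ss \to \Ss$ is replaced by a lift $\wt{8} \in \pi_{0,3}\Ss^{0,0}$ of higher synthetic filtration, as in the modified Adams spectral sequence of \cite[\S 3]{BHHM}. The cofiber sequences written above can be modified analogously so that each Bockstein is replaced by its higher-filtration synthetic lift, and the genericity Proposition \ref{prop:band-in-cofiber-seqs} still applies to compute the banded vanishing line on $C(\wt{8})$. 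The hard part of this paragraph is to set up $C(\wt{8})$ so that on the one hand its bigraded homotopy really computes the modified Adams filtration on $\pi_\ast(C(8))$, while on the other hand condition (3) of Definition \ref{dfn:banded-vanishing-line} still identifies elements of high modified filtration with $K(1)$-local data.

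Finally, to complete the proof I would analyze $\pi_\ast L_{K(1)} C(8)$. Since $L_{K(1)}\Ss$ has image-of-$J$ plus $\mu$-family in its $2$-completed homotopy (as used classically in \cite{AdamsJIV,QuillenAdamsConj}), and since the Bockstein $C(8) \to \Sigma \Ss$ realizes multiplication by an appropriate unit on the $K(1)$-local level, the image under the Bockstein of the subgroup of $\pi_k(C(8))$ detected in $\pi_k(L_{K(1)} C(8))$ lies in the subgroup generated by the image of $J$ and the $\mu$-family. The main obstacle I anticipate is the bookkeeping in the second paragraph: each application of Proposition \ref{prop:band-in-cofiber-seqs} shifts the intercepts $b,d,c$, the range of validity $v$, and the torsion bound $r$, and the resulting constants must be shown to meet the explicit $\tfrac15 k + 15$ bound of the theorem together with the hypothesis $k \geq 126$. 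This is not difficult in principle but requires a careful iterated computation with the formulas of Proposition \ref{prop:band-in-cofiber-seqs} applied first to $\eta$, then to the three successive multiplications by $2$ (or their modified lifts).
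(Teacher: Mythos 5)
Your proposal correctly identifies the broad architecture — bootstrap a $v_1$-banded vanishing line from $Y$ to $C(\wt{8})$ via \Cref{prop:band-in-cofiber-seqs} and then convert the resulting filtration estimate into an image-of-$J$/$\mu$-family statement — but the first step contains a genuine gap. You propose to use the cofiber sequence
\[ \Sigma^{1,1} \nu_{\HFt}(Y) \xrightarrow{\eta} \nu_{\HFt}(Y) \to \nu_{\HFt}(C(2)), \]
but no such cofiber sequence exists: $Y = C(2) \otimes C(\eta)$ has four cells while $C(2)$ has two, so $C(2)$ cannot be the cofiber of a self-map of $Y$. (The cofiber sequence relating the two spectra runs the other way, $C(2) \to Y \to \Sigma^2 C(2)$, which presents $Y$ as the cofiber.) The stated justification is also false: $\eta$ has positive $\HFt$-Adams filtration, so multiplication by $\eta$ induces the \emph{zero} map — not an injection — on $\HFt$-homology, and in particular \Cref{lemm:syn-cof} would not apply even if the sequence existed.

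The missing idea is the $\eta$-splitting trick. One cannot extract $C(\wt{2})$ from $\nu Y = C(\wt{2}) \otimes C(\wt{\eta})$ in a single cofiber-sequence step. Instead, one iterates the $\eta$-Bockstein cofiber sequences
\[ \Sigma^{1,2} C(\wt{2}) \otimes C(\wt{\eta}^{j}) \to C(\wt{2}) \otimes C(\wt{\eta}^{j+1}) \to C(\wt{2}) \otimes C(\wt{\eta}) \]
to obtain banded vanishing lines on $C(\wt{2}) \otimes C(\wt{\eta}^2)$ and then $C(\wt{2}) \otimes C(\wt{\eta}^3)$; one then uses the synthetic relation $\wt{\eta}^3 = \wt{4}\wt{\nu}$ together with the nullity of $\wt{4}$ on $C(\wt{2})$ to conclude that
\[ C(\wt{2}) \otimes C(\wt{\eta}^3) \simeq C(\wt{2}) \oplus \Sigma^{4,6} C(\wt{2}), \]
so that $C(\wt{2})$ inherits its line as a retract (\Cref{lemm:eta-splitting}). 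Without this, the induction never gets off the ground. Once the $C(\wt{2})$ case is established, the Bockstein sequences for $2$ as you describe do carry the line to $C(\wt{4})$ and $C(\wt{8})$.

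Your final reduction also departs from the paper's. You propose to compute $\pi_\ast L_{K(1)}C(8)$ and trace the Bockstein $K(1)$-locally; the paper instead exploits condition (2) of the band definition to upgrade the filtration of a class in $\pi_k C(8)$ from $\tfrac15 k + c$ to $\tfrac12 k + b$, passes through projection onto the top cell (correcting for the filtration shift between $\nu C(2^n)$ and $C(\wt{2}^n)$, as in \Cref{prop:I-want-to-be-done}), and then invokes the Davis--Mahowald Adams-filtration criterion (\Cref{thm:DM-filt-bound}) directly in $\pi_{k-1}\Ss$. Your $K(1)$-local route is plausible but would require additional care: the map $\pi_{k-1}\Ss \to \pi_{k-1} L_{K(1)}\Ss$ is not injective, so knowing that the image is the right thing $K(1)$-locally does not by itself pin down the class in $\pi_{k-1}\Ss$ up to image of $J$ plus $\mu$-family — one still needs a filtration input on the nose, which is what the paper's condition (2) supplies.
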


We will prove \Cref{thm:mod8-main-thm} by combining the banded genericity techonology of \Cref{sec:BandVan} with the main result of \Cref{sec:Y}. Before we explain further, let us fix some notation.

\begin{cnv}
    In this section, synthetic spectra will always be taken with respect to $\HFt$, and we will denote $\Sigma^{*,*}\nu_{\HFt}( \Ss_2^\wedge )$ by $\Ss_2^{*,*}$. Similarly, we will let $\Ss_2$ denote the $2$-complete sphere.
\end{cnv}


\begin{ntn}
By the calculations of \Cref{prop:syn-toda-range}, we see that that there are classes $\wt{2} \in \pi_{0,1} \Ss_2^{0,0}$, $\wt{\eta} \in \pi_{1,2} \Ss_2^{0,0}$ and $\wt{\nu} \in \pi_{3,4} \Ss_2^{0,0}$ which satisfy relations $\tau \wt{2} = \nu (2) = 2$, $\tau \wt{\eta} = \nu (\eta)$ and $\tau \wt{\nu} = \nu (\nu)$. Moreover, we let $\wt{2^n} = \wt{2}^n$.
\end{ntn}

\begin{lem} \label{fact:2-complete-fine}
    The natural map \[ [\Ss_2^{a,b}, \Ss_2 ^{0,0}] \to \pi_{a,b} (\Ss_2 ^{0,0})\] is an isomorphism for all $a,b$.
  Furthermore, for any $n \geq 0$ there is an equivalence
    \[\nu C(2^n) \simeq \mathrm{cof}(\Ss_2^{0,1} \xrightarrow{\tau^{n-1} \wt{2}^n} \Ss_2^{0,0}). \]
\end{lem}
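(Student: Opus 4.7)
The plan is to reduce to showing $\mathrm{Map}(F, \Ss_2^{0,0}) = 0$, where $F$ denotes the cofiber of the unit map $\Ss^{0,0} \to \Ss_2^{0,0}$; the claimed isomorphism for each $(a,b)$ then follows by tensoring the cofiber sequence $\Ss^{0,0} \to \Ss_2^{0,0} \to F$ with $\Ss^{a,b}$ and applying $\mathrm{Map}(-, \Ss_2^{0,0})$. Since the classical unit $\Ss \to \Ss_2^\wedge$ induces an isomorphism on $\HFt$-homology, \Cref{lemm:syn-cof} shows that its image under $\nu$ is a cofiber sequence $\nu\Ss \to \nu\Ss_2^\wedge \to \nu(\Ss_2^\wedge/\Ss)$ in $\mathrm{Syn}_{\HFt}$, identifying $F$ with $\nu(\Ss_2^\wedge/\Ss)$. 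The vanishing $\HFt_*(\Ss_2^\wedge/\Ss) = 0$ gives $C\tau \otimes F = 0$, so $F$ is $\tau$-invertible. On the other hand $\Ss_2^\wedge$ is $\HFt$-nilpotent complete, hence $\Ss_2^{0,0}$ is $\tau$-complete. Any map from a $\tau$-invertible synthetic spectrum into a $\tau$-complete one vanishes (since $\tau$ acts invertibly on the source but as zero on each finite quotient $Y/\tau^{k}$ of the target, into which we resolve the target via $\tau$-completeness), completing Part (1).

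\textbf{Part (2).} Part (1) promotes $\tau^{n-1}\wt{2}^{n} \in \pi_{0,1}(\Ss_2^{0,0})$ to a self-map $\Ss_2^{0,1} \to \Ss_2^{0,0}$, whose cofiber we denote $M$. Applying the exact functor $\tau^{-1}$ to the defining cofiber sequence of $M$, and using $\tau\wt{2} = 2$, gives $\tau^{-1} M \simeq C(2^{n})$. The adjunction $\tau^{-1} \dashv \nu$ provided by \Cref{thm:tau-inv} then produces a canonical comparison map $\Phi : M \to \nu C(2^{n})$ corresponding to the identity of $C(2^{n})$.

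The plan is to show that $\Phi$ is an equivalence by comparing $C\tau$-reductions. Both source and target are $\tau$-complete: the source as a cofiber of the $\tau$-complete spectra $\Ss_2^{0,0}$ and $\Ss_2^{0,1}$, and the target because $C(2^{n})$ is bounded below and $2$-complete, hence $\HFt$-nilpotent complete. For $\tau$-complete synthetic spectra a morphism is an equivalence as soon as its $C\tau$-reduction is, so it suffices to check $C\tau \otimes \Phi$. Smashing the defining cofiber sequence of $M$ with $C\tau$ produces a cofiber sequence whose connecting map is the mod-$\tau$ reduction of $\tau^{n-1}\wt{2}^{n}$: this reduction is zero when $n \geq 2$ thanks to the $\tau^{n-1}$ factor, and equals multiplication by $h_{0}$ when $n = 1$. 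Combined with the identification $\pi_{*,*}(C\tau \otimes \nu Y) \cong \mathrm{Ext}_{\A_{*}}(\F_{2}, \HFt_{*} Y)$ of \Cref{lemm:ctauE2}, the resulting long exact sequence is exactly the Ext long exact sequence attached to the short exact sequence of $\A_{*}$-comodules $0 \to \F_{2} \to \HFt_{*} C(2^{n}) \to \Sigma\F_{2} \to 0$ (which is nontrivial exactly when $n = 1$).

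The main obstacle will be pinning down that $\Phi$ genuinely realizes the abstract isomorphism between these two descriptions of $C\tau \otimes \nu C(2^{n})$. The plan to address this is to unwind the adjunction: the composite $\Ss_2^{0,0} \to M \xrightarrow{\Phi} \nu C(2^{n})$ agrees with the unit map obtained by applying $\nu$ to the topological projection $\Ss_2^{\wedge} \to C(2^{n})$, so naturality of the Ext long exact sequence, together with the identification $C\tau \otimes \Ss_2^{0,b} \simeq \Sigma^{0,b}(C\tau \otimes \Ss_2^{0,0})$, forces $C\tau \otimes \Phi$ to be the expected isomorphism.
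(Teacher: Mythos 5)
Your Part (1) is correct and gives a from-scratch argument for what the paper simply cites from Pstr\k{a}gowski: the cofiber $F \simeq \nu(\Ss_2^\wedge / \Ss)$ is $\tau$-invertible because $\HFt_*(\Ss_2^\wedge/\Ss)=0$ forces $C\tau\otimes F \simeq \pi_0^\heartsuit F = 0$, and $\tau$-invertible objects map trivially into the $\tau$-complete $\Ss_2^{0,0}$. One wording slip: $\tau$ does not act as zero on $Y/\tau^k$ for $k>1$; rather $\tau^k$ does, which suffices because $\tau^k$ also acts invertibly on $\mathrm{Map}(F,-)$. You should also note that $\tau$-completeness of $\Ss_2^{0,0}$ is \Cref{lemm:Elocal-taucomplete}.

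For Part (2) there are two issues. First, a precision point: $\nu$ is \emph{not} the right adjoint of $\tau^{-1}$; the right adjoint is the functor $Y$ with $\nu(X) = Y(X)_{\geq 0}$ (see the Recollection opening \Cref{sec:synthetic-appendix}). Your comparison map still exists because $M$ is connective, so the unit $M \to Y(C(2^n))$ factors through the connective cover $\nu C(2^n)$; but the adjunction you cite does not hold as stated.

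Second, and more substantively, the closing claim that naturality of the Ext long exact sequence ``forces $C\tau \otimes \Phi$ to be the expected isomorphism'' has a gap. Matching the bottom-cell inclusions gives you a map of cofiber sequences
\[
(\Ss_2^{0,0} \to M \to \Ss_2^{1,1}) \longrightarrow (\Ss_2^{0,0} \to \nu C(2^n) \to \Ss_2^{1,1})
\]
with identity on the left, $\Phi$ in the middle, and some induced self-map $\psi$ of $\Ss_2^{1,1}$ on the right. The five lemma requires you to verify that $\psi$ is an equivalence; this is not a consequence of naturality alone. By Part (1), $\psi$ is a class in $\pi_{0,0}(\Ss_2^{0,0})$ with $\tau^{-1}\psi = 1$, and the claim that this forces $\psi=1$ is exactly the statement that $\pi_{0,*}(\Ss_2^{0,0})$ is $\tau$-torsion free. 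Notice this is precisely the ingredient the paper's proof uses, but more directly: it applies \Cref{lemm:syn-cof} to $\Ss_2 \to C(2^n) \to \Sigma\Ss_2$ to write $\nu C(2^n) = \mathrm{cof}(\alpha)$ for some $\alpha\in\pi_{0,1}(\Ss_2^{0,0})$ with $\tau^{-1}\alpha = 2^n$, and then uses $\tau$-torsion-freeness of $\pi_{0,*}$ to identify $\alpha = \tau^{n-1}\wt{2}^n$ at once. Your strategy can be completed, but once the gap is filled it contains the paper's argument as a subroutine while additionally invoking $\tau$-completeness of both sides and the $C\tau$-reduction criterion; the paper's route to the same conclusion is appreciably shorter.
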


\begin{proof}
  The first claim follows from \cite[Proposition 5.6]{Pstragowski}, which implies that $\Ss^{0,0} _2$ is the $\nu \HFt$-localization of $\Ss^{0,0}$. 

  To prove the second claim, we note that the cofiber sequence $ \Ss_2^0 \to C(2^n) \to \Ss_2^1 $
    is short exact on $\HFt$-homology, so by \Cref{lemm:syn-cof} it induces a cofiber sequence $\Ss_2 ^{0,0} \to \nu C(2^n) \to \Ss_2 ^{1,1}$.
    Thus $\nu C(2^n)$ is the cofiber of a map $\Ss_2 ^{0,1} \to \Ss_2 ^{0,0}$ whose image under the functor $\tau^{-1}$ is $2^n$.
    The result therefore follows from the fact that $\pi_{0,*} (\Ss_2)$ is $\tau$-torsion free.
\end{proof}

\begin{ntn}
  For convenience, we will use the following notation:
  $$ C(\tau^a \wt{2}^b) \coloneqq \mathrm{Cof}(\Ss_2^{0,b-a} \xrightarrow{\tau^a \wt{2}^b} \Ss_2^{0,0}). $$
\end{ntn}

\begin{rmk}\label{rmk:modified-mod8}
    The synthetic spectrum $C(\wt{2}^n)$ encodes the modified $\HFt$-Adams spectral sequence for $C(2^n)$. See \cite[Section 3]{BHHM} for the notion of a modified Adams spectral sequence. \todo{Is there a better reference to cite here?}
\end{rmk}

Our next goal will be to establish a $v_1$-banded vanishing line of slope $\frac{1}{5}$ for $C(\wt{8})$ with explicit parameters. We will do this via a thick subcategory argument.


\begin{lem}\label{lemm:eta-splitting}
    There is a splitting of synthetic spectra
    \[C(\wt{2}) \otimes C(\wt{\eta}^3) \simeq C(\wt{2}) \oplus \Sigma^{4,6} C(\wt{2}).\]
\end{lem}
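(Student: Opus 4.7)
The plan is to prove $\pi_{3,6}(C(\wt{2})) = 0$, which will suffice for the splitting. Once this vanishing is established, the class $\wt{\eta}^3 \cdot 1_{C(\wt{2})} \in \pi_{3,6}(C(\wt{2}))$ is zero, so the induced $C(\wt{2})$-linear map $\wt{\eta}^3 \otimes 1_{C(\wt{2})} \colon \Sigma^{3,6} C(\wt{2}) \to C(\wt{2})$ is nullhomotopic (it is $C(\wt{2})$-linear and thus determined by its restriction to the bottom cell of its source), and its cofiber $C(\wt{2}) \otimes C(\wt{\eta}^3)$ splits as $C(\wt{2}) \oplus \Sigma^{4,6} C(\wt{2})$.

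By \Cref{fact:2-complete-fine} we identify $C(\wt{2}) \simeq \nu_{\HFt}(C(2))$, and $C(2)$ is $\HFt$-nilpotent complete with strongly convergent $\HFt$-Adams spectral sequence, so \Cref{thm:synthetic-Adams} applies. Its part $(4)$ expresses $\pi_{3,*}(C(\wt{2}))$ as the $\tau$-adic completion of the $\mathbb{Z}[\tau]$-submodule generated by synthetic lifts of permanent cycles in stem $3$ of the $\HFt$-Adams spectral sequence of $C(2)$. Since $\tau$-multiplication strictly decreases the second bigrading, a nonzero contribution to $\pi_{3,6}(C(\wt{2}))$ would require a nonzero permanent cycle in $\mathrm{E}_{\infty}^{s,s+3}(C(2))$ for some $s \geq 3$.

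The remaining step is therefore to verify that $\mathrm{Ext}^{s,s+3}_{\A_*}(\F_2, \mathrm{H}_*(C(2))) = 0$ for every $s \geq 3$. I would argue via the short exact sequence of $\A_*$-comodules
\[0 \to \F_2 \to \mathrm{H}_*(C(2)) \to \Sigma \F_2 \to 0,\]
whose connecting homomorphism in $\mathrm{Ext}$ is multiplication by $h_0$. At $s = 3$ the connecting map sends $h_0 h_2 \in \mathrm{Ext}^{2,5}_{\A_*}(\F_2,\F_2)$ onto the sole generator $h_0^2 h_2 = h_1^3$ of $\mathrm{Ext}^{3,6}_{\A_*}(\F_2,\F_2)$, forcing $\mathrm{Ext}^{3,6}_{\A_*}(\F_2, \mathrm{H}_* C(2)) = 0$. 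For $s \geq 4$ the Adem relation $h_0 h_1 = 0$ already forces both $\mathrm{Ext}^{s,s+3}_{\A_*}(\F_2,\F_2)$ and $\mathrm{Ext}^{s,s+2}_{\A_*}(\F_2,\F_2)$ to vanish, so the long exact sequence yields the desired vanishing directly. Since the neighboring $\mathrm{Ext}$ groups in stem $2$ at filtrations $\geq 3$ are also zero, no Adams differentials can enter or leave these bidegrees, so $\mathrm{E}_{\infty}^{s,s+3}(C(2)) = \mathrm{E}_{2}^{s,s+3}(C(2)) = 0$ for all $s \geq 3$. The main "obstacle" is thus reduced to a routine long-exact-sequence computation in $\mathrm{Ext}_{\A_*}(\F_2,\F_2)$.
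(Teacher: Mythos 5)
Your reduction to proving $\pi_{3,6}(C(\wt{2})) = 0$ has a genuine gap at the step where you claim the self-map $\wt{\eta}^3 \otimes 1_{C(\wt{2})}$ is ``$C(\wt{2})$-linear and thus determined by its restriction to the bottom cell of its source.'' The synthetic spectrum $C(\wt{2}) = \nu C(2)$ is not a ring: $C(2)$ admits no unital multiplication, the obstruction being precisely that $2 \cdot \mathrm{id}_{C(2)} \neq 0$, and this obstruction persists under $\nu$. So there is no module structure to invoke, and without it a self-map of $\Sigma^{3,6}C(\wt{2})$ is \emph{not} determined by the class it induces in $\pi_{3,6}(C(\wt{2}))$. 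Concretely, the cofiber sequence $\Ss^{3,6} \to \Sigma^{3,6}C(\wt{2}) \to \Ss^{4,7}$ gives an exact sequence
\[
\pi_{4,7}(C(\wt{2})) \longrightarrow \bigl[\Sigma^{3,6}C(\wt{2}),\, C(\wt{2})\bigr] \longrightarrow \pi_{3,6}(C(\wt{2})),
\]
and $\pi_{4,7}(C(\wt{2})) \cong \Z/2\Z$, detected by the filtration-$3$ permanent cycle lifting $4\nu$, so $[\Sigma^{3,6}C(\wt{2}), C(\wt{2})]$ is nonzero even though $\pi_{3,6}(C(\wt{2})) = 0$. Your argument says nothing about whether $\wt{\eta}^3 \otimes 1$ lands on this nonzero class. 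The same failure is already visible one degree lower: $\wt{2} \cdot 1_{C(\wt{2})} \in \pi_{0,1}(C(\wt{2})) = 0$, yet $\wt{2} \otimes 1_{C(\wt{2})}$ is not nullhomotopic, since $\tau^{-1}(\wt{2} \otimes 1) = 2 \cdot \mathrm{id}_{C(2)} \neq 0$.

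The paper's proof supplies exactly the extra multiplicative structure that your argument lacks. It first invokes the relation $\wt{\eta}^3 = \wt{2}^2 \wt{\nu}$ in $\pi_{*,*}(\Ss^{0,0}_2)$ from \Cref{prop:syn-toda-range}, reducing to the assertion that $\wt{2}^2$ acts as zero on $C(\wt{2})$. That assertion is then proved by an explicit factorization: the first $\wt{2}$ lifts through the bottom cell $\Ss^{0,0} \to C(\wt{2})$ (using the canonical nullhomotopy of $\wt{2} \circ \partial$), the second descends through the top cell $C(\wt{2}) \to \Ss^{1,1}$, and the composite threads through two consecutive maps of the defining cofiber sequence, hence is null. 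It is this factorization of $\wt{\eta}^3$ through $\wt{2}^2$, and the geometric mechanism by which $\wt{2}^2$ kills $C(\wt{2})$, that makes the top-cell contribution vanish --- information invisible from $\pi_{3,6}(C(\wt{2}))$ alone.

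As a minor secondary point: your $\mathrm{Ext}$ computation for $s \geq 4$ is correct in its conclusion, but the relevant relation forcing $\mathrm{Ext}^{s,s+3}_{\A_*}(\F_2,\F_2) = 0$ for $s \geq 4$ is $h_0^3 h_2 = 0$, not $h_0 h_1 = 0$; and in the $s=3$ case you should also note $\mathrm{Ext}^{3,5}_{\A_*}(\F_2,\F_2) = 0$ to conclude from the long exact sequence.
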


\begin{proof}
  After inverting $\tau$ this splitting becomes the classical fact that
  $ C(2) \otimes C(\eta^3) \simeq C(2) \oplus \Sigma^{4} C(2) $
  and the proof we give simply lifts this argument to the synthetic setting.
  
  The splitting follows two statements:
  that $\wt{\eta}^3 = \wt{4} \wt{\nu}$ as self-maps of $C(\wt{2})$ and
  that $\wt{4}$ is null as a self-map of $C(\wt{2})$.
  The first fact follows from \Cref{prop:syn-toda-range}, which shows that the relation $\wt{\eta}^3 = \wt{4} \wt{\nu}$ holds in the homotopy of $\Ss^{0,0} _2$.
  To prove that $\wt{4}$ is null as a self-map of $C(\wt{2})$, we examine the following commutative diagram
  
  \begin{center}
    \begin{tikzcd}
      \Ss^{0,0}_2 \arrow[r] \arrow[d, "\wt{2}"] & C(\wt{2}) \arrow[r] \arrow[d, "\wt{2}"] \arrow[dl, dashrightarrow] & \Ss^{1,1}_2 \arrow[d, "\wt{2}"] \\
      \Ss^{0,0}_2 \arrow[r] \arrow[d, "\wt{2}"] & C(\wt{2}) \arrow[r] \arrow[d, "\wt{2}"] & \Ss^{1,1}_2 \arrow[dl, dashrightarrow] \arrow[d, "\wt{2}"] \\
      \Ss^{0,0}_2 \arrow[r] & C(\wt{2}) \arrow[r] & \Ss^{1,1}_2 ,
    \end{tikzcd}
  \end{center}
  where the rows are cofiber sequences and the dashed arrows exist because of the canonical nullhomotopies of \[C(\wt{2}) \to \Ss^{1,1}_2 \xrightarrow{\wt{2}} \Ss^{1,1}_2 \text{ and } \Ss^{0,0}_2 \xrightarrow{\wt{2}} \Ss^{0,0}_2 \to C(\wt{2}).\]
    We wish to show that the composite of the middle two vertical arrows is null. Using the dashed arrows, we may factor this through the composition of the middle two horizontal arrows, which is null because they form a cofiber sequence.
\end{proof}

\begin{prop}\label{prop:banded-numbers}
  The synthetic spectra $X$ in the table below admit $v_1$-banded vanishing lines of slope $\frac{1}{5}$ and remaining parameters as follows:
  \renewcommand{\arraystretch}{1.6}
  \begin{center}
    \begin{tabular}{|c||c|c|c|c|c|}
      \hline
      \textup{Synthetic Spectrum X} & $b$ & $d$ & $v$ & $c$ & $r$ \\ \hline \hline
      $C(\wt{2}) \otimes C(\wt{\eta})$ &
      $-1.5$ & $0$ & $15$ & $2.6$ & $1$ \\ \hline
      $C(\wt{2}) \otimes C(\wt{\eta}^2)$ &
      $-2.5$ & $0.5$ & $23$ & $4.4$ & $2$ \\ \hline
      $C(\wt{2}) \otimes C(\wt{\eta}^3)$ &
      $-3.5$ & $1$ & $32 + \frac{1}{3}$ & $6.2$ & $4$ \\ \hline
      $C(\wt{2})$ &
      $-3.5$ & $1$ & $28 + \frac{1}{3}$ & $5$ & $4$ \\ \hline
      $C(\wt{4})$ &
      $-7.5$ & $2$ & $58 + \frac{1}{3}$ & $10$ & $9$ \\ \hline
      $C(\wt{8)}$ &
      $-12.5$ & $3$ & $91 + \frac{2}{3}$ & $15$ & $15$ \\ \hline
    \end{tabular}
  \end{center}
  \renewcommand{\arraystretch}{1.0}
\end{prop}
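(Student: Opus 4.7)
The plan is a thick-subcategory bootstrap from Theorem~\ref{thm:Y-v1-band}, proceeding row by row through the table while tracking parameters via Lemmas~\ref{lemm:band-retract-susp} and~\ref{prop:band-in-cofiber-seqs}. For the first row, I will identify $\nu_{\HFt}(Y) \simeq C(\wt{2}) \otimes C(\wt{\eta})$: since $\nu_{\HFt}$ is symmetric monoidal, this reduces to $\nu C(2) \simeq C(\wt{2})$ (Fact~\ref{fact:2-complete-fine}) and $\nu C(\eta) \simeq C(\wt{\eta})$, the latter obtained by the same argument as Fact~\ref{fact:2-complete-fine} applied to the cofiber sequence $\Ss^0 \to C(\eta) \to \Ss^2$ (which is short exact on $\HFt$-homology) together with the uniqueness of $\wt{\eta} \in \pi_{1,2}\Ss^{0,0}_2$ lifting $\nu(\eta)$. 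The parameters $(-3/2, 0, 15, 1/5, 13/5, 1)$ of Theorem~\ref{thm:Y-v1-band} then match those of the first row after rewriting $13/5 = 2.6$.

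For rows two and three, I would apply the octahedral axiom to the factorization $\wt{\eta}^{n+1} = \wt{\eta} \circ \Sigma^{1,2}\wt{\eta}^n$ (for $n = 1, 2$) to obtain cofiber sequences
\[\Sigma^{1,2} C(\wt{\eta}^n) \to C(\wt{\eta}^{n+1}) \to C(\wt{\eta}).\]
Tensoring with $C(\wt{2})$ and applying Proposition~\ref{prop:band-in-cofiber-seqs} at $p=2$ (so $m = 1/5$ and $1/(2p-2) = 1/2$) produces row $n+1$ from rows $n$ and $1$. The $\Sigma^{1,2}$ shift modifies parameters via Lemma~\ref{lemm:band-retract-susp} by $b \mapsto b+\tfrac{1}{2}$, $d \mapsto d+\tfrac{1}{2}$, $c \mapsto c+\tfrac{4}{5}$, $v \mapsto v+1$, with $r$ fixed; the numerics of Proposition~\ref{prop:band-in-cofiber-seqs} then produce the claimed values.

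The fourth row is obtained from Lemma~\ref{lemm:eta-splitting}: since $C(\wt{2}) \otimes C(\wt{\eta}^3) \simeq C(\wt{2}) \oplus \Sigma^{4,6}C(\wt{2})$, the summand $\Sigma^{4,6}C(\wt{2})$ inherits the parameters of row three as a retract. Transporting these back along $\Sigma^{-4,-6}$ via Lemma~\ref{lemm:band-retract-susp} (which shifts $c$ by $-\tfrac{6}{5}$ and $v$ by $-4$, while leaving $b, d, r$ unchanged) then yields the stated parameters for $C(\wt{2})$. It is crucial here to use the $\Sigma^{4,6}C(\wt{2})$ retract and shift back rather than the direct $C(\wt{2})$ retract, which would give strictly weaker parameters.

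Finally, rows five and six are obtained iteratively from the cofiber sequences
\[\Sigma^{0,1}C(\wt{2}^n) \to C(\wt{2}^{n+1}) \to C(\wt{2})\]
(derived from $\wt{2}^{n+1} = \wt{2} \circ \Sigma^{0,1}\wt{2}^n$ via the octahedral axiom) by feeding them through Proposition~\ref{prop:band-in-cofiber-seqs}, noting that $\Sigma^{0,1}$ adds $+1$ to each of $b, d, c$ and fixes $v, r$. The main obstacle I foresee is purely clerical: the intricate formula for $r_B$ in Proposition~\ref{prop:band-in-cofiber-seqs}, involving $\lfloor \max(d_A, \min(d_A + r_C, d_C)) - b_C - \tfrac{1}{2} \rfloor$, must be carefully evaluated in each of the six cases to verify that the torsion bounds $1, 2, 4, 4, 9, 15$ come out exactly as claimed.
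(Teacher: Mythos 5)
Your proposal is correct and mirrors the paper's proof exactly: the paper likewise builds up the table by inductively applying Lemmas~\ref{lemm:band-retract-susp} and~\ref{prop:band-in-cofiber-seqs} to the Bockstein cofiber sequences $\Sigma^{1,2}C(\wt{2})\otimes C(\wt{\eta}^n) \to C(\wt{2})\otimes C(\wt{\eta}^{n+1})\to C(\wt{2})\otimes C(\wt{\eta})$ and $\Sigma^{0,1}C(\wt{2}^n)\to C(\wt{2}^{n+1})\to C(\wt{2})$, with Theorem~\ref{thm:Y-v1-band} as the base case and Lemma~\ref{lemm:eta-splitting} to pass from row three to row four. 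You also correctly fill in two details the paper leaves implicit: the symmetric-monoidal identification $\nu_{\HFt}(Y)\simeq C(\wt{2})\otimes C(\wt{\eta})$ needed to transport the base case, and the observation that one must use the $\Sigma^{4,6}C(\wt{2})$ summand of the splitting and shift back by $\Sigma^{-4,-6}$ (the direct $C(\wt{2})$ summand would give the weaker intercept $c=6.2$ and would propagate to $c=11.2$ for $C(\wt{4})$, not matching the table).
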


\begin{proof}
  Inductively apply \Cref{lemm:band-retract-susp} and \Cref{prop:band-in-cofiber-seqs} to the Bockstein cofiber sequences
  \begin{align*}
      \Sigma^{1,2} C(\wt{2}) \otimes C(\wt{\eta}) \to C(\wt{2}) &\otimes C(\wt{\eta}^2) \to C(\wt{2}) \otimes C(\wt{\eta}) \\
      \Sigma^{1,2} C(\wt{2}) \otimes C(\wt{\eta}^2) \to C(\wt{2}) &\otimes C(\wt{\eta}^3) \to C(\wt{2}) \otimes C(\wt{\eta}) \\
      \Sigma^{0,1} C(\wt{2}) \to &C(\wt{4}) \to C(\wt{2}) \\
      \Sigma^{0,1} C(\wt{4}) \to &C(\wt{8}) \to C(\wt{2})
  \end{align*}
    using \Cref{thm:Y-v1-band} as a base case and \Cref{lemm:eta-splitting} to go from the second to the third sequence.
\end{proof}

\begin{rmk}
    The numbers in \Cref{prop:banded-numbers} can likely be improved by more carefully accounting for the behavior of the classes in the band under the cofiber sequences used in the proof. In particular, we believe that one could improve the torsion order bound in the $v_1$-banded vanishing line for $C(\wt{2})$ to $3$. This would imply by \Cref{prop:banded-adams} that the $v_1$-localized Adams spectral sequence for $C(2)$ collapses at the $\mathrm{E}_5$-page. This result was announced by Mahowald \cite[Theorem 5]{MahBull}, but to the best of our knowledge a proof has never appeared in the literature.
\end{rmk}

\begin{prop} \label{prop:I-want-to-be-done}
  If $C(\wt{2}^n)$ admits a banded vanishing line with parameters $(m, c, r, b, d, v)$, then
  the image of the map
  $$ F^{mk+c}\pi_k C(2^n) \to \pi_{k-1}(\Ss) $$
  is contained in the subgroup of $\pi_{k-1}(\Ss)$ generated by the image of $J$ and the $\mu$-family as long as $ k \geq v$ and
  $$ \frac{1}{2}k + b - n + 1 \geq \frac{3}{10}(k-1) + 4 + v_2(k+1) + v_2(k). $$
  Recall that $v_2 (k)$ denotes the $2$-adic valuation of $k$.
\end{prop}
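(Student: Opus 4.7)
The plan begins by using Property~(2) of the hypothesized $v_1$-banded vanishing line on $C(\wt{2}^n)$, which asserts that for $k\ge v$ the subgroup $F^{mk+c}\pi_k C(2^n)$ coincides with $F^{k/2+b}\pi_k C(2^n)$. Thus any $\alpha \in F^{mk+c}\pi_k C(2^n)$ admits a synthetic lift $\wt{\alpha}\in\pi_{k,k+s}(C(\wt{2}^n))$ with $s\ge \lceil k/2+b\rceil$, and I would work with this lift throughout.

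Next, I would exploit the synthetic Bockstein $\partial\colon C(\wt{2}^n)\to \Ss_2^{1,n}$ associated to the defining cofiber sequence $\Ss_2^{0,n}\xrightarrow{\wt{2}^n}\Ss_2^{0,0}\to C(\wt{2}^n)$. Applying $\partial$ to $\wt{\alpha}$ yields a class in $\pi_{k-1,k+s-n}(\Ss_2^{0,0})$, which under $\tau^{-1}$ becomes the classical Bockstein $\partial(\alpha)\in\pi_{k-1}\Ss$. By \Cref{cor:synth-filt}, $\partial(\alpha)$ has $\HFt$-Adams filtration at least $s-n+1\ge \tfrac{k}{2}+b-n+1$. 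The numerical hypothesis is tailored precisely so that this bound exceeds the Davis--Mahowald threshold $\tfrac{3}{10}(k-1)+4+v_2(k+1)+v_2(k)$ of \Cref{thm:DM-filt-bound}.

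To finish, I would invoke the Adams--Quillen splitting $\pi_{k-1}\Ss_{(2)}=(J+\mu)\oplus R$ of \cite[VIII.4.2]{EinfBook}, where $R$ denotes the kernel of the $L_{K(1)}$-Hurewicz, and decompose $\partial(\alpha)=y+z$ with $y\in J+\mu$ and $z\in R$; the task reduces to proving $z=0$. Here Property~(3) of the banded vanishing line enters: $\alpha$ is detected by its image $\bar{\alpha}\in\pi_k L_{K(1)}C(2^n)$, and naturality of the Bockstein identifies the image of $\partial(\alpha)$ in $\pi_{k-1}L_{K(1)}\Ss$ with the $K(1)$-local Bockstein $\partial_{K(1)}(\bar{\alpha})$. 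I would then take $y$ to be the canonical lift of $\partial_{K(1)}(\bar{\alpha})$ to $J+\mu$ via the Adams splitting, so that $z=\partial(\alpha)-y$ automatically lies in $R$. The main obstacle is verifying that $z$ inherits Adams filtration at least $\tfrac{k}{2}+b-n+1$—equivalently, that the $R$-component of a high-filtration class is itself of high filtration in the range of interest—after which \Cref{thm:DM-filt-bound} immediately yields $z=0$ and completes the argument. This filtration compatibility of the Adams--Quillen splitting, which one can access through the spectrum-level structure provided by the image-of-$J$ spectrum $j$, is the delicate point whose careful verification lies at the heart of the proof.
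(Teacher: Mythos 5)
Your proposal follows essentially the same route as the paper's proof: lift $\alpha$ to a high-weight synthetic class over $C(\wt{2}^n)$, apply the top-cell projection (synthetic Bockstein) to gain filtration on the sphere, and close with Davis--Mahowald. The numerical bookkeeping matches the paper's (the gain of $s-n+1$ over $\pi_{k-1}\Ss$ is exactly right). There are, however, two points worth flagging.

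First, and most substantively, you conflate the usual $\HFt$-Adams filtration on $\pi_k C(2^n)$ with the \emph{modified} Adams filtration encoded by $C(\wt{2}^n)$. Property~(2) of the $v_1$-banded vanishing line is a statement about the filtration $F^s\pi_k(\tau^{-1}C(\wt{2}^n))$ of \Cref{dfn:synth-filt}, which (as \Cref{rmk:modified-mod8} notes) is the modified filtration, whereas the hypothesis $\alpha\in F^{mk+c}\pi_k C(2^n)$ concerns the ordinary filtration. These need not agree a priori; one only has the implication ordinary $\Rightarrow$ modified, obtained by pushing a lift in $\nu C(2^n)\simeq C(\tau^{n-1}\wt{2}^n)$ forward along the comparison map $C(\tau^{n-1}\wt{2}^n)\to C(\wt{2}^n)$. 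The paper inserts precisely the diagram relating $C(\tau^{n-1}\wt{2}^n)$, $C(\wt{2}^n)$, $\Ss_2^{1,1}$ and $\Ss_2^{1,n}$ to make this transition rigorous. Your conclusion (a lift to $\pi_{k,k+s}(C(\wt{2}^n))$ with $s\ge k/2+b$) is correct, but the sentence asserting that Property~(2) directly identifies $F^{mk+c}\pi_k C(2^n)$ with $F^{k/2+b}\pi_k C(2^n)$ is not, as written.

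Second, the detour through Property~(3) and the $K(1)$-local Bockstein $\partial_{K(1)}(\bar\alpha)$ is unnecessary. The paper simply observes that a class of sufficiently high Adams filtration lies in the subgroup generated by $J$ and the $\mu$-family, citing \Cref{thm:DM-filt-bound}, and is done. Your instinct that this restatement of Davis--Mahowald rests on the Adams filtration compatibility of the Adams--Quillen splitting (i.e.\ that the kernel-of-Hurewicz component of a high-filtration class is itself high-filtration) is sound, and your suggested resolution via the spectrum-level structure of $j$ is the standard one; but this is a routine background fact that the paper (and indeed the definition of $\Gamma_2$ in \Cref{dfn:h}) takes for granted, not the heart of the argument. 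The substance here is the synthetic Bockstein computation together with Property~(2).
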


\begin{proof}
  First, we note that the conclusion holds trivially for $k \leq 1$.
  Next, using that $\pi_{k-1}(\Ss) \cong \pi_{k-1}(\Ss_2)$ for $k > 1$ and that the $\HFt$-Adams filtrations on each group agree, we may replace $\Ss$ in the theorem statement by $\Ss_2$. 

  Consider the diagram below, where each row is a cofiber sequence and the middle and right vertical maps are projection onto the top cell:
  \begin{center}
    \begin{tikzcd}
      \Sigma^{0,n}C\tau^{n-1} \ar[r] \ar[d, equal] &
      C(\tau^{n-1} \wt{2}^n) \ar[r] \ar[d] &
      C(\wt{2}^n) \ar[d] \\
      \Sigma^{0,n}C\tau^n \ar[r] &
      \Ss_2^{1, 1} \ar[r, "\tau^n"] &
      \Ss_2^{1, n} 
    \end{tikzcd}
  \end{center}
  By \Cref{fact:2-complete-fine}, there is an equivalence $ \nu C(2^n) \simeq C(\tau^{n-1} \wt{2}^n) $.
  Note that under $\tau^{-1}$ the map $\nu C(2^n) \to \Ss^{1,n}$ becomes projection onto the top cell.
  This implies that projection to the top cell induces maps
  $$ F^{s} \pi_k( C(2^n)) \to F^s \pi_k (\tau^{-1} C(\wt{2}^n)) \to F^{s-n+1} \pi_{k-1} \Ss, $$
  where $ F^s \pi_k(\tau^{-1} C(\wt{2}^n)) $ is as in \Cref{dfn:synth-filt}.\footnote{ \Cref{rmk:modified-mod8} allows us to identify this filtration with the \emph{modified} $\HFt$-Adams filtration.}
  We finish by using the hypothesis that $C(\wt{2}^n)$ has a banded vanishing line.
  It follows that, for $k \geq v$, the maps induced by projection to the top cell factor as
  \begin{align*}
    F^{mk + c} \pi_k (C(2^n))
    &\to F^{mk + c} \pi_k (\tau^{-1} C(\wt{2}^n) )
    = F^{\frac{1}{2}k + b} \pi_k (\tau^{-1} C(\wt{2}^n) )\\
    &\to F^{\frac{1}{2}k + b - n + 1} \pi_{k-1} \Ss_2.
  \end{align*}
  
  It therefore suffices to find a $k \geq v$ large enough so that every element of $\pi_{k-1} \Ss$ which has $\HFt$-Adams filtration at least $\frac{1}{2}k + b - n + 1$ is in the subgroup generated by the image of $J$ and the $\mu$-family.
  
  \Cref{thm:gamma-upper}(1) states that every element in $\pi_{k-1} \Ss$ which has $\HFt$-Adams filtration at least $\frac{3}{10}(k-1)+4+v_2 (k+1) + v_2 (k)$ is in the subgroup generated by the image of $J$ and the $\mu$-family. The result follows.
\end{proof}

\begin{proof}[Proof of \Cref{thm:mod8-main-thm}]
  Using Propositions \ref{prop:banded-numbers} and \ref{prop:I-want-to-be-done}, it will suffice to show that the following inequality holds for all $k \geq 126$:
  $$ \frac{1}{2}k - 14.5 \geq \frac{3}{10} (k-1) + 4 + v_2 (k+1) + v_2 (k). $$
  Rearranging, clearing denominators and applying the bound
  $$ \log_2(k+1) \geq v_2(k+1)+v_2(k), $$
  we find that it suffices to show that
  $$ k \geq 91 + 5\log_2(k+1). $$
  Taking derivatives, we find that the left hand side increases faster than the right hand side as soon as $k \geq 9$.
  Thus, to show the inequality holds for $k \geq 126$ it suffices to note that
  \[ 126 \geq 91 + 5 \log_2 (127) \approx 125.94. \qedhere\]


\end{proof}

\appendix

\section{Synthetic homotopy groups} 
\label{sec:synthetic-appendix}

In this appendix, we provide the technical details of the proof of \Cref{thm:synthetic-Adams}, as well as a computation of the $\HFt$-synthetic bigraded homotopy groups in the Toda range. The computation of synthetic homotopy groups highlights many of the subtleties within the statement of \Cref{thm:synthetic-Adams}. We have tried to make this appendix as self-contained as possible. 
Understanding the techniques introduced in this appendix is not necessary in order to read the remainder of the paper. For convenience, we recall the statement of \Cref{thm:synthetic-Adams}.

\begin{thm}[\Cref{thm:synthetic-Adams}]
  Let $X$ denote an $E$-nilpotent complete spectrum with strongly convergent $E$-based Adams spectral sequence.
  Then we have the following description of the bigraded homotopy groups of $\nu X$.

  Let $x$ denote a class in topological degree $k$ and filtration $s$ of the $\mathrm{E}_2$-page of the $E$-based Adams spectral sequence for $X$. The following are equivalent:
  \begin{itemize}
  \item[(1a)] Each of the differentials $d_2$,...,$d_r$ vanish on $x$.
  \item[(1b)] $x$, viewed as an element of $\pi_{k,k+s}(C\tau \otimes \nu X)$, lifts to $\pi_{k,k+s}(C\tau^r \otimes \nu X)$.
  \item[(1c)] $x$ admits a lift to $\pi_{k,k+s}(C\tau^r \otimes \nu X)$ whose image under the $\tau$-Bockstein
  $$ C\tau^r \otimes \nu X \to \Sigma^{1,-r} C\tau \otimes \nu X $$
  is equal to $-d_{r+1}(x)$.
  \end{itemize}

  If we moreover assume that $x$ is a permanent cycle, then there exists a (not necessarily unique) lift of $x$ along the map
  $\pi_{k,k+s}(\nu X) \to \pi_{k,k+s}(C\tau \otimes \nu X)$.  For any such lift, $\wt{x}$, the following statements are true:
  \begin{itemize}
  \item[(2a)] If $x$ survives to the $\mathrm{E}_{r+1}$-page, then $\tau^{r-1} \wt{x} \neq 0$.
  \item[(2b)] If $x$ survives to the $\mathrm{E}_\infty$-page, then the image of $\wt{x}$ in $\pi_{k} (X)$ is of $E$-Adams filtration $s$ and detected by $x$ in the $E$-based Adams spectral sequence.
  \end{itemize}
  Furthermore, there always exists a choice of lift $\wt{x}$ satisfying additional properties:
  \begin{itemize}
  \item[(3a)] If $x$ is the target of a $d_{r+1}$-differential, then we may choose $\wt{x}$ so that $\tau^r \wt{x} = 0$.
  \item[(3b)] If $x$ survives to the $\mathrm{E}_\infty$-page, and $\alpha \in \pi_k X$ is detected by $x$, then we may choose $\wt{x}$ so that
    $\tau^{-1} \wt{x} = \alpha$. In this case we will often write $\wt{\alpha}$ for $\wt{x}$.
  \end{itemize}
  Finally, the following generation statement holds:
  \begin{itemize}
  \item[(4)] Fix any collection of $\wt{x}$ (not necessarily chosen according to (3)) such that the $x$ span the permanent cycles in topological degree $k$.  Then the $\tau$-adic completion of the $\Z[\tau]$-submodule of $\pi_{k,*}(\nu X)$ generated by those $\wt{x}$ is equal to $\pi_{k,*} (\nu X)$.
  \end{itemize}
\end{thm}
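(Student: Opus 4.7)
The plan is to reduce everything to a careful comparison between the $\tau$-Bockstein spectral sequence for $\nu X$ and the $E$-based Adams spectral sequence for $X$. I would first fix an increasing cell structure on $\nu X$ by the cofiber sequences
\[\Sigma^{0,-1} C\tau^{r-1} \to C\tau^r \to C\tau, \qquad r \ge 1,\]
and smash with $\nu X$ to obtain the $\tau$-Bockstein exact couple whose associated spectral sequence has $\mathrm{E}_1$-page identified with $\pi_{*,*}(C\tau \otimes \nu X)$. By \Cref{lemm:ctauE2}, this is isomorphic to the $\mathrm{E}_2$-page of the $E$-based Adams spectral sequence for $X$ after the standard reindexing. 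The first, and most substantial, step is to verify that after inverting $\tau$ these two spectral sequences agree from the $\mathrm{E}_2$-page onward, with differentials matching up to a global sign. This can be seen by unravelling the defining connecting maps: the $d_r^{\tau\text{-BSS}}$-differential of a class $x$ at filtration $s$ is computed by lifting $x$ through the tower of $C\tau^r$'s, applying the $\tau$-Bockstein $\delta$, and then comparing with the Adams $d_{r+1}$, which is built from essentially the same connecting data after identifying $C\tau$ with the ``$E$-page'' comodule datum of \Cref{thm:tau-inv}(5). The sign is pinned down by a diagram chase, and this is exactly the sort of bookkeeping that makes the statement (1c) self-consistent.

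Given this identification, parts (1a)--(1c) follow formally. The long exact sequences obtained from the cofiber sequences above show by induction on $r$ that a class $x \in \pi_{k,k+s}(C\tau \otimes \nu X)$ lifts to $\pi_{k,k+s}(C\tau^r \otimes \nu X)$ if and only if the Bockstein images, i.e.\ the $d_2,\ldots,d_r$ differentials, all vanish on $x$; and the connecting map out of $C\tau^r$ precisely records the first nonvanishing differential. Thus (1a) $\Leftrightarrow$ (1b), and (1c) gives an explicit witness of the content of (1b) for those lifts obtained directly from the tower.

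For (2) and (3), assuming $x$ is a permanent cycle, the tower of $C\tau^r$'s and $E$-nilpotent completeness of $X$ (which is inherited by $\nu X$, cf.\ \Cref{lemm:Elocal-taucomplete}) let me take the homotopy limit and produce a lift $\widetilde{x} \in \pi_{k,k+s}(\nu X)$. Statement (2a) follows from the fact that if $\tau^{r-1}\widetilde{x}=0$, then $x$ would be divisible by $\tau^{r-1}$ on the nose in $\pi_{*,*}(C\tau \otimes \nu X)$ of bounded filtration, contradicting the vanishing in suitable tridegrees. For (2b), strong convergence of the Adams spectral sequence and \Cref{cor:synth-filt}-style arguments identify the image of $\widetilde{x}$ under $\tau^{-1}$ as a class in filtration $s$ detected by $x$. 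Statements (3a) and (3b) are obtained by adjusting a given lift by a correction term: if $x=d_{r+1}(y)$ for some $y$ in the Adams spectral sequence, then any lift of $y$ to $\pi_{*,*}(\nu X)$ gives a synthetic class whose $\tau$-Bockstein is a lift $\widetilde{x}$ of $x$ which is automatically $\tau^r$-torsion; similarly, given $\alpha \in \pi_k X$ detected by $x$, one produces $\widetilde{\alpha}$ with $\tau^{-1}\widetilde{\alpha}=\alpha$ by choosing the lift along the cofiber sequence $\Sigma^{0,-1}\nu X \xrightarrow{\tau} \nu X \to C\tau \otimes \nu X$.

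Part (4) is then a formal statement about the $\tau$-adic filtration: the quotients $\pi_{k,*}(\nu X)/\tau\pi_{k,*}(\nu X)$ inject into the permanent cycles (by (1a)--(1c) applied at all filtrations), and the chosen $\widetilde{x}$'s surject onto this quotient. Iterating and completing in the $\tau$-adic topology yields the claim, where convergence of this completion is supplied by the strong convergence hypothesis on the Adams spectral sequence together with $\tau$-completeness of $\nu X$ (\Cref{lemm:Elocal-taucomplete}). The main technical obstacle I anticipate is the first step: matching the $\tau$-Bockstein differentials with Adams differentials with the correct sign, requires setting up both spectral sequences from the same underlying exact couple in a way that does not introduce hidden reindexing sign errors, and verifying the identification $C\tau \otimes \nu X \simeq \epsilon^*(E_* X)$ is compatible with the $\tau$-Bockstein. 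Once this is dispatched, the remaining items are systematic diagram chases and applications of the compact generation and $\tau$-completeness of $\nu X$.
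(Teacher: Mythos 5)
Your high-level plan — compare the $\tau$-Bockstein spectral sequence for $\nu X$ with the $E$-Adams spectral sequence for $X$ and then read off (1)--(4) — is the right one, and it is in spirit the same as the paper's. However, the central step is left as a gap, and this is precisely where the paper does its work. You assert that ``unravelling the defining connecting maps'' identifies the $\tau$-Bockstein differentials with the Adams differentials, and flag the sign as the only subtlety. But the $\tau$-Bockstein tower of $\nu X$ (shifts of $\nu X$ linked by $\tau$) and the $E$-Adams tower of $X$ are two a priori unrelated exact couples, with no evident map between them, so there is nothing to ``unravel'' until you supply a bridge. The paper supplies that bridge by interposing the $\nu E$-Adams spectral sequence for $\nu X$: applying $\nu$ to the $E$-Adams tower produces the $\nu E$-Adams tower (this uses that each Adams-tower map is zero on $E$-homology, together with \Cref{lemm:syn-cof} and the (partial) monoidality of $\nu$), and inverting $\tau$ on that tower recovers the $E$-Adams tower verbatim. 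This gives the clean description of the $\nu E$-ASS in Theorem~\ref{thm:synth-ASS} ($\mathrm{E}_2$-page is $\mathrm{E}_2\otimes\Z[\tau]$; $d_r(x)=\tau^{r-1}d_{r,\mathrm{top}}(x)$) and only \emph{then} the identification $\beta_r^{s,k,w}\cong\mathrm{E}_{r+1}^{w+s,k,w}$ of the $\tau$-BSS with the reindexed $\nu E$-ASS. Skipping the $\nu E$-ASS intermediary leaves the comparison of differentials unproved.

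There are also local problems in your arguments for (2a) and (3a). In (2a) you speak of $x$ being ``divisible by $\tau^{r-1}$ in $\pi_{*,*}(C\tau\otimes\nu X)$''; but every class there is already $\tau$-torsion, so that condition is vacuous, and the intended contradiction is not visible. (The paper's proof of (2a) works in $\pi_{*,*}(\nu X)$, not $\pi_{*,*}(C\tau\otimes\nu X)$, and uses that on the associated graded for the $\nu E$-Adams filtration, multiplication by $\tau^{r-1}$ is the projection $\mathrm{Z}_\infty^{s,k}\twoheadrightarrow \mathrm{Z}_\infty^{s,k}/\mathrm{B}_r^{s,k}$, which is nonzero on a class surviving to $\mathrm{E}_{r+1}$.) In (3a) you say to lift the source $y$ of the $d_{r+1}$ to $\pi_{*,*}(\nu X)$; but $y$ supports a nonzero differential, so it is not a permanent cycle and does not lift to $\pi_{*,*}(\nu X)$ — it only lifts to $\pi_{*,*}(C\tau^{r+1}\otimes\nu X)$, whose Bockstein lands in $\pi_{*,*}(C\tau\otimes\nu X)$, not in $\pi_{*,*}(\nu X)$, so this does not by itself produce a $\tau^r$-torsion lift of $x$ to $\pi_{*,*}(\nu X)$. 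The paper's (3a) is instead a snake-lemma argument across two $\tau$-multiplication sequences, using the surjectivity statement of Corollary~\ref{cor:tau-surj} (itself dependent on strong convergence and the $\nu E$-ASS description). You should also make explicit where strong convergence enters: the equivalence of strong convergence of the $E$-ASS, the $\nu E$-ASS, and the $\tau$-BSS is not automatic and is proved in the paper via Boardman's criterion (Proposition~\ref{cor:strong-conv}).
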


\begin{rmk}
  As a foreward to the proof, we provide some commentary on the provenance of \Cref{thm:synthetic-Adams}.
  Through the equivalence between $(\Syn_{\BP}^{\mathrm{ev}})_p$ and $\mathcal{SH}(\mathbb{C})_p^{\mathrm{cell}}$ (see \cite[Theorem 1.4]{Pstragowski}) this theorem specializes to provide a translation between the $p$-complete, bigraded motivic stable stems over $\mathbb{C}$ and the Adams--Novikov spectral sequence.
  In fact, the proof we give was directly inspired by the literature on motivic stable stems and their connection to the Adams--Novikov spectral sequence.

  The core argument originates in \cite[Lemma 15]{HKO} where is it observed that at the prime $2$ the differentials in the motivic Adams--Novikov spectral sequence can be formally deduced from those in the classical Adams--Novikov spectral sequence.\footnote{See \cite[Proposition 5.6]{Stahn} for a version at odd primes.} The corresponding result in our setting is \Cref{thm:synth-ASS} where we identify the $\nu E$-based Adams spectral sequence for $\nu X$ in terms of the $E$-based Adams spectral sequence for $X$.

  Building on knowledge from extensive calculations of motivic stable stems, Isaksen then recognized that not only do these spectral sequences determine one another, but the motivic stable stems and Adams--Novikov spectral sequence (with its hidden extensions) contain essentially equivalent information (see the introduction to \cite[Chapter 6]{StableStems}). The remainder of the proof of \Cref{thm:synthetic-Adams} involves formalizing these ideas and dealing with questions of convergence.
  \todo{I'm not too happy with this remark, examine again with fresh eyes.}
\end{rmk}


\subsection{The proof of \Cref{thm:synthetic-Adams}}
\label{sec:proof-synth-Adams}\ 

This subsection is generally organized in order of increasing strength of hypotheses and some results are proved in greater generality than stated in \Cref{thm:synthetic-Adams}. Before we begin we will need to recall more material from \cite{Pstragowski}.

\begin{rec}
  In \cite[Section 4.2, Lemma 4.29 and Proposition 4.35]{Pstragowski},
  Pstragowski introduces a $t$-structure on $\Syn_E$ which satisfies the following properties:
  \begin{enumerate}
  \item The heart, $\mathrm{Syn}_E ^{\heartsuit}$, is equivalant to the abelian category of $E_* E$-comodules.
  \item Given a spectrum $X$, the $0^{\mathrm{th}}$ homotopy object of $\nu X$ with respect to this $t$-structure is naturally equivalent to $E_*X$ as an $E_*E$-comodule and naturally equivalent to $C\tau \otimes \nu(X)$ as an object of $\Syn_E$.
  \item If we let $Y(-)$ denote the right adjoint to inverting $\tau$,
    then we have a natural equivalence between the connective cover of $Y(X)$ and $\nu X$.  
  \item This $t$-strucutre is right complete and compatible with filtered colimits.
  \end{enumerate}
\end{rec}

\begin{ntn}
  In view of (2) we will refer to the $t$-structure introduced above as the \emph{homological $t$-structure} on $\Syn_E$.
  For notational brevity we will use subscripts to denote truncation with respect to this $t$-structure, so that $A_{\geq n}$ refers to the $n$-connective cover of a synthetic spectrum $A$ in the homological $t$-structure.  
\end{ntn}

\begin{cnv}
For the remainder of this subsection $X$ will denote a spectrum.
\end{cnv}
Our analysis of the relation between the bigraded homotopy groups of $\nu X$ and the $E$-based Adams spectral sequence for $X$ will hinge on an understanding of the $\nu E$-based Adams spectral sequence for $\nu X$. We begin by considering the canonical $E$-Adams tower for $X$, constructed below:

\begin{center}
  \begin{tikzcd}
    \cdots \ar[r] &
    X_2 \ar[r, "f_2"] \ar[d, "i_2"] &
    X_1 \ar[r, "f_1"] \ar[d, "i_1"] &
    X_0 \ar[r, equal] \ar[d, "i_0"] &
    X. \\
    & E \otimes X_2 &
    E \otimes X_1 &
    E \otimes X_0 &
  \end{tikzcd}
\end{center}
Each of the $f_i$ is zero on $E$-homology. Therefore, using \Cref{rmk:strong-monoidal} and \Cref{lemm:syn-cof}, we may identify the canonical $\nu E$-Adams tower of $\nu X$ as
\begin{center}
  \begin{tikzcd}
    &
    \Sigma^{0,2} \nu X_3 \ar[d, "\nu(f_3)"] \ar[dl, "\tau"'] &
    \Sigma^{0,1} \nu X_2 \ar[d, "\nu(f_2)"] \ar[dl, "\tau"'] &
    \nu X_1 \ar[d, "\nu(f_1)"] \ar[dl, "\tau"'] & \\
    \cdots \ar[r] &
    \Sigma^{0,2} \nu X_2 \ar[r, "\wt{f_2}"] \ar[d, "\nu(i_2)"] &
    \Sigma^{0,1} \nu X_1 \ar[r, "\wt{f_1}"] \ar[d, "\nu(i_1)"] &
    \nu X_0 \ar[r, equal] \ar[d, "\nu(i_0)"] &
    \nu X. \\
    &
    \nu E \otimes \Sigma^{0,2} \nu X_2 &
    \nu E \otimes \Sigma^{0,1} \nu X_1 &
    \nu E \otimes \nu X_0 &
  \end{tikzcd}
\end{center}


\begin{ntn}
The above tower gives rise to a spectral sequence
 $$ \mathrm{E}^{s,k,w}_1 \coloneqq \pi_{k, k+w} \left( \nu E \otimes \Sigma^{0,s} \nu X_s \right) \Longrightarrow \pi_{k, k+w}(\nu X), $$
 with differentials of tridegree $(r, -1, 1)$. Note that multiplication by $\tau$ lowers the $w$ grading by $1$ but preserves the $s$ and $k$ gradings.
  We use the notation $\mathrm{E}^{s,k,w}_r$ for page $r$ of this spectral sequence.

  Analogously, we use $\mathrm{E}^{s,k}_r$ to refer to the groups in the $E$-Adams spectral sequence for $X$
 $$ \mathrm{E}^{s,k}_1 \coloneqq \pi_k \left( E \otimes X_s \right) \Longrightarrow \pi_k(X), $$
with differentials of bidegree $(r, -1)$. \footnote{Our grading choices do not agree with the usual conventions for Adams spectral sequences. However, we prefer them because each of the indices has a clear interpretation: $k$ is the topological degree, $w$ is the weight and $s$ is the filtration.}
\end{ntn}
 Note that inverting $\tau$ determines a map of spectral sequences
 $$ \mathrm{E}_r^{s,k,w} \to \mathrm{E}_r^{s,k}. $$

 \begin{ntn}
 Let $\mathrm{B}_r^{s,k}$ denote the subgroup of $\mathrm{E}_2^{s,k}$ generated by the images of the differentials $d_2$ through $d_r$. Let $\mathrm{Z}_r^{s,k}$ denote the larger subgroup of $\mathrm{E}_2^{s,k}$ given by those classes on which $d_2$ through $d_r$ vanish. Then, $\mathrm{E}_{r+1}^{s,k} \cong \mathrm{Z}_r^{s,k}/\mathrm{B}_r^{s,k}$.
 \end{ntn}

\begin{thm} \label{thm:synth-ASS}
    The $\nu E$-based Adams spectral sequence for $\nu X$ is determined by the $E$-based Adams spectral sequence for $X$ in the following way:
    \begin{enumerate}
    \item $ \mathrm{E}^{s,k,w}_1 \cong \mathrm{E}^{s,k}_1 \otimes \ZZ[\tau]$,
      where $\mathrm{E}^{s,k}_1$ is considered to be in tridegree $(s,k,s)$.
    \item $ \mathrm{E}^{s,k,w}_2 \cong \mathrm{E}^{s,k}_2 \otimes \ZZ[\tau]$,
      where $\mathrm{E}^{s,k}_2$ is considered to be in tridegree $(s,k,s)$.
    \item Given a differential $d_{r, \mathrm{top}} (x) = y$, there is a differential $d_r (x) = \tau^{r-1} y$. Moreover, all differentials arise in this way.
    \end{enumerate}
\end{thm}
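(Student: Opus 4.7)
My plan is to obtain parts (1) and (2) directly from Pstr\k{a}gowski's symmetric monoidal structure on $\nu$ and then reduce part (3) to a comparison of lifts through the synthetic and topological Adams towers. For part (1), I would compute $\mathrm{E}_1^{s,k,w} = \pi_{k,k+w}(\nu E \otimes \Sigma^{0,s}\nu X_s) \cong \pi_{k,k+(w-s)}(\nu E \otimes \nu X_s)$. Because $E$ is of Adams type, hence a filtered colimit of finite projective spectra, \Cref{rmk:strong-monoidal} yields the symmetric monoidal identification $\nu E \otimes \nu X_s \simeq \nu(E \otimes X_s)$. Pstr\k{a}gowski's calculation $(\nu E)_{k,k+*}(\nu W) \cong E_k(W)[\tau]$ then identifies the $\mathrm{E}_1$-page with $\pi_k(E \otimes X_s) \otimes \mathbb{Z}[\tau] = \mathrm{E}_1^{s,k} \otimes \mathbb{Z}[\tau]$, with the topological generators placed in tridegree $(s,k,s)$ as claimed.

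For part (2), I would show that the synthetic $d_1$ is the $\mathbb{Z}[\tau]$-linear extension of the topological $d_1$ under the identifications of (1). The relevant cofiber sequences $\Sigma^{0,s+1}\nu X_{s+1} \to \Sigma^{0,s}\nu X_s \to \nu E \otimes \Sigma^{0,s}\nu X_s$ are obtained by applying $\Sigma^{0,s}$ to $\nu$ of the topological cofiber sequences $X_{s+1} \to X_s \to E \otimes X_s$ (with \Cref{lemm:syn-cof} applicable because $f_{s+1}$ is zero on $E$-homology). Naturality of $\nu$ together with the fact that $\tau$ is a morphism in $\Syn_E$ then forces $d_1$ to equal $d_1^{\mathrm{top}}$ extended $\mathbb{Z}[\tau]$-linearly, from which $\mathrm{E}_2^{s,k,w} \cong \mathrm{E}_2^{s,k} \otimes \mathbb{Z}[\tau]$ follows by taking homology.

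The hard part will be part (3). My plan is to describe $d_r$ via the quotient pair $(\Sigma^{0,s}\nu X_s, \Sigma^{0,s+r}\nu X_{s+r})$ and compare with the topological $d_r$ computed from $(X_s, X_{s+r})$. The central observation is that each map $\wt{f}_{i+1}\colon \Sigma^{0,i+1}\nu X_{i+1} \to \Sigma^{0,i}\nu X_i$ in the synthetic tower satisfies $\tau \cdot \wt{f}_{i+1} = \Sigma^{0,i}\nu(f_{i+1})$, since $f_{i+1}$ is zero on $E$-homology and hence its $\nu$-image is divisible by $\tau$. Composing $r$ such maps, the comparison between the synthetic and topological boundary operators picks up exactly $r-1$ factors of $\tau$, yielding $d_r(x) = \tau^{r-1} d_{r,\mathrm{top}}(x)$ on generators in tridegree $(s,k,s)$. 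The claim that every synthetic $d_r$ arises this way then follows from the $\mathbb{Z}[\tau]$-linearity of the differentials together with the generator description from part (2). The main obstacle will be bookkeeping: coherently executing the iterated $\tau$-factorization inside the quotient $\Sigma^{0,s}\nu X_s / \Sigma^{0,s+r}\nu X_{s+r}$ requires making compatible choices of representatives at each stage. As a sanity check on the exponent, I would verify compatibility with $\tau$-inversion: because $\tau^{-1}\nu X \simeq X$ and the $\tau^{-1}$-localized synthetic Adams spectral sequence must recover the topological one, the tridegree constraints on $d_r$ force any scaling factor to be precisely $\tau^{r-1}$.
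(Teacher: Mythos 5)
Your treatment of parts (1) and (2) is essentially correct and matches the paper's strategy (though the paper derives (2) as a consequence of (3) rather than proving it separately). The real issue is with part (3), where your proposal has a genuine gap.

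Your primary route for (3), tracking $\tau$-factors directly through the composition of tower maps $\wt{f}_{s+1},\dots,\wt{f}_{s+r}$, is harder than you seem to anticipate. The differential $d_r$ is a zigzag in the exact couple (lift $r-1$ stages, then apply a connecting map), not a direct composite of tower maps, and the quotient object $\Sigma^{0,s}\nu X_s / \Sigma^{0,s+r}\nu X_{s+r}$ is \emph{not} simply a bigraded shift of $\nu(X_s/X_{s+r})$. You flag the bookkeeping as the main obstacle; it is, and you haven't shown how to resolve it.

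What you call a ``sanity check'' --- $\tau$-inversion plus tridegree constraints --- is actually the paper's main argument, but as you state it it is incomplete in a way that matters. Inverting $\tau$ recovers the topological spectral sequence, so $\tau^{-1}d_r = d_{r,\mathrm{top}}$. The tridegree count then forces the exponent $r-1$. But this only determines $d_r(x)$ \emph{modulo the $\tau$-power torsion subgroup of its target tridegree}. To pin down $d_r$ exactly, you need to know that the $\mathrm{E}_r$-page is $\tau$-torsion-free in every tridegree that can receive a $d_r$, and this is precisely what a well-designed induction delivers: once the differentials $d_2,\dots,d_{r-1}$ are understood, the $\mathrm{E}_r$-page in tridegrees $(s',k',w')$ with $w' \le s'-r+1$ stabilizes to $\mathrm{E}_r^{s',k'}$ with $\tau$ acting invertibly, and the target of a $d_r$ always has $w' = w+1 \le s+1 = s'-r+1$, hence sits in this $\tau$-torsion-free range. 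The paper sets up exactly this induction (assume (3) holds through page $r$, conclude the target of $d_r$ is $\tau$-torsion-free, then invoke $\tau$-inversion). Your proposal omits both the induction and the $\tau$-torsion-freeness observation, so the $\tau$-inversion argument cannot close on its own.
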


\begin{proof}
  The proof is very similar to that of \cite[Lemma 15]{HKO}.
  Statement (1) follows from \cite[Proposition 4.21]{Pstragowski}.
  Statement (2) follows from statement (3).
  We now prove statement (3) by induction. Suppose that we have proved the statement through the $\mathrm{E}_r$-page.  To prove it for the $\mathrm{E}_{r+1}$-page, we calculate the differential
  $$ d_r : \mathrm{E}^{s,k,w}_r \to \mathrm{E}_r^{s+r,k-1,w+1}. $$
  Note that, by the inductive hypothesis, the $\mathrm{E}_r$-page in every tridegree which can be the target of a $d_r$ differential consists of $\tau$-torsion free elements.  On the other hand, upon inverting $\tau$ we must obtain the differential
$$\tau^{-1} d_r = d_{r,\mathrm{top}}: \mathrm{E}^{s,k}_{r} \to \mathrm{E}^{s+r,k-1}_{r},$$
which determines the $d_r$ differential by the above.
\end{proof}

As a corollary of this description of the $\nu E$-Adams spectral sequence, we obtain the following more explicit statement.

\begin{cor} \label{cor:synth-ASS}
  For $2 \leq r \leq \infty$, there are natural isomorphisms:
  \begin{enumerate}
  \item $\mathrm{E}_r^{s,k,w} \cong 0$ for $w > s$.
  \item $\mathrm{E}_r^{s,k,w} \cong \mathrm{Z}_{r-1}^{s,k}$ for $w = s$.
  \item $\mathrm{E}_r^{s,k,w} \cong \mathrm{Z}_{r-1}^{s,k}/\mathrm{B}_{s-w+1}^{s,k}$ for $s-r+1 < w < s$.
  \item $\mathrm{E}_r^{s,k,w} \cong \mathrm{E}_r^{s,k}$ for $w \leq s-r+1$ and $w \leq 0$.
  \end{enumerate}
    In particular, the map $\mathrm{E}_r ^{s,k,w} \to \mathrm{E}_r ^{s,k,w-1}$ induced by multiplication by $\tau$ is surjective for $w \leq s$.
\end{cor}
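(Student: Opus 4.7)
The plan is to deduce this corollary directly from Theorem \ref{thm:synth-ASS} by induction on $r$. The base case $r = 2$ is immediate: part (1) of that theorem identifies $\mathrm{E}_2^{s,k,w}$ with $\mathrm{E}_2^{s,k} \otimes \Z[\tau]$, with $\mathrm{E}_2^{s,k}$ placed in tridegree $(s,k,s)$ and $\tau$ of tridegree $(0,0,-1)$. This shows that $\mathrm{E}_2^{s,k,w} = 0$ for $w > s$ and $\mathrm{E}_2^{s,k,w} = \tau^{s-w}\cdot \mathrm{E}_2^{s,k}$ for $w \le s$, which accounts for cases (1), (2), and (4); case (3) is vacuous since the condition $s-r+1 < w < s$ is empty when $r=2$.

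For the inductive step, I will assume the stated description of $\mathrm{E}_r^{s,k,w}$ and compute kernels and images of the differential $d_r \colon \mathrm{E}_r^{s,k,w} \to \mathrm{E}_r^{s+r, k-1, w+1}$. By Theorem \ref{thm:synth-ASS}(3), under the identification of each tridegree with a subquotient of $\mathrm{E}_2^{s,k}$ via an appropriate power of $\tau$, the synthetic differential $d_r$ acts as $\tau^{r-1} d_{r,\mathrm{top}}$. The key observation is that the $\tau^{r-1}$-factor is exactly absorbed into the difference between the $\tau$-powers used to identify source and target: an element of $\mathrm{E}_r^{s,k,w}$ represented by $\tau^{s-w}x$ with $x \in Z_{r-1}^{s,k}$ is sent to $\tau^{s-w+r-1}\, d_{r,\mathrm{top}}(x)$, which is precisely the class in $\mathrm{E}_r^{s+r, k-1, w+1}$ corresponding to $d_{r,\mathrm{top}}(x) \in \mathrm{E}_2^{s+r,k-1}$. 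Therefore the synthetic $d_r$ is identified with $d_{r,\mathrm{top}}$ in this range, so the kernel in weight $w$ is $Z_r^{s,k}$ and the image (when it lands in the region) is $\tau^{s-w}\cdot B_r^{s,k}$.

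The main obstacle is keeping careful track of which weights receive incoming $d_r$ and which do not. An incoming $d_r$ into tridegree $(s,k,w)$ must originate at $(s-r, k+1, w-1)$, and by the inductive hypothesis this source vanishes unless $w - 1 \le s - r$. Thus for $w \ge s - r + 2$ no new relations are introduced by $d_r$, which exactly reproduces the formula $Z_r^{s,k}/B_{s-w+1}^{s,k}$ in the shifted range of case (3) for $\mathrm{E}_{r+1}$; meanwhile for $w \le s-r+1$ the image does land and the stable value $Z_r^{s,k}/B_r^{s,k} = \mathrm{E}_{r+1}^{s,k}$ is reached, giving case (4). The surjectivity of multiplication by $\tau$ for $w \le s$ then follows immediately, because each relevant subquotient is described as $\tau^{s-w}$ times a fixed subquotient of $\mathrm{E}_2^{s,k}$, so the map $\tau$ between consecutive weights is either the identity on the underlying group or a surjective quotient map. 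The case $r = \infty$ is handled by passing to the limit using the strong convergence of the $\nu E$-Adams spectral sequence, which reduces to the strong convergence hypothesis on the $E$-Adams spectral sequence of $X$ via the identification of pages from Theorem \ref{thm:synth-ASS}.
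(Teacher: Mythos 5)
The paper gives no explicit proof for this corollary — it is stated as following directly from Theorem \ref{thm:synth-ASS} — so there is nothing in the source to compare against; your inductive argument is the expected way to fill in the details, and it is correct. In particular, the key step is exactly as you say: since incoming $d_r$'s to $(s,k,w)$ originate at $(s-r,k+1,w-1)$, they vanish once $w-1>s-r$, and otherwise the inductive description of the source (in every sub-case: $Z_{r-1}/B_j$ or $\mathrm{E}_r$) always surjects, via the classical $d_{r,\mathrm{top}}$, onto the full image $B_r^{s,k}/B_{r-1}^{s,k}$ in the target; meanwhile the kernel computation gives $Z_r/B_{s-w+1}$ in the unstable zone. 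Two small remarks. First, your treatment of $r=\infty$ leans on strong convergence, but the corollary carries no such hypothesis and does not need one: for each fixed tridegree $(s,k,w)$ the vanishing of $\mathrm{E}_r^{s-r,k+1,w-1}$ for $r>s-w+1$ means that boundaries stabilize after finitely many pages, so $\mathrm{E}_\infty^{s,k,w}=Z_\infty^{s,k}/B_{s-w+1}^{s,k}$ is forced with no convergence assumption at all (this stabilization of boundaries is what one uses later to deduce strong convergence, e.g.\ in \Cref{cor:strong-conv}, not the other way around). Second, you place $w=s-r+1$ under case (4) of the statement for $\mathrm{E}_{r+1}$, but according to the indexing it sits in case (3); the groups agree numerically ($Z_r/B_{s-w+1}=Z_r/B_r=\mathrm{E}_{r+1}^{s,k}$), so this is only a labelling slip, not an error.
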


Our next order of business will be to determine the $\nu E$-based Adams spectral sequence for $C \tau^p \otimes \nu X$.

\begin{ntn}
    We use the notation ${}^p \mathrm{E}_r ^{s,k,w}$ to denote the groups on page $r$ of the $\nu E$-based Adams spectral sequence
    \[ {}^p\mathrm{E}_1^{s,k,w} \coloneqq \pi_{k,k+w}( \nu E \otimes C\tau^p \otimes \Sigma^{0,s}\nu X_s ) \Longrightarrow \pi_{k,k+w} (\nu X),\]
  and similarly for the later pages.
\end{ntn}

%
\begin{cor} \label{cor:synth-ctau-ASS}
  For $p \geq 1$ and $2 \leq r \leq \infty$, there are natural isomorphisms:
  \begin{enumerate}
  \item ${}^p\mathrm{E}_r^{s,k,w} \cong 0$ for $w > s$.
  \item ${}^p\mathrm{E}_\infty^{s,k,w} \cong \mathrm{Z}_{p-s+w}^{s,k}/\mathrm{B}_{s-w+1}^{s,k}$ for $s \geq w > s-p$.
  \item ${}^p\mathrm{E}_r^{s,k,w} \cong 0$ for $s-p \geq w$.
  \end{enumerate}
\end{cor}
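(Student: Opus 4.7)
The plan is to deduce Corollary~\ref{cor:synth-ctau-ASS} from Theorem~\ref{thm:synth-ASS} and Corollary~\ref{cor:synth-ASS} by exploiting the cofiber sequence
\[
\Sigma^{0,-p}\,\nu X \xrightarrow{\tau^p} \nu X \to C\tau^p \otimes \nu X.
\]
Smashing this cofiber sequence with the canonical $\nu E$-Adams tower of $\nu X$ produces a compatible cofiber sequence of Adams towers for $\Sigma^{0,-p}\nu X$, $\nu X$, and $C\tau^p \otimes \nu X$, and hence a long exact sequence of $\mathrm{E}_1$-terms. Because $\mathrm{E}_1^{s,k,*} \cong \mathrm{E}_1^{s,k}\otimes\ZZ[\tau]$ by Theorem~\ref{thm:synth-ASS}(1), multiplication by $\tau^p$ is injective, so the long exact sequence degenerates into the short exact sequences
\[
0 \to \mathrm{E}_1^{s,k,w+p} \xrightarrow{\tau^p} \mathrm{E}_1^{s,k,w} \to {}^p\mathrm{E}_1^{s,k,w} \to 0,
\]
yielding ${}^p\mathrm{E}_1^{s,k,w} \cong \mathrm{E}_1^{s,k} \otimes \ZZ[\tau]/\tau^p$ with $\mathrm{E}_1^{s,k}$ sitting in tridegree $(s,k,s)$. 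Since $d_1$ in the $C\tau^p$-spectral sequence is induced from $d_1$ in the spectral sequence for $\nu X$, the same argument gives
\[
{}^p\mathrm{E}_2^{s,k,w} \cong \mathrm{E}_2^{s,k} \otimes \ZZ[\tau]/\tau^p.
\]
In particular claims (1) and (3) hold already on the $\mathrm{E}_2$-page, and hence on every later page.

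For the higher differentials, the strategy is to track the map of spectral sequences $\mathrm{E}_r(\nu X) \to {}^p\mathrm{E}_r$. Setting $i = s-w$ (with $0 \le i < p$), a class $x$ in tridegree $(s,k,s-i)$ lifts from $\mathrm{E}_r^{s,k,s-i}$ whenever possible, and by naturality together with Theorem~\ref{thm:synth-ASS}(3) one has
\[
{}^p d_r(x) \;=\; \text{image of}\;\; \tau^{r-1} d_{r,\mathrm{top}}(\widetilde x) \;\; \text{in}\;\; {}^p\mathrm{E}_r^{s+r,k-1,s-i+1}.
\]
The target tridegree has weight $s-i+1$, and by the description of the $\mathrm{E}_2$-page (which passes to the $\mathrm{E}_r$-page by an induction on $r$ using injectivity of $\tau^p$ where available) this group is nonzero only when $i + r \le p$. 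Consequently, in tridegree $(s,k,s-i)$ the only potentially nonzero \emph{outgoing} differentials are $d_2,\ldots,d_{p-i}$, and the only potentially nonzero \emph{incoming} differentials (a dual computation) are $d_2,\ldots,d_{i+1}$. Combining these two observations produces exactly the formula
\[
{}^p\mathrm{E}_\infty^{s,k,s-i} \cong \mathrm{Z}_{p-i}^{s,k}\big/\mathrm{B}_{i+1}^{s,k},
\]
which is claim (2).

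The main obstacle in carrying this plan out is the inductive identification of the $d_r$-differentials on ${}^p\mathrm{E}_r$ with $\tau^{r-1}d_{r,\mathrm{top}}\pmod{\tau^p}$ for $r \ge 3$: while the short exact sequence on $\mathrm{E}_2$-pages is clean, on higher pages one must verify that classes and their $d_{r,\mathrm{top}}$-images continue to admit the right lifts and that the connecting map in the long exact sequence does not contribute spuriously. This reduces to a careful bookkeeping exercise using Corollary~\ref{cor:synth-ASS} — in particular, the surjectivity of multiplication by $\tau$ on $\mathrm{E}_r^{s,k,*}$ in the range $w \le s$ — together with the naturality of the map of Adams spectral sequences induced by $\nu X \to C\tau^p\otimes\nu X$.
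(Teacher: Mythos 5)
Your argument is a careful elaboration of the paper's one-line proof, which merely cites the map of $\nu E$-based Adams spectral sequences induced by $\nu X \to C\tau^p \otimes \nu X$ (indeed, the source even carries a note that more elaboration would be welcome). One concrete warning on the lifting subtlety you flag at the end: the comparison map $\mathrm{E}_r^{s,k,s-i}\to {}^p\mathrm{E}_r^{s,k,s-i}$ is \emph{not} surjective in general --- already for $p=3$, $i=1$, $r=4$ one has $\mathrm{E}_4^{s,k,s-1}=\mathrm{Z}_3/\mathrm{B}_2$ mapping by the proper inclusion into ${}^3\mathrm{E}_4^{s,k,s-1}=\mathrm{Z}_2/\mathrm{B}_2$ --- so ``lifts whenever possible'' does not reach every class; the argument is rescued by observing that any class in ${}^p\mathrm{E}_r^{s,k,s-i}$ which fails to lift is represented by an $x\in\mathrm{Z}_j^{s,k}\setminus\mathrm{Z}_{j+1}^{s,k}$ with $j+1>p-i$, so that every outgoing differential ${}^pd_{r'}$ on it has target in weight $>s-i$ with $i'\ge p$ and hence vanishes for degree reasons.
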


\begin{proof}
    This follows from considering the map of $\nu E$-based Adams spectral sequences induced by the map $\nu X \to C\tau^p \otimes \nu X$. \todo{Maybe elaborate on this.}
\end{proof}

In order to use the theorem and corollaries we have just proved we will need to make a digression and discuss completeness and convergence.

\begin{dfn}
  We say that a synthetic spectrum $A$ is \emph{$\tau$-complete} if the $\tau$-Bockstein tower of $A$ is convergent: that is, if the canonical map
  \[A \to \varprojlim_n C\tau^n \otimes A\] is an equivalence.
\end{dfn}

\begin{prop} \label{lemm:Elocal-taucomplete} \label{lemm:easy-e-compton}
  The following are equivalent:
  \begin{enumerate}
  \item $X$ is $E$-nilpotent complete.
  \item $\nu X$ is $\nu E$-nilpotent complete.
  \item $\nu X$ is $\tau$-complete.
  \end{enumerate}
\end{prop}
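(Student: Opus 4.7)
The plan is to establish the cycle of implications $(1) \Rightarrow (2) \Rightarrow (3) \Rightarrow (1)$, using \Cref{thm:synth-ASS} and \Cref{cor:synth-ctau-ASS} as the main technical tools. These results show that the three towers in play—the $E$-Adams tower of $X$, the $\nu E$-Adams tower of $\nu X$, and the $\tau$-Bockstein tower of $\nu X$—all record the same spectral sequence data, differing only in how the $\tau$-direction is packaged.

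For $(1) \Rightarrow (2)$, I would observe that the diagram displayed just before \Cref{thm:synth-ASS} exhibits the $\nu E$-Adams tower of $\nu X$ as the image under $\nu$, up to the bigraded shifts $\Sigma^{0,s}$, of the canonical $E$-Adams tower of $X$. Since $\nu$ is a right adjoint by \Cref{thm:tau-inv}(3), it preserves inverse limits, so $\nu$ sends the $E$-nilpotent completion of $X$ to the $\nu E$-nilpotent completion of $\nu X$. If $X$ is $E$-nilpotent complete, the unit map is then an equivalence after applying $\nu$, giving $(2)$.

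For $(3) \Rightarrow (2)$, the key observation is that each stage $C\tau^p \otimes \nu X$ of the $\tau$-Bockstein tower is itself $\nu E$-nilpotent complete: by \Cref{cor:synth-ctau-ASS}, the $\nu E$-Adams spectral sequence for $C\tau^p \otimes \nu X$ is concentrated in the strip $s-p < w \le s$, so the tower stabilizes in finitely many steps and automatically converges. An inverse limit of $\nu E$-nilpotent complete objects is $\nu E$-nilpotent complete, so $\tau$-completeness of $\nu X$ yields $(2)$.

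For $(2) \Rightarrow (3)$ and for $(3) \Rightarrow (1)$, the strategy is to compare completions at the level of homotopy groups using \Cref{thm:synth-ASS}(3), which identifies differentials in the $\nu E$-Adams spectral sequence as $\tau^{r-1}$ times differentials in the $E$-Adams spectral sequence. In the first implication, I would use this identification to show that the fiber $\lim_n \Sigma^{0,-n} \nu X$ of the $\tau$-completion map is forced to vanish once $\nu X$ has bounded-below $\nu E$-Adams filtration, which is the case when $\nu X$ is $\nu E$-nilpotent complete. In the second, applying $\tau^{-1}$ identifies the $\tau$-Bockstein spectral sequence of $\nu X$ with the $E$-Adams spectral sequence of $X$, and $\tau$-completeness of $\nu X$ translates directly into the strong convergence and completeness of the latter to $X$. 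The main obstacle is the step $(2) \Rightarrow (3)$: one must extract, from the abstract statement that $\nu X$ is a limit of $\nu E$-modules in synthetic spectra, the concrete bounded-below property of the $\tau$-adic filtration needed to force $\tau$-completeness. Here the explicit description of the $\mathrm{E}_1$-page as a polynomial ring in $\tau$ (\Cref{thm:synth-ASS}(1)) should provide the required bookkeeping.
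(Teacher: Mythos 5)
Your plan is reasonable in outline, but it relies on two assertions that do not hold, and the two vaguer implications miss the real technical inputs.

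First, for $(1)\Rightarrow(2)$ you claim that $\nu$ is a right adjoint by \Cref{thm:tau-inv}(3). That item only says $\tau^{-1}\circ\nu \simeq \mathrm{id}$; it does not make $\nu$ an adjoint, and in fact $\nu$ is \emph{not} a right adjoint (the right adjoint to $\tau^{-1}$ is the functor $Y$, and $\nu = \tau_{\geq 0}\circ Y$, which lands in $\Syn_E$ via the inclusion $\Syn_{E,\geq 0}\hookrightarrow \Syn_E$, a \emph{left} adjoint). Even if $\nu$ did preserve limits, the argument would still not go through: the $\nu E$-Adams tower of $\nu X$ has stages $\Sigma^{0,s}\nu X_s$ with $\tau$-twisted transition maps, so $\varprojlim_s \Sigma^{0,s}\nu X_s$ is not $\nu(\varprojlim_s X_s)$. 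The paper circumvents both issues by rewriting the stages as $Y(X_s)_{\geq -s}$, using that $Y$ is a fully faithful right adjoint, and comparing with $\varprojlim_s Y(X_s) = Y(\varprojlim_s X_s)$ via the cofiber $\varprojlim_s Y(X_s)_{\leq -s-1}$, which vanishes by right completeness of the natural $t$-structure. None of these ingredients appear in your sketch.

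Second, for $(3)\Rightarrow(2)$ you deduce $\nu E$-nilpotent completeness of $C\tau^p\otimes\nu X$ from the strip on the $\mathrm{E}_\infty$-page supplied by \Cref{cor:synth-ctau-ASS}. This is circular: knowing the $\mathrm{E}_\infty$-page lives in a strip controls derived $\varprojlim^1$ issues, but says nothing about whether $\varprojlim_s$ of the Adams tower itself vanishes, which is precisely what nilpotent completeness asserts. The paper's \Cref{lemm:comp1} proves this by a genuinely separate $t$-structure argument: $C\tau\otimes\Sigma^{0,s}\nu X_s$ is $(-s)$-coconnective, so the tower is killed by right completeness; the general $p$ then follows by induction along Bockstein cofiber sequences. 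Your remaining two implications are too sketchy to assess, but note that the paper's handling of $(2)\Leftrightarrow(3)$ also needs \Cref{lemm:comp2} (that $\nu E\otimes\nu X$ is $\tau$-complete, proved by a compact-generator and connectivity argument) together with a double-limit interchange over $s$ and over $\tau$; the informal appeal to ``bounded-below $\nu E$-Adams filtration'' and to ``applying $\tau^{-1}$ to the Bockstein spectral sequence'' does not produce these inputs, and the latter conflates nilpotent completeness (which this proposition is about) with strong convergence (handled separately in \Cref{cor:strong-conv}).
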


The proof of \Cref{lemm:Elocal-taucomplete} will rely on the following two lemmas.

\begin{lem} \label{lemm:comp1}
  The synthetic spectrum $C\tau^p \otimes \nu X$ is $\nu E$-nilpotent complete.
\end{lem}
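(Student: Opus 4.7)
The plan is to proceed by induction on $p$, with the base case $p=1$ handled by the equivalence $\mathrm{Mod}_{C\tau} \simeq \mathrm{Stable}_{E_*E}$ from Theorem \ref{thm:tau-inv}(5), and the inductive step handled by a straightforward cofiber sequence argument.

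For the base case, note that $C\tau \otimes \nu X$ lies in the heart of the natural $t$-structure (it is equivalent to $\pi_0^\heartsuit(\nu X)$), so it carries a canonical $C\tau$-module structure. Under the symmetric monoidal equivalence $\mathrm{Mod}_{C\tau} \simeq \mathrm{Stable}_{E_*E}$, the synthetic spectrum $C\tau \otimes \nu X$ corresponds to $E_* X$ viewed as a derived $E_*E$-comodule, and $\nu E \otimes C\tau$ corresponds to $E_*E$. The $\nu E$-based Adams resolution therefore translates to the cobar resolution in $\mathrm{Stable}_{E_*E}$, which converges unconditionally in the derived category of comodules. Alternatively, one can read $\nu E$-nilpotent completeness directly off Corollary \ref{cor:synth-ctau-ASS}: for $p=1$ the $\mathrm{E}_\infty$-page is concentrated in a single filtration ($s = w$) in each tridegree, so the $\nu E$-Adams filtration on $\pi_{k,k+w}(C\tau \otimes \nu X)$ is finite and the tower converges.

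For the inductive step, suppose that $C\tau^p \otimes \nu X$ is $\nu E$-nilpotent complete. The factorization $\tau^{p+1} = \tau \circ \tau^p$ gives a cofiber sequence
\[ \Sigma^{0,-1} C\tau^p \longrightarrow C\tau^{p+1} \longrightarrow C\tau \]
of synthetic spectra. Tensoring with $\nu X$ yields
\[ \Sigma^{0,-1} C\tau^p \otimes \nu X \longrightarrow C\tau^{p+1} \otimes \nu X \longrightarrow C\tau \otimes \nu X. \]
The left-hand term is $\nu E$-nilpotent complete by the inductive hypothesis together with invariance of nilpotent completeness under bigraded suspension, and the right-hand term is $\nu E$-nilpotent complete by the base case. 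Since $\nu E$-nilpotent completeness is characterized as locality with respect to a Bousfield-style localization, the full subcategory of $\nu E$-nilpotent complete synthetic spectra is closed under cofibers; we conclude that $C\tau^{p+1} \otimes \nu X$ is $\nu E$-nilpotent complete.

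The main obstacle will be pinning down the base case cleanly. Invoking the equivalence $\mathrm{Mod}_{C\tau} \simeq \mathrm{Stable}_{E_*E}$ is conceptually clean, but one must verify that this equivalence identifies the $\nu E$-Adams tower with the cobar tower, which requires tracking the interaction of $- \otimes \nu E$ with the $C\tau$-linear structure. If this identification proves delicate, the fallback via Corollary \ref{cor:synth-ctau-ASS} is more direct but requires one supplementary observation — namely, that bounded $\nu E$-Adams filtration on all bigraded homotopy groups of a synthetic spectrum $Y$ implies $\varprojlim_s Y_s = 0$, where $Y_s$ denotes the $s$-th stage of the $\nu E$-Adams tower — which is a Mittag--Leffler-type argument analogous to the classical case.
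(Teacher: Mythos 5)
Your inductive reduction to $p=1$ via Bockstein cofiber sequences matches the paper's proof exactly, and the observation that $\nu E$-nilpotent complete objects are closed under cofibers is correct. The gap is in the base case, and it is not the one you flag.

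Both of your proposed routes for $p=1$ are missing the actual key step. The paper's argument is a coconnectivity argument: tensoring the canonical $\nu E$-Adams resolution of $\nu X$ with $C\tau$ produces an Adams resolution of $C\tau \otimes \nu X$ whose $s$-th stage $C\tau \otimes \Sigma^{0,s}\nu X_s$ is $(-s)$-coconnective in the natural $t$-structure (by repeated use of \cite[Lemma 4.29]{Pstragowski}, since $C\tau \otimes \nu(-)$ lands in the heart and $\Sigma^{0,s}$ shifts coconnectivity). The inverse limit over $s$ is then $(-s)$-coconnective for every $s$, hence vanishes because the natural $t$-structure is right complete. Your first approach (pass to $\mathrm{Stable}_{E_*E}$ and invoke ``unconditional'' convergence of the cobar tower) is not wrong in spirit, but ``unconditional convergence'' is precisely the claim that needs proof, and the proof, when you supply it, is again a coconnectivity argument parallel to the one you would have given directly in $\Syn_E$ --- so no real savings, and the word ``unconditionally'' suggests you believe there is a free lunch where there is not.

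Your fallback is where the genuine logical error lies. You assert that finiteness of the $\nu E$-Adams filtration on each bigraded homotopy group of $Y$ implies $\varprojlim_s Y_s = 0$, and call this ``a Mittag--Leffler-type argument.'' This implication is false in general. Finiteness of the filtration says $\bigcap_s \mathrm{Im}\bigl(\pi_{k,k+w}(Y_s) \to \pi_{k,k+w}(Y)\bigr) = 0$, which controls only the image of the stages in $Y$; it says nothing about the groups $\pi_{k,k+w}(Y_s)$ themselves, and in particular does not make the pro-system $\{\pi_{k,k+w}(Y_s)\}_s$ pro-zero, which is what the Milnor sequence would need. In fact, reading off \Cref{cor:synth-ctau-ASS} that ${}^1\mathrm{E}_1^{s,k,w}$ vanishes unless $w=s$ only tells you that, for fixed $(k,w)$, the transition maps $\pi_{k,k+w}(Y_{s+1}) \to \pi_{k,k+w}(Y_s)$ are eventually isomorphisms; ``eventually constant'' is not ``eventually zero.'' To get vanishing of the limit you need the stronger statement that $\pi_{k,k+w}(Y_s) = 0$ for $s \gg 0$, i.e. the coconnectivity claim, which your writeup never establishes. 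Supplying that claim and invoking right-completeness of the $t$-structure closes the gap and is, in substance, the paper's proof.
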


\begin{proof}
    By induction on $p$ via the the Bockstein sequences
    \[\Sigma^{0,-1} C\tau^{p-1} \otimes \nu X \to C \tau^p \otimes \nu X \to C \tau \otimes \nu X,\]
  we see that it suffices to prove the lemma for $p=1$.
  Tensoring the canonical Adams resolution for $\nu X$ with $C\tau$, we obtain an Adams resolution of $C\tau \otimes \nu X$.
  Using \cite[Lemma 4.29]{Pstragowski} repeatedly, we learn that $C\tau \otimes \Sigma^{0,s}\nu X$ is $(-s)$-coconnective in the homological $t$-structure. Thus, the inverse limit of this Adams resolution for $\nu X$ is trivial because this $t$-structure is right complete.
\end{proof}

\begin{lem} \label{lemm:comp2}
  The synthetic spectrum $\nu E \otimes \nu X$ is $\tau$-complete.
\end{lem}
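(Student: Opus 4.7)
The plan is to show that the bigraded homotopy groups $\pi_{a,b}(\nu E \otimes \nu X)$ vanish for $b > a$, and then to deduce $\tau$-completeness from this support result by a direct Milnor sequence argument.

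First, I would use \Cref{rmk:strong-monoidal}: since $E$ is of Adams type, it is a filtered colimit of finite projective spectra, so the comparison map yields an equivalence $\nu E \otimes \nu X \simeq \nu(E \otimes X)$. Now $E \otimes X$ is an $E$-module, hence $E$-nilpotent: its canonical $E$-based Adams resolution splits, placing $E \otimes X$ entirely in filtration $s = 0$. Applying \Cref{thm:synth-ASS} and \Cref{cor:synth-ASS} to the $\nu E$-based Adams spectral sequence for $\nu(E \otimes X)$, one sees the spectral sequence collapses in $s = 0$ (convergence is automatic since $\nu(E \otimes X)$ is a $\nu E$-module, hence a retract of $\nu E \otimes \nu(E \otimes X)$, hence $\nu E$-nilpotent). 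Specializing the formula $\mathrm{E}_\infty^{s,k,w} = 0$ for $w > s$ to $s = 0$ gives the desired vanishing: $\pi_{k,k+w}(\nu E \otimes \nu X) = 0$ for $w > 0$, i.e., $\pi_{a,b}(\nu E \otimes \nu X) = 0$ whenever $b > a$.

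With the support result established, consider the fiber sequence
\[ F \longrightarrow \nu E \otimes \nu X \longrightarrow \varprojlim_n C\tau^n \otimes \nu E \otimes \nu X, \]
where $F \simeq \varprojlim_n \Sigma^{0,-n}(\nu E \otimes \nu X)$ with transition maps given by multiplication by $\tau \in \pi_{0,-1}(\mathbb{S}^{0,0})$. The Milnor exact sequence gives
\[ 0 \to {\varprojlim}^1_n \pi_{a+1, b+n}(\nu E \otimes \nu X) \to \pi_{a,b}(F) \to \varprojlim_n \pi_{a, b+n}(\nu E \otimes \nu X) \to 0. \]
For each fixed $(a,b)$ and for $n$ large enough that $b + n > a + 1$, both inverse systems have all terms zero by the support claim, so they stabilize at zero and both $\varprojlim$ and $\varprojlim^1$ vanish. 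Hence $\pi_{a,b}(F) = 0$ for every bidegree $(a,b)$, giving $F \simeq 0$, and therefore the natural map $\nu E \otimes \nu X \to \varprojlim_n C\tau^n \otimes \nu E \otimes \nu X$ is an equivalence.

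The main obstacle is the support claim itself. While the $\nu E$-Adams collapse argument is fairly direct for $\nu E$-modules, one could alternatively verify the claim via Pstr\k{a}gowski's description $\nu Z \simeq Y(Z)_{\geq 0}$ in the natural $t$-structure, using that synthetic spectra which are connective in the natural $t$-structure have bigraded homotopy concentrated in $b \leq a$. Once the support is in hand, the rest of the argument is formal.
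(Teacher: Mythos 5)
Your support claim---that $\pi_{a,b}(\nu E \otimes \nu X) = 0$ for $b > a$, proved via the collapse of the $\nu E$-based Adams spectral sequence for the $\nu E$-module $\nu(E\otimes X)$---is correct, and is arguably a cleaner route to the needed vanishing than the paper's direct appeal to a connectivity estimate from Pstr\k{a}gowski. The Milnor-sequence computation showing $\pi_{a,b}(F) = 0$ for all $(a,b)$ is also fine.

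The gap is in the final step: vanishing of all bigraded homotopy groups does \emph{not} imply that a synthetic spectrum is zero, because the bigraded spheres $\Ss^{a,b}$ are not a generating family for $\Syn_E$. As recorded in \Cref{rmk:dualizable-generators} (quoting \cite[Remark 4.14]{Pstragowski}), the compact generators of $\Syn_E$ are the objects $\Sigma^k\nu P$ for $k\in\Z$ and $P$ finite $E_*$-projective. Since $\nu$ does not preserve general cofiber sequences (\Cref{lemm:syn-cof}), one cannot reduce $\nu P$ to bigraded spheres by running through a cell structure for $P$: for this one would need each attaching map to be zero on $E$-homology, which fails for general Adams-type $E$. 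So concluding $F\simeq 0$ from $\pi_{a,b}(F)=0$ is unjustified. This is precisely why the paper's proof tests against the full family $\Sigma^k\nu P$: after using dualizability of $\nu P$ and strong monoidality of $\nu$ on finite projectives (\Cref{rmk:strong-monoidal}, \Cref{rmk:dualizable-generators}), the mapping spectrum $\Hom(\Sigma^k\nu P, \Sigma^{0,-n}\nu E\otimes\nu X)$ is identified with a shift of $\nu(E\otimes DP\otimes X)$, another $\nu$ of an $E$-module, so the same vanishing applies.

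The good news is that your argument closes the gap with minimal change: $E\otimes DP\otimes X$ is still an $E$-module, so your $\nu E$-ASS collapse argument gives the same support constraint on $\nu(E\otimes DP\otimes X)$. Running the identical Milnor-sequence argument against $\Hom(\Sigma^k\nu P, -)$ instead of $[\Ss^{a,b},-]$ then shows $\Hom(\Sigma^k\nu P, F)=0$ for every compact generator, giving $F\simeq 0$. One small additional remark: "$\nu E$-nilpotent" by itself does not literally give strong convergence of the $\nu E$-ASS, which you need to pass from $\mathrm{E}_\infty$-vanishing to $\pi_{*,*}$-vanishing; but for a ring module the Adams tower splits, the spectral sequence degenerates outright, and strong convergence is automatic, so the conclusion is still correct.
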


\begin{proof}
  In order to show that $\nu E \otimes \nu X$ is $\tau$-complete we will show that the inverse limit under iterated multiplication by $\tau$ is trivial.
  From \Cref{rmk:dualizable-generators} it suffices to check triviality on maps in from suspensions of finite projectives.
  Pick a finite $E_*$-projective $P$ and an integer $k$. Using \Cref{rmk:strong-monoidal} and the dualizability statement from \Cref{rmk:dualizable-generators}, we obtain an equivalence
  $$ \Hom(\Sigma^k \nu P, \varprojlim_\tau \Sigma^{0,-s}\nu E \otimes \nu X) \simeq \varprojlim_\tau \Hom(\Ss^{k,s}, \nu (E \otimes DP \otimes X)). $$
  Note that the spaces in the inverse limit on the right hand side are each $(s-k)$-connective by \cite[Proposition 4.21]{Pstragowski}. Therefore as $s \to \infty$ the right hand side becomes infinitely connective and thereby trivial.
\end{proof}

\begin{proof}[Proof of \Cref{lemm:easy-e-compton}]
  First we show that $E$-nilpotent completeness is equivalent to $\nu E$-nilpotent completeness.
  Using \cite[Lemma 4.29 and Proposition 4.35]{Pstragowski}, we may rewrite the canonical $\nu E$-Adams tower for $\nu X$ as
  $$ \cdots \to Y(X_2)_{\geq -2} \xrightarrow{\wt{f_2}} Y(X_1)_{\geq -1} \xrightarrow{\wt{f_1}} Y(X_0)_{\geq 0},$$
	where
	$\cdots \to X_2 \to X_1 \to X_0$
	is the canonical Adams tower for $X$.
  Inverting $\tau$ on the $\nu E$-Adams tower recovers the image of the $E$-Adams tower under $Y(-)$, and there is a fiber sequence
  $$ \left( \varprojlim_s Y(X_s)_{\geq -s} \right) \to \left( \varprojlim_s Y(X_s) \right) \to \left( \varprojlim_s Y(X_s)_{\leq -s} \right). $$
  The right hand term vanishes because the homological $t$-structure is right complete \cite[Proposition 4.16]{Pstragowski}. Furthermore, since $Y$ is a right adjoint, we may pull the inverse limit inside the functor $Y$ in the middle term. Thus, we obtain an equivalence
  $$ \left( \varprojlim_s Y(X_s)_{\geq -s} \right) \simeq Y \left( \varprojlim_s X_s \right).$$
  From the fact that $Y$ is fully faithful we conclude that the left hand side vanishes if and only if the inverse limit of the $E$-Adams tower for $X$ vanishes.
  
  Next we show that $\nu E$-nilpotent completeness is equivalent to $\tau$-completeness. Consider the following diagram:
$$  \begin{tikzcd}[column sep=small]
    \varprojlim_s \varprojlim_\tau \Sigma^{0,s-p} \nu X_s \ar[r] & \cdots \ar[r] &
    \varprojlim_\tau \Sigma^{0,1-p} \nu X_1 \ar[r, "\wt{f_1}"] \ar[d, "\nu(i_1)"] &
    \varprojlim_\tau \Sigma^{0,-p}\nu X \ar[d, "\nu(i_0)"] \\
    & &
    \varprojlim_\tau \Sigma^{0,1-p} (\nu E \otimes \nu X_1) &
    \varprojlim_\tau \Sigma^{0,-p} (\nu E \otimes \nu X). &
			\end{tikzcd} $$
Here, the limits over $\tau$ refer to limits, as $p$ varies, under multiplication by $\tau$ maps.
Using \Cref{lemm:comp2}, each object on the second row vanishes. We obtain an equivalence
$$ \varprojlim_s \varprojlim_\tau \Sigma^{0,s-p}\nu X_s \simeq \varprojlim_\tau \Sigma^{0,-p}X. $$
Dually, using \Cref{lemm:comp1}, we learn that
$$ \varprojlim_\tau \varprojlim_s \Sigma^{0,s-p}\nu X_s \simeq \varprojlim_s \Sigma^{0,s}\nu X_s. $$
Together these equalities finish the proof.
\end{proof}




We are now ready to prove the first part of \Cref{thm:synthetic-Adams}.

\begin{proof}[Proof of \Cref{thm:synthetic-Adams}(1)]
  Since $X$ is $E$-nilpotent complete, it follows from \Cref{lemm:easy-e-compton} that $C\tau^r \otimes \nu X$ is $\nu E$-nilpotent complete and $\tau$-complete. From \Cref{cor:synth-ctau-ASS} we can read off that the $\nu E$-based Adams spectral sequence for $C\tau^r \otimes \nu X$ converges strongly. Further, we can directly read off that (1a) and (1b) are equivalent. Clearly (1c) implies (1b). We will now prove (1c) assuming (1b).  If $d_{r+1}(x) = 0$, then we may finish by (1a), so we assume otherwise.

  We will prove (1c) by working directly with the cofiber sequence of Adams towers associated to the relevant Bockstein sequence. Before we begin, we fix some notation,
  $$ {}^r\mathrm{D}^{s,k,w} \coloneqq \pi_{k,k+w}( C\tau^r \otimes \Sigma^{0,s}\nu X_s ). $$
	Now, consider the following diagram of exact sequences, obtained by applying homotopy groups to the smash product of the two extended cofiber sequences
\[\Sigma^{-1,0} C\tau \to C\tau^{r+1} \to C\tau^{r} \to C\tau, \text{ and}\]
\[\Sigma^{0,s+1} \nu X_{s+1} \to \Sigma^{0,s} \nu X_{s} \to \nu E \otimes \Sigma^{0,s} \nu X_{s} \to \Sigma^{1,s+1} \nu X_{s+1}.\]
  \begin{center}
    \begin{tikzcd}
      {}^1\mathrm{D}^{s+1,k,s+r} \ar[r] \ar[d, "f"] &
      {}^{r+1}\mathrm{D}^{s+1,k,s} \ar[r] \ar[d, "f"] &
      {}^r\mathrm{D}^{s+1,k,s} \ar[r] \ar[d, "f"] &
      {}^1\mathrm{D}^{s+1,k-1,s+r+1} \ar[d, "f"] \\
      {}^1\mathrm{D}^{s,k,s+r} \ar[r] \ar[d, "i"] &
      {}^{r+1}\mathrm{D}^{s,k,s} \ar[r] \ar[d, "i"] &
      {}^r\mathrm{D}^{s,k,s} \ar[r] \ar[d, "i"] &
      {}^1\mathrm{D}^{s,k-1,s+r+1} \ar[d, "i"] \\
      0 \ar[r] \ar[d, "\partial"] &
      {}^{r+1}\mathrm{E}_1^{s,k,s} \ar[r] \ar[d, "\partial_{r+1}"] &
      {}^r\mathrm{E}_1^{s,k,s} \ar[r] \ar[d, "\partial_r"] &
      0 \ar[d, "\partial"] \\
      {}^1\mathrm{D}^{s+1,k-1,s+r+1} \ar[r] &
      {}^{r+1}\mathrm{D}^{s+1,k-1,s+1} \ar[r] &
      {}^r\mathrm{D}^{s+1,k-1,s+1} \ar[r] &
      {}^1\mathrm{D}^{s+1,k-2,s+r+2} 
    \end{tikzcd}
  \end{center}
	
  In this diagram we may pick a representative of $x$ in ${}^r\mathrm{E}_1^{s,k,s} = {}^{r+1}\mathrm{E}_1^{s,k,s}$ (which we will also denote $x$).
  Let $y = d_r(x)$ denote the target of the relevant differential in the $E$-based Adams spectral sequence and consider $y$ as an element of ${}^1\mathrm{D}^{s+1,k-1,s+r+1}$.
  We claim that there exists a $y' \in {}^{r+1}\mathrm{D}^{s+1,k-1,s+r+1}$ such that $\partial_{r+1} (x) = \tau^r y'$ and $y$ maps to $\tau^r y'$.
  Indeed, this follows from \Cref{thm:synth-ASS} and the fact that $\tau^r$ as an endomorphism of $C\tau^{r+1}$ factors through $C\tau$.

  Then, by standard manipulations of exact sequences arising from smash products of cofiber sequences (as in, e.g., \cite[Lemma 9.3.2]{AndrewsMiller}), there is some $\wt{x}$ in ${}^r\mathrm{D}^{s,k,s}$ which maps to both $x$ in ${}^r\mathrm{E}_1^{s,k,s}$ and $-f(y)$ in ${}^1\mathrm{D}^{s,k-1,s+r+1}$. The image of $\wt{x}$ along the map
  $$ {}^r\mathrm{D}^{s,k,s} \to \pi_{k,k+s}(C\tau^r \otimes \nu X) $$
  is the desired class.
\end{proof}

\begin{prop} \label{lemm:strong-etau-complete} \label{cor:strong-conv}
  Let $X$ denote an $E$-nilpotent complete spectrum.
  Then the following are equivalent:
  \begin{enumerate}
  \item The $E$-based Adams spectral sequence for $X$ converges strongly.
  \item The $\nu E$-based Adams spectral sequence for $\nu X$ converges strongly.
  \item The $\tau$-Bockstein spectral sequence for $\nu X$ converges strongly.
  \end{enumerate}
\end{prop}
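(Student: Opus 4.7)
The plan is to establish $(1) \Leftrightarrow (2)$ and $(1) \Leftrightarrow (3)$ separately, in each case reducing to Boardman's criterion for strong convergence (\Cref{thm:strong-conv}, as invoked in the proofs of \Cref{lemm:vanishing-line-converge} and \Cref{lemm:vanishing-converge}): granted that the abutment carries a complete Hausdorff filtration, strong convergence reduces to the condition that each (bi- or tri-)degree be the target of only finitely many differentials. The hypothesis that $X$ is $E$-nilpotent complete, combined with \Cref{lemm:easy-e-compton}, ensures that $\nu X$ is both $\nu E$-nilpotent complete and $\tau$-complete. This will be what guarantees completeness of the filtrations on the abutments of the two synthetic spectral sequences in (2) and (3) in parallel with the completeness of the Adams filtration on $\pi_\ast X$ in (1).

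For $(1) \Leftrightarrow (2)$, the decisive inputs are \Cref{thm:synth-ASS} and \Cref{cor:synth-ASS}, which together describe the $\nu E$-based Adams spectral sequence for $\nu X$ entirely in terms of the $E$-based Adams spectral sequence for $X$. In particular, the differential $d_r$ on $\mathrm{E}_r^{s,k,w}$ equals $\tau^{r-1}$ times the topological differential $d_{r,\mathrm{top}}$ on $\mathrm{E}_r^{s,k}$, and each tridegree with $w \le s$ is a subquotient of $\mathrm{E}_2^{s,k}$. This means that a nontrivial differential exiting the synthetic tridegree $(s,k,w)$ corresponds to a nontrivial topological differential of the same length exiting bidegree $(s,k)$, and conversely. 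Thus the finiteness condition on differentials transfers between the two spectral sequences in both directions, and Boardman's criterion yields the equivalence.

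For $(1) \Leftrightarrow (3)$, I would use \Cref{thm:synthetic-Adams}(1), which has just been proved in the preceding discussion. Together with \Cref{lemm:ctauE2}, that result shows that the $\mathrm{E}_1$-page of the $\tau$-Bockstein SS for $\nu X$ is precisely the $\mathrm{E}_2$-page of the $E$-Adams SS for $X$, and that the $\tau$-Bockstein differentials read off the lifts of \Cref{thm:synthetic-Adams}(1c) are (up to sign) the Adams differentials $d_{r+1}$. Thus the $\tau$-Bockstein SS and the $E$-Adams SS are isomorphic as spectral sequences up to a shift in page indexing, so the finiteness condition for one is equivalent to the finiteness condition for the other; $\tau$-completeness of $\nu X$ again supplies completeness of the filtration. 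The main obstacle across both halves of the argument will be the careful bookkeeping needed to match up the tri- and bi-graded indexing conventions, and to track which tridegrees $(s,k,w)$ contribute nontrivially under the identifications of \Cref{cor:synth-ASS}; once this bookkeeping is set up, the structural results do all the remaining work.
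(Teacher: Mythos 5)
The overall framing of your approach---reduce (1), (2), (3) to Boardman's criterion after using \Cref{lemm:easy-e-compton} to handle completeness, then transfer along the structural identifications of \Cref{thm:synth-ASS} and \Cref{cor:synth-ASS}---matches the paper, but there is a real gap in the convergence criterion you propose to use.

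You assert that ``strong convergence reduces to the condition that each (bi- or tri-)degree be the target of only finitely many differentials.'' (Presumably you mean \emph{source}: in an Adams-type spectral sequence each bidegree receives only finitely many differentials automatically, since the entering $d_r$ for $r > s$ originates in negative filtration.) But this is not what \Cref{thm:strong-conv} says. Boardman's criterion is the vanishing of $\varprojlim^1_r \mathrm{E}_r^{s,t}$. The ``finitely many differentials'' condition is exactly the Mittag-Leffler condition on the tower of $\mathrm{E}_r$-pages, which is \emph{sufficient} for $\varprojlim^1 = 0$ but not necessary---the paper itself only ever uses it as a one-way implication, in \Cref{lemm:vanishing-line-converge} and \Cref{lemm:vanishing-converge}. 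A concrete witness to the asymmetry: the decreasing tower of subgroups $p^n\Z_p \subset \Z_p$ has $\varprojlim^1 = 0$ but is not Mittag-Leffler. Since you need a biconditional between strong-convergence statements, you cannot get from ``the $\nu E$-Adams spectral sequence for $\nu X$ converges strongly'' to ``it has finitely many differentials leaving each tridegree,'' and so you cannot transfer finiteness back to the topological spectral sequence to close the $(1)\Leftrightarrow(2)$ loop.

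The paper avoids this by working directly with the $\varprojlim^1$ criterion. Using \Cref{cor:synth-ASS}, for fixed $w \leq s$ and large $r$ both $\{\mathrm{E}_r^{s,k}\}_r$ and $\{\mathrm{E}_r^{s,k,w}\}_r$ are towers obtained from the decreasing tower of cycle subgroups $\{\mathrm{Z}_{r-1}^{s,k}\}_r$ by quotienting by a \emph{fixed} subgroup of $\mathrm{E}_2^{s,k}$; quotienting a tower by a constant subgroup does not change $\varprojlim^1$, so both have $\varprojlim^1_r \mathrm{Z}_{r-1}^{s,k}$ and $(1)\Leftrightarrow(2)$ follows. For the third condition the paper proves $(2)\Leftrightarrow(3)$ (not your proposed $(1)\Leftrightarrow(3)$), exhibiting a re-indexing isomorphism $\beta_r^{s,k,w} \cong \mathrm{E}_{r+1}^{w+s,k,w}$ between the $\tau$-Bockstein and $\nu E$-Adams spectral sequences; isomorphic spectral sequences have literally the same $\varprojlim^1$ condition. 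Your idea of matching the $\tau$-Bockstein spectral sequence to the $E$-Adams spectral sequence is in the same spirit and would succeed if cast this way---but the step to make precise is an isomorphism of the towers (hence of their $\varprojlim^1$'s), not a matching of nonzero differentials, which only controls Mittag-Leffler. Once this is done, the appeal to the ``finiteness condition'' becomes unnecessary.
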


In order to prove \Cref{lemm:strong-etau-complete} we recall the following theorem of Boardman, which provides a useful characterization of strong convergence.

\begin{thm}[{\cite[Theorem 7.3]{Boardman}}] \label{thm:strong-conv}
    Given an $E$-nilpotent complete spectrum $X$, the following two conditions are equivalent:
    \begin{itemize}
        \item The $E$-based Adams spectral sequence of $X$ converges strongly.
        \item $\varprojlim_{r}^{1} \mathrm{E}^{s,t}_r (X) = 0$ for pair of integers $s$ and $t$.
    \end{itemize}
		Analogous $\mathrm{lim}^1$ conditions determine strong convergence of $\nu E$-based Adams spectral sequences and $\tau$-Bockstein spectral sequences.
\end{thm}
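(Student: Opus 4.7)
The plan is to follow Boardman's original argument from \cite{Boardman}, which is a formal result about spectral sequences arising from towers in any stable homotopy category. Since Boardman's framework is categorical in nature, the same proof will apply to all three settings mentioned in the theorem: the ordinary Adams spectral sequence of $X$, the $\nu E$-Adams spectral sequence of $\nu X$, and the $\tau$-Bockstein spectral sequence of $\nu X$.

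Given an $E$-nilpotent complete spectrum $X$, the canonical Adams tower $\{X_s\}$ satisfies $\varprojlim_s X_s = 0$, which is precisely conditional convergence. First I would introduce, following Boardman, the derived $\mathrm{E}_\infty$-page
\[ R\mathrm{E}_\infty^{s,t} := \varprojlim_{r}^{1} \mathrm{E}_r^{s,t}, \]
together with $Q\mathrm{E}_\infty^{s,t} := \varprojlim_{r} \mathrm{E}_r^{s,t}$. The heart of the argument is a collection of Milnor-type short exact sequences relating the tower $\{X_s\}$, the filtration $F^\bullet \pi_* X$, and the $\mathrm{E}_r$-pages. These sequences, obtained by passing to inverse limits in the exact couple, express $F^s \pi_*X / F^{s+1} \pi_* X$ as an extension of $\mathrm{E}_\infty^{s,*}$ by a contribution from $R\mathrm{E}_\infty$, and similarly identify the "infinite filtration" $\bigcap_s F^s \pi_* X$ in terms of $R\mathrm{E}_\infty$. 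Strong convergence of the spectral sequence---namely, that the filtration is complete and Hausdorff, and the associated graded agrees with $\mathrm{E}_\infty$---then holds if and only if $R\mathrm{E}_\infty^{s,t} = 0$ for all $s,t$.

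The main obstacle is the careful bookkeeping required to trace how the $\lim^1$ obstructions on the pages propagate into the filtration on the abutment. Boardman accomplishes this by systematically analyzing the "bounded" and "unbounded" parts of the exact couple and showing that the relevant Mittag-Leffler-type conditions translate precisely into $\lim^1$ statements; none of this uses anything specific to the category of spectra beyond stability and the existence of countable sequential (co)limits.

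Finally, for the synthetic analogs, we note that the canonical $\nu E$-Adams tower of $\nu X$, identified explicitly in the discussion preceding \Cref{thm:synth-ASS}, and the $\tau$-Bockstein tower $\{C\tau^n \otimes \nu X\}$ are both towers in the presentable stable $\infty$-category $\mathrm{Syn}_E$. By \Cref{lemm:easy-e-compton}, the hypothesis that $X$ is $E$-nilpotent complete guarantees both $\nu E$-nilpotent completeness and $\tau$-completeness of $\nu X$, so the inverse limits of both towers vanish. Since Boardman's arguments are entirely formal, they apply verbatim in $\mathrm{Syn}_E$, yielding the analogous $\varprojlim^1$ characterizations of strong convergence in the synthetic setting.
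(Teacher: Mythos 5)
The paper does not give an independent proof of this result: it is stated with a direct citation to Boardman's Theorem 7.3, and the synthetic analogs are asserted to follow because Boardman's framework is formal. Your sketch accurately reconstructs the content of Boardman's argument (conditional convergence from $E$-nilpotent completeness, the derived $\mathrm{E}_\infty$ machinery, the $\varprojlim^1$ characterization) and correctly identifies why the same argument applies verbatim to the $\nu E$-Adams and $\tau$-Bockstein towers in $\mathrm{Syn}_E$, so your proposal matches the intended reasoning.
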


Note that the second bullet point in the above theorem makes sense because $\mathrm{E}^{s,t} _{r+1} \subseteq \mathrm{E}^{s,t} _r$ as soon as $r>s$.

\begin{proof}[Proof of \Cref{lemm:strong-etau-complete}]
  Since $X$ is $E$-nilpotent complete, it follows from \Cref{lemm:easy-e-compton} that $\nu X$ is $\nu E$-nilpotent complete and $\tau$-complete.

  Using \Cref{thm:strong-conv}, to prove that (1) is equivalent to (2) it suffices to show that $\varprojlim_r \mathrm{E}^{s,k}_{r} = 0$ if and only if $\varprojlim_r \mathrm{E}^{s,k,w}_{r} = 0$. 
  In fact, these groups are isomorphic:
    \begin{align*}
      {\varprojlim_{r}}^1 \mathrm{E}^{s,k} _{r}
      \cong {\varprojlim_{r}}^1 \mathrm{Z}^{s,k}_{r}/\mathrm{B}_{s}^{s,k} 
      \cong {\varprojlim_{r}}^1 \mathrm{Z}^{s,k}_{r} 
      \cong {\varprojlim_{r}}^1 \mathrm{Z}^{s,k}_{r}/\mathrm{B}_{s-w+1}^{s,k} 
      \cong {\varprojlim_{r}}^1 \mathrm{E}^{s,k,w}_{r}.
    \end{align*}

We next prove the equivalence of the second and third conditions.  Let $\beta_r^{s,k,w}$ denote the groups in the $\tau$-Bockstein spectral sequence indexed so that the spectral sequence takes the form
    $$\beta_1^{s,k,w} \cong \pi_{k,k+w}(\Sigma^{0,-s} C\tau \otimes \nu X) \Longrightarrow \pi_{k,k+w} (\nu X).$$
Combining \Cref{thm:synthetic-Adams}(1) and \Cref{cor:synth-ASS}, we learn that
	$$\beta_r^{s,k,w} \cong \mathrm{E}_{r+1}^{w+s,k,w}.$$
Boardman's theorem applies since both of these spectral sequences are conditionally convergent.  Since the spectral sequences are furthermore isomorphic, up to reindexing, one converges strongly if and only if the other does.
\end{proof}

\begin{ntn}
    We let $\mathrm{F}^s \pi_{k,k+w} (\nu X) \subseteq \pi_{k,k+w} (\nu X)$ denote the $\nu E$-Adams filtration, and we let $\mathrm{F}^{s}_{\tau} \pi_{k,k+w}(\nu X)$ denote the $\tau$-Bockstein filtration.
\end{ntn}

\begin{cor} \label{cor:tau-surj}
  Suppose $X$ is $E$-nilpotent complete and that its $E$-based Adams spectral sequence converges strongly. Then
    $$\mathrm{F}^{s} \pi_{k,k+w}(\nu X) = \mathrm{F}^{s-w}_{\tau} \pi_{k,k+w}(\nu X),$$
    where for $k < 0$ we set
    \[\mathrm{F}^{k} _{\tau} \pi_{k,k+w} (\nu X) = \pi_{k,k+w} (\nu X).\]
    In particular, for $w \leq s$, the map
  $$ \mathrm{F}^s\pi_{k,k+w}(\nu X) \xrightarrow{\cdot \tau} \mathrm{F}^s\pi_{k,k+w-1}(\nu X) $$
  is surjective.

\end{cor}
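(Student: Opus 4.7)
The plan is to deduce the corollary from the two ingredients already in hand: the identification of the $\nu E$-based Adams spectral sequence for $\nu X$ given by \Cref{thm:synth-ASS} and \Cref{cor:synth-ASS}, together with the strong convergence of that spectral sequence from \Cref{cor:strong-conv}. The degenerate case $s \leq w$ is handled immediately: \Cref{cor:synth-ASS}(1) gives $\mathrm{E}_r^{s',k,w} = 0$ for $w > s'$, which forces every element of $\pi_{k,k+w}(\nu X)$ to lie in $\mathrm{F}^w \pi_{k,k+w}(\nu X) \subseteq \mathrm{F}^{s}\pi_{k,k+w}(\nu X)$, matching the convention that $\mathrm{F}^{s-w}_\tau$ is the whole group when $s-w < 0$. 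So we may assume $s \geq w$.

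For the inclusion $\mathrm{F}^{s-w}_\tau \subseteq \mathrm{F}^s$, I would start from $\alpha = \tau^{s-w}\beta$ with $\beta \in \pi_{k,k+s}(\nu X)$. By the preceding paragraph, $\beta$ automatically has $\nu E$-Adams filtration at least $s$. Since $\tau$ is a map of synthetic spectra and composition of maps only adds $\nu E$-Adams filtrations, $\tau^{s-w}\beta$ lies in $\mathrm{F}^s \pi_{k,k+w}(\nu X)$, giving one inclusion.

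The reverse inclusion $\mathrm{F}^s \subseteq \mathrm{F}^{s-w}_\tau$ is the main obstacle, and is handled by a standard successive-approximation argument powered by strong convergence. The last sentence of \Cref{cor:synth-ASS} provides that $\tau \colon \mathrm{E}_\infty^{s',k,w'} \to \mathrm{E}_\infty^{s',k,w'-1}$ is surjective whenever $w' \leq s'$; iterating from weight $s$ down to weight $w$ yields surjectivity of $\tau^{s-w}\colon \mathrm{E}_\infty^{s',k,s} \to \mathrm{E}_\infty^{s',k,w}$ for every $s' \geq s$. Given $\alpha \in \mathrm{F}^s\pi_{k,k+w}(\nu X)$, I would use this surjectivity to find $\beta_0 \in \mathrm{F}^s \pi_{k,k+s}(\nu X)$ whose class on $\mathrm{E}_\infty^{s,k,s}$ maps to the class of $\alpha$ on $\mathrm{E}_\infty^{s,k,w}$, so that $\alpha - \tau^{s-w}\beta_0 \in \mathrm{F}^{s+1}\pi_{k,k+w}(\nu X)$. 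Repeating the construction produces a sequence $\beta_i \in \mathrm{F}^{s+i}\pi_{k,k+s}(\nu X)$ with $\alpha - \tau^{s-w}(\beta_0 + \cdots + \beta_i) \in \mathrm{F}^{s+i+1}\pi_{k,k+w}(\nu X)$. By \Cref{cor:strong-conv} the $\nu E$-Adams filtration on $\pi_{k,k+s}(\nu X)$ is complete Hausdorff, so the partial sums converge to some $\beta \in \pi_{k,k+s}(\nu X)$ with $\alpha = \tau^{s-w}\beta$. This establishes the equality of filtrations.

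Finally, the surjectivity of $\tau\colon \mathrm{F}^s\pi_{k,k+w}(\nu X) \to \mathrm{F}^s\pi_{k,k+w-1}(\nu X)$ for $w \leq s$ follows formally: given $\alpha' \in \mathrm{F}^s\pi_{k,k+w-1}(\nu X) = \mathrm{F}^{s-w+1}_\tau\pi_{k,k+w-1}(\nu X)$, write $\alpha' = \tau^{s-w+1}\beta$ with $\beta \in \pi_{k,k+s}(\nu X)$, and take $\alpha := \tau^{s-w}\beta$, which lies in $\mathrm{F}^{s-w}_\tau\pi_{k,k+w}(\nu X) = \mathrm{F}^s\pi_{k,k+w}(\nu X)$ and satisfies $\tau\alpha = \alpha'$. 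The main technical step is therefore the convergence argument in the third paragraph, which crucially uses that the groups $\mathrm{E}_\infty^{s',k,w}$ involved in lifting across $\tau^{s-w}$ sit inside the spectral sequence whose strong convergence has been secured by \Cref{cor:strong-conv}.
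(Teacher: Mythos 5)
Your proof is correct and follows essentially the same route as the paper: one inclusion comes from \Cref{cor:synth-ASS}(1) together with the observation that $\tau$ preserves $\nu E$-Adams filtration, and the reverse inclusion comes from surjectivity of $\tau$ on the $\mathrm{E}_\infty$-page plus strong convergence. The paper compresses the second half into the phrase ``it suffices by strong convergence,'' whereas you spell out the successive-approximation/completeness argument in full, but the idea is the same; your direct treatment of the first inclusion (rather than the paper's downward induction on $w$) is also a trivial repackaging of the identical input from \Cref{cor:synth-ASS}(1).
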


\begin{proof}
  By \Cref{cor:strong-conv}, the $\nu E$-based Adams spectral sequence for $\nu X$ converges strongly.

    Now, the inclusion
    $$\mathrm{F}^{s-w} _{\tau} \pi_{k,k+w}(\nu X) \subseteq \mathrm{F}^{s} \pi_{k,k+w}(\nu X)$$
    follows from a downward induction on $w$, starting from \Cref{cor:synth-ASS}(1), which implies the desired result for $w \geq s$.
    On the other hand, to see that 
    $$\mathrm{F}^{s} \pi_{k,k+w}(\nu X) \subseteq \mathrm{F}^{s-w}_{\tau} \pi_{k,k+w}(\nu X)$$
    for all $s$, it suffices by strong convergence to show that, whenever $w \leq s$, multiplication by $\tau$ is surjective as a map $\mathrm{E}_{\infty} ^{s,k,w-1} \to \mathrm{E}_{\infty} ^{s,k,w}$. This is a consequence of \Cref{cor:synth-ASS}.
%
\end{proof}

\begin{proof}[Proof of \Cref{thm:synthetic-Adams}(2)-(4)]
    We begin by noting that \Cref{cor:strong-conv} implies that the $\nu E$-based Adams spectral sequence for $\nu X$ converges strongly.
    Recall that this means that:

    \begin{enumerate}
    \item $\mathrm{F}^s \pi_{k,k+w}( \nu X )/ \mathrm{F}^{s+1} \pi_{k,k+w}( \nu X ) \cong \mathrm{E}^{s,k,w} _\infty$.
    \item The filtration $\mathrm{F}^{\bullet} \pi_{k,k+w}( \nu X )$ is complete and Hausdorff.
    \end{enumerate}

    Now, we examine the reduction map
    $$ \nu X \to C\tau \otimes \nu X $$
    through the $\nu E$-based Adams spectral sequence.
    As discussed in \Cref{cor:synth-ctau-ASS}, the $\nu E$-based Adams spectral sequence for $C\tau \otimes \nu X$ has $\mathrm{E}_2$-term given by
    $$ {}^1\mathrm{E}_2^{s,k,w} \cong \begin{cases} \mathrm{E}^{s,k}_{2}, & \text{if } s=w \\ 0, & \text{otherwise.} \end{cases} $$
    It follows that the spectral sequence collapses at the $\mathrm{E}_2$-page, that there is no space for extension problems and that the map
    $$ \mathrm{Z}_\infty^{s,k} \cong \mathrm{E}_\infty^{s,k,s} \to {}^1\mathrm{E}_\infty^{s,k,s} \cong \mathrm{E}_2^{s,k} $$
    is just the usual inclusion. This produces a factorization
    $$\pi_{k,k+s}(\nu X)=F^{s} \pi_{k,k+s}(\nu X) \twoheadrightarrow \mathrm{E}_\infty^{s,k,s} \cong \mathrm{Z}_\infty^{s,k} \subseteq \mathrm{E}_2^{s,k} \cong \pi_{k,k+s}(C\tau \otimes \nu X). $$
    The surjectivity of the first map implies that we can always pick an $\wt{x}$.

    (2a) On the associated graded, multiplication by $\tau^{r-1}$ can be identified with
    $$ \mathrm{E}_\infty^{s,k,s} \cong \mathrm{Z}_\infty^{s,k}(X) \to \mathrm{Z}_\infty^{s,k}(X) / \mathrm{B}_{r}^{s,k}(X). $$
    Therefore, as long as $x$ survives to the $\mathrm{E}_{r+1}$-page, any lift $\wt{x}$ will have $\tau^{r-1}\wt{x} \neq 0$.
    
    (2b) It suffices to note that the $\nu E$-based Adams spectral sequence for $\nu X$ is sent to the $E$-based Adams spectral sequence for $X$ under $\tau^{-1}$ and that the induced map
    $$ \mathrm{E}_\infty^{s,k,s} \cong \mathrm{Z}_\infty^{s,k}(X) \to \mathrm{E}_\infty^{s,k} $$
    is just the usual projection.

    (3a) For this we now suppose that $x \in \mathrm{B}_{r+1}^{s,k}$ and consider the following diagram:
    \begin{center}
      \begin{tikzcd}
        0 \ar[r] &
        (\mathrm{F}^s\pi_{k,k+s}(\nu X))[\tau^r] \ar[r] \ar[d] &
        \mathrm{F}^s\pi_{k,k+s}(\nu X) \ar[r, "\cdot \tau^r"] \ar[d, two heads] &
        \mathrm{F}^s\pi_{k,k+s-r}(\nu X) \ar[r] \ar[d, two heads] &
        0 \\
        0 \ar[r] &
        \mathrm{E}_\infty^{s,k,s}[\tau^r] \ar[r] \ar[d, equal] &
        \mathrm{E}_\infty^{s,k,s} \ar[r] \ar[d, equal] &
        \mathrm{E}_\infty^{s,k,s-r} \ar[r] \ar[d, equal] &
        0 \\
        0 \ar[r] &
        \mathrm{B}_{r+1}^{s,k} \ar[r] &
        \mathrm{Z}_\infty^{s,k} \ar[r] &
        \mathrm{Z}_\infty^{s,k}/\mathrm{B}_{r+1}^{s,k} \ar[r] &
        0,
      \end{tikzcd}
    \end{center}
    where the rows are exact by Corollaries \ref{cor:synth-ASS} and \ref{cor:tau-surj}.
    It will suffice to show that left most vertical map is surjective.
    This follows from the snake lemma together with the fact that the map
    $$ \mathrm{F}^{s+1}\pi_{k,k+s}(\nu X) \xrightarrow{\cdot \tau^r} \mathrm{F}^{s+1}\pi_{k,k+s-r}(\nu X) $$
    is surjective, which follows from Corollary \ref{cor:tau-surj}.
    
    (3b) We now suppose that we are given $\alpha \in \pi_k (X)$ detected by $x$. In particular, $x$ is not the target of a differential in the $E$-based Adams spectral sequence for $X$. Then we may modify $\wt{x}$ by elements of higher $\nu E$-filtration without affecting conditions (1a) through (2b). Let $\beta = \tau^{-1} \wt{x}$. Then $\alpha - \beta \in \mathrm{F}^{s+1} \pi_k (X)$. It follows from \Cref{lemm:adams-fil1} that there exists some $e_1 \in \pi_{k,k+s+1} (\nu X)$ such that $\tau^{-1} e_1 = \alpha-\beta$. It follows from \Cref{cor:synth-ASS} that $e_1$ must be in $\nu E$-Adams filtration at least $s+1$. Replacing $\wt{x}$ with $\wt{x} + e_1$, we obtain $\tau^{-1} \wt{x} = \alpha$, as desired.

    (4) Finally, we verify the generation statement.
    Let $A$ denote the $\Z[\tau]$-submodule of $\pi_{k,*}(\nu X)$ generated by the $\wt{x}$, and let $B$ denote the $\tau$-adic completion of $A$. Our first claim is that $B$ remains a natural submodule of $\pi_{k,*}(\nu X)$, which follows from the fact that the $\tau$-adic filtration on $\pi_{k,*}(\nu X)$ is complete and Hausdorff by strong convergence.
		Now, since the inclusion $B \to \pi_{k,*}(\nu X)$ is one between $\tau$-complete objects, we need only note that the map
            $$ B/\tau \to \pi_{k,k+*}(\nu X)/\tau \cong \mathrm{F}^*\pi_{k,k+*}(\nu X)/\mathrm{F}^{*+1}\pi_{k,k+*}(\nu X) \cong \mathrm{E}_\infty^{*,k,*} $$
    is a surjection. The middle isomorphism above follows from \Cref{cor:tau-surj}.
		%
\end{proof}


\subsection{Bigraded homotopy groups in the Toda range}
\label{subsec:syn-toda-range}\ 

In order to illustrate the complexities present in synthetic homotopy groups we will compute the bigraded groups $\pi_{k,*}(\nu_{\HFt} \Ss^{\wedge} _2)$ in the Toda range ($k \leq 19$).
We will see that these groups reflect the entire structure of the $\HFt$-Adams spectral sequence for $\Ss^{\wedge} _2$, including hidden extensions.
For brevity, throughout this section $\pi_{a,b}$ will refer to $\pi_{a,b}(\nu_{\HFt} \Ss^\wedge_2)$.

The $\HFt$-Adams spectral sequence for $\Ss^\wedge_2$ converges strongly because $\Ss^\wedge_2$ is $\HFt$-nilpotent complete and each of the groups on its $\mathrm{E}_2$-term are finite.
There are no differentials in the $\HFt$-Adams spectral sequence for $\Ss^{\wedge} _2$ in topological degree less than or equal to $13$.
For topological degrees $14$ through $19$ we reproduce the spectral sequence below in \Cref{fig:syn-toda-range-sseq}.

\begin{prop} \label{prop:syn-toda-range}
  For $k \leq 19$, 
  $\pi_{k,*}$ is presented as a $\tau$-adically complete algebra by generators
  
  \begin{tabular*}{\textwidth}{c @{\extracolsep{\fill}} c c c c }
    $ \tau \in \pi_{0,-1} $ &
    $ \wt{\sigma} \in \pi_{7,8} $ &
    $ \wt{\kappa} \in \pi_{14,18} $ &
    $ \wt{P^2h_1} \in \pi_{17,26} $ \\
    $ \wt{2} \in \pi_{0,1} $ & 
    $ \wt{\epsilon} \in \pi_{8,11} $ &
    $ \wt{\rho} \in \pi_{15,19} $ &
    $ \wt{\nu^*} \in \pi_{18,20} $ \\
    $ \wt{\eta} \in \pi_{1,2} $ &
    $ \wt{Ph_1} \in \pi_{9,14} $ &
    $ \wt{\eta^*} \in \pi_{16,18} $ & 
    $ \wt{c_1} \in \pi_{19,22} $ \\
    $ \wt{\nu} \in \pi_{3,4} $ & 
    $ \wt{Ph_2} \in \pi_{11,16} $ & 
    $ \wt{Pc_0} \in \pi_{16,24} $ &
    $ \wt{P^2h_2} \in \pi_{19,28} $
  \end{tabular*}
  \newline

  subject to relations
  \begin{align*}
    0 &=
    \wt{2} \wt{\eta} =
    \wt{\eta} \wt{\nu} =
    \wt{2} \wt{\nu}^2 =
    \wt{2}^4 \wt{\sigma} =
    \wt{\nu} \wt{\sigma} =
    \wt{\eta} \wt{\sigma}^2 =
    \wt{2} \wt{\epsilon} =
    \wt{\eta}^2 \wt{\epsilon} =
    \wt{\nu} \wt{\epsilon} =
    \wt{\sigma} \wt{\epsilon} & \\
    &=
    \wt{2} \wt{Ph_1} =
    \wt{\nu} \wt{Ph_1} =
    \wt{\eta} \wt{Ph_2} =
    \wt{\sigma} \wt{Ph_2} =
    \wt{\epsilon} \wt{Ph_2} = 
    \wt{2}^3 \wt{\kappa} = 
    \wt{2}^5 \wt{\rho} = 
    \wt{\nu} \wt{\rho} & \mathrm{(0)}\\
    &=
    \wt{2} \wt{Pc_0} = 
    \wt{\eta}^2 \wt{Pc_0} = 
    \wt{\nu} \wt{Pc_0} = 
    \wt{2} \wt{\eta^*} = 
    \wt{\nu} \wt{\eta^*} = 
    \wt{2} \wt{P^2h_1} = 
    \wt{\eta} \wt{\nu^*} = 
    \wt{2} \wt{c_1} &
  \end{align*}

  \begin{tabular*}{\textwidth}{c @{\extracolsep{\fill}} l c l c l c }
    $\mathrm{(1)}$ & $ \wt{\eta}^3 = \wt{2}^2 \wt{\nu} $ &
    $\mathrm{(4)}$ & $ \wt{\eta}^2 \wt{Ph_1} = \wt{2}^2 \wt{Ph_2} $ &
    $\mathrm{(7)}$ & $ \wt{\eta}^2 \wt{\eta^*} = \wt{2}^2 \wt{\nu^*} $ \\
    $\mathrm{(2)}$ & $ \wt{\eta} \wt{\rho}  = \tau^2 \wt{Pc_0} $ &
    $\mathrm{(5)}$ & $ \wt{\epsilon} \wt{Ph_1} = \wt{\eta} \wt{Pc_0} $ &
    $\mathrm{(8)}$ & $ \wt{\eta}^2 \wt{P^2h_1} = \wt{2}^2 \wt{P^2h_2} $ \\
    $\mathrm{(3)}$ & $ \wt{\nu}^3 = \wt{\eta}^2 \wt{\sigma} + \tau \wt{\eta} \wt{\epsilon} $ &
    $\mathrm{(6)}$ & $ \wt{Ph_1}^2 = \wt{\eta} \wt{P^2h_1} $ &
    $\mathrm{(9)}$ & $ \tau \wt{2} = 2 $ \\
    & & \\
    $\mathrm{(10)}$ & $0 = 2 \wt{\sigma}^2$ &
    $\mathrm{(12)}$ & $0 = 2 \wt{\nu} \wt{\kappa}$ &
    $\mathrm{(14)}$ & $2 \wt{\kappa} = \wt{2}^2 \wt{\sigma}^2$ \\
    $\mathrm{(11)}$ & $0 = \tau \wt{\eta}^2 \wt{\kappa}$ &
    $\mathrm{(13)}$ & $\wt{\nu} \wt{Ph_2} = \wt{2}^2 \wt{\kappa} $ & 
    $\mathrm{(15)}$ & $ \wt{\epsilon}^2 = \wt{\eta}^2 \wt{\kappa} = \wt{\sigma} \wt{Ph_1} + \tau \wt{Pc_0}. $ \\
  \end{tabular*}  
\end{prop}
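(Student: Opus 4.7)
The plan is to apply Theorem \ref{thm:synthetic-Adams} systematically, using the well-known structure of the $\HFt$-Adams spectral sequence for $\Ss^{\wedge}_2$ through the Toda range (as tabulated in, e.g., \cite{Isaksen}) as the primary input. Since $\Ss^{\wedge}_2$ is $\HFt$-nilpotent complete with strongly convergent Adams spectral sequence, all parts of Theorem \ref{thm:synthetic-Adams} and its corollaries are available.

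First, I would produce the generators. For each multiplicatively indecomposable permanent cycle $x$ in stem $k \le 19$, part (3b) of Theorem \ref{thm:synthetic-Adams} supplies a synthetic lift $\wt{x} \in \pi_{k,\,k+\mathrm{fil}(x)}$ with $\tau^{-1}\wt{x}$ equal to the corresponding topological class. Inspection of the $\mathrm{E}_{\infty}$-page in this range yields precisely the sixteen classes listed, and the lift $\wt{2}$ chosen via (3b) satisfies relation (9). Part (4) of Theorem \ref{thm:synthetic-Adams}, combined with the strong symmetric monoidality of $\nu_{\HFt}$ on finite projectives (Remark \ref{rmk:strong-monoidal}), then shows that these sixteen elements span $\pi_{k,\ast}$ as a $\tau$-adically complete algebra. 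This reduces the problem to identifying the ideal of relations.

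Second, I would verify the relations in three batches. The zero products listed in (0) either hold because the target bidegree vanishes already on the $\mathrm{E}_{\infty}$-page, or else---as with $\wt{2}^3\wt{\kappa}=0$---reflect Adams differentials and are handled via part (3a) of Theorem \ref{thm:synthetic-Adams} together with the $\tau$-torsion order bound of Corollary \ref{cor:tau-torsion-bound}, which shows the relevant $\tau$-power torsion is strong enough to force outright vanishing. The numbered relations (1)--(8) and (10)--(15) encode hidden multiplicative extensions: whenever $\wt{x}\wt{y}$ is topologically detected in Adams filtration $s$ strictly greater than $\mathrm{fil}(x)+\mathrm{fil}(y)$, Corollary \ref{cor:synth-filt} forces $\wt{x}\wt{y} = \tau^{s-\mathrm{fil}(x)-\mathrm{fil}(y)}\wt{z}$, with $\wt{z}$ a lift of the detecting class. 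For example, $\wt{\eta}\wt{\rho}=\tau^2\wt{Pc_0}$ records a length-$2$ hidden $\eta$-extension, while $\wt{\nu}^3 = \wt{\eta}^2\wt{\sigma} + \tau\wt{\eta}\wt{\epsilon}$ refines the classical identity $\nu^3 = \eta^2\sigma$ by a synthetic correction invisible on $\mathrm{E}_{\infty}$. Completeness of the listed relations is then confirmed bidegree-by-bidegree: in each $\pi_{k,k+w}$ with $k\le 19$, the associated graded for the $\tau$-Bockstein filtration is determined by Corollary \ref{cor:synth-ASS} in terms of $\mathrm{E}_{\infty}^{\ast,k}$, so one merely compares cardinalities.

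The principal technical difficulty is the simultaneous compatibility of the generator choices. Part (3b) of Theorem \ref{thm:synthetic-Adams} allows adjustment of any lift $\wt{x}$ by a class of strictly greater $\nu_{\HFt}$-Adams filtration, but a single such adjustment can alter the $\tau$-coefficients in several relations at once. I would therefore proceed by an interlocked induction, fixing lifts in order of increasing stem and readjusting each new lift so that every relation involving previously fixed generators holds on the nose. The most delicate bookkeeping concerns the $\wt{\kappa}$-, $\wt{\eta^\ast}$-, $\wt{P^2 h_1}$- and $\wt{\nu^\ast}$-containing relations at the top of the range, where compatibility with the $d_2$- and $d_3$-differentials at stems $14$ and $15$ must be checked simultaneously with several concurrent hidden extensions, and where pinning down relation (3) in the precise form $\wt{\nu}^3 = \wt{\eta}^2\wt{\sigma} + \tau\wt{\eta}\wt{\epsilon}$ requires exploiting the last remaining degree of freedom in the choice of $\wt{\nu}$.
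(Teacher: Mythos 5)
Your scaffolding is right: generators come from Theorem \ref{thm:synthetic-Adams}(3b), the spanning statement from (4), and completeness of the relations is checked by counting lengths of $\pi_{k,k+s}$ as $\mathbb{Z}_2$-modules, which is indeed how the paper closes the argument. But the middle of your argument has a real gap, concentrated in relations (10)--(15).

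The organizing observation the paper makes first, and you omit, is that Corollary \ref{cor:tau-torsion-bound} shows that in the range $k \le 19$ the only bidegrees with nonzero $\tau$-power torsion are $\pi_{14,17},\dots,\pi_{14,20},\pi_{16,22},\pi_{17,23},\pi_{17,24}$; in every other bidegree $\tau^{-1}$ is injective and compatible with products, so relations (0)--(9) literally read off from the relations in $\pi_\ast(\Ss^{\wedge}_2)$. You instead invoke Corollary \ref{cor:synth-filt} and describe (1)--(8), (10)--(15) uniformly as ``hidden extensions.'' This framing is wrong in two directions. First, most of (1)--(8) are not hidden: $\wt{\eta}^3=\wt{2}^2\wt{\nu}$ in $\pi_{3,6}$, for instance, has $\eta^3=4\nu$ in filtration exactly $1{+}1{+}1$ with no filtration jump, and your formula $\wt{x}\wt{y}=\tau^{s-\mathrm{fil}(x)-\mathrm{fil}(y)}\wt{z}$ would insert a spurious $\tau$. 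Second, and more seriously, Corollary \ref{cor:synth-filt} only controls the image of $\pi_{k,k+s}(\nu X)$ in $\pi_k(X)$, i.e. the $\tau$-torsion-free quotient. Relations (10)--(15) are precisely statements about $\tau$-power-torsion classes whose image in $\pi_k(\Ss^{\wedge}_2)$ is zero, and that corollary gives you no purchase on them.

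Concretely: the paper proves (10) from the $\mathbb{E}_\infty$-ring structure on $\nu_{\HFt}(\Ss^{\wedge}_2)$ (an odd-degree class squares to something $2$-torsion); proves (11) and (12) by noting the products vanish topologically, hence are $\tau$-power torsion, and then invoking the torsion-order bounds from Corollary \ref{cor:tau-torsion-bound} in $\pi_{16,22}$ and $\pi_{14,19}$; proves (13) and (15) by showing the $\tau$-torsion subgroups of $\pi_{14,20}$ and $\pi_{16,22}$ inject into $\mathrm{Ext}^{\ast,\ast}_{\A_\ast}(\F_2,\F_2)$ via the ring map to $C\tau\otimes\nu\Ss^{\wedge}_2$ and reading the relations off the $\mathrm{E}_2$-page; and proves (14) by a $\tau$-Bockstein long-exact-sequence argument showing $\wt{2}\wt{\sigma}^2$ and $2\wt{\kappa}$ lift to $h_4$ and $h_0 h_4$ respectively and so are related by an $h_0$-multiplication. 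None of these four techniques appears in your proposal, and your ``interlocked induction on stems'' for fixing lifts, while in the right spirit for managing the indeterminacy in (3b), would not by itself produce any of (10)--(15): the ambiguity there is not a choice of representative but a genuine computation in the $\tau$-torsion submodule. You would need to supply these arguments to close the proof.
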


\begin{figure}[t]
  \centering
  Synthetic and usual Adams spectral sequences for the $2$-complete sphere \\
   \includegraphics[trim={3.9cm 15.2cm 3.9cm 4cm}, clip, scale=1]{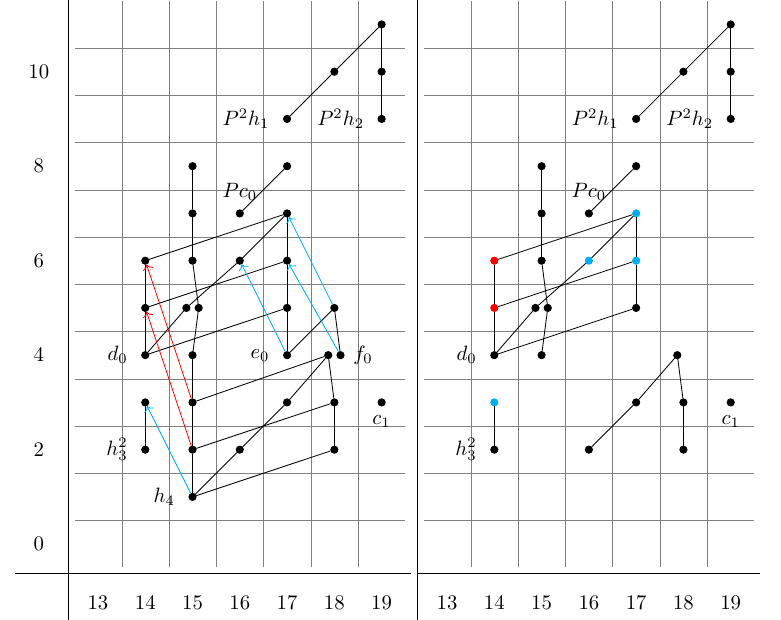}
   \caption{Left: Adams spectral sequence for the sphere, with differentials color-coded by length. Right: $\mathrm{E}_\infty$-page of the synthetic Adams spectral sequence for $\nu_{\HFt} \Ss_2^\wedge$. Black dots indicate a copy of $\F_2[\tau]$, red dots indicate a copy of $\F_2[\tau]/\tau^2$ and blue dots indicate a copy of $\F_2[\tau]/\tau$.}
   \label{fig:syn-toda-range-sseq}
\end{figure}

Before proving \Cref{prop:syn-toda-range} we discuss some of the subtleties which appear in this range.
The results of this proposition are also summarized in \Cref{fig:syn-toda-range}.

\begin{rmk} \label{rmk:subtle-mult}
  The first hidden extension in the Adams spectral sequence occurs in stem 9 where on the $\mathrm{E}_2$-page $h_2^3 = h_1^2h_3$ but in homotopy $\nu^3 = \eta^2\sigma + \eta\epsilon$. Synthetically the presence of this hidden term is reflected by the appearance of a $\tau$ in relation (3) where
  \[ \wt{\nu}^3 = \wt{\eta}^2 \wt{\sigma} + \tau \wt{\eta} \wt{\epsilon}. \]

  Similarly in stem 16 the hidden extension from $h_0^3h_4$ to $Pc_0$ is reflected by relation (2) where $ \wt{\eta} \wt{\rho}  = \tau^2 \wt{Pc_0} $. Note that in this case the multiplication jumps by 2 Adams filtrations and therefore 2 $\tau$'s appear in the product. This product relation is depicted by the green line originating from $\wt{\rho}$ in \Cref{fig:syn-toda-range}

  Another subtlety is that products that are classically zero need not be zero synthetically (though they will be $\tau$-power torsion). In this range the key example of this is relation (14) where $\wt{2}^2 \wt{\sigma}^2 = \tau \wt{2} \wt{\kappa}$. In this relation we see a product which is $\tau$-torsion hidden extend to a $\tau^2$-torsion class and it is depicted by the bent green line in \Cref{fig:syn-toda-range}. A combination of these features appears in relation (15). 
\end{rmk}

\begin{rmk}
In \Cref{prop:syn-toda-range} the generators are chosen using \Cref{thm:synthetic-Adams}(3).
It is important to note that there are ambiguities in this notation.
For some classes $\wt{x}$, $x$ refers to an element of the homotopy $\Ss^{\wedge}_2$.  In these cases $\wt{x}$ is determined up to $\tau$-power torsion classes of higher $\nu \HFt$-Adams filtration.
For other classes $\wt{x}$, $x$ refers to a permanent cycle on the $\mathrm{E}_2$-page of the Adams spectral sequence for $\Ss^{\wedge}_2$.  These classes are only determined up to elements of higher $\nu \HFt$-Adams filtration.  However, in the case that $x$ is the target of a $d_{m+1}$-differential, we more precisely define $\wt{x}$ up to elements of higher $\nu \HFt$-filtration which are $\tau^m$-torsion.

In particular, note that the classes $\wt{2}$, $\wt{\eta}$ and $\wt{\nu}$ are unambiguously determined. On the other hand, one could, for example, replace $\wt{\kappa}$ with $3\wt{\kappa}$ or $\wt{c_1}$ with $\wt{c_1} + a\tau^6 \wt{P^2h_2}$. Nevertheless, we claim that the proposition is valid for any collection of generators provided by \Cref{thm:synthetic-Adams}(3) \emph{as long as we choose a $\wt{c_1}$ which is 2-torsion.}
\end{rmk}

\begin{rmk}
It is also important to note that multiplication may not interact nicely with the tilde notation: $\wt{x} \wt{y}$ might not be a valid choice of representative for $\wt{xy}$ since $\wt{x}\wt{y}$ may not satisfy the $\tau$-torsion requirement that \Cref{thm:synthetic-Adams}(3) places on $\wt{xy}$. However, it is true that $\wt{x} \wt{y} - \wt{xy}$ is divisible by $\tau$, and often this can be used to show that $\wt{x}\wt{y}$ does in fact satisfy the $\tau$-torsion requirement.

Furthermore, when solving extension problems one needs to be careful about exactly which bigraded homotopy elements one chooses. For example, both $\wt{\sigma}^2$ and $\wt{\sigma}^2 + \tau^2 \wt{\kappa}$ are valid choices of $\wt{h_3^2}$, but $\wt{\eta} \wt{\sigma}^2 = 0$ whereas $\wt{\eta} (\wt{\sigma}^2 + \tau^2 \wt{\kappa}) = \tau^2 \wt{\eta} \wt{\kappa} \neq 0$.
\end{rmk}


\begin{figure}
  \centering
  $\pi_{k,k+s} \nu_{\HFt} (\Ss^{\wedge} _2)$ for $13 \leq k \leq 19$
\includegraphics[trim={3.8cm 9.6cm 3.8cm 5cm}, clip, scale=1]{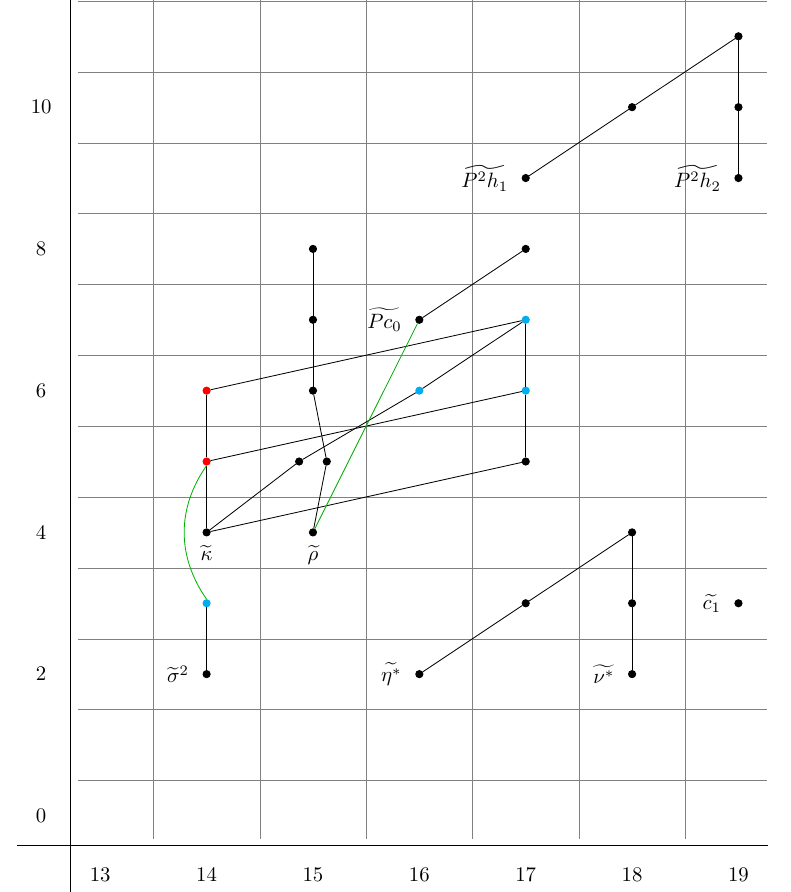}
  \caption{A picture of $\pi_{k,*}$ for $13 \leq k \leq 19$. We index the picture so that bidegree $(k,k+s)$ corresponds to position $(k,s)$.
    Black dots indicate non-$\tau$-torsion classes,
    red dots indicate $\tau^2$-torsion and
    blue dots indicate $\tau$-torsion.
    We suppress all $\tau$-multiples in this figure.
    Black lines correspond to $\wt{2}$, $\wt{\eta}$ and $\wt{\nu}$ multiplications which are detected at the level of $C\tau$.
    Green lines are used for more complicated $\wt{2}$ and $\wt{\eta}$ multiplications.
    In this range the green line indicate a multiplication which hits a power of $\tau$ times the indicated dot (see \Cref{rmk:subtle-mult} for further discussion).
    }
  \label{fig:syn-toda-range}
\end{figure}

\begin{proof}
  Using \Cref{thm:synthetic-Adams} we may produce the generators listed above.
  This theorem also lets us conclude that the $\tau$-adic completion of the algebra they generate is equal to $\pi_{k,*}$ for $k \leq 19$.

  Before we continue we use \Cref{cor:tau-torsion-bound} to find which bigraded groups have $\tau$-power torsion elements. The only bigraded groups with $k \leq 19$ for which $\pi_{k,k+s}^{\mathrm{tor}}$ is nonzero are
  $$ \pi_{14,17},\ \pi_{14,18},\ \pi_{14,19},\ \pi_{14,20},\ \pi_{16,22},\ \pi_{17,23},\ \pi_{17,24}. $$
  This means that $\tau^{-1} : \pi_{k,k+s} \to \pi_k$ is an inclusion in all other bidegrees.
  Moreover, since the functor $\tau^{-1}$ is symmetric monoidal, it follows that these inclusions respect the multiplicative structure on both sides. Thus, we may deduce that (0)--(9) follow from the associated relations in usual homotopy groups.

  To prove the relation (10), note that the element $\wt{\sigma}$ lives in an odd topological degree. Therefore, we learn that $2 \wt{\sigma}^2 = 0$ by considering the $\mathbb{E}_\infty$-ring structure on $\nu_{\HFt}(\Ss_2^\wedge)$ (see \cite[Remark 4.10]{Pstragowski}). 

  Relations (11) and (12) follow from the fact that both $\eta^2\kappa$ and $2\nu\kappa$ are zero in the usual homotopy groups of $\Ss_2^{\wedge}$.  Therefore both $\wt{\eta}^2\wt{\kappa}$ and $\wt{2}\wt{\nu}\wt{\kappa}$ are $\tau$-power torsion.  Since they live in bidegrees containing only simple $\tau$-torsion, it follows that $\tau$ times them is zero.  Note that $\tau \wt{2}\wt{\nu}\wt{\kappa}=2 \wt{\nu}\wt{\kappa}$.

  To prove (13) and (15) we consider the ring map
  $$ \nu_{\HFt}(\Ss^\wedge_2) \to C\tau \otimes \nu_{\HFt}(\Ss^\wedge_2). $$
  Because there are no $\tau$-power torsion elements which are also divisible by $\tau$ in $\pi_{14,20}$ or $\pi_{16,22}$, this map induces isomorphisms
  $$ \pi_{14,20}^{\mathrm{tor}} \cong \Ext^{6,20}_{\A_*}(\F_2, \F_2)\ \ \text{ and }\ \ \pi_{16,22}^{\mathrm{tor}} \cong \Ext^{6,22}_{\A_*}(\F_2, \F_2). $$
  Thus, once we know that each term is zero in the usual homotopy groups we can read (13) and (15) off from the corresponding relation in the $\mathrm{E}_2$ page.

  In the Toda range (14) is the most difficult relation.
  To obtain it, we will make use of the long exact sequence
  \begin{center}
    \begin{tikzcd}
      \cdots \ar[r] &
      \pi_{k+1,k+s-1} \ar[r] \arrow[draw=none]{d}[name=Z, shape=coordinate]{} &
      \Ext^{s-2,k+s-1}_{\A_*}(\F_2, \F_2) \arrow[rounded corners,to path={ -- ([xshift=2ex]\tikztostart.east)|- (Z) [near end]\tikztonodes-| ([xshift=-2ex]\tikztotarget.west)-- (\tikztotarget)}]{dl} & \\
        &
      \pi_{k,k+s} \ar[r, "\cdot \tau"] &
      \pi_{k,k+s-1} \ar[r] &
      \cdots.
    \end{tikzcd}
  \end{center}
  From (10) and the torsion bound on $\pi_{14,18}$ we know that $\wt{2} \wt{\sigma}^2$ and $2 \wt{\kappa}$ are both simple $\tau$-torsion. Thus, they lift to non-zero classes in $\Ext^{1,16} _{\A_*} (\F_2, \F_2)$ and $\Ext^{2,17} _{\A_*} (\F_2, \F_2)$, respectively. These classes must be $h_4$ and $h_0 h_4$, and hence are related by multiplication by $h_0$.
	This implies that their images are related by multiplication by $\wt{2}$, as desired.

    Finally, using parts (2a), (3a), (3b) and (4) of \Cref{thm:synthetic-Adams}, one may compute the length of $\pi_{k,k+s}$ as a $\Z_2$-module for each $k \leq 19$ and all $s$. From this we may conclude that there are no further relations for size reasons.
\end{proof}


\section{Vanishing curves in Adams spectral sequences, \texorpdfstring{\\}{} by Robert Burklund} \label{sec:Appendix}

In this appendix we study vanishing curves in Adams spectral sequences via an explicit analysis of Adams towers and their Postnikov truncations. These techniques were developed in order to answer Question 3.33 from \cite{Akhil}, which asks about the linear term in the vanishing curve of the $\BP\langle n \rangle$-Adams spectral sequence for the sphere. At the prime $3$ our results provide the upper bound on the left-hand-side of \cref{eqn:BIG} necessary in the proof of \Cref{thm:intro-main}. As a corollary we obtain new bounds on the $p$-torsion order of the stable homotopy groups of spheres.

Before proceeding further we should highlight several differences between the perspective on vanishing curves taken in \Cref{sec:AppendixVL} and this appendix. In the main body of the paper vanishing curves are interpreted in terms of the bigraded homotopy groups of a synthetic spectrum and are often implicitly linear and finite page. The emphasis is mostly on genericity results. \Cref{sec:AppendixVL} inherits the technical assumption that we must work only with ring spectra which are of Adams type from \cite{Pstragowski}. In this appendix we will not consider finite-page vanishing lines, instead confining ourselves to the vanishing curve present at the $E_\infty$ page. Our emphasis is on exploiting naturality in the choice of ring spectrum. This appendix works with the approach to descent developed by Akhil Mathew in \cite{Akhil} and thereby inherits the technical assumption that all ring spectra admit an $\mathbb{E}_1$ multiplication.

In Section \ref{app:prelim} we recall the definition of the vanishing curve, review previous results and state our main theorem, which is a collection of novel bounds on various vanishing curves.
In Section \ref{app:ABvl} we give a comparison theorem for vanishing curves over different rings. This comparison theorem is the key technical advance in this appendix. 
In Section \ref{app:blocks} we finish the proof of the comparison theorem.
In Section \ref{app:reductions} we use the comparison theorem and theorems of Davis-Mahowald and Gonz\'alez \cite{DM3} \cite{GonzalezRegular} to prove the main theorem.

\begin{cnv} \label{cnv:desc-ring}
  Throughout this appendix we will adopt the following conventions:
  \begin{enumerate}
  \item All spectra will be $p$-local for a fixed prime $p$.
  \item Rings and ring morphisms will refer to objects and morphisms of $\Alg(Sp)$. \footnote{The results of this appendix remain true if we replace $Alg(Sp)$ with the full subcategory of $Alg^{\mathbb{E}_0}(\Sp)$ on those objects that admit an $\mathbb{A}_2$ structure. We opt to work in less generality for convenience and so that we can avoid reproving many statements from \cite{Akhil}.}
  \item A ring $R$ will also be assumed to satisfy the following hypotheses:
    \begin{itemize}
    \item $R$ is $p$-local and connective,
    \item $\pi_0(R) \cong \Z_{(p)}$,
    \item $\pi_i(R)$ is a finitely generated $\Z_{(p)}$ module for all $i$.
    \end{itemize}
    Moreover, $A$ and $B$ will also denote rings satisfying the same hypotheses. \footnote{This convention ensures that the Adams spectral sequence based on $R$ converges for every connective $p$-local spectrum.}
  \item In order to make concise statements about the asymptotics of various functions we will make use both big $O$ and little $o$ notation. 
  \end{enumerate}
\end{cnv}

\begin{ntn}
  In this appendix we will adopt the following notation in order to simplify expressions,
  \begin{enumerate}
  \item $q = 2p-2$,
  \item $v_p(k)$ will denote the $p$-adic valuation of an integer $k \in \Z$,
  \item if $p \neq 2$, 
    $$ \ell(k) = \begin{cases} v_p(k+2) & k+2 \equiv 0 \pmod q \\ 0 & k+2 \not\equiv 0 \pmod q \end{cases}, $$
    if $p = 2$, 
    $$ \ell(k) = \begin{cases} v_2(k+1) & k \text{ odd} \\ v_2(k+2) & k \text{ even} \end{cases}. $$
  \end{enumerate}
  We will sometimes use that $\ell(k) \in O(\log(k))$.
\end{ntn}


\subsection{Preliminaries and statements}
\label{app:prelim}\ 

We begin by defining two functions attached to a ring $R$ which we will refer to as the $R$-Adams spectral sequence vanishing curves. Although the function $g_R$ defined below has a more direct interpretation as a vanishing curve it will turn out that $f_R$ has more tractable properties. For example, $f_R$ is sub-additive while $g_R$ has no such property.

\begin{dfn}
  \label{dfn:exp-of-nilp}
  Given a ring spectrum $R$ as above,
  \begin{itemize}
  \item Let $g_R(k)$ denote the minimal $m$ such that every
    $\alpha \in \pi_k(\Ss)$ whose $R$-Adams filtration is strictly greater than $m$ is zero. \footnote{Our function $g_{\BP}$ is equal to the function $g$ defined by Hopkins in \cite{''''}.}
  \item Let $f_R(k)$ denote the minimal $m$ such that
    for every connective $p$-local spectrum $X$, $i < k$, and $\alpha \in \pi_i(X)$,
    if $\alpha$ has $R$-Adams filtration at least $m$, then $\alpha = 0$. 
  \item Let $\Gamma (k)$ denote the minimal $m$ such that every $\alpha \in \pi_k(\Ss^0)$ whose $\HFp$-Adams filtration is strictly greater than $m$ is detected in the $K(1)$-local sphere ($\Gamma$ does not depend on a choice of $R$).
    \footnote{\Cref{dfn:h} is equivalent to the definition given here by our knowledge of the homotopy of the $K(1)$-local sphere.}
  \end{itemize}
\end{dfn}

\begin{rmk}\label{rmk:g-f-comp}
  \label{rmk:g->f}
  The $X = \Ss^0$ case in the definition of $f_R(k)$ implies that $$g_R(k) \leq f_R(k+1) - 1. $$
\end{rmk}

Several classic results in stable homotopy theory can be reformulated as bounds on the functions $f_R, g_R$ and $\Gamma$ for various rings $R$.
In \cite{MathewExponent} and \cite{Akhil}, work of Adams \cite{AdamsPer} and Luilevicius \cite{liul} is reformulated into the pair of inequalities
$$ f_{\Z_p}(k) \leq \frac{1}{q}k + O(1) \ \ \ \text{ and }\ \ \  \Gamma(k) \leq \frac{1}{q}k + O(1). $$
Later, in \cite{DM3}, Davis and Mahowald showed that at the prime $2$,
$$ g_{\text{bo}}(k) \leq \frac{1}{5}k + O(\log(k)) \ \ \ \text{ and }\ \ \  \Gamma(k) \leq \frac{3}{10}k + O(\log(k)) $$
In \cite{Gonz00}, Gonzalez proved the analogous results at odd primes, 
$$ g_{\BP\langle 1 \rangle}(k) \leq \frac{1}{p^2 - p -1}k + O(\log(k))\  \text{ and }\  \Gamma(k) \leq \frac{(2p-1)}{(2p-2)(p^2 - p -1)}k + O(\log(k)) $$
Finally, another formulation of the Nilpotence theorem \cite{DHS} worked out by Hopkins and Smith is that\footnote{See \cite[Theorem 3.30]{Akhil} for a published account of this argument.}
$$ f_{\BP}(k) = o(k). $$
One of the purposes of \Cref{sec:ANvl} was to provide the first effective bound on $f_{\BP}(k)$ which is not already present at the $\mathrm{E}_2$-page.

In the situation where $R$ is both an $\mathbb{E}_1$-ring and of Adams type we have the following lemma which relates $f_R$ and $g_R$ to weak and strong vanishing lines in synthetic spectra. 

\begin{lem}
  \label{lemm:strong-vl->f}
  Suppose $R$ is both an $\mathbb{E}_1$ ring and of Adams type,
  \begin{itemize}
      \item If $\nu_{R} (\mathbb{S}^{0})$ has a finite-page vanishing line of slope $m$ and intercept $c$,
    then
    $$ g_R(k) \leq mk + c. $$
\item If $\nu_{R} (\mathbb{S}^{0})$ has a strong finite-page vanishing line of slope $m$ and intercept $c$,
    then
    $$ f_R(k) \leq m(k-1) + c + 1.  $$
  \end{itemize}
\end{lem}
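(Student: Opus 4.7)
The plan is to deduce both bounds by lifting topological Adams-filtration data into bigraded synthetic homotopy via \Cref{lemm:adams-fil1}, reading off the vanishing conclusion from the synthetic vanishing line, and then projecting back to spectra by applying $\tau^{-1}$.

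For part (1), suppose $\alpha \in \pi_k(\mathbb{S}^0)$ has $R$-Adams filtration $s > mk+c$. Invoking \Cref{lemm:adams-fil1} produces a lift $\wt{\alpha} \in \pi_{k,k+s}(\nu_R(\mathbb{S}^0))$ such that $\nu_R(\alpha) = \tau^s \wt{\alpha}$. The assumed finite-page vanishing line of slope $m$, intercept $c$, and some finite torsion level $r$ forces $\tau^r \wt{\alpha} = 0$ in $\pi_{k,k+s-r}(\nu_R(\mathbb{S}^0))$. Now apply $\tau^{-1}$, recalling from \Cref{thm:tau-inv} that $\tau^{-1}$ is a symmetric monoidal localization with $\tau^{-1} \circ \nu \simeq \mathrm{id}$ and sends $\tau$ to an equivalence. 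Thus $\tau^{-1}(\wt{\alpha}) = \alpha$ (since $\nu(\alpha) = \tau^s \wt{\alpha}$ implies $\alpha = \tau^{-1}(\wt{\alpha})$), and $0 = \tau^{-1}(\tau^r \wt{\alpha}) = \alpha$. This gives $g_R(k) \leq mk+c$.

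For part (2), let $X$ be any connective $p$-local spectrum and take $\alpha \in \pi_i(X)$ with $i < k$ and $R$-Adams filtration $s \geq m(k-1) + c + 1$. The strong finite-page vanishing line hypothesis, applied to $Y = X$, yields a finite-page vanishing line of slope $m$, intercept $c$, and torsion level $r$ on $\nu_R(\mathbb{S}^0) \otimes \nu_R(X) \simeq \nu_R(X)$; here we use that $\nu_R(\mathbb{S}^0) = \mathbb{S}^{0,0}$ is the unit of $\mathrm{Syn}_R$. As before, \Cref{lemm:adams-fil1} produces a lift $\wt{\alpha} \in \pi_{i,i+s}(\nu_R(X))$. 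Assuming $m \geq 0$ (the relevant case), $i \leq k-1$ gives
\[
s \geq m(k-1) + c + 1 > m(k-1) + c \geq mi + c,
\]
so the vanishing line forces $\tau^r \wt{\alpha} = 0$. Applying $\tau^{-1}$ exactly as in part (1) yields $\alpha = 0$, whence $f_R(k) \leq m(k-1) + c + 1$.

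The proof is largely bookkeeping; the only conceptual point is that the bridge from synthetic $\tau^r$-torsion to topological nullity is entirely driven by the fact that $\tau^{-1}$ sends $\tau$ to an equivalence, so a finite torsion level (rather than a specific value of $r$) is all that is needed, which explains why $r$ does not appear in the stated bounds. The only mild obstacle is keeping track of the identification $\nu_R(\mathbb{S}^0) \otimes \nu_R(X) \simeq \nu_R(X)$ in part (2), which unlocks the use of the strong vanishing line for an arbitrary connective test spectrum $X$.
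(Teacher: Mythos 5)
Your proof is correct and takes essentially the same approach as the paper's (the paper condenses it to one sentence): use \Cref{lemm:adams-fil1} to produce a lift $\wt{\alpha}$ whose image under $\tau^{-1}$ is $\alpha$, so $\wt{\alpha}$ cannot be $\tau$-power torsion when $\alpha \neq 0$, contradicting the finite-page vanishing line. The only notable difference is that you make explicit the identification $\nu_R(\mathbb{S}^0) \otimes \nu_R(X) \simeq \nu_R(X)$ and the arithmetic $mi + c < m(k-1) + c + 1$ for $i \leq k-1$ and $m \geq 0$, both of which the paper leaves implicit.
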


\begin{proof}
  By Lemma \ref{lemm:adams-fil1} each nonzero class $\alpha \in \pi_j(X)$ whose $R$-Adams filtration is $\geq n$ yields a non-$\tau$-torsion class $\tilde{\alpha} \in \pi_{j,j-n}(\nu X)$.
  \todo{It may be appropriate to say something about tau-completeness here. AS: It doesn't seem to be required, if I read the lemma right.}
\end{proof}

Applying \Cref{lemm:strong-vl->f} to \Cref{thm:AN-vl} we obtain the following corollary.

\begin{cor}
  \label{cor:ANvl}
  For each odd prime,
  $$ f_{\BP}(k) \leq \frac{1}{p^3 - p -1}k + 2p^2 - 4p + 10 - \frac{2p^2+2p-9}{p^3 - p -1}. $$
\end{cor}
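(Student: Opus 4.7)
The plan is to combine the two ingredients cited in the statement: the strong finite-page vanishing line for $\nu_{\BP}(\mathbb{S}^0)$ given in Theorem \ref{thm:AN-vl}, together with Lemma \ref{lemm:strong-vl->f}(2), which converts such a synthetic vanishing line into an upper bound for $f_{\BP}$. Since both ingredients are already in place, the deduction is essentially a matter of unwinding definitions and doing the arithmetic, so this will be a short proof rather than a serious argument.

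More precisely, I will begin by noting that $\BP$ is both an $\mathbb{E}_1$-ring (in fact an $\mathbb{E}_\infty$-ring) and of Adams type, so the hypotheses of Lemma \ref{lemm:strong-vl->f} are satisfied. Theorem \ref{thm:AN-vl} then tells us that $\nu_{\BP}(\mathbb{S}^0)$ admits a strong finite-page vanishing line with slope $m = (p^3-p-1)^{-1}$ and intercept
\[
c \; = \; 2p^2 - 4p + 9 - \frac{2p^2+2p-10}{p^3-p-1}.
\]
The torsion level appearing in Theorem \ref{thm:AN-vl} is irrelevant for the conclusion of Lemma \ref{lemm:strong-vl->f}(2), which only reads off the slope and intercept.

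To conclude, I will invoke Lemma \ref{lemm:strong-vl->f}(2) to obtain $f_{\BP}(k) \leq m(k-1) + c + 1$, and then simplify:
\[
f_{\BP}(k) \;\leq\; \frac{k-1}{p^3-p-1} + 2p^2 - 4p + 10 - \frac{2p^2+2p-10}{p^3-p-1} \;=\; \frac{k}{p^3-p-1} + 2p^2 - 4p + 10 - \frac{2p^2+2p-9}{p^3-p-1},
\]
which is the stated bound. There is no real obstacle in this proof; the genuine work is already done in Theorem \ref{thm:AN-vl} and Lemma \ref{lemm:strong-vl->f}.
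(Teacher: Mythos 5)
Your proposal is correct and matches the paper's own argument, which is exactly the one-line deduction of Corollary \ref{cor:ANvl} from Theorem \ref{thm:AN-vl} via Lemma \ref{lemm:strong-vl->f}(2). The arithmetic checks out, and your remark that the torsion level plays no role in Lemma \ref{lemm:strong-vl->f}(2) is accurate.
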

The main theorem of this appendix is the following.

\begin{thm}\label{thm:app-main}\ 
  \begin{enumerate}
  \item For each prime and $n \in \Z_{\geq 0}$,
    $$ f_{\BP\langle n \rangle}(k) \leq \frac{1}{|v_{n+1}|}k + \left(1 + \frac{1}{|v_{n+1}|} \right) f_{\BP}(k) - \frac{1}{|v_{n+1}|}. $$
  \item For each prime,
    $$ \Gamma(k) \leq \frac{(q+1)}{q|v_2|}k + \frac{(q+1)(|v_2| + 1)}{q|v_2|}f_{\BP}(k) + \ell(k). $$
  \item For each odd prime,
    $$ f_{\BP\langle 1\rangle}(k) \leq \frac{p+2}{2(p^3 - p -1)}k + 2p^2 - 4p + 11. $$
  \item For $p=3$,
    $$ \Gamma(k) \leq \frac{25}{184}k + 19 + \frac{1133}{1472} + \ell(k), $$
    and for $p \geq 5,$
    $$ \Gamma(k) \leq \frac{(2p-1)(p+2)}{4(p-1)(p^3 - p -1)}k + 2p^2 - 3p + 11 + \ell(k). $$
  \end{enumerate}
\end{thm}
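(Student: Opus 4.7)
The plan is to prove part (1) as the main technical content and then deduce parts (2)--(4) as arithmetic combinations of (1), \Cref{cor:ANvl}, and known bounds of Davis--Mahowald and Gonz\'alez. The key observation underpinning (1), emphasized in the introduction, is that $\tau_{<|v_{n+1}|}\BP\langle n \rangle \simeq \tau_{<|v_{n+1}|}\BP$, so the $\BP\langle n \rangle$- and $\BP$-Adams resolutions of a connective spectrum $X$ agree within a Postnikov range. In Section \ref{app:ABvl} I would formalize this as a comparison theorem for vanishing curves: given a morphism $A \to B$ of rings satisfying \Cref{cnv:desc-ring} with $\tau_{<c}A \simeq \tau_{<c}B$, an $A$-Adams filtration of a class $\alpha \in \pi_i(X)$ can be converted into a $B$-Adams filtration for an associated class, with a controlled loss depending linearly on $i/c$.

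The main obstacle will be a careful ``block'' bookkeeping argument, carried out in Section \ref{app:blocks}, in which $|v_{n+1}|+1$ consecutive $\BP\langle n \rangle$-Adams stages are traded for $|v_{n+1}|$ $\BP$-Adams stages at the cost of $|v_{n+1}|$ units of connectivity. Iterating this replacement and tracking the accumulated connectivity gives the rearrangement
\[
|v_{n+1}|\,f_{\BP\langle n\rangle}(k) + 1 \;\leq\; k + (|v_{n+1}|+1)\,f_{\BP}(k),
\]
which is exactly (1). Beyond the Postnikov window the two towers genuinely diverge, so the argument cannot simply identify them---instead, one must iteratively truncate, replace finite blocks via the $\tau_{<|v_{n+1}|}$-equivalence, and reassemble, which is where the technical care lies.

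For (2), I would combine the $n=1$ case of (1) with Davis--Mahowald \cite[Corollary 1.3]{DM3} at $p=2$ and Gonz\'alez \cite[Theorem 5.1]{Gonzalez} at odd primes. Each of these, via the $K(1)$-local direct-summand splitting of \cite[VIII.4.2]{EinfBook}, yields (after translation) a bound of the form $\Gamma(k) \leq \tfrac{q+1}{q}\,f_{\BP\langle 1\rangle}(k) + \ell(k)$; substituting (1) with $n=1$ and rearranging delivers the claimed coefficients $\tfrac{q+1}{q|v_2|}$ and $\tfrac{(q+1)(|v_2|+1)}{q|v_2|}$. For (3) and (4) I would insert the explicit \Cref{cor:ANvl} bound on $f_{\BP}(k)$ into the $n=1$ case of (1) and into (2) respectively; the algebraic identity $(p+2)(p^2-1) = p^3 + 2p^2 - p - 2$ collapses the coefficients to the stated forms, and the intercepts follow by constant-chasing with the explicit constants from \Cref{cor:ANvl}. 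At $p=3$ these computations produce the specific constants $\tfrac{25}{184}$ and $19 + \tfrac{1133}{1472}$ recorded in the statement.
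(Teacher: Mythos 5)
Your outline for (1) matches the paper's: the Comparison Theorem is derived from $\tau_{<|v_{n+1}|}\BP \simeq \tau_{<|v_{n+1}|}\BP\langle n\rangle$ by noting that any $\BP$-module concentrated in a Postnikov window of width $|v_{n+1}|$ is automatically a $\BP\langle n\rangle$-module, then splitting each stage of a $\BP$-module resolution of $\tau_{<k}\Ss$ into blocks of that width, shuffling, and counting the length of the result; your displayed inequality is exactly that count rearranged. (One technical point you should address: $\BP\to\BP\langle n\rangle$ must be a ring map, which the paper gets from Angeltveit's work plus the $\mathbb{E}_4$-structure on $\BP$.)

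The one substantive error is in the inputs you propose for the $\Gamma$-to-$\BP\langle 1\rangle$ passage in (2) and (4). The results you cite, \cite[Corollary 1.3]{DM3} and \cite[Theorem 5.1]{Gonzalez}, are absolute bounds of the form $\Gamma(k) \leq \mathrm{const}\cdot k + O(\log k)$ and make no reference to the $bo$- or $\BP\langle 1\rangle$-Adams filtration of a class; they therefore cannot produce anything of the shape $\Gamma(k) \lesssim \tfrac{q+1}{q}\,f_{\BP\langle 1\rangle}(k)$, and no amount of translation through the $K(1)$-local splitting will change that. What is actually needed are the filtration-drop theorems for the canonical $bo$- and $\BP\langle 1\rangle$-Adams resolutions of the sphere, namely \cite[Theorem 5.1]{DM3} (a different statement that happens to share a number with your Gonz\'alez citation, which is probably the source of the mix-up) and \cite[Theorem 7.5]{GonzalezRegular}. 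Each of those controls, stage by stage, how much $\HFp$-Adams filtration is lost passing up the $bo$- or $\BP\langle 1\rangle$-tower; summing this loss over roughly $g_{bo}(k)$ (resp.\ $g_{\BP\langle 1\rangle}(k)$) stages, and then using $g_R(k)\leq f_R(k+1)-1$, is what converts a $\BP\langle 1\rangle$-side vanishing line into a bound on $\Gamma(k)$ up to the correction $\ell(k)$. With that substitution, your plan for (2)--(4) coincides with the paper's.
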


The proof of this theorem will occupy the remainder of this appendix.
Once we have proved \ref{thm:app-main}(1) the rest of the theorem follows by relatively standard arguments. Note that \ref{thm:app-main}(1), when combined with the Nilpotence theorem, implies the following corollary which appeared as question 3.33 in \cite{Akhil}.

\begin{cor}
     \[f_{\BP \langle n \rangle} (k) \leq \frac{1}{\abs{v_{n+1}}}k + o(k).\]
\end{cor}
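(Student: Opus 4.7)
The plan is to deduce the corollary as a direct consequence of \Cref{thm:app-main}(1) combined with the form of the Nilpotence Theorem recalled in the excerpt, namely $f_{\BP}(k) = o(k)$. No new technology is required beyond those two inputs; the corollary is really a statement about the asymptotic shape of the bound in \Cref{thm:app-main}(1).

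Concretely, I would start from the inequality
\[
f_{\BP\langle n \rangle}(k) \leq \frac{1}{|v_{n+1}|}k + \left(1 + \frac{1}{|v_{n+1}|} \right) f_{\BP}(k) - \frac{1}{|v_{n+1}|},
\]
which is given by \Cref{thm:app-main}(1). Observe that for fixed $n$ and $p$ the quantity $\bigl(1 + \frac{1}{|v_{n+1}|}\bigr)$ is a constant (depending only on $n$ and $p$), and the additive constant $-\frac{1}{|v_{n+1}|}$ is $O(1)$. Multiplying a sublinear function by a constant preserves sublinearity, and adding an $O(1)$ quantity to a sublinear function preserves sublinearity, so
\[
\left(1 + \frac{1}{|v_{n+1}|} \right) f_{\BP}(k) - \frac{1}{|v_{n+1}|} = o(k),
\]
using the stated form of the Nilpotence Theorem $f_{\BP}(k) = o(k)$. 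Substituting back yields the desired bound
\[
f_{\BP\langle n \rangle}(k) \leq \frac{1}{|v_{n+1}|}k + o(k).
\]

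There is no real obstacle to this argument once \Cref{thm:app-main}(1) is granted; all the substantive work lies in proving that theorem, which is the main business of the appendix. The only thing to be careful about is the implicit dependence of the $o(k)$ error on $n$ and $p$: the constant multiplier $1 + 1/|v_{n+1}|$ depends on $n$ and $p$, so the asymptotic bound is to be understood for each fixed prime $p$ and each fixed $n$, with the sublinear error term allowed to depend on these choices. This matches the way the Nilpotence Theorem is invoked elsewhere in the paper, so no additional care is needed.
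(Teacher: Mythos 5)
Your proposal is correct and matches the paper's argument exactly: the corollary is obtained by substituting the Nilpotence-theorem bound $f_{\BP}(k) = o(k)$ into the inequality of \Cref{thm:app-main}(1) and absorbing the constant multiplier and additive constant into the $o(k)$ term. The remark about the $o(k)$ error depending on $p$ and $n$ is a fair point of care, consistent with how the paper uses these asymptotics.
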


\begin{rmk}
  Similarly, using the Nilpotence theorem, the bound on $\Gamma$ given in \Cref{thm:app-main}(2) at the prime 2 simplifies to
  $$ \Gamma(k) \leq \frac{1}{4}k + o(k). $$
  Although this is asymptotically better than the result of Davis-Mahowald quoted above, because we don't have explicit control over the error term it is unsuitable for use in \cref{sec:Finale}.
  In fact, as observed by Stolz \cite[p. XX]{StolzBook}, any further improvement of the slope of a linear bound on $\Gamma(k)$ would imply \Cref{thm:intro-main} at the prime $2$ for $k \gg 0$. This would, at least for $k \gg 0$, bypass the need for \Cref{thm:AdamsBound}. 
\end{rmk}

\begin{cnj} \label{cnj:vanishing-line}
  $$ \Gamma(k) \leq \frac{1}{|v_2|}k + O(1). $$
\end{cnj}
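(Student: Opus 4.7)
The conjecture asserts a linear improvement over the best bounds currently available for $\Gamma(k)$, matching the slope $1/|v_2|$ expected from height-two chromatic considerations. Since the $v_1$-banded framework of Section \ref{sec:BandVan} successfully produces a slope $1/|v_1|$ bound on the part of $\Gamma$ coming from classes not $K(1)$-locally detected, my plan would be to lift that framework to a $v_2$-banded analog.

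First I would apply part (1) of \Cref{thm:app-main} with $n=2$ combined with the Nilpotence Theorem, which yields
\[
f_{\BP\langle 2\rangle}(k) \;\leq\; \tfrac{1}{|v_3|}\,k + o(k),
\]
giving a sublinear-error vanishing line in the $\BP\langle 2\rangle$-based Adams spectral sequence for $\Ss$. Next, I would develop a numerical comparison between $\HFp$- and $\BP\langle 2\rangle$-based Adams filtrations, in the spirit of Davis--Mahowald's comparison between $\HFp$- and $bo$-resolutions that underlies \Cref{thm:DM-filt-bound}, and in parallel to the block-decomposition technique of Section \ref{app:blocks}. Each level of $\BP\langle 2\rangle$-filtration should, on average, absorb roughly $|v_2|/|v_1|$ levels of $\HFp$-filtration (up to a sublinear error term), converting the first step into the desired $1/|v_2|$ slope.

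The hard part will be a third step: showing that classes above the resulting line are $K(1)$-locally detected, and not merely $E(2)$-locally detected. After the comparison such classes would be controlled by the $K(\le 2)$-local sphere, so one must argue that the $K(2)$-local contribution, namely the $v_2$-periodic families $\beta_n$, $\gamma_n$, and Greek-letter generalizations, lies strictly below the $1/|v_2|$ line asymptotically. Natural input here would be a $v_2$-banded version of the vanishing-line genericity (\Cref{thm:vl-are-generic}) together with explicit $\HFp$-Adams filtration estimates for $\beta$-elements coming from their detection in $\mathrm{tmf}$ at $p \le 3$ (and analogous higher-chromatic ring spectra for $p\ge 5$).

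I expect the main obstruction to be precisely this disentangling of $K(1)$-local and $K(2)$-local contributions at high filtration. The existence of genuinely infinite $v_2$-periodic families means the thick-subcategory / genericity arguments of Sections \ref{sec:AppendixVL}--\ref{sec:BandVan} do not directly apply: a $v_2$-self map $v:\Sigma^{n|v_2|}X\to X$ of a type-$2$ complex exists, but $v$-inversion detects $K(2)$-rather than $K(1)$-local phenomena, so one cannot reduce to a telescope-conjecture style statement as in \Cref{cor:telescope}. I would view the conjecture as truly requiring a new height-two analog of Miller's square, and expect any successful proof to combine the comparison-of-Adams-towers techniques of this appendix with substantial new chromatic input beyond what is used in this paper.
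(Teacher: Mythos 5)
This is a \emph{conjecture} (\Cref{cnj:vanishing-line}); the paper does not prove it, so there is no proof for your attempt to be compared against. You correctly identify this and give a strategy sketch rather than a proof, closing with an honest assessment that genuinely new height-two input is required. That assessment is sound, and you should say plainly at the outset that what follows is a heuristic plan, not an argument.

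Two further remarks on the sketch. First, your step of combining \Cref{thm:app-main}(1) with $n=2$ and the Nilpotence Theorem to obtain $f_{\BP\langle 2\rangle}(k) \leq \frac{1}{|v_3|}k + o(k)$ is correct, and is in fact exactly the calculation the paper records in the remark following \Cref{thm:AB-vl} (stated there as $f_{\mathrm{tmf}}(k)\leq f_{\BP\langle 2\rangle}(k)\leq \frac{1}{14}k + o(k)$ at $p=2$). Second, note that the conjecture asks for an $O(1)$ constant error, not an $o(k)$ sublinear one. Any argument routed through the Nilpotence Theorem inherits a non-effective $o(k)$ term, so even a hypothetical height-two analog of the Davis--Mahowald and Gonz\'alez comparison theorems (\Cref{thm:DM-bo}, \Cref{thm:gon-bp1}) would, as you have sketched it, only produce the weaker $o(k)$ version of the conjecture. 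Matching the conjectured $O(1)$ would additionally require an effective replacement for the Nilpotence input, analogous to what Sections~\ref{sec:AppendixVL}--\ref{sec:ANvl} accomplish at height one via the explicit Adams--Novikov vanishing line over $C(\wt{\beta}_1)$. Your diagnosis that the central obstruction is disentangling the $K(1)$-local from the $K(2)$-local contribution in high Adams filtration, and that the telescope-conjecture-style reduction underlying \Cref{cor:telescope} is unavailable (and indeed expected to fail) at height two, is accurate and identifies where the genuinely new idea must come from.
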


The application of \ref{thm:app-main}(2) to bounding torsion exponents in the stable homotopy groups of spheres was explained in \Cref{ssec:exp}. Ultimately, torsion exponent bounds arise as a corollaries to bounds on $\Gamma(k)$. A more numerically precise result is obtained at odd primes by using \ref{thm:app-main}(4). The mysterious ``sublinear error term'' present in \Cref{thm:torsion-bound} is a residue of the non-effective nature of the Nilpotence theorem.





\subsection{Comparing vanishing lines}
\label{app:ABvl}\ 

The novel part of the proof of \Cref{thm:app-main} is the following comparison theorem which allows us to relate vanishing lines for different rings.

\begin{thm}[Comparison Theorem]
  \label{thm:AB-vl}\
  
  Suppose we have a ring map $i : A \to B$, then
  \begin{enumerate}
  \item $g_A(k) \leq g_B(k)$,
  \item $f_A(k) \leq f_B(k)$,
  \item if $i$ becomes an equivalence after applying $\tau_{<m}$, then
    $$ f_B(k) \leq f_A(k) + \left\lfloor \frac{k + f_A(k) - 1}{m} \right\rfloor \leq \frac{1}{m}k + \left( 1 + \frac{1}{m} \right) f_A(k) - \frac{1}{m}. $$
  \end{enumerate}
\end{thm}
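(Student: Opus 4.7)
My plan is to work with the augmentation fibers $\bar A := \mathrm{fib}(\mathbb{S} \to A)$ and $\bar B := \mathrm{fib}(\mathbb{S} \to B)$, since the classes of $R$-Adams filtration $\geq N$ are by definition the images of $\pi_*(\bar R^{\otimes N} \otimes X) \to \pi_*(X)$. For parts (1) and (2), the ring map $i:A \to B$ together with the common unit $\mathbb{S}$ induces a canonical map $\bar A \to \bar B$, and hence natural comparison maps $\bar A^{\otimes N} \otimes X \to \bar B^{\otimes N} \otimes X$ of the $A$- and $B$-Adams towers for every connective spectrum $X$. A class with $A$-Adams filtration $\geq N$, realized as an image from $\pi_*(\bar A^{\otimes N} \otimes X)$, therefore automatically has $B$-Adams filtration $\geq N$ under the comparison; in other words, the $B$-filtration is at least as fine as the $A$-filtration. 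Applied to the definitions of $g_R(k)$ and $f_R(k)$, this immediately yields $g_A(k) \leq g_B(k)$ and $f_A(k) \leq f_B(k)$.

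For part (3), the hypothesis $\tau_{<m}(i):\tau_{<m}A \xrightarrow{\sim} \tau_{<m}B$ says that $\mathrm{cofib}(A \to B)$ is $m$-connective, from which one deduces that $\mathrm{cofib}(\bar A \to \bar B) \simeq \Sigma^{-1}\mathrm{cofib}(A \to B)$ is $(m-1)$-connective. Filtering $\bar B^{\otimes N}$ by the objects $G_j := \bar A^{\otimes (N-j)} \otimes \bar B^{\otimes j}$, with associated graded pieces $\bar A^{\otimes(N-j-1)} \otimes \Sigma^{-1}\mathrm{cofib}(A \to B) \otimes \bar B^{\otimes j}$, one computes that the cofiber of the tower comparison $\bar A^{\otimes N} \otimes X \to \bar B^{\otimes N} \otimes X$ enjoys a connectivity bound growing linearly in $N$. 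This produces a basic one-step trading lemma: if $\alpha \in \pi_i(X)$ has $B$-Adams filtration $\geq N$ and $i$ lies sufficiently far below the above connectivity threshold, then $\alpha$ automatically has $A$-Adams filtration $\geq N$.

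A naive application of the trading lemma only gives a bound on $f_B(k)$ of slope $1$; to obtain the sharper slope $1/m$, the idea---whose rigorous execution is deferred to \Cref{app:blocks}---is to iterate the trading in carefully sized \emph{blocks}. Given $\alpha \in \pi_i(X)$ of $B$-Adams filtration $\geq T$, one successively peels off blocks of roughly $m$ layers from the $B$-Adams tower, each block buying enough additional connectivity to reapply the one-step trading lemma and extract one more unit of $A$-Adams filtration. After $\lfloor (k + f_A(k) - 1)/m \rfloor$ successful block exchanges, the accumulated $A$-Adams filtration meets the threshold $f_A(k)$, and the class is forced to vanish by the definition of $f_A$. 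The second inequality in the statement then follows from the elementary estimate $\lfloor x \rfloor \leq x$ applied to the first.

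The main obstacle lies in the block iteration itself: one must give a precise block-trading lemma valid under the mild hypotheses of \Cref{cnv:desc-ring}, and must orchestrate the induction so that every intermediate lift is achievable within the shrinking degree range dictated by the cofiber connectivity. The floor function in the bound is precisely the count of block exchanges available before this range is exhausted, so an inefficient accounting would degrade the slope. This careful bookkeeping is the content of \Cref{app:blocks} and represents the essential technical novelty underlying the comparison theorem.
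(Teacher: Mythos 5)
Your treatment of parts (1) and (2) is correct and matches the paper's: the ring map $i \colon A \to B$ induces a map of canonical Adams towers, so a class of $A$-Adams filtration $\geq N$ automatically has $B$-Adams filtration $\geq N$, which immediately gives $g_A(k) \leq g_B(k)$ and $f_A(k) \leq f_B(k)$.

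For part (3), the route you sketch differs from the paper's, and it has two gaps that I do not see how to close.

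\emph{The connectivity estimate is wrong under the stated hypotheses.} You assert that the cofiber of $\bar A^{\otimes N} \otimes X \to \bar B^{\otimes N} \otimes X$ is, by your filtration, connective in a degree growing linearly in $N$. But the associated graded pieces $\bar A^{\otimes (N-j-1)} \otimes \Sigma^{-1}\mathrm{cofib}(A \to B) \otimes \bar B^{\otimes j}$ are only $(m-1)$-connective: \Cref{cnv:desc-ring} forces $\pi_0(A) \cong \pi_0(\mathbb{S}) \cong \mathbb{Z}_{(p)}$ and hence $\bar A, \bar B$ are $0$-connective, but $\pi_0(\bar A) = \mathrm{coker}(\pi_1(\mathbb{S}) \to \pi_1(A))$ need not vanish, and tensoring with a merely $0$-connective spectrum does not raise connectivity. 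So the cofiber of the comparison is $(m-1)$-connective \emph{independently of $N$}. Linear growth would require $\bar A$ and $\bar B$ to be at least $1$-connective, which is true for $\BP$ and $bo$ but is not part of the hypotheses of the theorem. Without linear growth, your ``one-step trading lemma'' only applies in a range of topological degrees bounded by a constant, and no amount of iteration gets you out.

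\emph{The block iteration is neither supplied nor what \Cref{app:blocks} does.} You defer the entire $1/m$-slope improvement to a ``block-trading'' argument, but the cited section contains a quite different construction. The paper first re-expresses $f_R$ via \Cref{prop:f-interpret-II}(4): $f_R(k) \leq n$ iff $\tau_{<k}\mathbb{S}^0$ is a retract of a spectrum admitting a length-$n$ resolution by connective $R$-modules. The key observation is then \Cref{lemm:res-by-B-mod}: any $A$-module concentrated in a Postnikov window $[a,a+m)$ is automatically a $B$-module, because $\tau_{<m}(B \otimes_A M) \simeq \tau_{<m}(A \otimes_A M) \simeq M$. Starting from a length-$f_A(k)$ resolution by $A$-modules, the paper chops each $A$-module into width-$m$ Postnikov slices and re-threads them into a resolution by $B$-modules via the ``staircase'' of \Cref{lemm:new-resolution}. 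That staircase is the whole point: the naive chopping (``Strategy 1'' in the text, the analogue of what a single application of your trading lemma per $A$-layer would give) yields a resolution of length roughly $f_A(k)\cdot k/m$, which is \emph{superlinear} in $k$ whenever $f_A(k) \to \infty$, whereas the staircase merges slices from distinct $A$-modules whose degree windows are disjoint, producing the additive bound $f_A(k) + \lfloor (k + f_A(k)-1)/m \rfloor$ of the theorem. The peel-off-$m$-layers-for-one-unit accounting you describe does not reproduce this arithmetic, and I do not see how to recover the merging step in the Adams-tower picture.
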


\begin{rmk}
  In \cite{BBBCX2}, Conjecture 9.4.2 asks whether there is a finite-page vanishing line of slope $\frac{1}{13}$ in the $\mathrm{tmf}$-Adams spectral sequence for a particular spectrum. We can provide the following evidence in favor of this conjecture:
  The map
  $$ \mathrm{tmf} \to \mathrm{tmf}_1(3) = \BP\langle 2 \rangle $$
  allows us to apply \Cref{thm:AB-vl}(2), \Cref{thm:app-main}(1) and the Nilpotence theorem in order to conclude that
  $$ f_{\mathrm{tmf}}(k) \leq f_{\BP\langle 2 \rangle}(k) \leq \frac{1}{14}k + o(k) $$ 
  Note that the bound on $f_{\mathrm{tmf}}$ is not guaranteed to appear at any finite page. \todo{Perhaps note the consequence for the $v_2 ^2$-elements in the telescope?}
\end{rmk}

The first two statements of \ref{thm:AB-vl} follow easily from the fact that $i:A \to B$ induces a map of canonical Adams resolutions. The proof of the third statement will occupy most of Sections \ref{app:ABvl} and \ref{app:blocks}. In this proof we will rely on an alternative interpretation of $f_R$ from \cite{Akhil}. In order to recall this interpretation we begin by defining a natural filtration on the thick $\otimes$-ideal generated by $R$.

\begin{dfn}
  Given a set of spectra, $S$, the thick $\otimes$-ideal generated by $S$ consists of the smallest collection of spectra, $\text{Thick}^{\otimes}(S)$, closed under finite (co)limits and retracts, such that $X \otimes s \in \text{Thick}^{\otimes}(S)$ for all $s \in S$. We equip $\text{Thick}^{\otimes}(S)$ with the following filtration:
  \begin{itemize}
  \item $\text{Thick}^{\otimes}(S)_0 = \{0\}$,
  \item $\text{Thick}^{\otimes}(S)_1$ consists of retracts of spectra of the form $X \otimes s$ where $s \in S$,
  \item $\text{Thick}^{\otimes}(S)_n$ consists of retracts of extensions of objects of $\text{Thick}^{\otimes}(S)_{n-1}$ by objects of $\text{Thick}^{\otimes}(S)_1$.
  \end{itemize}
  We will only make use of this definition in the case where $S = \{R\}$.
\end{dfn}

\begin{rmk}
  For any $R$-module $M$ the unit map $M \to R \otimes M$ and the action map $R \otimes M \to M$ exhibit $M$ as a retract of $R \otimes M$, therefore $M \in \text{Thick}^\otimes(R)_1$.
\end{rmk}

The function $f_R$ can then be interpreted in terms of this filtration. 

\newcounter{fint}
\begin{prop}[{\cite[Definitions 2.28 and 3.26, and Proposition 3.28]{Akhil}}]
  \label{prop:f-interpret-I}\ 
  
  Let $I := \mathrm{fib}(\Ss \to R)$, then the following are equivalent:
  \begin{enumerate}
  \item $f_R(k) \leq n$,
  \item $\tau_{<k}\Ss^0 \in \textup{Thick}^{\otimes}(R)_n$,
  \item the map $I^{\otimes n} \to \Ss^0$ becomes null after tensoring with $\tau_{<k}\Ss^0$.
    \setcounter{fint}{\value{enumi}}    
  \end{enumerate}
\end{prop}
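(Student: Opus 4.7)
The plan is to establish the cyclic chain $(2) \Rightarrow (3) \Rightarrow (1) \Rightarrow (2)$, where the three implications are proved by three different mechanisms.

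For $(2) \Rightarrow (3)$, I would prove the stronger statement that for any spectrum $Y \in \textup{Thick}^{\otimes}(R)_n$, the map $Y \otimes I^{\otimes n} \to Y$ is null; specializing to $Y = \tau_{<k}\Ss^0$ then gives $(3)$. The argument proceeds by induction on $n$. The base case $n = 1$ uses that the multiplication $\mu \colon R \otimes R \to R$ retracts the unit $\eta \otimes R \colon R \to R \otimes R$, so the fiber sequence $R \otimes I \to R \to R \otimes R$ is split and the map $R \otimes I \to R$ is null; smashing with any $X$ extends this to arbitrary $R$-modules, and retracts are handled automatically. The inductive step uses the definition of $\textup{Thick}^{\otimes}(R)_n$ as retracts of extensions of $\textup{Thick}^{\otimes}(R)_{n-1}$ by $\textup{Thick}^{\otimes}(R)_1$, together with a standard octahedral-axiom diagram chase to factor $Y \otimes I^{\otimes n} \to Y$ through a composite of two null maps.

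The implication $(3) \Rightarrow (1)$ is formal. Let $X$ be a connective $p$-local spectrum, $i < k$, and $\alpha \in \pi_i X$ a class of $R$-Adams filtration at least $n$, so that $\alpha$ lies in the image of $\pi_i(X \otimes I^{\otimes n}) \to \pi_i X$. Since $X$ is connective, the fiber $X \otimes \tau_{\geq k}\Ss^0$ of the map $X \to X \otimes \tau_{<k}\Ss^0$ is $k$-connective, so this map is an isomorphism on $\pi_i$ for $i < k$. Hence $\alpha$ may be computed from the composite $X \otimes I^{\otimes n} \otimes \tau_{<k}\Ss^0 \to X \otimes \tau_{<k}\Ss^0$, which is the null map of $(3)$ smashed with $X$; so $\alpha = 0$.

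For $(1) \Rightarrow (2)$, the cofiber sequence $I^{\otimes n} \to \Ss^0 \to R^{(n)}$ defines $R^{(n)} \in \textup{Thick}^{\otimes}(R)_n$, since $R^{(n)}$ inherits a natural $n$-step filtration with associated graded given by the $R$-modules $\Sigma^j I^{\otimes (j-1)} \otimes R$. Tensoring with $\tau_{<k}\Ss^0$, it suffices to show the resulting map $\phi \colon I^{\otimes n} \otimes \tau_{<k}\Ss^0 \to \tau_{<k}\Ss^0$ is null, since then $\tau_{<k}\Ss^0$ splits off as a direct summand of $R^{(n)} \otimes \tau_{<k}\Ss^0 \in \textup{Thick}^{\otimes}(R)_n$. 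Applying $(1)$ to $X = \tau_{<k}\Ss^0$ shows that $\phi$ vanishes on every homotopy group. To upgrade pointwise vanishing to genuine nullity, I would climb the Postnikov tower of the target inductively: at each stage the remaining obstruction lies in a group $[I^{\otimes n} \otimes \tau_{<k}\Ss^0, \Sigma^i \mathrm{H}\pi_i(\Ss^0)]$, which can be analyzed by applying $(1)$ to the connective Moore-type test spectra detecting Bockstein obstructions, exploiting that the source $I^{\otimes n} \otimes \tau_{<k}\Ss^0$ is itself connective.

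The hard part is this final Postnikov-climbing step: zero on $\pi_*$ does not in general imply nullity between bounded-above spectra, as shown by Bockstein-type maps such as $\Sq^1 \colon \HFt \to \Sigma \HFt$. The extra leverage comes from $(1)$ being a statement about \emph{all} connective test spectra rather than only spheres, which permits annihilation of the obstruction inductively at every Postnikov stage; carrying this out carefully is the technical core of Mathew's Proposition 3.28.
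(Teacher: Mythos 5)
The paper does not prove this proposition: it is cited verbatim from Mathew's \emph{Examples of descent up to nilpotence} (\cite{Akhil}), so there is no in-paper argument to compare against. Reviewing your attempt on its own merits: the cycle $(2) \Rightarrow (3) \Rightarrow (1) \Rightarrow (2)$ is a reasonable decomposition, and the first two legs are essentially correct. For $(2) \Rightarrow (3)$, the base case is right --- in the cofiber sequence $R \otimes I \to R \to R \otimes R$ the second map is split injective via $\mu$, hence the first is null --- and the inductive step goes through by factoring $I^{\otimes(a+b)} \to \Ss$ through $I^{\otimes a}$ and $I^{\otimes b}$ and chasing the resulting commutative squares, as you indicate. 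The implication $(3) \Rightarrow (1)$ is the formal connectivity argument you give: since $X$ is connective the fiber $X \otimes \tau_{\geq k}\Ss$ is $k$-connective, so $\pi_i(X) \cong \pi_i(X \otimes \tau_{<k}\Ss)$ for $i<k$, and smashing the null map of $(3)$ with $X$ kills the image of filtration $\geq n$. (Minor slip: the associated graded of $\Ss/I^{\otimes n}$ consists of $I^{\otimes j} \otimes R$ for $0 \leq j \leq n-1$, not $\Sigma^j I^{\otimes(j-1)} \otimes R$, but this does not affect anything downstream.)

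The genuine gap is in $(1) \Rightarrow (2)$, which you correctly reduce to $(1) \Rightarrow (3)$: showing that pointwise vanishing of $\pi_*$ of the map $\phi \colon \tau_{<k}\Ss \otimes I^{\otimes n} \to \tau_{<k}\Ss$ forces $\phi$ itself to be null. You identify the right difficulty --- zero on homotopy groups does not give nullhomotopy for maps between bounded spectra, as Bockstein-type phenomena show --- but the sketch you offer does not close it. The Postnikov obstruction at stage $j$ lives in $H^j(\Phi;\pi_j\Ss)$ where $\Phi = \tau_{<k}\Ss\otimes I^{\otimes n}$, and this group has both a $\mathrm{Hom}(H_j\Phi,\pi_j\Ss)$ part and an $\mathrm{Ext}(H_{j-1}\Phi,\pi_j\Ss)$ part; even after applying $(1)$ to auxiliary connective $X$ (say $X = \tau_{<k}\Ss \otimes \mathrm{H}\Z$ to control $H_*\Phi \to H_*A$, or Moore spectra for the torsion pieces), knowing that various induced maps on homotopy and homology vanish does not by itself pin down the actual obstruction class attached to your chosen lift $\phi_j$, which is not simply $H^j(\phi_j)$. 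You acknowledge as much by saying the completion of this step is ``the technical core of Mathew's Proposition 3.28,'' which is an admission that the argument is not finished; as written there is no reason the inductive lifting cannot stall. To make this a proof you would need to either carry out the obstruction-theoretic bookkeeping in full --- tracking how the indeterminacy in the choice of lift can be used to absorb the $\mathrm{Ext}$ terms --- or replace this step with a structurally different argument (for example, one exploiting convergence of the $R$-Adams tower of $\tau_{<k}\Ss$ together with the fact that $\tau_{<k}\Ss\otimes I^{\otimes m}$ is $m$-connective, so that the tower is pro-trivial and the $m=n$ stage can be compared with the unconditionally-null high stages). As it stands, the cycle of implications is open.
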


Sadly, none of these conditions are particularly convenient for the proof we have in mind. In order to remedy this we introduce two further equivalent conditions:

\begin{prop}
  \label{prop:f-interpret-II}
  The list of equivalent conditions from \Cref{prop:f-interpret-I} can be extended to include:
  \begin{enumerate}
    \setcounter{enumi}{\value{fint}}
  \item $\tau_{<k}\Ss^0$ is a retract of an object which has a length $n$ resolution by connective $R$-modules.
  \item $\tau_{<k}\Ss^0$ is a retract of an object which has a length $n$ resolution by connective induced $R$-modules.
  \end{enumerate}
\end{prop}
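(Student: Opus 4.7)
The plan is to establish the cycle of implications $(3) \Rightarrow (5) \Rightarrow (4) \Rightarrow (2)$, combined with the previously proved equivalence of $(1)$, $(2)$, $(3)$. The two easy implications are essentially formal. For $(5) \Rightarrow (4)$, every induced $R$-module is in particular an $R$-module. For $(4) \Rightarrow (2)$, every connective $R$-module $M$ lies in $\mathrm{Thick}^\otimes(R)_1$ because the unit $M \to R \otimes M$ and the action $R \otimes M \to M$ exhibit $M$ as a retract of $R \otimes M$; an iterated extension of $n$ such objects therefore lands in $\mathrm{Thick}^\otimes(R)_n$, as does any retract thereof.

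The content lies in the implication $(3) \Rightarrow (5)$. The strategy is to construct an explicit length-$n$ resolution of the spectrum $\tau_{<k}\mathbb{S}^0 \otimes \mathbb{S}^0/I^{\otimes n}$, and then to observe that condition $(3)$ forces $\tau_{<k}\mathbb{S}^0$ to be a retract of this spectrum. First I would resolve $\mathbb{S}^0/I^{\otimes n}$ itself. Applying the octahedral axiom to the composite $I^{\otimes i+1} \to I^{\otimes i} \to \mathbb{S}^0$ and using the defining cofiber sequence $I \to \mathbb{S}^0 \to R$ to identify $\mathrm{cof}(I^{\otimes i+1} \to I^{\otimes i})$ with $R \otimes I^{\otimes i}$ yields cofiber sequences
$$R \otimes I^{\otimes i} \to \mathbb{S}^0/I^{\otimes i+1} \to \mathbb{S}^0/I^{\otimes i}.$$
Iterating these gives $\mathbb{S}^0/I^{\otimes n}$ a length-$n$ filtration with successive quotients $R \otimes I^{\otimes i}$ for $0 \le i < n$. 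Since $\pi_0(\mathbb{S}^0) \to \pi_0(R)$ is an isomorphism, $I$ is $1$-connective, so each $R \otimes I^{\otimes i}$ is a connective induced $R$-module; tensoring with the connective spectrum $Y := \tau_{<k}\mathbb{S}^0$ preserves both connectivity and the property of being an induced $R$-module, so this filtration upgrades to a length-$n$ resolution of $Y \otimes \mathbb{S}^0/I^{\otimes n}$ of the required type.

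Finally, condition $(3)$ says exactly that the map $I^{\otimes n} \otimes Y \to Y$ is null, which is equivalent to saying that $Y \to Y \otimes \mathbb{S}^0/I^{\otimes n}$ admits a section; hence $Y$ is a retract of $Y \otimes \mathbb{S}^0/I^{\otimes n}$, verifying $(5)$. I do not expect any real obstacles here; the main delicacy is simply to keep careful track of connectivity of $I$, $R$, and $Y$ throughout, and to correctly identify the successive cofibers via the octahedral axiom, both of which are bookkeeping rather than serious technical difficulties.
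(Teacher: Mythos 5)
Your argument is correct and follows essentially the same route as the paper: a cycle $(3)\Rightarrow(5)\Rightarrow(4)\Rightarrow(2)$, with the substance in $(3)\Rightarrow(5)$ carried by the filtration of $\mathbb{S}^0/I^{\otimes n}$ with associated graded $R\otimes I^{\otimes i}$, tensored with $\tau_{<k}\mathbb{S}^0$, and the splitting of $\tau_{<k}\mathbb{S}^0 \otimes \mathbb{S}^0/I^{\otimes n}$ forced by the nullhomotopy of $I^{\otimes n}\otimes\tau_{<k}\mathbb{S}^0 \to \tau_{<k}\mathbb{S}^0$ from condition~(3). One wording nit: what you want is a \emph{retraction} of $Y \to Y\otimes\mathbb{S}^0/I^{\otimes n}$ (not a section), equivalently the splitting $Y\otimes\mathbb{S}^0/I^{\otimes n}\simeq Y\oplus \Sigma(I^{\otimes n}\otimes Y)$.
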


Before proving \Cref{prop:f-interpret-II} we set up some notation and conventions for manipulating finite resolutions of spectra.

\begin{dfn}
  A length $N$ resolution of a spectrum $X_0$ will consist of a diagram
  \begin{center}
    \begin{tikzcd}
        F_{N-1} \ar[r] & X_{N-2} \ar[r] \ar[d] & \cdots \ar[r] & X_1 \ar[r] \ar[d] & X_0 \ar[d] \\
        & F_{N-2} & & F_1 & F_0
    \end{tikzcd}
  \end{center}
  such that each $X_{j+1} \to X_j \to F_j$ is a cofiber sequence with the convention that $X_{N-1} = F_{N-1}$.
  In this situation we will say that we have a resolution of $X_0$ by $F_{N-1},\dots,F_0$.
\end{dfn}

\begin{ntn}
  We will adopt the following compact notation
  $$ [F_N, \dots, F_1, F_0; X] $$
  to express a resolution of $X$ by $F_{N},\dots,F_0$.
  It is important to note that this notation suppresses much of the data of a resolution.
\end{ntn}

\begin{wrn}
  Sometimes we will write $[\dots, F_1, F_0;X]$ for a resolution. Although this suggests an infinite-length resolution, in this appendix all resolutions will be finite length and this will simply be used to avoid specifying the length of a resolution. 
\end{wrn}

\begin{rmk}
  In the length two case the notation $[A, B; X]$ simply refers to a cofiber sequence
  $$ A \to X \to B. $$
\end{rmk}

\begin{proof}[Proof of \Cref{prop:f-interpret-II}]\
  
  $(5) \Rightarrow (4)$, clear.
  
  $(4) \Rightarrow (2)$,
  As remarked above every $R$-module is in $\text{Thick}^{\otimes}(R)_1$, therefore $\tau_{<k}\Ss^0$ is a retract of an $n$-fold extension of elements of $\text{Thick}^{\otimes}(R)_1$.
  
  $(3) \Rightarrow (5)$,
  Consider the following length $n+1$ resolution of $\Ss$,
  \begin{center}
    \begin{tikzcd}
      I^{\otimes n} \ar[r] &
       I^{\otimes (n-1)} \ar[r] \ar[d] &
      \cdots \ar[r] &
       I \ar[r] \ar[d] &
      \Ss \ar[d] \\
      &  R \otimes I^{\otimes (n-1)} & &
      R \otimes I &
      R
    \end{tikzcd}
  \end{center}
  From it we can produce a length $n$ resolution of $\text{cof}(I^{\otimes n} \to \Ss)$,
  \begin{center}
    \begin{tikzcd}
      R \otimes I^{\otimes (n-1)} \ar[r] &
      I^{\otimes (n-2)}/I^{\otimes n} \ar[r] \ar[d] &
      \cdots \ar[r] &
       I / I^{\otimes n} \ar[r] \ar[d] &
      \Ss / I^{\otimes n} \ar[d] \\
      &  R \otimes I^{\otimes (n-2)} & &
      R \otimes I &
      R
    \end{tikzcd}
  \end{center}
  Upon tensoring with $\tau_{<k}\Ss^0$ we obtain a length $n$ resolution of $(\tau_{<k}\Ss^0) \otimes (\Ss/I^{\otimes n})$ by connective, induced $R$-modules. Finally, by hypothesis,
  $$ (\tau_{<k}\Ss^0) \otimes (\Ss/I^{\otimes n}) \simeq (\tau_{<k}\Ss^0) \oplus (\tau_{<k}\Ss^0 \otimes \Sigma I^{\otimes n}). $$
\end{proof}

Using condition (4) we reduce the proof of the Theorem \ref{thm:AB-vl} to the following problem:
take a resolution of $X$ by $A$-modules and produce from it the shortest possible resolution of $X$ by $B$-modules. In order to provide a simple illustration of the methods we will use in the general case we first work the following example in detail.

\begin{qst*} Suppose that a spectrum $X$ sits in a cofiber sequence $C \to X \to D$ where $C,D$ are $\textup{BP}$-modules and $C,D,X \in \Sp_{[0,10]}$. What is the shortest resolution of $X$ by $\textup{BP}\langle 1 \rangle$-modules?
\end{qst*}

{\bf Strategy 1:}
We know that the map $\textup{BP} \to \textup{BP}\langle 1 \rangle$ is an equivalence after we apply $\tau_{<6}$, therefore any $\textup{BP}$ module in $\Sp_{[k,k+5]}$ is automatically a $\textup{BP}\langle 1 \rangle$ module.\footnote{A proof of this will appear in much greater generality in the next section} Knowing this trick we can break each of $C$ and $D$ into two $\textup{BP}\langle 1 \rangle$-modules and produce a new resolution of $X$ which uses 4 $\textup{BP}\langle 1 \rangle$-modules:
\begin{center}
  \begin{tikzcd}
    \tau_{[6,10]}C \ar[r] & C \ar[d] \ar[r] & F \ar[d] \ar[r] & X \ar[d] \\
    & \tau_{[0,5]}C & \tau_{[6,10]}D & \tau_{[0,5]}D
  \end{tikzcd}
\end{center}

This is a start, but it turns out we can do better.

{\bf Strategy 2:}
For our second approach we will start with a slightly modified version of the first resolution we produced:
\begin{center}
  \begin{tikzcd}
    \tau_{[5,10]}C \ar[r] & C \ar[d] \ar[r] & F \ar[d] \ar[r] & X \ar[d] \\
    & \tau_{[0,4]}C & \tau_{[6,10]}D & \tau_{[0,5]}D
  \end{tikzcd}
\end{center}
Now, we can expand this resolution into the diagram below where each square is cartesian.
\begin{center}
  \begin{tikzcd}
    \tau_{[5,10]}C \ar[r] \ar[d] &
    C \ar[r] \ar[d] &
    F \ar[r] \ar[d] &
    X \ar[d] \\
    0 \ar[r] &
    \tau_{[0,4]}C \ar[r] \ar[d] &
    G \ar[d] \ar[r] &
    H \ar[d] \\
    & 0 \ar[r] &
    \tau_{[6,10]}D \ar[r] \ar[d] &
    D \ar[d] \\
    & & 0 \ar[r] &
    \tau_{[0,5]}D 
  \end{tikzcd}
\end{center}

Notably the cofiber sequence which $G$ sits in is ``backwards''.
In fact, if we expand it a little bit
$$ \Sigma^{-1}\tau_{[6,10]}D \to \tau_{[0,4]}C \to G \to \tau_{[6,10]}D $$
we see that the attaching map must be zero for connectivity reasons and therefore $G \simeq \tau_{[0,4]}C \oplus \tau_{[6,10]}D$.
As a direct sum of $\textup{BP}\langle 1 \rangle$-modules this is in fact a $\textup{BP}\langle 1 \rangle $-module as well. Thus, we can produce the following length $3$ resolution of $X$
\begin{center}
  \begin{tikzcd}
    \tau_{[5,10]}C \ar[r] & F \ar[d] \ar[r] & X \ar[d] \\
    & \tau_{[0,4]}C \oplus \tau_{[6,10]}D & \tau_{[0,5]}D
  \end{tikzcd}
\end{center}


Both strategies had four main steps:
\begin{enumerate}
\item start with a resolution by bounded $A$-modules,
\item construct a new resolution from the given one,
\item count the length of the new resolution.
\item show that the new resolution is in fact a resolution by $B$-modules,
\end{enumerate}
In the proof of Theorem \ref{thm:AB-vl} each of these elements will be replaced by a lemma (whose proofs we defer to the next section).

\begin{lem}
  \label{lemm:resolution-prep}
  Given a resolution $[F_N,\dots,F_0; X]$ such that each $F_j$ is an $A$-module and all the $F_j$ are connective, there exists another resolution $[F_N',\dots,F_0'; \tau_{\leq M-1}X]$ such that each $F_j'$ is an $A$-module in $\Sp_{[0,M]}$.
\end{lem}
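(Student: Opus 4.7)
The plan is to proceed by induction on the length $N + 1$ of the resolution. For the base case, when $X \simeq F_{0}$ is itself a connective $A$-module, the truncation $\tau_{\leq M-1} F_{0}$ is an $A$-module lying in $\Sp_{[0, M-1]} \subset \Sp_{[0, M]}$, which furnishes a length-one resolution of $\tau_{\leq M-1} X$.

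For the inductive step, I begin with the initial cofiber sequence $X_{1} \to X \to F_{0}$ coming from the hypothesized resolution, and note that $[F_{N}, \ldots, F_{1}; X_{1}]$ is a shorter resolution of $X_{1}$ by connective $A$-modules. By the inductive hypothesis, $\tau_{\leq M-1} X_{1}$ admits a resolution by $A$-modules in $\Sp_{[0, M]}$. I would then decompose $F_{0}$ via the cofiber sequence
\[
\tau_{\geq M+1} F_{0} \longrightarrow F_{0} \longrightarrow \tau_{\leq M} F_{0}
\]
and weave the top piece $\tau_{\leq M} F_{0} \in \Sp_{[0, M]}$ into the resolution. The essential point, visible already in Strategy 2 of the preceding example, is that the highly-connective portion $\tau_{\geq M+1} F_{0}$ only contributes to the homotopy of $\tau_{\leq M-1} X$ through its interaction with deeper layers of the filtration, and a careful application of the octahedral axiom allows one to exchange this contribution for a modification at a deeper level of the new resolution.

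The main obstacle is ensuring that the new cofibers produced in this exchange remain $A$-modules, rather than merely spectra with the correct homotopy. As illustrated by the splitting $G \simeq \tau_{[0, 4]} C \oplus \tau_{[6, 10]} D$ in Strategy 2, the exchange produces attaching maps between highly-connective and coconnective pieces whose degree gaps force them to split as direct sums; since direct sums of $A$-modules are $A$-modules, this is what simultaneously preserves the $A$-module structure and keeps the new modules within $\Sp_{[0, M]}$. Organizing this splitting phenomenon across the induction, using the connectivities of the intermediate objects $X_{j}$ read off from the original resolution, constitutes the technical core of the argument.
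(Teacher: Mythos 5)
There is a genuine gap: the mechanism you invoke to secure the $A$-module structure on the new cofibers is the wrong one for this lemma. The splitting trick you cite—high connectivity of one piece against low coconnectivity of another forcing a degree gap and hence a vanishing attaching map—is precisely what drives \Cref{lemm:new-resolution} (this is the content of the Splitting Lemma in the paper). It does not apply here. If you set $F_0' := \cofib\bigl(\tau_{\leq M-1}X_1 \to \tau_{\leq M-1}X\bigr)$, then $F_0'$ is \emph{not} a direct sum, and it is also not $\tau_{\leq M}F_0$: tracing the long exact sequences shows that $\tau_{\leq M-1}F_0' \simeq \tau_{\leq M-1}F_0$ while $\pi_M(F_0')$ is a proper quotient of $\pi_M(F_0)$ (namely $\pi_M(F_0)/\mathrm{im}(\pi_M X)$). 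So $F_0'$ sits strictly between $\tau_{\leq M}F_0$ and $\tau_{\leq M-1}F_0$, and there is no degree gap available to split anything. Your plan therefore never establishes that the new cofibers carry $A$-module structures.

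The missing ingredient is a \emph{uniqueness of intermediate truncations} argument. The paper shows that each new cofiber $F_j'$ admits a factorization $\tau_{\leq M}F_j \to F_j' \to \tau_{\leq M-1}F_j$ with $F_j'\in\Sp^{\leq M}$ which becomes $\pi_M F_j \twoheadrightarrow Q \to 0$ on $\pi_M$; by \cite[Proposition 1.3.15]{BBD}, such an object is unique in any left- and right-complete $t$-category. Running this construction inside $\mathrm{LMod}_A$ (using that the $t$-structure on $\mathrm{LMod}_A$ is computed on underlying spectra, \Cref{lemm:truncate-modules}) produces an $A$-module, and uniqueness identifies its underlying spectrum with $F_j'$. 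Incidentally, the paper also does not run an induction on $N$: it simply applies $\tau_{\leq M-1}$ to every stage $X_j$ simultaneously and takes the resulting cofibers as the $F_j'$, which is cleaner than peeling off one layer at a time. Your base case is fine, but the inductive step needs the BBD-type argument, not splitting.
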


\begin{lem}
  \label{lemm:new-resolution}
  Given a resolution $[F_N, \dots, F_0; X]$ where each $F_j$ lives in $\Sp_{[0,K]}$
  we can construct another resolution
  $$ \left[ \dots, \left( \bigoplus_{0 \leq i \leq j} \tau_{[(j-i)m-i,(j-i+1)m-i)}F_i \right) , \dots, \left( \tau_{[-1,m-1)}F_1  \oplus  \tau_{[m,2m)}F_0 \right) , \left( \tau_{[0,m)}F_0 \right) ; X \right] $$
\end{lem}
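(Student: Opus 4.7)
The plan is to construct the new filtration of $X$ by systematically refining the original resolution along a ``staircase'' of Postnikov truncations, where successive layers of the new filtration cut the Postnikov tower of each $F_i$ into windows of length $m$, with the windows shifted down by one degree for each unit increase of $i$. Recall that a resolution $[F_N,\dots,F_0;X]$ is equivalent to a descending filtration $X=X_0\supseteq X_1\supseteq\cdots\supseteq X_{N+1}=0$ with $X_j/X_{j+1}\simeq F_j$. Writing $P_i^{(k)} := \tau_{[km-i,(k+1)m-i)}F_i$, the claimed $j$th layer is $G_j=\bigoplus_{i+k=j}P_i^{(k)}$. The task is therefore to produce a descending filtration $X=Y_0\supseteq Y_1\supseteq\cdots$ whose successive cofibers are these $G_j$.

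I would proceed by induction on $N$. The base case $N=0$ is just $X=F_0$, and the desired filtration is the Postnikov tower of $F_0$ sliced into windows of length $m$, giving $G_j=\tau_{[jm,(j+1)m)}F_0$. For the induction step, the cofiber sequence $X_1\to X\to F_0$, combined with an inductively given refined filtration of $X_1$, produces a refinement of the filtration of $X$ whose associated graded pieces are the individual $P_i^{(k)}$. What remains is to show that those components $P_i^{(k)}$ with $i+k=j$ can be collected into a single layer of a filtration whose successive cofiber is the direct sum $G_j$.

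The heart of the argument will be this collection step, which rests on a connectivity computation. For $i<i'$ with $i+k=i'+k'=j$, the piece $P_i^{(k)}$ lies in $\Sp_{[(j-i)m-i,\,(j-i+1)m-i-1]}$, which sits strictly above the range of $P_{i'}^{(k')}$, namely $\Sp_{[(j-i')m-i',\,(j-i'+1)m-i'-1]}$, with a gap of $(i'-i-1)m+(i'-i)\geq 1$. It follows that the mapping spectrum $[\Sigma^{-1}P_i^{(k)},\,P_{i'}^{(k')}]$ vanishes, so every cofiber sequence internal to diagonal level $j$ splits; iterating this splitting over all pairs with $i+k=j$ assembles the components into the desired wedge $G_j$.

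The main obstacle I expect is the diagram bookkeeping required to produce the iteratively refined filtration and to verify that its reorganization along the diagonals $i+k=j$ is compatible with the ambient cofiber sequence $X_1\to X\to F_0$ used in the induction. A clean way to handle this will be to introduce an auxiliary bifiltered object indexed by pairs $(i,k)$ with $0\le i\le N$ and $k\ge 0$, recovering the original resolution by totalizing along $i$ and recovering the new resolution by totalizing along $i+k$. Once this bifiltered framework is in place, the connectivity-based splitting of the previous paragraph reduces the remaining content of the lemma to a formal manipulation of cofiber sequences.
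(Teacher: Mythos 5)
Your proposal identifies the same mechanism the paper uses: the staggered Postnikov windows $P_i^{(k)} = \tau_{[km-i,(k+1)m-i)}F_i$ produce degree gaps of at least two between any two pieces on a common diagonal $i+k=j$, forcing the relevant boundary maps to vanish and allowing the fine filtration to be regrouped into layers $G_j = \bigoplus_{i+k=j}P_i^{(k)}$. This is precisely the content of the paper's Splitting Lemma, and your inductive structure (base case: a windowed Postnikov tower of $F_0$; inductive step: combine $X_1\to X\to F_0$ with a refined filtration of $X_1$ and regroup along diagonals) also matches the paper's.

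Where the proposal glosses over a genuine difficulty is the sentence asserting that the inductive hypothesis applied to $X_1$ ``produces a refinement of the filtration of $X$ whose associated graded pieces are the individual $P_i^{(k)}$.'' If the inductive hypothesis is applied verbatim to $[F_N,\dots,F_1;X_1]$, with $F_1$ now playing the role of $F_0$, the window it assigns to $F_i$ for $i\geq 1$ at diagonal $j$ is $\tau_{[(j-i)m-i+1,(j-i+1)m-i+1)}F_i$, which is one degree \emph{higher} than the desired $P_i^{(j-i)}$. That single unit is not cosmetic: with the unshifted windows the bottom of $\tau_{[jm,(j+1)m)}F_0$ sits only one degree above the top of the adjacent $F_1$ piece, so the boundary map $\tau_{[jm,(j+1)m)}F_0 \to \Sigma\tau_{[(j-1)m,jm)}F_1$ can be nonzero in the overlap degree $jm$, and the splitting you rely on fails. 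The paper addresses this by first forming $[\Sigma F_N,\dots,\Sigma F_1;\Sigma X_1]$, applying the inductive hypothesis there, and then desuspending; the suspension contributes exactly the extra $-1$ shift needed so that the windows assigned to $F_i$ come out as $P_i^{(k)}$. You already wrote down the correct shifted windows, so you clearly understand the target, but the induction must be set up so that it actually produces them --- whether by the paper's suspension trick, by strengthening the inductive hypothesis with a shift parameter, or by constructing your bifiltered object carefully enough that the diagonal shift is built in from the start. Once one of these is in place, the remainder is, as you say, a formal manipulation of cofiber sequences; the paper organizes it with the compression, slicing, insertion, swapping and appending constructions, and your bifiltered framing is a reasonable alternative.
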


\begin{rmk}
  Since \Cref{lemm:new-resolution} is more complicated than the others we pause here to note that in the case where $N=0$ this lemma just produces the $m$-speed Postnikov tower for $X$.
  In general, the lemma takes Postnikov-type towers for each $F_j$ and shuffles them together.
  Ultimately this is no more than a careful elaboration on the manipulations used in strategy 2 above.
\end{rmk}

\begin{lem}
  \label{lemm:length-count}
  The resolution produced by the Lemma \ref{lemm:new-resolution} has length,
  $$ 1 + N + \left\lfloor \frac{K + N}{m} \right\rfloor. $$
\end{lem}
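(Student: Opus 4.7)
The plan is purely combinatorial: we already know the shape of the resolution from \Cref{lemm:new-resolution}, so the length is one more than the largest index $j$ for which the $j$-th term
\[
G_j \;:=\; \bigoplus_{0 \le i \le j} \tau_{[(j-i)m-i,\,(j-i+1)m-i)}F_i
\]
is nonzero. I would first observe that since each $F_i \in \Sp_{[0,K]}$, truncation onto an interval lying entirely above $K$ is null. Concretely, the $i$-th summand of $G_j$ vanishes whenever $(j-i)m - i > K$, i.e.\ whenever
\[
j \;>\; i \;+\; \frac{K+i}{m}.
\]
Hence $G_j \ne 0$ requires that for \emph{some} index $0 \le i \le \min(j,N)$ we have $j \le i + (K+i)/m$.

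The right-hand side of this inequality, viewed as a function of $i$, is strictly increasing in $i$ (for $m > 0$), so it is maximized at $i = \min(j,N)$. If $j \ge N$, then this max is $N + (K+N)/m$, giving the sharp bound $j \le N + \lfloor(K+N)/m\rfloor$ (noting $N$ is an integer and the floor absorbs the fractional part). If $j < N$ the bound is certainly no obstruction. I would then verify the reverse: for $j = N + \lfloor (K+N)/m \rfloor$, the summand with $i = N$ lives in the interval $[(j-N)m - N,\, (j-N+1)m - N)$, and by construction of $\lfloor \cdot \rfloor$ the lower endpoint satisfies $(j-N)m - N \le K$, so this summand is nonzero (here we implicitly use that $F_N$ is actually nontrivial in the top portion of its range, which can be arranged by truncating the original resolution to remove trailing zero terms, or absorbed trivially if not).

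The maximal index being $j_{\max} = N + \lfloor (K+N)/m \rfloor$, the resolution has length
\[
j_{\max} + 1 \;=\; 1 + N + \left\lfloor \frac{K+N}{m} \right\rfloor,
\]
as claimed. The only mild obstacle I anticipate is ensuring that the ``backwards'' extensions constructed in \Cref{lemm:new-resolution} actually split into direct sums (as in Strategy 2 of the example), but that is handled inside the proof of \Cref{lemm:new-resolution} and need not be re-examined here; the present lemma is a clean bookkeeping statement about where the relevant truncations can land.
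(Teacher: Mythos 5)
Your proposal is correct and follows essentially the same route as the paper: identify the largest $j$ for which the truncation window $[(j-i)m-i,(j-i+1)m-i)$ still meets $[0,K]$, observe this is maximized at $i=N$, and solve $K \in [(j-N)m-N,(j-N+1)m-N)$ to get $j = N + \lfloor(K+N)/m\rfloor$. The only difference is that you spell out the monotonicity in $i$ and the sharpness caveat explicitly, whereas the paper's proof states the same facts more tersely.
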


\begin{lem}
  \label{lemm:res-by-B-mod}
  Given a ring map $A \to B$ which becomes an equivalence after applying $\tau_{<m}$ any $A$-module in $\Sp_{[a,a+m)}$ can be given the structure of a $B$-module.
\end{lem}

\begin{proof}[Proof of Theorem \ref{thm:AB-vl}]
  Let $N+1 = f_A(k)$, then by Proposition \ref{prop:f-interpret-II}(4) there exists a $Y$ such that
  \begin{enumerate}
  \item $\tau_{<k}\Ss^0$ is a retract of $Y$ and
  \item $Y$ has a resolution $[F_{N},\dots,F_0; Y]$ by connective $A$-modules.
  \end{enumerate}
  Next we apply Lemma \ref{lemm:resolution-prep} to obtain a resolution $[G_{N},\dots,G_0; \tau_{<k}Y]$ where each $G_j$ is an $A$-module in $\Sp_{[0,k]}$.

  At this point we apply Lemma \ref{lemm:new-resolution} to
  $ [G_{N},\dots,G_0; \tau_{<k}Y] $ with $K = k$, $m = m$ to obtain a new resolution
  $ [\dots,H_1,H_0; \tau_{<k}Y] $.
  Each of the $H_j$ is a direct sum of finitely many terms of the form $\tau_{[a,a+m)}G_i$.
  By Lemma \ref{lemm:res-by-B-mod} each of these terms is then a $B$-module,
  thus $H_j$ is a $B$-module as well.
  Finally, we note that $\tau_{<k}\Ss^0$ is a retract of $\tau_{<k}Y$, therefore by Proposition \ref{prop:f-interpret-II}(4) $f_B(k)$ is bounded by the length of the resolution we have produced and Lemma \ref{lemm:length-count} lets us conclude that
  $$ f_B(k) \leq 1 + N + \left\lfloor \frac{k + N}{m} \right\rfloor. $$
\end{proof}


\subsection{Comparing vanishing lines (continued)}
\label{app:blocks}\ 

In this subsection we prove the four lemmas used in the proof of \Cref{thm:AB-vl}.
Lemmas \ref{lemm:resolution-prep} and \ref{lemm:res-by-B-mod} follow from standard manipulations of Postnikov towers for $R$-modules. \Cref{lemm:new-resolution} requires the iterated application of several simple maneuvers that modify finite resolutions. After laying out the necessary constructions the proof is straightforward. 

\begin{lem}[{\cite[Proposition 7.1.1.13]{HA}}]
  \label{lemm:truncate-modules}\
  Let $U : \mathrm{LMod}_R \to \Sp$ denote the functor which sends a left $R$-module to its underlying spectrum. Let $\mathrm{LMod}_R^{\geq 0}$ (resp. $\mathrm{LMod}_R^{\leq 0}$) denote the full subcategory of $\mathrm{LMod}_R$ on those left $R$-modules whose underlying spectrum is connective (coconnective). Then, $\mathrm{LMod}_R^{\geq 0}$ and $\mathrm{LMod}_R^{\leq 0}$ determine an accessible $t$-structure on $\mathrm{LMod}_R$ such that
  \begin{enumerate}
  \item $U(\tau_{\geq 0}M) \simeq \tau_{\geq 0}U(M)$,
  \item $U(\tau_{\leq 0}M) \simeq \tau_{\leq 0}U(M)$ and
  \item the natural functor $\pi_0U : \mathrm{LMod}_R^\heartsuit \to \Sp_{(p)}^\heartsuit$ is an equivalence. \footnote{Recall that by convention $\pi_0R \cong \Z_{(p)}$.}
  \end{enumerate}
\end{lem}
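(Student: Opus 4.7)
The plan is to realize this as an instance of the general machinery of \cite[\S1.4.4 and \S7.1.1]{HA}. Concretely, I would show that the full subcategory $\mathrm{LMod}_R^{\geq 0}$ defined in the statement satisfies the hypotheses of \cite[Proposition 1.4.4.11]{HA}, namely that it is a presentable subcategory of $\mathrm{LMod}_R$ which is closed under colimits and extensions. Granting this, that proposition produces an accessible $t$-structure on $\mathrm{LMod}_R$ whose connective part is $\mathrm{LMod}_R^{\geq 0}$ and whose coconnective part is automatically the full subcategory of $M$ such that $\mathrm{Map}_{\mathrm{LMod}_R}(N, M)$ is discrete for every $N \in \mathrm{LMod}_R^{\geq 1}$. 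A separate check, carried out below, will identify this coconnective part with $\mathrm{LMod}_R^{\leq 0}$ as defined in the statement.

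To verify the three hypotheses, I would first observe that $U$ both preserves and detects colimits: this is standard for forgetful functors from module categories, and so closure of $\mathrm{LMod}_R^{\geq 0}$ under colimits is inherited from closure of $\Sp^{\geq 0}$ under colimits in $\Sp$. Closure under extensions is likewise inherited from $\Sp^{\geq 0}$, since cofiber sequences in $\mathrm{LMod}_R$ are computed on underlying spectra. Presentability is the only spot where connectivity of $R$ enters in an essential way: the set $\{R[n] : n \geq 0\}$ consists of compact generators of $\mathrm{LMod}_R^{\geq 0}$ (compact because $R$ is compact in $\mathrm{LMod}_R$, connective because $R$ is a connective spectrum by Convention \ref{cnv:desc-ring}), and this forces $\mathrm{LMod}_R^{\geq 0}$ to be the presentable subcategory generated by these objects under colimits. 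Properties (1) and (2) then follow by applying $U$ to the fiber sequence $\tau_{\geq 0} M \to M \to \tau_{\leq -1} M$ in $\mathrm{LMod}_R$ and invoking the uniqueness of the analogous decomposition in $\Sp$, provided we know that $\tau_{\leq -1} M$ has coconnective underlying spectrum. Property (3) is then a consequence: the heart consists of $R$-modules $M$ with $U(M) \in \Sp^\heartsuit$, and since the $R$-action factors through $\pi_0 R \cong \mathbb{Z}_{(p)}$ (which equals $\pi_0 \Ss_{(p)}$ in the $p$-local setting we work in), the forgetful functor induces an equivalence onto $\Sp^\heartsuit$.

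The main obstacle, and the point that requires more than a formal application of the $t$-structure recognition principle, is verifying the claim used above that $\tau_{\leq -1} M$ in $\mathrm{LMod}_R$ has coconnective underlying spectrum, or equivalently that the cellular construction of $\tau_{\geq 0}^R M$ does not alter nonnegative homotopy. I would handle this by the standard transfinite cell-attachment argument: build $\tau_{\geq 0}^R M$ from $M$ by iteratively coning off maps $R[n] \to M$ with $n < 0$ representing nonzero classes in $\pi_{<0} U(M)$. Each such attachment only modifies $U(M)$ in negative degrees, because the cone on $R[n] \to M$ differs from $M$ by a copy of $R[n+1]$ which is $(n+1)$-connective and hence, for $n < 0$, trivial in nonnegative degrees. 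Passing to the colimit of this process yields an object whose underlying spectrum is exactly $\tau_{\geq 0} U(M)$, closing the argument.
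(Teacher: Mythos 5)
The paper gives no proof of this lemma: the bracketed reference to Lurie's Proposition~7.1.1.13 in \emph{Higher Algebra} \emph{is} its justification, specialized via the standing convention $\pi_0 R \cong \mathbb{Z}_{(p)}$ (together with $p$-locality) so that item~(3) becomes an equivalence with $\Sp^\heartsuit$ rather than with the category of $\pi_0 R$-modules. Your high-level strategy --- verify the hypotheses of the $t$-structure recognition principle, HA~1.4.4.11, for $\mathrm{LMod}_R^{\geq 0}$, and then identify its coconnective complement --- is the same route Lurie takes, so there is no disagreement on the overall approach.

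The gap is in your final paragraph. You propose to show $U(\tau^R_{\geq 0}M)\simeq\tau_{\geq 0}U(M)$ by ``building $\tau^R_{\geq 0}M$ from $M$ by iteratively coning off maps $R[n]\to M$ with $n<0$,'' claiming that each such cone only changes $U(M)$ in negative degrees because the attached cell $R[n+1]$ ``is $(n+1)$-connective and hence, for $n<0$, trivial in nonnegative degrees.'' That claim is false: $(n+1)$-connectivity means $R[n+1]$ has homotopy in \emph{all} degrees $\geq n+1$, and for $n<0$ this includes degree $0$ and above (already for $n=-1$ the attached cell has its bottom homotopy in degree $0$). Concretely, with $R=\mathbb{S}$ and $M=\mathrm{H}\mathbb{Z}[-1]$, coning off the Hurewicz generator $\mathbb{S}[-1]\to M$ yields $\tau_{\geq 1}\mathbb{S}$, which has $\pi_1\cong\mathbb{Z}/2$, whereas $\pi_1 M=0$ and $\tau_{\geq 0}M\simeq 0$. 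Moreover, coning off produces a quotient of $M$, while the connective cover maps \emph{into} $M$, so the construction has the wrong variance entirely. The fix is to run the cell argument in the other direction: build $\tau^R_{\leq -1}M$ from $M$ by coning off maps $R[n]\to M$ with $n\geq 0$; each such attachment adds a cell $R[n+1]$ in degree $n+1\geq 1$, which does not touch negative homotopy, and the fiber of $M\to\tau^R_{\leq -1}M$ is then $\tau^R_{\geq 0}M$. Alternatively, avoid cell attachment entirely and use the generation statement you already established: $\mathrm{LMod}_R^{\geq 1}$ is generated under colimits by $R[1]$, and $\mathrm{Hom}_R(R[n],N)\simeq N[-n]$, so if $U(N)$ is coconnective then $\mathrm{Map}_{\mathrm{LMod}_R}(R[n],N)$ is contractible for every $n\geq 1$, hence $\mathrm{Map}_{\mathrm{LMod}_R}(P,N)$ is contractible for every $P\in\mathrm{LMod}_R^{\geq 1}$; this identifies the abstract coconnective part produced by HA~1.4.4.11 with $\mathrm{LMod}_R^{\leq 0}$ and gives (1) and (2) by uniqueness of the fiber sequence decomposition in $\Sp$.
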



\begin{proof}[Proof of Lemma \ref{lemm:res-by-B-mod}]
  It will suffice to prove the lemma in the case where $a=0$.
  We would like to show that the left $B$-module $$ \tau_{<m} ( B \otimes_A M) $$
  is equivalent to $M$. Consider the following diagram of spectra
  \begin{center}
    \begin{tikzcd}
      (\tau_{\geq m}B) \otimes_A M \ar[r] &
      B \otimes_A M \ar[r] &
      (\tau_{<m}B) \otimes_A M \ar[d, equal] \\
      (\tau_{\geq m}A) \otimes_A M \ar[r] &
      A \otimes_A M \ar[r] &
      (\tau_{<m}A) \otimes_A M
    \end{tikzcd}
  \end{center}
  where both rows are cofiber sequences.
  In order to produce a chain of equivalences
  $$ \tau_{<m}(B \otimes_A M) \simeq \tau_{<m}(\tau_{<m}B \otimes_A M) \simeq \tau_{<m}(\tau_{<m}A \otimes_A M) \simeq \tau_{<m}(A \otimes_A M) $$
  it will suffice to show that $(\tau_{\geq m}B) \otimes_A M$ and $(\tau_{\geq m}A) \otimes_A M$ are $m$-connective. This follows from the fact that a relative tensor product of connective modules over a connective ring is connective.
\end{proof}

\begin{proof}[Proof of Lemma \ref{lemm:resolution-prep}]
  We are given a resolution:
  \begin{center}
    \begin{tikzcd}
      F_N \ar[r] & X_{N-1} \ar[r] \ar[d] & \cdots \ar[r] & X_1 \ar[r] \ar[d] & X \ar[d] \\
    & F_{N-1} & & F_1 & F_0
    \end{tikzcd}
  \end{center}
  From this we construct the resolution:
  \begin{center}
    \begin{tikzcd}
      \tau_{\leq M-1}F_N \ar[r] &
      \tau_{\leq M-1}X_{N-1} \ar[r] \ar[d] &
      \cdots \ar[r] &
      \tau_{\leq M-1}X_1 \ar[r] \ar[d] &
      \tau_{\leq M-1}X \ar[d] \\
      & F_{N-1}' & &
      F_1' &
      F_0'
    \end{tikzcd}
  \end{center}

  In order to finish the proof we just need to analyze $F_j'$.
  For $j \neq N$, we can construct the following diagram of spectra where $Y$ and $Z$ are chosen so that each row is a cofiber sequence.
  \begin{center}
    \begin{tikzcd}
      \tau_{\leq M-1}X_{j+1} \ar[r] \ar[d, equal] &
      Y \ar[r] \ar[d] &
      \tau_{\leq M} F_j \ar[d] \ar[r] &
      \tau_{\leq M} \Sigma X_{j+1} \ar[d, equal] \ar[r] &
      \Sigma Y \ar[d] \\
      \tau_{\leq M-1}X_{j+1} \ar[r] \ar[d] &
      \tau_{\leq M-1}X_j \ar[r] \ar[d, equal] &
      F_j' \ar[d] \ar[r] &
      \tau_{\leq M} \Sigma X_{j+1} \ar[d] \ar[r] &
      \tau_{\leq M} \Sigma X_j \ar[d, equal] \\
      Z \ar[r] &
      \tau_{\leq M-1} X_j \ar[r] &
      \tau_{\leq M-1} F_j \ar[r] &
      \Sigma Z \ar[r] &
      \tau_{\leq M} \Sigma X_j
    \end{tikzcd}
  \end{center}
  On long exact sequences of homotopy groups this diagram becomes:
  \begin{center}
    \begin{tikzcd}
      \cdots \ar[r] &
      0 \ar[r] \ar[d] &
      A \ar[r] \ar[d] &
      \pi_M(F_j) \ar[d] \ar[r] &
      \pi_{M-1}(X_{j+1}) \ar[d] \ar[r] &
      \pi_{M-1}(X_j)  \ar[d] \ar[r] &
      \cdots \\
      \cdots \ar[r] &
      0 \ar[r] \ar[d] &
      0 \ar[r] \ar[d] &
      \pi_M(F_j') \ar[d] \ar[r] &
      \pi_{M-1}(X_{j+1}) \ar[d] \ar[r] &
      \pi_{M-1}(X_j) \ar[d] \ar[r] &
      \cdots \\
      \cdots \ar[r] &
      0 \ar[r] &
      0 \ar[r] &
      0 \ar[r] &
      B \ar[r] &
      \pi_{M-1}(X_j) \ar[r] &
      \cdots
    \end{tikzcd}
  \end{center}
  where $A, \pi_{M}(F_j')$ and $B$ are the kernels of the next map in the sequence.
  From this we can read off that the sequence 
  $$ \tau_{\leq M}F_j \to F_j' \to \tau_{\leq M -1}F_j $$
  becomes an equivalence after applying $\tau_{\leq M-1}$ and induces a surjection on $\pi_M$.
  In particular, this tells us that $F_j' \in \Sp^{[0,M]}$.
  What remains is to show that $F_j'$ is an $A$-module.

  In order to do this we recall \cite[Proposition 1.3.15]{BBDG}:
  Let $\mathcal{C}$ be a triangulated category equipped with a left and right complete $t$-structure.
  Suppose we are given an object $P \in \mathcal{C}$
  and a quotient map $\pi_MP \to Q$ in $\mathcal{C}^\heartsuit$.
  Then, there is a unique object $P'$ equipped with a factorization
  $$ \tau_{\leq M}P \to P' \to \tau_{\leq M-1}P $$
  such that $P' \in \mathcal{C}^{\leq M}$ and after applying $\pi_M$ this sequence becomes
  $$ \pi_M(P) \to Q \to 0. $$
  
  Using the existence part of this proposition with $\mathcal{C} = \mathrm{LMod}_R$
  we construct an $A$-module $P$.
  Using the properties of $U$ from \Cref{lemm:truncate-modules} we may apply the uniqueness assertion in the proposition to conclude that $UP \simeq F_j'$.
  
  In the $j=N$ case we have $F_N' := \tau_{\leq M-1}F_j$.
  This objects clearly lives in $\Sp_{[0,M]}$ and is an $A$-module by \Cref{lemm:truncate-modules}.
\end{proof}

Before proceeding with the proof of Lemma \ref{lemm:new-resolution} we introduce several basic constructions which we will need in order to efficiently manipulate resolutions.
The first pair of constructions (which are inverse to each other) codify the process of inserting a resolution into another resolution and extracting a piece of a resolution.

\begin{cnstr}[Compression]  
  Given a resolution
  $$ [\dots, F_j, \dots, F_{j-a}, \dots, F_0; X] $$
  we construct resolutions
  $$ [\dots, F_{j+1}, G, F_{j-a-1}, \dots, F_0; X]\ \ \ \text{ and }\ \ \ [F_j, \dots, F_{j-a}; G]. $$
\end{cnstr}

\begin{proof}
  The desired resolution is given by
  \begin{center}
    \begin{tikzcd}
      \cdots \ar[r] & X_{j+2} \ar[r] \ar[d] & X_{j+1} \ar[r] \ar[d] & X_{j-a} \ar[r] \ar[d] & X_{j-a-1} \ar[r] \ar[d] & \cdots \ar[r] & X \ar[d] \\
      & F_{j+2} & F_{j+1} & G & F_{j-a-1} & & F_0
    \end{tikzcd}
  \end{center}

  The resolution of $G$ is given by 
  \begin{center}
    \begin{tikzcd}
      F_j \ar[r] & X_{j-1}/X_{j+1} \ar[r] \ar[d] & X_{j-2}/X_{j+1} \ar[r] \ar[d] & \cdots \ar[r] & X_{j-a}/X_{j+1} =: G \ar[d] \\
      & F_{j-1} & F_{j-2} & & F_{j-a}
    \end{tikzcd}
  \end{center}
\end{proof}

\begin{cnstr}[Insertion]
  Given a resolution $ [F_n, \dots , F_0; X] $
  and another resolution $ [G_m, \dots, G_0; F_j] $
  we construct a third resolution
  $$ [F_n, \dots, F_{j+1}, G_m, \dots, G_0, F_{j-1}, \dots, F_0; X] $$
\end{cnstr}

\begin{proof}
  We will make our construction by induction on $m$. The $m=0$ case is trivial.

  In the $m=1$ case let $H$ denote the fiber of the composite
  $ X_j \to F_j \to G_0 $.
  Then, we obtain a natural maps $ X_{j+1} \to H \to X_j $
  and the desired resolution is given by
  \begin{center}
    \begin{tikzcd}
      \cdots \ar[r] & X_{j+1} \ar[r] \ar[d] & H \ar[r] \ar[d] & X_j \ar[r] \ar[d] & X_{j-1} \ar[d] \ar[r] & \cdots \ar[r] & X \ar[d] \\
      & F_{j+1} & G_1 & G_0 & F_{j-1} & & F_0 
    \end{tikzcd}
  \end{center}
  
  For the induction step we compress $[G_m, \dots, G_0; F_j]$ into a resolution $[G_m, \dots, G_2, K; F_j]$ insert this new resolution into the given one, then apply insertion again, this time with  $[G_1,G_0; K]$ instead, which finishes the construction.
\end{proof}

When appropriate connectivity hypotheses are satisfied we can use compression and insertion to swap the order of the terms in a resolution.

\begin{lem}[Splitting Lemma]
  When the compression construction is applied with $a=1$, if $F_{j-1}$ is $k$-connective and $F_j$ is $(k-2)$-coconnective, then $G \simeq F_j \oplus F_{j-1}$. 
\end{lem}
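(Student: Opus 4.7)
The plan is to read off from the compression construction the cofiber sequence that $G$ sits in, and then argue this sequence must split by a one-line connectivity estimate --- essentially the same estimate used informally in the worked example of Strategy 2 preceding the lemma.

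First, I would unpack the $a=1$ case of the compression construction. Its output includes the length-two resolution $[F_j, F_{j-1}; G]$, which by the paper's convention for two-term resolutions is a cofiber sequence
$$ F_j \longrightarrow G \longrightarrow F_{j-1}. $$
Rotating, this is classified by an attaching map $\delta \colon F_{j-1} \to \Sigma F_j$, and $G \simeq F_j \oplus F_{j-1}$ if and only if $\delta$ is null (equivalently, the alternative attaching map $\Sigma^{-1} F_{j-1} \to F_j$ is null, as in the worked example).

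Second, I would verify nullity of $\delta$ by a connectivity estimate. By hypothesis $F_{j-1} \in \Sp_{\geq k}$ and $F_j \in \Sp_{\leq k-2}$, so $\Sigma F_j \in \Sp_{\leq k-1}$. The subcategories $\Sp_{\geq k}$ and $\Sp_{\leq k-1}$ in the standard $t$-structure on spectra are mutually left-orthogonal, so $[F_{j-1}, \Sigma F_j] = 0$. Concretely, this vanishing can be seen by inducting up the Postnikov tower of $\Sigma F_j$: each stage contributes a group of the form $\mathrm{H}^m(F_{j-1}; \pi_m \Sigma F_j)$ with $m \leq k-1 < k$, and such cohomology vanishes since $F_{j-1}$ is $k$-connective.

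The only real thing to watch is bookkeeping: confirming that the compression construction labels its two pieces so that $F_j$ is the more coconnective piece and $F_{j-1}$ is the more connective piece, matching the worked example where $\tau_{[0,4]}C$ plays the role of $F_j$ and $\tau_{[6,10]}D$ plays the role of $F_{j-1}$ with $k = 6$. Once the indices are aligned, the splitting is immediate, so there is no serious obstacle.
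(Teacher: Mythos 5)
Your proof is correct and takes the same approach as the paper: the paper's entire proof is ``the attaching map in the cofiber sequence building $G$ must be $0$ for connectivity reasons,'' and you have simply supplied the details --- identifying the attaching map $F_{j-1}\to\Sigma F_j$ from the two-term resolution convention and invoking $t$-structure orthogonality of $\Sp_{\geq k}$ against $\Sp_{\leq k-1}$. Your index bookkeeping also matches the worked Strategy~2 example, so there is nothing to fix.
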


\begin{proof}
  The attaching map in the cofiber sequence building $G$ is 0 for connectivity reasons.
\end{proof}

\begin{cnstr}[Swapping]
  Given a resolution
  $$ [F_n, \dots F_j, A \oplus B, F_{j-1}, \dots, F_0; X] $$
  we can construct another resolution
  $$ [F_n, \dots, F_j, A, B, F_{j-1}, \dots, F_0; X] $$
  by inserting the resolution $[A,B; A \oplus B]$ into the given resolution.
\end{cnstr}

Finally, we have a second pair of inverse constructions where we slice off the leading object of a resolution or add a new leading term.

\begin{cnstr}[Slicing]
  Given a resolution $ [F_n, \dots, F_0; X] $
  we will construct another resolution
  $ [F_n, \dots, F_1; X_1] $
  where $X_1$ sits in a cofiber sequence $X_1 \to X \to F_0$.
\end{cnstr}

\begin{proof}
  The desired resolution is given by
  \begin{center}
    \begin{tikzcd}
      \cdots \ar[r] & X_2 \ar[r] \ar[d] & X_1 \ar[d] \\
      & F_2 & F_1
    \end{tikzcd}
  \end{center}
  together with the cofiber sequence
  $X_1 \to X \to F_0$.
\end{proof}

\begin{cnstr}[Appending]
  Given a resolution $ [F_n, \dots, F_0; X] $
  and a cofiber sequence $X \to Y \to A$ 
  we can construct another resolution
  $ [F_n, \dots, F_0, A; Y] $
\end{cnstr}

\begin{proof}
  The desired resolution is given by
  \begin{center}
    \begin{tikzcd}
      \cdots \ar[r] & X_2 \ar[r] \ar[d] & X_1 \ar[d] \ar[r] & X \ar[r] \ar[d] & Y \ar[d] \\
      & F_2 & F_1 & F_0 & A
    \end{tikzcd}
  \end{center}
\end{proof}

\begin{proof}[Proof of \Cref{lemm:new-resolution}]
  We will prove the proposition by induction on $N$.
  For the base case we replace the resolution $[X;X]$ with
  $$[\dots, \tau_{[2m,3m)}X, \tau_{[m,2m)}X, \tau_{[0,m)}X; X]$$
  which is a variant on the Postnikov resolution.
  For the induction step we start by slicing and suspending the given resolution to obtain a new resolution
  $$ [\Sigma F_N, \dots, \Sigma F_1; \Sigma X_1] $$
  Next we apply the $(N-1)$ case of this proposition to this resolution
  $$ \left[ \dots, \left( \bigoplus_{0 \leq i \leq j} \tau_{[(j-i)m-i,(j-i+1)m-i)}(\Sigma F_{i+1}) \right) , \dots , \left( \tau_{[0,m)} (\Sigma F_1) \right) ; \Sigma X_1 \right]. $$
  After desuspending this resolution and appending $X$ to the front we obtain a resolution,
  $$ \left[ \dots, \left( \bigoplus_{1 \leq i \leq j} \tau_{[(j-i)m-i,(j-i+1)m-i)} F_i \right) , \dots , \left( \tau_{[-1,m-1)} F_1 \right), F_0 ; X \right]. $$
  In order to simplify notation we will define
  $$ G_j := \left( \bigoplus_{1 \leq i \leq j} \tau_{[(j-i)m-i,(j-i+1)m-i)} F_i \right) $$
    
  Now we make two observations that will let us finish the proof:\\
  \noindent (1) We're trying to produce a resolution whose terms are $G_j \oplus \tau_{[jm,(j+1)m)}F_0$,\\
  \noindent (2) $G_j$ is $(jm-2)$-coconnective.

  Next, we insert the the same variant of the Postnikov resolution considered in the base case (this time applied to $F_0$). This produces a resolution,
  $$ [\dots, G_2, G_1, \dots, \tau_{[2m,3m)}F_0, \tau_{[m,2m)}F_0, \tau_{[0,m)}F_0; X]$$  
  We now apply the Splitting Lemma and the Swap Construction repeatedly in order to move the terms $\tau_{[am,(a+1)m)}F_0$ to the left until we saturate the coconnectivity from (2). This yields a resolution
  $$ [\dots, \tau_{[2m,3m)}F_0 \oplus G_2, \tau_{[m,2m)}F_0 \oplus G_1, \tau_{[0,m)}F_0; X], $$
  which completes the proof.
\end{proof}

\begin{proof}[Proof of Lemma \ref{lemm:length-count}]
  The last term in the resolution from Lemma \ref{lemm:new-resolution} which contains a truncation of $F_i$ as a summand has $j$ such that
  $$ K \in [(j-i)m-i, (j-i+1)m - i) $$
  Thus, there are $j+1$ terms in the resolution where $j$ is the integer such that
  $$ K \in [(j-N)m-N, (j-N+1)m - N) $$
 From this we may conclude that
 $$ j+1 = 1 + N + \left\lfloor \frac{K+N}{m} \right\rfloor $$
\end{proof}


\subsection{The proof of \Cref{thm:app-main}}
\label{app:reductions}\ 

For clarity of exposition we prove the various parts of \Cref{thm:app-main} as separate lemmas. Before proceeding we summarize each of these parts.
\begin{enumerate}
\item \Cref{thm:app-main}(1) is a direct corollary of \Cref{thm:AB-vl}.
\item \Cref{thm:app-main}(2) is a corollary of \Cref{thm:app-main}(1) and a bound on $\Gamma(k)$ in terms of $f_{\BP\langle 1\rangle}(k)$ due to Davis and Mahowald \cite{DM3} at $p=2$ and Gonz\'alez \cite{GonzalezRegular} at odd primes.
\item \Cref{thm:app-main}(3) is a corollary of the $n=1$ case of \Cref{thm:app-main}(1) combined with \Cref{cor:ANvl}.
\item \Cref{thm:app-main}(4) is proved in the same way as \Cref{thm:app-main}(2) except using \Cref{thm:app-main}(3) instead of \Cref{thm:app-main}(1). 
\end{enumerate}

\begin{cor}[\Cref{thm:app-main}(1)]
  \label{cor:bpn-vl}
  $$ f_{\textup{BP}\langle n \rangle}(k) \leq \frac{1}{|v_{n+1}|}k + \left(1 + \frac{1}{|v_{n+1}|}\right)f_{\textup{BP}}(k) - \frac{1}{|v_{n+1}|} = \frac{k}{|v_{n+1}|} + o(k) $$
\end{cor}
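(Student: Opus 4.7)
The plan is to apply Theorem \ref{thm:AB-vl}(3) directly to a standard ring map $\BP \to \BP\langle n \rangle$, so the work is entirely in verifying the hypotheses of that theorem.

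First I would fix a map of rings $i: \BP \to \BP\langle n \rangle$ in the sense of Convention \ref{cnv:desc-ring}. Both spectra are $p$-local, connective, have $\pi_0 \cong \Z_{(p)}$, and have finitely generated homotopy groups in each degree, so this fits into the framework of the appendix. The existence of such a map of $\mathbb{E}_1$-rings is standard; for example, one may use the construction of $\BP\langle n \rangle$ as a quotient of $\BP$ by the regular sequence $(v_{n+1}, v_{n+2}, \ldots)$.

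Next I would verify that $i$ becomes an equivalence after applying $\tau_{< |v_{n+1}|}$. The induced map on homotopy groups is the projection $\pi_*\BP \to \pi_*\BP / (v_{n+1}, v_{n+2}, \ldots) \cong \pi_*\BP\langle n \rangle$; since the lowest-degree generator killed is $v_{n+1}$, which lives in degree $|v_{n+1}| = 2(p^{n+1}-1)$, this map is an isomorphism in degrees $< |v_{n+1}|$. The map on connective covers $\tau_{<|v_{n+1}|} \BP \to \tau_{<|v_{n+1}|} \BP\langle n \rangle$ is thus an equivalence.

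With these hypotheses in hand, Theorem \ref{thm:AB-vl}(3) (applied with $A = \BP$, $B = \BP\langle n \rangle$, and $m = |v_{n+1}|$) gives
\[
f_{\BP\langle n \rangle}(k) \;\le\; f_{\BP}(k) + \left\lfloor \frac{k + f_{\BP}(k) - 1}{|v_{n+1}|} \right\rfloor \;\le\; \frac{1}{|v_{n+1}|}k + \left(1 + \frac{1}{|v_{n+1}|}\right) f_{\BP}(k) - \frac{1}{|v_{n+1}|},
\]
which is the desired inequality. The asymptotic estimate $f_{\BP\langle n\rangle}(k) = \tfrac{k}{|v_{n+1}|} + o(k)$ then follows from the Hopkins--Smith bound $f_{\BP}(k) = o(k)$ recorded earlier in Section \ref{app:prelim}. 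There is no genuine obstacle here: the corollary really is a direct consequence of Theorem \ref{thm:AB-vl}(3), with all the technical content packaged into that theorem and its supporting lemmas of Section \ref{app:blocks}.
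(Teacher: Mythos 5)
Your proposal is essentially the paper's own proof: both apply \Cref{thm:AB-vl}(3) to a ring map $\BP \to \BP\langle n\rangle$ with $m = |v_{n+1}|$, verify the truncation equivalence on the nose, and feed in $f_{\BP}(k) = o(k)$ for the asymptotic. The one place the paper is more careful is the existence of $\BP \to \BP\langle n\rangle$ as a map of $\mathbb{E}_1$-rings, which it justifies by citing Angeltveit's quotient construction together with the $\mathbb{E}_2$ (indeed $\mathbb{E}_4$) structure on $\BP$ due to Basterra--Mandell; your ``this is standard'' gloss elides exactly this wrinkle, which is not automatic for a quotient by a regular sequence, but the substance of the argument is the same.
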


\begin{proof}
    Apply \Cref{thm:AB-vl} to the map of $\mathbb{E}_1$-algebras $\BP \to \BP\langle n \rangle$, which exists by \cite[Corollary 3.2]{Angeltveit}. While the statement of \cite[Corollary 3.2]{Angeltveit} asks for $R$ to be an $\mathbb{E}_\infty$-ring spectrum, its proof only requires that $R$ be $\mathbb{E}_2$. The spectrum $\BP$ admits the structure of an $\mathbb{E}_4$-ring (and therefore $\mathbb{E}_2$-ring) by \cite{BasterraMandell}.
\end{proof}

This corollary used only very coarse information about $\BP\langle n \rangle$.
In fact, the same conclusions hold with $\BP\langle n \rangle$ replaced by $\tau_{<|v_{n+1}|}\BP$.
We believe that the actual vanishing curve for $\BP\langle n \rangle$ has only a constant ``error term''. As such, we make the following conjecture:

\begin{cnj}
  $$ f_{\textup{BP}\langle n \rangle}(k) = \frac{k}{|v_{n+1}|} + O(1). $$
\end{cnj}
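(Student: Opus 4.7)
The conjecture splits into a lower bound and a matching upper bound, and the difficulty is asymmetric. The plan is to establish the lower bound $f_{\BP\langle n\rangle}(k)\ge k/|v_{n+1}|+O(1)$ via explicit periodic families, and then to improve the upper bound of Corollary \ref{cor:bpn-vl} from an $o(k)$ error term to $O(1)$ by an argument that exploits the chromatic structure of $\BP\langle n\rangle$ directly, rather than routing through $\BP$.

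For the lower bound, I would fix a type $(n+1)$ generalized Moore spectrum $V(n)=\Ss/(p^{i_0},v_1^{i_1},\dots,v_n^{i_n})$ admitting a $v_{n+1}$-self map $v:\Sigma^{d|v_{n+1}|}V(n)\to V(n)$ (these exist for suitable exponents by the periodicity theorem of Hopkins--Smith). The key property is that the $k$-fold iterate $v^k$ is detected in the $\BP\langle n\rangle$-Adams spectral sequence of $V(n)$ by $v_{n+1}^k$ on the $\mathrm{E}_2$-page; since the $v_{n+1}$-Bockstein is injective on the $\BP\langle n\rangle$-Adams $\mathrm{E}_2$-term for $V(n)$, the class $v^k\in\pi_{kd|v_{n+1}|}(V(n))$ is nonzero of $\BP\langle n\rangle$-Adams filtration exactly $k$. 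This forces $f_{\BP\langle n\rangle}(kd|v_{n+1}|+1)\ge k+1$, and interpolating across residues modulo $d|v_{n+1}|$ using suspensions of $v^k$ yields $f_{\BP\langle n\rangle}(k)\ge k/|v_{n+1}|+O(1)$ (the $O(1)$ absorbing the factor $d$ and the truncation exponents $i_j$).

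For the upper bound, the naive application of Theorem \ref{thm:AB-vl} to $\BP\to\BP\langle n\rangle$ already gives slope $1/|v_{n+1}|$, but the intercept term $(1+1/|v_{n+1}|)f_{\BP}(k)$ is sublinear rather than bounded because $f_{\BP}(k)$ itself grows like $k/(p^3-p-1)+O(1)$. To remove this dependence, I would argue inductively and intrinsically as follows. Given a resolution $[F_N,\dots,F_0;\tau_{<k}Y]$ of the kind produced by Proposition \ref{prop:f-interpret-II}(5) over $\BP\langle n\rangle$, the terms $F_j$ are connective induced $\BP\langle n\rangle$-modules, and the key point is that a connective $\BP\langle n\rangle$-module $M$ in $\Sp_{[a,a+|v_{n+1}|)}$ is automatically equivalent to $\BP\langle n-1\rangle\otimes_{\BP\langle n\rangle}M$ — the analogue of Lemma \ref{lemm:res-by-B-mod} in the downward direction — because $\mathrm{fib}(\BP\langle n\rangle\to\BP\langle n-1\rangle)$ is $|v_n|$-connective. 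Iterating the resolution surgery of Lemma \ref{lemm:new-resolution} with the finer connectivity estimate coming from the sequence $\BP\langle n\rangle\to\BP\langle n-1\rangle\to\cdots\to\mathrm{H}\Z_{(p)}$ should trade length $1/|v_{n+1}|$ per unit degree with an error controlled by the constants in the inductive hypothesis, giving $f_{\BP\langle n\rangle}(k)\le k/|v_{n+1}|+C_n$ with $C_n$ depending only on $n$.

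The main obstacle is precisely this upper bound. The comparison technology of Section \ref{app:ABvl} is designed to pass from a finer spectrum to a coarser one, where the loss in filtration length is governed by connectivity of the fiber. The direction we need is opposite: we want bounds on $\BP\langle n\rangle$ derived from bounds on $\BP\langle n-1\rangle$, where the fiber $\Sigma^{|v_n|}\BP\langle n\rangle$ is not of finite type in a useful way. Overcoming this will likely require a genuinely new input, for instance a structured relation between $\BP\langle n\rangle$-resolutions and $v_{n+1}$-torsion analogous to the $v_1$-banded vanishing lines of Section \ref{sec:BandVan} but at higher chromatic height, together with a nontrivial nilpotence statement asserting that $v_{n+1}$ acts with bounded exponent on every connective finite-type spectrum's $\BP\langle n\rangle$-Adams $\mathrm{E}_\infty$-page modulo the image of $J$-like phenomena. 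Without such an input, only the $o(k)$ bound of Corollary \ref{cor:bpn-vl} seems accessible by the methods of this appendix.
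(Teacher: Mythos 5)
You were asked to prove a statement that the paper itself does not prove: it is recorded as a conjecture, and the remark immediately following it states that only the $n=0$ case (essentially Adams--Liulevicius) is known, while for $n>0$ the conjecture is open. What the appendix actually establishes is the weaker bound $f_{\BP\langle n\rangle}(k)\le \frac{1}{|v_{n+1}|}k+o(k)$ (\Cref{cor:bpn-vl}, via \Cref{thm:AB-vl} applied to $\BP\to\BP\langle n\rangle$ together with the Nilpotence theorem). Your proposal is, by your own admission, not a proof either: you concede that the upper bound with $O(1)$ error --- which is the entire content of the conjecture beyond \Cref{thm:app-main}(1) --- is inaccessible by the methods of the appendix and would require a genuinely new input. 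So there is no paper argument to compare against, and the proposal remains a research plan with the essential step missing.

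Beyond that global gap, two concrete steps in your sketch are flawed. For the upper bound, your ``downward analogue'' of \Cref{lemm:res-by-B-mod} is mis-stated: the fiber of $\BP\langle n\rangle\to\BP\langle n-1\rangle$ is only $|v_n|$-connective, so a connective $\BP\langle n\rangle$-module is automatically a $\BP\langle n-1\rangle$-module only in windows of length $|v_n|$, not $|v_{n+1}|$; and in any case producing $\BP\langle n-1\rangle$-module resolutions runs the comparison in the wrong direction for bounding $f_{\BP\langle n\rangle}$, as you note yourself. For the lower bound, the bookkeeping does not give slope $1/|v_{n+1}|$: a $v_{n+1}$-self map $v$ of a generalized Moore spectrum $V(n)$ has degree $d|v_{n+1}|$ with $d$ in general large, and the only filtration estimate you actually justify (namely that $v$ induces zero on $\BP\langle n\rangle$-homology, hence has filtration $\ge 1$) yields nonzero classes of filtration about $k$ in degree $kd|v_{n+1}|$, i.e.\ a lower bound of slope $1/(d|v_{n+1}|)$. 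The factor $d$ multiplies the slope and cannot be ``absorbed into $O(1)$.'' To recover the correct slope you would need the $\BP\langle n\rangle$-Adams filtration of $v$ itself to be at least $d$ (equivalently, of $v^k$ to grow like $dk$), and your assertion that $v^k$ is detected by a power of $v_{n+1}$ in the corresponding filtration, together with injectivity of the $v_{n+1}$-Bockstein on the $\BP\langle n\rangle$-Adams $\mathrm{E}_2$-term of $V(n)$, is precisely the kind of claim that requires proof and is not supplied. (The one point that does go through easily is nontriviality of the composite of $v^k$ with the bottom-cell inclusion, which is detected in $K(n+1)$-homology; alternatively one can work with $v^k$ as an element of the homotopy of the endomorphism spectrum of $V(n)$.)
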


\begin{rmk}
    The $n=0$ case of this conjecture is essentially due to Adams and Luilevicius and appeared in the discussion following \Cref{rmk:g-f-comp}. For $n > 0$ this conjecture is open.
\end{rmk}

In order to prove \Cref{thm:app-main}(2) we need a technique which allows us to bound $\Gamma(k)$. This is provided by the following pair of theorems proved by Davis-Mahowald at $p=2$ and Gonz\'alez at $p\neq 2$.
  
\begin{thm}[{\cite[Theorem 5.1]{DM3}}]
  \label{thm:DM-bo}
  Let
  $$\mathbb{S}^0 = S_0 \xleftarrow{f_1} S_1 \xleftarrow{f_2} S_2 \xleftarrow{f_3} \cdots $$
  denote the canonical $bo$-Adams resolution of $\Ss^0$ and suppose we are given $\alpha_s \in \pi_n(S_s)$ such that $\alpha_s$ maps to zero under the composite
  $$\pi_n(S_s) \to \pi_n(S_0) \to \pi_n(L_{K(1)}\Ss) $$
  and $AF(\alpha_s) \geq \epsilon(n,s)$.\footnote{If $\alpha \in \pi_*(X)$, then $AF(\alpha)$ denotes the $\HFp$-Adams filtration of the class $\alpha$.} Then, there exists an $\alpha_{s+1}$ such that $f_s(f_{s+1}(\alpha_{s+1})) = f_s(\alpha_s)$ and $AF(\alpha_{s+1}) \geq AF(\alpha_s) - \delta(n,s)$ where the values of $\epsilon(n,s)$ and $\delta(n,s)$ are given in the table below:
  \begin{center}
    \begin{tabular}{|c||c|c|}\hline
      \text{s}  & $\epsilon(n,s)$ & $\delta(n,s)$ \\\hline\hline
      $0$ & $1$ & $1$ \\\hline
      $1$ & $\max(1, v_2(n+1)-1)$ & $\max(1, v_2(n+1)-1)$ \\\hline
      $2$ & $v_2(n+2)+1$ & $v_2(n+2)+1$ \\\hline
      $\geq 3$ & $2$ & $\begin{cases} 2 & n+s \equiv 0,1,2,4 \pmod 8 \\ 1 & n+s \equiv 3,5,6,7 \pmod 8 \end{cases}$ \\\hline
    \end{tabular}
  \end{center}
  Note that $\alpha_s$ maps to zero in $L_{K(1)}\Ss$ automatically if $s \geq 2$.
\end{thm}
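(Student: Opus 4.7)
The plan is to prove this by a careful analysis of the cofiber sequence $S_{s+1} \to S_s \to bo \otimes S_s$, tracking Adams filtration throughout. Given a class $\alpha_s \in \pi_n(S_s)$ satisfying the hypotheses, the obstruction to finding a lift $\alpha_{s+1}$ with $f_{s+1}(\alpha_{s+1}) = \alpha_s$ is the image $g_s(\alpha_s) \in \pi_n(bo \otimes S_s)$, where $g_s : S_s \to bo \otimes S_s$ is the map in the cofiber sequence. If this obstruction vanishes, we are done; in general, we must instead allow $f_{s+1}(\alpha_{s+1})$ to differ from $\alpha_s$ only after applying $f_s$, which provides more flexibility at the cost of a bounded decrease in Adams filtration.

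First I would identify $bo \otimes S_s$ with $bo \otimes \overline{bo}^{\otimes s}$, where $\overline{bo}$ is the fiber of the unit map $\mathbb{S} \to bo$. Using Mahowald's splitting, which expresses $bo \smash bo$ as a wedge of suspensions of $bo\langle 1 \rangle$-modules together with integral Eilenberg--Mac Lane summands, one obtains an explicit model for $\pi_*(bo \otimes S_s)$ in low degrees. In particular, for $s \geq 3$ one is reduced to understanding $\pi_*$ of iterated smash powers of connective $bo\langle 1 \rangle$-like spectra, where Bott periodicity governs everything modulo $8$; this is the source of the mod $8$ alternation in the stated value of $\delta(n,s)$.

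Next I would exploit the natural map $bo \to L_{K(1)}\mathbb{S}$ (arising from $bo \to KO$ followed by $K(1)$-completion). The hypothesis that $\alpha_s$ maps to zero in $\pi_n(L_{K(1)}\mathbb{S})$ translates, through this map, into the statement that the $K(1)$-local projection of the obstruction $g_s(\alpha_s)$ vanishes. Combined with the explicit Mahowald splitting, this forces $g_s(\alpha_s)$ to lie in a ``torsion'' subgroup of $\pi_n(bo \otimes S_s)$ consisting of $\mathrm{H}\mathbb{Z}$ and $\mathrm{H}\mathbb{Z}/2$ summands; such classes can then be used to modify $\alpha_s$ and produce $\alpha_{s+1}$, with $\delta(n,s)$ measuring the worst-case Adams filtration drop incurred when absorbing them into the next stage of the resolution.

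The hard part will be the tight bookkeeping for the small-$s$ cases and the explicit appearance of $v_2(n+1)$ and $v_2(n+2)$ in $\epsilon$ and $\delta$. These valuations reflect the behavior of multiplication by $2$ in low-dimensional $\pi_*(bo)$ and the specific attaching maps in the cell decompositions of the first few levels of the $bo$-Adams resolution; for example, the $s=2$ case involves the torsion element of order $2^{v_2(n+2)+1}$ corresponding to the stable $J$-homomorphism image. Matching Adams filtration conventions between the $bo$-resolution and the $\mathrm{H}\mathbb{F}_2$-Adams spectral sequence, and verifying that the stated bounds are actually sharp at each of $s=0,1,2$, is where most of the work will lie; the inductive step for $s \geq 3$ is comparatively routine once the Mahowald splitting is set up.
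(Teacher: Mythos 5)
The paper does not prove this theorem: it is quoted verbatim from Davis and Mahowald \cite{DM3} and used as a black box, so there is no internal proof to compare your attempt against. That said, your sketch does identify the right framework underlying the cited result — the cofiber sequence $S_{s+1} \to S_s \to bo \otimes S_s$, Mahowald's splitting of $bo \otimes \overline{bo}^{\otimes s}$ into $bo\langle 1 \rangle$-module and Eilenberg--Mac Lane pieces, and bookkeeping of $\mathrm{H}\mathbb{F}_2$-Adams filtration drops through the $bo$-Adams tower.

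Your proposal is nonetheless a sketch of a strategy, not a proof. The content of the theorem is entirely in the table: the exact values of $\epsilon(n,s)$ and $\delta(n,s)$, including the $\max(1,v_2(n+1)-1)$ correction at $s=1$, the $v_2(n+2)+1$ correction at $s=2$, and the mod-$8$ alternation between $1$ and $2$ for $s\geq 3$. You explicitly defer deriving all of these (``the hard part will be the tight bookkeeping\ldots''), but that bookkeeping is the theorem. Producing those constants requires going through Mahowald's splitting cell by cell, computing $\mathrm{H}\mathbb{F}_2$-Adams filtrations on the individual summands, and pinning down which $\mathrm{H}\mathbb{Z}$- and $\mathrm{H}\mathbb{Z}/2^k$-summands account for the $v_2$-terms; this occupies most of \cite[Section 5]{DM3} and several preceding sections. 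A separate concrete issue: there is no natural map $bo \to L_{K(1)}\mathbb{S}$ as you assert. The maps run $L_{K(1)}\mathbb{S} \to KO^\wedge_2$, not the other way, and what one actually needs is the identification of the $E_\infty$-page of the $bo$-resolution above a suitable line with the image-of-$J$ pattern, together with the fact that the canonical map from the $bo$-nilpotent completion detects $K(1)$-local homotopy. Translating the hypothesis ``$\alpha_s$ maps to zero in $\pi_n(L_{K(1)}\mathbb{S})$'' into a condition on the obstruction class in $\pi_n(bo\otimes S_s)$ requires stating and using that identification precisely, not a nonexistent map of ring spectra. If your goal is to actually reprove this result rather than cite it, you should work through \cite{DM3} directly.
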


\begin{thm}[{\cite[Theorem 7.5]{GonzalezRegular}}]
  \label{thm:gon-bp1}
  Let
  $$\mathbb{S}^0 = S_0 \xleftarrow{f_1} S_1 \xleftarrow{f_2} S_2 \xleftarrow{f_3} \cdots $$
  denote the canonical $\textup{BP}\langle 1\rangle$-Adams resolution of $\mathbb{S}^0$ and suppose we are given $\alpha_s \in \pi_n(S_s)$ such that $\alpha_s$ maps to zero under the composite
  $$ \pi_n(S_s) \to \pi_n(S_0) \to \pi_n(L_{K(1)}\Ss) $$
  and $AF(\alpha_s) \geq \epsilon(n,s)$. Then, there exists an $\alpha_{s+1}$ such that $f_s(f_{s+1}(\alpha_{s+1})) = f_s(\alpha_s)$ and $AF(\alpha_{s+1}) \geq AF(\alpha_s) - \epsilon(n,s)$ where the values of $\epsilon(n,s)$ are given in the table below:
  \begin{center}
    \begin{tabular}{|c|c|}\hline
      \text{s} & $\epsilon(n,s)$ \\\hline\hline
      $0,1$ & $1$ \\\hline
      $2$ & $1 + \ell(n)$ \\\hline
      $\geq 3$ & $\begin{cases} 2 & n+s \equiv 0 \pmod{q} \\ 1 & n+s \not\equiv 0 \pmod{q}\end{cases}$ \\\hline
    \end{tabular}
  \end{center}
  Note that $\alpha_s$ maps to zero in $L_{K(1)}\Ss$ automatically if $s \geq 2$.
\end{thm}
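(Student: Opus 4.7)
The plan is to approach this through a careful analysis of the obstruction theory for lifting along the canonical $\BP\langle 1 \rangle$-Adams resolution, exploiting that $\BP\langle 1\rangle$ is a height-one approximation to $\BP$ whose associated Adams spectral sequence is well understood at odd primes. The key cofiber sequence is $S_{s+1} \to S_s \to \BP\langle 1\rangle \otimes S_s$, so a class $\alpha_s \in \pi_n(S_s)$ lifts to $\pi_n(S_{s+1})$ precisely when its image in $\pi_n(\BP\langle 1\rangle \otimes S_s)$ vanishes. The first step of my proof is therefore to reformulate the theorem as an assertion about annihilating the image of $\alpha_s$ in $\pi_n(\BP\langle 1\rangle \otimes S_s)$ by multiplication by $p^{\epsilon(n,s)}$ (or rather, by a class that raises Adams filtration by $\epsilon(n,s)$).

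Next, I would use the identification of the $\BP\langle 1\rangle$-based Adams $\mathrm{E}_2$-page with $\mathrm{Ext}$ over the Hopf algebroid $(\BP\langle 1\rangle_*, \BP\langle 1 \rangle_* \BP\langle 1\rangle)$ and the fact that, at an odd prime, $\BP\langle 1\rangle$ sees exactly the chromatic height one layer. Concretely, work of Miller--Ravenel--Wilson and subsequent calculations show that the $v_1$-periodic part of the $\BP\langle 1\rangle$-Adams $\mathrm{E}_2$-term coincides (above a low-filtration band) with the image of $J$ pattern, i.e.\ with what detects $\pi_n(L_{K(1)}\Ss)$. Thus the hypothesis that $\alpha_s$ maps to zero in $L_{K(1)}\Ss$ puts its image in $\pi_n(\BP\langle 1\rangle \otimes S_s)$ into the $v_1$-torsion (equivalently, $\eta$-torsion in the relevant comodule sense), which is exactly the region of the spectral sequence where an effective $p$-torsion height bound is available.

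The core computational step is then to bound the $v_1$-torsion exponent in bidegree $(n,s)$ of the $\BP\langle 1\rangle$-Adams $\mathrm{E}_2$-term. For $s=0,1$ this is essentially trivial, giving $\epsilon = 1$. For $s=2$ one encounters the first real phenomenon: the relevant torsion is controlled by denominators of $v_1$-divisions of $\alpha_1$-like classes, which produces the $\ell(n)$ correction. For $s \geq 3$, sparsity and the $q$-periodicity of the $\mathrm{E}_2$-page give that the torsion is simple except when $n+s \equiv 0 \pmod q$, matching the table. I would carry out each case by writing down an explicit cobar representative, expressing the lifting obstruction as a $p$-adic division problem in the $v_1$-periodic part of $\mathrm{Ext}$, and reading off the exponent from the $p$-adic valuation of $v_1^{k/q}$-style denominators (i.e.\ the $\mathrm{num}/\mathrm{denom}$ dichotomy appearing in Bernoulli number formulas for $J$).

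The main obstacle I anticipate is the bookkeeping in step two: proving that the classes being lifted really do land in the $v_1$-torsion part after the $K(1)$-local hypothesis and not in some other region requires a delicate comparison of three spectral sequences (the $\BP\langle 1\rangle$-Adams spectral sequence, the chromatic $E(1)$-local Adams--Novikov spectral sequence, and the $\HFp$-Adams spectral sequence) along their natural maps. Once that comparison is in place, the values of $\epsilon(n,s)$ in the table should fall out of direct computation in $\mathrm{Ext}_{\BP\langle 1\rangle_*\BP\langle 1\rangle}^{*,*}(\BP\langle 1\rangle_*, \BP\langle 1\rangle_*)$ and its comparison with $\mathrm{Ext}_{P_*}$ via a localized Miller square argument analogous to the one used in Section~\ref{sec:Y}.
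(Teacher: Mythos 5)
This theorem is not proved in the paper at all: it is cited verbatim as Theorem~7.5 of Gonz\'alez's work (the reference \texttt{GonzalezRegular}), and the authors use it as a black box. So there is no ``paper's own proof'' against which to compare your argument; you are attempting to reprove a cited external result, which changes the nature of the evaluation.

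On the mathematical content of your proposal, the main issue is the reformulation in your first paragraph. The theorem does \emph{not} assert that the image of $\alpha_s$ in $\pi_n(\BP\langle 1\rangle \otimes S_s)$ becomes zero after multiplying by $p^{\epsilon(n,s)}$; such a statement would only let you lift $p^{\epsilon(n,s)}\alpha_s$, which is a strictly weaker conclusion than the one in the table. What the theorem actually asserts is that some class $\alpha_{s+1} \in \pi_n(S_{s+1})$ exists with $f_s(f_{s+1}(\alpha_{s+1})) = f_s(\alpha_s)$, i.e.\ $\alpha_s$ becomes liftable after correcting it by an element of $\ker(f_s)$, together with a bound $AF(\alpha_{s+1}) \geq AF(\alpha_s) - \epsilon(n,s)$. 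The correction term lives in the image of the connecting map $\pi_{n+1}(\BP\langle 1\rangle \otimes S_{s-1}) \to \pi_n(S_s)$; controlling its $\HFp$-Adams filtration is precisely where the hard work lies, and it is not governed by a single $p$-power torsion exponent in the target group. Without this correction step your reformulated obstruction problem is not equivalent to the theorem, so the subsequent $v_1$-torsion analysis, even if it were carried out in full, would establish only the weaker ``$p^\epsilon\alpha_s$ lifts'' statement.

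The later parts of your proposal are closer in spirit to what Gonz\'alez actually does (and to the $p=2$ Davis--Mahowald argument it parallels): one needs detailed control of $\pi_*(\BP\langle 1\rangle \otimes S_s)$, including how $\HFp$-Adams filtration behaves under the maps in the tower, and this does ultimately reduce to understanding $v_1$-periodicity in the $\BP\langle 1\rangle$-Adams $\mathrm{E}_2$-term and the splittings of $\BP\langle 1\rangle \otimes \BP\langle 1\rangle$. But the three-spectral-sequence comparison you flag as ``the main obstacle'' is not just bookkeeping; in Gonz\'alez's paper it is the bulk of the argument, carried out through an explicit analysis of a ``regular complex'' filtering the $\BP\langle 1\rangle$-cobar construction. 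A sketch at the level of your proposal does not yet substitute for that.
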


\begin{cor}
  \label{cor:bp1->h}
  \begin{align*}
    & \Gamma(k) \leq \frac{3}{2} g_{bo}(k) + \frac{3}{2} + \ell(k) & \text{ at }\ p = 2 \\
    \text{and}\ \ \ \ \  & \Gamma(k) \leq \frac{q+1}{q} g_{\textup{BP}\langle 1 \rangle}(k) + 1 - \frac{2}{q} + \ell(k) & \text{ at }\ p \neq 2.
  \end{align*}  
\end{cor}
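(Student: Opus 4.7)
The plan is to lift $\alpha \in \pi_n(\mathbb{S})$ as high as possible in the canonical $R$-Adams tower (with $R = bo$ at $p=2$ and $R = \BP\langle 1\rangle$ at odd primes) by iterating Theorem~\ref{thm:DM-bo} or Theorem~\ref{thm:gon-bp1}, and to then appeal to the definition of $g_R$.

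First I would fix a nonzero class $\alpha \in \pi_n(\mathbb{S})$ mapping to zero in $\pi_n(L_{K(1)}\mathbb{S})$ and set $\alpha_0 := \alpha$ and $g := g_R(n)$. Applying the lifting theorem inductively produces classes $\alpha_s \in \pi_n(S_s)$ with $f_s(\alpha_s) = \alpha_{s-1}$, so that the image of each $\alpha_s$ in $\pi_n(\mathbb{S})$ is $\alpha$ itself; in particular the hypothesis that $\alpha_s$ dies in $L_{K(1)}\mathbb{S}$ is automatic. The lift at stage $s$ requires $AF(\alpha_s) \geq \epsilon(n,s)$ and decreases the Adams filtration by at most $\delta(n,s)$ (with $\delta = \epsilon$ in the odd-primary case). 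So, writing
\[
\Delta_i \;:=\; \epsilon(n,i) + \sum_{j=0}^{i-1}\delta(n,j),
\]
the inductive construction reaches $\alpha_{g+1} \in \pi_n(S_{g+1})$ as soon as $AF(\alpha) \geq \Delta_i$ for every $i \in \{0,1,\ldots,g\}$. But $\alpha_{g+1}$ witnesses that $\alpha$ itself has $R$-Adams filtration $\geq g+1$, which by definition of $g = g_R(n)$ forces $\alpha = 0$, contradicting the choice of $\alpha$. Hence $\Gamma(n) \leq \max_{0\le i\le g}\Delta_i - 1$.

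To evaluate this maximum, a short case analysis using the explicit values of $\epsilon,\delta$ shows that the differences $\Delta_{i+1}-\Delta_i = \delta(n,i) + \epsilon(n,i+1) - \epsilon(n,i)$ are strictly positive for all $i\ge 0$, so the maximum is attained at $i=g$ (for $g\geq 3$; small $g$ is handled separately). At an odd prime the first three terms contribute $\epsilon(n,0)+\epsilon(n,1)+\epsilon(n,2) = 3+\ell(n)$, while for $s \geq 3$ one has $\epsilon(n,s) = 2$ when $n+s \equiv 0 \pmod{q}$ and $\epsilon(n,s) = 1$ otherwise; since exactly one of every $q$ consecutive residues is $\equiv 0 \pmod q$, the tail sum is bounded above by $(g-3)(q+1)/q + O(1)$. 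Collecting terms in $\Delta_g - 1$ produces the desired bound $\tfrac{q+1}{q}g_{\BP\langle 1\rangle}(n) + 1 - \tfrac{2}{q} + \ell(n)$. At $p=2$ the same method applies with the $bo$ weights: the first three terms contribute at most $3+\ell(n)$ regardless of the parity of $n$, and among any eight consecutive $s\geq 3$ exactly four give $\delta(n,s) = 2$ and four give $\delta(n,s) = 1$, so that the tail sum is bounded by $\tfrac{3}{2}(g-3)+O(1)$, yielding $\tfrac{3}{2}g_{bo}(n) + \tfrac{3}{2} + \ell(n)$. The main obstacle is the numerical bookkeeping at $p=2$: one must verify that unfavorable residue distributions never push the partial sums of $\delta$ above $\tfrac{3}{2}(g-3)$ by more than a small additive constant, and check the small-$g$ corner cases where the monotonicity of $\Delta_i$ is not needed.
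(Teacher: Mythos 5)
Your proposal is correct and takes essentially the same route as the paper's own proof: iterate the lifting statements of Theorems \ref{thm:DM-bo} and \ref{thm:gon-bp1} to push a class that dies $K(1)$-locally up the $bo$- (resp.\ $\BP\langle 1\rangle$-) Adams tower, track the cumulative drop in $\HFp$-filtration, and invoke the definition of $g_R$ once the filtration exceeds $g_R(k)$. Your $\Delta_i$-packaging, together with the observation that $\Delta_{i+1}-\Delta_i = \delta(n,i)+\epsilon(n,i+1)-\epsilon(n,i) > 0$, is a slightly tidier organization than the paper's display (and in fact $\Delta_g = \epsilon(n,g)+\sum_{j<g}\delta(n,j)$ is marginally tighter than the paper's intermediate expression, which carries a harmless extra $+1$); the one thing you leave implicit is pinning down the ``$O(1)$'' in the residue count --- e.g.\ among $m$ consecutive integers at most $\lceil m/q\rceil$ are $\equiv 0 \pmod q$, and at most $\lfloor m/2\rfloor + 2$ hit $\{0,1,2,4\} \pmod 8$ --- but filling that in routinely recovers the stated constants.
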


\begin{proof}
  Suppose $p=2$, then we can read off from Theorem \ref{thm:DM-bo} that
  if $\alpha \in \pi_k(\Ss)$ is a class which maps to zero in $L_{K(1)}\Ss$ and
  \begin{align*}
    AF(\alpha) \geq &1 + \max(1, v_2(k+1)-1) + \big( 1 + v_2(k+2) \big) \\
    &+ (N-3) + \left| \left\{ (k+s) \equiv 0,1,2,4 \pmod 8\ |\ 3 \leq s < N \right\} \right| + 1
  \end{align*}
  then $\alpha$ has $bo$-Adams filtration at least $N$.
  Once $N > g_{bo}(k)$ we automatically have $\alpha = 0$.
  Stated another way, we have
  \begin{align*}
    \Gamma(k)+1 &\leq 1 + \max(1, v_2(n+1)-1) + \big( 1 + v_2(n+2) \big) \\
    &\ \ \ + (g_{bo}(k) - 2) + \left| \left\{ (n+s) \equiv 0,1,2,4 \pmod 8\ |\ 3 \leq s < (g_{bo}(k)+1) \right\} \right| + 1 \\
    &\leq 3 + \begin{cases} v_2(n+1) & n \text{ odd} \\ v_2(n+2) & n \text{ even} \end{cases} + (g_{bo}(k) - 2) + \frac{1}{2}(g_{bo}(k) - 2) + \frac{5}{2} \\
    &\leq \frac{3}{2}g_{bo}(k) + \frac{5}{2} + \ell(k).
\end{align*}

  Suppose $p \neq 2$, then we can read off from Theorem \ref{thm:gon-bp1} that
  if $\alpha \in \pi_k(\Ss)$ is a class which maps to zero in $L_{K(1)}\Ss$ and
  $$ AF(\alpha) \geq 3 + \ell(k) + (N-3) + \left| \left\{ (k+s) \equiv 0 \pmod q\ |\ 3 \leq s < N \right\} \right| $$
  then $\alpha$ has $BP\langle 1 \rangle$-Adams filtration at least $N$.
  Once $N > g_{BP\langle 1 \rangle}(k)$ we automatically have $\alpha = 0$.
  Stated another way, we have
  \begin{align*}
    \Gamma(k)+1 &\leq 3 + \ell(k) + (g_{BP\langle 1 \rangle}(k)-2) \\
    &\ \ \ + \left| \left\{ (k+s) \equiv 0 \pmod q\ |\ 3 \leq s < (g_{BP\langle 1 \rangle}(k)+1) \right\} \right| \\
    &\leq 1 + \ell(k) + g_{BP\langle 1 \rangle}(k) + \frac{1}{q}(g_{BP\langle 1 \rangle}(k) - 2) + 1 \\
    &\leq 2 - \frac{2}{q} + \ell(k) + \frac{q+1}{q} g_{BP\langle 1 \rangle}(k).
  \end{align*}
\end{proof}

\begin{cor}[\Cref{thm:app-main}(2)]
\begin{align*}
  & \Gamma(k) \leq \frac{1}{4}k + \frac{7}{4}f_{\textup{BP}}(k+1) + \ell(k) & \text{ at }\ p = 2 \\
  \text{and}\ \ \ \ \  & \Gamma(k) \leq \frac{q+1}{q|v_2|}k + \frac{(q+1)(|v_2|+1)}{q|v_2|}f_{\textup{BP}}(k+1) - \frac{3}{q} + \ell(k) & \text{ at }\ p \neq 2. 
\end{align*}
\end{cor}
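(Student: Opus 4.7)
The plan is to combine \Cref{cor:bp1->h}, \Cref{cor:bpn-vl}, and the elementary inequality $g_R(k) \leq f_R(k+1) - 1$ from \Cref{rmk:g-f-comp}. In both parity cases the strategy is the same: bound $\Gamma(k)$ in terms of a vanishing function for a $\BP\langle 1\rangle$-type spectrum (which \Cref{cor:bp1->h} provides), then pass from $g$ to $f$, and finally use \Cref{cor:bpn-vl} (itself an application of \Cref{thm:AB-vl}(3) to the $\mathbb{E}_1$-ring map $\BP \to \BP\langle 1\rangle$) to replace $f_{\BP\langle 1\rangle}$ with $f_{\BP}$.

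At an odd prime $p$, the chain of substitutions is immediate. \Cref{cor:bp1->h} gives $\Gamma(k) \leq \frac{q+1}{q}\, g_{\BP\langle 1\rangle}(k) + 1 - \frac{2}{q} + \ell(k)$. Applying \Cref{rmk:g-f-comp} yields $g_{\BP\langle 1\rangle}(k) \leq f_{\BP\langle 1\rangle}(k+1) - 1$, and the $n=1$ case of \Cref{cor:bpn-vl} bounds $f_{\BP\langle 1\rangle}(k+1) \leq \frac{k+1}{|v_2|} + \bigl(1+\frac{1}{|v_2|}\bigr) f_{\BP}(k+1) - \frac{1}{|v_2|}$. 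Substituting and collecting, the additive constants coalesce via $-\frac{q+1}{q} + 1 - \frac{2}{q} = -\frac{3}{q}$, producing the claimed bound.

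At $p = 2$ the same argument works once we connect $bo$ to $\BP\langle 1\rangle$. Since the complexification $bo \to bu$ is an $\mathbb{E}_\infty$-ring map, and $bu_{(2)}$ is canonically $\mathbb{E}_1$-equivalent to $\BP\langle 1\rangle_{(2)}$ (by uniqueness of $\mathbb{E}_1$-structures on a connective complex-oriented spectrum with homotopy $\Z_{(2)}[u]$), we obtain an $\mathbb{E}_1$-ring map $bo \to \BP\langle 1\rangle$. Via \Cref{thm:AB-vl}(1) this gives $g_{bo}(k) \leq g_{\BP\langle 1\rangle}(k)$. Inserting this into \Cref{cor:bp1->h} yields $\Gamma(k) \leq \frac{3}{2}\, g_{\BP\langle 1\rangle}(k) + \frac{3}{2} + \ell(k)$, and running the same chain of substitutions as in the odd case (with $|v_2| = 6$) produces $\Gamma(k) \leq \frac{k}{4} + \frac{7}{4} f_{\BP}(k+1) + \ell(k)$, the additive constants cancelling exactly: $-\frac{3}{2} + \frac{3}{2} = 0$.

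The main obstacle is purely numerical bookkeeping through the chain of substitutions; no new homotopy-theoretic input is required beyond the earlier corollaries. The only subtle point is the $\mathbb{E}_1$-identification $bu_{(2)} \simeq \BP\langle 1\rangle_{(2)}$ used at $p=2$, which permits the $\mathbb{E}_\infty$-complexification $bo \to bu$ to be viewed as an $\mathbb{E}_1$-ring map $bo \to \BP\langle 1\rangle$ satisfying the hypotheses of \Cref{thm:AB-vl}.
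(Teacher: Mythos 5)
Your proposal is correct and follows essentially the same route as the paper: at an odd prime the chains of inequalities are identical, and at $p=2$ you merely transpose two interchangeable steps (you pass $g_{bo}\le g_{\BP\langle 1\rangle}$ and then $g\le f(k{+}1)-1$, whereas the paper passes $g_{bo}\le f_{bo}(k{+}1)-1$ and then $f_{bo}\le f_{\BP\langle 1\rangle}$), which makes no numerical difference. One small merit of your write-up: you make explicit the $\mathbb{E}_1$-ring map $bo \to \BP\langle 1\rangle$ (via complexification $bo\to bu$ and the $2$-local identification $bu_{(2)}\simeq\BP\langle 1\rangle$) that the paper invokes \Cref{thm:AB-vl} against without spelling out the ring map being used.
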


\begin{proof}
  At $p = 2$, using \Cref{cor:bp1->h}, \Cref{rmk:g->f}, \Cref{thm:AB-vl} and \Cref{thm:app-main}(1) we obtain:
  \begin{align*}
    \Gamma(k)
    &\leq \frac{3}{2} g_{bo}(k) + \frac{3}{2} + \ell(k) 
    \leq \frac{3}{2} (f_{bo}(k+1) - 1) + \frac{3}{2} + \ell(k) \\
    &\leq \frac{3}{2} f_{\BP \langle 1 \rangle}(k+1) + \ell(k) 
    \leq \frac{3}{2} \left( \frac{1}{6}(k+1) + \frac{7}{6} f_{\BP}(k+1) - \frac{1}{6} \right) + \ell(k) \\
    &\leq \frac{1}{4}k  + \frac{7}{4} f_{\BP}(k+1) + \ell(k).
  \end{align*}
  
  At $p \neq 2$,
  using \Cref{cor:bp1->h}, \Cref{rmk:g->f} and \Cref{thm:app-main}(1) we obtain:
  \begin{align*}
    \Gamma(k)
    &\leq \frac{q+1}{q} g_{\textup{BP}\langle 1 \rangle}(k) + 1 - \frac{2}{q} + \ell(k) \\
    &\leq \frac{q+1}{q} \left( f_{\textup{BP}\langle 1 \rangle}(k+1) - 1 \right) + 1 - \frac{2}{q} + \ell(k) \\
    &\leq \frac{q+1}{q} \left( \frac{1}{|v_{2}|}(k+1) + \frac{|v_2| + 1}{|v_{2}|} f_{\BP}(k+1) - \frac{1}{|v_{2}|} \right) - \frac{3}{q} + \ell(k) \\
    &\leq \frac{q+1}{q|v_{2}|}k + \frac{(q+1)(|v_2| + 1)}{q|v_{2}|} f_{\BP}(k+1) - \frac{3}{q} + \ell(k). 
  \end{align*}
\end{proof}

\begin{cor}[\Cref{thm:app-main}(3)]
  For each odd prime,
  $$ f_{\textup{BP}\langle 1 \rangle}(k) \leq \frac{p+2}{2(p^3 - p - 1)}k + 2p^2 - 4p + 11. $$
\end{cor}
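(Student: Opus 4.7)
The plan is to combine two ingredients already proved earlier in the paper: Theorem \ref{thm:app-main}(1) specialized to $n=1$, and the explicit bound on $f_{\BP}$ from Corollary \ref{cor:ANvl}. There is no essential new idea to introduce; the whole statement is obtained by substitution followed by an elementary algebraic simplification, and the only interesting thing to check is that the coefficients come out to exactly the values stated.

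First I would record that at the prime $p$ one has $|v_2| = 2p^2 - 2 = 2(p-1)(p+1)$. Feeding $n=1$ into Theorem \ref{thm:app-main}(1) yields
\[
 f_{\BP\langle 1\rangle}(k) \;\leq\; \frac{1}{2p^2-2}\,k \;+\; \frac{2p^2-1}{2p^2-2}\,f_{\BP}(k) \;-\; \frac{1}{2p^2-2}.
\]
Next I would substitute the Adams--Novikov bound from Corollary \ref{cor:ANvl},
\[
 f_{\BP}(k) \;\leq\; \frac{1}{p^3-p-1}\,k \;+\; \Big(2p^2-4p+10 - \tfrac{2p^2+2p-9}{p^3-p-1}\Big),
\]
and collect terms.

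The key algebraic observation is the identity
\[
 \frac{1}{2p^2-2} \;+\; \frac{2p^2-1}{(2p^2-2)(p^3-p-1)} \;=\; \frac{p^3+2p^2-p-2}{(2p^2-2)(p^3-p-1)} \;=\; \frac{(p-1)(p+1)(p+2)}{2(p-1)(p+1)(p^3-p-1)} \;=\; \frac{p+2}{2(p^3-p-1)},
\]
using the factorization $p^3+2p^2-p-2=(p^2-1)(p+2)$. This delivers precisely the slope claimed in the statement. The main obstacle, such as it is, lies in this factorization: it is what makes the seemingly ad hoc constants $p+2$ and $2(p^3-p-1)$ appear on the right-hand side.

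For the constant term it suffices to verify the numerical inequality
\[
 \Big(1+\tfrac{1}{2p^2-2}\Big)\Big(2p^2-4p+10 - \tfrac{2p^2+2p-9}{p^3-p-1}\Big) - \tfrac{1}{2p^2-2} \;\leq\; 2p^2-4p+11
\]
for all odd primes $p$. Writing $A = 2p^2-4p+10$ and $B = \tfrac{2p^2+2p-9}{p^3-p-1}$, the left-hand side equals $A - B + \tfrac{A-B-1}{2p^2-2}$, so after clearing denominators the inequality reduces to a polynomial inequality in $p$ which is easily seen to hold for $p\geq 3$ (one checks $p=3$ and $p=5$ by hand, and for $p\geq 7$ the term $B$ is already very small while the slack $1$ on the right is more than enough). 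This completes the proof.
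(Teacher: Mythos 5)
Your proposal is correct and follows exactly the same route as the paper: specialize \Cref{thm:app-main}(1) (equivalently \Cref{cor:bpn-vl}) to $n=1$ with $|v_2| = 2p^2-2$, substitute \Cref{cor:ANvl}, and collect terms via the factorization $p^3+2p^2-p-2 = (p^2-1)(p+2)$. Your treatment of the constant term is if anything slightly cleaner than the paper's: since $B > 0$ and $\tfrac{A-B-1}{2p^2-2} < \tfrac{2p^2-4p+9}{2p^2-2} < 1$ already for $p \geq 3$, the inequality $A - B + \tfrac{A-B-1}{2p^2-2} < A + 1$ holds uniformly, with no need for separate small-prime checks.
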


\begin{proof}
  We specialize \Cref{cor:bpn-vl} to the $n=1$ case and plug in the bound on $f_\BP$ obtained in \Cref{cor:ANvl}.
  \begin{align}\label{eqn:bp1}
    \nonumber f_{\textup{BP}\langle 1 \rangle}(k)
    &\leq \frac{1}{|v_2|}k + \frac{1 + |v_2|}{|v_2|} f_\BP(k) - \frac{1}{|v_2|} \\
    \nonumber &\leq \frac{1}{|v_2|}k + \frac{1 + |v_2|}{|v_2|} \left( \frac{1}{p^3 - p -1}k + 2p^2 - 4p + 10 - \frac{2p^2+2p-9}{p^3 - p -1} \right) - \frac{1}{|v_2|} \\
      &\leq \frac{p+2}{2(p^3 - p -1)}k + 2p^2 - 4p + 11 - \frac{2p - 6}{p^2 -1} - \frac{(2p^2+2p-9)(2p^2 -1)}{(2p^2 -2)(p^3 - p -1)} \\
    &< \frac{p+2}{2(p^3 - p -1)}k + 2p^2 - 4p + 11
  \end{align}
\end{proof}

\begin{cor}[\Cref{thm:app-main}(4)]
  For $p=3$,
  $$ \Gamma(k) \leq \frac{25}{184}k + 19 + \frac{1133}{1472} + \ell(k), $$
  and for $p \geq 5$,
  $$ \Gamma(k) \leq \frac{(2p-1)(p+2)}{4(p-1)(p^3 - p -1)}k + 2p^2 - 3p + 11 + \ell(k). $$
\end{cor}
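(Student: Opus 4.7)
The plan is to follow exactly the template used in the proof of \Cref{thm:app-main}(2), but with \Cref{thm:app-main}(3) substituted for \Cref{thm:app-main}(1) so that we bound $f_{\BP\langle 1 \rangle}$ directly rather than via $f_{\BP}$.

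First, I would invoke \Cref{cor:bp1->h} at odd primes to obtain
\[ \Gamma(k) \leq \frac{q+1}{q} g_{\BP\langle 1\rangle}(k) + 1 - \frac{2}{q} + \ell(k), \]
then apply \Cref{rmk:g-f-comp} (i.e. $g_R(k) \leq f_R(k+1) - 1$) to pass to
\[ \Gamma(k) \leq \frac{q+1}{q} f_{\BP\langle 1\rangle}(k+1) - \frac{3}{q} + \ell(k). \]
Substituting the bound $f_{\BP\langle 1\rangle}(k) \leq \frac{p+2}{2(p^3-p-1)}k + 2p^2-4p+11$ from \Cref{thm:app-main}(3) and using $\frac{q+1}{q} = \frac{2p-1}{2p-2}$, the leading coefficient becomes $\frac{(2p-1)(p+2)}{4(p-1)(p^3-p-1)}$, which matches the stated slope.

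For the constant term at $p \geq 5$, collecting terms produces
\[ \frac{(2p-1)(p+2)}{4(p-1)(p^3-p-1)} + \frac{(2p-1)(2p^2-4p+11)}{2p-2} - \frac{3}{2p-2}, \]
and the polynomial division $\frac{(2p-1)(2p^2-4p+11) - 3}{2(p-1)} = 2p^2 - 3p + 10 + \frac{3}{p-1}$ reduces the task to the elementary inequality $\frac{(2p-1)(p+2)}{4(p-1)(p^3-p-1)} + \frac{3}{p-1} \leq 1$, which holds for all $p \geq 5$ since the first summand is tiny and $\frac{3}{p-1} \leq \frac{3}{4}$.

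At $p=3$ the above estimate fails because $\frac{3}{p-1} = \frac{3}{2}$, so the looser form of \Cref{thm:app-main}(3) does not absorb into $2p^2-3p+11$. To obtain the specific constant $19 + \frac{1133}{1472}$, I would instead feed in the sharper intermediate bound from equation (8.1) of the proof of \Cref{thm:app-main}(3): at $p=3$ the correction term $\frac{2p-6}{p^2-1}$ vanishes and $\frac{(2p^2+2p-9)(2p^2-1)}{(2p^2-2)(p^3-p-1)} = \frac{255}{368}$, giving $f_{\BP\langle 1\rangle}(k) \leq \frac{5}{46}k + \frac{3793}{368}$. Then $\frac{5}{4}\bigl(\frac{5}{46}(k+1) + \frac{3793}{368}\bigr) - \frac{3}{4}$ simplifies, after clearing to a common denominator of $1472$, to $\frac{25}{184}k$ plus a constant at most $19 + \frac{1133}{1472}$, and adding $\ell(k)$ finishes the job. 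There is no real obstacle in this argument: every step is a direct substitution into a previously proven bound, and the only subtlety is remembering to pass through the sharper form of (3) at the exceptional prime $p=3$.
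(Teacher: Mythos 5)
Your proposal follows the same route as the paper: bound $\Gamma(k)$ via \Cref{cor:bp1->h} and \Cref{rmk:g-f-comp} by $\frac{q+1}{q}f_{\BP\langle 1\rangle}(k+1) - \frac{3}{q} + \ell(k)$, then substitute a bound on $f_{\BP\langle 1\rangle}$ from \Cref{thm:app-main}(3). Your $p \geq 5$ argument is correct and slightly cleaner than the paper's in presentation: the paper substitutes the intermediate bound \Cref{eqn:bp1} for all odd primes and then observes the resulting error term is $<1$ when $p \geq 5$, whereas you use the simplified form $2p^2-4p+11$ directly and check the elementary inequality $\frac{(2p-1)(p+2)}{4(p-1)(p^3-p-1)} + \frac{3}{p-1} \leq 1$; both work.

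There is, however, an arithmetic slip in your $p=3$ computation. At $p=3$, \Cref{eqn:bp1} yields
\[ f_{\BP\langle 1\rangle}(k) \;\leq\; \frac{5}{46}k + 17 - 0 - \frac{255}{368} \;=\; \frac{5}{46}k + \frac{6001}{368}, \]
not $\frac{5}{46}k + \frac{3793}{368}$ as you wrote (note $17\cdot 368 = 6256$ and $6256 - 255 = 6001$; your constant is exactly $6$ too small). As stated, the bound $f_{\BP\langle 1\rangle}(k) \leq \frac{5}{46}k + \frac{3793}{368}$ is stronger than what \Cref{eqn:bp1} provides and is not established. Fortunately the strategy survives the correction: substituting $\frac{6001}{368}$ one finds
\[ \frac{5}{4}\left(\frac{5}{46}(k+1) + \frac{6001}{368}\right) - \frac{3}{4} \;=\; \frac{25}{184}k + \frac{200 + 30005 - 1104}{1472} \;=\; \frac{25}{184}k + \frac{29101}{1472}, \]
and $\frac{29101}{1472} = 19 + \frac{1133}{1472}$ exactly, recovering the stated bound after adding $\ell(k)$.
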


\begin{proof}
  In the proof of \Cref{thm:app-main}(2) we obtained
  $$ \Gamma(k) \leq \frac{q+1}{q} (f_{\BP\langle 1 \rangle}(k+1) - 1) + 1 - \frac{2}{q} + \ell(k) $$
  for each odd prime. Using the intermediate bound on $f_{\BP\langle 1 \rangle}(k)$ from \Cref{eqn:bp1} we obtain a bound on $\Gamma(k)$ which simplifies to
\begin{align*}
  \Gamma(k)
    &\leq \frac{(2p-1)(p+2)}{4(p-1)(p^3 - p -1)}k + 2p^2 - 3p + 10 \\ &+ \frac{-4p^5 + 26p^4 + 19p^3 - 52p^2 - 27p + 35}{(2p-2)(2p^2 - 2)(p^3 - p -1)} + \ell(k).
\end{align*}
For all $p \geq 5$ the second to last term is less than $1$.
\end{proof}


\bibliographystyle{alpha}
\bibliography{bibliography}

\end{document}